\title{On global classical and weak solutions with arbitrary large initial data to the multi-dimensional viscous Saint-Venant system and compressible Navier-Stokes equations subject to the BD entropy condition under spherical symmetry }
\date{}
\author{
	\bf\large Xiangdi Huang$^{a}$\thanks{E-mail addresses: xdhuang@amss.ac.cn (X. Huang); mengweili@amss.ac.cn (W. Meng); xyzhang05@163.com (X. Zhang). }, Weili Meng$^a$, Xueyao Zhang$^b$\\
	\small a. State Key Laboratory of Mathematical Sciences, Academy of Mathematics and Systems Science,\\
	\small Chinese Academy of Sciences, Beijing 100190, China;\\
	\small b. School of Mathematics and Statistics, Yulin University, Yulin 719000, China;\\
     }
\newcommand{\norm}[1]{\left\Vert#1\right\Vert}
\let\div\relax
\DeclareMathOperator*{\div}{div}
\newtheorem{thm}{Theorem}[section]
\newtheorem{lema}{Lemma}[section]
\newtheorem{prop}{Proposition}[section]
\newtheorem{defi}{Definition}[section]
\newtheorem{rmk}{Remark}[section]
\begin{document}
	\maketitle %显示标题
	\begin{abstract}
In 1871, Saint-Venant introduced the renowned shallow water equations. Since then, for the two-dimensional viscous or inviscid shallow water equations, the global existence of smooth solutions with arbitrarily large initial data has remained a challenging and long-standing open problem. In this paper, we provide an affirmative resolution to the viscous problem under the assumption of two-dimensional radial symmetry. Specifically, we establish the global existence of smooth solutions for the two-dimensional radially symmetric viscous shallow water equations with arbitrary smooth initial data. To achieve this goal, our approach relies crucially on overcoming two major obstacles: first, treating the viscous Saint-Venant system as the endpoint case of the BD entropy condition for the compressible Navier-Stokes equations; and second, addressing the critical embedding imposed by the spatial dimension, which currently holds only in two dimensions. However, the same result can be extended to three dimension for the compressible Navier-Stokes equations satisfying general BD entropy conditions excluding the endpoint case. Indeed, under the same symmtric framework, we also prove the global existence of smooth solutions for arbitrarily large initial data for both the two- and three-dimensional compressible Navier-Stokes equations subject to the BD entropy condition. It is particularly noteworthy that the aforementioned shallow water equations precisely correspond to the endpoint case of the compressible Navier-Stokes equations satisfying the BD entropy condition. More precisely, assuming that for the  two- and three-dimensional compressible Navier-Stokes equations with viscosity coefficients satisfying the BD entropy condition which take the specific form:
		\[
	\mu(\rho)=\rho^\alpha,\quad \lambda(\rho)=(\alpha-1)\rho^\alpha,\quad \alpha\ge \frac{N-1}{N}.
		\]
For \(\alpha\le 1\), to address the singularity at the center of the high-dimensional sphere, we introduce a new parameter-continuation method and a carefully designed r-weight function for the nearby density. This approach enables us to prove, for arbitrarily large initial data far from vacuum, the global existence and large-time behavior of classical solutions. Particularly, in the two-dimensional scenario, only $\alpha \in (0.5,1]$ is required, and its lower bound is optimal due to physical constraints. Most notably, the case of \(\alpha=1\) covers the shallow-water equations, where the key to the proof lies in obtaining a positive lower bound for the density by estimating the \(L^\infty\) norm of the so-called effective velocity and the velocity. For \(\alpha\ge 1\), we construct a global weak solution containing vacuum with bounded density. This weak solution possesses higher regularity than those available in previous works [Guo-Jiu-Xin, SIAM J. Math. Anal 39 (5):1402-1427, 2008; Li-Xin, arXiv:1504.06826, 2015; Vasseur-Yu, Invent. math. 206:935-974, 2016; Bresch-Vasseur-Yu, J. Eur. Math. Soc. 24:1791-1837, 2022]. Moreover, we establish a new time-independent \(L^4\) estimate for both the velocity field and the density gradient, proving that the vacuum state of such global weak solutions will disappear within a finite time, thereby extending the existence of the weak solutions with bounded density and vacuum-vanishing phenomenon in one-dimensional cases [Li-Li-Xin,  Comm. Math. Phys 281(2):401-444, 2008] to the higher dimensions.  For $\alpha=1 $, assuming that the initial density is far from the vacuum and has additional local regularities, we introduce some new suitable cut-off functions to derive a family of approximation-independent uniform estimates, thereby obtaining a class of global weak solutions in which any possible vacuum appears only near the origin, and proving that the weak solution is actually a strong solution on certain subsets of the non-vacuum fluid regions. All results concerning weak solution obtained in this article are also applicable to the shallow-water equations for describing shallow water waves.
\end{abstract}
	
	\tableofcontents %生成目录

	\section{Introduction}
	
	The system of compressible isentropic Navier-Stokes equations with density-dependent viscosity coefficients in ${\mathbb R}^N$, $N=2,3,$ can be read as
	\begin{equation}\label{0}
		\begin{cases}
			\partial_t{\rho} + \operatorname{div}(\rho \mathbf{u}) = 0,\\
			\partial_t(\rho \mathbf{u}) + \operatorname{div}(\rho \mathbf{u} \otimes \mathbf{u})+ \nabla P(\rho ) - \operatorname{div}\big(\mu(\rho) \mathbb{D}\mathbf{u}\big) - \nabla \big(\lambda(\rho) \operatorname{div}\mathbf{u}\big)
			= 0,
		\end{cases}
	\end{equation}
	where $\rho(x,t)$, $\mathbf{u}(x,t)$ and $P(\rho)=\rho^\gamma$($\gamma>1$) are the fluid density,
	velocity and pressure, respectively, and $\mathbb{D}\mathbf{u} = \frac{{\nabla \mathbf{u} + {\nabla \mathbf{u}}^T}}{2}$. The viscosities $\mu(\rho)$ and $\lambda(\rho)$ satisfy
	$$\mu(\rho)\geq0,\ \mu(\rho)+N\lambda(\rho)\geq0.$$
	
	Throughout the process of studying the solutions of $(\ref{0})$, a critical problem that one has to face is the possible appearance of vacuum. As it is well-known, the formation and dynamics of vacuum states are key issues in the studies of the existence, regularity, and long time behaviors of solutions for viscous compressible fluids, see \cite{Huang-Li-Xin-2012, jiang-2006, jiu-2014, Li-2008, Liu-Xin-Yang-1998, Matsumura-Nishida-1980,Xin-1998} and the references therein.
	
	In particular, Hoff-Smoller \cite{hoff-2001} proved that weak solutions of the Navier–Stokes equations for compressible fluid flow in one space dimension do not exhibit vacuum states, provided that no vacuum states are present initially. For the one-dimensional case where the viscosity coefficient depends on density, interesting phenomena such as the vanishing of vacuum and the blow-up of solutions were found by Li-Li-Xin \cite{Li-2008}, after that, the weak solution transforms into a strong solution and tends toward a non-vacuum equilibrium state. However, in general, for compressible fluids, it remains unknown whether a vacuum will emerge within a finite time, even if the initial state is far from vacuum. For instance, Xin-Yuan \cite{xin-2006} extended such a result in \cite{hoff-2001} to the spherically symmetric case on which a sufficient condition on the regularity of the velocity to ensure non-formation of vacuum is given, and it is shown that the separate two initial vacuum states shall not meet together in a finite time. Moreover, for the spherically symmetric case, as initial data away from vacuum states, Hoff \cite{Hoff-1992} first constructed the global weak solutions of (\ref{0}) with constant viscosities for $\gamma=1$, and proved that such solutions may develop vacuum regions near the symmetry center where the boundaries of these vacuum regions are H{\"o}lder continuous in space-time. Hoff-Jenssen \cite{Jensen-2004} generalized these results and proved the global existence of weak solutions for the nonbarotropic flow, the analysis allowed for the possibility that a vacuum state emerges at the origin or axis of symmetry, and the equations hold in the sense of distributions in the ``Fluid region" where the density is positive. Moreover, Huang-Matsumura \cite{Ma-2015} considered the initial boundary value problem, and it is proved that the classical solution which is spherically symmetric loses its regularity in a finite time either the density concentrates or vanishes around the center. Therefore, for the multi-dimensional spherically symmetric compressible flow, one important problem is whether the initial non-vacuum states retain all the time or the initial vacuum states vanish in finite time. The motivation of this paper is to give some positive answers to such problems about compressible flow, which is also crucial to the global existence of classical solution.

    In the case where the viscosity coefficients $\mu(\rho)$ and $\lambda(\rho)$ are constants, the global well-posedness theory of the compressible Navier-Stokes equations has been extensively studied. For the one-dimensional case, when the initial data is far from vacuum, Kanel \cite{Kale-1968} first established the existence of global solutions under smooth initial data. Serre \cite{Serre-1986-1,Serre-1986-2} and Hoff \cite{Hoff-1987} further obtained global solutions for discontinuous initial data. Kazhikhov-Shelukhin \cite{Kazhikhov-Shelukhin-1977} and Kawashima-Nishida \cite{Kawashima-Nishida-1981} respectively proved the existence of global strong solutions on bounded and unbounded domains. For the multi-dimensional case, by a standard Banach fixed point argument, Nash \cite{Nash-1962} established the local existence and uniqueness of classical solutions to the Cauchy problem under the condition that the initial data does not contain vacuum. The result of global solution was first obtained by Matsumura-Nishida \cite{Matsumura-Nishida-1980}, when the initial data close to a non-vacuum equilibrium state, and they established the existence of global classical solutions in some Sobolev space \( H^s(\mathbb{R}^3) (s>\frac52)\). later, Hoff \cite{hoff-1995} studied the problem with discontinuous initial data and introduced a new type of a priori estimates on the material derivative of the velocity field. Furthermore, Cho-Choe-Kim \cite{Cho-Choe-Kim-2004} and Cho-Kim \cite{Cho-Kim-2006} proposed initially an important compatibility condition, by using linearization and iterative methods, and proved the local well-posedness of strong solutions for the vacuum case in three dimensions. Subsequently, Luo \cite{Luo-2012} obtained the local classical solutions in two dimensions. The main breakthrough on global well-posedness is duo to Lions \cite{Lions-1998} by using the methods of weak convergence and renormalized solutions, where he established the global existence of weak solutions with finite energy and large initial data allowing vacuum as \( \gamma \ge \frac{3N}{N+2} (N=2,3)\). Later, Feireisl-Novotn\'{y}-Petzeltov\'{a} \cite{FNP-2001} extended this result to \( \gamma > \frac{N}{2} \) by introducing
an appropriate method of truncation. For the three-dimensional spherically symmetric case, Jiang-Zhang \cite{Jiang-Zhang-2001} extended this threshold to \( \gamma > 1 \). However, the regularity and uniqueness of such renormalized weak solutions remain important open problems. Indeed, in inhomogeneous Sobolev
space, Xin \cite{Xin-1998} showed that any classical solutions to the compressible Navier–Stokes equations with finite energy cannot exist globally in time since it may blow up in finite time, provided that the density is compactly supported. In addition, Xin-Yan \cite{Xin-Yan-2013} proved that any classical solutions of viscous non-isentropic compressible
fluids without heat conduction will blow up in finite time, if the initial data has an
isolated mass group. In particular, for initial data containing vacuum even have compact support, Huang-Li-Xin \cite{Huang-Li-Xin-2012} established, for the first time, the existence and uniqueness of global classical solutions of Cauchy problem with small energy and large oscillations in three-dimension. Then, Li-Xin \cite{li-2019} proved the global well-posedness and large time asymptotic behavior of strong and classical solutions in two dimensions with vacuum as far field density. Recently, Li-Wang-Xin \cite{Li-Wang-Xin-2019} proved that the one-dimensional classical solution with finite energy does not exist in the inhomogeneous Sobolev space for any short time with vacuum.

In the theory of gas dynamics, by some physical considerations, Liu-Xin-Yang \cite{Liu-Xin-Yang-1998} introduced the modified compressible Navier-Stokes equations with density-dependent viscosity coefficients for isentropic fluids. The case in which viscosities depend on the density has received
	a lot of attention recently. When the shear viscosity $\mu>0$ is a positive constant and the bulk viscosity $\lambda$ is density-dependent, namely,
	\begin{align*}
		\mu=const.>0,\ \lambda(\rho)=\rho^{\beta},
	\end{align*}
	the system (\ref{0}) was first studied by Vaigant-Kazhikhov \cite{vaigant-1995}, where the global well-posedness of the strong solution to the 2D periodic problem was established for arbitrarily large and non-vacuum initial data under the restriction $\beta>3$. If the initial values may contain
	vacuum states, Jiu-Wang-Xin \cite{jiu-2014} proved the global well-posedness of the classical solution to the 2D periodic problem but still under the restriction $\beta>3$. Later, Huang-Li \cite{huang-1205} relaxed the power index $\beta$ to be $\beta>\frac43$ and studied the large time behavior of the solution. The global well-posedness of 2D Cauchy problem with the vacuum states at far-fields was studied by Jiu-Wang-Xin \cite{jiu-2013} and Huang-Li \cite{huang-1207}. For 2D Cauchy problem with non-vacuum far-fields, the global well-posedness was proved by Jiu-Wang-Xin \cite{jiu-pre}, in particular, it is shown that the solution will not develop the vacuum states in any finite time for non-vacuum large initial data.  By using the pull-back Green’s function method to get over the difficulties brought by boundary, Fan-Li-Li \cite{fan-2022} solved the Navier-slip boundary problem. For $\beta>1$, the 2D global strong solution of the free boundary problem with spherically symmetric data was obtained by Li-Zhang \cite{li-2016}. Huang-Meng-Ni \cite{hmn} obtained the global spherically symmetric strong solution of the free boundary problem as $\beta=1$. Furthermore, Guo-Wang-Wang \cite{wang-2018} considered the 3D spherically symmetric cases, and proved the global existence of weak solutions with $0\leq\beta\leq\gamma$.
	
	Moreover, when the viscosity coefficients both depend on the density and satisfying the B-D relation as follows
	\begin{align*}
		\lambda(\rho)=\rho\mu'(\rho)-\mu(\rho),
	\end{align*}
a new mathematical entropy estimate was obtained by Bresch-Desjardins-Lin \cite{bd-2003, bd-2004}. Later, by obtaining a new priori estimate on smooth approximate solutions, Mellet-Vasseur \cite{Mellet-2007} studied the stability of (\ref{0}). Based on these, Guo-Jiu-Xin \cite{Guo-2008} first showed the existence of global weak solutions with arbitrarily large and spherically symmetric initial data which contain vacuum states, further Guo-Li-Xin \cite{guo-2012} extended it to the free boundary case and obtained the detailed regularity and Lagrangian structure of this weak solution. For the general multi-dimensional initial data, some crucial
progresses on the global well-posedness of the weak solutions were achieved by Li-Xin \cite{Li-2015} and Vasseur-Yu \cite{Vasseur-2016}, where they independently obtained the global existence of weak solutions for the compressible system with $\mu(\rho)=\rho,\lambda(\rho)=0$ for arbitrarily large data allowing vacuum. Note that, Li-Xin \cite{Li-2015} also considered a broader range of viscosity coefficients satisfying the BD relation, where the viscous term take the form $-\operatorname{div}(\mu(\rho) \mathbb{D}\mathbf{u} )-\nabla(\lambda(\rho)\operatorname{div}\mathbf{u})$ in two dimensions and $-\operatorname{div}(\mu(\rho)\nabla \mathbf{u})-\nabla(\lambda(\rho)\operatorname{div}\mathbf{u})$ in two and three dimensions respectively. Later, in the three-dimensional space case, a major breakthrough was obtained by Bresch-Vasseur-Yu \cite{va-2022}, the authors extended the results on global existence of weak solutions obtained by Li-Xin \cite{Li-2015} to a physical symmetric viscous stress tensor $-\operatorname{div}(\mu(\rho) \mathbb{D}\mathbf{u} )-\nabla(\lambda(\rho)\operatorname{div}\mathbf{u})$ and more general viscosities. More recently, for the spherically symmetric data, Zhang \cite{zhang-2025} first proved the result of the global existence of the non-vacuum strong solution with arbitrarily large data. Guo-Xu-Zhang \cite{Guo-2025} further constructed the global strong solution of the Cauchy problem. Most notably, a special case of viscosity coefficients satisfying the B-D relation is $\mu(\rho)=\rho$, $\lambda(\rho)=0$, which covers the Saint-Venant model for the motion of shallow water. This important physical
system has attracted extensive attentions recently, and some important achievements both
on weak and strong solutions have been obtained. For the non-vacuum initial data, Haspot \cite{Haspot-2018} and Haspot-Burtea \cite{Haspot-Burtea} proved the existence of a unique global strong solution for the one-dimensional Cauchy problem. Later, Cao-Li-Zhu \cite{Cao-Li-Zhu-1} proved the existence of global regular solutions for one-dimensional  Cauchy problem with large data and far field vacuum. Cao-Li-Zhu \cite{Cao-Li-Zhu-2} further considered the initial-boundary value problem in the domain exterior to a ball in $\mathbb{R}^d(d=2,3)$, and proved the global existence of the unique
spherically symmetric classical solution for large initial data with far field vacuum. Some interesting progress on the vacuum free boundary problem are achieved by Xin-Zhang-Zhu \cite{Xin-Zhang-Zhu} and Li-Wang-Xin \cite{Li-Wang-Xin}. In \cite{Xin-Zhang-Zhu}, the authors established the global-in-time well-posedness of classical
solutions to the vacuum free boundary problem of the one-dimensional case with large data. Li-Wang-Xin \cite{Li-Wang-Xin} further given the local-in-time well-posedness of classical solutions to the vacuum free boundary problem in two dimensions. Indeed, for the high-dimensional Saint-Venant model, there still exist no results about the global existence of classical solutions with large initial data.

In addition, to a class of density-dependent viscosity coefficients but independent of the BD-entropy relation
	\begin{align*}
		\mu(\rho)=\alpha\rho^{\delta},\ \lambda(\rho)=\beta\rho^{\delta},
	\end{align*}
	based on the observations that the degeneracies of the time evolution and the viscosity can be transferred to the possible singularity of the special source term, as $\delta=1$, Li-Pan-Zhu \cite{ya-2014} proved the existence of the unique local regular solution in two-dimension space, see Zhu \cite{zhu-20152} for the three-dimension case. Later, via introducing a proper class of solution space, as $1<\delta\leq\min\{3,\frac{\gamma+1}{2}\},$ the same authors \cite{ya-2015, zhu-2015} gave the existence of local regular solution. Xin-Zhu \cite{xin-2021} further proved the local existence of regular solution as $0<\delta<1$. In particular, for $\delta>1$, under the smallness assumptions on $\Vert{\rho_0}^{\frac{\gamma-1}{2}}\Vert_{H^3}+\Vert{\rho_0}^{\frac{\delta-1}{2}}\Vert_{H^3}$, by making use of the “quasi-symmetric
	hyperbolic” and “degenerate elliptic” coupled structure to control the behavior of the fluid velocity, Xin-Zhu \cite{xin-2018} proved the global well-posedness of regular solution to the three-dimension Cauchy problem with vacuum and smooth initial data. As initial data away from vacuum, Guo-Song \cite{song-2019} obtained the global strong solution under the smallness assumptions on $\Vert \mathbf{u}_0\Vert_{H^1}+\Vert\rho_0-\tilde{\rho}\Vert_{L^2}$.  However, except for the one-dimension problems, there are
	still only few results on the global well-posedness theory of strong solution to (\ref{0}) with density-dependent viscosities because of the
	possible degeneracy, please refer to \cite{song-2019,xin-2018}. 
	
	In this article, as the shear viscosity coefficient and the bulk viscosity coefficient satisfy the BD relation 
	\begin{align*}
		\mu(\rho)=\rho^\alpha,\quad \lambda(\rho)=(\alpha-1)\rho^\alpha,
	\end{align*}we study the global existence of large classical solutions away from vacuum, the global existence of large weak solutions that allow vacuum, and the phenomenon of Vanishing of Vacuum States and Non-formation of Vacuum States.
	
	Assume that \(\Omega\) is a bounded domain in \(\mathbb R^N\) with \(N=2,3\).  We consider equation \eqref{0} with initial data
	\begin{align}\label{0-1}
		\rho(x,0)=\rho_0(x),\quad 
		\rho\mathbf{u}(x,0)=\mathbf{m}_0(x),\quad x\in\Omega,
	\end{align}
	and the boundary condition 
	\begin{align}\label{0-2}
		\rho\mathbf{u}(x,\cdot)=0,\quad x\in\partial\Omega. 
	\end{align}
	In the radially symmetric setting, the domain, density field, and velocity field have the following representations:
	\begin{align*}
		\Omega=B_R,\quad \rho(x,t)=\rho(r,t),\quad \mathbf{u}(x,t)=u(r,t)\frac{\mathbf{x}}{r}.
	\end{align*}
	The initial boundary value problem \eqref{0}-\eqref{0-2} is transformed into 
	\begin{align}\label{Equ 2}
		\left\{ 
		\begin{array}{ll}
			\rho_t+(\rho u)_r+\frac{N-1}{r}\rho u=0,\\
			(\rho u)_t+(\rho u^2)_r+(\rho^\gamma)_r+\frac{N-1}{r}\rho u^2-\left(\frac{\alpha}{r^{N-1}}\rho^\alpha(r^{N-1}u)_r\right)_r+\frac{N-1}{r}(\rho^\alpha)_r u=0.
		\end{array}
		\right.
	\end{align}
	with the initial data
	\begin{align}\label{inital data2}
		\rho(r,0)=\rho_0(r),\quad \rho u(r,0)=m_0(r),
	\end{align}
	and the boundary condition
	\begin{align}\label{boundary condition2}
		\rho u(0,t)=\rho u(R,t)=0.
	\end{align}

    We now outline the organization of the rest of this paper. In Sect. 2, we state our main results: Theorem \ref{Thm1} establishes the global existence of classical solutions for large initial data away from vacuum as $\alpha<1$. Theorem \ref{Thm2} gives the large-time behavior of such classical solution. Theorem \ref{Thm SV 2d} establishes the global existence of classical solutions with large initial data away from vacuum, as well as the large-time behavior of the solutions, in the two-dimensional endpoint case $\alpha = 1$, which includes the viscous Saint‑Venant model describing shallow water motion for $\gamma = 2$. Theorem \ref{Thm3} and Theorem \ref{Thm4} respectively assert the global existence of the weak solutions with large initial data allowing for vacuum in two-dimensional and three-dimensional settings as $\alpha\ge 1$. Theorem \ref{Thm5} demonstrates the finite-time vacuum-vanishing phenomenon for such weak solutions. Theorem \ref{T_2} yields a class of global weak solutions where any potential vacuum only appears near the origin as $\alpha=1$. Moreover, we present the main strategies and a brief overview for the proofs of these main results. Sect. 3 is dedicated to proving Theorem \ref{Thm1} and Theorem \ref{Thm2}, and the primary goal here is to derive the upper and lower bounds of the density as well as some uniform-in-time estimates. Sect. 4 is devoted to the proof of Theorem \ref{Thm SV 2d}, with the key step being the derivation of a positive lower bound for the density in the two-dimensional endpoint case. Sect. 5 focuses on the proofs of Theorem \ref{Thm3}–Theorem \ref{Thm5}. To this end, we introduce artificial viscosities into the system away from the center, and investigate the global solvability of the approximate system. Further we employ a careful compactness argument to take the limits, thereby obtaining global weak solutions with bounded density. Sect. 6 is devoted to proving Theorem \ref{T_2}, where the approximate system contains no artificial viscosity terms, and the weak solutions are obtained as limits of solutions in annular regions as the inner radius tends to zero and exhibit high regularity away from the symmetry center. Finally, Sect. 7 lists the appendix which includes several lemmas frequently utilized throughout the paper.

	\section{Main results}
	Before stating our main results, we first clarify the following notations and conventions. For a scalar function \(f\), define the material derivative of \(f\) as
	\begin{align*}
		\dot{f}=\frac{D}{Dt}f=\partial_t f+\mathbf{u}\cdot \nabla f,
	\end{align*}
	and the average value of the integral as
	\begin{align*}
		\bar{f}=\frac{1}{|\Omega|}\int_\Omega fdx.
	\end{align*}
    We call a scalar function \(f\) or a vector-valued function $\mathbf{v}$ radially symmetric if  
    \begin{align*}
        f(x)=f(|x|),\quad \mathbf{v}(x)=v(|x|)\frac{x}{r}.
    \end{align*}
	
	\begin{defi}
		Define
	\begin{align*}
		M_{\text{set}}=\left\{1+\frac{s}{2k+1}\Big| s\in\mathbb{N}^+, k\in\mathbb{N}\right\}.
	\end{align*}
	\end{defi}
	It is readily verified that \(M_{\mathrm{set}}\) is dense in \((1,\infty)\).
	
	\begin{defi}\label{Def alpha 2}
	For $n>1$ define
	\begin{align}  
		&\alpha_{2,-}(n)=1-\frac{n\sqrt{2n-1}-2n+1}{n^2-2n+1},\\
		&\alpha_{2,+}(n)=1+\frac{n\sqrt{2n-1}+2n-1}{n^2-2n+1}.  
	\end{align}     
	\end{defi}
	Here $\alpha_{2,-}(\cdot)$ is strictly increasing on $(1,\infty)$ with $\lim\limits_{n\to 1^+}\alpha_{2,-}(n)=\frac12$ and $\lim\limits_{n\to\infty}\alpha_{2,-}(n)=1$, while $\alpha_{2,+}(\cdot)$ is strictly decreasing on $(1,\infty)$ with $\lim\limits_{n\to 1^+}\alpha_{2,+}(n)=\infty$ and $\lim\limits_{n\to\infty}\alpha_{2,+}(n)=1$. For the sake of continuity, the proofs of the properties of \(\alpha_{2,\pm}\) are given in Lemma \ref{Lem alpha2} of the Appendix.
	
	\begin{defi}
		Define $n_2(\cdot):(\frac12,\infty)\to(1,\infty]$ as follows.  For $\alpha\in(\frac12,1)$, let $n_2(\alpha)$ be the unique solution of $\alpha_{2,-}\bigl(n_2(\alpha)\bigr)=\alpha$; for $\alpha=1$, set $n_2(1)=+\infty$; and for $\alpha\in(1,\infty)$, let $n_2(\alpha)$ be the unique solution of $\alpha_{2,+}\bigl(n_2(\alpha)\bigr)=\alpha$.  
	\end{defi}
	
	\begin{defi}\label{Def alpha 3}
	For $n>1$ define
		\begin{align}  
			&\alpha_{3,-}(n)=1-\frac{\sqrt{4n(4n^2-n-1)+1}-6n+3}{4n^2-8n+4},\\
			&\alpha_{3,+}(n)=1+\frac{\sqrt{4n(4n^2-n-1)+1}+6n-3}{4n^2-8n+4}.
		\end{align}  
	\end{defi}
	Here $\alpha_{3,-}(\cdot)$ is strictly increasing on $(1,\infty)$ with $\lim\limits_{n\to 1^+}\alpha_{3,-}(n)=\frac23$ and $\lim\limits_{n\to\infty}\alpha_{3,-}(n)=1$, whereas $\alpha_{3,+}(\cdot)$ is strictly decreasing on $(1,\infty)$ with $\lim\limits_{n\to 1^+}\alpha_{3,+}(n)=\infty$ and $\lim\limits_{n\to\infty}\alpha_{3,+}(n)=1$. The proofs of the properties of \(\alpha_{3,\pm}\) can be found in Lemma \ref{Lem alpha3} of the Appendix.
	
	\begin{defi}
	Define $n_3(\cdot):(\frac23,\infty)\to(1,\infty]$ as follows. For $\alpha\in(\frac23,1)$, let $n_3(\alpha)$ solve $\alpha_{3,-}\bigl(n_3(\alpha)\bigr)=\alpha$; for $\alpha=1$, set $n_3(1)=+\infty$; and for $\alpha\in(1,\infty)$, let $n_3(\alpha)$ solve $\alpha_{3,+}\bigl(n_3(\alpha)\bigr)=\alpha$.
	\end{defi}
	
	\subsection{Global classical solution away from vacuum $(\frac{N-1}{N}<\alpha<1)$}
	We start with the definition of the global classical solution to \eqref{0}.
	\begin{defi}[Global classical solution]\label{D_1}
		A radially symmetric pair \((\rho,\mathbf{u})\) with $\rho>0$ is called a global classical solution of the initial-boundary-value problem \eqref{0}-\eqref{0-2} if, for any \(T>0\),
		\begin{align}
			\left\{
			\begin{array}{l}
				\rho\in  C\big([0,T];H^3(\Omega)\big),\ \rho_t\in C\big([0,T];H^2(\Omega)\big),\nonumber\\
				u\in  C\big([0,T];H_0^1(\Omega)\cap H^3(\Omega)\big)\cap L^2\big(0,T;H^4(\Omega)\big),\nonumber\\
				u_t\in L^{2}\big(0,T;H^{2}(\Omega)\big)\cap L^{\infty}\big(0,T;H_0^1(\Omega)\big),\nonumber\\
				u_{tt}\in L^{2}\big(0,T;L^{2}(\Omega)\big).\nonumber
			\end{array}
			\right.
		\end{align}
	\end{defi}
	
	The first result on the global existence of classical solutions with large initial data away from vacuum is stated as follows:
	\begin{thm}[Global classical solution]\label{Thm1}
		Let $N=2$ or $N=3$. Assume that \((\alpha,\gamma)\) satisfies
		\begin{align}
			&N=2,\quad 0.5<\alpha<1,\quad \gamma>1;\label{C 2d alpha}\\
			&N=3,\quad 0.686<\alpha<1,\quad 1<\gamma<6\alpha-3+\frac{3-5\alpha}{2n_3(\alpha)},\label{C 3d alpha}
		\end{align}
		and that the radially symmetric initial data \((\rho_0,\mathbf{u}_0)\) satisfies
		\begin{align}\label{C initial data}
			0<\underline{\rho_0}\leq \rho_0\leq \overline{\rho_0},\quad (\rho_0, \mathbf{u}_0)\in H^3(\Omega),\quad \mathbf{u}_0|_{\partial \Omega}=0,
		\end{align}
		where $\underline{\rho_0}$ and $\overline{\rho_0}$ are positive constants. Then the initial-boundary-value problem \eqref{0}–\eqref{0-2} admits a unique global radially symmetric classical solution $(\rho,\mathbf{u})$ satisfying, for any \(T>0\) and any \((x,t)\in\Omega\times [0,T]\),
		\begin{align}
			(C(T))^{-1}\le \rho(x,t)\le C(T),
		\end{align}
		where the constant \(C(T)>0\) depends on the initial data and \(T\).
	\end{thm}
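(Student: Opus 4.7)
The plan is to prove Theorem \ref{Thm1} via a short-time existence result for classical solutions in the class of Definition \ref{D_1} combined with a continuation argument. A local classical solution exists because the initial density is strictly bounded away from zero and $(\rho_0,\mathbf{u}_0)\in H^3$, so it suffices to show that on the maximal existence interval the density stays pinched between two positive constants and the higher norms appearing in Definition \ref{D_1} remain finite. The argument therefore splits into four steps: (i) basic energy and BD-entropy identities; (ii) an upper bound on $\rho$; (iii) a positive lower bound on $\rho$, which is the crux of the proof; and (iv) propagation of $H^3$ regularity for $(\rho,\mathbf{u})$.

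For step (i), I would first derive the standard energy identity controlling $\int_\Omega \bigl(\tfrac12\rho|\mathbf{u}|^2+\tfrac{1}{\gamma-1}\rho^\gamma\bigr)\,dx$ together with the dissipation $\int_\Omega \mu(\rho)|\mathbb{D}\mathbf{u}|^2\,dx$, and then the BD-entropy estimate obtained by testing the momentum equation against the modified velocity $\mathbf{u}+\nabla\phi(\rho)$ with $\phi'(\rho)=\mu'(\rho)/\rho=\alpha\rho^{\alpha-2}$. Under the BD relation $\lambda(\rho)=\rho\mu'(\rho)-\mu(\rho)$ this produces, after radial reduction, a uniform $L^\infty_t L^2_r$ bound on $\rho^{(\alpha-1)/2}\partial_r\rho$ and an $L^2_t L^2_r$ bound on a corresponding higher-order analogue involving $\rho^{(\gamma+\alpha-2)/2}\partial_r\rho$. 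Coupled with the radial Sobolev embedding this gives mixed $L^p_tL^q_r$ control on $\rho$ that feeds everything below.

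For step (ii), I would pass to the Lagrangian-like variable $\eta=\int_0^r s^{N-1}\rho(s,t)\,ds$ and exploit the effective viscous flux $F=\tfrac{\alpha}{r^{N-1}}\rho^\alpha(r^{N-1}u)_r-\rho^\gamma$, which together with the radial mass equation yields an ODE inequality for $\rho$ along particle trajectories whose right-hand side is controlled by quantities already bounded in step (i); a Zlotnik-type argument then produces $\rho(r,t)\le C(T)$. The restrictions \eqref{C 2d alpha}--\eqref{C 3d alpha} on $\gamma$ enter precisely here: in three dimensions the radial Sobolev exponent forces $\gamma$ below the explicit threshold $6\alpha-3+(3-5\alpha)/(2n_3(\alpha))$, whereas in two dimensions the critical embedding $H^1\hookrightarrow L^q$ for every finite $q$ permits any $\gamma>1$.

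The main obstacle is step (iii), because at the symmetry center $r=0$ the factors $r^{-(N-1)}$ appearing in the radial system are singular and standard arguments break down. I would work with the effective velocity $v=u+\tfrac{\alpha}{\alpha-1}\partial_r(\rho^{\alpha-1})$, which is well defined thanks to $\alpha<1$ and satisfies a damped transport equation; an $L^\infty_{r,t}$ estimate for $v$, obtained from the BD-based $H^1$ control of $\rho^{\alpha-1/2}$ together with an $L^p$ bound on $(r^{N-1}u)_r$, combined with integration along the flow map, yields $\rho(r,t)\ge C(T)^{-1}>0$. Controlling the behavior at the center is exactly where the hypothesis $\alpha>(N-1)/N$ becomes unavoidable: it is what allows a carefully chosen $r$-weighted energy (the weight function alluded to in the abstract) to close the estimate. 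Once a positive lower bound and a finite upper bound on $\rho$ are in hand the momentum equation is uniformly parabolic, and step (iv), propagation of $H^3$ regularity, follows from successive differentiation of the radial system and standard energy estimates, thereby closing the continuation argument and giving global existence.
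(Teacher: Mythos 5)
Your overall architecture (local existence plus continuation, closed by a-priori bounds on $\rho$ and higher Sobolev norms) matches the paper, as does the use of the energy identity and the BD-entropy identity in step (i). The substantive divergence, and the gap, is in step (iii).

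For the lower bound you propose to prove an $L^\infty_{r,t}$ estimate on the effective velocity $u+\rho^{-1}\partial_r\rho^\alpha$ and integrate along the flow map. In the paper this $L^\infty$ strategy is reserved exclusively for the endpoint $\alpha=1$ (Theorem~\ref{Thm SV 2d}), and it is exactly what forces the extra restrictions there (two dimensions only, $\gamma\ge 3/2$). For $\frac{N-1}{N}<\alpha<1$ your route does not close, for the following concrete reason. The Mellet–Vasseur-type integral bound on the velocity (Proposition~\ref{Prop C n}) holds only for $\int u^{2n}$ with $n<n_N(\alpha)<\infty$; the constraint $n<n_N(\alpha)$ comes from making the cross term $2n(N-1)(\alpha-1)\rho^\alpha u^{2n-1}\partial_y u\,r^{N-2}$ absorbable by the dissipation via \eqref{C4}, and it degenerates as $n\to\infty$. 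So no $L^\infty$ bound on $u$ — and hence none on the effective velocity — is available when $\alpha<1$, and the downstream integration along characteristics has nothing to feed on. The paper instead works in Lagrangian coordinates and closes the lower bound with a \emph{finite} exponent: by Lemma~\ref{Lem C 2k} one can pick $k\in M_{\mathrm{set}}$ with $k<n_2(\alpha)$ and $\alpha<1-\tfrac{1}{2k}$; the BD identity in $L^{2k}$ (Proposition~\ref{Prop C m}) then gives control of $\int(u+r^{N-1}\partial_y\rho^\alpha)^{2k}\,dy$, and the conserved quantity $\int_0^1 v\,dy$ with $v=1/\rho$ plus a one-dimensional Sobolev embedding yields the self-improving inequality $v\le C(T)\,V_T^{\alpha+\frac{1}{2k}}$ (Proposition~\ref{Prop C 2d VT}). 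The exponent $\alpha+\frac{1}{2k}<1$ — which is precisely the condition $\alpha<1-\frac{1}{2k}$ — is what allows Young's inequality to close this without ever passing to $L^\infty$. Absent that observation, your argument loses the full $(\alpha,\gamma)$ range of Theorem~\ref{Thm1}, in particular the three-dimensional case.

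Two secondary remarks. First, in step (ii) you reach for the effective viscous flux and a Zlotnik argument; the paper obtains the upper bound much more cheaply from the $L^{2k}$ bound on $\partial_r\rho^{\alpha-1+\frac{1}{2k}}$ plus the one-dimensional Sobolev embedding (Proposition~\ref{Prop C 2d RT}), \emph{before} any material-derivative estimates are available, which is the order needed to bootstrap. Second, you assert that $v=u+\frac{\alpha}{\alpha-1}\partial_r(\rho^{\alpha-1})$ is ``well defined thanks to $\alpha<1$'' — the opposite is closer to the truth: $\alpha<1$ makes $\rho^{\alpha-1}$ a negative power of $\rho$, which is a priori unbounded since the whole point is to prove $\rho$ stays away from zero, and the condition $\alpha>\frac{N-1}{N}$ in the paper is used to make $\mu+N\lambda=(1+(\alpha-1)N)\rho^\alpha\ge 0$ so the dissipation is coercive, not to control a weighted energy near the origin.
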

	
	\begin{rmk}
		In the two-dimensional case, we improve the existence result in \cite{zhang-2025} from the range \(0.8\le\alpha<1\) and $\gamma>1$ to \(0.5<\alpha<1\) and $\gamma>1$.  The restriction \(\alpha<1\) arises from the derivation of the positive lower bound on the density.  The physical requirement on the viscosity coefficients \(\mu+2\lambda\ge0\) is equivalent to \(\alpha\ge0.5\). However, the endpoint case $\alpha=0.5$ is extremely difficult because the dissipative estimate of the velocity field becomes unavailable, so our range appears to be optimal.
	\end{rmk}

\begin{rmk}
    In the three-dimensional case, the physical requirement on the viscosity coefficients \(\mu+3\lambda\ge0\) is equivalent to \(\alpha\ge\tfrac23\).  The range \(\alpha\in(\tfrac23,0.686]\) is therefore left for future work.
\end{rmk}
    
	\begin{rmk}
		The range of \(\gamma\) in \eqref{C 3d alpha} is non-empty.  Observe that  for any $\alpha>\frac{2}{3}$,
		\[
		1<6\alpha-3+\frac{3-5\alpha}{2n_3(\alpha)}
		\Longleftrightarrow
		n_3(\alpha)>\frac{5\alpha-3}{12\alpha-8}.
		\]  
		For \(\alpha\in[\frac57,1)\) the inequality holds automatically because \(n_3(\alpha)>1\).  
		For \(\alpha\in(\frac23,\frac57)\) it becomes  
		\[
		\alpha>\alpha_{3,-}\Bigl(\frac{5\alpha-3}{12\alpha-8}\Bigr),
		\]  
		which is satisfied whenever \(0.686<\alpha<\frac57\).
	\end{rmk}
	
	\begin{rmk}
		In the three-dimensional case, we extend the result of \cite{zhang-2025} from the ranges \(0.875\le\alpha<1\) and \(1<\gamma<9\alpha-6\) to \(0.686<\alpha<1\) and \(1<\gamma<6\alpha-3+\frac{3-5\alpha}{2n_3(\alpha)}\). In fact, for $0.875\leq \alpha<1$,  it holds that
		\begin{align*}
			9\alpha-6<6\alpha-3+\frac{3-5\alpha}{2n_3(\alpha)}\Longleftrightarrow\alpha>\alpha_{3,-}\left(\frac{5\alpha-3}{6-6\alpha}\right).
		\end{align*}
	\end{rmk}
	
	We now state the second result concerning the large-time behavior of the classical solution.
	\begin{thm}[Global classical solutions and large-time behavior]\label{Thm2}
			Let $N=2$ or $N=3$. Assume that \((\alpha,\gamma)\) satisfies
		\begin{align}
			&N=2,\quad 0.54<\alpha<1,\quad \gamma>1;\label{C 2d unialpha}\\
			&N=3,\quad 0.689<\alpha<1,\quad 1<\gamma<3\alpha-1,\label{C 3d unialpha}
		\end{align}
		and that the radially symmetric initial data $(\rho_0,\mathbf{u}_0)$ satisfies \eqref{C initial data}. Then the initial-boundary-value problem \eqref{0}–\eqref{0-2} admits a unique global radially symmetric classical solution $(\rho,\mathbf{u})$ satisfying, for any \(T>0\) and any $(x,t)\in\Omega\times[0,T]$, 
		\begin{align*}
			C^{-1}\le \rho(x,t)\le C,
		\end{align*}
		where the constant \(C>0\) depends on the initial data but is independent of \(T\). Moreover, the following large-time behavior holds:
		\begin{align}
			\lim\limits_{t\to\infty}\norm{\rho(t)-\overline{\rho_0}}_{C(\overline{\Omega})}=0
		\end{align}
		and
		\begin{align}\label{large time behavior2}
			\lim\limits_{t\to\infty}(\norm{\nabla\rho(t)}_{L^2(\Omega)}+\norm{\nabla\mathbf{u}(t)}_{L^2(\Omega)}+\norm{\nabla^2\mathbf{u}(t)}_{L^2(\Omega)})=0.
		\end{align}
	\end{thm}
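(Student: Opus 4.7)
Since the hypotheses \eqref{C 2d unialpha}–\eqref{C 3d unialpha} are strictly stronger than \eqref{C 2d alpha}–\eqref{C 3d alpha}, Theorem~\ref{Thm1} already yields a unique global radially symmetric classical solution on $[0,\infty)$; what remains is to upgrade its $T$-dependent density bound to a $T$-independent one and to establish the decay \eqref{large time behavior2}. The plan is to revisit every estimate used in the proof of Theorem~\ref{Thm1}, tightening each time-growing constant into a time-uniform one under the narrower assumptions \eqref{C 2d unialpha}–\eqref{C 3d unialpha}, and then to combine integrability of dissipation with uniform higher-order regularity to extract the asymptotic limits.

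The first step is to produce time-uniform basic and BD entropy estimates. The standard energy identity gives uniform control of $\int_\Omega \bigl(\tfrac12\rho|\mathbf u|^2+\tfrac{\rho^\gamma}{\gamma-1}\bigr)\,dx$ together with $\int_0^\infty\!\!\int_\Omega \rho^\alpha|\nabla\mathbf u|^2\,dxdt<\infty$. The BD entropy identity, which is available precisely because $\lambda(\rho)=\rho\mu'(\rho)-\mu(\rho)$, yields uniform control of $\nabla\rho^{\alpha-1/2}$ in $L^2$ and absolute integrability in time of the dissipation acting on the effective velocity $\mathbf v=\mathbf u+\tfrac{\alpha}{\alpha-1}\nabla\rho^{\alpha-1}$. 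The pressure contribution in the BD balance is of order $\int \rho^{\gamma+\alpha-2}|\nabla\rho|^2$, and this is where the stricter range $\gamma<3\alpha-1$ in three dimensions (and the corresponding tightening in two dimensions) is used: it allows this pressure term to be absorbed into the dissipation with a constant independent of $T$.

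The crucial and most delicate step, and the main obstacle, is the time-uniform two-sided bound $C^{-1}\le\rho\le C$. I would again control $\|\mathbf v\|_{L^\infty}$ and $\|\mathbf u\|_{L^\infty}$ from the radial transport–diffusion structure of the effective velocity as in Theorem~\ref{Thm1}, and then translate these into a pointwise estimate of $\rho^{\alpha-1}$ along characteristics. The obstruction, and the reason for the shrinkage from $\alpha>0.5$ to $\alpha>0.54$ in 2D and from $\alpha>0.686$ to $\alpha>0.689$ in 3D, is that the Gr\"onwall argument must now close with $\int_0^\infty \|\mathbf v(t)\|_{L^\infty}^p\,dt$ rather than $\int_0^T$; achieving time-global integrability forces Gagliardo–Nirenberg interpolations involving the radial weight $r^{N-1}$ and the critical Sobolev embedding to be applied at their sharp thresholds, and this sharpness dictates the exponent windows in \eqref{C 2d unialpha}–\eqref{C 3d unialpha}.

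Once the uniform density bound is in hand, iteration of the higher-order estimates of Section~3 produces $\sup_{t\ge 0}\bigl(\|\rho(t)\|_{H^3}+\|\mathbf u(t)\|_{H^3}\bigr)<\infty$. For the large-time behavior I would invoke the standard lemma: if $f\ge 0$ belongs to $W^{1,1}_{\mathrm{loc}}(0,\infty)$ with $f\in L^1(0,\infty)$ and $f'\in L^1(0,\infty)$, then $f(t)\to 0$ as $t\to\infty$. Applied with $f(t)=\|\nabla\mathbf u(t)\|_{L^2}^2$, $\|\nabla^2\mathbf u(t)\|_{L^2}^2$, and $\|\nabla\rho(t)\|_{L^2}^2$, where the integrability of the derivatives is obtained by differentiating the momentum equation and the continuity equation and using the uniform $H^3$-bounds, this delivers \eqref{large time behavior2}. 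Finally, the mass identity $\int_\Omega\rho(t)\,dx=|\Omega|\,\overline{\rho_0}$, the Poincar\'e inequality applied to $\rho(t)-\overline{\rho_0}$, the decay $\|\nabla\rho(t)\|_{L^2}\to 0$, and interpolation with the uniform $H^3$-bound together give $\rho(t)\to\overline{\rho_0}$ in $C(\overline{\Omega})$.
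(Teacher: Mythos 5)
There are two genuine gaps in the proposal.

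\textbf{First, the opening claim is false for $N=3$.} You assert that \eqref{C 2d unialpha}--\eqref{C 3d unialpha} are strictly stronger than \eqref{C 2d alpha}--\eqref{C 3d alpha}, so that global existence follows at once from Theorem~\ref{Thm1}. This is correct in two dimensions, but the paper's remark immediately after Theorem~\ref{Thm2} explicitly points out that it fails in three dimensions: for $0.689<\alpha<0.7$ one has $3\alpha-1>6\alpha-3+\frac{3-5\alpha}{2n_3(\alpha)}$, so the $\gamma$-range of \eqref{C 3d unialpha} is \emph{not} contained in that of \eqref{C 3d alpha}, and Theorem~\ref{Thm1} does not supply global existence for those parameters. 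The paper has to run the uniform upper-bound argument (Propositions~\ref{Prop 3.11}--\ref{Prop C uniRT}) first and then add a separate Proposition~\ref{Prop ex 3dVT} for the $T$-dependent lower bound, before the continuation argument can close. Your proof skips this.

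\textbf{Second, the lower-bound mechanism you propose is not the one used here and is not made to work.} You say you would control $\|\mathbf{w}\|_{L^\infty}$ and $\|\mathbf{u}\|_{L^\infty}$ ``as in Theorem~\ref{Thm1}'' and then integrate along characteristics. But Theorem~\ref{Thm1}'s proof for $\frac{N-1}{N}<\alpha<1$ does not obtain $L^\infty$ bounds on the effective velocity; that argument is specific to the endpoint $\alpha=1$ (Section~4, Theorem~\ref{Thm SV 2d}), where the relation $\mathbf{w}=\mathbf{u}+\nabla\log\rho$ couples directly to $\nabla\log\rho$. For $\alpha<1$ the lower bound in Theorem~\ref{Thm1} is a Lagrangian Sobolev-embedding argument on $v=1/\rho$ driven by the $L^{2k}$ bound on $\nabla\rho^{\alpha-1+\frac{1}{2k}}$ with $\alpha<1-\frac{1}{2k}$ (Propositions~\ref{Prop C 2d VT} and \ref{Prop C 3d VT}), and it is intrinsically $T$-dependent. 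The paper gets the \emph{time-uniform} lower bound for Theorem~\ref{Thm2} by an entirely different route (Proposition~\ref{Prop rho epsilon}): one shows $\|\rho^b-\overline{\rho^b}\|_{C(\overline\Omega)}\to 0$ via $L^1$-in-time control of a functional $g$ and its derivative $g'$, then invokes Jensen's inequality $\overline{\rho^b}\ge\overline{\rho_0}^b>0$ to force the density away from vacuum for all large $t$, and splices this with the finite-time lower bound on $[0,T^*]$. Your proposal never establishes time-uniformity for the effective-velocity bound, and as written the Gr\"onwall-along-characteristics step would still produce a constant that grows with $T$. Incidentally, the shrinkage from $\alpha>0.5$ to $\alpha>0.54$ (2D) and from $0.686$ to $0.689$ (3D) is not caused by sharp Gagliardo--Nirenberg thresholds for $\|\mathbf{w}\|_{L^\infty}$; it is exactly the condition $2<n_N(\alpha)$, which is what makes the $T$-independent $L^4$ estimate on $\rho^{1/4}\mathbf{u}$ in Proposition~\ref{Prop 3.11} close.

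The final decay step of your proposal (integrability of $f$ and $f'$, Poincar\'e plus interpolation) is in the right spirit and matches the paper's averaging argument over $[t-1,t]$.
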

	\begin{rmk}
		When \eqref{C 2d unialpha} holds, the global existence of classical solutions in Theorem \ref{Thm2} is already guaranteed by Theorem \ref{Thm1}.  However, when \eqref{C 3d unialpha} holds, the global existence asserted in Theorem \ref{Thm2} is no longer covered by Theorem \ref{Thm1}, because the validity of \eqref{C 3d alpha} does not imply that of \eqref{C 3d unialpha}. Indeed, we observe that
		\begin{align*}
			3\alpha-1<6\alpha-3+\frac{3-5\alpha}{2n_3(\alpha)}\Longleftrightarrow \alpha>\alpha_{3,-}\left(\frac{5\alpha-3}{6\alpha-4}\right),
		\end{align*}
		which \textbf{fails} for \(0.689<\alpha<0.7\).
	\end{rmk}
	\begin{rmk}
		The ranges of \(\alpha\) in \eqref{C 2d unialpha} and \eqref{C 3d unialpha} are determined by the condition \(2<n_N(\alpha)\), which guarantees a time-independent estimate for \(\int_\Omega\rho|\mathbf{u}|^4 dx\).
	\end{rmk}
	
	We now comment on the analysis of the proof of Theorem \ref{Thm1}. Taking the two-dimensional case as an example, we extend the global existence result for classical solutions away from vacuum in \cite{zhang-2025} from the range \(0.8\le\alpha<1\) and $\gamma>1$ to \(0.5<\alpha<1\) and $\gamma>1$. The requirement \(\alpha<1\) is essential for deriving a positive lower bound on the density.  
	
The local well-posedness of classical solutions to the initial-boundary-value problem \eqref{0}-\eqref{0-2} away from vacuum can be established by adapting the arguments in \cite{Cho-Kim-2006,ya-2015,zpx-2014,zhang-2025}. As noted in \cite{zhang-2025}, the key to continuing this local solution globally is to obtain both an upper bound and a positive lower bound for the density. Fix \(T>0\) prior to the maximal existence time \(T^*\) of the local classical solution. First, we perform the standard energy estimate to obtain bounds on \(\|\rho^{\frac{1}{2}}\mathbf u\|_{L^\infty(0,T; L^2(\Omega))}\) and \(\|\rho\|_{L^\infty(0,T; L^\gamma(\Omega))}\). We then apply the well-known B-D entropy estimate, following \cite{bd-2003,bd-2004}, to obtain a bound on \(\|\nabla\rho^{\alpha-\frac{1}{2}}\|_{L^\infty(0,T;L^2(\Omega))}\). These estimates not only yield a bound on \(\|\rho\|_{L^\infty(0,T;L^s(\Omega))}\) for any finite \(s\), but also provide, in the radially symmetric case, the crucial \(r\)-weighted \(L^\infty\) bound \(\|\rho^{\alpha-\frac{1}{2}} r^\xi\|_{L^\infty(0,T;L^\infty(\Omega))}\) for any sufficiently small \(\xi>0\). This \(r\)-weighted estimate on the density is then used to obtain higher integrability of both the velocity field and the density gradient. To this end, for any \(\alpha\) and \(\gamma\) satisfying \eqref{C 2d alpha}, we choose a constant \(1<k\in M_{\mathrm{set}}\) depending on \(\alpha\) such that
\begin{align*}
	1-\frac{k\sqrt{2k-1}-2k+1}{k^2-2k+1}<\alpha<1-\frac{1}{2k}.
\end{align*}
The left-hand side of the inequality is equivalent to \(k<n_2(\alpha)\), which ensures that the bound \(\|\rho^{\frac{1}{2k}}\mathbf{u}\|_{L^\infty(0,T;L^{2k}(\Omega))}\) can be obtained. Next, for technical reasons, we introduce the set \(M_{\mathrm{set}}\) to obtain a bound on \(\|\nabla\rho^{\alpha-1+\frac{1}{2k}}\|_{L^\infty(0,T;L^{2k}(\Omega))}\). The \(L^{2k}\) estimate for the density gradient immediately gives an upper bound for the density. Finally, instead of estimating the lower bound of the density in Eulerian coordinates as in \cite{zhang-2025}, we derive a positive lower bound of the density in Lagrangian coordinates by utilizing the bound on \(\|\nabla\rho^{\alpha-1+\frac{1}{2k}}\|_{L^\infty(0,T;L^{2k}(\Omega))}\)with \(\alpha<1-\frac{1}{2k}\), thereby avoiding the difficulty of estimating \(\norm{\rho^{-1}}_{L^\infty(0,T;L^p(\Omega))}\) for some $p\ge1$ in Eulerian coordinates.
	
Another key point in continuing this local solution to a global one is to obtain higher-order estimates for the density and the velocity. Indeed, the estimates that were established while deriving the upper and lower bounds of the density, namely
\begin{align*}
	\sup_{0\leq t\leq T}\Bigl(\norm{\mathbf{u}}_{L^{2k}(\Omega)}&+\norm{\rho}_{L^\infty(\Omega)}+\norm{\rho^{-1}}_{L^\infty(\Omega)}+\norm{\nabla\rho}_{L^2(\Omega)}+\norm{\nabla\rho}_{L^{2k}(\Omega)}\Bigr)\\
	&+\int_0^T\left(\norm{\nabla\rho}_{L^2(\Omega)}^2+\norm{\nabla\rho}_{L^{2k}(\Omega)}^{2k}+\norm{\nabla\mathbf{u}}_{L^2(\Omega)}^2\right)dt\leq C(T),
\end{align*}
already suffices to close the higher-order estimates of both the density and the velocity fields. We can therefore apply a standard continuation argument to conclude that \(T^*=\infty\), which implies that the classical solution exists globally in time.

We now comment on the analysis of the proof of Theorem \ref{Thm2}. Taking the two-dimensional case as an example again, the global existence of the classical solution is guaranteed by Theorem \ref{Thm1}, and the key to obtaining the large-time behavior of the solution is to derive time-independent upper and lower bounds for the density. To this end, we first obtain a time-independent bound for \(\|\rho^{\frac14}\mathbf{u}\|_{L^\infty(0,T;L^4(\Omega))}\) under the condition \(2<n_2(\alpha)\), which is guaranteed by \eqref{C 2d unialpha}. We then use the dissipation estimate of the velocity field to obtain time-independent bounds on \(\|\nabla\rho^{\alpha-\frac{3}{4}}\|_{L^\infty(0,T;L^4(\Omega))}\) and \(\|\nabla\rho^{\frac{\gamma+3\alpha-3}{4}}\|_{L^4(\Omega\times(0,T))}\), which directly yields a uniform upper bound for the density. As shown in \cite{Li-2008}, this uniform \(L^4\) estimate for the density gradient, together with the \(L^4\) space-time dissipation estimate, suffices to guarantee that the density converges to its equilibrium state. Hence the density possesses a uniform positive lower bound. With the uniform estimate
\begin{align*}
	\sup_{0\leq t<\infty}\Bigl(\norm{\mathbf{u}}_{L^4(\Omega)}&+\norm{\rho}_{L^\infty(\Omega)}+\norm{\rho^{-1}}_{L^\infty(\Omega)}+\norm{\nabla\rho}_{L^2(\Omega)}+\norm{\nabla\rho}_{L^4(\Omega)}\Bigr)\\
	&+\int_0^\infty\left(\norm{\nabla\rho}_{L^2(\Omega)}^2+\norm{\nabla\rho}_{L^{4}(\Omega)}^{4}+\norm{\nabla\mathbf{u}}_{L^2(\Omega)}^2\right)dt\leq C
\end{align*}
at hand, repeating the higher-order estimates, we obtain time-independent bounds on \(\|\nabla\mathbf {u}\|_{L^\infty(0,T;L^2(\Omega))}\) and \(\|\sqrt\rho\dot{\mathbf u}\|_{L^2(\Omega\times(0,T))}\). Taking a step further, we show that the bounds on \(\|\sqrt{\rho}\dot{\mathbf{u}}\|_{L^\infty(0,T;L^2(\Omega))}\), \(\|\mathbf{u}\|_{L^\infty(0,T;H^2(\Omega))}\), and \(\|\nabla\dot{\mathbf{u}}\|_{L^2(\Omega\times(0,T))}\) are independent of time. These estimates directly yield the large-time behavior \eqref{large time behavior2}.
	
The proofs of Theorem \ref{Thm1} and Theorem \ref{Thm2} in three dimensions are essentially similar to those in two dimensions, but require more involved calculations and a finer analysis of the exponents.

\subsection{Global classical solution away from vacuum including viscous shallow water equations \((\alpha=1)\)}
In this subsection, we consider the global existence and large‑time behavior of classical solutions away from vacuum to the initial‑boundary value problem \eqref{0}–\eqref{0-2} for the endpoint case \(\alpha = 1\) with arbitrarily large initial data. In particular, our results cover the physically important viscous Saint‑Venant model that describes shallow water motion. 

We present the result concerning the global existence and large‑time behavior of classical solutions in two dimension.
\begin{thm}[Global classical solutions and large-time behavior]\label{Thm SV 2d}
			Let $N=2$. Assume that \((\alpha,\gamma)\) satisfies
		\begin{align}
			\alpha=1,\quad \gamma\ge\frac{3}{2}\label{SV-2gamma}
		\end{align}
		and that the radially symmetric initial data $(\rho_0,\mathbf{u}_0)$ satisfies \eqref{C initial data}. Then the initial-boundary-value problem \eqref{0}–\eqref{0-2} admits a unique global radially symmetric classical solution that possesses all the properties stated in Theorem \ref{Thm2}.
	\end{thm}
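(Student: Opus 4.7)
The plan follows the bootstrap scheme of Theorems \ref{Thm1}--\ref{Thm2}: extend the local classical solution globally via uniform upper and positive lower bounds on $\rho$, the only essentially new step being the lower bound at $\alpha=1$. The energy identity and the BD entropy inequality (still valid at $\alpha=1$) supply
\[
\sup_{0\le t\le T}\Bigl(\|\sqrt{\rho}\,\mathbf u\|_{L^2}+\|\rho\|_{L^\gamma}+\|\nabla\sqrt\rho\|_{L^2}\Bigr)+\int_0^T\!\|\nabla\rho^{(\gamma-1)/2}\|_{L^2}^2\,dt\le C,
\]
and a Mellet--Vasseur-type $L^{2k}$ moment for $\mathbf u$---admissible for arbitrarily large $k$ since $n_2(1)=+\infty$---together with an $L^{2k}$ estimate on $\nabla\rho^{1-\frac{1}{2k}+\varepsilon}$ yield the uniform upper bound $\rho\le C(T)$ in exactly the same manner as before.

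\textbf{The effective velocity at $\alpha=1$.} For $\alpha<1$ the positive lower bound came from $\|\nabla\rho^{\alpha-1+1/(2k)}\|_{L^{2k}}$ with $\alpha-1+1/(2k)<0$; at $\alpha=1$ this exponent is no longer negative and the argument fails. I would therefore introduce the BD effective velocity
\[
\mathbf v=\mathbf u+\nabla\log\rho.
\]
Adding the momentum equation divided by $\rho$ to the gradient of $\partial_t\log\rho+\mathbf u\cdot\nabla\log\rho=-\operatorname{div}\mathbf u$, and using that in the radial setting $\nabla\mathbf u$ is symmetric with $\Delta\mathbf u=\nabla\operatorname{div}\mathbf u$, the viscous contributions cancel exactly and $\mathbf v$ solves the first-order transport equation
\[
\partial_t\mathbf v+(\mathbf u\cdot\nabla)\mathbf v+\frac{\gamma}{\gamma-1}\nabla\rho^{\gamma-1}=0.
\]
Testing against $|\mathbf v|^{2p-2}\mathbf v$ and letting $p\to\infty$ yields $\|\mathbf v(t)\|_{L^\infty}\le\|\mathbf v_0\|_{L^\infty}+C\!\int_0^t\!\|\nabla\rho^{\gamma-1}\|_{L^\infty}\,ds$, and the assumption $\gamma\ge 3/2$ enters precisely here: it makes $|\nabla\rho^{\gamma-1}|\le C\rho^{\gamma-3/2}|\nabla\rho^{1/2}|\le C|\nabla\rho^{1/2}|$ under the already established upper bound on $\rho$, after which the 2D Sobolev embedding $W^{1,2k}\hookrightarrow L^\infty$ (for $k>1$, available since $n_2(1)=+\infty$) applied to the BD-type $L^{2k}$ gradient bound produces an integrable $L^\infty$ control of the pressure forcing. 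A parallel $L^\infty$ estimate for $\mathbf u$ follows by testing the momentum equation with $|\mathbf u|^{2p-2}\mathbf u$ and invoking the Mellet--Vasseur moment bound together with the viscous dissipation.

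\textbf{Lower bound on density and closure.} Combining the two $L^\infty$ estimates yields
\[
\|\nabla\log\rho(t)\|_{L^\infty}\le\|\mathbf v(t)\|_{L^\infty}+\|\mathbf u(t)\|_{L^\infty}\le C(T).
\]
Writing the continuity equation as $\frac{D}{Dt}\log\rho=-\operatorname{div}\mathbf u$ and integrating along the flow of $\mathbf u$, with $\int_0^t\!\|\operatorname{div}\mathbf u\|_{L^\infty}\,ds$ controlled via $\operatorname{div}\mathbf u=\operatorname{div}\mathbf v-\Delta\log\rho$ and the bounds above, produces the positive lower bound $\rho(x,t)\ge C(T)^{-1}$. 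With $0<C^{-1}\le\rho\le C$ secured, the higher-order estimates, the time-independent closure of the $L^4$ velocity and density-gradient bounds, and the convergences $\|\rho(t)-\overline{\rho_0}\|_{C(\bar\Omega)}\to 0$ and \eqref{large time behavior2} follow exactly as in the proof of Theorem \ref{Thm2}.

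\textbf{Main obstacle.} The decisive point is closing the $L^\infty$ estimate for $\mathbf v$: since the parabolic smoothing disappears entirely at $\alpha=1$, $\mathbf v$ is transported rather than diffused and its $L^\infty$ norm can only be controlled by propagation along characteristics plus forcing. The sharp threshold $\gamma\ge 3/2$ is forced upon the analysis at precisely this step, reflecting the delicate balance between the BD-type gradient estimates and the pressure-driven forcing in the $\mathbf v$-equation.
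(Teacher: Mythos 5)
Your overall architecture is correct — use the $n_2(1)=+\infty$ degree of freedom to recover the upper bound $\rho\le C(T)$ as in Theorem \ref{Thm1}, introduce the effective velocity $\mathbf v=\mathbf u+\nabla\log\rho$ which satisfies a pure transport equation, get an $L^\infty$ bound on $\mathbf v$ and then on $\mathbf u$, deduce $\|\nabla\log\rho\|_{L^\infty}\le C(T)$, and close the lower bound in Lagrangian coordinates. This is exactly the paper's scaffolding (Section 4). However, the step where you close the $L^\infty$ estimate for $\mathbf v$ has a genuine gap. You write $\|\mathbf v(t)\|_{L^\infty}\le\|\mathbf v_0\|_{L^\infty}+C\int_0^t\|\nabla\rho^{\gamma-1}\|_{L^\infty}\,ds$ and then try to control $\|\nabla\rho^{\gamma-1}\|_{L^\infty}$ by $|\nabla\rho^{\gamma-1}|\le C|\nabla\rho^{1/2}|$ followed by a Sobolev embedding applied to ``the BD-type $L^{2k}$ gradient bound.'' But the BD estimates deliver $\nabla\rho^{\alpha-\frac12}\in L^\infty_tL^2_x$ and $\nabla\rho^{\alpha-1+\frac1{2k}}\in L^\infty_tL^{2k}_x$ — they put $\rho^{\text{power}}$ in $W^{1,2k}$, not in $W^{2,2k}$, so $W^{1,2k}\hookrightarrow L^\infty$ controls $\rho^{\text{power}}$ in $L^\infty$ but gives nothing on $\nabla\rho^{\text{power}}$ in $L^\infty$. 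Alternatively one might try $\nabla\rho^{\gamma-1}=(\gamma-1)\rho^{\gamma-1}(\mathbf v-\mathbf u)$, but bounding that in $L^\infty$ is precisely what you are trying to prove, so the estimate becomes circular.

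The paper resolves this at the level of the $w^{2n-1}$ energy identity, not at the level of crude $L^\infty$ forcing. Writing $r\partial_y\rho^\gamma=\gamma\rho^{\gamma-1}(w-u)$ (the identity $\nabla\log\rho=\mathbf w-\mathbf u$ inserted \emph{inside} the weak formulation) splits the forcing into a good-sign term $\gamma\rho^{\gamma-1}w^{2n}\ge 0$ and a cross term $\gamma\rho^{\gamma-1}uw^{2n-1}$; here $\gamma\ge\frac32$ is used exactly to make $\rho^{\gamma-3/2}$ bounded so that the cross term is controlled by $\|\sqrt\rho u\|_{L^\infty_x}$, which in turn lies in $L^2_t$ by the two-dimensional radial embedding of Lemma \ref{Lem 2D L inf} combined with the $L^4$ moment/gradient estimates. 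This last ingredient — $\|\sqrt\rho\mathbf u\|_{L^2(0,T;L^\infty)}\le C(T)$ — is what makes the Gronwall argument close, and it is absent from your proposal. The parallel $L^\infty$ estimate for $\mathbf u$ in the paper also relies on the same substitution $r\partial_y\rho=w-u$ to turn the pressure gradient into $\gamma\rho^{\gamma-1}(|w|+|u|)$, rather than on Mellet--Vasseur moments and viscous dissipation alone as you suggest; without the already-established $L^\infty$ bound on $w$ that step does not go through either.
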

\begin{rmk}
    Theorem \ref{Thm SV 2d} covers the viscous Saint-Venant model for the shallow water motion with $N=2, \alpha=1$ and $\gamma=2$. 
\end{rmk}
\begin{rmk}
    The global existence of classical solutions with arbitrarily large initial data for the 2D viscous Saint‑Venant system in the whole space, under both far‑field vacuum and non‑vacuum conditions, will be reported in the future work \cite{HMZ-SV-2025}.
\end{rmk}

We now comment on Theorem \ref{Thm SV 2d}. Theorem \ref{Thm SV 2d} establishes the global existence of classical solutions with large initial data away from vacuum, as well as the large-time behavior of the solutions, for the initial-boundary value problem \eqref{0}--\eqref{0-2} in the two-dimensional endpoint case $\alpha= 1$. In particular, for $\gamma=2$, this is the first global classical solution for the viscous Saint‑Venant model in higher dimensions for arbitrarily large initial data. Indeed, the difficulty in the endpoint case lies in obtaining a positive lower bound for the density. To this end, we first utilize the special structure of two-dimensional radial symmetry to obtain an estimate for $\|\sqrt{\rho}\mathbf{u}\|_{L^2(0,T;L^\infty(\Omega))}$. Using this estimate, we derive the $L^\infty_{t,x}$ bound for the so-called effective velocity $\mathbf{w} = \mathbf{u}+\nabla\log\rho$.
Then, instead of using velocity field estimates to bound the effective velocity as in previous works, we obtain an \(L^\infty_{t,x}\) bound on the velocity field itself from the \(L^\infty_{t,x}\) bound on the effective velocity. This immediately yields an \(L^\infty_{t,x}\) bound on \(\nabla\log\rho\), which is the key to obtaining a positive lower bound for the density in Lagrangian coordinates. After obtaining both the upper and positive lower bounds on the density, the global existence of classical solutions and their large-time behavior follow by exactly the same arguments as for the case \(\alpha<1\).

It is worth noting that for $\alpha < 1$, global classical solutions with large initial data can be established in both two and three dimensions, and in the two-dimensional case, only $\gamma > 1$ is required. In contrast, for the critical case $\alpha = 1$, the result is only available in two dimensions, and the constraint on the adiabatic exponent $\gamma$ is tightened to $\gamma \ge 3/2$. The fundamental reason lies in the fact that obtaining a positive lower bound for the density is considerably more challenging in the critical case compared to non-endpoint cases. In our approach, a critical embedding in the two-dimensional spherically symmetric setting is employed, which constitutes the key obstacle preventing the extension of this endpoint result to three dimensions.
	
	\subsection{Global weak solutions allowing vacuum \((\alpha\ge1)\)}
    As shown in the previous subsection, the condition \(\alpha<1\) in Theorem \ref{Thm1} arises from estimating the positive lower bound of the density, which guarantees the regularity of the velocity field. A natural question therefore arises: for \(\alpha\ge1\), can one still obtain global weak solutions with bounded density that allow vacuum? This is precisely the goal of the present subsection.
    
		In this subsection we consider the global existence of radially symmetric weak solutions that allow vacuum, as well as the finite-time vanishing of vacuum for such weak solutions.  We prescribe radially symmetric initial data \(\rho_0(x)=\rho_0(r)\), \(\mathbf{m}_0(x)=m_0(r)\frac{\mathbf{x}}{r}\) satisfying
	\begin{align}\label{2d weak initial}
		\begin{split}
			&0\leq \rho_0\in L^\gamma(\Omega),\quad\int_\Omega \rho_0 dx>0,\quad \nabla\rho_0^{\alpha-1+\frac{1}{2q}}\in L^{2q}(\Omega),\\ &\frac{|\mathbf{m}_0|^2}{\rho_0}\in L^1(\Omega),\quad \frac{|\mathbf{m}_0|^{2p}}{\rho_0^{2p-1}}\in L^1(\Omega),\quad \mathbf{m}_0=0, \text{ a.e. on }\left\{\rho_0=0\right\},
		\end{split}
	\end{align}
	where \(1<p\in\mathbb{R}\) and \(q\in M_{set}\).
	Next, we give the definition of a weak solution to the initial-boundary-value problem \eqref{0}-\eqref{0-2}.
	\begin{defi}[Global weak solution]\label{Def weak sol}
		A radially symmetric pair \((\rho,\mathbf{u})\) is called a global weak solution to the initial-boundary-value problem \eqref{0}-\eqref{0-2} if
		\begin{enumerate}
			\item \(\rho\ge 0\) a.e. and
			\begin{align}\label{weak regularity}
				\begin{split}
					&\rho\in L^\infty(\Omega\times(0,T)),\quad \nabla \rho^{\alpha-\frac{1}{2}}\in L^\infty(0,T;L^{2}(\Omega)),\quad \nabla \rho^{\alpha-1+\frac{1}{2q}}\in L^\infty(0,T;L^{2q}(\Omega)),\\
					&\sqrt{\rho}\mathbf{u}\in L^\infty(0,T;L^2(\Omega)),\quad \rho^\alpha\nabla \mathbf{u}\in L^2(0,T;W^{-1,1}_{\rm{loc}}(\Omega));
				\end{split}
			\end{align}
			\item for any \(t_2\ge t_1\ge 0\) and any \(\zeta\in C^1(\overline{\Omega}\times[t_1,t_2])\), the continuity equation holds in the following sense:
			\begin{align}\label{mass equ weak}
				\int_\Omega \rho\zeta dx\left|_{t_1}^{t_2}\right.=\int_{t_1}^{t_2}\int_\Omega (\rho\zeta_t+\sqrt{\rho}\sqrt{\rho}\mathbf{u}\cdot\nabla\zeta)dxdt;
			\end{align}
			\item for any \(t_2\ge t_1\ge 0\), any \(\phi\in C^1(\overline{\Omega}\times[t_1,t_2])\), and any $b\ge \alpha$, the renormalized continuity equation holds in the following sense:
			\begin{align}\label{rnm mass equ}
				\begin{split}
					\int_\Omega \rho^b\phi dx\left|_{t_1}^{t_2}\right.=\int_{t_1}^{t_2}\int_\Omega\left( \rho^b\partial_t \phi +\frac{2b(b-1)}{2b-1}\sqrt{\rho}\mathbf{u}\cdot\nabla \rho^{b-\frac{1}{2}}\phi+b\rho^{b-\frac{1}{2}}\sqrt{\rho}\mathbf{u}\cdot\nabla\phi\right) dxdt;
				\end{split}
			\end{align}
			\item furthermore, it holds that
			\begin{align}
				\rho \in C(\overline{\Omega}\times[0,T]);
			\end{align}
			\item for any \(\bm{\psi}=(\psi_i)_{i=1}^N\) with \(\psi_i\in C^2(\overline{\Omega}\times [0,T])\), satisfying \(\bm{\psi}(x,t)|_{\partial\Omega}=0\) and \(\bm{\psi}(\cdot, T)=0\), the momentum equation holds in the following sense:
			\begin{align}\label{mom equ weak}
				\begin{split}
					\int_{\Omega}\mathbf{m}_0\cdot\bm{\psi}(\cdot,0)dx+\int_0^T\int_\Omega[\sqrt{\rho}\sqrt{\rho}\mathbf{u}\cdot\partial_t\bm{\psi}+\sqrt{\rho}\mathbf{u}\otimes\sqrt{\rho}\mathbf{u}:\nabla\bm{\psi}]dxdt\\
					+\int_0^T\int_\Omega \rho^\gamma\div\bm{\psi} dxdt-\langle \rho^\alpha \nabla \mathbf{u},\nabla\bm{\psi}\rangle-(\alpha-1)\langle \rho^\alpha\div\mathbf{u}, \div\bm{\psi}\rangle=0.
				\end{split}
			\end{align}
			The diffusion terms are defined by
			\begin{align}\label{diff term}
				\begin{split}
					\langle \rho^\alpha \nabla\mathbf{u},\nabla\bm{\psi}\rangle&:=-\int_0^T\int_\Omega\left( \rho^{\alpha-\frac{1}{2}}\sqrt{\rho}\mathbf{u}\cdot\Delta\bm{\psi} +\frac{2\alpha}{2\alpha-1}\nabla \rho^{\alpha-\frac{1}{2}}\cdot\nabla\bm{\psi}\cdot\sqrt{\rho}\mathbf{u}\right) dxdt,\\
					\langle \rho^\alpha \div\mathbf{u},\div\bm{\psi}\rangle&:=-\int_0^T\int_\Omega\left( \rho^{\alpha-\frac{1}{2}}\sqrt{\rho}\mathbf{u}\cdot\nabla\div\bm{\psi} +\frac{2\alpha}{2\alpha-1}\nabla \rho^{\alpha-\frac{1}{2}}\cdot\sqrt{\rho}\mathbf{u} \div\bm{\psi}\right) dxdt;
				\end{split}
			\end{align}
		\end{enumerate}
	\end{defi}
	\begin{rmk}
		As explained in \cite{Guo-2008}, we need to clarify in what sense our definition of weak solution satisfies boundary condition \eqref{0-2}. We rewrite the weak form \eqref{mass equ weak} of the continuity equation as follows:
		\begin{align}\label{yy1}
			\int_0^R \rho\zeta r^{N-1} dr|_{t_1}^{t_2}=\int_{t_1}^{t_2}\int_0^R(\rho\zeta_t+\rho u\zeta_r)r^{N-1}drdt,
		\end{align}
		where $\zeta\in C^1([0,R]\times[t_1, t_2])$. Define \(\tilde{\zeta}(r,t)=\zeta_1(t)\zeta_2(r)\), where \(\zeta_1(t)=1\) on \([t_1,t_2]\) and \(\zeta_2(r)\) is given by
		\begin{align*}
			\zeta_2(r)=\left\{
			\begin{array}{ll}
				1,& r\in[0,R-\delta],\\
				1-\frac{1}{\delta}(r-(R-\delta)),& r\in[R-\delta,R].
			\end{array}
			\right.
		\end{align*}
		Indeed, \eqref{yy1} also holds for Lipschitz continuous functions \(\zeta\). Substituting \(\tilde{\zeta}\) into \eqref{yy1} gives
		\begin{align*}
			\frac{1}{\delta}\left|\int_{t_1}^{t_2}\int_{R-\delta}^R \rho u r^{N-1}drdt\right|\leq \left|\int_{R-\delta}^R\rho(r,t_2)(\zeta_2(r)-1)r^{N-1}dr-\int_{R-\delta}^R\rho(r,t_1)(\zeta_2(r)-1)r^{N-1}dr\right|.
		\end{align*}
		Letting \(\delta\to 0\) shows that \(\rho u(R,t)=0\) in the sense of trace.
	\end{rmk}
	
	We next present the global existence result for weak solutions that allow vacuum.
	\begin{thm}[Global weak solution in 2D]\label{Thm3}
		Let $N=2$. Assume that $(\alpha,\gamma)$ satisfies
		\begin{align}\label{WI 2d alpha}
			\alpha\ge1,\quad \gamma>\max\left\{1,\alpha-\frac{1}{2}\right\}
		\end{align}
		and that the radially symmetric initial data \((\rho_{0},\mathbf{m}_{0})\) satisfies \eqref{2d weak initial}
		where \(p\) and \(q\) satisfy
		\begin{align}\label{2d pq}
			1<q<p<n_2(\alpha).
		\end{align}
		Moreover, assume that
		\begin{align}\label{WI 2d gamma}
			\gamma\ge \max\left\{\left(1-\frac{1}{2p}\right)\alpha,\alpha-1+\frac{q}{p}\right\}.
		\end{align}
		Then the initial-boundary-value problem \eqref{0}-\eqref{0-2} admits a global radially symmetric weak solution.
	\end{thm}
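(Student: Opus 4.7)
The plan is to construct $(\rho,\mathbf{u})$ as the limit of classical solutions to a family of regularized problems. Starting from $(\rho_0,\mathbf{m}_0)$ satisfying \eqref{2d weak initial}, I would mollify the initial density and lift it away from vacuum, setting
\begin{align*}
\rho_0^\varepsilon = \rho_0 * j_\varepsilon + \varepsilon,
\qquad
\mathbf{u}_0^\varepsilon = \frac{\mathbf{m}_0 * j_\varepsilon}{\rho_0^\varepsilon},
\end{align*}
and arrange the regularization so that the integrals in \eqref{2d weak initial} remain bounded uniformly in $\varepsilon$. Since $\alpha\ge 1$ is outside the scope of Theorem \ref{Thm1}, I would also add an artificial cold-pressure term of Bresch-Desjardins type $\delta\rho^{-\beta}$ that forces a positive (but $\varepsilon$-dependent) lower bound on the approximate density, together with a small elliptic perturbation localized away from the symmetry center to tame the singularity at the origin. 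For each fixed $\varepsilon$, the modified system admits a global radially symmetric classical solution $(\rho^\varepsilon,\mathbf{u}^\varepsilon)$ on $[0,T]$ by an adaptation of Theorem \ref{Thm1}.

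\textbf{$\varepsilon$-uniform estimates.} I next establish estimates independent of $\varepsilon$, following the framework sketched for Theorem \ref{Thm1} in Sect. 3. The basic energy inequality gives $\|\sqrt{\rho^\varepsilon}\mathbf{u}^\varepsilon\|_{L^\infty_t L^2} + \|\rho^\varepsilon\|_{L^\infty_t L^\gamma}\le C$, and the B-D entropy produces $\|\nabla(\rho^\varepsilon)^{\alpha-1/2}\|_{L^\infty_t L^2}\le C$; here the assumption $\gamma>\alpha-\tfrac12$ from \eqref{WI 2d alpha} absorbs the pressure contribution. I then propagate two higher-order bounds. Testing the momentum equation with $|\mathbf{u}|^{2p-2}\mathbf{u}$, where $1<p<n_2(\alpha)$, yields
\begin{align*}
\|(\rho^\varepsilon)^{\frac{1}{2p}}\mathbf{u}^\varepsilon\|_{L^\infty_t L^{2p}} \le C,
\end{align*}
the constraint $p<n_2(\alpha)$ being exactly the threshold encoded in the definition of $n_2$. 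The counterpart of the $L^{2k}$ density-gradient estimate from Theorem \ref{Thm1}, adapted to the exponent $q\in M_{\mathrm{set}}$ with $q<p$, produces
\begin{align*}
\|\nabla(\rho^\varepsilon)^{\alpha-1+\frac{1}{2q}}\|_{L^\infty_t L^{2q}} \le C.
\end{align*}
The two conditions in \eqref{WI 2d gamma} enter here: $\gamma\ge(1-\tfrac{1}{2p})\alpha$ is needed to control the pressure inside the higher-momentum estimate, while $\gamma\ge \alpha-1+q/p$ appears when the $L^{2p}$ velocity bound is fed back into the gradient estimate via H\"older's inequality. Combining the gradient bound with the radial representation and the 2D Sobolev embedding then yields the $\varepsilon$-uniform upper bound $\|\rho^\varepsilon\|_{L^\infty(\Omega\times(0,T))}\le C$.

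\textbf{Compactness and passage to the limit.} The uniform estimates together with the continuity equation and the Aubin-Lions lemma give strong convergence $\rho^\varepsilon\to\rho$ in $C(\overline{\Omega}\times[0,T])$, while $\nabla(\rho^\varepsilon)^{\alpha-1+\frac{1}{2q}}$ converges weakly-$*$ in $L^\infty_t L^{2q}$. The central difficulty is the compactness of $\sqrt{\rho^\varepsilon}\mathbf{u}^\varepsilon$, required for passing to the limit in the convective term $\sqrt{\rho^\varepsilon}\mathbf{u}^\varepsilon\otimes\sqrt{\rho^\varepsilon}\mathbf{u}^\varepsilon$. I would adapt the Mellet-Vasseur argument: the higher integrability with $p>1$ provides the extra moment that rules out concentration in vacuum regions and delivers strong convergence of $\sqrt{\rho^\varepsilon}\mathbf{u}^\varepsilon$ in $L^2(\Omega\times(0,T))$. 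For the diffusion terms, rather than passing to the limit in the ill-behaved $\rho^\alpha\nabla\mathbf{u}$, I invoke the renormalized formulation \eqref{diff term}: every factor inside it is a product of $\sqrt{\rho^\varepsilon}\mathbf{u}^\varepsilon$, $\nabla(\rho^\varepsilon)^{\alpha-1/2}$, and $(\rho^\varepsilon)^{\alpha-1/2}$, whose convergence has already been established. The pressure limit is immediate from the uniform $L^\infty$ bound and the strong convergence of $\rho^\varepsilon$. The hardest points are keeping the density upper bound independent of $\varepsilon$ for $\alpha\ge 1$ and simultaneously retaining enough integrability of $\sqrt{\rho}\mathbf u$ to close Mellet-Vasseur; this is precisely why the gap $q<p<n_2(\alpha)$ between the two exponents is imposed in \eqref{2d pq}.
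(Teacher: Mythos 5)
Your regularization scheme is genuinely different from the paper's, and the difference is where the gap lies. The paper does not add a cold-pressure term. Instead it removes a ball $B_\eta$ around the origin, works on the annulus $\Omega_\eta=\Omega\setminus B_\eta$, and adds an artificial viscosity $\eta\rho^\delta$ with an exponent $\delta$ satisfying $1-\frac{p\sqrt{2p-1}-2p+1}{p^2-2p+1}<\delta<1-\frac{1}{2p}$. This choice is not cosmetic: the $\eta$-dependent positive lower bound for the approximate density, which is what lets the approximate solution be prolonged globally, is obtained exactly as in the lower-bound argument of Theorem~\ref{Thm1}, but with $\delta$ standing in for $\alpha$ — the key estimate is on $\nabla\rho_\eta^{\delta-1+\frac{1}{2p}}$ in $L^{2p}$, and it yields a lower bound only because $\delta-1+\frac{1}{2p}<0$. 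The artificial viscosity of lower power $\delta<1$ is thus the device that transplants the $\alpha<1$ lower-bound machinery into the $\alpha\ge 1$ regime, and cutting out the ball $B_\eta$ makes the approximate problem an essentially one-dimensional boundary-value problem for which local classical solvability is standard.

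Your proposal instead regularizes by a cold pressure $\delta\rho^{-\beta}$ plus an unspecified elliptic perturbation, and then asserts that the modified system ``admits a global radially symmetric classical solution by an adaptation of Theorem~\ref{Thm1}.'' This is the gap: Theorem~\ref{Thm1} is proved only for $\alpha<1$, precisely because its positive lower bound for the density requires $\alpha<1-\frac{1}{2k}$, and a cold-pressure term does not change the viscosity exponent. Cold pressure does give an $L^1$ control on $\rho^{1-\beta}$ through the energy, but converting that into the pointwise lower bound and the higher-order estimates needed to continue a classical solution is a separate argument — that route essentially reproves a radial version of Vasseur--Yu, which is substantially harder than the annulus construction the paper uses. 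The phrase ``small elliptic perturbation localized away from the symmetry center'' is also too vague to fill this: the obstruction sits at the origin, and the paper's solution is simply to delete a neighborhood of the origin from the domain.

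Once the approximation scheme is replaced by the paper's, the remainder of your outline — energy and B--D entropy estimates, $L^{2p}$ moment via $p<n_2(\alpha)$, $L^{2q}$ density-gradient bound for $q\in M_{\mathrm{set}}$ using \eqref{WI 2d gamma} to absorb the pressure, the $\eta$-independent $L^\infty$ bound on $\rho_\eta$, Mellet--Vasseur compactness for $\sqrt{\rho_\eta}\mathbf u_\eta$, and passing to the limit in the renormalized diffusion terms \eqref{diff term} — matches the paper's argument well in both structure and the role played by each hypothesis.
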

	
		\begin{thm}[Global weak solution in 3D]\label{Thm4}
		Let $N=3$. Assume that $(\alpha,\gamma)$ satisfies
		\begin{align}\label{WI 3d alpha}
			1\leq \alpha<11.7,\quad \gamma>\max\left\{1,\alpha-\frac{1}{2}\right\}
		\end{align}
		and that the radially symmetric initial data \((\rho_{0},\mathbf{m}_{0})\) satisfies \eqref{2d weak initial}
		where \(p\) and \(q\) satisfy
		\begin{align}\label{3d pq}
			1.5<q<p<n_3(\alpha).
		\end{align}
		Moreover, assume that
		\begin{align}\label{WI 3d gamma}
				2\alpha-1\leq \gamma <3\alpha-1+\frac{\alpha-1}{2p}+\min\left\{2\alpha-1,(3\alpha-2)\left(1-\frac{1}{q}\right)\right\}.
		\end{align}
		Then the initial-boundary-value problem \eqref{0}-\eqref{0-2} admits a global radially symmetric weak solution.
	\end{thm}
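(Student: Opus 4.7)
The plan is to construct the weak solution as the limit of a sequence of globally defined classical solutions to a regularized problem, in the spirit of \cite{Guo-2008, Li-2015, Vasseur-2016, va-2022}. First I would introduce a two-parameter approximation: replace $\rho_0$ by $\rho_0+\eta$ to lift the initial density off zero, and add an artificial shear-type viscosity $-\epsilon\operatorname{div}(\chi\mathbb{D}\mathbf{u})$ with a smooth radial cut-off $\chi(r)$ supported away from the origin, so that the radial singularity at $r=0$ is not disturbed by the regularization. The resulting system lies in the non-endpoint regime covered by Theorem \ref{Thm1} (suitably adapted to the additional viscous term), yielding for each fixed $(\eta,\epsilon)$ a global radially symmetric classical solution $(\rho^{\eta,\epsilon},\mathbf{u}^{\eta,\epsilon})$ with $\rho^{\eta,\epsilon}>0$.

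The core of the argument is a chain of uniform-in-$(\eta,\epsilon)$ estimates. The basic energy identity gives $\sqrt{\rho^{\eta,\epsilon}}\mathbf{u}^{\eta,\epsilon}\in L^{\infty}_{t}L^{2}_{x}$ and $\rho^{\eta,\epsilon}\in L^{\infty}_{t}L^{\gamma}_{x}$, while the BD entropy produces $\nabla(\rho^{\eta,\epsilon})^{\alpha-\frac12}\in L^{\infty}_{t}L^{2}_{x}$ together with a dissipation control of $(\rho^{\eta,\epsilon})^{\frac{\alpha-1}{2}}\nabla\mathbf{u}^{\eta,\epsilon}$ in $L^{2}_{t,x}$. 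The three-dimensional radial symmetry combined with a weighted Sobolev embedding then yields the crucial $r$-weighted $L^{\infty}$ bound $\|(\rho^{\eta,\epsilon})^{\alpha-\frac12}r^{\xi}\|_{L^{\infty}}\leq C$ for some small $\xi>0$. Using the hypothesis $p<n_{3}(\alpha)$ I would then derive the higher-moment estimate $(\rho^{\eta,\epsilon})^{1/(2p)}\mathbf{u}^{\eta,\epsilon}\in L^{\infty}_{t}L^{2p}_{x}$, and in turn, exploiting $q\in M_{\mathrm{set}}$ with $q<p$, the density-gradient estimate $\nabla(\rho^{\eta,\epsilon})^{\alpha-1+1/(2q)}\in L^{\infty}_{t}L^{2q}_{x}$. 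The condition $q>3/2$ is exactly the three-dimensional Sobolev threshold that converts the last bound into a uniform $L^{\infty}$ control of $\rho^{\eta,\epsilon}$ and the continuity $\rho^{\eta,\epsilon}\in C(\overline{\Omega}\times[0,T])$. The explicit window for $\gamma$ in \eqref{WI 3d gamma} and the numerical cutoff $\alpha<11.7$ arise precisely at this step, from the requirement that admissible $(p,q)$ exist and that the pressure $\rho^{\gamma}$ and convective term $\rho\mathbf{u}\otimes\mathbf{u}$ be compatible with the available integrabilities.

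The passage to the limit proceeds in three stages. Pointwise convergence of $\rho^{\eta,\epsilon}$ follows from the continuity equation combined with the uniform equicontinuity implied by the preceding bounds. Strong $L^{2}$ convergence of $\sqrt{\rho^{\eta,\epsilon}}\mathbf{u}^{\eta,\epsilon}$, required to handle the convective term, is obtained through a Mellet-Vasseur type argument whose crucial input is exactly the higher-moment estimate with $p>3/2$. Finally, the diffusion terms are passed to the limit via the renormalized representation \eqref{diff term}, which transfers the $\nabla\mathbf{u}$ derivative onto the test function and hence only demands the already-established strong convergence of $\sqrt{\rho^{\eta,\epsilon}}\mathbf{u}^{\eta,\epsilon}$ and of $\nabla(\rho^{\eta,\epsilon})^{\alpha-\frac12}$; the latter is produced by an Aubin-Lions argument applied to $(\rho^{\eta,\epsilon})^{\alpha-\frac12}$, whose time derivative is controlled by the renormalized continuity equation \eqref{rnm mass equ} with $b=\alpha-\frac12$.

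The hardest part will be the interplay between the three-dimensional radial singularity at $r=0$ and the degeneracy of the viscosity at vacuum when $\alpha\ge 1$. The geometric factor $r^{-2}$ in the radial divergence amplifies the cost of any weight introduced in the density estimates, forcing the delicate balance of exponents encoded in \eqref{WI 3d alpha}-\eqref{WI 3d gamma}, and it is ultimately the tightness of this balance that produces the upper threshold $\alpha<11.7$. A secondary technical issue is ensuring that, as $\epsilon\to 0$, the vanishing artificial viscosity does not generate energy concentration near the origin; this is controlled by the placement of the cut-off $\chi$ together with the uniform $r$-weighted density bound, which supplies the tightness needed for the compactness argument to close.
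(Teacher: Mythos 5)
Your overall program (uniform energy, BD entropy, weighted $L^\infty$ density bound, higher velocity moments, density gradient, Aubin-Lions and Mellet-Vasseur compactness) matches the paper's estimate chain at the level of slogans, but the regularization you propose would not actually deliver the global approximate solutions on which everything else rests, and this is a genuine gap.

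Your approximate system lifts $\rho_0$ to $\rho_0+\eta$ and adds $-\epsilon\operatorname{div}(\chi\mathbb{D}\mathbf u)$ with $\chi$ supported \emph{away} from the origin, and you then invoke ``Theorem \ref{Thm1} suitably adapted.'' This fails for two structural reasons. First, Theorem \ref{Thm1} is valid only for $\frac{N-1}{N}<\alpha<1$: its lower-bound-on-$\rho$ mechanism (the $L^{2k}$ estimate on $\nabla\rho^{\alpha-1+\frac1{2k}}$ combined with $\alpha-1+\frac1{2k}<0$) collapses when $\alpha\ge 1$ because the exponent becomes nonnegative. Your regularization does not touch $\alpha$, so there is no route from $\rho_0+\eta>0$ at $t=0$ to $\rho^{\eta,\epsilon}>0$ for $t>0$; a strictly positive initial density plus a constant artificial viscosity is not, in multi-D, known to prevent vacuum formation. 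Second, supporting $\chi$ away from $r=0$ is precisely the wrong geometry: near the origin your equations remain unchanged and therefore retain both the degenerate viscosity $\rho^\alpha$ (for $\alpha\ge1$) and the radial singularity $r^{-2}$; nothing you have added regularizes those.

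The paper's regularization is entirely different and is precisely tailored to these obstructions. It works on the annulus $\Omega_\eta=\Omega\setminus B_\eta$ (removing the radial singularity rather than leaving it untouched) and replaces the viscosities by $\tilde\mu(\rho_\eta)=\rho_\eta^\alpha+\eta\rho_\eta^\delta$, $\tilde\lambda(\rho_\eta)=(\alpha-1)\rho_\eta^\alpha+\eta(\delta-1)\rho_\eta^\delta$ with a carefully chosen $\delta<1$. It is the $\eta\rho_\eta^\delta$ term, dominant near vacuum because $\delta<1\le\alpha$, that makes the non-endpoint lower-bound argument available for the approximate system, via an estimate on $\nabla\rho_\eta^{\delta-1+\frac{1}{2n}}$ with $\delta<1-\frac1{2n}$. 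The scaling $\rho_{\eta,0}\ge C\eta^{\frac{1}{\alpha-\delta}}$ (rather than your $\rho_0+\eta$) is dictated by matching this viscosity. Crucially, the numerical threshold $\alpha<11.7$ is not, as you suggest, an artifact of some diffuse ``balance of exponents'' against $r^{-2}$: it is exactly the condition $n_3(\alpha)>1.55$, which is what guarantees the interval
\[
1-\frac{\sqrt{4n(4n^2-n-1)+1}-6n+3}{4n^2-8n+4}<\delta<1-\frac1{2n},\qquad n<\min\{n_3(\alpha),n_3(\delta)\},
\]
is nonempty, i.e.\ that an admissible artificial viscosity exponent $\delta$ exists at all. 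Without that mechanism in your approximate system, you cannot justify global existence of the approximants, and the rest of the proof has nothing to pass to the limit. Your cut-off viscosity also introduces an extra problem you gloss over: the Sobolev embedding constants on an interior-truncated domain (or annulus) are not uniform, and the paper handles this by extending $\rho_\eta$ by a constant across $B_\eta$ and proving the quantitative bound $\rho_\eta^{\alpha-\frac12}(\eta,t)\le C\eta^{-\zeta}$ with $\zeta<3$ before applying the embedding on all of $\Omega$ — a step that has no analogue in your scheme.
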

	\begin{rmk}
		Theorem \ref{Thm3} covers the viscous Saint-Venant model for the shallow water motion with $N=2, \gamma=2$ and $\alpha=1$. 
	\end{rmk}
	\begin{rmk}
		Since \(n_2(\alpha)>1\) for any \(1\leq \alpha<\infty\) and \(n_3(\alpha)>1.55\) for any $1\leq\alpha<11.7$, we see that $\eqref{2d pq}$ and $\eqref{3d pq}$  are meaningful. 
	\end{rmk}
	\begin{rmk}
		The conditions in Theorem \ref{Thm3} and \ref{Thm4} imply that \(\rho_{0}\in L^{\infty}(\Omega)\). Since \(\gamma>\alpha-\frac{1}{2}\) and $q>1$, it follows that $\rho_0^{\alpha-1+\frac{1}{2q}}\in L^1(\Omega)$. Together with \(\nabla\rho_0^{\alpha-1+\frac{1}{2q}}\in L^{2q}(\Omega)\), the standard Sobolev embedding gives \(\rho_{0}\in L^{\infty}(\Omega)\).
	\end{rmk}
	\begin{rmk}\label{RMK 2d}
		Since the initial density is bounded, smaller \(p\) and \(q\) correspond to weaker requirements on the initial data. Indeed, when the initial data possess higher regularity, for example \(q>n_N(\alpha)\), the existence of weak solutions is guaranteed by Theorem \ref{Thm3} and \ref{Thm4}. However, the corres4ponding high regularity $\norm{\nabla\rho^{\alpha-1+\frac{1}{2q}}}_{L^\infty(0,T; L^{2q}(\Omega))}$ need not persist.
	\end{rmk}
	\begin{rmk}
		Indeed, the global existence of radially symmetric weak solutions to the initial-boundary-value problem \eqref{0}--\eqref{0-2} under the assumptions
        \[
        N=3,\quad \alpha=1,\quad 1<\gamma<3,\quad \sqrt{\rho_0}\in H^1(\Omega),\quad \frac{|\mathbf{m}_0|^2}{\rho_0}\in L^1(\Omega),
        \]
        along with additional integrability on the velocity field and natural compatibility conditions, has been established in \cite{Guo-2008}.
        Compared with their result, we impose stronger conditions on the initial data and obtain solutions with higher regularity. For instance, the densities of our weak solutions are bounded, whereas theirs are not.
	\end{rmk}
		\begin{rmk}
		For the general high-dimensional initial data,  Li-Xin \cite{Li-2015} and Vasseur-Yu \cite{Vasseur-2016} independently obtained the global existence of weak solutions with $\mu(\rho)=\rho,\lambda(\rho)=0$. Moreover, for a non-symmetric viscous term of the form $-\operatorname{div}(\mu(\rho)\nabla \mathbf{u})-\nabla(\lambda(\rho)\operatorname{div}\mathbf{u})$, the work \cite{Li-2015} also considers viscosities satisfying the BD relation $\mu(\rho)=\rho^\alpha$ and $\lambda(\rho)=(\alpha-1)\rho^\alpha$, under the following conditions on $\alpha$ and $\gamma$: for $N=2$, $\alpha > \frac{1}{2}$, $\gamma>1$ and $\gamma \ge 2\alpha - 1$; for $N=3$, $\frac{3}{4} \le \alpha < 2$ and
\[
\gamma \in 
\begin{cases} 
(1, 6\alpha - 3), & \text{if } \alpha \in [3/4, 1], \\ 
[2\alpha - 1, 3\alpha - 1], & \text{if } \alpha \in (1, 2).
\end{cases}
\]
Later, Bresch-Vasseur-Yu \cite{va-2022} considered a physical symmetric viscous term $-\operatorname{div}(\mu(\rho) \mathbb{D}\mathbf{u} )-\nabla(\lambda(\rho)\operatorname{div}\mathbf{u})$ and obtained the global existence of entropy weak solutions in a periodic domain $\mathbb{T}^3$, which cover the case of the viscosities satisfy the BD relation with $\mu(\rho)= \rho^\alpha$ where $\frac23<\alpha<4$. Further, \cite{ab-2020} remove the restriction about the upper bound of $\alpha$.
	\end{rmk}
The following theorem shows that the vacuum region vanishes in finite time for the weak solution.
		\begin{thm}[Vanishing of vacuum states]\label{Thm5}
		Let \(N=2\) or \(N=3\). Assume that $(\alpha,\gamma)$ satisfies 
		\begin{align}
			&N=2, \quad 1\leq \alpha<7.46,\quad\gamma>1,\quad\gamma\ge 2\alpha-1;\label{2d van}\\
			&N=3,\quad 1\leq \alpha<5.81,\quad  \gamma>1,\quad2\alpha-1\leq \gamma<3\alpha-1 \label{3d van}
		\end{align}
		and that  the radially symmetric initial data \((\rho_{0},\mathbf{m}_{0})\) satisfies \eqref{2d weak initial}, where \(p\) and \(q\) satisfy
		\begin{align}
			p=2,\quad \frac{N}{2}<q<2.
		\end{align}
		Then the global radially symmetric weak solution obtained in Theorem \ref{Thm3} and \ref{Thm4} has a uniformly bounded density and the vacuum state vanishes in finite time. Specifically, there exist positive constants \(\rho^+,\;T_0,\;\rho^->0\) depending only on the initial data such that
		\begin{align*}
			&\rho(x,t)\leq \rho^+,\quad(x,t)\in\Omega\times(0,\infty), \\ &\rho(x,t)\ge \rho^-, \quad(x,t)\in\Omega\times[T_0,\infty).
		\end{align*}
	\end{thm}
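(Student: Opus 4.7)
The plan is to refine the construction of the weak solution from Theorems \ref{Thm3}--\ref{Thm4} by producing, at the level of the approximate system, a family of a priori estimates that are uniform not only in the regularization parameter but also in time. Once these bounds are passed to the limit, they yield first a uniform-in-time upper bound on $\rho$ and then, via a Li--Li--Xin type ODE argument, uniform convergence of $\rho(t)$ to $\overline{\rho_0}>0$, from which the vacuum-vanishing statement is immediate. Throughout I would choose $p=2$ and $\tfrac{N}{2}<q<2$ in the regularity assumption \eqref{2d weak initial}.

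The first core step is to derive the weighted $L^4$ bound
\[
\sup_{0\le t<\infty}\int_\Omega \rho|\mathbf{u}|^4\,dx+\int_0^\infty\int_\Omega\rho^\alpha|\mathbf{u}|^2|\nabla\mathbf{u}|^2\,dx\,dt\le C,
\]
obtained by testing the (approximate) momentum equation with $|\mathbf{u}|^2\mathbf{u}$. The pressure contribution is converted, after integration by parts and use of the renormalized continuity equation \eqref{rnm mass equ} with $b=\gamma$, into something controlled by the BD-entropy term $\|\nabla\rho^{(\gamma+\alpha-1)/2}\|_{L^2}^2$, whose time integral is bounded. The condition $\gamma\ge 2\alpha-1$ in \eqref{2d van}--\eqref{3d van} is exactly what is needed for this conversion; the upper bounds $\alpha<7.46$ and $\alpha<5.81$ are the quantitative incarnations of $p=2<n_N(\alpha)$, which is precisely the requirement that the resulting Hardy/Sobolev balance in the radially symmetric setting closes.

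The second step propagates this weighted bound into a time-uniform $L^{2q}$ bound for $\nabla\rho^{\alpha-1+1/(2q)}$, following the scheme already outlined for Theorems \ref{Thm1}--\ref{Thm2}. Because $2q>N$, the Sobolev embedding $W^{1,2q}\hookrightarrow L^\infty$ together with a uniform control of $\|\rho^{\alpha-1+1/(2q)}\|_{L^{2q}}$ (a consequence of $\gamma>\alpha-1/2$ and $\rho\in L^\infty(L^\gamma)$) yields the uniform upper bound $\rho\le \rho^+$ on $\Omega\times(0,\infty)$, the first assertion of the theorem. A parallel $L^4$ space-time estimate for $\nabla\rho^{(\gamma+3\alpha-3)/4}$ can also be extracted from the dissipation, giving $\int_0^\infty\|\nabla\rho^{(\gamma+3\alpha-3)/4}\|_{L^4}^4\,dt<\infty$.

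The third step is the vacuum-vanishing argument. With $\rho$ uniformly bounded from above, $\int_0^\infty\|\nabla\mathbf{u}\|_{L^2}^2\,dt\le C$, and the $L^4$ dissipation of $\nabla\rho^{(\gamma+3\alpha-3)/4}$ in hand, I would mimic \cite{Li-2008}: derive a differential inequality of the form $\tfrac{d}{dt}\mathcal{E}(t)+\mathcal{E}(t)\le f(t)$ with $f\in L^1(0,\infty)$ for $\mathcal{E}(t)=\|\rho(t)-\overline{\rho_0}\|_{L^2(\Omega)}^2$, which forces $\mathcal{E}(t)\to 0$. Upgrading this $L^2$ decay to $C^0(\overline{\Omega})$ convergence is done by interpolation with the uniform $W^{1,2q}$ bound on $\rho^{\alpha-1+1/(2q)}$ from Step 2. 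From $\|\rho(\cdot,t)-\overline{\rho_0}\|_{C(\overline{\Omega})}\to 0$ one finds $T_0>0$ such that $\rho\ge \overline{\rho_0}/2=:\rho^->0$ on $\Omega\times[T_0,\infty)$, proving the second assertion.

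The principal obstacle is Step 1: the geometric terms $(N-1)/r$ in \eqref{Equ 2} produce singular contributions at the symmetry center, precisely the region in which vacuum may concentrate, and they must be absorbed by the degenerate dissipation $\rho^\alpha|\mathbf{u}|^2|\nabla\mathbf{u}|^2$. The sharp balance is what determines the numerical thresholds $7.46$ and $5.81$ through $p<n_N(\alpha)$, and it is where the strengthened conditions $2\alpha-1\le\gamma<3\alpha-1$ in 3D become essential. A secondary but nontrivial obstacle is Step 3: time-integrability of the dissipation does not by itself yield pointwise-in-$x$ convergence, and a delicate bootstrap using the uniform $L^{2q}$ bound on the density gradient together with Poincaré's inequality is needed to promote integrability in time into uniform decay.
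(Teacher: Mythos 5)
Your Steps 1 and 2 track the paper's strategy closely: testing the approximate momentum equation with $|\mathbf u|^2\mathbf u$ (in spherical coordinates, with $r^{N-1}u_\eta^3$), absorbing the geometric terms via $p=2<n_N(\alpha)$ and the $r$-weighted density bounds, then propagating to a time- and $\eta$-uniform $L^{2q}$ control of $\nabla\rho_\eta^{\alpha-1+1/(2q)}$ and a uniform upper bound on $\rho$ via $W^{1,2q}\hookrightarrow L^\infty$. One small slip: with $q<2$ you only obtain an $L^{2q}$ (not $L^4$) space-time estimate for the density gradient, the exponent being $\alpha-1+(\gamma-\alpha+1)/(2q)$ rather than $(\gamma+3\alpha-3)/4$; this does not affect the scheme but the constants.

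The genuine gap is in Step 3. First, the functional $\mathcal E(t)=\|\rho(t)-\overline{\rho_0}\|_{L^2}^2$ cannot be differentiated using the renormalized continuity equation \eqref{rnm mass equ}, since that identity is only available for powers $b\ge\alpha$, and you must allow $\alpha$ as large as $7.46$ (2D) or $5.81$ (3D); you therefore need to work with $g(t)=\|\rho^b-\overline{\rho^b}\|$ for $b$ large. Second, and more seriously, the Gronwall-type inequality $\tfrac{d}{dt}\mathcal E+\mathcal E\le f$ with $f\in L^1$ is not derivable from the available estimates: the dissipation you control is $\int\rho^\alpha|\nabla\mathbf u|^2$ (a velocity quantity) and $\int|\nabla\rho^{\beta}|^2$ (in integrated time), neither of which supplies a coercive term $-c\,\mathcal E$ without a positive lower bound on $\rho$, which is precisely what is being sought. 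The paper instead shows that both $\int_0^\infty g(t)\,dt$ and $\int_0^\infty|g'(t)|\,dt$ are finite — the latter by writing $g'$ in terms of $\sqrt\rho\,\sqrt\rho\,\mathbf u\cdot\nabla\rho^{b-1}$ and $\sqrt\rho\,\sqrt\rho\,\mathbf u\cdot\nabla\rho^b$, the former from the Poincaré inequality and the integrated BD dissipation — and invokes the elementary fact that a nonnegative function with $g,g'\in L^1(0,\infty)$ tends to zero. Third, the paper does \emph{not} prove $\rho(t)\to\overline{\rho_0}$ in $C(\overline\Omega)$ for the weak solution; it only shows $\|\rho^b-\overline{\rho^b}\|_{C(\overline\Omega)}\to 0$ and then applies Jensen's inequality, $\overline{\rho^b}\ge\overline{\rho_0}^{\,b}>0$, to extract the lower bound. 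Proving full convergence of $\overline{\rho^b}(t)$ requires a positive lower bound on $\rho$ to control $|\frac{d}{dt}\overline{\rho^b}|$ via $\int\rho^\alpha|\nabla\mathbf u|^2$, a step that is circular before $T_0$ is identified; the Jensen argument avoids this entirely. If you replace your Step 3 with this bounded-variation-plus-integrability argument applied to $g(t)=\|\rho^b-\overline{\rho^b}\|_{L^4}^4$ for a sufficiently large $b\ge\alpha$, and drop the claim of convergence to $\overline{\rho_0}$, the proof closes.
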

		\begin{rmk}
		Theorem \ref{Thm5} covers the viscous Saint-Venant model for the shallow water motion with $N=2, \gamma=2$ and $\alpha=1$. 
	\end{rmk}
		\begin{rmk}
		For the one-dimensional case, Li-Li-Xin \cite{Li-2008} obtained the global existence of weak solutions to \eqref{0}-\eqref{0-2} with viscous term $-(\rho^\alpha u_x)_x$ $(\alpha > 1/2)$ and proved that for any global entropy weak solution, any vacuum state must vanish within finite time. Theorem \ref{Thm5} extend such result to the high-dimensional case.
	\end{rmk}
    
	We now comment on the analysis of the proof of Theorem \ref{Thm3}. First, we introduce the approximate system from \cite{Guo-2008}:
	\begin{align*}
		\left\{
		\begin{array}{l}
			\partial_t\rho_\eta+\div(\rho_\eta\mathbf{u}_\eta)=0,\\
			\partial_t(\rho_\eta \mathbf{u}_\eta)+\div(\rho_\eta\mathbf{u}_\eta\otimes\mathbf{u}_\eta)+\nabla(\rho_\eta^\gamma)=\div(\tilde{\mu}(\rho_\eta)\nabla \mathbf{u}_\eta)+\nabla(\tilde{\lambda}(\rho_\eta)\div\mathbf{u}_\eta),\\
			(\rho_{\eta},\mathbf{u}_{\eta})|_{t=0}=(\rho_{\eta,0},\mathbf{u}_{\eta,0}),\\
			\mathbf{u}_{\eta}|_{\partial\Omega_\eta}=0
		\end{array}
		\right.
	\end{align*}
	where $t\ge0$ and $x\in \Omega_\eta=\Omega-B_\eta$. The viscosity coefficients satisfy
	\begin{align*}
		\tilde{\mu}(\rho_\eta)=\rho_\eta^\alpha+\eta\rho^\delta_\eta, \quad
		\tilde{\lambda}(\rho_\eta)=(\alpha-1)\rho_\eta^\alpha+\eta(\delta-1)\rho_\eta^\delta,
	\end{align*}
	where \(\delta\) is chosen as follows:
		\begin{align*}
			1-\frac{p\sqrt{2p-1}-2p+1}{p^2-2p+1}<\delta<1-\frac{1}{2p}.
	\end{align*}
	The initial data \((\rho_{\eta,0},\mathbf{u}_{\eta,0})\) of the approximate system satisfy
\begin{align*}
	\begin{split}
		&\rho_{\eta,0}\rightarrow \rho_0 \text{ in }L^\gamma(\Omega),\quad \nabla\rho_{\eta,0}^{\alpha-\frac{1}{2}}\rightarrow\nabla\rho_0^{\alpha-\frac{1}{2}} \text{ in }L^2(\Omega),\quad \rho_{\eta,0}\ge C\eta^{\frac{1}{\alpha-\delta}},\\
		&\nabla\rho_{\eta,0}^{\alpha-1+\frac{1}{2q}}\rightarrow  \nabla\rho_{0}^{\alpha-1+\frac{1}{2q}}\text{ in }L^{2q}(\Omega), \quad |\mathbf{m}_{\eta,0}|^{2p}/\rho_{\eta,0}^{2p-1}\rightarrow |\mathbf{m}_0|^{2p}/\rho_0^{2p-1} \text{ in }L^1(\Omega),\\
		&|\mathbf{m}_{\eta,0}|^{2}/\rho_{\eta,0}\rightarrow |\mathbf{m}_0|^{2}/\rho_0 \text{ in }L^1(\Omega), \quad \mathbf{u}_{\eta,0}|_{\partial\Omega_\eta}=0,
	\end{split}
\end{align*}
where \(C>0\) is a generic constant and $\mathbf{m}_{\eta,0}=\rho_{\eta,0}\mathbf{u}_{\eta,0}$. 

Next, we investigate the global solvability of the approximate system. Since the initial data $\rho_{\eta,0}$ are strictly positive and the approximate system stays away from the origin, the problem becomes essentially one-dimensional, and the local well-posedness of classical solutions can be established by standard arguments. As shown in \cite{Guo-2008}, the key to extending the local classical solution globally is to obtain both an upper bound and a positive lower bound for \(\rho_\eta\). Let \(T^*\) denote the maximal existence time of the classical solution and fix \(0<T<T^*\).  Below we devote ourselves to obtaining an \(\eta\)-independent upper bound and an \(\eta\)-dependent positive lower bound for \(\rho_\eta\).  We first apply the standard energy estimate and the B-D entropy estimate to obtain \(\eta\)-independent bounds for \(\|\rho_\eta\|_{L^\infty(0,T;L^\gamma(\Omega_\eta))}\) and \(\|\nabla\rho_\eta^{\alpha-\frac{1}{2}}\|_{L^\infty(0,T;L^2(\Omega_\eta))}\). Applying the standard Sobolev embedding on \(\Omega_\eta\), we can obtain a bound on \(\|\rho_\eta\|_{L^\infty(0,T;L^s(\Omega_\eta))}\) for any finite \(s\ge1\). However, this bound will depend on the domain \(\Omega_\eta\) and thus on \(\eta\). Moreover, since the annular domain does not possess the same scaling properties as a ball, we are unable to explicitly write down how the embedding constant depends on \(\eta\). To overcome this difficulty, inspired by \cite{Guo-2008}, we extend \(\rho_\eta\) constantly to the whole domain \(\Omega\) and prove that \(\rho_\eta^{\alpha-\frac{1}{2}}(\eta,t)\leq C\eta^{-\zeta}\) for some \(\zeta<2\). This yields an \(\eta\)-independent bound on \(\|\rho_\eta^{\alpha-\frac{1}{2}}\|_{L^\infty(0,T;L^1(\Omega))}\). Consequently, applying the Sobolev embedding on \(\Omega\), we obtain \(\eta\)-independent bounds on \(\|\rho_\eta\|_{L^\infty(0,T;L^s(\Omega))}\) for any finite \(s\ge1\) and the $r$-weighted $L^\infty$ estimate $\|\rho_\eta^{\alpha-\frac{1}{2}} r^\xi\|_{L^\infty(0,T;L^\infty(\Omega_\eta))}$ for any sufficiently small $\xi>0$. With these estimates at hand, we use the fact that \(p<\min\{n_2(\alpha),n_2(\delta)\}\) to obtain an \(\eta\)-independent bound on \(\|\rho_\eta^{\frac{1}{2p}}\mathbf{u}_\eta\|_{L^\infty(0,T;L^{2p}(\Omega_\eta))}\). We then obtain an \(\eta\)-independent estimate for \(\|\nabla\rho_\eta^{\alpha-1+\frac{1}{2q}}\|_{L^\infty(0,T;L^{2q}(\Omega_\eta))}\), which directly yields an \(\eta\)-independent upper bound for \(\rho_\eta\). As for the lower bound on the density, we follow the approach in \cite{Guo-2008} to derive an \(\eta\)-dependent estimate for \(\|\nabla\rho_\eta^{\delta-1+\frac{1}{2p}}\|_{L^\infty(0,T;L^{2p}(\Omega_\eta))}\). Combining this with the fact that \(\delta<1-\frac{1}{2p}\) then yields a positive lower bound for \(\rho_\eta\) that depends on \(\eta\). Therefore, standard arguments then yield higher-order estimates for the approximate solution, and a continuity argument shows that the approximate solution exists globally in time.

Finally, we investigate the stability of the global approximate solutions \((\rho_\eta, \mathbf{u}_\eta)\) defined on \(\Omega_\eta \times [0, \infty)\). We extend \(\rho_\eta\) and \(\mathbf{u}_\eta\) constantly to the whole domain \(\Omega \times [0, \infty)\) and recall the \(\eta\)-independent estimates:
\begin{align*}
	\sup_{0\le t\le T}\Big(\norm{\rho_\eta}_{L^\infty(\Omega)}&+\norm{\nabla\rho_\eta^{\alpha-\frac12}}_{L^2(\Omega)}+\norm{\nabla\rho_\eta^{\alpha-1+\frac{1}{2q}}}_{L^{2q}(\Omega)}\Big)\\
	&+\int_0^T\Big(\norm{\nabla\rho_\eta^{\frac{\gamma+\alpha-1}{2}}}_{L^2(\Omega)}^2+\norm{\nabla\rho_\eta^{\alpha-1+\frac{\gamma-\alpha+1}{2q}}}_{L^{2q}(\Omega)}^{2q}\Big)dt\leq C(T),\\
	\sup_{0\le t\le T}\int_{\Omega}\Big(\rho_\eta|\mathbf{u}_\eta|^2&+\rho_\eta|\mathbf{u}_\eta|^{2p}\Big)dx+\int_0^T\int_\Omega\Big(\rho_\eta^{\alpha}|\nabla\mathbf{u}_\eta|^2+\eta\rho_\eta^{\delta}|\nabla\mathbf{u}_\eta|^2\Big)dxdt\leq C(T).
\end{align*}
These \(\eta\)-independent estimates are stronger than those obtained in \cite{Guo-2008}. Hence, by applying the standard compactness results from \cite{Guo-2008,Mellet-2007}, we obtain a global weak solution \((\rho,\mathbf{u})\) of the original system that allows for vacuum.

We now comment on Theorem \ref{Thm4}. Indeed, the restriction \(1\leq\alpha<11.7\) stems from the limitations of the approximate system. Specifically, to close the positive lower bound on \(\rho_\eta\), we need to obtain an estimate for \(\|\nabla\rho_\eta^{\delta-1+\frac{1}{2n}}\|_{L^\infty(0,T;L^{2n}(\Omega_\eta))}\) with \(\delta<1-\frac{1}{2n}\). To achieve this, we further require the bound on \(\|\rho_\eta^{\frac{1}{2n}}\mathbf{u}_\eta\|_{L^\infty(0,T;L^{2n}(\Omega_\eta))}\), which in turn demands \(n<\min\{n_3(\alpha),n_3(\delta)\}\). The constraints on \(n\) and \(\delta\) thus reduce to
\begin{align*}
	1-\frac{\sqrt{4n(4n^2-n-1)+1}-6n+3}{4n^2-8n+4}<\delta<1-\frac{1}{2n}.
\end{align*}
From the inequalities above, one can compute that the range of \(\delta\) is non-empty provided \(n\ge1.55\). Consequently, to ensure that a suitable artificial viscosity can be added, \(\alpha\) must satisfy \(1.55<n_{3}(\alpha)\), a condition guaranteed by \(1\le\alpha<11.7\).

We now comment on Theorem \ref{Thm5}. As in the estimates for the global classical solution, we first show that \(\|\rho_\eta^{\frac{1}{4}}\mathbf{u}_\eta\|_{L^\infty(0,T; L^4(\Omega_\eta))}\) is bounded independently of \(T\) and \(\eta\), provided that \(2<\min\{n_N(\alpha),n_N(\delta)\}\). We then exploit the dissipation estimate of the velocity field to obtain bounds on \(\|\nabla\rho_\eta^{\alpha-1+\frac{1}{2q}}\|_{L^\infty(0,T;L^{2q}(\Omega_\eta))}\) and \(\|\nabla\rho_\eta^{\alpha-1+\frac{\gamma-\alpha+1}{2q}}\|_{L^{2q}(\Omega_\eta\times(0,T))}\) that are independent of both \(T\) and \(\eta\), which directly yields a uniform upper bound on \(\rho_\eta\) independent of \(T\) and \(\eta\). Therefore, standard compactness arguments show that \(\rho\) has a uniform upper bound and that the bounds on \(\|\nabla\rho^{\alpha-1+\frac{1}{2q}}\|_{L^\infty(0,T;L^{2q}(\Omega))}\) and \(\|\nabla\rho^{\alpha-1+\frac{\gamma-\alpha+1}{2q}}\|_{L^{2q}(\Omega\times(0,T))}\) are independent of $T$. Since \(2q>N\), arguments similar to those in \cite{Li-2008} show that the uniform \(L^{2q}\) estimate for the density gradient, together with the corresponding \(L^{2q}\) space-time dissipation estimate, suffice to guarantee that the density converges to its equilibrium state. Consequently, the vacuum state of the weak solution vanishes in finite time.

\subsection{Global weak solutions with possible vacuum at the origin $(\alpha=1)$}
In this section, we investigate whether the weak solution to the initial-boundary-value problem \eqref{0}–\eqref{0-2} with $\alpha = 1$ develops a vacuum in finite time, given that the initial data is away from vacuum. The following Theorem \ref{T_2} is our main result.
\begin{thm}\label{T_2}
				Let \(N=2\) or \(N=3\). Assume that $(\alpha,\gamma)$ satisfies 
	\begin{align}\label{wii alpha}
		\alpha=1,\quad \gamma>1,
	\end{align}
	and that the radially symmetric initial data \((\rho_{0},\mathbf{u}_{0})\) satisfies, for any $n\in\mathbb{N}^+$, 
	\begin{equation}
		\begin{cases}
			0<\underline{\rho_0}\leq \rho_0(x)\leq\overline{\rho_0},\ \ \rho_0\in W^{1,2q}(\Omega),\ \ \nabla\rho_0\in L^{\infty}(\Omega\backslash B_{\frac{1}{n}}) ,\\
			\mathbf{u}_0\in L^{2p}(\Omega)\cap H^1(\Omega\backslash B_{\frac{1}{n}}),\ \ \mathbf{u}_0|_{\partial\Omega}=0,\label{1'}
		\end{cases}
	\end{equation}
	where \(\overline{\rho_0}\) and \(\underline{\rho_0}\) are positive constants and \(p\) and \(q\) satisfy
	\begin{align*}
		\frac{N}{2}<q<p<\infty,\quad q\in M_{set}.
	\end{align*}
	Moreover, assume that
	\begin{align}\label{wii gamma}
		\begin{split}
			&N=2,\quad 1+\frac{1}{p}\leq \gamma;\\
			&N=3,\quad 1+\frac{1}{p}\leq \gamma<3-\frac{1}{q}.
		\end{split}
	\end{align}
	Then the initial-boundary-value problem \eqref{0}-\eqref{0-2} possesses a global radially symmetric weak solution \((\rho,\mathbf{u})\) in the sense of Definition \ref{Def weak sol} that satisfies the following:
	\begin{enumerate}
		\item There exists an upper semicontinuous curve $\underline{r}(\cdot):[0,\infty)\to[0,\infty)$ such that, for any $t\ge 0$,
		\begin{align*}
			\rho(x,t)r^{N-1} = 0,\quad 0\leq |x|\leq \underline{r}(t);  \\
			\rho(x,t) > 0,\quad \underline{r}(t)<|x|\leq R.
		\end{align*}
		\item Let $\mathcal{F}$ be the set
		\begin{align*}
			\mathcal{F}:=\{(x,t):t\ge 0 \text{ and }\underline{r}(t)<|x|\leq R\}.
		\end{align*}
		 For any $(x,t)\in\mathcal{F}$, there exist small positive constants $\tilde{r}$ and $\tilde{t}$ such that
		\begin{align*}
		\Omega\backslash B_{|x|-\tilde{r}}\times[(t-\tilde{t})_+, t+\tilde{t}] \subset\mathcal{F}
		\end{align*}
		and the pair $(\rho,\mathbf{u})$ is H\"{o}lder continuous on $\Omega\backslash B_{|x|-\tilde{r}}\times[(t-\tilde{t})_+, t+\tilde{t}]$ and satisfies
		\begin{align}
			\begin{split}
				&\rho\in L^\infty((t-\tilde{t})_+,t+\tilde{t}; W^{1,\infty}(\Omega\backslash B_{|x|-\tilde{r}}));\\
				& \partial_t\rho\in L^\infty((t-\tilde{t})_+,t+\tilde{t}; L^2(\Omega\backslash B_{|x|-\tilde{r}}));\\
				&\mathbf{u}\in L^\infty((t-\tilde{t})_+,t+\tilde{t}; H^1(\Omega\backslash B_{|x|-\tilde{r}}))\cap L^2((t-\tilde{t})_+,t+\tilde{t}; H^2(\Omega\backslash B_{|x|-\tilde{r}}));\\
				&\partial_t \mathbf{u}\in L^2((t-\tilde{t})_+,t+\tilde{t}; L^2(\Omega\backslash B_{|x|-\tilde{r}})).
			\end{split}
		\end{align}
		The Navier-Stokes equations \eqref{0} hold almost everywhere on \(\Omega\backslash B_{|x|-\tilde{r}}\times[(t-\tilde{t})_+, t+\tilde{t}]\).
		\item It holds that
		\begin{align}
			\underline{r}(t)\leq \left(R^N-\frac{M_0^{\frac{\gamma}{\gamma-1}}}{\omega_N((\gamma-1)E_0)^{\frac{1}{\gamma-1}}}\right)^{\frac{1}{N}},
		\end{align}
		where \(\omega_N\) denotes the measure of the unit ball in \(\mathbb R^N\) and
		\begin{align*}
			M_0:=\int_\Omega \rho_0(x)dx,\quad E_0:=\frac{1}{2}\int_\Omega\rho_0(x)|\mathbf{u}_0(x)|^2dx+\frac{1}{\gamma-1}\int_\Omega\rho_0^{\gamma}(x)dx.
		\end{align*}
	\end{enumerate}
	\end{thm}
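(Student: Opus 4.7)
The plan is to construct the weak solution as the limit, as $n\to\infty$, of smooth radially symmetric solutions on shrinking annular domains $\Omega_n := \Omega \setminus \overline{B_{1/n}}$. I would regularize the initial data to produce $(\rho_{0,n}, \mathbf{u}_{0,n})$ on $\Omega_n$ satisfying $\rho_{0,n} \ge \underline{\rho_0}/2$, $\mathbf{u}_{0,n}|_{\partial\Omega_n}=0$, converging to $(\rho_0,\mathbf{u}_0)$ in $W^{1,2q}(\Omega)$ and $L^{2p}(\Omega)$ respectively, together with $H^1$-convergence of $\mathbf{u}_{0,n}$ on compact subsets of $\Omega\setminus\{0\}$ (which is available by hypothesis \eqref{1'}). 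On each $\Omega_n$ the inner boundary is bounded away from the symmetric center, radial symmetry reduces the problem to an essentially one-dimensional one on $(1/n,R)$ with smooth boundary and strictly positive initial density, so the strategy of Theorem \ref{Thm SV 2d}---in particular the $L^\infty$-bound on the effective velocity $\mathbf{w}=\mathbf{u}+\nabla\log\rho$ producing a positive lower bound on the density---furnishes a global classical solution $(\rho_n,\mathbf{u}_n)$ on $\Omega_n\times[0,\infty)$ with $n$-dependent upper and lower bounds on $\rho_n$.

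\textbf{Uniform estimates and the cut-off strategy.} The core task is to derive two layers of $n$-independent estimates. For the \emph{global} estimates, the energy identity controls $\norm{\sqrt{\rho_n}\mathbf{u}_n}_{L^\infty_t L^2_x}$, $\norm{\rho_n}_{L^\infty_t L^\gamma_x}$, and the dissipation $\norm{\rho_n^{1/2}\nabla\mathbf{u}_n}_{L^2_{t,x}}$; the BD entropy at $\alpha=1$ controls $\norm{\nabla\sqrt{\rho_n}}_{L^\infty_tL^2_x}$; testing the momentum equation against $|\mathbf{u}_n|^{2p-2}\mathbf{u}_n$ and using $\gamma\ge 1+1/p$ from \eqref{wii gamma} yields $\norm{\rho_n^{1/(2p)}\mathbf{u}_n}_{L^\infty_tL^{2p}_x}$; and a Bresch-Desjardins-type $L^{2q}$ density-gradient estimate, again relying on \eqref{wii gamma}, yields $\norm{\nabla\rho_n^{1/(2q)}}_{L^\infty_tL^{2q}_x}$, from which the hypothesis $N/2<q$ gives an $n$-uniform upper bound on $\rho_n$. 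Unlike Theorems \ref{Thm3}-\ref{Thm5}, no artificial viscosity is introduced, so no uniform positive lower bound on $\rho_n$ is available, reflecting that vacuum may genuinely appear in the limit. For the \emph{local} higher regularity, I introduce cut-off functions $\chi_k\in C_c^\infty(\Omega\setminus B_{1/k})$ with $\chi_k\equiv 1$ on $\Omega\setminus B_{2/k}$, multiply the momentum equation by test functions of the form $\chi_k^2\mathbf{u}_n$, $\chi_k^2\dot{\mathbf{u}}_n$, and $\chi_k^2 \partial_t\mathbf{u}_n$, and derive cut-off bounds such as $\norm{\chi_k\nabla\mathbf{u}_n}_{L^\infty_tL^2}$, $\norm{\chi_k\nabla^2\mathbf{u}_n}_{L^2_{t,x}}$ and, via the continuity equation, $\norm{\chi_k\nabla\rho_n}_{L^\infty}$ and $\norm{\chi_k\partial_t\rho_n}_{L^\infty_tL^2}$. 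The hard part is controlling the commutator terms involving $\nabla\chi_k$ and the degenerate viscosity $\rho_n$ simultaneously, since propagating a positive lower bound for $\rho_n$ on the support of $\chi_k$ requires that $\mathbf{u}_n$ be regular enough there, which in turn uses that lower bound through $\rho_n$-weighted elliptic estimates. This circular dependence must be broken by a bootstrap argument, using the local initial regularity \eqref{1'} to start the iteration and the global upper bound on $\rho_n$ to absorb commutators.

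\textbf{Passing to the limit and the vacuum curve.} Standard compactness (Aubin-Lions, strong convergence of $\rho_n$ and of $\sqrt{\rho_n}\mathbf{u}_n$ from the BD entropy, in the spirit of Mellet-Vasseur and Guo-Jiu-Xin) produces a subsequential limit $(\rho,\mathbf{u})$ satisfying all items of Definition \ref{Def weak sol}. The local cut-off estimates pass to the limit on any set $\Omega\setminus B_{1/k}$, giving the H\"older continuity and the regularity asserted in item 2 on the open fluid region, where the Navier-Stokes system holds almost everywhere. Setting
\[
\underline{r}(t)\ :=\ \sup\{r\in[0,R]:\ \rho(x,t)|x|^{N-1}=0 \text{ for some }|x|=r\},
\]
upper semicontinuity follows from the continuity of $\rho$ on the fluid region together with the propagation of positivity: along characteristics of the locally regular $\mathbf{u}$, positivity of $\rho$ is preserved forward in time, so any vacuum may only accumulate inward. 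Finally, the bound in item 3 follows from conservation of mass (obtained by taking $\zeta\equiv 1$ in \eqref{mass equ weak}) and the standard energy inequality $\int_\Omega\rho^\gamma(\cdot,t)\,dx\le(\gamma-1)E_0$ via H\"older's inequality on the fluid region:
\[
M_0\ =\ \int_{\Omega\setminus B_{\underline{r}(t)}}\rho\,dx\ \le\ \Bigl(\int_\Omega\rho^\gamma\,dx\Bigr)^{1/\gamma}\bigl(\omega_N(R^N-\underline{r}(t)^N)\bigr)^{(\gamma-1)/\gamma},
\]
which rearranges to the stated upper bound on $\underline{r}(t)$.
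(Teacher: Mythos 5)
Your outline matches the paper at both ends---the annular approximate problem without artificial viscosity and the global energy/BD-entropy/$L^{2p}$/$L^{2q}$ estimates, and the mass-plus-energy argument for item 3---but the middle step, where all the difficulty lies, is done with the wrong kind of cut-off and this makes the argument fail.

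The paper's key idea (Propositions \ref{Prop wii VT}--\ref{Prop nabla rho}) is to take cut-offs along \emph{Lagrangian particle paths}: it fixes a mass level $h>0$, defines $r_h^\iota(t)$ by $h=\int_\iota^{r_h^\iota(t)}\rho_\iota r^{N-1}\,dr$, passes to the mass variable $y$, and works with the cut-off $\phi_h(y)$ supported in $[h/2,1]$. Along a particle path the mass to its left is conserved, and using the BD entropy equation in Lagrangian form together with the a priori estimate \eqref{wii 5} on $\int_0^s|\partial_y(\rho_\iota^\gamma)r_\iota^{N-1}|^p\,d\xi$, one gets an $\iota$-independent lower bound $\rho_\iota\ge (C(h/2,T))^{-1}$ on $[r_h^\iota(t),R]\times[0,T]$ (eq.\ \eqref{92}). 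That uniform lower bound is precisely what makes the higher-order energy estimate \eqref{wii 13} closable. Your proposal instead uses a fixed Eulerian cut-off $\chi_k\in C_c^\infty(\Omega\setminus B_{1/k})$. But the vacuum region $\{|x|\le\underline r(t)\}$ can grow in time (only $\lim_{t\to0^+}\underline r(t)=0$ is guaranteed; item 3 bounds $\underline r(t)$ away from $R$ but not away from $0$), so for any fixed $k$ there may exist $t$ with $\underline r(t)>2/k$, at which point no $\iota$-independent lower bound on $\rho_\iota$ can hold on $\mathrm{supp}\,\chi_k$. Since the viscosity degenerates like $\rho_n$, without that lower bound the $\rho_n$-weighted elliptic estimates you appeal to are not coercive and the "bootstrap" you gesture at cannot close. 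The missing idea is not a bootstrap trick but the switch to mass-level cut-offs that follow the flow.

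Your construction of $\underline r(t)$ also diverges from the paper's in a way that creates a real gap. The paper first shows the particle paths $r_h^{\iota_i}(t)$ converge (Proposition \ref{P_1}), that the limit $r_h(t)$ is H\"older in $(h,t)$ and monotone in $h$, and then defines $\underline r(t)=\lim_{h\to0^+}r_h(t)$; upper semicontinuity of $\underline r$ follows cleanly from this monotone-limit structure, and the dichotomy $\rho r^{N-1}=0$ on $[0,\underline r(t)]$, $\rho>0$ on $(\underline r(t),R]$ is read off from the uniform lower bound to the right of $r_h$ and the $L^\gamma$ smallness to its left (Proposition \ref{P_3}). Your definition of $\underline r(t)$ as $\sup\{r:\rho(\cdot,t)r^{N-1}=0\text{ somewhere on }|x|=r\}$ does not by itself rule out that the vacuum set has holes or disconnected components, so item 1 would need a separate argument, and the "propagation of positivity along characteristics" you invoke requires regularity of $\mathbf{u}$ up to the vacuum boundary---which is exactly what is not established (item 2 only gives regularity on compact subsets strictly inside $\mathcal F$, i.e.\ at positive distance from $\underline r(t)$). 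You should replace both of these steps with the Lagrangian particle-path construction.
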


    \begin{rmk}
        Theorem \ref{T_2} applies to a large class of parameters and initial data, for example \((N,\alpha,\gamma,p,q)=(2,1,2,4,2)\) and \((\rho_0,\mathbf u_0)\) given by
        \begin{align*}
            \rho_0(x)=|x|^{\frac{2}{3}}+1,\quad \mathbf{u}_0(x)=|x|^{-\frac{1}{100}}\chi(|x|)\frac{x}{r},
        \end{align*}
        where \(\chi\) is a smooth cut-off function equal to \(1\) on \([0,\frac{R}{2}]\) and equal to \(0\) on \([\frac{2}{3}R,R]\).
    \end{rmk}

    \begin{rmk}
        Due to the limitations of the approximate system, in Theorem \ref{T_2} we can only treat the case \(\alpha=1\). Indeed, when \(\alpha>1\) the added artificial viscosity prevents us from establishing a positive lower bound on the approximate density that is independent of the approximation parameter.
    \end{rmk}

    Theorem \ref{T_2} shows that, under the assumption that the initial data possess sufficiently high local regularity and that the initial density is strictly positive (i.e., away from vacuum), there exists a global weak solution in which the vacuum may appear only inside a certain semi-continuous curve. Moreover, the solution is strong on suitable subsets of the flow region lying outside this semi-continuous curve. Now, we give a brief description of the proof of Theorem \ref{T_2}. We first construct an approximate system on an annulus $\Omega_\iota=\Omega\backslash B_\iota$ away from the origin.  Unlike the approximate system for weak solutions in the previous subsection, the one used here does not contain any artificial-viscosity terms. Consequently, it is easier to adapt the arguments from the previous subsection to obtain \(L^{2p}\) bounds on the velocity field, \(L^{2q}\) bounds on the density gradient, and \(\iota\)-independent upper bounds on the density. We then follow the approach of \cite{guo-2012} to derive a positive lower bound on the density that depends on \(\iota\). This ensures that the approximate solutions exist globally in time.  Combining the established \(\iota\)-independent estimates with standard compactness arguments yields the global existence of weak solutions.
	
	Next, we adapt the method of \cite{Hoff-1992} to give a detailed description of the weak solution, including the distribution of the vacuum states and the regularity of the solution in the fluid region. To this end, we introduce the following particle path:
\begin{equation*}
	0<h=\int_\iota^{r_h^{\iota} (t)}\rho_{\iota}(r,t)r^{N-1}dr.
\end{equation*}
	On the right-hand side of each particle path \(r_h^\iota(t)\) determined by \(h\), we establish an \(\iota\)-independent lower bound on the density,  
\begin{align*}
	0<(C(h/2,T))^{-1}\leq\rho_\iota(r,t)\leq C(T),\ (r,t)\in\big[r_h^{\iota} (t),R\big]\times[0,T]
\end{align*}
	and the \(\iota\)-independent higher-order estimate  
\begin{align*}
	\sup_{t\in[0,T]}\int_{r_h^{\iota} (t)}^R\Big((\partial_{r} u_\iota)^2+(\partial_t \rho_\iota)^2 \Big)dr&+\sup_{t\in[0,T]}\norm{\partial_{r}\rho_\iota}_{L^\infty((r_h^{\iota} (t),R))}\\
	&+\int_0^{T} \int_{r_h^\iota
		(t)}^R{ \Big((\partial_t u_\iota)^2+(\partial_{r r}u_\iota)^2 \Big)}drdt\leq C(h/4,T).
\end{align*}
On the one hand, these \(\iota\)-independent estimates guarantee the existence of the upper semi-continuous curve \(\underline{r}(\cdot)\), inside which lies the vacuum region and outside which lies the non-vacuum region. On the other hand, these uniform estimates also ensure that the solution is strong on certain compact subsets of the fluid region. 

Finally, using standard mass conservation and the finiteness of energy, we derive an accurate upper bound for \(\underline{r}(t)\), showing that the vacuum region of the weak solution we obtain can appear only inside a ball whose radius is strictly less than the radius \(R\) of the domain.

	\section{Global classical solutions away from vacuum \((\frac{N-1}{N}<\alpha<1)\)}
	Throughout this section we always assume that the radially symmetric initial data $(\rho_0,\mathbf u_0)$ satisfies \eqref{C initial data} and that $(\alpha,\gamma)$ satisfies $\frac{N-1}{N}<\alpha\leq1$ and $\gamma>1$.

	\subsection{Local existence of classical solutions}
	The local existence theory for classical solutions below can be established by the arguments similar to those in \cite{Cho-Kim-2006,ya-2015,zpx-2014,zhang-2025}, so we omit the proof.
	\begin{lema}\label{Lem local existence}
		Assume that the radially symmetric initial data $(\rho_0,\mathbf{u}_0)$ satisfies \eqref{C initial data} and that $(\alpha,\gamma)$ satisfy $\alpha>\frac{N-1}{N}$ and $\gamma>1$. Then there exists a small time \(T_0>0\) such that the initial-boundary-value problem \eqref{0}–\eqref{0-2} possesses a unique radially symmetric classical solution $(\rho,\mathbf{u})$ on \(\Omega\times[0,T_0]\) with $\rho>0$, and for every \(0<\tau<T_0\), which satisfies
		\begin{align*}
			\begin{cases}
				\rho\in C\big([0,T_0];H^3(\Omega)\big),~
				\rho_t\in C\big([0,T_0];H^2(\Omega)\big),
				\nonumber\\
				%&\nabla\rho\in C([0,T];L^6\cap D^1\cap D^2),~
				%(\nabla\rho)_t\in C([0,T];H^1),\\
				\mathbf{u}\in C\big([0,T_0];H_0^1(\Omega)\cap H^3(\Omega)\big)\cap L^2\big(0,T_0;H^4(\Omega)\big),~
				\\
				\mathbf{u}_t\in L^\infty\big(0,T_0;H_0^1(\Omega)\big)\cap L^2\big(0,T_0;H^2(\Omega)\big),\nonumber\\
				\mathbf{u}_t\in L^\infty\big(\tau,T_0;H^2(\Omega)\big),\ \mathbf{u}_{tt}\in  L^2\big(0,T_0;L^2(\Omega)\big),\nonumber\\
				\mathbf{u}_{tt}\in  L^\infty\big(\tau,T_0;L^2(\Omega)\big)\cap L^2\big(\tau,T_0;H^1(\Omega)\big).
			\end{cases}
		\end{align*}
	\end{lema}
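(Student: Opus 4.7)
The plan is to establish local existence via a standard linearization-iteration scheme in Eulerian coordinates, adapting the framework developed in \cite{Cho-Kim-2006, ya-2015, zpx-2014, zhang-2025}. Since the initial density is bounded below by $\underline{\rho_0}>0$, on a short time interval one expects $\rho\ge\underline{\rho_0}/2$, so the viscosities $\mu(\rho)=\rho^\alpha$ and $\lambda(\rho)=(\alpha-1)\rho^\alpha$ are smooth in $\rho$ and the momentum equation becomes uniformly parabolic in $\mathbf{u}$. Radial symmetry is preserved throughout by the symmetry of the initial data and boundary conditions, together with the uniqueness at each iteration step.

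First I would set $(\rho^0,\mathbf{u}^0)=(\rho_0,\mathbf{u}_0)$, and given $(\rho^k,\mathbf{u}^k)$ define $\rho^{k+1}$ as the unique solution of the linear transport equation $\partial_t\rho^{k+1}+\operatorname{div}(\rho^{k+1}\mathbf{u}^k)=0$ with initial data $\rho_0$, and then solve the linear variable-coefficient parabolic problem
\[
\rho^{k+1}\partial_t\mathbf{u}^{k+1}+\rho^{k+1}\mathbf{u}^k\cdot\nabla\mathbf{u}^{k+1}+\nabla(\rho^{k+1})^\gamma=\operatorname{div}(\mu(\rho^{k+1})\mathbb{D}\mathbf{u}^{k+1})+\nabla(\lambda(\rho^{k+1})\operatorname{div}\mathbf{u}^{k+1}),
\]
for $\mathbf{u}^{k+1}$ with $\mathbf{u}^{k+1}|_{\partial\Omega}=0$ and $\mathbf{u}^{k+1}(\cdot,0)=\mathbf{u}_0$. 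The representation of $\rho^{k+1}$ along the flow of $\mathbf{u}^k$ propagates the pointwise bounds $\underline{\rho_0}/2\le\rho^{k+1}\le 2\overline{\rho_0}$ provided $T_0$ is small, and standard parabolic theory for strictly elliptic Lamé-type systems with smooth positive coefficients yields well-posedness of each iterate in the class of Definition \ref{D_1}.

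Next I would derive uniform-in-$k$ estimates on $[0,T_0]$ with $T_0$ independent of $k$. Differentiating the transport and momentum equations and testing against suitable multipliers, together with a Gr\"onwall argument, delivers
\[
\sup_{[0,T_0]}\bigl(\|\rho^{k+1}\|_{H^3}+\|\mathbf{u}^{k+1}\|_{H^3}+\|\sqrt{\rho^{k+1}}\partial_t\mathbf{u}^{k+1}\|_{L^2}\bigr)+\int_0^{T_0}\|\mathbf{u}^{k+1}\|_{H^4}^2\,dt\le C,
\]
with $C$ depending only on the initial data. Then showing that $\{(\rho^k,\mathbf{u}^k)\}$ is Cauchy in the weaker norm $C([0,T_0];L^2)\times L^2(0,T_0;H^1)$ via a difference estimate and Gr\"onwall yields a limit $(\rho,\mathbf{u})$ which, by lower semicontinuity, inherits the uniform bounds and solves the original system. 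Uniqueness follows from an energy estimate on the difference of two solutions. The interior-in-time gains $\mathbf{u}_t\in L^\infty(\tau,T_0;H^2)$ and $\mathbf{u}_{tt}\in L^\infty(\tau,T_0;L^2)\cap L^2(\tau,T_0;H^1)$ come from parabolic smoothing after differentiating the momentum equation once and twice in time and exploiting the strict positivity of $\rho$ at positive times, without requiring any compatibility condition on the initial data beyond \eqref{C initial data}.

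The main obstacle, as in the cited references, is closing the top-order estimates without losing derivatives: controlling $\nabla^3(\mu(\rho)\mathbb{D}\mathbf{u})$ would formally require $\rho\in H^3$ and $\mathbf{u}\in H^4$ simultaneously. The standard workaround is to first bound $\sqrt{\rho}\,\partial_t\mathbf{u}$ in $L^\infty(L^2)$ and $\partial_t\mathbf{u}$ in $L^2(H^1)$ by testing the time-differentiated momentum equation with $\partial_t\mathbf{u}$, which only costs $\rho\in H^2$, and then to recover $\|\mathbf{u}\|_{H^4}$ by viewing the momentum equation at fixed $t$ as a stationary Lamé-type elliptic system for $\mathbf{u}$ with source $\rho\partial_t\mathbf{u}+\rho\mathbf{u}\cdot\nabla\mathbf{u}+\nabla\rho^\gamma$, applying elliptic regularity. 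A minor additional point is to verify that the geometric factor $(N-1)/r$ arising in the radial reduction \eqref{Equ 2} produces no singularity at the origin; this is automatic because radial symmetry forces $u(0,t)=0$, so $u/r$ stays smooth and the regularity transfers between the radial profile and $\mathbf{u}\in H^3(\Omega)$ via the standard weighted Sobolev identities.
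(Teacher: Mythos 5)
Your proposal is correct and takes essentially the same approach as the paper, which omits this proof entirely and defers to the standard linearization–iteration frameworks in \cite{Cho-Kim-2006,ya-2015,zpx-2014,zhang-2025}; your sketch (linearize, iterate, $k$-uniform $H^3$-level bounds, weak-norm contraction to pass to the limit, elliptic regularity and time-weighted parabolic smoothing for $\mathbf{u}_t,\mathbf{u}_{tt}$) is a faithful account of exactly that strategy. The points you single out — uniform ellipticity of the Lamé operator owing to $\alpha>\frac{N-1}{N}$, short-time propagation of the density lower bound, and the $\tau>0$ cutoff absorbing the lack of $H^2$-compatibility of $\mathbf{u}_t$ at $t=0$ — are precisely the ingredients the cited references supply.
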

	
	\subsection{Some useful estimates}
 By Lemma \ref{Lem local existence}, this yields a unique local classical solution to the initial-boundary-value problem \eqref{0}--\eqref{0-2} on $\Omega\times[0,T_0]$. Writing $T^*$ for the maximal existence time of this classical solution, we fix $0<T<T^*$.
	
	We derive several useful estimates in Lagrangian coordinates. Without loss of generality, assume that $\int_0 ^R {\rho {r^{N-1}}dr} =\int_0 ^R {\rho_0 {r^{N-1}}dr=1}$. Define the coordinates transformation
	\begin{equation*}
		y(r,t) = \int_0 ^r {\rho(s,t) {s^{N-1}}ds},\ \tau(r,t)  = t,
	\end{equation*}
	which translates the domain $[0,R]\times[0,T]$ into $[0,1]\times[0,T]$ and satisfies
	\begin{equation*}
		\frac{{\partial y}}{{\partial r}} = \rho {r^{N-1}},\
		\frac{{\partial y}}{{\partial t}} =  - \rho u{r^{N-1}},\
		\frac{{\partial \tau }}{{\partial r}} = 0,\
		\frac{{\partial \tau }}{{\partial t}} = 1,\
		\frac{{\partial r}}{{\partial \tau }} = u.
	\end{equation*}
	Then, \eqref{Equ 2}-\eqref{boundary condition2} is changed to
	\begin{equation}\label{Equ 3}
		\begin{cases}
			\partial_\tau\rho + {\rho ^2}\partial_y({r^{N-1}}u) = 0,\\
			{r^{ - (N-1)}}\partial_\tau u + {\partial_y{\rho ^\gamma }} - \alpha\partial_y{\big({\rho ^{1+\alpha}}{\partial_y({r^{N-1}}u)}\big)} + \frac{{N-1}}{r}{\partial_y\rho ^\alpha u}=0,\\
		\end{cases}
	\end{equation}
	with the initial and boundary conditions given by
	\begin{align}
		\begin{split}\label{C1}
			\left\{
			\begin{array}{l}
			{\left. {(\rho ,u)} \right|_{\tau = 0}} = ({\rho _0},{u_0}),\ y\in[0,1],\\
			u(0 ,\tau)=u(1,\tau) = 0,\ \tau\geq0.
		\end{array}
			\right.
		\end{split}
	\end{align}
	
	The following proposition, taken from \cite{zhang-2025}, shows that the integrability of the velocity field is controlled by the \(r\)-weighted \(L^{p}\) estimates of the density. 
	\begin{prop}\label{Prop C n}
		Assume that  \(1<n<n_N(\alpha)\) with \(n\in\mathbb R\). For any $0<\tau<T$, it holds that
		\begin{align}
			\int_0^1 (u^2+\rho^{\gamma-1})dy+&\int_0^\tau\int_0^1\Big(\rho^{\alpha-1}\frac{u^2}{r^2}+\rho^{1+\alpha}(\partial_y u)^2 r^{2(N-1)}\Big)dyd\tau\leq C;\label{C u^2}\\
			\int_0^1 u^{2n}dy+\int_0^\tau\int_0^1\Big(\rho^{\alpha-1}\frac{u^{2n}}{r^2}
			&+\rho^{1+\alpha} (\partial_y u)^2 u^{2n-2}r^{2(N-1)}\Big)dyd\tau\nonumber\\
			&\leq C+C\int_0^\tau\int_0^1\rho^{2n(\gamma-\alpha)+\alpha-1}r^{2n-2}dyd\tau,\label{C u^2n}
		\end{align}
		where $C$ is a positive constant independent of $T$.
	\end{prop}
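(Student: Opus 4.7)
The plan is to derive both estimates via multiplier arguments applied to the Lagrangian momentum equation in \eqref{Equ 3}, exploiting the fact that the BD viscous structure has already been encoded in the single divergence term $\alpha\partial_y(\rho^{1+\alpha}\partial_y(r^{N-1}u))$.

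For \eqref{C u^2}, I would multiply the momentum equation by $r^{N-1}u$ and integrate over $y\in[0,1]$; the boundary conditions in \eqref{C1} kill every boundary contribution. Using the continuity equation together with the Jacobian identity $\partial_y r=(\rho r^{N-1})^{-1}$, the pressure term integrates to $\tfrac{d}{d\tau}\int_0^1\tfrac{\rho^{\gamma-1}}{\gamma-1}\,dy$, and the viscous term becomes $\alpha\int_0^1\rho^{1+\alpha}(\partial_y(r^{N-1}u))^2\,dy$ after one integration by parts. Expanding $\partial_y(r^{N-1}u)=r^{N-1}\partial_y u+(N-1)u/(\rho r)$ and combining the cross contribution with the remaining $\tfrac{N-1}{r}u\partial_y\rho^{\alpha}$ term (integrating the cross piece by parts once more) produces the two dissipation pieces $\alpha\rho^{1+\alpha}r^{2(N-1)}(\partial_y u)^2$ and $\alpha(N-1)\rho^{\alpha-1}u^2/r^2$. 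The residual $(1-\alpha)$-weighted term is controlled by Young's inequality against these two dissipations. Integrating in $\tau$ and invoking \eqref{C initial data} yields \eqref{C u^2}.

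For \eqref{C u^2n} the strategy is parallel but with the multiplier $r^{N-1}u^{2n-1}$. The time derivative becomes $\tfrac{1}{2n}\tfrac{d}{d\tau}\int_0^1 u^{2n}\,dy$, and integration by parts of the viscous term delivers, up to sign-indefinite cross terms, the two dissipation pieces $\rho^{1+\alpha}u^{2n-2}r^{2(N-1)}(\partial_y u)^2$ and $\rho^{\alpha-1}u^{2n}/r^2$ that appear on the left-hand side. The pressure term, after one integration by parts, leaves two contributions:
\begin{align*}
-(2n-1)\int_0^1\rho^{\gamma}r^{N-1}u^{2n-2}\partial_y u\,dy-(N-1)\int_0^1\rho^{\gamma-1}\frac{u^{2n-1}}{r}\,dy.
\end{align*}
The first I plan to absorb via Young's inequality with exponents $\bigl(\tfrac{n}{n-1},n\bigr)$, calibrated so that the residual integrand is precisely $\rho^{2n(\gamma-\alpha)+\alpha-1}r^{2n-2}$; indeed $n(2\gamma-\alpha-1)-(n-1)(\alpha-1)=2n(\gamma-\alpha)+\alpha-1$, matching the source term in \eqref{C u^2n}. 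The second is absorbed into $\rho^{\alpha-1}u^{2n}/r^2$ and the same source integrand after rewriting $1/r=\rho^{(\alpha-1)/2}r^{-1}\cdot\rho^{(1-\alpha)/2}$.

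The main obstacle will be keeping the coefficients of the two dissipation pieces strictly positive once the $(N-1)/r$ cross terms and the Young splittings have all been absorbed. After a second integration by parts, the collection of cross terms can be written as a quadratic form in the two dissipation densities $\rho^{(1+\alpha)/2}u^{n-1}r^{N-1}\partial_y u$ and $\rho^{(\alpha-1)/2}u^n/r$, and the requirement that this quadratic form be positive-definite is exactly the inequality that defines the thresholds $\alpha_{N,-}(n)$ in Definitions \ref{Def alpha 2}--\ref{Def alpha 3}, i.e.\ $n<n_N(\alpha)$. Under this condition the Grönwall-type integration closes with $T$-independent constants, since all absorbed contributions are either time derivatives of data-controlled quantities or are dominated by the left-hand side dissipations.
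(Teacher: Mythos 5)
Your proposal follows essentially the same multiplier strategy as the paper: multiply the Lagrangian momentum equation $\eqref{Equ 3}_2$ by $r^{N-1}u$ (respectively $r^{N-1}u^{2n-1}$), expand $\partial_y(r^{N-1}u)$, integrate the $\tfrac{N-1}{r}u\partial_y\rho^\alpha$ term by parts, absorb the sign-indefinite cross term $2n(N-1)(\alpha-1)\rho^\alpha u^{2n-1}\partial_y u\,r^{N-2}$ into the two dissipation densities by Young's inequality (with $n<n_N(\alpha)$ being exactly the positive-definiteness threshold), and then Young-split the pressure residuals to produce the source integrand $\rho^{2n(\gamma-\alpha)+\alpha-1}r^{2n-2}$. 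Your arithmetic check $n(2\gamma-\alpha-1)-(n-1)(\alpha-1)=2n(\gamma-\alpha)+\alpha-1$ and your identification of the quadratic form's positive-definiteness with the $\alpha_{N,-}(n)$ thresholds both match the paper's calculation, so this is the same proof.
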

	\begin{proof}
		Multiplying $\eqref{Equ 3}_2$ by \(r^{N-1}u\) and integrating the resulting equation over $(0,1)\times(0,\tau)$, we obtain \eqref{C u^2} by using the fact $\alpha>\frac{N-1}{N}$ and the boundary condition \eqref{C1}.
		
		Multiplying $\eqref{Equ 3}_2$ by \(r^{N-1}u^{2n-1}\) and integrating the resulting equation over $(0,1)$ gives 
		\begin{align*}
			\frac{1}{2n}\frac{d}{d\tau}\int_0^1u^{2n}dy&+\int_0^1\alpha\rho^{1+\alpha}\partial_y(r^{N-1} u)\partial_y(r^{N-1}u^{2n-1})dy\nonumber- \int_0^1(N-1)\rho^{\alpha}\partial_y(r^{N-2}u^{2n})dy\\
			&=\int_0^1\rho^{\gamma}\partial_y(r^{N-1}u^{2n-1})dy.
		\end{align*}
		A direct computation shows
		\begin{align}\label{C2}
			\begin{split}
				&\frac{1}{2n}\frac{d}{d\tau}\int_0^1u^{2n}dy+
				\int_0^1\Big(\big(\alpha(N-1)^2-(N-1)(N-2)\big)\rho^{\alpha-1}\frac{u^{2n}}{r^2}\\
				&\quad+(2n-1)\alpha\rho^{1+\alpha} (\partial_y u)^2 u^{2n-2}r^{2(N-1)}+2n(N-1)(\alpha-1)\rho^\alpha u^{2n-1}\partial_yur^{N-2}\Big)dy\\
				&=\int_0^1 \Big((2n-1)\rho^{\gamma}u^{2n-2}\partial_yu r^{N-1}+(N-1)\rho^{\gamma-1}\frac{u^{2n-1}}{r} \Big)dy.
			\end{split}
		\end{align}
		Young’s inequality yields
		\begin{align}\label{C3}
			\begin{split}
				&\quad 2n(N-1)(\alpha-1)\rho^\alpha u^{2n-1}\partial_yur^{N-2}\\
				&\ge-(1-2\varepsilon)(2n-1)\alpha\rho^{1+\alpha} (\partial_y u)^2
                u^{2n-2}r^{2(N-1)}-\frac{\big(2n(N-1)(\alpha-1)\big)^2}{4(1-2\varepsilon)(2n-1)\alpha}\rho^{\alpha-1}\frac{u^{2n}}{r^2}\\
				&\geq -(1-2\varepsilon)\left((2n-1)\alpha\rho^{1+\alpha} (\partial_y u)^2 u^{2n-2}r^{2(N-1)}+\big(\alpha(N-1)^2-(N-1)(N-2)\big)\rho^{\alpha-1}\frac{u^{2n}}{r^2}\right),
			\end{split}
		\end{align}
		where \(\varepsilon\) is taken sufficiently small so that
		\begin{align}\label{C4}
			\frac{\big(2n(N-1)(\alpha-1)\big)^2}{4(1-2\varepsilon)(2n-1)\alpha}<(1-2\varepsilon)\big(\alpha(N-1)^2-(N-1)(N-2)\big).
		\end{align}
		It is direct to verify that an \(\varepsilon\) satisfying \eqref{C4} exists, since \(n<n_N(\alpha)\). Using \eqref{C3}, we obtain from \eqref{C2} that
		\begin{align}\label{C5}
			\begin{split}
				&\frac{1}{2n}\frac{d}{d\tau}\int_0^1u^{2n}dy+
				2\varepsilon\int_0^1\Big(\big(\alpha(N-1)^2-(N-1)(N-2)\big)\rho^{\alpha-1}\frac{u^{2n}}{r^2}\\
				&\quad+(2n-1)\alpha\rho^{1+\alpha} (\partial_y u)^2 u^{2n-2}r^{2(N-1)}\Big)dy\\
				&\leq C\int_0^1 \Big((2n-1)\rho^{\gamma}u^{2n-2}\partial_yu r^{N-1}+(N-1)\rho^{\gamma-1}\frac{u^{2n-1}}{r} \Big)dy.
			\end{split}
		\end{align}
		Applying Young’s inequality once more, we get
		\begin{align}\label{C6}
			\begin{split}
				&\frac{1}{2n}\frac{d}{d\tau}\int_0^1u^{2n}dy+
				\varepsilon\int_0^1\Big(\big(\alpha(N-1)^2-(N-1)(N-2)\big)\rho^{\alpha-1}\frac{u^{2n}}{r^2}\\
				&\quad+(2n-1)\alpha\rho^{1+\alpha} (\partial_y u)^2 u^{2n-2}r^{2(N-1)}\Big)dy\\
				&\leq C\int_0^1\rho^{2n(\gamma-\alpha)+\alpha-1}r^{2n-2}dy.
			\end{split}
		\end{align}
		Integrating the above expression with respect to $\tau$ gives \eqref{C u^2n}.
	\end{proof}
	
	Next, we provide estimates for the derivatives of the density field.
	\begin{prop}\label{Prop C m}
		Assume that \(1<m\in M_{\text{set}}\). For any $0<\tau<T$, it holds that
		\begin{align}
			\int_0^1(u+r^{N-1}\partial_y\rho^\alpha)^2dy&+\int_0^\tau\int_0^1\rho^{\gamma-\alpha}(r^{N-1}\partial_y\rho^\alpha)^{2}dyd\tau\leq C;\label{C nablarho^2}\\
			\int_0^1(u+r^{N-1}\partial_y\rho^\alpha)^{2m}dy&+\int_0^\tau\int_0^1\rho^{\gamma-\alpha}(r^{N-1}\partial_y\rho^\alpha)^{2m}dyd\tau\nonumber\\
			&\leq C+C\int_0^\tau\int_0^1\rho^{\gamma-\alpha}u^{2m}dyd\tau,\label{C nablarho^2m}
		\end{align}
		where $C$ is a positive constant independent of $T$.
	\end{prop}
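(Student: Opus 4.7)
The proof will rest on introducing the radial Bresch--Desjardins effective velocity
\[
w := u + r^{N-1}\partial_y\rho^\alpha,
\]
which in Eulerian form is $u+\partial_r\varphi(\rho)$ with $\rho\varphi'(\rho)=\alpha\rho^{\alpha-1}$ (so that $\rho\,\partial_r\varphi = \partial_r\rho^\alpha$), and on establishing the clean identity
\[
\partial_\tau w = -r^{N-1}\partial_y\rho^\gamma = -\frac{\gamma}{\alpha}\rho^{\gamma-\alpha}(w-u).\qquad(\star)
\]
To derive $(\star)$ I would first rewrite $\eqref{Equ 3}_2$, using $\rho^{1+\alpha}\partial_y(r^{N-1}u)=\rho^\alpha\,\mathrm{div}_N u$ with $\mathrm{div}_N u:=\partial_r u+\tfrac{N-1}{r}u$, as
\[
\rho u_\tau = -\partial_r\rho^\gamma + \alpha\partial_r(\rho^\alpha\mathrm{div}_N u) - \frac{N-1}{r}u\,\partial_r\rho^\alpha.
\]
Next, the identity $\tfrac{D\varphi(\rho)}{Dt}=-\alpha\rho^{\alpha-1}\mathrm{div}_N u$ (a mere restatement of the continuity equation), differentiated in $r$ and combined with the Eulerian commutator $[\partial_r,D/Dt]=\partial_r u\cdot\partial_r$, yields
\[
\rho\partial_\tau\partial_r\varphi = -\alpha\rho\partial_r(\rho^{\alpha-1}\mathrm{div}_N u)-\partial_r u\,\partial_r\rho^\alpha.
\]
Summing the two and invoking the product-rule identity $\alpha\partial_r(\rho^\alpha\mathrm{div}_N u) - \alpha\rho\partial_r(\rho^{\alpha-1}\mathrm{div}_N u) = \partial_r\rho^\alpha\cdot\mathrm{div}_N u$---which holds precisely because of the BD relation $\lambda(\rho)=(\alpha-1)\mu(\rho)$---forces the viscous and geometric source terms to cancel exactly, leaving $\rho\partial_\tau w = -\partial_r\rho^\gamma$, i.e.\ $(\star)$.

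For \eqref{C nablarho^2} I would test $(\star)$ against $2w$ and split $w=u+(w-u)$. The piece involving $u$ is integrated by parts in $y$ (Lagrangian boundary terms vanish since $u(0,\tau)=u(1,\tau)=0$) and, via $\partial_y(r^{N-1}u)=-\rho^{-2}\partial_\tau\rho$, becomes the perfect derivative $-\tfrac{2}{\gamma-1}\tfrac{d}{d\tau}\int_0^1\rho^{\gamma-1}dy$, while the $(w-u)=r^{N-1}\partial_y\rho^\alpha$ piece contributes exactly the desired dissipation $\tfrac{2\gamma}{\alpha}\int_0^1\rho^{\gamma-\alpha}(r^{N-1}\partial_y\rho^\alpha)^2 dy$ on the left. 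Integrating in $\tau$ and dropping the nonnegative pressure potential $\int_0^1\rho^{\gamma-1}dy$ then gives \eqref{C nablarho^2}.

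For \eqref{C nablarho^2m}, the condition $m\in M_{\mathrm{set}}$ is precisely what ensures that $2m-1$ is a rational with odd numerator and odd denominator, so that $w^{2m-1}$ is a well-defined odd function of $w\in\mathbb R$. Testing $(\star)$ against $2m\,w^{2m-1}$ gives
\[
\frac{d}{d\tau}\int_0^1 w^{2m}dy + \frac{2m\gamma}{\alpha}\int_0^1\rho^{\gamma-\alpha}w^{2m}dy = \frac{2m\gamma}{\alpha}\int_0^1\rho^{\gamma-\alpha}w^{2m-1}u\,dy,
\]
and Young's inequality $|w^{2m-1}u|\le \tfrac{2m-1}{2m}|w|^{2m}+\tfrac{1}{2m}|u|^{2m}$ absorbs the bulk of the cross term into the dissipation. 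The residual positive multiple of $\int_0^1\rho^{\gamma-\alpha}w^{2m}dy$ is then converted, via the pointwise bound $|w|^{2m}\ge 2^{-(2m-1)}|w-u|^{2m}-|u|^{2m}$, into the desired $\int_0^1\rho^{\gamma-\alpha}(r^{N-1}\partial_y\rho^\alpha)^{2m}dy$ at the expense of an extra $\int_0^1\rho^{\gamma-\alpha}u^{2m}dy$ that is sent to the right-hand side. Integration over $(0,\tau)$ concludes the proof. The only genuinely delicate step is the identity $(\star)$: the exact cancellation of $\alpha\partial_r(\rho^\alpha\mathrm{div}_N u)$ against $\alpha\rho\partial_r(\rho^{\alpha-1}\mathrm{div}_N u)$, and the matching of the geometric term $\tfrac{N-1}{r}u\,\partial_r\rho^\alpha$ with $\partial_r u\,\partial_r\rho^\alpha$, both rely essentially on the specific BD pairing $\mu(\rho)=\rho^\alpha$, $\lambda(\rho)=(\alpha-1)\rho^\alpha$; once $(\star)$ is in hand, both estimates reduce to Gronwall-type tests.
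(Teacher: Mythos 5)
Your proposal is correct and follows the same architecture as the paper's proof: both start from the radial Bresch--Desjardins identity $\partial_\tau w + r^{N-1}\partial_y\rho^\gamma = 0$ with $w = u + r^{N-1}\partial_y\rho^\alpha$ (which the paper states without derivation, so your Eulerian derivation via the commutator $[\partial_r,D/Dt]=\partial_r u\,\partial_r$ is a useful addition), and then test against $w$ and $w^{2m-1}$ in Lagrangian coordinates, with the constraint $m\in M_{\mathrm{set}}$ guaranteeing $w\mapsto w^{2m-1}$ is an odd function. The only substantive divergence is in the final algebraic step for \eqref{C nablarho^2m}: the paper invokes a dedicated elementary inequality (Lemma~\ref{Lem ks}: $a(a+b)^{2m-1}\ge\varepsilon|a|^{2m}-|b|^{2m}$) applied directly with $a=r^{N-1}\partial_y\rho^\alpha$, $b=u$, whereas you instead expand $(w-u)w^{2m-1}=w^{2m}-uw^{2m-1}$, absorb the cross term by Young's inequality to leave a sign-definite residual $\tfrac{\gamma}{\alpha}\int\rho^{\gamma-\alpha}|w|^{2m}$, and then convert to $|w-u|^{2m}$ via $|w|^{2m}\ge 2^{1-2m}|w-u|^{2m}-|u|^{2m}$. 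The two are arithmetically equivalent; the paper's lemma is slightly more compact, while your version is self-contained and makes the constants transparent. Either way the conclusion \eqref{C nablarho^2m} follows upon integrating in $\tau$.
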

	\begin{proof}
		From equation \eqref{Equ 3} we derive the following B–D entropy equation:
		\begin{align}\label{2d B-D}
			\partial_\tau(u+r^{N-1}\partial_y\rho^\alpha)+\partial_y\rho^\gamma r^{N-1}=0.
		\end{align}
    	Multiplying \eqref{2d B-D} by \(u+r^{N-1}\partial_y\rho^{\alpha}\) and integrating the resulting equation over \((0,1)\times(0,\tau)\), we use equation $\eqref{Equ 3}_1$ and boundary condition $\eqref{C1}_2$ to obtain
		\begin{align*}
			\int_0^1\frac{1}{2}(u+r^{N-1}\partial_y\rho^\alpha)^2+\frac{\rho^{\gamma-1}}{\gamma-1}dy+\alpha\gamma\int_0^\tau\int_0^1\rho^{\gamma+\alpha-2}(\partial_y\rho)^2r^{2(N-1)}dyd\tau\\
			=\int_0^1\frac{1}{2}(u_0+r^{N-1}\partial_y\rho_0^\alpha)^2+\frac{\rho_0^{\gamma-1}}{\gamma-1}dy,
		\end{align*}
        which shows that \eqref{C nablarho^2} holds.
		Multiplying \eqref{2d B-D} by \(\bigl(u+r^{N-1}\partial_y\rho^{\alpha}\bigr)^{2m-1}\) and integrating the resulting equation over \(y\in(0,1)\) gives
		\begin{align*}
			\frac{1}{2m}\frac{d}{d\tau}\int_0^1 (u+r^{N-1}\partial_y\rho^\alpha)^{2m}dy+\int_0^1\gamma\rho^{\gamma-1}\partial_y\rho(u+r^{N-1}\partial_y\rho^\alpha)^{2m-1}r^{N-1}dy=0.
		\end{align*}
		Using Lemma \ref{Lem ks}, we obtain 
		\begin{align*}
			\gamma\rho^{\gamma-1}\partial_y\rho(u+r^{N-1}\partial_y\rho^\alpha)^{2m-1}r^{N-1}&= r^{N-1}\partial_y\rho^\alpha(u+r^{N-1}\partial_y\rho^\alpha)^{2m-1}\frac{\gamma}{\alpha}\rho^{\gamma-\alpha}\\
			&\geq\varepsilon \rho^{\gamma-\alpha}(r^{N-1}\partial_y\rho^\alpha)^{2m}-C\rho^{\gamma-\alpha}u^{2m},
		\end{align*}
		where $\varepsilon$ is a small generic constant depending only on $m, \gamma, \alpha$. Therefore, we have
		\begin{align*}
			\frac{1}{2m}\frac{d}{d\tau}\int_0^1 (u+r^{N-1}\partial_y\rho^\alpha)^{2m}dy+\varepsilon\int_0^1\rho^{\gamma-\alpha}(r^{N-1}\partial_y\rho^\alpha)^{2m}dy\leq C\int_0^1 \rho^{\gamma-\alpha}u^{2m}dy.
		\end{align*}
		Integrating the inequatity with respect to $\tau$, we obtain \eqref{C nablarho^2m}.
	\end{proof}
	
	Finally, we establish a key $r$-weighted \(L^\infty\) estimate for the density, which will be used to obtain both upper and positive lower bounds on \(\rho\).
	\begin{prop}\label{Prop C 2d r-weight}
		Let \(N=2\). For every \(1\le s<\infty\), there exists a constant \(C(s)>0\) independent of \(T\) such that
		\begin{align}\label{C7}
			\sup_{0\leq t\leq T}\norm{\rho}_{L^s(\Omega)}\leq C(s).
		\end{align}
		Moreover, for every \(0<\xi\ll1\), there exists a constant \(C(\xi)>0\) independent of \(T\) such that
		\begin{align}\label{C8}
			\sup_{0\leq t\leq T}\norm{\rho^{\alpha-\frac{1}{2}}r^\xi}_{L^\infty(0,R)}\leq C(\xi).
		\end{align}
	\end{prop}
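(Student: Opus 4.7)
The plan is to combine the energy estimate from Proposition \ref{Prop C n} with the B--D entropy estimate from Proposition \ref{Prop C m} to obtain a $T$-independent $H^1(\Omega)$-bound on $\rho^{\alpha-\frac12}$, then exploit the two-dimensional critical Sobolev embedding together with a one-dimensional radial calculus argument to extract both \eqref{C7} and \eqref{C8}.

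First I would take $n=1$ in Proposition \ref{Prop C n}: translating \eqref{C u^2} back to Eulerian coordinates via $dy=\rho\,r\,dr$ yields the $T$-independent bound $\sup_{0\le t\le T}\norm{\rho}_{L^\gamma(\Omega)}\le C$. Next, with $m=1$ in Proposition \ref{Prop C m}, combining the energy control $\int_0^1 u^2\,dy\le C$ with \eqref{C nablarho^2} produces $\int_0^1(r\,\partial_y\rho^\alpha)^2\,dy\le C$. A direct calculation using $\partial_y=\rho^{-1}r^{-1}\partial_r$ shows that $r\,\partial_y\rho^\alpha=\frac{\alpha}{\alpha-1/2}\,\rho^{-\frac12}\,\partial_r\rho^{\alpha-\frac12}$, so returning to Eulerian coordinates gives $\sup_{0\le t\le T}\norm{\nabla\rho^{\alpha-\frac12}}_{L^2(\Omega)}\le C$. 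Since $\alpha\in(\tfrac12,1]$ and $\gamma>1$ entail $\gamma/(\alpha-\tfrac12)\ge 2\gamma>2$, one has $\norm{\rho^{\alpha-\frac12}}_{L^2(\Omega)}\le C\norm{\rho}_{L^\gamma(\Omega)}^{\alpha-\frac12}\le C$ on the bounded domain. Consequently $\sup_{0\le t\le T}\norm{\rho^{\alpha-\frac12}}_{H^1(\Omega)}\le C$.

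For \eqref{C7} I would then apply the two-dimensional Sobolev embedding $H^1(\Omega)\hookrightarrow L^s(\Omega)$, valid for every $1\le s<\infty$, to obtain $\norm{\rho^{\alpha-\frac12}}_{L^s(\Omega)}\le C(s)$. Raising to the power $1/(\alpha-\tfrac12)$ yields $\norm{\rho}_{L^{s(\alpha-\frac12)}(\Omega)}\le C(s)^{\frac{1}{\alpha-1/2}}$, and since $\alpha>\tfrac12$ and $s$ is arbitrary, \eqref{C7} follows. For \eqref{C8}, set $f(r,t):=\rho^{\alpha-\frac12}(r,t)$ and note that in radial two-dimensional coordinates $\int_\Omega|\nabla f|^2\,dx=2\pi\int_0^R f_r^2\,r\,dr$. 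The Cauchy--Schwarz inequality then yields, for any $0<r_1<r_2\le R$,
\begin{align*}
|f(r_1,t)-f(r_2,t)|\le \int_{r_1}^{r_2}|f_s|\,ds\le \Bigl(\frac{1}{2\pi}\Bigr)^{\frac12}\norm{\nabla f(\cdot,t)}_{L^2(\Omega)}\,\Bigl(\log\frac{r_2}{r_1}\Bigr)^{\frac12}.
\end{align*}
Using the uniform $L^2$ bound on $f$ together with a mean-value/pigeonhole argument on the annulus $r\in[\tfrac{R}{2},R]$, I can select $r_0(t)\in[\tfrac{R}{2},R]$ with $f(r_0,t)\le C$ uniformly in $t$. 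Therefore $f(r,t)\le C+C|\log(r/R)|^{\frac12}$ for every $r\in(0,R]$, and multiplying by $r^\xi$ absorbs the logarithmic growth at the origin, yielding $\norm{r^\xi f(\cdot,t)}_{L^\infty(0,R)}\le C(\xi)$.

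The main obstacle is the failure of the Sobolev embedding $H^1\hookrightarrow L^\infty$ in two dimensions: only a logarithmic singularity of $\rho^{\alpha-\frac12}$ at the origin can be ruled out, not an $L^\infty$ bound. The weight $r^\xi$ with arbitrarily small $\xi>0$ in \eqref{C8} is precisely what absorbs this logarithm, and this $r$-weighted $L^\infty$ estimate is the crucial ingredient that subsequently enables the bootstrap of the higher integrability of both $\mathbf{u}$ and $\nabla\rho$, as outlined in the discussion preceding the statement.
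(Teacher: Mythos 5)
Your proof is correct, and the treatment of \eqref{C7} is essentially the same as the paper's: both derive the $T$-independent bounds on $\|\rho\|_{L^\gamma}$ and $\|\nabla\rho^{\alpha-\frac12}\|_{L^2}$ from Propositions \ref{Prop C n} and \ref{Prop C m}, pass to a bound on $\|\rho^{\alpha-\frac12}\|_{H^1}$, and invoke the two-dimensional Sobolev embedding into every $L^s$. (The paper starts from an $L^1$ bound via $\alpha-\tfrac12<\gamma$ while you start from an $L^2$ bound via H\"older on the bounded domain; this is immaterial.)

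For \eqref{C8}, however, you take a genuinely different route. You use the radial log--Morrey estimate
\[
|f(r_1,t)-f(r_2,t)|\le C\,\norm{\nabla f(\cdot,t)}_{L^2(\Omega)}\bigl(\log(r_2/r_1)\bigr)^{1/2},
\]
a mean-value selection of a good reference radius $r_0(t)\in[\tfrac R2,R]$, and then absorb the resulting $|\log r|^{1/2}$ growth with the power weight $r^\xi$. The paper instead works with $g=\rho^{\alpha-\frac12}r^\xi$ directly: it applies the one-dimensional embedding $W^{1,1}(0,R)\hookrightarrow L^\infty(0,R)$ to $g$, converts the resulting one-dimensional integrals into two-dimensional ones, and uses H\"older's inequality against the weights $r^{\xi-1}$ and $r^{\xi-2}$, which are square-integrable with respect to $r\,dr$ for any $\xi>0$. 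Both calculations exploit the same structural fact --- the two-dimensional obstruction to an $L^\infty$ bound for an $H^1$ function is only logarithmic --- and both extend to $N=3$ (your pointwise estimate would then carry an $r^{-1/2}$ singularity, giving the $r^{\frac12+\xi}$ weight of Proposition \ref{Prop C 3d r-weight}); the paper's version has the minor advantage of being a single unified H\"older computation that mirrors the three-dimensional case verbatim.
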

	\begin{proof}
		From \eqref{C u^2} and \eqref{C nablarho^2} we know that \(\sup_{0\le t\le T}\|\rho\|_{L^\gamma(\Omega)}\) and \(\sup_{0\le t\le T}\|\nabla \rho^{\alpha-\frac{1}{2}}\|_{L^2(\Omega)}\) are independent of \(T\).  Since \(\alpha-\frac{1}{2}<\gamma\), we obtain a $T$-independent bound on \(\sup_{0\le t\le T}\|\rho^{\alpha-\frac{1}{2}}\|_{L^1(\Omega)}\).  The standard Sobolev embedding then yields a $T$-independent bound on \(\sup_{0\le t\le T}\|\rho^{\alpha-\frac{1}{2}}\|_{L^s(\Omega)}\), which in turn gives a $T$-independent bound on \(\sup_{0\le t\le T}\|\rho\|_{L^s(\Omega)}\) for any finite $1\leq s<\infty$. 
		
		Using the one-dimensional Sobolev embedding and \eqref{C7}, we obtain
		\begin{align*}
			&\norm{\rho^{\alpha-\frac{1}{2}} r^{\xi}}_{L^\infty(0,R)}\leq C\int_0^R \rho^{\alpha-\frac{1}{2}} r^{\xi} dr+C\int_0^R |\partial_r \rho^{\alpha-\frac{1}{2}}| r^{\xi} dr+C\int_0^R \rho^{\alpha-\frac{1}{2}} r^{\xi-1}dr\\
			&\leq C\int_\Omega \rho^{\alpha-\frac{1}{2}} r^{\xi-1}dx+C\int_\Omega |\nabla \rho^{\alpha-\frac{1}{2}}|r^{\xi-1}dx+C\int_\Omega \rho^{\alpha-\frac{1}{2}} r^{\xi-2}dx\\
			&\leq C\left(\int_\Omega|\nabla\rho^{\alpha-\frac{1}{2}}|^2dx\right)^{\frac{1}{2}}\left(\int_\Omega r^{2\xi-2}dx\right)^{\frac{1}{2}}+C\left(\int_\Omega\rho^{(\alpha-\frac{1}{2})\frac{4}{\xi}} dx\right)^{\frac{\xi}{4}}\left(\int_\Omega r^{\frac{4\xi-8}{4-\xi}}dx\right)^{\frac{4-\xi}{4}}\\
			&\leq C(\xi).
		\end{align*}
		This completes the proof of Proposition \ref{Prop C 2d r-weight}.
	\end{proof}
	
	\begin{prop}\label{Prop C 3d r-weight}
		Let \(N=3\). Then there exists a constant \(C>0\) independent of \(T\) such that
		\begin{align}\label{C9}
			\sup_{0\leq t\leq T}\norm{\rho}_{L^{6\alpha-3}(\Omega)}\leq C.
		\end{align}
		Moreover, for every \(0<\xi\ll1\), there exists a constant \(C(\xi)>0\) independent of \(T\) such that
		\begin{align}\label{C10}
			\sup_{0\leq t\leq T}\norm{\rho^{\alpha-\frac{1}{2}}r^{\frac{1}{2}+\xi}}_{L^\infty(0,R)}\leq C(\xi).
		\end{align}
	\end{prop}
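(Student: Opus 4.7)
My plan for proving Proposition \ref{Prop C 3d r-weight} parallels the two-dimensional argument of Proposition \ref{Prop C 2d r-weight} but requires more delicate weighted estimates to compensate for the stronger singularity at the origin in three dimensions.

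For the first assertion \eqref{C9}, I would begin from the $T$-independent bounds on $\|\rho\|_{L^\infty(0,T;L^\gamma(\Omega))}$ and $\|\nabla\rho^{\alpha-\frac12}\|_{L^\infty(0,T;L^2(\Omega))}$ supplied by \eqref{C u^2} and \eqref{C nablarho^2}. Since $2\alpha-1<1<\gamma$ under the standing assumptions $\alpha<1$ and $\gamma>1$, Hölder's inequality on the bounded domain $\Omega$ yields $\|\rho^{\alpha-\frac12}\|_{L^\infty(0,T;L^2(\Omega))}\le C$. Combining this with the gradient bound and invoking the three-dimensional Sobolev embedding $W^{1,2}(\Omega)\hookrightarrow L^6(\Omega)$, I obtain $\|\rho^{\alpha-\frac12}\|_{L^\infty(0,T;L^6(\Omega))}\le C$, which is precisely \eqref{C9} since $6(\alpha-\tfrac12)=6\alpha-3$.

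For the $r$-weighted pointwise bound \eqref{C10}, I would apply the one-dimensional embedding $W^{1,1}(0,R)\hookrightarrow L^\infty(0,R)$ to $f(r):=\rho^{\alpha-\frac12}(r)\,r^{\frac12+\xi}$, which gives
\[
\|f\|_{L^\infty(0,R)}\le \tfrac{1}{R}\!\int_0^R\!\rho^{\alpha-\frac12}r^{\frac12+\xi}dr+\!\int_0^R\!|\partial_r\rho^{\alpha-\frac12}|\,r^{\frac12+\xi}dr+(\tfrac12+\xi)\!\int_0^R\!\rho^{\alpha-\frac12}r^{-\frac12+\xi}dr.
\]
Each term is then recast as a three-dimensional radial integral via $dx=4\pi r^2dr$ and controlled: the first and third integrals by Hölder with conjugate exponents $(6,6/5)$ together with $\|\rho^{\alpha-\frac12}\|_{L^6(\Omega)}\le C$ from the previous step, using that $\int_0^R r^{(-\frac32+\xi)\cdot 6/5+2}dr$ and $\int_0^R r^{(-\frac52+\xi)\cdot 6/5+2}dr$ both converge for small $\xi>0$; the second integral by Cauchy--Schwarz, writing $\int_0^R|\partial_r\rho^{\alpha-\frac12}|^2 r^2 dr=\tfrac{1}{4\pi}\|\nabla\rho^{\alpha-\frac12}\|_{L^2(\Omega)}^2\le C$ and $\int_0^R r^{-1+2\xi}dr<\infty$ for $\xi>0$.

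The main technical obstacle, compared to the two-dimensional case, lies in the calibration of the weight exponent. The radial volume element in 3D contributes $r^2$, so converting a one-dimensional integral into a three-dimensional one introduces an $r^{-2}$ factor that worsens the singularity at the origin relative to 2D. Applying Cauchy--Schwarz to the gradient term forces $\int_0^R r^{2(\frac12+\xi)-2}dr=\int_0^R r^{-1+2\xi}dr$ to be finite, which both dictates the exponent $\tfrac12+\xi$ in the weight (rather than the bare $\xi$ of the 2D case) and explains why $\xi>0$ cannot be relaxed. Verifying that every Hölder pairing produces a convergent $r$-integral at the origin is the main bookkeeping step, but is otherwise routine once the proper weight $r^{\frac12+\xi}$ is chosen.
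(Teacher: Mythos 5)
Your proposal is correct and follows essentially the same route as the paper: both derive \eqref{C9} from the energy and B--D entropy bounds via Sobolev embedding (the paper goes through the $L^1$ bound of $\rho^{\alpha-1/2}$ and you through its $L^2$ bound, which is an immaterial difference), and both prove \eqref{C10} by applying the one-dimensional $W^{1,1}\hookrightarrow L^\infty$ embedding to $\rho^{\alpha-1/2}r^{1/2+\xi}$, converting each resulting one-dimensional integral into a three-dimensional radial one, and closing with H\"older against $\|\rho^{\alpha-1/2}\|_{L^6}$ and Cauchy--Schwarz against $\|\nabla\rho^{\alpha-1/2}\|_{L^2}$ with the same exponent bookkeeping to ensure the $r$-integrals converge for $\xi>0$.
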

	\begin{proof}
		From \eqref{C u^2} and \eqref{C nablarho^2} we know that \(\sup_{0\le t\le T}\|\rho\|_{L^\gamma(\Omega)}\) and \(\sup_{0\le t\le T}\|\nabla \rho^{\alpha-\frac{1}{2}}\|_{L^2(\Omega)}\) are independent of $T$. Since \(\alpha-\frac{1}{2}<\gamma\), we obtain a $T$-independent bound on \(\sup_{0\le t\le T}\|\rho^{\alpha-\frac{1}{2}}\|_{L^1(\Omega)}\). Therefore, the standard Sobolev embedding yields \eqref{C9}.
		
		Using the one-dimensional Sobolev embedding and \eqref{C9}, one has
		\begin{align*}
			&\norm{\rho^{\alpha-\frac{1}{2}} r^{\frac{1}{2}+\xi}}_{L^\infty(0,R)}\leq C\int_0^R \rho^{\alpha-\frac{1}{2}} r^{\frac{1}{2}+\xi} dr+C\int_0^R |\partial_r \rho^{\alpha-\frac{1}{2}}| r^{\frac{1}{2}+\xi} dr+C\int_0^R \rho^{\alpha-\frac{1}{2}} r^{\xi-\frac{1}{2}}dr\\
			&\leq C\int_\Omega \rho^{\alpha-\frac{1}{2}} r^{\xi-\frac{3}{2}}dx+C\int_\Omega |\nabla \rho^{\alpha-\frac{1}{2}}|r^{\xi-\frac{3}{2}}dx+C\int_\Omega \rho^{\alpha-\frac{1}{2}} r^{\xi-\frac{5}{2}}dx\\
			&\leq C\left(\int_\Omega|\nabla\rho^{\alpha-\frac{1}{2}}|^2dx\right)^{\frac{1}{2}}\left(\int_\Omega r^{2\xi-3}dx\right)^{\frac{1}{2}}+C\left(\int_\Omega\rho^{6\alpha-3} dx\right)^{\frac{1}{6}}\left(\int_\Omega r^{\frac{6}{5}\xi-3}dx\right)^{\frac{5}{6}}\\
			&\leq C(\xi).
		\end{align*}
		We have thus completed the proof of Proposition \ref{Prop C 3d r-weight}.
	\end{proof}
	
	\subsection{Upper bound and positive lower bound of the density $(N=2)$}
	We first state a technical lemma.
	\begin{lema}\label{Lem C 2k}
		Let \(N=2\).  For any \((\alpha,\gamma)\) satisfying \eqref{C 2d alpha}, there exists an $\alpha$-dependent constant \(k\in M_{\text{set}}\) with $k>1$ such that  
		\begin{align}\label{C11}
			1-\frac{k\sqrt{2k-1}-2k+1}{k^2-2k+1}<\alpha<1-\frac{1}{2k}.
		\end{align}  
	\end{lema}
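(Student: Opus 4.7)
The plan is to recognize that the two-sided inequality \eqref{C11} is equivalent to the conjunction of $\alpha_{2,-}(k)<\alpha$ and $\alpha<1-\frac{1}{2k}$. Using the strict monotonicity of $\alpha_{2,-}$ established in Lemma \ref{Lem alpha2}, the first inequality is equivalent to $k<n_2(\alpha)$, while the second is equivalent to $k>\frac{1}{2(1-\alpha)}$. Thus the entire lemma reduces to showing that for every $\alpha\in(\tfrac12,1)$ the open interval
\[
I(\alpha):=\Bigl(\tfrac{1}{2(1-\alpha)},\,n_2(\alpha)\Bigr)
\]
is non-empty and contained in $(1,\infty)$; once this is done, the density of $M_{\text{set}}$ in $(1,\infty)$ (which follows immediately from the definition $M_{\text{set}}=\{1+\frac{s}{2j+1}:s\in\mathbb N^+,\,j\in\mathbb N\}$) supplies the required $k\in M_{\text{set}}\cap I(\alpha)$.

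The analytical core is therefore to prove $\frac{1}{2(1-\alpha)}<n_2(\alpha)$ for every $\alpha\in(\tfrac12,1)$. Since $\alpha_{2,-}$ is strictly increasing with range $(\tfrac12,1)$ on $(1,\infty)$, an equivalent reformulation (obtained by setting $\alpha=\alpha_{2,-}(k)$ with $k=n_2(\alpha)>1$) is the single-variable inequality
\[
\alpha_{2,-}(k)<1-\tfrac{1}{2k}\qquad\text{for all }k>1.
\]
Writing $\alpha_{2,-}(k)=1-\frac{k\sqrt{2k-1}-2k+1}{(k-1)^2}$ and clearing denominators, this becomes
\[
5k^{2}-4k+1<2k^{2}\sqrt{2k-1},\qquad k>1.
\]
Both sides are positive for $k\ge 1$, so squaring and rearranging is equivalent to $P(k)>0$ where $P(k):=8k^{5}-29k^{4}+40k^{3}-26k^{2}+8k-1$. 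A direct factorization (checking successively that $k=1$ is a root) yields
\[
P(k)=(k-1)^{3}(8k^{2}-5k+1).
\]
The quadratic factor has discriminant $25-32=-7<0$ and positive leading coefficient, hence is strictly positive on $\mathbb R$, so $P(k)>0$ for all $k>1$.

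With the inequality $\frac{1}{2(1-\alpha)}<n_2(\alpha)$ in hand, and noting that $\frac{1}{2(1-\alpha)}>1$ whenever $\alpha>\tfrac12$, the interval $I(\alpha)$ is a non-empty open subinterval of $(1,\infty)$. Invoking the density of $M_{\text{set}}$ in $(1,\infty)$ produces a $k\in M_{\text{set}}$ with $\frac{1}{2(1-\alpha)}<k<n_2(\alpha)$, which by the equivalences above is precisely the content of \eqref{C11}. I do not anticipate any serious obstacle; the slightly delicate point is verifying the polynomial inequality for $P(k)$, and the clean factorization $(k-1)^{3}(8k^{2}-5k+1)$ — reflecting that $k=1$ is a triple root where both sides of the original inequality touch — is precisely what makes the argument go through.
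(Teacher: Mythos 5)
Your proposal is correct and follows the same overall strategy as the paper: reduce the statement to the one-variable inequality $\alpha_{2,-}(k)<1-\frac{1}{2k}$ for $k>1$, pick $k_0=\frac{1}{2(1-\alpha)}$ as the pivot, and invoke the density of $M_{\text{set}}$ together with continuity. The only divergence is in how the scalar inequality is verified: the paper rewrites it as $\tilde h(a)=2a^2\sqrt{2a-1}-5a^2+4a-1>0$ and proves this by showing $\tilde h(1)=0$ and $\tilde h'(a)=(10a-4)\bigl(\frac{a}{\sqrt{2a-1}}-1\bigr)>0$, i.e. a monotonicity argument; you instead square (noting both sides of $5k^2-4k+1<2k^2\sqrt{2k-1}$ are positive for $k\ge1$, since the quadratic on the left has discriminant $-4$) and obtain the polynomial $P(k)=8k^5-29k^4+40k^3-26k^2+8k-1$, which you correctly factor as $(k-1)^3(8k^2-5k+1)$ with the quadratic factor everywhere positive. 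Both verifications are elementary and short; the paper's avoids the fifth-degree algebra by differentiating once, while yours avoids calculus entirely and makes the triple tangency at $k=1$ transparent through the factor $(k-1)^3$. Either way the lemma is established, and the remainder of your argument (non-emptiness of $I(\alpha)$ and the density step) matches the paper.
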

	\begin{rmk}
		The first inequality in \eqref{C11} is equivalent to $k<n_2(\alpha)$.
	\end{rmk}
	\begin{proof}[Proof of the Lemma \ref{Lem C 2k}]
		We introduce the auxiliary function
		\begin{align*}
			h(a)=\frac{a\sqrt{2a-1}-2a+1}{(a-1)^2}-\frac{1}{2a}, \quad a\in(1,\infty).
		\end{align*}
		It is direct to verify that \(h\in C^\infty(1,\infty)\) and \(\lim_{a\to 1^+}h(a)=0\).  
		We next claim  
		\begin{align}\label{C15}  
			h(a)>0,\quad a\in(1,\infty),  
		\end{align}  
		an assertion equivalent to
		\begin{align}\label{C12}
			\tilde{h}(a)=2 a^2\sqrt{2a-1}-5a^2+4a-1>0,\quad a\in(1,\infty).
		\end{align}
		Note that
		\begin{align*}
			\tilde{h}'(a)=(10a-4)\left(\frac{a}{\sqrt{2a-1}}-1\right)>0, \quad a\in(1,\infty),
		\end{align*}
		which, combined with \(\tilde{h}(1)=0\), implies \eqref{C12}.
		
		For any \(\alpha\) satisfying \eqref{C 2d alpha}, set \(k_0=\tfrac{1}{2(1-\alpha)}\). Using \eqref{C15}, we get 
		\begin{align*}
			1-\frac{k_0\sqrt{2k_0-1}-2k_0+1}{k_0^2-2k_0+1}<1-\frac{1}{2k_0}=\alpha.
		\end{align*}
		By the continuity of the function and the density of \(M_{\mathrm{set}}\), we choose \(k\in M_{\mathrm{set}}\) with \(k>k_0\) and \(k-k_0\) sufficiently small so that \eqref{C11} holds. This completes the proof of Lemma \ref{Lem C 2k}.
	\end{proof}
	
	We are now in a position to establish upper and lower bounds of the density. To this end, we state the key Proposition \ref{Prop C 2d k}, which supplies an estimate for the velocity field and the derivative of the density.
	\begin{prop}\label{Prop C 2d k}
		Assume that \eqref{C 2d alpha} holds. Let \(k\) be as defined in Lemma \ref{Lem C 2k}. Then there exists a constant \(C(T)>0\) such that
		\begin{align}\label{C 2d k}
			\sup_{0\leq t\leq T}\int_0^R\rho u^{2k}rdr+\int_0^T\int_0^R\Big(\rho^{\alpha}\frac{u^{2k}}{r}
			+\rho^{\alpha} (\partial_ru)^2 u^{2k-2}r\Big)drdt\leq C(T)
		\end{align}
		and
		\begin{align}\label{C 2d l}
			\sup_{0\leq t\leq T}\int_0^R |\partial_r\rho^{\alpha-1+\frac{1}{2k}}|^{2k}rdr+\int_0^T\int_0^R |\partial_r\rho^{\alpha-1+\frac{\gamma-\alpha+1}{2k}}|^{2k}rdrdt\leq C(T).
		\end{align}
	\end{prop}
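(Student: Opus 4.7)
The plan is to apply Propositions \ref{Prop C n} and \ref{Prop C m} with $n=k$ and $m=k$ respectively—both applicable because Lemma \ref{Lem C 2k} gives $k\in M_{\mathrm{set}}$ and $k<n_2(\alpha)$—and then translate the resulting Lagrangian bounds back to Eulerian form via $dy=\rho r\,dr$ and $\partial_r=\rho r\,\partial_y$. For \eqref{C 2d k} I first apply Proposition \ref{Prop C n} with $n=k$ to obtain
\[
\int_0^1 u^{2k}\,dy+\int_0^\tau\!\!\int_0^1\!\Big(\rho^{\alpha-1}\tfrac{u^{2k}}{r^{2}}+\rho^{1+\alpha}(\partial_y u)^{2}u^{2k-2}r^{2}\Big)\,dy\,ds\le C+C\int_0^\tau\!\!\int_0^1\!\rho^{2k(\gamma-\alpha)+\alpha-1}r^{2k-2}\,dy\,ds.
\]
In Eulerian variables the right-hand integrand is $\rho^{2k(\gamma-\alpha)+\alpha}r^{2k-1}$, which is uniformly integrable: the $r$-weighted estimate \eqref{C8} gives $\rho\le C(\xi)r^{-2\xi/(2\alpha-1)}$ for arbitrarily small $\xi>0$, while \eqref{C7} provides $\rho\in L^s$ for every finite $s$, so choosing $\xi$ small keeps the resulting power of $r$ integrable at $r=0$. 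Converting $dy=\rho r\,dr$ then yields \eqref{C 2d k}.

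For \eqref{C 2d l} I apply Proposition \ref{Prop C m} with $m=k$ to obtain
\[
\int_0^1(u+r\partial_y\rho^\alpha)^{2k}\,dy+\int_0^\tau\!\!\int_0^1\!\rho^{\gamma-\alpha}(r\partial_y\rho^\alpha)^{2k}\,dy\,ds\le C+C\int_0^\tau\!\!\int_0^1\!\rho^{\gamma-\alpha}u^{2k}\,dy\,ds.
\]
The forcing $\rho^{\gamma-\alpha}u^{2k}$ is the crux, since $\rho^{\gamma-\alpha}$ is not known to be $L^\infty$ at this stage. I would split it by Young's inequality,
\[
\rho^{\gamma-\alpha}u^{2k}\le \tfrac{1}{p}\rho^{(\gamma-\alpha)p}+\tfrac{1}{p'}u^{2kp'},
\]
choosing $1<p'$ so that $kp'<n_2(\alpha)$; this is possible precisely because Lemma \ref{Lem C 2k} delivers the strict inequality $k<n_2(\alpha)$. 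The $\rho$-term is then uniformly controlled by \eqref{C7}, and for the $u$-term I reapply Proposition \ref{Prop C n} with $n=kp'$ and repeat the reasoning of the first step to bound $\sup_\tau\int_0^1 u^{2kp'}\,dy\le C(T)$. This closes the forcing at the level $C(T)$ and produces $\sup_\tau\int_0^1(u+r\partial_y\rho^\alpha)^{2k}\,dy+\int_0^\tau\!\!\int_0^1\rho^{\gamma-\alpha}(r\partial_y\rho^\alpha)^{2k}\,dy\,ds\le C(T)$.

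Finally, I translate these bounds into Eulerian gradient form. A direct computation using $\partial_r=\rho r\,\partial_y$ yields the pointwise identities
\[
|\partial_r\rho^{\alpha-1+\frac{1}{2k}}|^{2k}r\,dr=c_{\alpha,k}\,(r\partial_y\rho^\alpha)^{2k}\,dy,\qquad |\partial_r\rho^{\alpha-1+\frac{\gamma-\alpha+1}{2k}}|^{2k}r\,dr=c'_{\alpha,\gamma,k}\,\rho^{\gamma-\alpha}(r\partial_y\rho^\alpha)^{2k}\,dy,
\]
for positive constants $c_{\alpha,k},\,c'_{\alpha,\gamma,k}$. Combining these with the elementary inequality $(r\partial_y\rho^\alpha)^{2k}\le 2^{2k-1}\big((u+r\partial_y\rho^\alpha)^{2k}+u^{2k}\big)$ and the bounds obtained above gives \eqref{C 2d l}. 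The principal hurdle is the closure of $\int_0^\tau\int_0^1\rho^{\gamma-\alpha}u^{2k}\,dy\,ds$ in the second step; this is exactly where the strict inequality $k<n_2(\alpha)$ in Lemma \ref{Lem C 2k} (not merely $k\in M_{\mathrm{set}}$) is essential, since it furnishes the ``room'' needed to raise the exponent from $2k$ to $2kp'$ while still remaining within the applicability range of Proposition \ref{Prop C n}.
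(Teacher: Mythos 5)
Your argument for \eqref{C 2d k} is essentially the paper's: apply Proposition \ref{Prop C n} with $n=k$, bound the Lagrangian source term by writing the Eulerian integrand $\rho^{2k(\gamma-\alpha)+\alpha}r^{2k-1}$ as $(\rho^{\alpha-\frac12}r^\xi)^{\frac{2k(\gamma-\alpha)+\alpha}{\alpha-1/2}}\cdot r^{2k-1-\xi\frac{2k(\gamma-\alpha)+\alpha}{\alpha-1/2}}$, and use \eqref{C8} with $\xi$ small. Your translation identities between Lagrangian and Eulerian quantities are also correct. Where you genuinely diverge is in \eqref{C 2d l}: after applying Proposition \ref{Prop C m} with $m=k$, the paper handles the forcing term $\int_0^\tau\int_0^1\rho^{\gamma-\alpha}u^{2k}\,dy\,d\tau$ by factoring its Eulerian form as $\rho^\alpha\frac{u^{2k}}{r}\cdot\rho^{\gamma-2\alpha+1}r^2$, using $\gamma-2\alpha+1>0$ (which holds since $\gamma>1>\alpha$) together with \eqref{C8} to bound the second factor by a constant, and then recycling the already-established dissipation bound $\int_0^T\!\int_0^R\rho^\alpha\frac{u^{2k}}{r}\,dr\,dt\le C(T)$ from \eqref{C 2d k}. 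You instead split by Young's inequality into $\rho^{(\gamma-\alpha)p}+u^{2kp'}$, bound the $\rho$-term by \eqref{C7}, and close the $u$-term by boosting the exponent from $2k$ to $2kp'$ via a second application of Proposition \ref{Prop C n} (legitimately, since $k<n_2(\alpha)$ strictly, so you may pick $p'>1$ with $kp'<n_2(\alpha)$). Both routes are valid. The paper's is more economical (one application of Proposition \ref{Prop C n}, and it reuses the dissipation term already in hand rather than paying for a higher moment of $u$), whereas yours leans on the strict inequality $k<n_2(\alpha)$ in a more essential way. Note also that your second argument still requires the exact same inequality $\gamma-\alpha>0$ to apply \eqref{C8} when bounding the boosted source term, so nothing is gained structurally; it simply exchanges the paper's direct factoring for a Young splitting plus bootstrap.
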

	
	\begin{proof}
		We first establish \eqref{C 2d k}. 
		Using \eqref{C8}, we obtain from \eqref{C u^2n} that
		\begin{align*}
			\begin{split}
				&\quad\sup_{0\leq t\leq T}\int_0^R\rho u^{2k}rdr+\int_0^T\int_0^R\Big(\rho^{\alpha}\frac{u^{2k}}{r}
				+\rho^{\alpha} (\partial_r u)^2 u^{2k-2}r\Big)drdt\\
				&\leq C+C\int_0^T\int_0^R\rho^{2k(\gamma-\alpha)+\alpha}r^{2k-1}drdt\\
				&\leq C+C\sup_{0\le t\le T}\norm{\rho^{\alpha-\frac{1}{2}}r^\xi}_{L^\infty(0,R)}^{\frac{2k(\gamma-\alpha)+\alpha}{\alpha-1/2}}\int_0^T\int_0^R r^{2k-1-\frac{2k(\gamma-\alpha)+\alpha}{\alpha-1/2}\xi}drdt\\
				&\leq C(T),
			\end{split}
		\end{align*}
		where \(\xi>0\) is chosen sufficiently small so that
		\begin{align*}
			2k-1-\frac{2k(\gamma-\alpha)+\alpha}{\alpha-1/2}\xi>-1.
		\end{align*}
		
		We next prove \eqref{C 2d l}. Invoking \eqref{C8} and \eqref{C 2d k}, one derives from \eqref{C nablarho^2m} that
		\begin{align*}
			\begin{split}
				&\quad\sup_{0\leq t\leq T}\int_0^R |\partial_r\rho^{\alpha-1+\frac{1}{2k}}|^{2k}rdr+\int_0^T\int_0^R |\partial_r\rho^{\alpha-1+\frac{\gamma-\alpha+1}{2k}}|^{2k}rdrdt\\
				&\leq C\sup_{0\leq t\leq T}\int_0^R\rho u^{2k}rdr+ C\sup_{0\leq t\leq T}\int_0^R\rho(u+\rho^{-1}\partial_r\rho^\alpha)^{2k}rdr+\int_0^T\int_0^R|\partial_r\rho^{\alpha-1+\frac{\gamma-\alpha+1}{2k}}|^{2k}rdrdt\nonumber\\
				&\leq C(T)+C\int_0^T\int_0^R\rho^{\gamma-\alpha+1}u^{2k}rdrdt\\
				&\leq C(T)+C\int_0^T\int_0^R\rho^\alpha\frac{u^{2k}}{r}\rho^{\gamma-2\alpha+1}r^2drdt\\
				&\leq C(T)+C\sup_{0\le t\le T}\norm{\rho^{\alpha-\frac{1}{2}}r^\xi}_{L^\infty(0,R)}^{\frac{\gamma-2\alpha+1}{\alpha-1/2}}\int_0^T\int_0^R\rho^\alpha \frac{u^{2k}}{r}drdt\\
				&\leq C(T),
			\end{split}
		\end{align*}
		where we have used 
		\begin{align*} 
			\gamma-2\alpha+1>0,
		\end{align*} 
		and \(\xi\) is taken sufficiently small so that
		\begin{align*}
			\left(\frac{\gamma-2\alpha+1}{\alpha-1/2}\right)\xi<2.
		\end{align*}
		This completes the proof of Proposition \ref{Prop C 2d k}.
	\end{proof}
	
	Finally, we establish the upper and positive lower bounds for the density in two dimensions. Denote that
	\begin{align*}
		\begin{split}
			R_T:=\sup_{0\le t\le T}\norm{\rho(t)}_{L^\infty(\Omega)}+1,\quad V_T:=\sup_{0\le t\le T}\norm{\rho^{-1}(t)}_{L^\infty(\Omega)}+1.
		\end{split}
	\end{align*}
	\begin{prop}\label{Prop C 2d RT}
		Assume that \eqref{C 2d alpha} holds. There exists a constant \(C(T)>0\) such that
		\begin{align}\label{C 2d RT}
			R_T\leq C(T).
		\end{align}
	\end{prop}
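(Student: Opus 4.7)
The plan is to deduce $R_T\le C(T)$ from the gradient estimate \eqref{C 2d l} of Proposition \ref{Prop C 2d k}, combined with the $L^{s}$-bounds on $\rho$ supplied by Proposition \ref{Prop C 2d r-weight}. Concretely, my strategy is: unfold \eqref{C 2d l} into a weighted gradient estimate for $\rho$ itself; use H\"older's inequality and the $L^s$-bounds to convert this into an unweighted $L^{p}$-estimate on $\nabla\rho$ with some $p>2$; and conclude by the two-dimensional Sobolev embedding $W^{1,p}(\Omega)\hookrightarrow L^\infty(\Omega)$.

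For the unfolding step, observe that
\[
|\nabla\rho^{\alpha-1+\frac{1}{2k}}|^{2k}=\Bigl(\alpha-1+\tfrac{1}{2k}\Bigr)^{2k}\rho^{2k\alpha-4k+1}|\nabla\rho|^{2k},
\]
so \eqref{C 2d l} yields $\sup_{0\le t\le T}\int_\Omega \rho^{2k\alpha-4k+1}|\nabla\rho|^{2k}\,dx\le C(T)$. By \eqref{C11}, the weight exponent $2k\alpha-4k+1$ is strictly negative, which is precisely the feature making this estimate compatible with the high-integrability bounds $\rho\in L^{s}(\Omega)$ for every finite $s$.

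Next, fix any $p\in(2,2k)$, which is non-empty because $k>1$. H\"older's inequality gives
\[
\int_\Omega|\nabla\rho|^{p}\,dx\le\Bigl(\int_\Omega\rho^{2k\alpha-4k+1}|\nabla\rho|^{2k}\,dx\Bigr)^{p/(2k)}\Bigl(\int_\Omega\rho^{(4k-1-2k\alpha)p/(2k-p)}\,dx\Bigr)^{(2k-p)/(2k)}.
\]
The first factor is bounded by $C(T)$; since the exponent $(4k-1-2k\alpha)p/(2k-p)$ is finite and positive, Proposition \ref{Prop C 2d r-weight} bounds the second factor as well. Thus $\|\nabla\rho\|_{L^{p}(\Omega)}\le C(T)$ for every $p\in(2,2k)$. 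Applying then $W^{1,p}(\Omega)\hookrightarrow L^{\infty}(\Omega)$ for $p>2$, together with the $L^s$-bound on $\rho$, delivers $\sup_{0\le t\le T}\|\rho(t)\|_{L^{\infty}(\Omega)}\le C(T)$, that is, $R_T\le C(T)$.

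No genuinely new obstacle arises at this final step: the conceptual work was front-loaded into Lemma \ref{Lem C 2k} (choosing $k>1$, which makes the embedding $W^{1,p}\hookrightarrow L^\infty$ accessible, and ensuring $\alpha<1-\frac{1}{2k}$, which makes the H\"older interpolation close) and Proposition \ref{Prop C 2d k} (producing the weighted $L^{2k}$-gradient estimate with the correct sign of the weight). The only care required here is exponent bookkeeping, and the fact that the Sobolev embedding into $L^\infty$ is available only in the critical regime $p>N=2$ explains why this argument is confined to two dimensions, in accordance with the \emph{2D} restriction of the proposition.
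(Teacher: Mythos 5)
Your proof is correct but takes a genuinely different route from the paper. The paper works in the radial variable: it applies the one-dimensional Sobolev inequality on $(0,R)$ to $\rho^{\alpha}$, writes $|\partial_r\rho^{\alpha}|=C\,\rho^{1-\frac{1}{2k}}|\partial_r\rho^{\alpha-1+\frac{1}{2k}}|$, and then distributes the $r$-weight of \eqref{C 2d l} and the $L^s$ bounds of \eqref{C7} across two explicit H\"older splittings (with factors $r^{\pm 1/3}$, $r^{\pm 1/(2k)}$, etc.). You instead observe that the $r\,dr$ weight in \eqref{C 2d l} is exactly the planar area element, rewrite \eqref{C 2d l} as $\sup_t\int_\Omega\rho^{2k\alpha-4k+1}|\nabla\rho|^{2k}\,dx\le C(T)$, H\"older the negative density weight onto the $L^s$-scale supplied by Proposition~\ref{Prop C 2d r-weight} (the exponent $(4k-1-2k\alpha)\tfrac{p}{2k-p}$ is indeed positive since $\alpha<1$ and $k>1$), and conclude with the two-dimensional Morrey embedding $W^{1,p}(\Omega)\hookrightarrow L^{\infty}(\Omega)$ for a fixed $p\in(2,2k)$. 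Your route trades the paper's explicit exponent bookkeeping for the slightly heavier 2D Morrey embedding; it is somewhat cleaner to check in this $N=2$ setting, and both close with the same inputs. One minor slip: the factored-out constant in the unfolding should be $\bigl|\alpha-1+\tfrac{1}{2k}\bigr|^{2k}$, not $\bigl(\alpha-1+\tfrac{1}{2k}\bigr)^{2k}$, since the base is negative and $2k$ need not be an integer; this does not affect the argument.
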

	\begin{proof}
		Using \eqref{C7} and \eqref{C 2d l}, we obtain from the one-dimensional Sobolev embedding that
		\begin{align*}
			&\norm{\rho^\alpha}_{L^\infty(0,R)}\leq C\int_0^R \rho^\alpha dr+C\int_0^R |\partial_r\rho^\alpha|dr\\
			&\leq C\int_0^R\rho^{\alpha}r^{\frac{1}{3}}r^{-\frac{1}{3}}dr+C\int_0^R|\partial_r\rho^{\alpha-1+\frac{1}{2k}}|\rho^{1-\frac{1}{2k}}dr\\
			&\leq C\left(\int_0^R\rho^{3\alpha}rdr\right)^{\frac{1}{3}}\left(\int_0^Rr^{-\frac{1}{2}}dr\right)^{\frac{2}{3}}+C\left(\int_0^R|\partial_r\rho^{\alpha-1+\frac{1}{2k}}|^{2k}rdr\right)^{\frac{1}{2k}}\left(\int_0^R \rho r^{-\frac{1}{2k-1}} dr\right)^{\frac{2k-1}{2k}}\\
			&\leq C(T)+C(T)\int_0^R\rho r^{-\frac{1}{2k-1}}dr \\
			&\leq C(T)+C(T)\left(\int_0^R \rho^{\frac{2k}{k-1}}rdr\right)^{\frac{k-1}{2k}}\left(\int_0^Rr^{-\frac{2k^2-k+1}{2k^2+k-1}}dr\right)^{\frac{k+1}{2k}}\\
			&\leq C(T).
		\end{align*}
		This completes the proof of Proposition \ref{Prop C 2d RT}.
	\end{proof}
	
	\begin{prop}\label{Prop C 2d VT}
		Assume that \eqref{C 2d alpha} holds. There exists a constant \(C(T)>0\) such that
		\begin{align}\label{C 2d VT}
			V_T\leq C(T).
		\end{align}
	\end{prop}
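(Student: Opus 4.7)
The plan is to derive the positive lower bound on $\rho$ by working in Lagrangian coordinates and exploiting the negativity of the exponent $\beta := \alpha - 1 + \frac{1}{2k}$. Indeed, by Lemma \ref{Lem C 2k} the chosen $k$ satisfies $k>1$ and $\alpha<1-\frac{1}{2k}$, hence $\beta<0$, so an $L^\infty$ bound on $\rho^\beta$ translates directly into a positive lower bound on $\rho$, i.e., an upper bound on $V_T$.

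The key step is the one-dimensional Sobolev embedding $W^{1,1}(0,1)\hookrightarrow L^\infty(0,1)$ applied to $\rho^\beta(\cdot,\tau)$ as a function of the Lagrangian variable $y$:
\[
\|\rho^\beta(\cdot,\tau)\|_{L^\infty(0,1)}\leq C\int_0^1\rho^\beta\,dy+C\int_0^1|\partial_y\rho^\beta|\,dy.
\]
For the zero-order term, the Lagrangian change of variable $dy=\rho r\,dr$ (with $N=2$), together with $\beta+1=\alpha+\frac{1}{2k}>0$ and the uniform upper bound from Proposition \ref{Prop C 2d RT}, gives $\int_0^1\rho^\beta\,dy=\int_0^R\rho^{\beta+1}r\,dr\leq C(T)$. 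The cancellation of $\rho$ from the Lagrangian measure against the singular factor $\rho^{-|\beta|}$ is precisely why the Lagrangian formulation is convenient here; in Eulerian coordinates one would have to control $\|\rho^{-|\beta|}\|_{L^1}$ directly, which is unnatural when $\rho$ can be close to zero. For the gradient term, using $\partial_y=(\rho r)^{-1}\partial_r$ and $dy=\rho r\,dr$ one finds $\int_0^1|\partial_y\rho^\beta|\,dy=\int_0^R|\partial_r\rho^\beta|\,dr$, and H\"older's inequality then yields
\[
\int_0^R|\partial_r\rho^\beta|\,dr\leq\left(\int_0^R|\partial_r\rho^\beta|^{2k}r\,dr\right)^{\frac{1}{2k}}\left(\int_0^R r^{-\frac{1}{2k-1}}\,dr\right)^{\frac{2k-1}{2k}}\leq C(T),
\]
where the first factor is bounded by the estimate \eqref{C 2d l} of Proposition \ref{Prop C 2d k} and the second is finite precisely because $k>1$ forces $\frac{1}{2k-1}<1$. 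Combining both bounds yields $\|\rho^\beta(\cdot,\tau)\|_{L^\infty(0,1)}\leq C(T)$, and since $\beta<0$ this is equivalent to $\rho(y,\tau)\geq (C(T))^{-1/|\beta|}$, i.e., $V_T\leq C(T)$.

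The main obstacle is the delicate accounting of exponents, which Lemma \ref{Lem C 2k} has already handled: one needs simultaneously $\beta<0$ for the argument to produce a \emph{lower} bound on $\rho$, and $k>1$ for the singular integral $\int_0^R r^{-1/(2k-1)}\,dr$ at the origin to converge. Both requirements are built into the strict double inequality \eqref{C11}, which in turn is available only because of the strict bound $\alpha<1$ in \eqref{C 2d alpha}; at the endpoint $\alpha=1$ no admissible $k$ exists, and the quite different argument carried out in Section~4 for the Saint-Venant case is required instead.
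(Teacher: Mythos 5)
Your proof is correct, and it takes a genuinely simpler route than the paper's. The paper works with $v=\rho^{-1}$ rather than $\rho^\beta$: it uses the Lagrangian conservation law $\int_0^1 v\,dy=\int_0^1 v_0\,dy$ to control the zero-order term, then bounds $\int_0^1|\partial_y v|\,dy=C\int_0^1|\partial_y\rho^\alpha|v^{\alpha+1}\,dy$ via a three-factor H\"older inequality (introducing an auxiliary small parameter $\xi$ to split the $r$-weight), arriving at the self-referential bound $v\leq C(T)+C(T)V_T^{\alpha+\frac{1}{2k}}$, which is then closed by Young's inequality using the strict inequality $\alpha+\frac{1}{2k}<1$. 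You instead bound $\|\rho^\beta\|_{L^\infty}$ directly, exploiting the already-established upper bound $R_T\leq C(T)$ from Proposition \ref{Prop C 2d RT} to control $\int_0^1\rho^\beta\,dy=\int_0^R\rho^{\beta+1}r\,dr$ (valid since $\beta+1=\alpha+\frac{1}{2k}>0$), and a clean two-factor H\"older on $\int_0^R|\partial_r\rho^\beta|\,dr$ that directly invokes \eqref{C 2d l}. This avoids the auxiliary parameter $\xi$, the three-factor H\"older, and the absorption step entirely. Both arguments hinge on the same algebraic fact from Lemma \ref{Lem C 2k} — namely $\alpha<1-\frac{1}{2k}$, which in your notation is $\beta<0$ and in the paper's is the sub-linear power in the absorption — and both need $k>1$ to make the singular $r$-integrals converge. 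A minor trade-off: the paper's route uses only the conservation law $\int_0^1 v_0\,dy<\infty$ (a $T$-independent quantity) for the zero-order term, whereas yours invokes $R_T\leq C(T)$; here this makes no difference since both ultimately produce a $T$-dependent bound, but it is worth noting that the paper's version would survive unchanged in a setting where one seeks a bound before $R_T$ is known.
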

	\begin{proof}
		Let \(v(y,\tau)=\frac{1}{\rho(y,\tau)}\). By $\eqref{Equ 3}_1$, we know that \(v\) satisfies the equation \(v_\tau=(r u)_y\). Integrating this equation over \((0,1)\times(0,\tau)\) and using the boundary condition \eqref{C1}, we obtain
		\begin{align}\label{C16}
			\int_0^1 v(y,\tau)dy=\int_0^1 v_0(y)dy\leq C.
		\end{align}
		Choose \(\xi\) sufficiently small so that \(0<\xi<\frac{k-1}{2k}\).  Using \eqref{C 2d l}, \eqref{C16} and the one-dimensional Sobolev embedding, we obtain
		\begin{align}\label{C17}
			\begin{split}
				&v(y,\tau)\leq \int_0^1 v dy+\int_0^1 |\partial_y v|dy\leq C+C\int_0^1 |\partial_y\rho^\alpha|v^{\alpha+1}dy\\
				&\leq C+C\left(\int_0^1|\partial_y\rho^\alpha|^{2k}r^{2k}dy\right)^{\frac{1}{2k}}\left(\int_0^1vr^{-\frac{2}{2\xi+1}}dy\right)^{\xi+\frac{1}{2}}\left(\int_0^1v^{(\alpha+\frac{1}{2}-\xi)\frac{2k}{k-1-2k\xi}}dy\right)^{\frac{k-1-2k\xi}{2k}}\\
				&\leq C+C(T)\left(\int_0^1v^{(\alpha+\frac{1}{2}-\xi)\frac{2k}{k-1-2k\xi}-1+1}dy\right)^{\frac{k-1-2k\xi}{2k}}\\
				&\leq C(T)V_T^{\alpha+\frac{1}{2k}}.
			\end{split}
		\end{align}
		Taking the supremum of \eqref{C17} over \([0,1]\times[0,T]\) and applying Young's inequality together with \eqref{C11}, we obtain \eqref{C 2d VT}.
	\end{proof}
	
	\subsection{Upper bound and positive lower bound of the density $(N=3)$}
	We begin by stating a technical lemma.
	\begin{lema}\label{Lem C 3k}
		Let \(N=3\).  For any \((\alpha,\gamma)\) satisfying \eqref{C 3d alpha}, there exist an $\alpha,\gamma$-dependent constant \(k\in M_{\text{set}}\) with $k>1.91$ such that  
		\begin{align}\label{C21}
			1 - \frac{\sqrt{4k(4k^2 - k - 1) + 1} - 6k + 3}{4k^2 - 8k + 4}<\alpha<1-\frac{1}{2k}
		\end{align}  
		and
		\begin{align}\label{C22}
			1<\gamma<6\alpha-3+\frac{3-5\alpha}{2k}.
		\end{align}
	\end{lema}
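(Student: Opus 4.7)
The plan is to mimic the proof of Lemma \ref{Lem C 2k}, with an additional bookkeeping step that handles the new $\gamma$-constraint \eqref{C22}. I introduce the threshold constants
\[
k_0:=\frac{1}{2(1-\alpha)},\qquad k_1:=\frac{5\alpha-3}{2(6\alpha-3-\gamma)}.
\]
Both are strictly positive: indeed \eqref{C 3d alpha} gives $\alpha>0.686$, hence $3-5\alpha<0$; then $\gamma<6\alpha-3+\frac{3-5\alpha}{2n_3(\alpha)}<6\alpha-3$, so $6\alpha-3-\gamma>0$. By the strict monotonicity of $\alpha_{3,-}$ and the definition of $n_3$, the inequality $\alpha_{3,-}(k)<\alpha$ is equivalent to $k<n_3(\alpha)$, while $\alpha<1-\frac{1}{2k}$ is equivalent to $k>k_0$; and since the map $k\mapsto 6\alpha-3+\frac{3-5\alpha}{2k}$ is strictly increasing (because $3-5\alpha<0$), the inequality $\gamma<6\alpha-3+\frac{3-5\alpha}{2k}$ is equivalent to $k>k_1$. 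The left inequality in \eqref{C22} is the standing hypothesis $\gamma>1$. Thus the lemma reduces to proving
\[
\max\{1.91,\,k_0,\,k_1\}<n_3(\alpha),
\]
after which the density of $M_{\text{set}}$ in $(1,\infty)$ supplies a $k\in M_{\text{set}}$ in the resulting nonempty open interval, and the equivalences above yield \eqref{C21}-\eqref{C22}.

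Two of the three comparisons are essentially immediate. For $1.91<n_3(\alpha)$, a direct numerical evaluation gives $\alpha_{3,-}(1.91)\le 0.686$, so \eqref{C 3d alpha} and the monotonicity of $\alpha_{3,-}$ force $n_3(\alpha)>1.91$. For $k_1<n_3(\alpha)$, I simply rearrange the assumption $\gamma<6\alpha-3+\frac{3-5\alpha}{2n_3(\alpha)}$ in \eqref{C 3d alpha}, using $n_3(\alpha)>0$ and $6\alpha-3-\gamma>0$, to obtain $n_3(\alpha)>k_1$.

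The main obstacle is the comparison $k_0<n_3(\alpha)$, which is the 3D analogue of the auxiliary inequality \eqref{C15}. Following the same strategy as in the 2D proof, I would establish $\alpha_{3,-}(a)<1-\frac{1}{2a}$ for all $a>1.55$. Clearing the positive denominator $4(a-1)^{2}$, isolating the square root, and squaring both sides (which are positive for $a>1$), this reduces to
\[
a^{2}(16a^{3}-4a^{2}-4a+1)>(8a^{2}-7a+2)^{2},
\]
which upon expansion factors as
\[
4(a-1)^{2}(4a^{3}-9a^{2}+5a-1)>0.
\]
The cubic $p(a):=4a^{3}-9a^{2}+5a-1$ has critical points at $a=(9\pm\sqrt{21})/12\approx 0.368,\,1.132$, and its values at both critical points are strictly negative, so $p$ has a unique real root, localized by the sign changes $p(1.54)<0<p(1.55)$ in $(1.54,1.55)$; consequently $p>0$ on $(1.55,\infty)$. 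Since $\alpha>0.686$ forces $k_0>\frac{1}{2(1-0.686)}>1.59>1.55$, we obtain $p(k_0)>0$, i.e.\ $\alpha_{3,-}(k_0)<1-\frac{1}{2k_0}=\alpha$, which rewrites as $k_0<n_3(\alpha)$.

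Assembling the three comparisons yields $\max\{1.91,k_0,k_1\}<n_3(\alpha)$, and by the density of $M_{\text{set}}$ we pick any $k\in M_{\text{set}}$ in this open interval; the equivalences established at the outset then guarantee \eqref{C21}-\eqref{C22} and $k>1.91$. The core analytic difficulty is the cubic analysis: unlike the 2D setting, where the analogous auxiliary function is positive on all of $(1,\infty)$, here the corresponding function changes sign near $a\approx 1.547$, and the sharp cut-off $\alpha>0.686$ imposed by \eqref{C 3d alpha} is precisely what is needed to push $k_0$ past this critical value.
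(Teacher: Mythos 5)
Your proposal is correct and follows essentially the same route as the paper: the paper sets $k_0=n_3(\alpha)$, observes that at $k_0$ all constraints hold with strict inequality (using \eqref{C23} for the upper bound on $\alpha$ and \eqref{C 3d alpha} for the $\gamma$-bound), and then picks $k\in M_{\text{set}}$ slightly below $k_0$ by continuity, whereas you make the feasible interval $(\max\{1.91,k_0,k_1\},n_3(\alpha))$ explicit, which amounts to the same thing. The one piece you add is the cubic computation behind the paper's ``one readily checks'' step \eqref{C23}: the factorization $a^2(16a^3-4a^2-4a+1)-(8a^2-7a+2)^2=4(a-1)^2(4a^3-9a^2+5a-1)$ and the localization of the unique real root of $4a^3-9a^2+5a-1$ in $(1.54,1.55)$ are correct and usefully fill that gap.
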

		\begin{rmk}
		The first inequality in \eqref{C21} is equivalent to $k<n_3(\alpha)$.
	\end{rmk}
	\begin{proof}[Proof of the Lemma \ref{Lem C 3k}]
		One readily checks 
		\begin{align}\label{C23}
			1 - \frac{\sqrt{4n(4n^2 - n - 1) + 1} - 6n + 3}{4n^2 - 8n + 4}<1-\frac{1}{2n}\Leftarrow n>1.55.
		\end{align}
		Since \(n_3(\alpha)\) is strictly increasing on \((\frac23,1)\), \(\alpha>0.686\) implies \(n_3(\alpha)>1.91\). Let   \(k_0=n_3(\alpha)\). Then \eqref{C23} gives
		\begin{align*}
			1 - \frac{\sqrt{4k_0(4k_0^2 - k_0 - 1) + 1} - 6k_0 + 3}{4k_0^2 - 8k_0 + 4}=\alpha<1-\frac{1}{2k_0}
		\end{align*}
		and \eqref{C 3d alpha} gives
		\begin{align*}
			1<\gamma<6\alpha-3+\frac{3-5\alpha}{2k_0}.
		\end{align*}
		By the continuity of the functions and the density of \(M_{\mathrm{set}}\) in \((1,\infty)\), we can choose \(k\in M_{\mathrm{set}}\) with \(k<k_0\) and \(k_0-k\) sufficiently small so that \eqref{C21} and \eqref{C22} hold.
	\end{proof}
	
	\begin{prop}\label{Prop C 3d k}
		Assume that \eqref{C 3d alpha} holds. Let \(k\) be as defined in Lemma \ref{Lem C 3k}. Then there exists a constant $0\leq\sigma<(3\alpha-2)(1-\frac{1}{k})$ such that
		\begin{align}\label{C24}
			\alpha-\frac{\alpha}{2k}+\sigma<\gamma<3\alpha-1+\frac{\alpha-1}{2k}+\sigma.
		\end{align}
		Furthermore, it holds that
		\begin{align}\label{C 3d k}
			\sup_{0\leq t\leq T}\int_0^R\rho u^{2k}r^2dr+\int_0^T\int_0^R\Big(\rho^{\alpha}u^{2k}
			+\rho^{\alpha} (\partial_ru)^2 u^{2k-2}r^2\Big)drdt\leq C(T)R_T^{2k\sigma}.
		\end{align}
		and
		\begin{align}\label{C 3d l}
			\sup_{0\leq t\leq T}\int_0^R |\partial_r\rho^{\alpha-1+\frac{1}{2k}}|^{2k}r^2dr+\int_0^T\int_0^R |\partial_r\rho^{\alpha-1+\frac{\gamma-\alpha+1}{2k}}|^{2k}r^2drdt\leq C(T)R_T^{2k\sigma}.
		\end{align}
	\end{prop}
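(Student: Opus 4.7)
The plan is to adapt the two-dimensional argument of Proposition \ref{Prop C 2d k}, with the essential new feature that the estimates must be allowed to depend on $R_T$ through the factor $R_T^{2k\sigma}$. This is forced because the three-dimensional $r$-weighted $L^\infty$ estimate of Proposition \ref{Prop C 3d r-weight} controls only $\rho^{\alpha-1/2}r^{1/2+\xi}$, whereas its two-dimensional counterpart controls $\rho^{\alpha-1/2}r^{\xi}$; the extra $r^{1/2}$ factor costs one power of $r$ in the pointwise bound on $\rho$ and must be compensated by the free parameter $\sigma$.

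First I would verify that some $\sigma \in [0,(3\alpha-2)(1-\tfrac{1}{k}))$ satisfying \eqref{C24} exists. The inequalities \eqref{C24} mean exactly that $\sigma$ lies in the interval $[\max\{0,\gamma - 3\alpha + 1 - \tfrac{\alpha-1}{2k}\}, \min\{(3\alpha-2)(1-\tfrac{1}{k}),\gamma - \alpha + \tfrac{\alpha}{2k}\})$, and a direct algebraic check shows that the condition $\gamma - 3\alpha + 1 - \tfrac{\alpha-1}{2k} < (3\alpha-2)(1-\tfrac{1}{k})$ is equivalent to \eqref{C22} from Lemma \ref{Lem C 3k}. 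The remaining compatibility inequalities follow from $\gamma>1>\alpha(1-\tfrac{1}{2k})$ and $\alpha > 2/3$.

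Next, for \eqref{C 3d k} I would apply Proposition \ref{Prop C n} with $n=k$, which is admissible because $k<n_3(\alpha)$ by Lemma \ref{Lem C 3k}. Converting \eqref{C u^2n} to Eulerian coordinates via $dy = \rho r^2 dr$, the source term on the right becomes $C\int_0^T\int_0^R \rho^{2k(\gamma-\alpha)+\alpha}r^{2k}dr dt$. I would then split $\rho^{2k(\gamma-\alpha)+\alpha} \leq R_T^{2k\sigma}\rho^{2k(\gamma-\alpha-\sigma)+\alpha}$, set $\beta = \tfrac{2k(\gamma-\alpha-\sigma)+\alpha}{\alpha-1/2}$, and factor $\rho^{2k(\gamma-\alpha-\sigma)+\alpha}r^{2k} = (\rho^{\alpha-1/2}r^{1/2+\xi})^\beta\, r^{2k-\beta(1/2+\xi)}$. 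Invoking \eqref{C10}, the resulting $r$-integral is finite once $2k - \beta(1/2+\xi) > -1$ for some small $\xi>0$, which is equivalent to $\beta < 4k+2$, and this in turn is equivalent to the right inequality of \eqref{C24}; likewise $\beta\geq 0$ corresponds to the left inequality. This gives \eqref{C 3d k}.

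Finally, for \eqref{C 3d l} I would apply Proposition \ref{Prop C m} with $m=k$. Using the identity $\rho^{-1}\partial_r\rho^\alpha = \tfrac{\alpha}{\alpha-1+1/(2k)}\rho^{-1/(2k)}\partial_r\rho^{\alpha-1+1/(2k)}$ and its analogue relating $\rho^{\gamma-\alpha}(r^{N-1}\partial_y\rho^\alpha)^{2k}$ to $|\partial_r\rho^{\alpha-1+(\gamma-\alpha+1)/(2k)}|^{2k}$, the Eulerian form of \eqref{C nablarho^2m} bounds the left-hand side of \eqref{C 3d l} by $C + C\sup_t \int_0^R \rho u^{2k}r^2 dr + C \int_0^T\int_0^R \rho^{\gamma-\alpha+1}u^{2k}r^2 dr dt$. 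The first two terms are already bounded by $C(T)R_T^{2k\sigma}$ via \eqref{C 3d k}. For the third I factor $\rho^{\gamma-\alpha+1}u^{2k}r^2 = \rho^\alpha u^{2k}\cdot\rho^{\gamma-2\alpha+1}r^2$ and apply \eqref{C10} to $\rho^{\gamma-2\alpha+1}r^2$, which is uniformly bounded provided $\gamma < 6\alpha - 3$ (a consequence of \eqref{C 3d alpha} since $\alpha>3/5$), leaving $\int_0^T\int_0^R\rho^\alpha u^{2k}dr dt \leq C(T) R_T^{2k\sigma}$ from \eqref{C 3d k}. The main obstacle is the exponent bookkeeping of Step 1, which simultaneously encodes the admissible range of $\gamma$ in \eqref{C 3d alpha} and the precise $R_T^{2k\sigma}$ loss that must later be absorbed when deriving the upper bound $R_T \leq C(T)$.
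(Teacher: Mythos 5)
Your proposal is correct and follows essentially the same route as the paper: apply Proposition \ref{Prop C n} with $n=k$, write the source term in Eulerian variables, peel off $R_T^{2k\sigma}$ and absorb the remaining power of $\rho$ into the $r$-weighted $L^\infty$ bound of Proposition \ref{Prop C 3d r-weight}, then repeat with Proposition \ref{Prop C m} together with the bound on $\rho^{\gamma-2\alpha+1}r^2$ for the density-gradient estimate. Your spelled-out verification that the $\sigma$-interval is nonempty (the key being that $\gamma-3\alpha+1-\tfrac{\alpha-1}{2k}<(3\alpha-2)(1-\tfrac1k)$ is algebraically equivalent to \eqref{C22}) is a welcome elaboration of the paper's one-line ``readily verified''; and your observation that $\beta\ge0$ and $\beta<4k+2$ encode precisely the two sides of \eqref{C24} is exactly the exponent bookkeeping the paper performs implicitly. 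The only small omission is that for \eqref{C 3d l} you should also record $\gamma-2\alpha+1>0$ (immediate from $\gamma>1>2\alpha-1$ since $\alpha<1$) so that the factor $\rho^{\gamma-2\alpha+1}$ carries a nonnegative power of $\rho$ before the $L^\infty$ bound is invoked.
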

	\begin{proof}
		It is readily verified that a constant \(\sigma\) satisfying the required condition exists, thanks to \eqref{C22}. 
		
		Next, we prove \eqref{C 3d k}. We use Proposition \ref{Prop C n}, \eqref{C10} and \eqref{C24} to obtain
		\begin{align*}
			\begin{split}
				&\quad\sup_{0\leq t\leq T}\int_0^R\rho u^{2k}r^2dr+\int_0^T\int_0^R\Big(\rho^{\alpha}u^{2k}
				+\rho^{\alpha} (\partial_ru)^2 u^{2k-2}r^2\Big)drdt\\
				&\leq C+C\int_0^T\int_0^R\rho^{2k(\gamma-\alpha)+\alpha}r^{2k}drdt\\
				&\leq C+C\sup_{0\le t\le T}\norm{\rho^{\alpha-\frac{1}{2}}r^{\frac{1}{2}+\xi}}_{L^\infty(0,R)}^{\frac{2k(\gamma-\alpha)+\alpha-2k\sigma}{\alpha-1/2}}R_T^{2k\sigma}\int_0^T\int_0^R r^{2k-\frac{2k(\gamma-\alpha)+\alpha-2k\sigma}{\alpha-1/2}\left(\frac{1}{2}+\xi\right)}drdt\\
				&\leq C(T)R_T^{2k\sigma},
			\end{split}
		\end{align*}
		where \(\xi>0\) is a sufficiently small constant satisfying
		\begin{align*}
			2k-\frac{2k(\gamma-\alpha)+\alpha-2k\sigma}{\alpha-1/2}\left(\frac{1}{2}+\xi\right)>-1.
		\end{align*}
		Such a $\xi$ exists because \eqref{C24}. We thus obtain \eqref{C 3d k}.
		
		Next we  \eqref{C 3d l}. Invoking \eqref{C10} and \eqref{C 3d k}, one derives from \eqref{C nablarho^2m} that
		\begin{align*}
			\begin{split}
				&\quad\sup_{0\leq t\leq T}\int_0^R |\partial_r\rho^{\alpha-1+\frac{1}{2k}}|^{2k}r^2dr+\int_0^T\int_0^R |\partial_r\rho^{\alpha-1+\frac{\gamma-\alpha+1}{2k}}|^{2k}r^2drdt\\
				&\leq C\sup_{0\leq t\leq T}\int_0^R\rho u^{2k}r^2dr+ C\sup_{0\leq t\leq T}\int_0^R\rho(u+\rho^{-1}\partial_r\rho^\alpha)^{2k}r^2dr+\int_0^T\int_0^R|\partial_r\rho^{\alpha-1+\frac{\gamma-\alpha+1}{2k}}|^{2k}r^2drdt\nonumber\\
				&\leq C(T)R_T^{2k\sigma}+C\int_0^T\int_0^R\rho^{\gamma-\alpha+1}u^{2k}r^2drdt\\
				&\leq C(T)R_T^{2k\sigma}+C\int_0^T\int_0^R\rho^\alpha u^{2k}\rho^{\gamma-2\alpha+1}r^2drdt\\
				&\leq C(T)R_T^{2k\sigma}+C\sup_{0\le t\le T}\norm{\rho^{\alpha-\frac{1}{2}}r^{\frac{1}{2}+\xi}}_{L^\infty(0,R)}^{\frac{\gamma-2\alpha+1}{\alpha-1/2}}\int_0^T\int_0^R\rho^\alpha u^{2k}drdt\\
				&\leq C(T)R_T^{2k\sigma},
			\end{split}
		\end{align*}
		where we have used 
		\begin{align*} 
			\gamma-2\alpha+1>0,
		\end{align*} 
		and \(\xi\) is taken sufficiently small so that
		\begin{align*}
			\left(\frac{\gamma-2\alpha+1}{\alpha-1/2}\right)\left(\frac{1}{2}+\xi\right)<2.
		\end{align*}
		Such a $\xi$ exists because $\gamma<6\alpha-3$. This completes the proof of Proposition \ref{Prop C 3d k}.
	\end{proof}
	
	We now establish the upper and positive lower bounds for the density in three dimensions.
	\begin{prop}\label{Prop C 3d RT}
		Assume that \eqref{C 3d alpha} holds. There exists a constant \(C(T)>0\) such that
		\begin{align}\label{C 3d RT}
			R_T\leq C(T).
		\end{align}
	\end{prop}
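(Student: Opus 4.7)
The plan is to run the same argument as in Proposition \ref{Prop C 2d RT}, but now managing the factor $R_T^{2k\sigma}$ that appears on the right-hand side of \eqref{C 3d l} and compensating for the fact that \eqref{C9} only controls $\int\rho^{6\alpha-3} r^2 dr$ rather than $\int\rho^s r^2 dr$ for every finite $s$. The objective is to derive an inequality of the shape $R_T^\alpha \le C(T)R_T^{1/2}+C(T)R_T^{\beta'}$ with $\beta'<\alpha$, so that absorption yields $R_T\le C(T)$.

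Starting from the one-dimensional Sobolev embedding $\norm{\rho^\alpha}_{L^\infty(0,R)}\le C\int_0^R \rho^\alpha dr + C\int_0^R |\partial_r\rho^\alpha|dr$, I would handle the first term using the $r$-weighted bound \eqref{C10} in the form $\rho^{\alpha-1/2}\le C(\xi)\, r^{-1/2-\xi}$: writing $\rho^\alpha=\rho^{\alpha-1/2}\cdot\rho^{1/2}$ and bounding $\rho^{1/2}\le R_T^{1/2}$ gives $\int_0^R \rho^\alpha dr \le C R_T^{1/2}$ for small enough $\xi$, and the exponent $\tfrac12$ is strictly smaller than $\alpha$ since $\alpha>0.686$. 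For the second term, the chain rule yields $|\partial_r \rho^\alpha|\le C\rho^{1-\frac{1}{2k}}|\partial_r\rho^{\alpha-1+\frac{1}{2k}}|$, and Hölder's inequality with exponents $(2k,\,2k/(2k-1))$ and weight $r^{1/k}\cdot r^{-1/k}$ gives
\begin{align*}
\int_0^R |\partial_r\rho^\alpha|dr \le \Big(\int_0^R |\partial_r\rho^{\alpha-1+\frac{1}{2k}}|^{2k} r^2 dr\Big)^{\frac{1}{2k}} \Big(\int_0^R \rho\, r^{-\frac{2}{2k-1}}dr\Big)^{\frac{2k-1}{2k}}.
\end{align*}
By \eqref{C 3d l} the first factor is controlled by $C(T)R_T^\sigma$; for the second factor I would apply a further Hölder inequality with an auxiliary exponent $p$, using $\int \rho^p r^2 dr \le R_T^{p-(6\alpha-3)} \int\rho^{6\alpha-3}r^2 dr\le C R_T^{p-(6\alpha-3)}$ and checking that the remaining $r$-integral converges at the origin precisely when $p>3(2k-1)/(2k-3)$. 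Since Lemma \ref{Lem C 3k} guarantees $k>1.91>3/2$, such $p$ exist, and one obtains $\int_0^R \rho\, r^{-2/(2k-1)}dr \le C R_T^{1-(6\alpha-3)/p}$.

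Combining everything yields $R_T^\alpha \le C(T) R_T^{1/2}+C(T)R_T^{\sigma+\frac{2k-1}{2k}(1-\frac{6\alpha-3}{p})}$, and the decisive step, which I expect to be the main obstacle, is verifying that the second exponent can be made strictly less than $\alpha$ by a suitable choice of $p$. A direct algebraic manipulation shows that the admissible range of $p$ is the interval $3(2k-1)/(2k-3)<p<(2k-1)(6\alpha-3)/(2k-1-2k\alpha+2k\sigma)$, which is non-empty precisely when $k\sigma<(k-1)(3\alpha-2)$, i.e.\ $\sigma<(3\alpha-2)(1-1/k)$; this is exactly the strict inequality granted by Proposition \ref{Prop C 3d k}. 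For any such $p$, both powers of $R_T$ on the right-hand side are strictly smaller than $R_T^\alpha$, and absorption yields the desired bound $R_T\le C(T)$.
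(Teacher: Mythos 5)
Your proof is correct, and it reaches the same goal by a genuinely different parametrization. The paper picks the free parameter to be the \emph{exponent} $\beta$: it chooses $\beta\in(\sigma,(3\alpha-2)(1-1/k))$, estimates $\|\rho^\beta\|_{L^\infty}$ directly (with the first term bounded purely by the $L^{6\alpha-3}$ estimate, requiring $\beta<2\alpha-1$, and the second term handled via the $r$-weighted $L^\infty$ estimate on $\rho^{\beta-\alpha+1-\frac{1}{2k}}$, requiring $\beta>\alpha-1+\frac{1}{2k}$), and lands on $\|\rho^\beta\|_\infty\le C(T)R_T^\sigma$ with $\sigma<\beta$, so Young absorbs. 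You instead keep the exponent fixed at $\alpha$ and introduce the free parameter as the auxiliary \emph{Hölder index} $p$ in the second term; both terms leak factors of $R_T$, and your task becomes verifying that those extra powers can be steered strictly below $\alpha$. What is satisfying is that the non-emptiness of your admissible interval for $p$ --- namely $3(2k-1)/(2k-3)<p<(2k-1)(6\alpha-3)/(2k-1-2k\alpha+2k\sigma)$ --- collapses after simplification to exactly $\sigma<(3\alpha-2)(1-1/k)$, which is precisely the strict inequality that, in the paper's version, guarantees the interval for $\beta$ is non-empty. So the same algebraic constraint governs both proofs, and neither is strictly more general than the other; the paper's choice is perhaps a touch cleaner because the first term carries no $R_T$ at all, while yours is more mechanical in that you fix what you want to bound and let Hölder exponents absorb the slack.
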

	\begin{proof}
		Choose \(\beta\) such that
		\[
		\sigma<\beta<(3\alpha-2)\Bigl(1-\frac1k\Bigr),
		\]
		where \(\sigma\) is given in Proposition \ref{Prop C 3d k}.
		In view of \eqref{C21}, one readily verifies that \(\beta\) satisfies 
		\begin{align}\label{C25}
			\max\Bigl\{0,\alpha-1+\frac1{2k}\Bigr\}
			<\beta<
			\min\Bigl\{2\alpha-1,(3\alpha-2)\Bigl(1-\frac1k\Bigr)\Bigr\}.
		\end{align}
		Therefore, combining the one-dimensional Sobolev embedding with \eqref{C9}, \eqref{C 3d l} and \eqref{C25}, we obtain
		\begin{align*}
			\begin{split}
				&\norm{\rho^\beta}_{L^\infty(0,R)}\leq C\int_0^R\rho^\beta dr+C\int_0^R |\partial_r\rho^\beta| dr\\
				&\leq C\int_0^R (\rho^{6\alpha-3}r^2)^{\frac{\beta}{6\alpha-3}}r^{-\frac{2\beta}{6\alpha-3}}dr+C\int_0^R|\partial_r\rho^{\alpha-1+\frac{1}{2k}}|\rho^{\beta-\alpha+1-\frac{1}{2k}}dr\\
				&\leq C\left(\int_0^R\rho^{6\alpha-3}r^2dr\right)^{\frac{\beta}{6\alpha-3}}\left(\int_0^Rr^{-\frac{2\beta}{6\alpha-\beta-3}}dr\right)^{\frac{6\alpha-\beta-3}{6\alpha-3}}\\
				&\quad+C\left(\int_0^R|\partial_r\rho^{\alpha-1+\frac{1}{2k}}|^{2k}r^2dr\right)^{\frac{1}{2k}}\left(\int_0^R\rho^{\frac{2k}{2k-1}(\beta-\alpha)+1}r^{-\frac{2}{2k-1}}dr\right)^{\frac{2k-1}{2k}}\\
				&\leq C+C(T)R_T^{\sigma}\sup_{0\le t\le T}\norm{\rho^{\alpha-\frac{1}{2}}r^{\frac{1}{2}+\xi}}_{L^\infty}^{\frac{\beta-\alpha+\frac{2k-1}{2k}}{\alpha-1/2}}\left(\int_0^R r^{-\frac{2}{2k-1}-\left(\frac{1}{2}+\xi\right)\frac{\frac{2k}{2k-1}(\beta-\alpha)+1}{\alpha-1/2}}dr\right)^{\frac{2k-1}{2k}}\\
				&\leq C(T)R_T^\sigma,
			\end{split}
		\end{align*}
		where \(\xi\) is taken sufficiently small so that
		\begin{align*}
			-\frac{2}{2k-1}-\left(\frac{1}{2}+\xi\right)\frac{\frac{2k}{2k-1}(\beta-\alpha)+1}{\alpha-1/2}>-1.
		\end{align*}
		Such a \(\xi\) exists by virtue of \eqref{C25}. Applying Young’s inequality, we obtain 
		\begin{align*}
			R_T^{\beta}\leq \frac{1}{2}R_T^{\beta}+C(T),
		\end{align*}
		which establishes \eqref{C 3d RT}.
	\end{proof}
	
	\begin{prop}\label{Prop C 3d VT}
		Assume that \eqref{C 3d alpha} holds. There exists a constant \(C(T)>0\) such that
		\begin{align}\label{C 3d VT}
			V_T\leq C(T).
		\end{align}
	\end{prop}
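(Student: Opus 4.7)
The plan is to mimic the argument of Proposition \ref{Prop C 2d VT} in Lagrangian coordinates $y=\int_0^r\rho(s,\tau)s^2\,ds$, whose volume element is $dy=\rho r^2\,dr$. Setting $v=1/\rho$, the continuity equation $\eqref{Equ 3}_1$ yields $v_\tau=(r^2u)_y$; integrating over $y\in(0,1)$ and using the no-slip condition $u(0,\tau)=u(1,\tau)=0$ gives the mass-type identity $\int_0^1 v(y,\tau)\,dy=\int_0^1v_0(y)\,dy\leq C$. Combining this with the one-dimensional Sobolev embedding on $[0,1]$ and the identity $\partial_yv=-\alpha^{-1}v^{1+\alpha}\partial_y\rho^\alpha$ then yields the pointwise bound
\[
v(y,\tau)\leq C+C\int_0^1|\partial_y\rho^\alpha|\,v^{1+\alpha}\,dy.
\]

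Next, I will estimate the right-hand side by a three-factor H\"older inequality with exponents $\tfrac{1}{2k},\xi+\tfrac12,\tfrac{k-1-2k\xi}{2k}$, splitting the integrand as $(|\partial_y\rho^\alpha|r^{2})\cdot(v^{\xi+1/2}r^{-2})\cdot v^{\alpha+1/2-\xi}$. Reverting to Eulerian coordinates via $dy=\rho r^2 dr$ and $\partial_r\rho^\alpha=C\rho^{1-\frac{1}{2k}}\partial_r\rho^{\alpha-1+\frac{1}{2k}}$, the first resulting integral collapses to a constant multiple of $\int_0^R|\partial_r\rho^{\alpha-1+\frac{1}{2k}}|^{2k}r^2\,dr\leq C(T)$ by \eqref{C 3d l}; the second becomes $\int_0^R r^{\,2-4/(2\xi+1)}\,dr$, which is finite provided $\xi>\tfrac16$; and the third, after writing $v^M=v\cdot v^{M-1}$ with $M=(\alpha+\tfrac12-\xi)\tfrac{2k}{k-1-2k\xi}$ and using $v\leq V_T$ together with $\int_0^1 v\,dy\leq C$, is bounded by $CV_T^{M-1}$, provided $\xi<\tfrac{k-1}{2k}$. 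Collecting the three factors and computing the overall exponent of $V_T$ yields
\[
v(y,\tau)\leq C+C(T)\,V_T^{\alpha+\frac{1}{2k}}.
\]
Taking the supremum over $(y,\tau)\in[0,1]\times[0,T]$ and invoking the bound $\alpha+\frac{1}{2k}<1$ from \eqref{C21}, Young's inequality produces $V_T\leq C(T)$, which is \eqref{C 3d VT}.

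The main technical obstacle relative to the two-dimensional case is that the heavier Lagrangian volume element $\rho r^2\,dr$ (instead of $\rho r\,dr$) tightens the admissible range of the auxiliary exponent $\xi$ from $\xi>0$ to $\xi>\tfrac16$. Combined with the upper constraint $\xi<\tfrac{k-1}{2k}$ needed for the third H\"older factor to control a positive power of $V_T$, this forces $k>\tfrac32$. That is precisely why the stronger lower bound $k>1.91$ (equivalently $\alpha>0.686$) is imposed in Lemma \ref{Lem C 3k}: it guarantees that an admissible $\xi$ exists so that every $r$-weighted integral arising from the H\"older splitting remains integrable near the origin.
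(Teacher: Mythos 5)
Your argument is correct and follows essentially the same route as the paper's proof: pass to Lagrangian variables, observe $\partial_\tau\int_0^1 v\,dy=0$ so that $\int_0^1 v\,dy\le C$, bound $v(y,\tau)$ pointwise by $\int_0^1 v\,dy+\int_0^1|\partial_y v|\,dy$, rewrite $|\partial_y v|=\alpha^{-1}v^{1+\alpha}|\partial_y\rho^\alpha|$, and close with a three-factor H\"older estimate whose first factor is controlled by \eqref{C 3d l} (after $R_T\le C(T)$ from Proposition~\ref{Prop C 3d RT}), whose second is a pure $r$-power integral, and whose third is absorbed via $v\le V_T$, $\int v\,dy\le C$. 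The only difference is cosmetic: you keep the two-dimensional splitting $v^{\xi+\frac12}r^{-2}$ for the middle factor, forcing $\xi>\frac16$ to make $\int_0^R r^{2-\frac{4}{2\xi+1}}\,dr$ finite, whereas the paper redistributes the $v$-weight as $v^{\frac{2+2\xi}{3}}r^{-2}$ (H\"older exponents $\frac{1}{2k},\,\frac{2+2\xi}{3},\,\frac{2k-3-4k\xi}{6k}$) so that the middle integral is $\int_0^R r^{2-\frac{3}{1+\xi}}\,dr$, finite for all $\xi>0$. Either choice gives a nonempty $\xi$-range exactly when $k>\frac32$, and the telescoping of the exponents makes the final power of $V_T$ equal $\alpha+\frac{1}{2k}$ independently of $\xi$, so both close by \eqref{C21} and Young.

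One remark on your closing paragraph: the inference that the $k>\frac32$ requirement of this lower-bound argument is ``precisely why'' Lemma~\ref{Lem C 3k} imposes $k>1.91$ is not accurate. The lower-bound proof only needs $k>\frac32$; the sharper threshold $k>1.91$ is the value of $n_3(0.686)$, and the restriction $\alpha>0.686$ in \eqref{C 3d alpha} is imposed primarily to keep the admissible $\gamma$-range nonempty (see the remark following Theorem~\ref{Thm1}) and to support the exponent bookkeeping in Propositions~\ref{Prop C 3d k} and~\ref{Prop C 3d RT}, not because of integrability near the origin in this step.
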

	\begin{proof}
		Let \(v(y,\tau)=\frac{1}{\rho(y,\tau)}\). By $\eqref{Equ 3}_1$, we know that \(v\) satisfies the equation \(v_\tau=(r^2 u)_y\). Integrating this equation over \((0,1)\times(0,\tau)\) and using the boundary condition \eqref{C1}, we obtain
		\begin{align}\label{C26}
			\int_0^1 v(y,\tau)dy=\int_0^1 v_0(y)dy\leq C.
		\end{align}
		Choose \(\xi\) sufficiently small so that \(0<\xi<\frac{2k-3}{4k}\).  Using \eqref{C 3d l}, \eqref{C26} and the one-dimensional Sobolev embedding, we obtain
		\begin{align}\label{C27}
			\begin{split}
				&v(y,\tau)\leq \int_0^1 v dy+\int_0^1 |\partial_y v|dy\leq C+C\int_0^1 |\partial_y\rho^\alpha|v^{\alpha+1}dy\\
				&\leq C+C\left(\int_0^1|\partial_y\rho^\alpha|^{2k}r^{4k}dy\right)^{\frac{1}{2k}}\left(\int_0^1vr^{-\frac{3}{\xi+1}}dy\right)^{\frac{2+2\xi}{3}}\left(\int_0^1v^{(\alpha+\frac{1}{3}-\frac{2\xi}{3})\frac{6k}{2k-3-4k\xi}}dy\right)^{\frac{2k-3-4k\xi}{6k}}\\
				&\leq C+C(T)\left(\int_0^1v^{(\alpha+\frac{1}{3}-\frac{2\xi}{3})\frac{6k}{2k-3-4k\xi}-1+1}dy\right)^{\frac{2k-3-4k\xi}{6k}}\\
				&\leq C(T)V_T^{\alpha+\frac{1}{2k}}.
			\end{split}
		\end{align}
		Taking the supremum of \eqref{C27} over \([0,1]\times(0,T)\) and applying Young's inequality together with \eqref{C21}, we obtain \eqref{C 3d VT}.
	\end{proof}
	
	\subsection{Uniform upper bound of the density $(N=2,3)$}
	We first show that the \(L^4\)-integrability of the velocity field can be established independently of time.
	
	\begin{prop}\label{Prop 3.11}
		Assume that \eqref{C 2d unialpha} or \eqref{C 3d unialpha} holds. Then there exists a constant \(C>0\) independent of \(T\) such that
		\begin{align}\label{uu1}
			\int_0^1 u^4dy+\int_0^\tau\int_0^1\Big( \rho^{\alpha-1}\frac{ u^4}{r^2}+\rho^{1+\alpha}(\partial_yu)^2u^{2}r^{2(N-1)}\Big)dyd\tau\leq C.
		\end{align}
	\end{prop}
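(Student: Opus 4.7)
The plan is to invoke Proposition \ref{Prop C n} at $n=2$ and then sharpen the resulting right-hand side to a bound independent of $\tau$. The conditions \eqref{C 2d unialpha} and \eqref{C 3d unialpha} are tailored precisely so that $2<n_N(\alpha)$, which is exactly what Proposition \ref{Prop C n} requires at $n=2$. Applying it gives
\begin{align*}
\int_0^1 u^4\,dy + \int_0^\tau\!\!\int_0^1\!\Big(\rho^{\alpha-1}\tfrac{u^4}{r^2}+\rho^{1+\alpha}(\partial_y u)^2 u^2 r^{2(N-1)}\Big)dy\,d\tau \leq C + C\int_0^\tau\!\!\int_0^1 \rho^{4\gamma-3\alpha-1}r^{2}\, dy\,d\tau,
\end{align*}
so the proof reduces to controlling the last space--time integral by a constant independent of $\tau$.

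To obtain such a $\tau$-uniform bound I would combine the three time-independent dissipation identities already at our disposal: the kinetic energy estimate \eqref{C u^2}; the BD entropy estimate \eqref{C nablarho^2}, which in Eulerian coordinates provides $\int_0^\infty\!\int_\Omega|\nabla\rho^{(\gamma+\alpha-1)/2}|^2\,dx\,dt\le C$; and the uniform $L^\infty(0,\infty;L^\gamma(\Omega))$-bound on $\rho$ coupled with the $r$-weighted $L^\infty$-estimates of Propositions \ref{Prop C 2d r-weight} and \ref{Prop C 3d r-weight}. Converted back to Eulerian coordinates, the troublesome integrand becomes (up to constants) a weighted $L^{4\gamma-3\alpha}$-expression in $\rho$. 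Splitting this power via H\"older between the uniformly bounded $L^\gamma$-norm of $\rho$ and a higher $L^p$-norm controlled in time by Sobolev embedding---any finite $L^p$ in two dimensions, and $L^6$ of $\rho^{(\gamma+\alpha-1)/2}$ in three dimensions---produces a quantity that is integrable in time with bound independent of $\tau$.

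The principal obstacle is the precise calibration of the H\"older exponents and the handling of the $r^2$ weight, particularly in the three-dimensional case where the upper restriction $\gamma<3\alpha-1$ in \eqref{C 3d unialpha} becomes critical: it is exactly what ensures that the power $4\gamma-3\alpha$ lies within the range reachable by interpolation between the uniform $L^\infty(0,\infty;L^\gamma)$ bound on $\rho$ and the BD-driven $L^2(0,\infty;L^{3(\gamma+\alpha-1)})$ bound produced by $H^1\hookrightarrow L^6$. Once the right-hand side has been shown to be bounded uniformly in $\tau$, substituting back into the inequality from Proposition \ref{Prop C n} and discarding the non-negative dissipation terms yields \eqref{uu1}.
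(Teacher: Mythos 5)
The starting observation---that the two conditions are designed so that $2<n_N(\alpha)$, so that the sign structure of the dissipation in the $u^4$ energy is favorable---is correct and is indeed the paper's first step. However, your plan to quote Proposition \ref{Prop C n} at $n=2$ and then control the purely-density residual $\int_0^\tau\int_0^1\rho^{4\gamma-3\alpha-1}r^{2}\,dy\,d\tau$ by interpolating the BD dissipation against the uniform $L^\gamma$ bound does not close, and is not what the paper does. Proposition \ref{Prop C n} has already used Young's inequality to absorb \emph{all} of the velocity into the dissipation; the remaining space-time integral is a power of $\rho$ alone. But the only $T$-uniform dissipation estimate on $\rho$ is \eqref{C nablarho^2}, i.e.\ $\int_0^\infty\|\nabla\rho^{(\gamma+\alpha-1)/2}\|_{L^2}^2\,dt\leq C$; the $L^\infty(0,\infty;L^\gamma)$ bound only contributes a factor growing like $T$ once integrated in time (and every Gagliardo--Nirenberg step in a bounded domain carries a lower-order term that does the same). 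Tracking the exponents, a $T$-uniform bound on $\int_0^T\|\rho^{4\gamma-3\alpha}\|_{L^1}\,dt$ from these two ingredients forces, even after inserting the $r$-weighted $L^\infty$ bound, the Sobolev target to land \emph{exactly} on the endpoint $\|\rho^{(\gamma+\alpha-1)/2}\|_{L^{q_0+2}(\Omega)}^{q_0+2}$ with $q_0=2\gamma/(\gamma+\alpha-1)$; this restricts $\gamma$ (for instance, in $N=2$ with $\theta=0$ it gives $\gamma\leq 2\alpha-\tfrac12$) and does not cover the full range $\gamma>1$ in \eqref{C 2d unialpha}. In short, the residual you produce has no dissipation partner, and the claimed ``integrable in time with bound independent of $\tau$'' does not follow.

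The paper's proof avoids this by \emph{not} invoking Proposition \ref{Prop C n}: it returns to identity \eqref{uu2} and applies Young's inequality only once to the pressure and geometric terms, so as to keep a $u^2$ in the residual, arriving at
\[
\frac{d}{d\tau}\int_0^1 u^4\,dy+\varepsilon\int_0^1\Big(\rho^{\alpha-1}\tfrac{u^4}{r^2}+\rho^{1+\alpha}(\partial_yu)^2u^2 r^{2(N-1)}\Big)dy
\leq C\int_0^1\rho^{2\gamma-\alpha-1}u^2\,dy.
\]
The right-hand side is then factored as $\int_0^1\rho^{\alpha-1}\tfrac{u^2}{r^2}\cdot\rho^{2\gamma-2\alpha}r^2\,dy$; the $r$-weighted $L^\infty$ bounds of Propositions \ref{Prop C 2d r-weight}/\ref{Prop C 3d r-weight} make $\rho^{2\gamma-2\alpha}r^2$ a bounded multiplier (this is exactly where $\gamma\geq\alpha$ in 2D and $\gamma<3\alpha-1$ in 3D are used), so the residual is dominated by $C\int_0^1\rho^{\alpha-1}\tfrac{u^2}{r^2}\,dy$, whose time integral is $T$-uniformly bounded directly by \eqref{C u^2}. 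Retaining the velocity is precisely what produces a residual that is comparable to an already-controlled dissipation, rather than a pure density integral that is not.
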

	\begin{proof}
		Multiplying $\eqref{Equ 3}_2$ by \(r^{N-1}u^{3}\) and integrating the resulting equation over $(0,1)$, we obtain from \eqref{C2} that
		\begin{align}\label{uu2}
			\begin{split}
				&\frac{1}{4}\frac{d}{d\tau}\int_0^1u^{4}dy+
				\int_0^1\Big((\alpha(N-1)^2-(N-1)(N-2))\rho^{\alpha-1}\frac{u^{4}}{r^2}\\
				&\quad+3\alpha\rho^{1+\alpha} {(\partial_y u)}^2 u^{2}r^{2(N-1)}+4(N-1)(\alpha-1)\rho^\alpha u^{3}\partial_y ur^{N-2}\Big)dy\\
				&=\int_0^1 \Big(3\rho^{\gamma}u^{2}\partial_y u r^{N-1}+(N-1)\rho^{\gamma-1}\frac{u^{3}}{r} \Big)dy.
			\end{split}
		\end{align}
		Observing that \(2<n_N(\alpha)\), we apply Young's inequality to obtain a universal constant \(\varepsilon>0\) such that
		\begin{align*}
			\begin{split}
				&\frac{d}{d\tau}\int_0^1 u^4dy+2\varepsilon\int_0^1 \Big(\rho^{\alpha-1}\frac{u^4}{r^2}+\rho^{1+\alpha}(\partial_yu)^2u^2r^{2(N-1)}\Big)dy\\
				&\leq  C\int_0^1\rho^\gamma u^2|\partial_yu| r^{N-1}dy+C\int_0^1\rho^{\gamma-1}\frac{|u|^3}{r}dy\\
				&\leq C\int_0^1\left(\rho^{1+\alpha}(\partial_yu)^2u^{2}r^{2(N-1)}\right)^{\frac{1}{2}}\left(\rho^{2\gamma-\alpha-1}u^2\right)^{\frac{1}{2}}dy+C\int_0^1 \left(\rho^{\alpha-1}\frac{u^{4}}{r^2}\right)^{\frac{1}{2}}\left(\rho^{2\gamma-\alpha-1}u^2\right)^{\frac{1}{2}}dy\\
				&\leq \varepsilon \int_0^1 \rho^{1+\alpha}(\partial_yu)^2u^2r^{2(N-1)}dy +\varepsilon\int_0^1 \rho^{\alpha-1}\frac{u^{4}}{r^2}dy+C\int_0^1 \rho^{2\gamma-\alpha-1}u^2dy.
			\end{split}    
		\end{align*}
		
		When \(N=2\), using \eqref{C8}, we obtain, for sufficiently small \(\xi\),
		\begin{align*}
			\begin{split}
			&\quad C\int_0^1 \rho^{2\gamma-\alpha-1}u^2dy=C\int_0^1 \rho^{\alpha-1}\frac{u^2}{r^2}\rho^{2\gamma-2\alpha}r^2dy\\
			&\leq C\norm{\rho^{\alpha-\frac{1}{2}}r^\xi}^{\frac{2(\gamma-\alpha)}{\alpha-1/2}}_{L^\infty(0,R)}\int _0^1\rho^{\alpha-1}\frac{u^2}{r^2}dy\leq C\int _0^1\rho^{\alpha-1}\frac{u^2}{r^2}dy,
			\end{split}
		\end{align*}
		where we used
		\begin{align*}
			\frac{2(\gamma-\alpha)}{\alpha-1/2}\ge0\Leftrightarrow\gamma\ge\alpha.
		\end{align*}
		
		When \(N=3\), we use \eqref{C10} to get that
		\begin{align*}
			\begin{split}
			&\quad C\int_0^1 \rho^{2\gamma-\alpha-1}u^2dy=C\int_0^1 \rho^{\alpha-1}\frac{u^2}{r^2}\rho^{2\gamma-2\alpha}r^2dy\\ 
			&\leq C\norm{\rho^{\alpha-\frac{1}{2}}r^{\frac{1}{2}+\xi}}^{\frac{2(\gamma-\alpha)}{\alpha-1/2}}_{L^\infty(0,R)}\int_0^1 \rho^{\alpha-1}\frac{u^2}{r^2}dy\leq C\int _0^1\rho^{\alpha-1}\frac{u^2}{r^2}dy,
			\end{split}
		\end{align*}
		where we have used $\gamma<3\alpha-1$ to choose a sufficiently small \(\xi\) such that
		\begin{align*}
			0\leq \left(\frac{1}{2}+\xi\right)\frac{2(\gamma-\alpha)}{\alpha-1/2}\leq 2.
		\end{align*}
		
		Therefore, we obtain
		\begin{align*}
			\frac{d}{d\tau}\int_0^1 u^4dy+\varepsilon\int_0^1 \Big(\rho^{\alpha-1}\frac{u^4}{r^2}+\rho^{1+\alpha}(\partial_yu)^2u^2r^{2(N-1)}\Big)dy\leq C\int _0^1\rho^{\alpha-1}\frac{u^2}{r^2}dy.
		\end{align*}
		Integrating the above inequality with respect to $\tau$ and invoking \eqref{C u^2} yields \eqref{uu1}.
	\end{proof}
	
	\begin{prop}\label{Prop C 4}
		Assume that \eqref{C 2d unialpha} or \eqref{C 3d unialpha} holds. Then there exists a constant \(C>0\) independent of $T$ such that
		\begin{align}\label{C 2d 4}
			\sup_{0\leq t\leq T}\int_0^R\rho u^{4}r^{N-1}dr+\int_0^T\int_0^R\Big(\rho^{\alpha}u^{4}r^{N-3}
			+\rho^{\alpha} (\partial_ru)^2 u^{2}r^{N-1}\Big)drdt\leq C
		\end{align}
		and
		\begin{align}\label{C 2d l4}
			\sup_{0\leq t\leq T}\int_0^R \rho^{4\alpha-7}|\partial_r\rho|^4r^{N-1}dr+\int_0^T\int_0^R \rho^{\gamma+3\alpha-7}|\partial_r\rho|^4r^{N-1}drdt\leq C.
		\end{align}
	\end{prop}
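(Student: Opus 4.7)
The plan is to split the proof into two parts. The first estimate \eqref{C 2d 4} is simply the Eulerian reformulation of Proposition \ref{Prop 3.11}, while the second estimate \eqref{C 2d l4} will be obtained by applying Proposition \ref{Prop C m} with $m=2$ and then controlling the resulting source term via the weighted $L^\infty$ density bounds of Propositions \ref{Prop C 2d r-weight} and \ref{Prop C 3d r-weight}.

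First, I would return to the Lagrangian estimate \eqref{uu1}, which holds with a constant independent of $T$ under \eqref{C 2d unialpha} or \eqref{C 3d unialpha}. Applying the change of variables $dy=\rho r^{N-1}dr$ together with $\partial_y u=\partial_r u/(\rho r^{N-1})$, the three integrands on the left of \eqref{uu1} transform respectively into $\rho u^4 r^{N-1}$, $\rho^{\alpha}u^4 r^{N-3}$ and $\rho^{\alpha}(\partial_r u)^2 u^2 r^{N-1}$, which yields \eqref{C 2d 4} immediately.

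Next, I would observe that $2\in M_{\text{set}}$ (take $s=1$, $k=0$ in the definition), so Proposition \ref{Prop C m} with $m=2$ yields
\begin{align*}
    \int_0^1 \bigl(u+r^{N-1}\partial_y\rho^\alpha\bigr)^{4}dy+\int_0^\tau\!\!\int_0^1\rho^{\gamma-\alpha}\bigl(r^{N-1}\partial_y\rho^\alpha\bigr)^{4}dyd\tau \le C+C\int_0^\tau\!\!\int_0^1\rho^{\gamma-\alpha}u^{4}dyd\tau.
\end{align*}
The crux is to absorb the source on the right into a constant independent of $T$. I would decompose
\[
    \rho^{\gamma-\alpha}u^{4}=\rho^{\alpha-1}\frac{u^{4}}{r^{2}}\cdot\rho^{\gamma-2\alpha+1}r^{2}
\]
and bound the factor $\rho^{\gamma-2\alpha+1}r^{2}$ pointwise: in 2D, $\|\rho^{\alpha-1/2}r^\xi\|_{L^\infty}\le C$ gives $\rho^{\gamma-2\alpha+1}r^2\le C r^{2-\xi(\gamma-2\alpha+1)/(\alpha-1/2)}$, uniformly bounded for small $\xi>0$ since $\gamma-2\alpha+1>0$ (as $\alpha<1<\gamma$); in 3D, $\|\rho^{\alpha-1/2}r^{1/2+\xi}\|_{L^\infty}\le C$ combined with the restriction $\gamma<3\alpha-1$ ensures $(1/2+\xi)(\gamma-2\alpha+1)/(\alpha-1/2)<2$ for small $\xi>0$. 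Then \eqref{uu1} handles the residual factor $\rho^{\alpha-1}u^4/r^2$.

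Finally, I would pass back to Eulerian coordinates via $r^{N-1}\partial_y\rho^\alpha=\alpha\rho^{\alpha-2}\partial_r\rho$, so that $(r^{N-1}\partial_y\rho^\alpha)^{4}=\alpha^{4}\rho^{4\alpha-8}|\partial_r\rho|^{4}$; together with $dy=\rho r^{N-1}dr$, this produces the target weight $\rho^{4\alpha-7}|\partial_r\rho|^{4}r^{N-1}$, and similarly $\rho^{\gamma+3\alpha-7}|\partial_r\rho|^{4}r^{N-1}$ for the space-time term. An elementary inequality $(r^{N-1}\partial_y\rho^\alpha)^{4}\le 8(u+r^{N-1}\partial_y\rho^\alpha)^{4}+8u^{4}$, combined once more with \eqref{uu1}, extracts \eqref{C 2d l4}. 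The main obstacle is the 3D absorption of $\rho^{\gamma-2\alpha+1}r^{2}$: the negative $r$-power coming from the weighted $L^\infty$ bound must be compensated by the Jacobian weight $r^{2}$, and this is precisely where the upper restriction $\gamma<3\alpha-1$ in \eqref{C 3d unialpha} enters the argument.
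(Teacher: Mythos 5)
Your proof matches the paper's: convert \eqref{uu1} to Eulerian coordinates to get \eqref{C 2d 4}, apply Proposition \ref{Prop C m} with $m=2$, split off and bound $\rho^{\gamma-2\alpha+1}r^{2}$ pointwise via the weighted $L^\infty$ density estimates of Propositions \ref{Prop C 2d r-weight} and \ref{Prop C 3d r-weight}, and then pass the density-gradient term back to Eulerian variables using $(r^{N-1}\partial_y\rho^\alpha)^4\,dy = \alpha^4\rho^{4\alpha-7}|\partial_r\rho|^4 r^{N-1}\,dr$. One small precision on the 3D case: the absorption step only needs $\gamma<6\alpha-3$ (which the paper invokes directly when choosing $\xi$); the tighter constraint $\gamma<3\alpha-1$ from \eqref{C 3d unialpha} is what makes \eqref{uu1} hold in Proposition \ref{Prop 3.11}, and it implies $\gamma<6\alpha-3$ because $\alpha>\tfrac23$.
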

	\begin{proof}
		Rewriting \eqref{uu1} in Eulerian coordinates gives \eqref{C 2d 4}. 
		
		We next prove \eqref{C 2d l4}. Noting that \(2\in M_{\mathrm{set}}\), so we can apply Proposition \ref{Prop C m} to obtain
		\begin{align*}
			\begin{split}
				&\quad\sup_{0\leq t\leq T}\int_0^R \rho^{4\alpha-7}|\partial_r\rho|^4r^{N-1}dr+\int_0^T\int_0^R \rho^{\gamma+3\alpha-7}|\partial_r\rho|^4r^{N-1}drdt\\
				&\leq C\sup_{0\leq t\leq T}\int_0^R\rho u^{4}r^{N-1}dr+ C\sup_{0\leq t\leq T}\int_0^R\rho(u+\rho^{-1}\partial_r\rho^\alpha)^{4}r^{N-1}dr+\int_0^T\int_0^R \rho^{\gamma+3\alpha-7}|\partial_r\rho|^4r^{N-1}drdt\nonumber\\
				&\leq C+C\int_0^T\int_0^R\rho^{\gamma-\alpha+1}u^{4}r^{N-1}drdt\\
				&\leq C+C\sup_{0\le t\le T}\norm{\rho^{\gamma-2\alpha+1}r^2}_{L^\infty(0,R)}\int_0^T\int_0^R\rho^\alpha u^4r^{N-3}drdt\\
				&\leq C+C\sup_{0\le t\le T}\norm{\rho^{\gamma-2\alpha+1}r^2}_{L^\infty(0,R)},
			\end{split}
		\end{align*}
		where we used \eqref{C 2d 4} and
		\begin{align*}
			\gamma\ge2\alpha-1.
		\end{align*}
		Therefore, it suffices to obtain \eqref{C 2d l4} by estimating \(\sup_{0\le t\le T}\norm{\rho^{\gamma-2\alpha+1}r^2}_{L^\infty(0,R)}\). 
		
		When \(N=2\), using \eqref{C8}, we obtain,
		\begin{align*}
			\sup_{0\le t\le T}\norm{\rho^{\gamma-2\alpha+1}r^2}_{L^\infty(0,R)}\leq C\sup_{0\le t\le T}\norm{\rho^{\alpha-\frac{1}{2}}r^{\xi}}_{L^\infty(0,R)}^{\frac{\gamma-2\alpha+1}{\alpha-1/2}}\leq C,
		\end{align*}
		where \(\xi\) is taken sufficiently small so that
		\begin{align*}
			\left(\frac{\gamma-2\alpha+1}{\alpha-1/2}\right)\xi\leq2.
		\end{align*}
		
		When \(N=3\), we use \eqref{C10} to obtain
		\begin{align*}
			\sup_{0\le t\le T}\norm{\rho^{\gamma-2\alpha+1}r^2}_{L^\infty(0,R)}\leq C\sup_{0\le t\le T}\norm{\rho^{\alpha-\frac{1}{2}}r^{\frac{1}{2}+\xi}}_{L^\infty(0,R)}^{\frac{\gamma-2\alpha+1}{\alpha-1/2}}\leq C,
		\end{align*}
		where we have used $\gamma<6\alpha-3$ to choose a sufficiently small \(\xi\) such that
		\begin{align*}
			\left(\frac{1}{2}+\xi\right)\frac{\gamma-2\alpha+1}{\alpha-1/2}\leq 2.
		\end{align*}
		
		This completes the proof of Proposition \ref{Prop C 4}.
	\end{proof}
	
	Next, we devote ourselves to obtaining a uniform upper bound for the density.
	\begin{prop}\label{Prop C uniRT}
		Assume that \eqref{C 2d unialpha} or \eqref{C 3d unialpha} holds. Then there exists a constant \(C>0\) independent of $T$ such that
		\begin{align}\label{C uniRT}
			R_T\leq C.
		\end{align}
	\end{prop}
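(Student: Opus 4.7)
The plan is to follow verbatim the structure of the proofs of Propositions \ref{Prop C 2d RT} and \ref{Prop C 3d RT}, specialized to $k=2$, but with the $T$-dependent estimates \eqref{C 2d l} and \eqref{C 3d l} replaced by the uniform-in-$T$ gradient estimate \eqref{C 2d l4} from Proposition \ref{Prop C 4}. Indeed, \eqref{C 2d l4} is precisely the $k=2$ analogue of those gradient bounds, but carries no $T$-dependence and, in the three-dimensional case, no factor of the form $R_T^{2k\sigma}$. Consequently every step carries over with a constant independent of $T$, and in the $N=3$ case the Young-type absorption step used in Proposition \ref{Prop C 3d RT} is no longer required.

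For $N=2$, the one-dimensional Sobolev embedding on $(0,R)$ gives
\[
\norm{\rho^\alpha}_{L^\infty(0,R)} \leq C\int_0^R \rho^\alpha\,dr + C\int_0^R |\partial_r \rho^\alpha|\,dr.
\]
I would write $\partial_r \rho^\alpha = \tfrac{\alpha}{\alpha - 3/4}\,\rho^{3/4}\partial_r\rho^{\alpha-3/4}$ (valid for $\alpha \neq 3/4$) and apply H\"older to obtain
\[
\int_0^R |\partial_r \rho^\alpha|\,dr \leq C\Big(\int_0^R |\partial_r \rho^{\alpha-3/4}|^4 r\,dr\Big)^{1/4}\Big(\int_0^R \rho\, r^{-1/3}\,dr\Big)^{3/4}.
\]
The first factor is bounded uniformly in $T$ by \eqref{C 2d l4}; the second, after rewriting $\int_0^R \rho\, r^{-1/3}\,dr$ as a weighted integral over $\Omega$ and applying H\"older with $p=4$ against the locally integrable weight $r^{-7/9}$, is bounded uniformly in $T$ via the $L^4(\Omega)$ estimate of $\rho$ from \eqref{C7}. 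The term $\int_0^R \rho^\alpha\,dr$ is handled analogously by splitting $\rho^\alpha = (\rho^\alpha r^{1/3})\cdot r^{-1/3}$ and applying H\"older with exponents $3$ and $3/2$, using \eqref{C7} with $s=3\alpha$. This yields $\norm{\rho}_{L^\infty(\Omega)} \leq C$ uniformly in $T$.

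For $N=3$, I would pick an exponent $\beta$ with
\[
\max\Bigl\{0,\ \alpha - \tfrac{3}{4}\Bigr\} < \beta < \min\Bigl\{2\alpha - 1,\ \tfrac{3\alpha - 2}{2}\Bigr\},
\]
a non-empty interval throughout the range \eqref{C 3d unialpha}: one checks that $\alpha > 2/3$ makes $(3\alpha-2)/2 > 0$, that $\alpha > 0$ forces $(3\alpha-2)/2 < 2\alpha - 1$, and that $\alpha > 1/2$ gives $\alpha - 3/4 < (3\alpha-2)/2$. Applying the same one-dimensional Sobolev embedding to $\rho^\beta$ and splitting $\partial_r \rho^\beta = \tfrac{\beta}{\alpha - 3/4}\rho^{\beta - \alpha + 3/4}\partial_r \rho^{\alpha - 3/4}$ by H\"older, the gradient factor is controlled by \eqref{C 2d l4}, the density factor by \eqref{C9}, and the remaining radial weights $r^{-s}$ are integrable after choosing $\xi>0$ sufficiently small in the weighted $L^\infty$ bound \eqref{C10}. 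Since no $R_T^\sigma$ factor is produced, the resulting inequality is $R_T^\beta \leq C$, giving $R_T \leq C$ uniformly in $T$.

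The principal obstacle will be checking, in the three-dimensional setting, that the window $2\alpha - 1 \leq \gamma < 3\alpha - 1$ imposed by Proposition \ref{Prop C 4} and \eqref{C 3d unialpha} admits a single $\xi > 0$ small enough to render all the radial weight integrals convergent simultaneously, exactly in the spirit of the verification carried out for Proposition \ref{Prop C 3d RT}. A minor technical point is the degenerate case $\alpha = 3/4$ in two dimensions, where the representation $\partial_r \rho^\alpha = C_\alpha \rho^{3/4}\partial_r\rho^{\alpha - 3/4}$ breaks down; in that case one works with $\log \rho$ in place of $\rho^{\alpha - 3/4}$, so that \eqref{C 2d l4} reads $\int_0^R |\partial_r \log \rho|^4\, r\,dr \leq C$, and the same H\"older and Sobolev arguments still deliver the uniform upper bound on $\rho$.
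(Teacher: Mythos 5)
Your proposal is correct and reproduces the paper's proof essentially verbatim: the $N=2$ case uses the same two H\"older splits against the uniform $L^4$ gradient bound \eqref{C 2d l4} and the $L^4(\Omega)$ estimate from \eqref{C7}, and the $N=3$ case chooses the same intermediate exponent $\beta$ (your upper bound $\min\{2\alpha-1,(3\alpha-2)/2\}$ simplifies to $(3\alpha-2)/2$ since $\alpha>0$) together with the $r$-weighted $L^\infty$ bound \eqref{C10}. One small remark: the paper applies H\"older directly to the raw quantity $\rho^{4\alpha-7}|\partial_r\rho|^4$ rather than factoring it as $c\,|\partial_r\rho^{\alpha-3/4}|^4$, so the degenerate case $\alpha=3/4$ never arises and your $\log\rho$ fallback, while correct, is unnecessary.
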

	\begin{proof}
		When $N=2$, using \eqref{C7} and \eqref{C 2d l4}, we obtain from the one-dimensional Sobolev embedding that
		\begin{align*}
			&\norm{\rho^\alpha}_{L^\infty(0,R)}\leq C\int_0^R \rho^\alpha dr+C\int_0^R |\partial_r\rho^\alpha|dr\\
			&\leq C+C\left(\int_0^R\rho^{4\alpha-7}|\partial_r\rho|^4rdr\right)^{\frac{1}{4}}\left(\int_0^R \rho r^{-\frac{1}{3}} dr\right)^{\frac{3}{4}}\leq C+C\int_0^R\rho r^{-\frac{1}{3}}dr \\
			&\leq C+C\left(\int_0^R \rho^{4}rdr\right)^{\frac{1}{4}}\left(\int_0^Rr^{-\frac{7}{9}}dr\right)^{\frac{3}{4}}\leq C.
		\end{align*}   
		
		When \(N=3\), we choose \(\beta\) satisfying
		\begin{align}\label{3d uni beta}
			\max\Bigl\{0,\alpha-\frac3{4}\Bigr\}
			<\beta<\frac{3}{2}\alpha-1.
		\end{align}
		Combining the one-dimensional Sobolev embedding with \eqref{C9}, \eqref{C10} and \eqref{C 2d l4}, we obtain
		\begin{align*}
			\begin{split}
				&\norm{\rho^\beta}_{L^\infty(0,R)}\leq C\int_0^R\rho^\beta dr+C\int_0^R |\partial_r\rho^\beta| dr\\
				&\leq C+C\left(\int_0^R\rho^{4\alpha-7}|\partial_r\rho|^4r^2dr\right)^{\frac{1}{4}}\left(\int_0^R\rho^{\frac{4}{3}(\beta-\alpha)+1}r^{-\frac{2}{3}}dr\right)^{\frac{3}{4}}\\
				&\leq C+C\norm{\rho^{\alpha-\frac{1}{2}}r^{\frac{1}{2}+\xi}}_{L^\infty(0,R)}^{\frac{\frac{4}{3}(\beta-\alpha)+1}{\alpha-1/2}}\int_0^R r^{-\frac{2}{3}-\left(\frac{1}{2}+\xi\right)\frac{\frac{4}{3}(\beta-\alpha)+1}{\alpha-1/2}}dr\\
				&\leq C,
			\end{split}
		\end{align*}
		where we have used \eqref{3d uni beta} to choose a sufficiently small \(\xi>0\) such that
		\begin{align*}
			-\frac{2}{3}-\left(\frac{1}{2}+\xi\right)\frac{\frac{4}{3}(\beta-\alpha)+1}{\alpha-1/2}>-1.
		\end{align*}
		
		This completes the proof of Proposition \ref{Prop C uniRT}.
	\end{proof}
	
	Under the hypotheses of Theorem \ref{Thm2} we have thus established a uniform upper bound for the density on $\Omega\times[0,T]$. In two dimensions, \eqref{C 2d alpha} implies \eqref{C 2d unialpha}, so the positive lower bound follows directly from Proposition \ref{Prop C 2d VT} under assumption \eqref{C 2d unialpha}.  
	In three dimensions, however, \eqref{C 3d alpha} alone does not guarantee \eqref{C 3d unialpha}. Consequently, an additional statement is required to treat the positive lower bound of density under assumption \eqref{C 3d unialpha}, and we provide it next.
	
	\begin{prop}\label{Prop ex 3dVT}
		Assume that \eqref{C 3d unialpha} holds. There exists a constant \(C(T)>0\) such that
		\begin{align}\label{C 3d VT2}
			V_T\leq C(T).
		\end{align}
	\end{prop}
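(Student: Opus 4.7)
The plan is to reproduce the argument of Proposition \ref{Prop C 3d VT}, with the crucial difference that the uniform upper bound $R_T \le C$ from Proposition \ref{Prop C uniRT} is now available under hypothesis \eqref{C 3d unialpha}. This uniform bound replaces the refined $r$-weighted estimates of Proposition \ref{Prop C 3d k} (which rested on the stricter hypothesis \eqref{C 3d alpha}), thereby allowing a direct derivation of the key $W^{1,2k}$ estimate on $\rho^{\alpha-1+\frac{1}{2k}}$ that the Lagrangian closing step requires.

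The first step is the selection of an admissible exponent $k$. Since $\alpha > 0.689 > 0.686$, Lemma \ref{Lem C 3k} gives $n_3(\alpha) > 1.91$, and applying the elementary implication \eqref{C23} at $n = n_3(\alpha)$ yields $\frac{1}{2(1-\alpha)} < n_3(\alpha)$. By the density of $M_{set}$ in $(1,\infty)$, I fix $k \in M_{set}$ with
\begin{align*}
\frac{1}{2(1-\alpha)} < k < n_3(\alpha).
\end{align*}
This choice guarantees both $\alpha + \frac{1}{2k} < 1$ (needed to close via Young's inequality at the end) and admissibility of Proposition \ref{Prop C n} with $n=k$. The bound $R_T \le C$ then immediately controls the right-hand side of Proposition \ref{Prop C n}, so that $\sup_{0\le t\le T}\int_0^R \rho u^{2k} r^2 dr$ and $\int_0^T\!\int_0^R \rho^\alpha u^{2k}\, dr dt$ are both bounded by $C(T)$. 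Feeding these into Proposition \ref{Prop C m} with $m=k$ and observing that $\gamma > 1 > 2\alpha - 1$ together with $R_T \le C$ render $\rho^{\gamma - 2\alpha + 1} r^2$ uniformly bounded (so the troublesome term $\int\!\int \rho^{\gamma-\alpha+1} u^{2k} r^2 dr dt$ reduces to a multiple of $\int\!\int \rho^\alpha u^{2k}\, dr dt$), I obtain
\begin{align*}
\sup_{0\le t\le T}\int_0^R \left|\partial_r \rho^{\alpha-1+\frac{1}{2k}}\right|^{2k} r^2 dr \le C(T).
\end{align*}

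With this estimate at hand, the Lagrangian closure of Proposition \ref{Prop C 3d VT} transfers verbatim: setting $v = 1/\rho$, the continuity equation yields $\int_0^1 v(y,\tau)\, dy \le C$, and the one-dimensional Sobolev embedding combined with the three-factor H\"older estimate of \eqref{C27} produces $v(y,\tau) \le C + C(T)\, V_T^{\alpha+\frac{1}{2k}}$. Taking the supremum over $(y,\tau)\in[0,1]\times[0,T]$ and applying Young's inequality with $\alpha + \frac{1}{2k} < 1$ absorbs the $V_T$-term and gives $V_T \le C(T)$. The main obstacle is the narrow parameter regime in which \eqref{C 3d unialpha} holds but \eqref{C 3d alpha} fails (for instance $\alpha$ close to $0.7$ and $\gamma$ close to $3\alpha-1$), where Lemma \ref{Lem C 3k} does not directly supply a workable $k$. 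This is circumvented precisely because the uniform bound $R_T \le C$ frees us from invoking the $r$-weighted exponents and the supplementary constraint \eqref{C22} on $\gamma$, reducing the admissibility condition on $k$ to $\frac{1}{2(1-\alpha)} < k < n_3(\alpha)$, which is always satisfiable in view of \eqref{C23}.
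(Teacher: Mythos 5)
Your proof is correct and follows the same route as the paper's (brief) proof. Both select $k\in M_{\mathrm{set}}$ so that $\frac{1}{2(1-\alpha)}<k<n_3(\alpha)$ (i.e., \eqref{C21} holds, with the $\gamma$-condition \eqref{C22} no longer needed once $R_T\le C$ is in hand), use Proposition \ref{Prop C n} with the uniform bound $R_T\le C$ from Proposition \ref{Prop C uniRT} to control $\sup_t\int_0^R\rho u^{2k}r^2\,dr$ and $\int_0^T\int_0^R\rho^\alpha u^{2k}\,drdt$, feed these into the density-gradient estimate (the analogue of \eqref{C 2d l4}, now trivialized by $\rho^{\gamma-2\alpha+1}r^2\le C$), and close via the Lagrangian argument of Proposition \ref{Prop C 3d VT} using $\alpha+\frac{1}{2k}<1$; you have simply filled in the details that the paper outlines.
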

	\begin{proof}
		For brevity, we only outline the proof.  As stated in Lemma \ref{Lem C 3k}, for $\alpha\in(0.689,1)$ we can choose $k\in M_{set}$ satisfying \eqref{C21}.  Applying Proposition~\ref{Prop C n} and \eqref{C uniRT} to this \(k\), we obtain a $T$-dependent bound on \(\sup_{0\le t\le T}\int_0^R\rho u^{2k}r^2\mathrm{d}r\), which in turn guarantees the boundedness of \(\sup_{0\le t\le T}\int_0^R|\partial_r \rho^{\alpha-1+\frac{1}{2k}}|^{2k}r^2\mathrm{d}r\) by an argument similar to the one used for \eqref{C 2d l4}.  Once the estimate for the density derivative is available, repeating the proof of Proposition~\ref{Prop C 3d VT} yields \eqref{C 3d VT2}.
	\end{proof}
	
	\subsection{Higher-order estimates}
	In this subsection we carry out higher-order estimates for the local classical solution $(\rho,\mathbf u)$. Throughout this subsection we always assume $N=2$ or $3$. For brevity, we denote the standard Lebesgue and Sobolev spaces as follows:
	\begin{align*}
		L^p=L^p(\Omega),\quad W^{k,p}=W^{k,p}(\Omega),\quad H^k=W^{k,2}(\Omega).
	\end{align*}

\begin{prop}\label{Prop H1}
		Let $N=2$ with $l>1$, or $N=3$ with $1.5<l<3$, and assume that
	\begin{align}\label{hh2}
		\begin{split}
			\sup_{0\leq t\leq T}\Big(\norm{\mathbf{u}}_{L^{2l}}&+\norm{\rho}_{L^\infty}+\norm{\rho^{-1}}_{L^\infty}+\norm{\nabla\rho}_{L^2}+\norm{\nabla\rho}_{L^{2l}}\Big)\\
			&+\int_0^T\left(\norm{\nabla\rho}_{L^2}^2+\norm{\nabla\rho}_{L^{2l}}^{2l}+\norm{\nabla\mathbf{u}}_{L^2}^2\right)dt\leq M.
		\end{split}
	\end{align}
	Then there exists a constant \(C(M)>0\), independent of time \(T\), such that
	\begin{align}\label{h1}
		\sup_{0\leq t\leq T}\norm{\nabla\mathbf{u}}_{L^2}+\int_0^T\int_\Omega \rho |\dot{\mathbf{u}}|^2dxdt\leq C(M).
	\end{align}
\end{prop}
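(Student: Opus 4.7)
The plan is to test the momentum equation, rewritten in the material-derivative form
\[
\rho\dot{\mathbf{u}}+\nabla P=\mathrm{div}(\mu(\rho)\mathbb{D}\mathbf{u})+\nabla(\lambda(\rho)\mathrm{div}\mathbf{u}),
\]
against $\dot{\mathbf{u}}$ and integrate over $\Omega$; this is the classical Hoff-type device adapted to density-dependent viscosities. The left-hand side gives $\int_\Omega\rho|\dot{\mathbf{u}}|^2dx$ together with a pressure contribution, while the viscous terms on the right must be rearranged into a time-derivative plus lower-order remainder.

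Manipulating the viscous contributions requires two ingredients: integration by parts (using the homogeneous boundary condition for $\mathbf{u}$) and the identity $\partial_t\mu(\rho)=-\mathbf{u}\cdot\nabla\mu(\rho)-\rho\mu'(\rho)\mathrm{div}\mathbf{u}$ coming from the continuity equation. Combining these with the commutator $\partial_j\dot u_i=\partial_t\partial_ju_i+u_k\partial_k\partial_ju_i+\partial_ju_k\partial_ku_i$, I would arrive at the standard identity
\[
\int_\Omega\bigl[\mathrm{div}(\mu\mathbb{D}\mathbf{u})+\nabla(\lambda\mathrm{div}\mathbf{u})\bigr]\cdot\dot{\mathbf{u}}\,dx=-\frac12\frac{d}{dt}\int_\Omega\bigl(\mu|\mathbb{D}\mathbf{u}|^2+\lambda(\mathrm{div}\mathbf{u})^2\bigr)dx+I_1,
\]
where $I_1$ collects cubic remainders of the schematic types $\mu|\nabla\mathbf{u}|^2\nabla\mathbf{u}$, $\mu'(\rho)\nabla\rho\cdot\mathbf{u}\,|\nabla\mathbf{u}|^2$, and $\mu'(\rho)\nabla\rho\cdot\nabla\mathbf{u}\cdot(\mathbf{u}\cdot\nabla\mathbf{u})$. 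Similarly, integration by parts and one use of continuity on the pressure side produce $\frac{d}{dt}\int_\Omega P\,\mathrm{div}\mathbf{u}\,dx+I_2$, with $I_2$ collecting the terms $\int\gamma P|\mathrm{div}\mathbf{u}|^2$ and $\int P\,\partial_ju_k\,\partial_ku_j$.

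The remainders $I_1$ and $I_2$ are then bounded using \eqref{hh2}. The quadratic-in-$\nabla\mathbf{u}$ pieces follow directly from $\|\rho\|_{L^\infty}$ and $\|\nabla\rho\|_{L^{2l}}$ combined with Gagliardo--Nirenberg and the Sobolev embedding $W^{1,2l}\hookrightarrow L^\infty$ (which is available precisely because $2l>N$). For the cubic mixed terms, I would invoke Lam\'e-type elliptic regularity on the momentum equation to bound $\|\nabla^2\mathbf{u}\|_{L^2}$ by $\|\sqrt{\rho}\dot{\mathbf{u}}\|_{L^2}+\|\nabla\rho\|_{L^{2l}}\|\nabla\mathbf{u}\|_{L^{2l/(l-1)}}+\|\nabla P\|_{L^2}$, then interpolate $\|\nabla\mathbf{u}\|_{L^{2l/(l-1)}}$ between $\|\nabla\mathbf{u}\|_{L^2}$ and $\|\nabla^2\mathbf{u}\|_{L^2}$. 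Because $\|\rho^{-1}\|_{L^\infty}\leq M$ converts $\|\dot{\mathbf{u}}\|_{L^2}$ into $\|\sqrt\rho\dot{\mathbf{u}}\|_{L^2}$, an absorption argument permits a small fraction of $\int_\Omega\rho|\dot{\mathbf{u}}|^2dx$ to swallow the resulting higher-order pieces.

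The main obstacle will be to ensure that the factor left multiplying the state variable in the resulting differential inequality is itself time-integrable against the bounds in \eqref{hh2}. In particular, the three-dimensional restriction $l<3$ is imposed precisely to keep the interpolation exponents subcritical, so that every leftover power of $\|\nabla\mathbf{u}\|_{L^2}$ can either be absorbed into the dissipation or paired with the integrable quantities $\|\nabla\mathbf{u}\|_{L^2}^2$ and $\|\nabla\rho\|_{L^{2l}}^{2l}$. Once the differential inequality
\[
\frac{d}{dt}\Phi(t)+\int_\Omega\rho|\dot{\mathbf{u}}|^2dx\leq C(M)\bigl(1+\Phi(t)\bigr)g(t),\qquad \int_0^Tg(t)\,dt\leq C(M),
\]
is established with $\Phi(t):=\int_\Omega\bigl(\mu|\mathbb{D}\mathbf{u}|^2+\lambda(\mathrm{div}\mathbf{u})^2-2P\,\mathrm{div}\mathbf{u}\bigr)dx+C_0$, Gronwall's inequality together with the initial regularity $\mathbf{u}_0\in H_0^1$ yields the desired estimate \eqref{h1}.
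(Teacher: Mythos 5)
Your proposal follows the same core strategy as the paper's proof: test the momentum equation against $\dot{\mathbf{u}}$, pull a time derivative of the quadratic viscous energy $\int(\mu(\rho)|\nabla\mathbf{u}|^2+\lambda(\rho)(\div\mathbf{u})^2)dx$ out of the viscous terms, control the cubic remainders via the Lam\'e elliptic estimate $\|\mathbf{u}\|_{H^2}\leq C(M)(\|\sqrt{\rho}\dot{\mathbf{u}}\|_{L^2}+\|\nabla\rho\|_{L^2}+\|\nabla\mathbf{u}\|_{L^2})$ and Gagliardo--Nirenberg interpolation of $\|\nabla\mathbf{u}\|_{L^{2l/(l-1)}}$, then absorb a small multiple of $\|\sqrt{\rho}\dot{\mathbf{u}}\|_{L^2}^2$. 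Two small divergences from the paper: (i) you fold the pressure into a time-derivative $\frac{d}{dt}\int P\,\div\mathbf{u}\,dx$ plus quadratic commutators, the classical Hoff device needed when $\rho^{-1}\notin L^\infty$; since hypothesis \eqref{hh2} supplies $\|\rho^{-1}\|_{L^\infty}\leq M$, the paper instead estimates $\bigl|\int\nabla P\cdot\dot{\mathbf{u}}\,dx\bigr|\leq\tfrac16\|\sqrt{\rho}\dot{\mathbf{u}}\|_{L^2}^2+C(M)\|\nabla\rho\|_{L^2}^2$ by H\"older alone, which avoids the extra bookkeeping with $\Phi(t)$ and keeps the functional sign-definite without a $-2P\div\mathbf{u}$ offset; and (ii) your closing differential inequality $\frac{d}{dt}\Phi+\int\rho|\dot{\mathbf{u}}|^2\leq C(M)(1+\Phi)g$ invokes Gronwall, whereas the paper's remainder is already a fixed $C(M)(\|\nabla\rho\|_{L^2}^2+\|\nabla\mathbf{u}\|_{L^2}^2)$, so direct time-integration against \eqref{hh2} suffices (Gronwall is reserved for Proposition \ref{Prop H2}). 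Both variants are valid here; the paper's is slightly leaner precisely because the density is uniformly bounded from below. Also note that in the radially symmetric setting $\mathbb{D}\mathbf{u}=\nabla\mathbf{u}$, which the paper exploits to write the viscosity as $\div(\mu(\rho)\nabla\mathbf{u})$ from the start — your $\mathbb{D}\mathbf{u}$ form coincides with it but carries along redundant symmetrization terms.
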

\begin{proof}
	Note that in the radially symmetric setting, \(\mathbb{D}\mathbf{u}=\nabla \mathbf{u}\). Therefore, we rewrite $\eqref{0}_2$ as
	\begin{align}\label{hh3}
		\rho\dot{\mathbf{u}}+\nabla P=\div\left(\mu(\rho)\nabla\mathbf{u}\right)+\nabla(\lambda(\rho)\div\mathbf{u})
	\end{align}
	Multiplying the above equation by $\dot{\mathbf{u}}$ and integrating the resulting equation over \(\Omega\) by parts gives
	\begin{align}\label{hI0}
		\int_\Omega \rho|\dot{\mathbf{u}}|^2dx=-\int_\Omega\nabla P\cdot\dot{\mathbf{u}}dx-\int_\Omega \mu(\rho)\nabla\mathbf{u}:\nabla\dot{\mathbf{u}}dx-\int_\Omega \lambda(\rho)\div\mathbf{u}\div\dot{\mathbf{u}}dx:=\sum_{i=1}^{3}I_i.
	\end{align}
	Hölder's inequality implies
	\begin{align}\label{hI1}
		\begin{split}
		I_1&=-\int_\Omega \nabla P\cdot\dot{\mathbf{u}}dx\leq C(M)\int_\Omega |\nabla\rho|\sqrt{\rho}|\dot{\mathbf{u}}|dx\\
		&\leq \frac{1}{6}\norm{\sqrt{\rho}\dot{\mathbf{u}}}_{L^2}^2+C(M)\norm{\nabla\rho}_{L^2}^2.
		\end{split}
	\end{align}
	From \eqref{hh3} we know that \(\mathbf{u}\) satisfies the elliptic system:
	\begin{align}\label{hh4}
		\left\{
		\begin{array}{l}
			\Delta\mathbf{u}+(\alpha-1)\nabla\div\mathbf{u}=\frac{1}{\rho^\alpha}(\rho\dot{\mathbf{u}}+\nabla P-\nabla\mu(\rho)\cdot\nabla\mathbf{u}-\nabla\lambda(\rho)\div\mathbf{u}),\\
			\mathbf{u}|_{\partial \Omega}=0.
		\end{array}
		\right.
	\end{align}
	Standard \(L^p\) estimates for elliptic systems and \eqref{hh2} imply
	\begin{align*}
		\norm{\mathbf{u}}_{H^2}&\leq C(M)(\norm{\rho\dot{\mathbf{u}}}_{L^2}+\norm{\nabla\rho}_{L^2}+\norm{|\nabla\rho||\nabla\mathbf{u}|}_{L^2})\\
		&\leq C(M)(\norm{\sqrt{\rho}\dot{\mathbf{u}}}_{L^2}+\norm{\nabla\rho}_{L^2}+\norm{\nabla\rho}_{L^{2l}}\norm{\nabla\mathbf{u}}_{L^{\frac{2l}{l-1}}})\\
		&\leq C(M)(\norm{\sqrt{\rho}\dot{\mathbf{u}}}_{L^2}+\norm{\nabla\rho}_{L^2}+\norm{\nabla\mathbf{u}}_{L^2}^{1-\frac{N}{2l}}\norm{\nabla\mathbf{u}}_{H^1}^{\frac{N}{2l}})\\
		&\leq \frac{1}{2}\norm{\mathbf{u}}_{H^2}+C(M)(\norm{\sqrt{\rho}\dot{\mathbf{u}}}_{L^2}+\norm{\nabla\rho}_{L^2}+\norm{\nabla\mathbf{u}}_{L^2}),
	\end{align*}
	Therefore, we obtain
	\begin{align}\label{hh5}
		\norm{\mathbf{u}}_{H^2}\leq C(M)(\norm{\sqrt{\rho}\dot{\mathbf{u}}}_{L^2}+\norm{\nabla\rho}_{L^2}+\norm{\nabla\mathbf{u}}_{L^2}).
	\end{align}
	Using \eqref{hh2}, \eqref{hh5} and Young’s inequality, we obtain
	\begin{align}\label{hI2}
		\begin{split}
			&I_2=-\int_\Omega \mu(\rho)\nabla\mathbf{u}:\nabla\mathbf{u}_tdx-\int_\Omega\mu(\rho)\nabla\mathbf{u}:\nabla(\mathbf{u}\cdot\nabla\mathbf{u})dx\\
			&=-\frac{1}{2}\frac{d}{dt}\int_\Omega \mu(\rho)|\nabla\mathbf{u}|^2dx+\frac{1}{2}\int_\Omega\partial_t\mu(\rho)|\nabla\mathbf{u}|^2dx-\int_\Omega\mu(\rho)\nabla\mathbf{u}:\nabla(\mathbf{u}\cdot\nabla\mathbf{u})dx\\
			&=-\frac{1}{2}\frac{d}{dt}\int_\Omega \mu(\rho)|\nabla\mathbf{u}|^2dx-\frac{1}{2}\int_\Omega\div(\mu(\rho)\mathbf{u})|\nabla\mathbf{u}|^2dx\\
			&\quad-\frac{\alpha-1}{2}\int_\Omega\mu(\rho)\div\mathbf{u}|\nabla\mathbf{u}|^2dx-\int_\Omega\mu(\rho)\nabla\mathbf{u}:\nabla(\mathbf{u}\cdot\nabla\mathbf{u})dx\\
			&\leq -\frac{1}{2}\frac{d}{dt}\int_\Omega \mu(\rho)|\nabla\mathbf{u}|^2dx+C(M)\int_\Omega|\nabla\rho||\mathbf{u}||\nabla\mathbf{u}|^2dx+C(M)\int_\Omega |\mathbf{u}||\nabla\mathbf{u}||\nabla^2\mathbf{u}|dx\\
			&\leq -\frac{1}{2}\frac{d}{dt}\int_\Omega \mu(\rho)|\nabla\mathbf{u}|^2dx+ C(M)\norm{\nabla\rho}_{L^{2l}}\norm{\mathbf{u}}_{L^{2l}}\norm{\nabla\mathbf{u}}_{L^{\frac{2l}{l-1}}}^2+C(M)\norm{\mathbf{u}}_{L^{2l}}\norm{\nabla^2\mathbf{u}}_{L^2}\norm{\nabla\mathbf{u}}_{L^{\frac{2l}{l-1}}}\\
			&\leq -\frac{1}{2}\frac{d}{dt}\int_\Omega \mu(\rho)|\nabla\mathbf{u}|^2dx+C(M)\norm{\nabla\mathbf{u}}_{L^{\frac{2l}{l-1}}}^2+C(M)\norm{\nabla^2\mathbf{u}}_{L^2}\norm{\nabla\mathbf{u}}_{L^{\frac{2l}{l-1}}}\\
			&\leq -\frac{1}{2}\frac{d}{dt}\int_\Omega \mu(\rho)|\nabla\mathbf{u}|^2dx+C(M)\norm{\nabla\mathbf{u}}_{L^2}^{\frac{2l-N}{l}}\norm{\nabla\mathbf{u}}_{H^1}^{\frac{N}{l}}+C(M)\norm{\nabla\mathbf{u}}_{L^2}^{\frac{2l-N}{2l}}\norm{\nabla \mathbf{u}}_{H^1}^{\frac{2l+N}{2l}}\\
			&\leq -\frac{1}{2}\frac{d}{dt}\int_\Omega \mu(\rho)|\nabla\mathbf{u}|^2dx+\frac{1}{6}\norm{\sqrt{\rho}\dot{\mathbf{u}}}_{L^2}^2+C(M)\norm{\nabla\rho}_{L^2}^2+C(M)\norm{\nabla \mathbf{u}}_{L^2}^2.
		\end{split}
	\end{align}
	Similarly, estimating \(I_3\), we obtain
	\begin{align}\label{hI3}
		\begin{split}
			&I_3=-\int_\Omega \lambda(\rho)\div\mathbf{u}\div\mathbf{u}_tdx-\int_\Omega\lambda(\rho)\div\mathbf{u}\div(\mathbf{u}\cdot\nabla\mathbf{u})dx\\
			&=-\frac{1}{2}\frac{d}{dt}\int_\Omega \lambda(\rho)(\div\mathbf{u})^2dx+\frac{1}{2}\int_\Omega\partial_t\lambda(\rho)(\div\mathbf{u})^2dx-\int_\Omega\lambda(\rho)\div\mathbf{u}\div(\mathbf{u}\cdot\nabla\mathbf{u})dx\\
			&=-\frac{1}{2}\frac{d}{dt}\int_\Omega \lambda(\rho)(\div\mathbf{u})^2dx-\frac{1}{2}\int_\Omega \div(\lambda(\rho) \mathbf{u})(\div\mathbf{u})^2dx\\
			&\quad-\frac{(\alpha-1)}{2}\int_\Omega\lambda(\rho)(\div\mathbf{u})^3dx-\int_\Omega\lambda(\rho)\div\mathbf{u}\div(\mathbf{u}\cdot\nabla\mathbf{u})dx\\
			&\leq -\frac{1}{2}\frac{d}{dt}\int_\Omega \lambda(\rho)(\div\mathbf{u})^2dx+C(M)\int_\Omega|\nabla\rho||\mathbf{u}||\nabla\mathbf{u}|^2dx+C(M)\int_\Omega |\mathbf{u}||\nabla\mathbf{u}||\nabla^2\mathbf{u}|dx\\
			&\leq -\frac{1}{2}\frac{d}{dt}\int_\Omega \lambda(\rho)(\div\mathbf{u})^2dx+\frac{1}{6}\norm{\sqrt{\rho}\dot{\mathbf{u}}}_{L^2}^2+C(M)\norm{\nabla\rho}_{L^2}^2+C(M)\norm{\nabla \mathbf{u}}_{L^2}^2.
		\end{split}
	\end{align}
	Inserting \eqref{hI1}, \eqref{hI2} and \eqref{hI3} into \eqref{hI0} gives
	\begin{align}\label{hh6}
		\begin{split}
			\frac{d}{dt}\int_\Omega( \mu(\rho)|\nabla\mathbf{u}|^2+\lambda(\rho)(\div\mathbf{u})^2)dx +\int_\Omega\rho|\dot{\mathbf{u}}|^2dx\leq C(M)(\norm{\nabla\rho}_{L^2}^2+\norm{\nabla\mathbf{u}}_{L^2}^2).
		\end{split}
	\end{align}
Integrating the above equality over $(0,t)$ and using \eqref{hh2}, we obtain
\begin{align}\label{hh6.5}
	\int_\Omega( \mu(\rho)|\nabla\mathbf{u}|^2+\lambda(\rho)(\div\mathbf{u})^2)dx+\int_0^t\int_\Omega \rho|\dot{\mathbf{u}}|^2dxdt\leq C(M).
\end{align}	
Note that $\frac{N-1}{N}<\alpha<1$ gives 
\begin{align*}
	\begin{split}
		&\quad\int_\Omega( \mu(\rho)|\nabla\mathbf{u}|^2+\lambda(\rho)(\div\mathbf{u})^2)dx\\
		&=\int_\Omega \Big(\rho^\alpha|\nabla\mathbf{u}|^2+(\alpha-1)\rho^\alpha(\div\mathbf{u})^2\Big)dx\\
		&\ge \int_\Omega (1+(\alpha-1)N)\rho^\alpha |\nabla\mathbf{u}|^2dx\\
		&\ge \frac{1+(\alpha-1)N}{M^\alpha}\int_\Omega |\nabla\mathbf{u}|^2dx,
	\end{split}
\end{align*}
which, combined with \eqref{hh6.5}, shows \eqref{h1}. This completes the proof of Proposition \ref{Prop H1}.
\end{proof}

	\begin{prop}\label{Prop H2}
	Assume that \eqref{hh2} holds. Then there exists a constant \(C(M)>0\), independent of $T$, such that
	\begin{align}\label{h2}
		\sup_{0\leq t\leq T}\left(\norm{\sqrt{\rho}\dot{\mathbf{u}}}_{L^2}+\norm{\mathbf{u}}_{H^2}\right)+\int_0^T\int_\Omega|\nabla\dot{\mathbf{u}}|^2dxdt\leq C(M).
	\end{align}
\end{prop}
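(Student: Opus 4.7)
The plan is to apply the material derivative $\dot{(\cdot)}=\partial_t+\mathbf{u}\cdot\nabla$ to the momentum equation \eqref{hh3}, multiply by $\dot{\mathbf{u}}$, and integrate over $\Omega$. Using the continuity equation in the form $\partial_t(\rho\dot u^j)+\mathrm{div}(\rho\mathbf{u}\dot u^j)=\rho(\dot u^j)^{\cdot}$, the left side produces $\tfrac{1}{2}\tfrac{d}{dt}\int_\Omega\rho|\dot{\mathbf{u}}|^2\,dx$. The right side is rewritten by exploiting the standard commutators
\begin{align*}
(\partial_j f)^{\cdot}=\partial_j\dot f-\partial_j u^k\,\partial_k f,\qquad \dot\rho=-\rho\,\mathrm{div}\,\mathbf{u},\qquad \dot P=-\gamma P\,\mathrm{div}\,\mathbf{u},
\end{align*}
together with their analogues for the viscous fluxes $\mu(\rho)\nabla\mathbf{u}$ and $\lambda(\rho)\mathrm{div}\,\mathbf{u}$, whose time derivatives produce terms of the form $\mu'(\rho)\dot\rho\,\nabla\mathbf{u}=-\mu'(\rho)\rho\,\mathrm{div}(\mathbf{u})\nabla\mathbf{u}$.

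After integration by parts the diffusive contributions generate the good dissipation $\int_\Omega\mu(\rho)|\nabla\dot{\mathbf{u}}|^2\,dx+\int_\Omega\lambda(\rho)(\mathrm{div}\,\dot{\mathbf{u}})^2\,dx$, which, thanks to $\alpha>\frac{N-1}{N}$ and the lower bound $\rho\geq M^{-1}$ from \eqref{hh2}, dominates $\|\nabla\dot{\mathbf{u}}\|_{L^2}^2$ up to a constant depending on $M$. All remaining terms are of the schematic form
\begin{align*}
\int_\Omega |\nabla\mathbf{u}|^2|\nabla\dot{\mathbf{u}}|\,dx,\quad \int_\Omega|\nabla\rho||\mathbf{u}||\nabla\mathbf{u}||\nabla\dot{\mathbf{u}}|\,dx,\quad \int_\Omega|\nabla\rho||\dot{\mathbf{u}}||\nabla\mathbf{u}|\,dx,
\end{align*}
plus lower-order analogues from the pressure. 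Each of these is estimated by H\"older's inequality using the available $L^\infty$ bound on $\rho$ and $\rho^{-1}$, the bound $\|\nabla\rho\|_{L^{2l}}\le M$, the bound $\|\mathbf{u}\|_{L^{2l}}\le M$, and the Gagliardo--Nirenberg interpolation $\|\nabla\mathbf{u}\|_{L^{2l/(l-1)}}\leq C\|\nabla\mathbf{u}\|_{L^2}^{1-N/(2l)}\|\mathbf{u}\|_{H^2}^{N/(2l)}$, which is valid since $2l/(l-1)<\infty$ in 2D and $2l/(l-1)<6$ in 3D (owing to $l>3/2$). The elliptic estimate \eqref{hh5} then lets us replace $\|\mathbf{u}\|_{H^2}$ by $C(M)(\|\sqrt\rho\dot{\mathbf{u}}\|_{L^2}+\|\nabla\rho\|_{L^2}+\|\nabla\mathbf{u}\|_{L^2})$, and Young's inequality absorbs all occurrences of $\|\nabla\dot{\mathbf{u}}\|_{L^2}^2$ and $\|\mathbf{u}\|_{H^2}^2$ into the left-hand side.

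This yields an inequality of the form
\begin{align*}
\frac{d}{dt}\int_\Omega\rho|\dot{\mathbf{u}}|^2\,dx+\int_\Omega|\nabla\dot{\mathbf{u}}|^2\,dx\leq C(M)\Bigl(\int_\Omega\rho|\dot{\mathbf{u}}|^2\,dx+\|\nabla\mathbf{u}\|_{L^2}^2+\|\nabla\rho\|_{L^2}^2+1\Bigr).
\end{align*}
The initial value $\|\sqrt{\rho_0}\,\dot{\mathbf{u}}(0)\|_{L^2}$ is finite thanks to the regularity of the local solution in Lemma \ref{Lem local existence} and the initial assumption \eqref{C initial data}: indeed, evaluating \eqref{hh3} at $t=0$ expresses $\rho_0\dot{\mathbf{u}}(0)$ in terms of $\mathbf{u}_0,\rho_0\in H^3$. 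Combining Gronwall's inequality with the already established integrated bound $\int_0^T\int_\Omega\rho|\dot{\mathbf{u}}|^2\,dxdt\leq C(M)$ and the time-integrability of $\|\nabla\mathbf{u}\|_{L^2}^2+\|\nabla\rho\|_{L^2}^2$ from \eqref{hh2}, we obtain $\sup_{0\le t\le T}\|\sqrt\rho\,\dot{\mathbf{u}}\|_{L^2}^2+\int_0^T\|\nabla\dot{\mathbf{u}}\|_{L^2}^2\,dt\leq C(M)$. Inserting this into \eqref{hh5} produces the bound on $\|\mathbf{u}\|_{H^2}$.

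The main obstacle I anticipate lies in the commutator terms containing triple products $|\nabla\rho||\mathbf{u}||\nabla\mathbf{u}||\nabla\dot{\mathbf{u}}|$: the exponent $2l/(l-1)$ in the Sobolev embedding degrades as $l\to (N/2)^+$ in three dimensions, so one must verify that the interpolation exponent $N/(2l)<1$ guarantees via Young's inequality a clean absorption into $\|\mathbf{u}\|_{H^2}^2+\|\nabla\dot{\mathbf{u}}\|_{L^2}^2$ with constant depending only on $M$. This is precisely where the hypothesis $1.5<l<3$ in the three-dimensional case is used; the two-dimensional case $l>1$ is easier since any finite Sobolev exponent is admissible.
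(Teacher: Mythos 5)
Your approach is essentially the same as the paper's: apply the material derivative (equivalently the operator $\dot u^j[\partial_t+\div(\mathbf u\,\cdot)]$, as in Hoff's method) to the momentum equation \eqref{hh3}, test against $\dot{\mathbf u}$, extract the dissipation $\int_\Omega\bigl(\mu(\rho)|\nabla\dot{\mathbf u}|^2+\lambda(\rho)(\div\dot{\mathbf u})^2\bigr)dx$, control the commutator errors via the elliptic estimates \eqref{hh5}, \eqref{hh8} and Gagliardo--Nirenberg, and close with Gronwall. Your identification of the interpolation constraint $N/(2l)<1$ — hence $l<3$ in 3D — as the point where Young's inequality absorbs $\|\nabla\dot{\mathbf u}\|_{L^2}^2$ is the same structural observation the paper's \eqref{hh9}--\eqref{hh11} hinge on, and your justification that $\|\sqrt{\rho_0}\,\dot{\mathbf u}(0)\|_{L^2}$ is finite is correct.

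The one inaccuracy is the shape of your final differential inequality. The error term $\int_\Omega|\nabla\mathbf u|^4\,dx$ is estimated (via $\|\nabla\mathbf u\|_{L^4}\lesssim\|\mathbf u\|_{H^2}$ and \eqref{hh5}) by a quantity containing $\|\sqrt\rho\dot{\mathbf u}\|_{L^2}^4$, so the inequality is not linear in $X=\int_\Omega\rho|\dot{\mathbf u}|^2\,dx$ as you wrote; the paper's \eqref{hh14} carries an explicit $C(M)\|\sqrt\rho\dot{\mathbf u}\|_{L^2}^4$ on the right. Also the ``$+1$'' should be $\|\nabla\rho\|_{L^{2l}}^{2l}$ (arising from \eqref{hh8}), whose time integral is bounded by $M$ under \eqref{hh2}; a genuine constant would produce an $e^{CT}$ factor and destroy the claimed $T$-independence. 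The Gronwall step is therefore genuinely nonlinear in $X$ and closes only because of the already-established integrated bound $\int_0^T\int_\Omega\rho|\dot{\mathbf u}|^2\,dxdt\le C(M)$ from Proposition \ref{Prop H1} — you do invoke this bound, so the mechanism is right, but the displayed inequality must carry the quartic term for that invocation to be the thing that saves the argument.
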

\begin{proof}
	We use the method of Hoff in \cite{hoff-1995} to prove \eqref{h2}.  Operating $\dot{u}^j[\partial/\partial t+\div(\mathbf{u}\cdot)]$ to \(\eqref{hh3}^j\), summing over \(j\), integrating the resulting equation over \(\Omega\) and using integration by parts, we obtain
	\begin{align}\label{hh7}
		\begin{split}
			&\left(\dfrac{1}{2}\int\rho|\dot{\mathbf{u}}|^2dx\right)_t\\
			&=-\int_\Omega\dot{u}^j[\partial_j P_t+\div(\mathbf{u}\partial_j P)]dx+\int_\Omega\dot{u}_j[\partial_{it}(\mu(\rho)\partial_i u_j)+\div(\mathbf{u}\partial_i(\mu(\rho)\partial_i u_j))]dx\\
			&\quad+\int_\Omega \dot{u}^j[\partial_{jt}(\lambda(\rho)\div \mathbf{u})+\div (\mathbf{u}\partial_j(\lambda(\rho)\div \mathbf{u}))]dx:=\sum_{i=1}^3N_i.
		\end{split}
	\end{align}
	Using equation $\eqref{0}_1$, we obtain by integration by parts that
	\begin{align}\label{hn1}
		\begin{split}
			N_1&=-\int _\Omega\dot{u}^j[\partial_j P_t+\div(\partial_j P\mathbf{u})]dx\\
			&=\int_\Omega\left[ -P'\rho\div \mathbf{u}\partial_j\dot{u}^j+\partial_k(\partial_j\dot{u}^ju^k)P-P\partial_j(\partial_k\dot{u}^ju^k)  \right]dx\\
			&\leq C(M)\norm{\nabla \mathbf{u}}_{L^2}\norm{\nabla \dot{\mathbf{u}}}_{L^2}\\
			&\leq\varepsilon\norm{\nabla \dot{\mathbf{u}}}_{L^2}^2+C(\varepsilon,M)\norm{\nabla \mathbf{u}}_{L^2}^2,
		\end{split}
	\end{align}
	where \(\varepsilon>0\) is a sufficiently small positive constant to be determined. From \eqref{hh2} and equation $\eqref{0}_1$, we obtain by integration by parts that
	\begin{align}\label{hh12}
		\begin{split}
			&N_2=\int_\Omega\dot{u}_j[\partial_{it}(\mu(\rho)\partial_i u_j)+\div(\mathbf{u}\partial_i(\mu(\rho)\partial_i u_j))]dx\\
			&=-\int_\Omega\partial_i \dot{u}_j\partial_t(\mu(\rho)\partial_iu_j)dx-\int_\Omega \partial_k \dot{u}_ju_k\partial_i(\mu(\rho)\partial_iu_j)dx\\
			&=-\int_\Omega \mu(\rho)|\nabla\dot{\mathbf{u}}|^2dx+\int_\Omega\mu(\rho)\partial_i\dot{u}_j\partial_i(u_k\partial_ku_j)dx-\int_\Omega\partial_i\dot{u}_j\partial_t\mu(\rho)\partial_iu_jdx\\
			&\quad-\int_\Omega\partial_k\dot{u}_ju_k\partial_i\mu(\rho)\partial_iu_jdx-\int_\Omega \partial_k\dot{u}_ju_k\mu(\rho)\partial_{ii}u_jdx\\
			&\leq -\int_\Omega \mu(\rho)|\nabla\dot{\mathbf{u}}|^2dx+C(M)\int_\Omega\left(|\nabla\dot{\mathbf{u}}||\nabla\mathbf{u}|^2+ |\nabla\dot{\mathbf{u}}||\mathbf{u}||\nabla^2\mathbf{u}|+|\nabla\dot{\mathbf{u}}||\mathbf{u}||\nabla\rho||\nabla\mathbf{u}|\right)dx\\
			&\leq -\int_\Omega \mu(\rho)|\nabla\dot{\mathbf{u}}|^2dx+\varepsilon\int_\Omega|\nabla\dot{\mathbf{u}}|^2dx+C(\varepsilon,M)\int_\Omega\left(|\mathbf{u}|^2|\nabla^2 \mathbf{u}|^2+|\nabla\mathbf{u}|^4+|\nabla\rho|^2|\mathbf{u}|^2|\nabla\mathbf{u}|^2\right)dx.
		\end{split}
	\end{align}
	Using standard \(L^p\) estimates for elliptic systems, we obtain from $\eqref{hh4}$ and \eqref{hh2} that
	\begin{align*}
		\norm{\mathbf{u}}_{W^{2,{2l}}}&\leq C(M)(\norm{\rho\dot{\mathbf{u}}}_{L^{2l}}+\norm{\nabla\rho}_{L^{2l}}+\norm{|\nabla\rho||\nabla\mathbf{u}|}_{L^{2l}})\\
		&\leq C(M)(\norm{\sqrt{\rho}\dot{\mathbf{u}}}_{L^{2l}}+\norm{\nabla\rho}_{L^{2l}}+\norm{\nabla\rho}_{L^{2l}}\norm{\nabla\mathbf{u}}_{L^{\infty}})\\
		&\leq C(M)(\norm{\sqrt{\rho}\dot{\mathbf{u}}}_{L^{2l}}+\norm{\nabla\rho}_{L^{2l}}+\norm{\nabla\mathbf{u}}_{L^2}^{\frac{2l-N}{Nl+2l-N}}\norm{\nabla\mathbf{u}}_{W^{1,{2l}}}^{\frac{Nl}{Nl+2l-N}})\\
		&\leq \frac{1}{2}\norm{\mathbf{u}}_{W^{2,{2l}}}+C(M)(\norm{\sqrt{\rho}\dot{\mathbf{u}}}_{L^{2l}}+\norm{\nabla\rho}_{L^{2l}}+\norm{\nabla\mathbf{u}}_{L^2})
	\end{align*}
	which implies that
	\begin{align}\label{hh8}
		\norm{\mathbf{u}}_{W^{2,{2l}}}\leq C(M)(\norm{\sqrt{\rho}\dot{\mathbf{u}}}_{L^{2l}}+\norm{\nabla\rho}_{L^{2l}}+\norm{\nabla\mathbf{u}}_{L^2}).
	\end{align}
	Observing that \(\mathbf{u}|_{\partial \Omega}=0\) implies \(\|\mathbf{u}\|_{L^{\frac{2l}{l-1}}}\leq C\|\nabla \mathbf{u}\|_{L^2}\leq C(M)\), we thus obtain from \eqref{hh8} that
	\begin{align}\label{hh9}
		\begin{split}
			C(\varepsilon,M)\int_\Omega|\mathbf{u}|^2|\nabla^2\mathbf{u}|^2dx&\leq C(\varepsilon,M) \norm{\mathbf{u}}_{L^{\frac{2l}{l-1}}}^2 \norm{\nabla^2\mathbf{u}}_{L^{2l}}^2 \leq C(\varepsilon,M)\norm{\mathbf{u}}_{L^{\frac{2l}{l-1}}}^2\norm{\mathbf{u}}_{W^{2,2l}}^2\\
			&\leq C(\varepsilon,M)\norm{\mathbf{u}}_{L^{\frac{2l}{l-1}}}^2(\norm{\sqrt{\rho}\dot{\mathbf{u}}}_{L^{2l}}^2+\norm{\nabla\rho}_{L^{2l}}^2+\norm{\nabla\mathbf{u}}_{L^2}^2)\\
			&\leq C(\varepsilon,M)(\norm{\sqrt{\rho}\dot{\mathbf{u}}}_{L^2}^{\frac{N+2l-Nl}{l}}\norm{\nabla\dot{\mathbf{u}}}_{L^2}^{\frac{Nl-N}{l}}+\norm{\mathbf{u}}_{L^{\frac{2l}{l-1}}}^2\norm{\nabla\rho}_{L^{2l}}^2+\norm{\nabla\mathbf{u}}_{L^2}^2)\\
			&\leq \varepsilon\norm{\nabla\dot{\mathbf{u}}}_{L^2}^2+C(\varepsilon,M)(\norm{\sqrt{\rho}\dot{\mathbf{u}}}_{L^2}^2+\norm{\nabla\rho}_{L^{2l}}^{2l}+\norm{\mathbf{u}}_{L^{\frac{2l}{l-1}}}^{\frac{2l}{l-1}}+\norm{\nabla\mathbf{u}}_{L^2}^2)\\
			&\leq \varepsilon\norm{\nabla\dot{\mathbf{u}}}_{L^2}^2+C(\varepsilon, M)(\norm{\sqrt{\rho}\dot{\mathbf{u}}}_{L^2}^2+\norm{\nabla\rho}_{L^{2l}}^{2l}+\norm{\nabla\mathbf{u}}_{L^2}^2),
		\end{split}
	\end{align}
	where the last inequality follows from $\frac{2l}{l-1}>2$ and \eqref{h1}. We use \eqref{hh5}, \eqref{h1} and \eqref{hh2} to obtain
	\begin{align}\label{hh10}
		\begin{split}
			C(\varepsilon,M)\int_\Omega|\nabla\mathbf{u}|^4dx&=C(\varepsilon,M) \norm{\nabla\mathbf{u}}_{L^4}^4\leq C(\varepsilon,M)\norm{\nabla\mathbf{u}}_{H^1}^4\\
			&\leq C(\varepsilon,M)(\norm{\sqrt{\rho}\dot{\mathbf{u}}}_{L^2}^4+\norm{\nabla\rho}_{L^2}^4+\norm{\nabla\mathbf{u}}_{L^2}^4)\\
			&\leq C(\varepsilon,M)(\norm{\sqrt{\rho}\dot{\mathbf{u}}}_{L^2}^4+\norm{\nabla\rho}_{L^2}^2+\norm{\nabla\mathbf{u}}_{L^2}^2).
		\end{split}
	\end{align}
	Arguing as in the estimate for \eqref{hh9}, one has
	\begin{align}\label{hh11}
		\begin{split}
			C(\varepsilon,M)\int_\Omega|\nabla\rho|^2|\mathbf{u}|^2|\nabla\mathbf{u}|^2dx&\leq C(\varepsilon,M)\norm{\nabla\rho}_{L^{2l}}^2\norm{\mathbf{u}}_{L^{\frac{2l}{l-1}}}^2\norm{\nabla\mathbf{u}}_{L^{\infty}}^2\\
			&\leq C(\varepsilon,M)\norm{\mathbf{u}}_{L^{\frac{2l}{l-1}}}^2\norm{\nabla\mathbf{u}}_{W^{1,{2l}}}^2 \\
			&\leq \varepsilon\norm{\nabla\dot{\mathbf{u}}}_{L^2}^2+C(\varepsilon,M)(\norm{\sqrt{\rho}\dot{\mathbf{u}}}_{L^2}^2+\norm{\nabla\rho}_{L^{2l}}^{2l}+\norm{\nabla\mathbf{u}}_{L^2}^2).
		\end{split}
	\end{align}
	Inserting \eqref{hh9}, \eqref{hh10} and \eqref{hh11} into \eqref{hh12} shows that
	\begin{align}\label{hn2}
		\begin{split}
			N_2\leq -\int_\Omega \mu(\rho)|\nabla\dot{\mathbf{u}}|^2dx&+3\varepsilon \int_\Omega|\nabla\dot{\mathbf{u}}|^2dx+C(\varepsilon,M)\norm{\sqrt{\rho}\dot{\mathbf{u}}}_{L^2}^4\\
			&+C(\varepsilon,M)(\norm{\sqrt{\rho}\dot{\mathbf{u}}}_{L^2}^2+\norm{\nabla\rho}_{L^2}^2+\norm{\nabla\rho}_{L^{2l}}^{2l}+\norm{\nabla\mathbf{u}}_{L^{2}}^2).
		\end{split}
	\end{align}
	Similarly, estimating \(N_3\), we have
	\begin{align}\label{hh13}
		\begin{split}
			&N_3=\int_\Omega \dot{u}^j[\partial_{jt}(\lambda(\rho)\div \mathbf{u})+\div (\mathbf{u}\partial_j(\lambda(\rho)\div \mathbf{u}))]dx\\
			&=-\int_\Omega\div\dot{\mathbf{u}}\partial_t(\lambda(\rho)\div\mathbf{u})dx-\int_\Omega \partial_i\dot{u}_ju_i\partial_j(\lambda(\rho)\partial_k u_k)dx\\
			&=-\int_\Omega\lambda(\rho)(\div\dot{\mathbf{u}})^2dx+\int_\Omega \lambda(\rho)\div\dot{\mathbf{u}}\div(\mathbf{u}\cdot\nabla\mathbf{u})dx-\int_\Omega\partial_t\lambda(\rho)\div\dot{\mathbf{u}}\div\mathbf{u}dx\\
			&\quad-\int_\Omega \partial_i\dot{u}_ju_i\partial_j(\lambda(\rho)\partial_k u_k)dx\\
			&\leq -\int_\Omega\lambda(\rho)(\div\dot{\mathbf{u}})^2dx+C(M)\int_\Omega\left(|\nabla\dot{\mathbf{u}}||\nabla\mathbf{u}|^2+ |\nabla\dot{\mathbf{u}}||\mathbf{u}||\nabla^2\mathbf{u}|+|\nabla\dot{\mathbf{u}}||\mathbf{u}||\nabla\rho||\nabla\mathbf{u}|\right)dx\\
			&\leq -\int_\Omega\lambda(\rho)(\div\dot{\mathbf{u}})^2dx+\varepsilon\int_\Omega|\nabla\dot{\mathbf{u}}|^2dx+C(\varepsilon,M)\int_\Omega\left(|\mathbf{u}|^2|\nabla^2 \mathbf{u}|^2+|\nabla\mathbf{u}|^4+|\nabla\rho|^2|\mathbf{u}|^2|\nabla\mathbf{u}|^2\right)dx.
		\end{split}
	\end{align}
	Inserting \eqref{hh9}, \eqref{hh10} and \eqref{hh11} into \eqref{hh13} gives
	\begin{align}\label{hn3}
		\begin{split}
			N_3\leq -\int_\Omega\lambda(\rho)(\div\dot{\mathbf{u}})^2dx&+3\varepsilon \int_\Omega|\nabla\dot{\mathbf{u}}|^2dx+C(\varepsilon,M)\norm{\sqrt{\rho}\dot{\mathbf{u}}}_{L^2}^4\\
			&+C(\varepsilon,M)(\norm{\sqrt{\rho}\dot{\mathbf{u}}}_{L^2}^2+\norm{\nabla\rho}_{L^2}^2+\norm{\nabla\rho}_{L^{2l}}^{2l}+\norm{\nabla\mathbf{u}}_{L^2}^2).
		\end{split}
	\end{align}
	Using \eqref{hn1}, \eqref{hn2} and \eqref{hn3}, we obtain from \eqref{hh7} that 
	\begin{align*}
		\begin{split}
			&\quad\frac{1}{2}\frac{d}{dt}\int_\Omega\rho|\dot{\mathbf{u}}|^2dx+\int_\Omega( \mu(\rho)|\nabla\dot{\mathbf{u}}|^2+\lambda(\rho)(\div\dot{\mathbf{u}})^2)dx-7\varepsilon\norm{\nabla\dot{\mathbf{u}}}_{L^2}^2\\
			&\leq  C(\varepsilon,M)(\norm{\sqrt{\rho}\dot{\mathbf{u}}}_{L^2}^2+\norm{\nabla\rho}_{L^2}^2+\norm{\nabla\rho}_{L^{2l}}^{2l}+\norm{\nabla\mathbf{u}}_{L^2}^2)+C(\varepsilon,M)\norm{\sqrt{\rho}\dot{\mathbf{u}}}_{L^2}^4.
		\end{split}
	\end{align*}
	Therefore, one has
	\begin{align*}
		\begin{split}
			&\quad\frac{1}{2}\frac{d}{dt}\int_\Omega\rho|\dot{\mathbf{u}}|^2dx+\left(\frac{1+(\alpha-1)N}{M^\alpha}-7\varepsilon\right)\int_\Omega |\nabla\dot{\mathbf{u}}|^2dx\\
			&\leq C(\varepsilon,M)(\norm{\sqrt{\rho}\dot{\mathbf{u}}}_{L^2}^2+\norm{\nabla\rho}_{L^2}^2+\norm{\nabla\rho}_{L^{2l}}^{2l}+\norm{\nabla\mathbf{u}}_{L^2}^2)+C(\varepsilon,M)\norm{\sqrt{\rho}\dot{\mathbf{u}}}_{L^2}^4.
		\end{split}
	\end{align*}
	Choose \(\varepsilon>0\) sufficiently small so that $7\varepsilon<\frac{1+(\alpha-1)N}{2M^\alpha}$. Then we get
		\begin{align}\label{hh14}
		\begin{split}
			&\quad\frac{1}{2}\frac{d}{dt}\int_\Omega\rho|\dot{\mathbf{u}}|^2dx+\frac{1+(\alpha-1)N}{2M^\alpha}\int_\Omega |\nabla\dot{\mathbf{u}}|^2dx\\
			&\leq C(M)(\norm{\sqrt{\rho}\dot{\mathbf{u}}}_{L^2}^2+\norm{\nabla\rho}_{L^2}^2+\norm{\nabla\rho}_{L^{2l}}^{2l}+\norm{\nabla\mathbf{u}}_{L^2}^2)+C(M)\norm{\sqrt{\rho}\dot{\mathbf{u}}}_{L^2}^4.
		\end{split}
	\end{align}
Using Gronwall’s inequality together with \eqref{h1}, \eqref{hh2} and \eqref{hh5}, we obtain \eqref{h2}.
\end{proof}

\begin{prop}\label{Prop H3}
	Assume that \eqref{hh2} holds.  Then there exists a constant \(C(M,T)>0\) depending on \(M\) and \(T\) such that
	\begin{align}\label{h3}
		\sup_{0\leq t\leq T}\norm{\rho}_{H^2}+\int_0^T\norm{\mathbf{u}}_{H^3}^2dt\leq C(M,T).
	\end{align}
\end{prop}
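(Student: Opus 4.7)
The plan is to close a Gronwall-type inequality for $\|\nabla^2\rho\|_{L^2}$ by combining a transport estimate derived from the continuity equation with a third-order elliptic estimate for $\mathbf{u}$. First, I would upgrade the bounds on the velocity provided by Proposition \ref{Prop H2}. Using the elliptic estimate \eqref{hh8} together with the Gagliardo--Nirenberg interpolation
$$\|\dot{\mathbf{u}}\|_{L^{2l}}\leq C(M)\bigl(\|\sqrt{\rho}\dot{\mathbf{u}}\|_{L^2}^{\theta}\|\nabla\dot{\mathbf{u}}\|_{L^2}^{1-\theta}+\|\sqrt{\rho}\dot{\mathbf{u}}\|_{L^2}\bigr)$$
for an appropriate $\theta\in(0,1)$ (using that $\sqrt{\rho}$ has a positive lower bound by \eqref{hh2}), I obtain $\mathbf{u}\in L^2(0,T;W^{2,2l})$, and since $2l>N$, this embeds into $L^2(0,T;W^{1,\infty})$. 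Hence $\|\nabla\mathbf{u}\|_{L^\infty}\in L^2(0,T)$.

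Next, differentiating the continuity equation $\rho_t+\mathbf{u}\cdot\nabla\rho+\rho\,\div\mathbf{u}=0$ twice, testing against $\nabla^2\rho$, and integrating by parts (using $\mathbf{u}|_{\partial\Omega}=0$), I arrive at
$$\frac{d}{dt}\|\nabla^2\rho\|_{L^2}^2\leq C(M)\bigl(1+\|\nabla\mathbf{u}\|_{L^\infty}\bigr)\|\nabla^2\rho\|_{L^2}^2+C(M)\bigl(\|\mathbf{u}\|_{H^3}+\|\mathbf{u}\|_{H^2}\bigr)\|\nabla^2\rho\|_{L^2}.$$
In parallel, viewing \eqref{hh4} as a Lam\'e-type system for $\mathbf{u}$ with the density-dependent elliptic operator and applying standard $H^1$ elliptic regularity to the right-hand side (using that $\rho,\rho^{-1}\in L^\infty$) yields
$$\|\mathbf{u}\|_{H^3}\leq C(M)\bigl(\|\nabla\dot{\mathbf{u}}\|_{L^2}+(1+\|\nabla\mathbf{u}\|_{L^\infty})\|\nabla^2\rho\|_{L^2}+\|\mathbf{u}\|_{H^2}+\|\nabla\rho\|_{H^1}\bigr),$$
where the worst term $\nabla^2\mu(\rho)\cdot\nabla\mathbf{u}\sim\rho^{\alpha-1}\nabla^2\rho\cdot\nabla\mathbf{u}$ forces the factor $\|\nabla^2\rho\|_{L^2}\|\nabla\mathbf{u}\|_{L^\infty}$. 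Substituting this elliptic estimate into the transport inequality and applying Young's inequality produces
$$\frac{d}{dt}\|\nabla^2\rho\|_{L^2}^2\leq C(M)\bigl(1+\|\nabla\mathbf{u}\|_{L^\infty}^2+\|\nabla\dot{\mathbf{u}}\|_{L^2}^2\bigr)\bigl(1+\|\nabla^2\rho\|_{L^2}^2\bigr).$$
By the first step and Proposition \ref{Prop H2}, the prefactor is integrable in time, and Gronwall's lemma yields $\sup_{0\leq t\leq T}\|\nabla^2\rho\|_{L^2}\leq C(M,T)$, which combined with \eqref{hh2} gives $\rho\in L^\infty(0,T;H^2)$.

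The hardest part is precisely this coupling: the transport estimate for $\nabla^2\rho$ naturally involves $\nabla^3\mathbf{u}$, while the $H^3$ elliptic estimate for $\mathbf{u}$ in turn requires $\nabla^2\rho$. Once the coupling is broken by Young's inequality and the $L^2_t$ bound on $\|\nabla\mathbf{u}\|_{L^\infty}$, the remaining work is routine: reinserting $\sup_t\|\nabla^2\rho\|_{L^2}\leq C(M,T)$ into the elliptic estimate for $\|\mathbf{u}\|_{H^3}$ and integrating in time, using $\int_0^T\|\nabla\dot{\mathbf{u}}\|_{L^2}^2\,dt\leq C(M)$ from Proposition \ref{Prop H2}, yields $\int_0^T\|\mathbf{u}\|_{H^3}^2\,dt\leq C(M,T)$, completing the proof.
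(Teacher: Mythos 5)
Your proposal follows essentially the same route as the paper's proof: a transport estimate for $\|\nabla^2\rho\|_{L^2}$ from the twice-differentiated continuity equation, an $H^3$ elliptic estimate for $\mathbf{u}$ coupled to $\|\nabla^2\rho\|_{L^2}$ via \eqref{hh4}, the pointwise bound $\|\nabla\mathbf{u}\|_{L^\infty}\leq C(M)(1+\|\nabla\dot{\mathbf{u}}\|_{L^2})$ obtained from \eqref{hh8}, and a final Gronwall argument closed by the $L^2_t$ integrability of $\|\nabla\dot{\mathbf{u}}\|_{L^2}$ from Proposition~\ref{Prop H2}. The only bookkeeping detail your sketch glosses over is the term $|\nabla\rho||\nabla^2\mathbf{u}|$ in the elliptic estimate, which the paper handles by $\|\nabla\rho\|_{L^{2l}}\|\nabla^2\mathbf{u}\|_{L^{2l/(l-1)}}$ together with interpolation of $\|\nabla^2\mathbf{u}\|_{L^{2l/(l-1)}}$ between $L^2$ and $H^1$ before absorbing into $\|\mathbf{u}\|_{H^3}$; this is easily supplied and does not change the structure of the argument.
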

\begin{proof}
	Operating \(\partial_i\partial_j\) to equation $\eqref{0}_1$, we obtain
	\begin{align*}
		\begin{split}
			\partial_t(\partial_{ij}\rho)&+\partial_{ij} u_k\partial_k \rho+\partial_i u_k\partial_{jk}\rho+\partial_j u_k\partial_{ik}\rho+u_k \partial_{ijk}\rho\\
			&+\partial_{ij}\rho\partial_ku_k+\partial_i\rho\partial_{jk}u_k+\partial_j\rho\partial_{ik}u_k+\rho\partial_{ijk} u_k=0.
		\end{split}
	\end{align*}
	Multiplying the above equation by \(\partial_{ij}\rho\), summing over \(i,j\), and integrating the resulting equation over \(\Omega\), we use integration by parts to obtain
	\begin{align}\label{hh15}
		\begin{split}
			&\quad\frac{d}{dt}\int_\Omega |\nabla^2\rho|^2dx\leq C\left(\int_\Omega|\nabla\rho||\nabla^2\mathbf{u}||\nabla^2\rho|dx+\int_\Omega|\nabla\mathbf{u}||\nabla^2\rho|^2dx+\int_\Omega\rho|\nabla^3\mathbf{u}||\nabla^2\rho|dx\right)\\
			&\leq C(M)(1+\norm{\nabla\mathbf{u}}_{L^\infty})\int_\Omega|\nabla^2\rho|^2dx+C(M)\int_\Omega |\nabla\rho|^2|\nabla^2\mathbf{u}|^2dx+C(M)\int_\Omega|\nabla^3\mathbf{u}|^2dx\\
			&\leq C(M)(1+\norm{\nabla\mathbf{u}}_{L^\infty})\int_\Omega|\nabla^2\rho|^2dx+C(M)\norm{\nabla\rho}_{L^{2l}}^2\norm{\nabla^2\mathbf{u}}_{L^{\frac{2l}{l-1}}}^2+C(M)\int_\Omega|\nabla^3\mathbf{u}|^2dx\\
			&\leq C(M)(1+\norm{\nabla\mathbf{u}}_{L^\infty})\int_\Omega|\nabla^2\rho|^2dx+C(M)+C(M)\int_\Omega|\nabla^3\mathbf{u}|^2dx,
		\end{split}
	\end{align}
	where the last inequality follows from \eqref{hh2} and \eqref{h2}. We obtain from \eqref{hh8} that
	\begin{align}\label{hh16}
		\begin{split}
			\norm{\nabla\mathbf{u}}_{L^\infty}&\leq C\norm{\nabla\mathbf{u}}_{W^{1,{2l}}}\leq C(M)(\norm{\sqrt{\rho}\dot{\mathbf{u}}}_{L^{2l}}+\norm{\nabla\rho}_{L^{2l}}+\norm{\nabla\mathbf{u}}_{L^2})\\
			&\leq C(M)+C(M)\norm{\nabla\dot{\mathbf{u}}}_{L^2}.
		\end{split}
	\end{align}
	Using the elliptic system \eqref{hh4} once more, we obtain from \eqref{hh2}, \eqref{h2} and \eqref{hh16} that
	\begin{align*}
		\begin{split}
			\norm{\mathbf{u}}_{H^3}^2&\leq C\norm{\frac{1}{\rho^\alpha}(\rho\dot{\mathbf{u}}+\nabla P-\nabla\mu(\rho)\cdot\nabla\mathbf{u}-\nabla\lambda(\rho)\div\mathbf{u})}_{H^1}^2\\
			&\leq C(M)\left(1+\norm{|\nabla\rho| |\dot{\mathbf{u}}|}_{L^2}^2+\norm{\nabla\dot{\mathbf{u}}}_{L^2}^2+\norm{\nabla^2\rho}_{L^2}^2+\norm{\nabla\rho}_{L^4}^2+\norm{|\nabla\rho|^2|\nabla\mathbf{u}|}_{L^2}^2\right.\\
			&\left.\quad+\norm{|\nabla^2\rho||\nabla\mathbf{u}|}_{L^2}^2+\norm{|\nabla\rho||\nabla^2\mathbf{u}|}_{L^2}^2\right)\\
			&\leq C(M)\left(1+\norm{\nabla\dot{\mathbf{u}}}_{L^2}^2\right)\left(1+\norm{\nabla^2\rho}_{L^2}^2\right)+C(M)\norm{\nabla^2 \mathbf{u}}_{L^{\frac{2l}{l-1}}}^2\\
			&\leq C(M)\left(1+\norm{\nabla\dot{\mathbf{u}}}_{L^2}^2\right)\left(1+\norm{\nabla^2\rho}_{L^2}^2\right)+C(M)\norm{\nabla^2\mathbf{u}}_{L^2}^{\frac{2l-N}{l}}\norm{\nabla^2\mathbf{u}}_{H^1}^{\frac{N}{l}}\\
			&\leq \frac{1}{2}\norm{\mathbf{u}}_{H^3}^2+C(M)\left(1+\norm{\nabla\dot{\mathbf{u}}}_{L^2}^2\right)\left(1+\norm{\nabla^2\rho}_{L^2}^2\right), 
		\end{split}
	\end{align*}
	which implies 
	\begin{align}\label{hh17}
		\norm{\mathbf{u}}_{H^3}^2\leq C(M)\left(1+\norm{\nabla\dot{\mathbf{u}}}_{L^2}^2\right)\left(1+\norm{\nabla^2\rho}_{L^2}^2\right).
	\end{align}
	Using \eqref{hh16} and \eqref{hh17}, we obtain from \eqref{hh15} that
	\begin{align*}
		\frac{d}{dt}\int_\Omega |\nabla^2\rho|^2dx\leq C(M)\left(1+\norm{\nabla\dot{\mathbf{u}}}_{L^2}^2\right)\left(1+\norm{\nabla^2\rho}_{L^2}^2\right)
	\end{align*}
	This, combined with Gronwall's inequality, \eqref{hh17} and \eqref{h2}, gives \eqref{h3}.
\end{proof}

\begin{prop}\label{Prop H4}
	Assume that \eqref{hh2} holds. Then there exists a constant \(C(M,T)>0\) depending on \(M\) and \(T\) such that
	\begin{align}\label{h4}
		\sup_{0\leq t\leq T}(\norm{\nabla\dot{\mathbf{u}}}_{L^2}+\norm{\nabla\mathbf{u}_t}_{L^2}+\norm{\mathbf{u}}_{H^3})+\int_0^T(\norm{\mathbf{u}_{tt}}_{L^2}^2+\norm{\mathbf{u}_t}_{H^2}^2)dt\leq C(M,T).
	\end{align}
\end{prop}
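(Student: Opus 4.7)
\textbf{Proof strategy for Proposition \ref{Prop H4}.} The plan is to derive a differential inequality for $\norm{\nabla\mathbf{u}_t}_{L^2}^2$ coupled with $\int_\Omega\rho|\mathbf{u}_{tt}|^2\,dx$, and then to recover the remaining norms through elliptic regularity. I differentiate \eqref{hh3} in $t$, multiply by $\mathbf{u}_{tt}$, integrate over $\Omega$ by parts using $\mathbf{u}_t|_{\partial\Omega}=0$, and employ $\rho_t=-\div(\rho\mathbf{u})$ to arrive at
\begin{align*}
\frac{1}{2}\frac{d}{dt}\int_\Omega\bigl(\mu(\rho)|\nabla\mathbf{u}_t|^2+\lambda(\rho)(\div\mathbf{u}_t)^2\bigr)dx+\int_\Omega\rho|\mathbf{u}_{tt}|^2\,dx\le\sum_k J_k,
\end{align*}
where the $J_k$ collect the contributions of $\rho_t\mathbf{u}_t\cdot\mathbf{u}_{tt}$, $(\rho\mathbf{u}\cdot\nabla\mathbf{u})_t\cdot\mathbf{u}_{tt}$, $\nabla P_t\cdot\mathbf{u}_{tt}$, $\div((\mu(\rho))_t\nabla\mathbf{u})\cdot\mathbf{u}_{tt}$, and $\nabla((\lambda(\rho))_t\div\mathbf{u})\cdot\mathbf{u}_{tt}$.

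I bound each $J_k$ by H\"older and interpolation, using the hypothesis \eqref{hh2}, the bounds $\norm{\mathbf{u}}_{H^2}+\norm{\sqrt\rho\dot{\mathbf{u}}}_{L^2}\le C(M)$ and $\int_0^T(\norm{\nabla\dot{\mathbf{u}}}_{L^2}^2+\norm{\mathbf{u}}_{H^3}^2)dt\le C(M,T)$ from Propositions \ref{Prop H2}--\ref{Prop H3}, and the $L^\infty$-estimate $\norm{\nabla\mathbf{u}}_{L^\infty}\le C(M,T)(1+\norm{\nabla\dot{\mathbf{u}}}_{L^2})$ from \eqref{hh16}. Young's inequality absorbs a small multiple of $\int\rho|\mathbf{u}_{tt}|^2\,dx$ on the left and casts the residue as $g(t)\norm{\nabla\mathbf{u}_t}_{L^2}^2+h(t)$ with $g,h\in L^1(0,T)$; Gronwall's inequality then delivers $\sup_{0\le t\le T}\norm{\nabla\mathbf{u}_t}_{L^2}^2+\int_0^T\int_\Omega\rho|\mathbf{u}_{tt}|^2\,dx\,dt\le C(M,T)$, the initial value $\nabla\mathbf{u}_t(\cdot,0)$ being well-defined through Lemma \ref{Lem local existence} and the momentum equation at $t=0$. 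The remaining norms then follow: the identity $\dot{\mathbf{u}}=\mathbf{u}_t+\mathbf{u}\cdot\nabla\mathbf{u}$ and $\norm{\mathbf{u}}_{H^2}\le C(M)$ convert $\sup\norm{\nabla\mathbf{u}_t}_{L^2}$ into $\sup\norm{\nabla\dot{\mathbf{u}}}_{L^2}$; elliptic regularity on \eqref{hh4} together with $\rho\in L^\infty(0,T;H^2)$ from Proposition \ref{Prop H3} yields $\sup\norm{\mathbf{u}}_{H^3}\le C(M,T)$; viewing the $t$-differentiated momentum equation as an elliptic system for $\mathbf{u}_t$ and applying the standard $L^2$ estimate gives $\int_0^T\norm{\mathbf{u}_t}_{H^2}^2\,dt\le C(M,T)$; and $\int_0^T\norm{\mathbf{u}_{tt}}_{L^2}^2\,dt\le C(M,T)$ follows from $\rho^{-1}\le M$.

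The main obstacle lies in the $J_k$ arising from $\div((\mu(\rho))_t\nabla\mathbf{u})$: since $(\mu(\rho))_t=-\mu'(\rho)\div(\rho\mathbf{u})$ is merely an $L^2$-object, pairing it against $\nabla\mathbf{u}_{tt}$ through integration by parts is inadmissible. The remedy is to retain $\mathbf{u}_{tt}$ as the test function (forgoing the integration by parts on this term) and to control $\norm{\div((\mu(\rho))_t\nabla\mathbf{u})}_{L^2}$ directly via
\begin{align*}
\div\bigl((\mu(\rho))_t\nabla\mathbf{u}\bigr)=-\mu'(\rho)\nabla\div(\rho\mathbf{u})\cdot\nabla\mathbf{u}-\mu'(\rho)\div(\rho\mathbf{u})\Delta\mathbf{u}-\mu''(\rho)\nabla\rho\,\div(\rho\mathbf{u})\cdot\nabla\mathbf{u},
\end{align*}
whose factors all lie in suitable Lebesgue spaces by Proposition \ref{Prop H3} and the above $L^\infty$-estimate on $\nabla\mathbf{u}$. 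Cauchy--Schwarz against $\sqrt\rho\mathbf{u}_{tt}$ then produces an $\varepsilon\int\rho|\mathbf{u}_{tt}|^2\,dx$ absorbed on the left and an integrable-in-time remainder.
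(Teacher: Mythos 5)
Your proposal is sound but takes a genuinely different normalization from the paper, and it is worth noting where the divergence matters. The paper's Proposition~\ref{Prop H4} first rewrites the momentum equation in the constant-coefficient form \eqref{hh4}, \emph{i.e.} it divides by $\rho^\alpha$ \emph{before} differentiating in $t$; this makes the left side $-\Delta\mathbf{u}_t-(\alpha-1)\nabla\div\mathbf{u}_t$, whose test against $\mathbf{u}_{tt}$ yields $\tfrac12\tfrac{d}{dt}\int(|\nabla\mathbf{u}_t|^2+(\alpha-1)(\div\mathbf{u}_t)^2)$ with \emph{no} extra term from the time derivative hitting a density-dependent coefficient, and the density-dependent factor lands harmlessly on the inertial term as $\int\rho^{1-\alpha}|\mathbf{u}_{tt}|^2$. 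Your proposal differentiates the un-normalized equation, so you produce $\tfrac{d}{dt}\int\mu(\rho)|\nabla\mathbf{u}_t|^2$ and are then forced to deal with the extra commutators $\tfrac12\int\mu(\rho)_t|\nabla\mathbf{u}_t|^2$ and $\int\div((\mu(\rho))_t\nabla\mathbf{u})\cdot\mathbf{u}_{tt}$, neither of which appears in the paper's computation. Your identification of $\div((\mu(\rho))_t\nabla\mathbf{u})$ as the delicate term and your remedy — keep $\mathbf{u}_{tt}$ as the test function, expand $\div((\mu(\rho))_t\nabla\mathbf{u})$, and bound it in $L^2$ using $\rho\in L^\infty(0,T;H^2)$ and $\mathbf{u}\in L^2(0,T;H^3)$ from Proposition~\ref{Prop H3} — is correct and does close, provided that for $\div(\rho\mathbf{u})\Delta\mathbf{u}$ you exploit $\nabla\rho\in L^\infty(0,T;L^6)$ and $\Delta\mathbf{u}\in L^2(0,T;L^6)$, and for the commutator $\int\mu(\rho)_t|\nabla\mathbf{u}_t|^2$ (which you did not single out) you interpolate $\norm{\nabla\mathbf{u}_t}_{L^{r}}$ between $L^2$ and $H^1$, absorb $\norm{\mathbf{u}_t}_{H^2}^2$ via the elliptic estimate \eqref{hh19}, and push the leftover $\norm{\mathbf{u}_{tt}}_{L^2}^2$ onto the dissipation. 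What the paper's pre-normalization buys, compared with your route, is that all the density-dependent coefficients disappear from the leading quadratic form, so the commutator terms simply never arise and the argument closes without Gronwall — the integrated inequality is driven directly by $\int_0^T\norm{\nabla\dot{\mathbf{u}}}_{L^2}^2\,dt\leq C(M)$ from Proposition~\ref{Prop H2}. Your version works too, at the cost of the extra bookkeeping above, and needs the density and velocity regularity from Proposition~\ref{Prop H3} more explicitly; be sure your exposition also treats $\int\lambda(\rho)_t(\div\mathbf{u}_t)^2$ and $\int\nabla((\lambda(\rho))_t\div\mathbf{u})\cdot\mathbf{u}_{tt}$ by the same device.
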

\begin{proof}
	Differentiating \eqref{hh3} with respect to $t$ gives
	\begin{align}\label{hh18}
		-\Delta \mathbf{u}_t-(\alpha-1)\nabla\div\mathbf{u}_t=-\left[\frac{1}{\rho^\alpha}\left(\rho\dot{\mathbf{u}}+\nabla P-\nabla\mu(\rho)\cdot\nabla\mathbf{u}-\nabla\lambda(\rho)\div\mathbf{u}\right)\right]_t.
	\end{align}
	Multiplying by \(\mathbf{u}_{tt}\) and integrating the resulting equation over \(\Omega\) by parts gives
	\begin{align}\label{hh20}
		\begin{split}
			&\quad\frac{1}{2}\frac{d}{dt}\int_\Omega\Big(|\nabla\mathbf{u}_t|^2+(\alpha-1)(\div\mathbf{u}_t)^2\Big)dx+\int_\Omega\rho^{1-\alpha}|\mathbf{u}_{tt}|^2dx\\
			&\leq C(M)\int_\Omega|\mathbf{u}_{tt}|\left( |\rho_t||\mathbf{u}_t|+|\rho_t||\mathbf{u}||\nabla\mathbf{u}|+|\mathbf{u}_t||\nabla\mathbf{u}|+|\mathbf{u}||\nabla\mathbf{u}_t|+|\rho_t||\nabla\rho|+|\nabla\rho_t|\right.\\
			&\quad\left.+|\rho_t||\nabla\rho||\nabla \mathbf{u}|+|\nabla\rho_t||\nabla\mathbf{u}|+|\nabla\rho||\nabla \mathbf{u}_t|\right)dx\\
			&\leq \varepsilon\int_\Omega |\mathbf{u}_{tt}|^2dx+C(\varepsilon,M)\int_\Omega\left( |\rho_t|^2|\mathbf{u}_t|^2+|\rho_t|^2|\mathbf{u}|^2|\nabla\mathbf{u}|^2+|\mathbf{u}_t|^2|\nabla\mathbf{u}|^2+|\mathbf{u}|^2|\nabla\mathbf{u}_t|^2\right.\\
			&\quad\left.+|\rho_t|^2|\nabla\rho|^2+|\nabla\rho_t|^2+|\rho_t|^2|\nabla\rho|^2|\nabla \mathbf{u}|^2+|\nabla\rho_t|^2|\nabla\mathbf{u}|^2+|\nabla\rho|^2|\nabla \mathbf{u}_t|^2\right)dx\\
			&\leq \varepsilon\int_\Omega |\mathbf{u}_{tt}|^2dx+C(\varepsilon,M)\left(\norm{\rho_t}_{L^6}^2\norm{\mathbf{u}_t}_{L^6}^2+\norm{\rho_t}_{L^6}^2\norm{\mathbf{u}}_{L^\infty}^2\norm{\nabla\mathbf{u}}_{L^6}^2+\norm{\mathbf{u}_t}_{L^6}^2\norm{\nabla\mathbf{u}}_{L^6}^2\right.\\
			&\quad +\norm{\mathbf{u}}_{L^\infty}^2\norm{\nabla\mathbf{u}_t}_{L^2}^2+\norm{\rho_t}_{L^6}^2\norm{\nabla\rho}_{L^6}^2+\norm{\nabla\rho_t}_{L^2}^2+\norm{\rho_t}_{L^6}^2\norm{\nabla\rho}_{L^6}^2\norm{\nabla\mathbf{u}}_{L^6}^2\\
			&\quad+\left.\norm{\nabla\rho_t}_{L^2}^2\norm{\nabla\mathbf{u}}_{L^\infty}^2+\norm{\nabla\rho}_{L^6}^2\norm{\nabla\mathbf{u}_t}_{L^3}^2\right),
		\end{split}
	\end{align}
	where \(\varepsilon>0\) is a sufficiently small constant to be determined. Note that
	\begin{align}\label{hh21}
		\begin{split}
			\norm{\nabla \mathbf{u}_t}_{L^2}^2\leq C\norm{\nabla \dot{\mathbf{u}}}_{L^2}^2+C\norm{\nabla(\mathbf{u}\cdot\nabla\mathbf{u})}_{L^2}^2\leq C\norm{\nabla\dot{\mathbf{u}}}_{L^2}^2+C(M);\\
			\norm{\nabla \dot{\mathbf{u}}}_{L^2}^2\leq C \norm{\nabla \mathbf{u}_t}_{L^2}^2+C\norm{\nabla(\mathbf{u}\cdot\nabla\mathbf{u})}_{L^2}^2\leq C\norm{\nabla\mathbf{u}_t}_{L^2}^2+C(M).
		\end{split}
	\end{align}
	Using \eqref{hh18} and the boundary condition \(\mathbf{u}_t|_{\partial\Omega}=0\), we obtain from the \(L^p\) estimate for the elliptic system that
	\begin{align*}
		\norm{\mathbf{u}_t}_{H^2}^2&\leq C\norm{\left[\frac{1}{\rho^\alpha}\left(\rho\dot{\mathbf{u}}+\nabla P-\nabla\mu\cdot\nabla\mathbf{u}-\nabla\lambda\div\mathbf{u}\right)\right]_t}_{L^2}^2\\
		&\leq C(M)\left(\norm{\rho_t\mathbf{u}_t}_{L^2}^2+\norm{\mathbf{u}_{tt}}_{L^2}^2+\norm{\rho_t|\mathbf{u}||\nabla\mathbf{u}|}_{L^2}^2+\norm{|\mathbf{u}_t||\nabla\mathbf{u}|}_{L^2}^2+\norm{|\mathbf{u}||\nabla\mathbf{u}_t|}_{L^2}^2+\right.\\
		&\quad\left.\norm{\rho_t|\nabla\rho|}_{L^2}^2+\norm{\nabla\rho_t}_{L^2}^2+\norm{\rho_t|\nabla\rho||\nabla\mathbf{u}|}_{L^2}^2+\norm{|\nabla\rho_t||\nabla\mathbf{u}|}_{L^2}^2+\norm{|\nabla\rho||\nabla\mathbf{u}_t|}_{L^2}^2\right)\\
		&\leq C(M)(\norm{\rho_t}_{L^6}^2\norm{\mathbf{u}_t}_{L^6}^2+\norm{\mathbf{u}_{tt}}_{L^2}^2+\norm{\rho_t}_{L^6}^2\norm{\mathbf{u}}_{L^\infty}^2\norm{\nabla\mathbf{u}}_{L^6}^2+\norm{\mathbf{u}_t}_{L^6}^2\norm{\nabla\mathbf{u}}_{L^6}^2\\
		&\quad+ \norm{\mathbf{u}}_{L^\infty}^2\norm{\nabla\mathbf{u}_t}_{L^2}^2+\norm{\rho_t}_{L^6}^2\norm{\nabla\rho}_{L^6}^2+\norm{\nabla\rho_t}_{L^2}^2+\norm{\rho_t}_{L^6}^2\norm{\nabla\rho}_{L^6}^2\norm{\nabla\mathbf{u}}_{L^6}^2\\
		&\quad+\norm{\nabla\rho_t}_{L^2}^2\norm{\nabla\mathbf{u}}_{L^\infty}^2+\norm{\nabla\rho}_{L^6}^2\norm{\nabla\mathbf{u}_t}_{L^3}^2)\\
		&\leq C(M,T)(1+\norm{\nabla\dot{\mathbf{u}}}_{L^2}^2+\norm{\mathbf{u}_{tt}}_{L^2}^2+\norm{\nabla\mathbf{u}_t}_{L^3}^2)\\
		&\leq C(M,T)(1+\norm{\nabla\dot{\mathbf{u}}}_{L^2}^2+\norm{\mathbf{u}_{tt}}_{L^2}^2)+C(M,T)\norm{\nabla \mathbf{u}_t}_{L^2}^{\frac{6-N}{3}}\norm{\nabla \mathbf{u}_t}_{H^1}^{\frac{N}{3}}\\
		&\leq \frac{1}{2}\norm{\nabla\mathbf{u}_t}_{H^1}^2+C(M,T)(1+\norm{\nabla\dot{\mathbf{u}}}_{L^2}^2+\norm{\mathbf{u}_{tt}}_{L^2}^2), 
	\end{align*}
	which shows that
	\begin{align}\label{hh19}
		\norm{\mathbf{u}_t}_{H^2}^2\leq C(M,T)(1+\norm{\nabla\dot{\mathbf{u}}}_{L^2}^2+\norm{\mathbf{u}_{tt}}_{L^2}^2).
	\end{align}
	Therefore, we obtain from \eqref{hh20}--\eqref{hh19} that
	\begin{align*}
		&\quad\frac{1}{2}\frac{d}{dt}\int_\Omega\Big(|\nabla\mathbf{u}_t|^2+(\alpha-1)(\div\mathbf{u}_t)^2\Big)dx+\int_\Omega\rho^{1-\alpha}|\mathbf{u}_{tt}|^2dx\\
		&\leq \varepsilon\int_\Omega |\mathbf{u}_{tt}|^2dx+C(\varepsilon,M,T)(1+\norm{\nabla\dot{\mathbf{u}}}_{L^2}^2+\norm{\nabla\mathbf{u}_t}_{L^3}^2)\\
		&\leq \varepsilon\int_\Omega |\mathbf{u}_{tt}|^2dx+C(\varepsilon,M,T)(1+\norm{\nabla\dot{\mathbf{u}}}_{L^2}^2+\norm{\nabla \mathbf{u}_t}_{L^2}^{\frac{6-N}{3}}\norm{\nabla \mathbf{u}_t}_{H^1}^{\frac{N}{3}})\\
		&\leq 2\varepsilon\int_\Omega |\mathbf{u}_{tt}|^2dx+C(\varepsilon,M,T)+C(\varepsilon,M,T)\norm{\nabla\dot{\mathbf{u}}}_{L^2}^2.
	\end{align*}
	Choosing \(\varepsilon>0\) sufficiently small so that \(2\varepsilon<\frac{M^{\alpha-1}}{2}\) holds, we obtain
	\begin{align*}
		\frac{1}{2}\frac{d}{dt}\int_\Omega\Big(|\nabla\mathbf{u}_t|^2+(\alpha-1)(\div\mathbf{u}_t)^2\Big)dx+\frac{M^{\alpha-1}}{2}\int_\Omega |\mathbf{u}_{tt}|^2dx\leq C(M,T)+C(M,T)\norm{\nabla\dot{\mathbf{u}}}_{L^2}^2.
	\end{align*}
	Integrating with respect to time, we obtain \eqref{h4} from \eqref{h2}, \eqref{hh17}, \eqref{hh21} and \eqref{hh19}.
\end{proof}

\begin{prop}\label{Prop H5}
	Assume that \eqref{hh2} holds. Then there exists a constant \(C(M,T)>0\) depending on \(M\) and \(T\) such that
	\begin{align}\label{h5}
		\sup_{0\leq t\leq T}\norm{\rho}_{H^3}+\int_0^T\norm{\mathbf{u}}_{H^4}^2dt\leq C(M,T).
	\end{align}
\end{prop}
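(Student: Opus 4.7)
The plan is to derive a differential inequality for $\norm{\nabla^3\rho}_{L^2}^2$ by differentiating the continuity equation three times, couple it with an $H^2$ elliptic estimate that controls $\norm{\mathbf{u}}_{H^4}$ in terms of $\norm{\rho}_{H^3}$ and the $L^1_t$-integrable quantity $\norm{\mathbf{u}_t}_{H^2}^2$ supplied by Proposition \ref{Prop H4}, and then close the coupled system by Gronwall's inequality. This mirrors the argument for Proposition \ref{Prop H3}, raised by one order.

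First, apply $\partial_i\partial_j\partial_k$ to $\eqref{0}_1$, multiply by $\partial_{ijk}\rho$, sum over the indices, and integrate over $\Omega$. Integration by parts on the transport contribution together with the Leibniz expansion of $\partial_{ijk}(\rho\,\partial_m u_m)$ yields, schematically,
\begin{align*}
\frac{d}{dt}\norm{\nabla^3\rho}_{L^2}^2\leq C\int_\Omega\Big(|\nabla\mathbf{u}||\nabla^3\rho|^2+|\nabla^2\mathbf{u}||\nabla^2\rho||\nabla^3\rho|+|\nabla^3\mathbf{u}||\nabla\rho||\nabla^3\rho|+\rho|\nabla^4\mathbf{u}||\nabla^3\rho|\Big)dx.
\end{align*}
Using $\norm{\rho}_{H^2}\leq C(M,T)$ and $\norm{\mathbf{u}}_{H^3}\leq C(M,T)$ from Propositions \ref{Prop H3}–\ref{Prop H4}, together with the embeddings $H^3\hookrightarrow W^{1,\infty}$ and $H^2\hookrightarrow L^p$ valid for $N\leq 3$, the first three integrals are dominated by $C(M,T)(1+\norm{\nabla^3\rho}_{L^2}^2)$, while the top-order contribution is split by Cauchy–Schwarz as $\varepsilon\norm{\mathbf{u}}_{H^4}^2+C(\varepsilon,M)\norm{\nabla^3\rho}_{L^2}^2$.

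Next, apply standard $H^2$ elliptic theory to system \eqref{hh4}. Since the lower bound $\norm{\rho^{-1}}_{L^\infty}\leq M$, the $H^2$ bound on $\rho$, and the uniform $H^3$ bound on $\mathbf{u}$ are at our disposal, two derivatives of the source $\frac{1}{\rho^\alpha}(\rho\dot{\mathbf{u}}+\nabla P-\nabla\mu\cdot\nabla\mathbf{u}-\nabla\lambda\div\mathbf{u})$ can be estimated, after writing $\dot{\mathbf{u}}=\mathbf{u}_t+\mathbf{u}\cdot\nabla\mathbf{u}$ and using $\norm{\mathbf{u}\cdot\nabla\mathbf{u}}_{H^2}\leq C\norm{\mathbf{u}}_{H^3}^2\leq C(M,T)$, to produce
\begin{align*}
\norm{\mathbf{u}}_{H^4}^2\leq C(M,T)\Big(1+\norm{\mathbf{u}_t}_{H^2}^2+\norm{\rho}_{H^3}^2\Big)\leq C(M,T)\Big(1+\norm{\mathbf{u}_t}_{H^2}^2\Big)\Big(1+\norm{\rho}_{H^3}^2\Big).
\end{align*}

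Finally, inserting this elliptic bound into the differential inequality from Step 1 and choosing $\varepsilon$ small yields
\begin{align*}
\frac{d}{dt}\norm{\nabla^3\rho}_{L^2}^2+\frac{1}{2}\norm{\mathbf{u}}_{H^4}^2\leq C(M,T)\Big(1+\norm{\mathbf{u}_t}_{H^2}^2\Big)\Big(1+\norm{\nabla^3\rho}_{L^2}^2\Big),
\end{align*}
and Gronwall's inequality, combined with $\int_0^T\norm{\mathbf{u}_t}_{H^2}^2\,dt\leq C(M,T)$ from Proposition \ref{Prop H4}, delivers $\sup_{0\leq t\leq T}\norm{\rho}_{H^3}\leq C(M,T)$; integrating in time then gives $\int_0^T\norm{\mathbf{u}}_{H^4}^2\,dt\leq C(M,T)$, which is \eqref{h5}. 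The main obstacle I anticipate is the careful bookkeeping of the top-order coupling: the term $\rho\,\nabla^4\mathbf{u}\cdot\nabla^3\rho$ must be absorbed by the $H^2$ elliptic bound for $\mathbf{u}$ rather than estimated crudely, and the Gronwall multiplier must remain merely $L^1_t$ to prevent an exponential blow-up of $\norm{\rho}_{H^3}$.
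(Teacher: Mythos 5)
Your approach matches the paper's almost exactly: triple-differentiate the continuity equation and pair against $\nabla^3\rho$, bound the top-order coupling through the $H^4$ elliptic estimate for \eqref{hh4}, and close by Gronwall using an $L^1_t$-integrable multiplier supplied by Proposition~\ref{Prop H4}. The only cosmetic difference is that the paper further replaces $\|\mathbf{u}_t\|_{H^2}^2$ by $\|\mathbf{u}_{tt}\|_{L^2}^2$ via \eqref{hh19} before applying Gronwall, whereas you use $\|\mathbf{u}_t\|_{H^2}^2$ directly; both are time-integrable by \eqref{h4}, so the argument is the same.
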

\begin{proof}
	Applying \(\partial_i\partial_j\partial_k\) to equation $\eqref{0}_1$, multiplying the resulting equation by \(\partial_{ijk}\rho\), summing over \(i,j,k\), and integrating the resulting equation over \(\Omega\), we use integration by parts, \eqref{h3}, \eqref{h4} and \eqref{hh16} to obtain
	\begin{align}\label{hh23}
		\begin{split}
			&\quad\frac{1}{2}\frac{d}{dt}\int_\Omega |\nabla^3\rho|^2dx\leq C\int_\Omega |\nabla^3\rho|\left(\rho|\nabla^4\mathbf{u}|+|\nabla\rho||\nabla^3\mathbf{u}|+|\nabla^2\rho||\nabla^2\mathbf{u}|+|\nabla^3\rho||\nabla\mathbf{u}|\right)dx\\
			&\leq C(M)(\norm{\nabla^3\rho}_{L^2}^2+\norm{\nabla^4\mathbf{u}}_{L^2}^2+\norm{\nabla\rho}_{L^\infty}^2\norm{\nabla^3\mathbf{u}}_{L^2}^2+\norm{\nabla^2\rho}_{L^6}^2\norm{\nabla^2\mathbf{u}}_{L^6}^2+\norm{\nabla^3\rho}_{L^2}^2\norm{\nabla\mathbf{u}}_{L^\infty}^2)\\
			&\leq C(M,T)+C(M,T)\norm{\nabla^3\rho}_{L^2}^2+C(M,T)\norm{\nabla^4\mathbf{u}}_{L^2}^2.
		\end{split}
	\end{align}
	From the elliptic system \eqref{hh4}, \eqref{h3} and \eqref{h4}, we obtain
	\begin{align}\label{hh22}
		\begin{split}
			&\norm{\mathbf{u}}_{H^4}^2\leq C \norm{\frac{1}{\rho^\alpha}(\rho \mathbf{u}_t+\rho\mathbf{u}\cdot\nabla\mathbf{u}+\nabla P-\nabla\mu\cdot\nabla\mathbf{u}-\nabla\lambda\div\mathbf{u})}_{H^2}^2\\
			&\leq C(M,T)+ C(M)(\norm{|\nabla^2\rho||\mathbf{u}_t|}_{L^2}^2+\norm{|\nabla\rho||\nabla\mathbf{u}_t|}_{L^2}^2+\norm{\nabla^2\mathbf{u}_t}_{L^2}^2+\norm{|\nabla\rho|^2|\mathbf{u}_t|}_{L^2}^2)\\
			&\quad+C(M)(\norm{|\nabla^2\rho||\mathbf{u}||\nabla\mathbf{u}|}_{L^2}^2+\norm{|\nabla\rho||\nabla\mathbf{u}|^2}_{L^2}^2+\norm{|\mathbf{u}||\nabla^3\mathbf{u}|}_{L^2}^2+\norm{|\nabla\rho|^2|\mathbf{u}||\nabla\mathbf{u}|}_{L^2}^2\\
			&\quad+\norm{|\nabla\rho||\mathbf{u}||\nabla^2 \mathbf{u}|}_{L^2}^2+\norm{|\nabla^2\mathbf{u}||\nabla \mathbf{u}|}_{L^2}^2)+C(M)(\norm{\nabla^3\rho}_{L^2}^2+\norm{|\nabla^2\rho||\nabla\rho|}_{L^2}^2+\norm{|\nabla\rho|^3}_{L^2}^2)\\
			&\quad +C(M)(\norm{|\nabla^3\mathbf{u}||\nabla\rho|}_{L^2}^2+\norm{|\nabla^2\mathbf{u}||\nabla^2\rho|}_{L^2}^2+\norm{|\nabla^2\mathbf{u}||\nabla\rho|^2}_{L^2}^2+\norm{|\nabla\rho||\nabla^2\rho||\nabla\mathbf{u}|}_{L^2}^2\\
			&\quad+\norm{|\nabla^3\rho||\nabla\mathbf{u}|}_{L^2}^2+\norm{|\nabla\mathbf{u}||\nabla\rho|^3}_{L^2}^2)\\
			&\leq C(M,T)(1+\norm{\nabla^3\rho}_{L^2}^2+\norm{\mathbf{u}_t}_{H^2}^2)\\
			&\leq C(M,T)(1+\norm{\nabla^3\rho}_{L^2}^2+\norm{\mathbf{u}_{tt}}_{L^2}^2),
		\end{split}
	\end{align}
	where the last inequality follows from \eqref{hh19}. Inserting \eqref{hh22} into \eqref{hh23}, we get
	\begin{align}\label{hh25}
		\frac{1}{2}\frac{d}{dt}\int_\Omega |\nabla^3\rho|^2dx\leq C(M,T)(1+\norm{\nabla^3\rho}_{L^2}^2+\norm{\mathbf{u}_{tt}}_{L^2}^2),
	\end{align}
	By Gronwall's inequality, \eqref{h4} and \eqref{hh22} imply \eqref{h5}.
\end{proof}

\subsection{The proof of Theorem \ref{Thm1}}
	\begin{prop}\label{Prop H6}
	Under the assumptions of Theorem \ref{Thm1}, there exists a constant \(C(T)>0\) such that
	\begin{align}\label{hh24}
		\begin{split}
			&\sup_{0\leq t\leq T}\left(\norm{\rho}_{H^3}+\norm{\rho_t}_{H^2}+\norm{\rho_{tt}}_{L^2}\right)+\int_0^T\norm{\rho_{tt}}_{H^1}^2dt\leq C(T);\\
			&\sup_{0\leq t\leq T}\left(\norm{\mathbf{u}}_{H^3}+\norm{\mathbf{u}_t}_{H^1}\right)+\int_0^T(\norm{\mathbf{u}}_{H^4}^2+\norm{\mathbf{u}_t}_{H^2}^2+\norm{\mathbf{u}_{tt}}_{L^2}^2)dt\leq C(T).
		\end{split}
	\end{align}
\end{prop}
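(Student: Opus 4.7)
The plan is to verify the hypothesis \eqref{hh2} of Propositions \ref{Prop H1}--\ref{Prop H5} under the assumptions of Theorem \ref{Thm1} for some admissible exponent $l$, apply these five propositions to recover all the spatial estimates and the $\mathbf{u}_t,\mathbf{u}_{tt}$ estimates listed in \eqref{hh24}, and then close the remaining density time-derivative estimates by differentiating the continuity equation.

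For Step 1, choose $l\in M_{\mathrm{set}}$ with $1<l<n_N(\alpha)$ and, when $N=3$, additionally $\tfrac32<l<3$; the density of $M_{\mathrm{set}}$ in $(1,\infty)$ together with $n_3(\alpha)>1.91$ (which follows from $\alpha>0.686$) guarantees that such an $l$ exists. Applying Propositions \ref{Prop C n} and \ref{Prop C m} with $n=l$ and $m=l$, and using the upper and lower density bounds already established in Propositions \ref{Prop C 2d RT}--\ref{Prop ex 3dVT}, one obtains the $L^{2l}$ bound on $\mathbf{u}$ (from $\rho^{-1}\le C(T)$) and the $L^2,L^{2l}$ bounds on $\nabla\rho$ (by writing $\nabla\rho=(\alpha-\tfrac12)^{-1}\rho^{\frac32-\alpha}\nabla\rho^{\alpha-\frac12}$ and $\nabla\rho=(\alpha-1+\tfrac{1}{2l})^{-1}\rho^{2-\alpha-\frac{1}{2l}}\nabla\rho^{\alpha-1+\frac{1}{2l}}$); the dissipation integrals $\int_0^T\|\nabla\rho\|_{L^2}^2\,dt$, $\int_0^T\|\nabla\rho\|_{L^{2l}}^{2l}\,dt$ and $\int_0^T\|\nabla\mathbf{u}\|_{L^2}^2\,dt$ are similarly bounded. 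Hence \eqref{hh2} holds with some $M=M(T)$, and Propositions \ref{Prop H1}--\ref{Prop H5} furnish bounds on $\sup_t(\|\nabla\mathbf{u}\|_{L^2}+\|\sqrt{\rho}\dot{\mathbf{u}}\|_{L^2}+\|\mathbf{u}\|_{H^3}+\|\nabla\mathbf{u}_t\|_{L^2}+\|\rho\|_{H^3})$ and on $\int_0^T(\|\mathbf{u}\|_{H^4}^2+\|\mathbf{u}_t\|_{H^2}^2+\|\mathbf{u}_{tt}\|_{L^2}^2)\,dt$. This covers every item of \eqref{hh24} except the three estimates for $\rho_t$ and $\rho_{tt}$.

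For the remaining estimates, differentiate the continuity equation to obtain $\rho_t=-\mathbf{u}\cdot\nabla\rho-\rho\,\div\mathbf{u}$. Using the Sobolev algebra structure of $H^2$ in dimension $N\le 3$ together with the previously obtained bounds yields $\sup_t\|\rho_t\|_{H^2}\le C(T)$. Time-differentiating once more gives
\begin{align*}
\rho_{tt}=-\mathbf{u}_t\cdot\nabla\rho-\mathbf{u}\cdot\nabla\rho_t-\rho_t\,\div\mathbf{u}-\rho\,\div\mathbf{u}_t,
\end{align*}
whose right-hand side is controlled in $L^2$ by H\"older together with the embeddings $H^1\hookrightarrow L^6$ and $H^2\hookrightarrow L^\infty$, giving $\sup_t\|\rho_{tt}\|_{L^2}\le C(T)$. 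Applying $\nabla$ to this identity introduces $\nabla^2\mathbf{u}_t$ and $\nabla^2\rho_t$ as the highest-order terms; the former lies in $L^2_tL^2_x$ by \eqref{h4}, and the latter, via another use of the continuity equation, reduces to a sum of products of spatial derivatives of $\rho$ and $\mathbf{u}$ up to order three, controlled by $\sup_t\|\rho\|_{H^3}$ and $\int_0^T\|\mathbf{u}\|_{H^4}^2\,dt$. Hence $\int_0^T\|\rho_{tt}\|_{H^1}^2\,dt\le C(T)$, completing \eqref{hh24}.

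The principal obstacle is the exponent book-keeping in Step 1: one must verify that a single $l$ simultaneously fits the ranges required by Propositions \ref{Prop C n}--\ref{Prop C m} and by Propositions \ref{Prop H1}--\ref{Prop H5}. This is automatic in two dimensions, and in three dimensions it is guaranteed by the lower bound $n_3(\alpha)>1.91$ under \eqref{C 3d alpha} combined with the density of $M_{\mathrm{set}}$ in $(1,\infty)$. Everything else is a routine application of the already established propositions together with elementary differentiation of the continuity equation.
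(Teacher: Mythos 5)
Your proposal is correct and follows the same overall structure as the paper's proof: verify hypothesis \eqref{hh2}, invoke Propositions \ref{Prop H1}--\ref{Prop H5}, then treat $\rho_t$ and $\rho_{tt}$ via the continuity equation and its $t$-derivative. The only technical divergence is in Step~1. The paper does not reapply Propositions \ref{Prop C n} and \ref{Prop C m} with a fresh exponent; instead it directly reuses the $k$-specific bounds already established in Propositions \ref{Prop C 2d k} (setting $l=k$ for $N=2$) and \ref{Prop C 3d k}/\eqref{C 3d l} (choosing $1.5<l<\min\{3,k\}$ and downgrading $L^{2k}$ to $L^{2l}$ by H\"older for $N=3$), together with the upper/lower density bounds. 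You instead reinvoke the base Propositions \ref{Prop C n} and \ref{Prop C m} with $n=m=l$, which — now that $\rho$ is bounded above and below — makes the $r$-weighted factors on the right-hand sides trivially controlled and yields \eqref{hh2} for the chosen $l$ directly. Both routes give the same conclusion and rely on the same ingredients; yours avoids the H\"older downgrade in the $N=3$ case at the cost of re-running the estimates. Your treatment of $\rho_t$ and $\rho_{tt}$ coincides with the paper's, though your remark about substituting the continuity equation to control $\nabla^2\rho_t$ is unnecessary: since $\sup_t\|\rho_t\|_{H^2}\le C(T)$ is already available, $\|\nabla^2\rho_t\|_{L^2}\le C(T)$ can be used directly in bounding $\int_0^T\|\rho_{tt}\|_{H^1}^2\,dt$.
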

\begin{proof}
	By  Proposition  \ref{Prop C 2d RT}, \ref{Prop C 3d RT}, \ref{Prop C 2d VT} and \ref{Prop C 3d VT}, we obtain that there exists a constant \(C(T)>0\) such that 
	\begin{align}\label{hhh1}
	\sup_{0\leq t\leq T}\norm{\rho}_{L^\infty}+\sup_{0\leq t\leq T}\norm{\rho^{-1}}_{L^\infty}\leq C(T).
	\end{align}
	
	For \(N=2\), set \(l = k\), where \(k\) is as defined in \eqref{C11}. Then, by Proposition \ref{Prop C 2d k}, we obtain 
	\begin{align}\label{hhh2}
		\sup_{0\le t\le T}\norm{\rho^{\frac{1}{2l}}\mathbf{u}}_{L^{2l}}+\sup_{0\le t\le T}\norm{\nabla\rho^{\alpha-1+\frac{1}{2l}}}_{L^{2l}}+\int_0^T\norm{\nabla\rho^{\alpha-1+\frac{\gamma-\alpha+1}{2l}}}_{L^{2l}}^{2l}dt\leq C(T).
	\end{align}
	Therefore, \eqref{hhh1}, \eqref{C u^2}, \eqref{C nablarho^2}, \eqref{hhh2} together imply
		\begin{align}\label{hhh3}
			\begin{split}
				\sup_{0\leq t\leq T}\Big(\norm{\mathbf{u}}_{L^{2l}}&+\norm{\rho}_{L^\infty}+\norm{\rho^{-1}}_{L^\infty}+\norm{\nabla\rho}_{L^2}+\norm{\nabla\rho}_{L^{2l}}\Big)\\
				&+\int_0^T\left(\norm{\nabla\rho}_{L^2}^2+\norm{\nabla\rho}_{L^{2l}}^{2l}+\norm{\nabla\mathbf{u}}_{L^2}^2\right)dt\leq C(T).
			\end{split}
		\end{align}	
		
	For $N=3$, choose any \(l\) satisfying \(1.5<l<\min\{3,k\}\), where \(k\) is as defined in \eqref{C21}. Therefore, \eqref{hhh1}, \eqref{C u^2}, \eqref{C nablarho^2}, \eqref{C 3d k}, \eqref{C 3d l} together imply \eqref{hhh3}.

	 Therefore, by Proposition \ref{Prop H1}-Proposition \ref{Prop H5}, we obtain \eqref{hh24} except for the estimates of \(\rho_{t}\) and \(\rho_{tt}\). Indeed, from equation $\eqref{0}_1$, one has $\sup_{0\leq t\leq T}\norm{\rho_t}_{H^2}\leq C(T).$ Differentiating equation $\eqref{0}_1$ with respect to $t$, we obtain 
	\begin{align*}
		\rho_{tt}+\rho_t\div\mathbf{u}+\rho\div\mathbf{u}_t+\nabla\rho_t\cdot\mathbf{u}+\nabla\rho\cdot\mathbf{u}_t=0,
	\end{align*}
	which shows that
	\begin{align*}
		\sup_{0\leq t\leq T}\norm{\rho_{tt}}_{L^2}+\int_0^T\norm{\rho_{tt}}_{H^1}^2dt\leq C(T).
	\end{align*}
	We have completed the proof.
\end{proof}

The pair \((\rho,\mathbf{u})\) in Proposition \ref{Prop H6} can be shown to belong to \(C([0,T];H^3)\). Indeed, \(\rho\in L^\infty(0,T;H^3)\) together with \(\rho_t\in L^\infty(0,T;H^2)\) implies \(\rho\in C([0,T];H^2)\).  The \(L^2\)-continuity of the third-order derivatives of \(\rho\) follows from standard arguments on weak convergence and norm convergence.  Moreover, since \(\mathbf{u}\in L^2(0,T;H^4)\) and \(\mathbf{u}_t\in L^2(0,T;H^2)\), standard interpolation gives \(\mathbf{u}\in C([0,T];H^3)\).

We now prove that \(T^*=\infty\).  If not, then \(T^*<\infty\), and the constant \(C(T)\) in Proposition \ref{Prop H6} can be replaced by \(C(T^*)\), where \(C(T^*)\) depends on \(T^*\) and not on \(T<T^*\).  Therefore, we can define
\begin{align*}
	(\rho(T^*),\mathbf{u}(T^*))=\lim\limits_{t\to T^*}(\rho(t),\mathbf{u}(t))\in H^3.
\end{align*}
The pair \((\rho(T^*),\mathbf{u}(T^*))\) can serve as new initial data, and at this time \(\rho(T^*)\) has a positive lower bound.  By the local existence Lemma \ref{Lem local existence}, there exists a small time \(t_0>0\) such that the new initial-boundary-value problem has a solution on \([T^*,T^*+t_0]\).  This contradicts the maximality of \(T^*\). We have completed the proof of Theorem \ref{Thm1}.

\subsection{The proof of Theorem \ref{Thm2}}
Note that global existence of classical solutions has been established by Theorem \ref{Thm1}  for \(N=2\) with \(\alpha>0.54, \gamma>1\) and for \(N=3\) with $(\alpha,\gamma)\in\Big\{(\alpha,\gamma)|0.689<\alpha<1,1<\gamma<3\alpha-1\Big\}\cap\Big\{(\alpha,\gamma)|0.686<\alpha<1,1<\gamma<6\alpha-3+\frac{3-5\alpha}{2n_3(\alpha)}\Big\}$.  We now sketch the proof for the remaining case \(N=3\) with $(\alpha,\gamma)\in\Big\{(\alpha,\gamma)|0.689<\alpha<1,1<\gamma<3\alpha-1\Big\}-\Big\{(\alpha,\gamma)|0.686<\alpha<1,1<\gamma<6\alpha-3+\frac{3-5\alpha}{2n_3(\alpha)}\Big\}.$ In this case, Propositions \ref{Prop C uniRT} and \ref{Prop ex 3dVT} yield positive upper and lower bounds for the density, while Proposition \ref{Prop C 4} provides the \(L^{4}\) bound on the velocity field and the derivative of the density.  Consequently, combining this with \eqref{C u^2} and \eqref{C nablarho^2}, we know that \eqref{hh2} holds for \(l=2\) with \(M\) replaced by \(C(T)\).  Iterating Propositions \ref{Prop H1}-\ref{Prop H5} then gives the higher-order estimates, and repeating the arguments of the previous section, we obtain global existence of solutions.

Now, we assume that \((\rho,\mathbf{u})\) is the global classical solution constructed in Theorem \ref{Thm2}. We obtain from \eqref{C u^2}, \eqref{C nablarho^2}, \eqref{C 2d l4} and \eqref{C uniRT} that, 
\begin{align}\label{aa11}
	\begin{split}
		\sup_{0\leq t<\infty}&\left(\norm{\rho}_{L^\infty}+\norm{\nabla\rho^{\alpha-\frac{1}{2}}}_{L^2}+\norm{\rho^{\alpha-\frac{7}{4}}\nabla\rho}_{L^{4}}\right)\\
		&+\int_0^\infty\norm{\nabla\rho^{\frac{\gamma+\alpha-1}{2}}}_{L^{2}}^2+\norm{\rho^{\frac{\gamma+3\alpha-7}{4}}\nabla\rho}_{L^{4}}^{4}+\norm{\rho^{\frac{\alpha}{2}}\nabla \mathbf{u}}_{L^2}^2dt\leq C.
	\end{split}
\end{align}

We now show that the lower bound of the density is in fact independent of time.
	\begin{prop}\label{Prop rho epsilon}
	There exist constants \(\varepsilon>0\) depending only on the initial data such that
	\begin{align}\label{aa1}
		\rho\ge \varepsilon, \quad \forall (x,t)\in \Omega\times[0,\infty).
	\end{align}
	Furthermore, it holds that 
	\begin{align}\label{aa10}
		\lim\limits_{t\to\infty}\norm{\rho(t)-\overline{\rho_0}}_{C(\overline{\Omega})}=0.
	\end{align}
\end{prop}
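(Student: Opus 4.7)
The plan is first to establish the $C(\overline{\Omega})$-convergence \eqref{aa10} and then to deduce the positive lower bound \eqref{aa1} from it. Integrating the continuity equation over $\Omega$ and using the boundary condition gives mass conservation $\int_\Omega\rho(\cdot,t)\,dx=\overline{\rho_0}\,|\Omega|$, so $\rho-\overline{\rho_0}$ has spatial mean zero. Combining the uniform bound $\sup_t\|\rho^{\alpha-7/4}\nabla\rho\|_{L^4}<\infty$ and the dissipation $\int_0^\infty\|\rho^{(\gamma+3\alpha-7)/4}\nabla\rho\|_{L^4}^4\,dt<\infty$ from \eqref{aa11} with the upper bound on $\rho$ of Proposition \ref{Prop C uniRT} (noting that $7/4-\alpha>0$, since $\alpha<1$, and that in the parameter range of Theorem \ref{Thm2} the weight exponent $7-\gamma-3\alpha$ is nonnegative, which in 3D follows from $\gamma<3\alpha-1<2$), one obtains
\begin{align*}
\sup_{t\ge 0}\|\nabla\rho(\cdot,t)\|_{L^4(\Omega)}<\infty,\qquad \int_0^\infty\|\nabla\rho(\cdot,t)\|_{L^4(\Omega)}^4\,dt<\infty.
\end{align*}
By the Morrey embedding $W^{1,4}(\Omega)\hookrightarrow C(\overline{\Omega})$, valid since $N\le 3<4$, together with the Poincar\'e inequality applied to the mean-zero function $\rho-\overline{\rho_0}$,
\begin{align*}
\|\rho(\cdot,t)-\overline{\rho_0}\|_{L^\infty(\Omega)}\le C\,\|\nabla\rho(\cdot,t)\|_{L^4(\Omega)},
\end{align*}
and therefore $\int_0^\infty\|\rho-\overline{\rho_0}\|_{L^\infty}^4\,dt<\infty$.

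To upgrade this time integrability to pointwise decay, I would invoke the elementary real-analysis fact that a nonnegative $f\in L^1(0,\infty)$ with uniformly bounded time derivative on $[0,\infty)$ must satisfy $f(t)\to 0$ as $t\to\infty$. I would apply it to $f(t)=\|\nabla\rho(\cdot,t)\|_{L^4}^4$; the needed bound on $f'(t)$ is obtained by differentiating under the integral, substituting $\partial_t\rho=-\mathrm{div}(\rho\mathbf{u})$, integrating by parts in space, and estimating the resulting terms by the uniform upper bound on $\rho$ together with the time-independent higher-order estimates established in Section 3.6 (which, under hypothesis \eqref{C 2d unialpha} or \eqref{C 3d unialpha}, can be closed uniformly in $T$ as in the proof of Theorem \ref{Thm2}). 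This yields $\|\nabla\rho(\cdot,t)\|_{L^4}\to 0$, and hence \eqref{aa10}.

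Once \eqref{aa10} is established, \eqref{aa1} is immediate. Since $\overline{\rho_0}>0$, the convergence \eqref{aa10} supplies a time $T_0>0$, depending only on the initial data, such that $\rho(x,t)\ge\overline{\rho_0}/2$ for all $(x,t)\in\Omega\times[T_0,\infty)$; on $\Omega\times[0,T_0]$, a positive lower bound on the classical density is already provided by Propositions \ref{Prop C 2d VT}, \ref{Prop C 3d VT}, and \ref{Prop ex 3dVT}. Taking $\varepsilon=\min\bigl(\overline{\rho_0}/2,\ \inf_{[0,T_0]\times\Omega}\rho\bigr)>0$ gives \eqref{aa1}. The principal obstacle is the uniform control of $f'(t)$ in the second step, because no positive lower bound on $\rho$ is yet available (indeed, that is what we are trying to prove); the remedy is to keep every estimate phrased in terms of nonnegative powers $\rho^p$ of the density, so that bounds flow from the $L^\infty$ upper bound alone, and to invoke only those higher-order estimates whose derivation does not itself rest on a lower bound on $\rho$.
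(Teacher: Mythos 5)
Your proposal contains a genuine gap, and the remedy you sketch at the end does not resolve it.

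\textbf{Gap 1 (the 2D case).} Your transfer from the weighted dissipation bound to $\int_0^\infty\|\nabla\rho\|_{L^4}^4\,dt<\infty$ requires $7-\gamma-3\alpha\ge 0$, so that the weight $\rho^{(\gamma+3\alpha-7)/4}$ is a nonpositive power of the bounded quantity $\rho$. You verify this in 3D using $\gamma<3\alpha-1$, but you are silent about 2D, where Theorem \ref{Thm2} assumes only $\gamma>1$. For $\gamma>7-3\alpha$ (perfectly admissible in 2D) the exponent $\gamma+3\alpha-7$ is \emph{positive}, the weight can be arbitrarily small, and the inequality $\|\nabla\rho\|_{L^4}\le C\|\rho^{(\gamma+3\alpha-7)/4}\nabla\rho\|_{L^4}$ goes the wrong way. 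The paper avoids this by working not with $\nabla\rho$ but with $\nabla\rho^{b}$ for a \emph{large} $b$: since $\nabla\rho^{b}\propto\rho^{\,b-(\gamma+3\alpha-3)/4}\nabla\rho^{(\gamma+3\alpha-3)/4}$, taking $b\ge(\gamma+3\alpha-3)/4$ makes the extra factor a nonnegative power, which is then controlled by the upper bound of Proposition \ref{Prop C uniRT} regardless of $\gamma$. That is precisely what \eqref{aa2} packages.

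\textbf{Gap 2 (circularity in bounding $f'$).} You take $f(t)=\|\nabla\rho(t)\|_{L^4}^4$ and want a uniform bound on $f'$. Differentiating gives terms involving $\nabla\rho_t=-\nabla\div(\rho\mathbf{u})$, hence $\nabla^2\rho$, $\nabla\rho\cdot\nabla\mathbf{u}$, and $\rho\nabla\div\mathbf{u}$. Controlling these requires the higher-order estimates of Propositions \ref{Prop H1}--\ref{Prop H5}, but those propositions rest on \eqref{hh2}, whose hypotheses explicitly include $\sup_t\|\rho^{-1}\|_{L^\infty}\le M$ — the very lower bound you are trying to prove. You acknowledge this circularity at the end, but the proposed remedy (restrict to estimates phrased in nonnegative powers of $\rho$) does not tell you how to control $\nabla\mathbf{u}$ or $\nabla^2\rho$ without a lower bound on $\rho$; no such control is available at this stage of the argument. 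The paper sidesteps the issue entirely by a different choice of Lyapunov quantity: it sets $g(t)=\|\rho^{b}-\overline{\rho^{b}}\|_{L^4}^4$, so that $g'$ involves only $\rho_t$ (not $\nabla\rho_t$), and after inserting $\rho_t=-\div(\rho\mathbf{u})$ and integrating by parts the integrand contains only $\sqrt{\rho}\mathbf{u}$ and $\nabla\rho^{b}$, $\nabla\rho^{b-1}$ — all of which are controlled by the energy/entropy bounds \eqref{aa11} and the upper bound on $\rho$ alone, with no lower bound or higher-order estimates needed. It then shows $\int_0^\infty g\,dt<\infty$ and $\int_0^\infty|g'|\,dt<\infty$ (not a pointwise bound on $g'$), which already forces $g(t)\to 0$, and combines this with Jensen's inequality ($\overline{\rho^{b}}\ge\overline{\rho}^{\,b}=\overline{\rho_0}^{\,b}>0$) to extract the positive lower bound.

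The closing step of your argument (patching the lower bound on $[0,T_0]$ with Propositions \ref{Prop C 2d VT}, \ref{Prop C 3d VT}, \ref{Prop ex 3dVT}, and taking $\varepsilon$ the minimum) is fine and matches the paper. But the core convergence argument needs to be reorganized around $\rho^{b}$ for $b$ large and a Lyapunov functional whose time derivative does not involve spatial derivatives of the velocity.
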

\begin{proof}
	We choose \(b>1\) sufficiently large so that
	\begin{align}\label{aa2}
		\begin{split}
			\sup_{0\leq t<\infty}&\left(\norm{\rho}_{L^\infty}+\norm{\nabla\rho^{b-1}}_{L^2}+\norm{\nabla\rho^b}_{L^{4}}\right)\\
			&+\int_0^\infty\norm{\nabla\rho^{b-1}}_{L^2}^2+\norm{\nabla\rho^{b}}_{L^{4}}^{4}dt\leq C.
		\end{split}
	\end{align}
	By the embedding inequality we have
	\begin{align}\label{aa3}
		\norm{\rho^b-\overline{\rho^b}}_{C(\overline{\Omega})}\leq C\norm{\rho^b-\overline{\rho^b}}_{L^4}^{\frac{4-N}{4}}\norm{\nabla\rho^b}_{L^{4}}^{\frac{N}{4}}\leq C\norm{\rho^b-\overline{\rho^b}}_{L^{4}}^{\frac{4-N}{4}}.
	\end{align}
	Next, we claim that
	\begin{align}\label{aa4}
		g(t)=\norm{(\rho^b-\overline{\rho^b})(t)}_{L^4}^4\rightarrow 0, \text{ as }t\rightarrow\infty.
	\end{align}
	To prove \eqref{aa4}, we first show that \(\int_{0}^{\infty}|g'(t)|dt\le C\). A direct computation gives
	\begin{align*}
		g'(t)&=4b\left\langle (\rho^b-\overline{\rho^b})^3\rho^{b-1},\rho_t\right\rangle-4(\overline{\rho^b})_t\int_\Omega(\rho^b-\overline{\rho^b})^3dx\\
		&=-4b\left\langle \nabla\left((\rho^b-\overline{\rho^b})^3\rho^{b-1}\right),\sqrt{\rho}\sqrt{\rho}\mathbf{u}\right\rangle-4(\overline{\rho^b})_t\int_\Omega(\rho^b-\overline{\rho^b})^3dx:=I_1+I_2.
	\end{align*}
	Using \eqref{aa2}, we obtain
	\begin{align}\label{aa5}
		\begin{split}
			&\int_0^\infty|I_1|dt\leq C\int_0^\infty\int_\Omega\Big((\rho^b-\overline{\rho^b})^2|\nabla\rho^b|\rho^{b-\frac{1}{2}}\sqrt{\rho}|\mathbf{u}|+\left|\rho^b-\overline{\rho^b}\right|^3|\nabla\rho^{b-1}|\sqrt{\rho}\sqrt{\rho}|\mathbf{u}|\Big)dxdt\\
			&\leq C\int_0^\infty\left(\norm{\sqrt{\rho}\mathbf{u}}_{L^2}\norm{\nabla\rho^b}_{L^2}\norm{\rho^b-\overline{\rho^b}}_{L^\infty}^2+\norm{\sqrt{\rho}\mathbf{u}}_{L^2}\norm{\nabla\rho^{b-1}}_{L^2}\norm{\rho^b-\overline{\rho^b}}_{L^\infty}^2\right)dt\\
			&\leq C\int_0^\infty\left(\norm{\nabla\rho^{b-1}}_{L^2}^2+\norm{\nabla \rho^b}_{L^{4}}^{4}\right)dt\leq C.
		\end{split}
	\end{align}
	Note that
	\begin{align*}
		\sup_{0\leq t<\infty}\left|\frac{d}{dt}\overline{\rho^b}\right|&=b\sup_{0\leq t<\infty}\left|\langle \rho^{b-1},\rho_t\rangle\right|=b\sup_{0\leq t<\infty}\left|\int_\Omega \nabla\rho^{b-1}\sqrt{\rho}\cdot\sqrt{\rho}\mathbf{u}dx\right|\\
		&\leq C\norm{\sqrt{\rho}\mathbf{u}}_{L^\infty L^2}\norm{\nabla\rho^{b-1}}_{L^\infty L^2}\leq C.
	\end{align*}
	Therefore,
	\begin{align}\label{aa6}
		\int_0^\infty|I_2|dt\leq C\int_0^\infty\norm{\rho^b-\overline{\rho^b}}^2_{L^2}dt\leq C\int_0^\infty \norm{\nabla\rho^b}_{L^2}^2dt\leq C.
	\end{align}
	Finally, we obtain
	\begin{align}\label{aa7}
		\int_0^\infty g(t)dt\leq C\norm{\rho^b-\overline{\rho^b}}_{L^\infty L^\infty}^2\int_0^\infty\|\rho^b-\overline{\rho^b}\|_{L^2}^2dt\leq C\int_0^\infty\norm{\nabla \rho^b}_{L^2}^2dt\leq C.
	\end{align}
	The estimates \eqref{aa5}–\eqref{aa7} imply \eqref{aa4}. Therefore, we obtain from \eqref{aa3} that
	\begin{align}\label{aa8}
		\norm{\rho^b(t)-\overline{\rho^b}(t)}_{C(\overline{\Omega})}\to 0,\text{ as }t\rightarrow\infty.
	\end{align}
	Then, by Jensen's inequality we obtain
	\begin{align*}
		\overline{\rho^b}\ge \overline{\rho}^b=\overline{\rho_0}^b>0.
	\end{align*}
	Together with \eqref{aa8}, this shows that there exist \(\varepsilon_0\) and \(T^*\) such that
	\begin{align}\label{aa9}
		\rho(x,t)\ge\varepsilon_0,\quad \forall (x,t)\in\Omega\times[T^*,\infty).
	\end{align}
	We note that \(\rho\) has a positive lower bound on \(\Omega\times[0,T^*]\), which together with \eqref{aa9} proves \eqref{aa1}. 
	
	Below we prove \eqref{aa10}. We first prove that \(\overline{\rho^b}(t)\) converges to some positive constant as \(t\to\infty\). Indeed, from \eqref{aa1}, we have
	\begin{align}
		\begin{split}
			\left|\frac{d}{dt}\overline{\rho^b}(t)\right|&=\left|\int_\Omega\partial_t \rho^b dx\right|\leq C\left|\int_\Omega(\rho^b-\overline{\rho^b})\div\mathbf{u}dx\right|\\
			&\leq C\int_\Omega \Big(|\nabla\mathbf{u}|^2+|\rho^b-\overline{\rho^b}|^2\Big)dx\\
			&\leq \frac{C}{\varepsilon^\alpha}\int_\Omega\rho^\alpha|\nabla\mathbf{u}|^2dx+C\int_\Omega|\nabla\rho^b|^2dx.
		\end{split}
	\end{align}
	Integrating with respect to $t$ and using \eqref{aa11} and \eqref{aa2} gives
	\begin{align*}
		\int_0^\infty \left|\frac{d}{dt}\overline{\rho^b}(t)\right|dt<\infty.
	\end{align*}
	This shows that \(\overline{\rho^b}(t)\) converges to some positive constant \(a_0\), which together with \eqref{aa8} implies
	\begin{align}
		\norm{\rho^b(t)-a_0}_{C(\overline{\Omega})}\rightarrow 0,\text{ as }t\rightarrow\infty.
	\end{align}
	Using \(b>1\), we obtain
	\begin{align}
		\norm{\rho(t)-a_0^{\frac{1}{b}}}_{C(\overline{\Omega})}\rightarrow 0,\text{ as }t\rightarrow\infty.
	\end{align}
	It is easily checked, by applying the dominated convergence theorem, that \(a_0^{1/b}=\overline{\rho_0}\).  Thus, we have completed the proof.
\end{proof}

Therefore, combining \eqref{C 2d 4}, \eqref{aa11}, and \eqref{aa1}, we obtain
	\begin{align}\label{aa13}
	\begin{split}
		\sup_{0\leq t<\infty}\Big(\norm{\mathbf{u}}_{L^4}&+\norm{\rho}_{L^\infty}+\norm{\rho^{-1}}_{L^\infty}+\norm{\nabla\rho}_{L^2}+\norm{\nabla\rho}_{L^4}\Big)\\
		&+\int_0^\infty\left(\norm{\nabla\rho}_{L^2}^2+\norm{\nabla\rho}_{L^{4}}^{4}+\norm{\nabla\mathbf{u}}_{L^2}^2\right)dt\leq C.
	\end{split}
\end{align}

	\begin{prop}
	It holds that
	\begin{align}\label{hh26}
		\lim\limits_{t\to\infty}\norm{\nabla\mathbf{u}(t)}_{L^2}=0.
	\end{align}
\end{prop}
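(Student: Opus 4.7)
The plan is to deduce \eqref{hh26} by a standard uniform-continuity argument: combine the integrability $\int_0^\infty\|\nabla\mathbf{u}\|_{L^2}^2\,dt<\infty$ already contained in \eqref{aa13} with a uniform-in-time bound on $\frac{d}{dt}\|\nabla\mathbf{u}\|_{L^2}^2$. Since the uniform bound on the derivative requires higher-order time-independent regularity of $(\rho,\mathbf{u})$ than \eqref{aa13} provides, I first promote \eqref{aa13} to the stronger estimate
\[
\sup_{t\ge 0}\bigl(\|\mathbf{u}(t)\|_{H^2}+\|\sqrt{\rho}\dot{\mathbf{u}}(t)\|_{L^2}+\|\mathbf{u}_t(t)\|_{L^2}\bigr)+\int_0^\infty\|\nabla\dot{\mathbf{u}}\|_{L^2}^2\,dt\le C.
\]

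To obtain these, I apply Propositions \ref{Prop H1} and \ref{Prop H2} with $l=2$ and with $M$ taken from \eqref{aa13}, which is now a time-independent constant. First, integrating \eqref{hh6} in $t$ and exploiting the integrability $\int_0^\infty(\|\nabla\rho\|_{L^2}^2+\|\nabla\mathbf{u}\|_{L^2}^2)\,dt\le C$ from \eqref{aa13} yields the $t$-independent bound
\[
\int_\Omega\bigl(\mu(\rho)|\nabla\mathbf{u}|^2+\lambda(\rho)(\operatorname{div}\mathbf{u})^2\bigr)\,dx+\int_0^\infty\!\!\int_\Omega\rho|\dot{\mathbf{u}}|^2\,dxdt\le C,
\]
and hence, via \eqref{hh5}, a uniform bound on $\|\mathbf{u}(t)\|_{H^2}$. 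Next, I insert the resulting integrability of $\|\sqrt{\rho}\dot{\mathbf{u}}\|_{L^2}^2$ into \eqref{hh14}; all of the remaining forcing terms on the right-hand side of \eqref{hh14} are time-integrable by \eqref{aa13}, so Gronwall's inequality applied to the nonlinear term $C\|\sqrt{\rho}\dot{\mathbf{u}}\|_{L^2}^4=C\|\sqrt{\rho}\dot{\mathbf{u}}\|_{L^2}^2\cdot\|\sqrt{\rho}\dot{\mathbf{u}}\|_{L^2}^2$ closes, because the resulting exponential factor $\exp\bigl(C\int_0^t\|\sqrt{\rho}\dot{\mathbf{u}}\|_{L^2}^2\,ds\bigr)$ is uniformly bounded in $t$. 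This yields $\sup_{t\ge 0}\|\sqrt{\rho}\dot{\mathbf{u}}(t)\|_{L^2}\le C$ and $\int_0^\infty\|\nabla\dot{\mathbf{u}}\|_{L^2}^2\,dt\le C$. Finally, using $\mathbf{u}_t=\dot{\mathbf{u}}-\mathbf{u}\cdot\nabla\mathbf{u}$, the lower bound $\rho\ge\varepsilon$ from \eqref{aa1}, the uniform bound on $\|\mathbf{u}\|_{H^2}$, and the embedding $H^2\hookrightarrow L^\infty$, I obtain $\sup_{t\ge 0}\|\mathbf{u}_t(t)\|_{L^2}\le C$.

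With all this in hand, set $f(t):=\|\nabla\mathbf{u}(t)\|_{L^2}^2$, for which $\int_0^\infty f(t)\,dt\le C$ by \eqref{aa13}. Since $\mathbf{u}_t|_{\partial\Omega}=0$, integrating by parts gives $f'(t)=-2\int_\Omega\Delta\mathbf{u}\cdot\mathbf{u}_t\,dx$, and hence
\[
|f'(t)|\le 2\|\Delta\mathbf{u}\|_{L^2}\|\mathbf{u}_t\|_{L^2}\le C\|\mathbf{u}\|_{H^2}\|\mathbf{u}_t\|_{L^2}\le C
\]
uniformly in $t$. Therefore $f$ is Lipschitz, in particular uniformly continuous, and also integrable on $[0,\infty)$, which by the standard lemma forces $f(t)\to 0$ as $t\to\infty$.

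The main obstacle is the Gronwall step in establishing the time-independent bound on $\|\sqrt{\rho}\dot{\mathbf{u}}\|_{L^2}$: the quartic right-hand side $\|\sqrt{\rho}\dot{\mathbf{u}}\|_{L^2}^4$ is a priori super-linear, and it is only the integrability of $\|\sqrt{\rho}\dot{\mathbf{u}}\|_{L^2}^2$ coming from the first uniform step that prevents the exponential Gronwall factor from growing in $t$. Once this step is secured, the bounds on $\|\mathbf{u}\|_{H^2}$ and $\|\mathbf{u}_t\|_{L^2}$ together with the uniform-continuity lemma are routine.
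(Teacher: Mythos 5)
Your proof is correct, but it takes a genuinely different route from the paper's. The paper works directly with the viscosity-weighted quantity $g(t)=\int_\Omega(\mu(\rho)|\nabla\mathbf{u}|^2+\lambda(\rho)(\operatorname{div}\mathbf{u})^2)dx$, which \eqref{hh6} shows satisfies a \emph{one-sided} inequality $g'(t)\le C(\|\nabla\rho\|_{L^2}^2+\|\nabla\mathbf{u}\|_{L^2}^2)$ with time-integrable right-hand side; the paper then uses an averaging trick (writing $g(t)\le g(s)+\int_s^t g'$, averaging over $s\in[t-1,t]$) to deduce $g(t)\to0$ from the integrability of $g$ itself plus the integrability of the one-sided derivative bound, and finally uses $g(t)\ge(1+(\alpha-1)N)\varepsilon^\alpha\|\nabla\mathbf{u}\|_{L^2}^2$ to conclude. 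This argument closes at the level of Proposition \ref{Prop H1} (and even only the differential inequality \eqref{hh6}, not its integrated form). You instead take $f(t)=\|\nabla\mathbf{u}(t)\|_{L^2}^2$ itself and secure a \emph{two-sided} bound $|f'(t)|\le C$ by first establishing the time-independent bounds of Proposition \ref{Prop H2} (on $\|\mathbf{u}\|_{H^2}$, $\|\sqrt{\rho}\dot{\mathbf{u}}\|_{L^2}$, and thence $\|\mathbf{u}_t\|_{L^2}$), then invoking the standard Barbalat-type lemma for an integrable, Lipschitz function. Your worry about the quartic Gronwall term is indeed the delicate point, but it is resolved exactly as you say: the time-integrability of $\|\sqrt{\rho}\dot{\mathbf{u}}\|_{L^2}^2$ (from \eqref{h1}) makes the Gronwall exponential uniformly bounded, which is precisely why the paper can state Proposition \ref{Prop H2} with a $T$-independent constant. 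In short: the paper's route is more economical for this proposition alone, needing only a one-sided derivative estimate and no $H^2$ control; yours front-loads the second-order a priori estimates but those are needed anyway for Proposition \ref{Prop lb}, so in the context of the full proof of Theorem \ref{Thm2} the extra work is not wasted.
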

\begin{proof}
	We set \(g(t)=\int_\Omega(\mu(\rho)|\nabla\mathbf{u}|^2+\lambda(\rho)(\div\mathbf{u})^2)dx\). Using \eqref{aa13}, one has
	\begin{align}\label{aa12}
		\int_0^\infty g(t)dt\leq C\int_0^\infty\int_\Omega |\nabla \mathbf{u}|^2dxdt\leq C.
	\end{align}
	We obtain from \eqref{hh6} that
	\begin{align*}
		g'(t)\leq C (\norm{\nabla\rho(t)}_{L^2}^2+\norm{\nabla\mathbf{u}(t)}_{L^2}^2).
	\end{align*}
	Letting \(t\) be sufficiently large and taking \(s\in[t-1,t]\), we have
	\begin{align*}
		g(t)=g(s)+\int_s^t g'(\tau)d\tau\leq g(s)+C\int_{t-1}^t (\norm{\nabla\rho}_{L^2}^2+\norm{\nabla\mathbf{u}}_{L^2}^2)d\tau.
	\end{align*}
	Integrating with respect to \(s\) over \([t-1,t]\) gives
	\begin{align}\label{hh28}
		g(t)\leq \int_{t-1}^t g(\tau)d\tau+C\int_{t-1}^t (\norm{\nabla\rho}_{L^2}^2+\norm{\nabla\mathbf{u}}_{L^2}^2)d\tau.
	\end{align}
	Therefore, we know from \eqref{aa13} and \eqref{aa12} that
	\begin{align*}
		g(t)\to 0, \text{ as }t\to\infty,
	\end{align*}
	which, combined with \eqref{C 2d unialpha}, \eqref{C 3d unialpha} and \eqref{aa1}, implies \eqref{hh26}.
\end{proof}

\begin{prop}\label{Prop lb}
	It holds that
	\begin{align}\label{hh27}
		\lim\limits_{t\to\infty}(\norm{\nabla\rho(t)}_{L^2}+\norm{\nabla^2\mathbf{u}(t)}_{L^2})=0.
	\end{align}
\end{prop}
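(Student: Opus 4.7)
The plan is to derive \eqref{hh27} from the uniform estimates \eqref{aa13}, the already-established large-time decay \eqref{hh26}, and a time-independent version of the higher-order estimates in Propositions \ref{Prop H2}--\ref{Prop H4}. Since \eqref{aa13} and \eqref{aa1} give $C^{-1}\le\rho\le C$ uniformly in time, iterating the arguments behind \eqref{hh5}, \eqref{h1} and \eqref{h2} with $M$ replaced by an absolute constant will yield
\begin{align*}
    \sup_{0\le t<\infty}\bigl(\norm{\sqrt\rho\dot{\mathbf{u}}}_{L^2}+\norm{\mathbf{u}}_{H^2}\bigr)+\int_0^\infty\bigl(\norm{\sqrt\rho\dot{\mathbf{u}}}_{L^2}^2+\norm{\nabla\dot{\mathbf{u}}}_{L^2}^2\bigr)\,dt\le C,
\end{align*}
where every constant is independent of time. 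The key observation here is that each absorption step in \eqref{hI2}, \eqref{hI3}, and \eqref{hh14} uses only $\norm{\rho}_{L^\infty}$, $\norm{\rho^{-1}}_{L^\infty}$, $\norm{\nabla\rho}_{L^2}$, $\norm{\nabla\rho}_{L^4}$, $\norm{\mathbf{u}}_{L^4}$ and $\norm{\nabla\mathbf{u}}_{L^2}$, all of which are controlled uniformly in $t$ by \eqref{aa13}.

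Next, to show $\norm{\nabla\rho(t)}_{L^2}\to 0$, set $f(t):=\norm{\nabla\rho(t)}_{L^2}^2$. From \eqref{aa13} we already know $\int_0^\infty f(t)\,dt\le C$. Differentiating $\eqref{0}_1$, multiplying by $\nabla\rho$ and integrating by parts gives
\begin{align*}
    f'(t)=-2\int_\Omega\nabla\rho\cdot\nabla(\mathbf{u}\cdot\nabla\rho+\rho\div\mathbf{u})\,dx,
\end{align*}
which, after standard Hölder and Sobolev manipulations using the uniform bounds on $\norm{\rho}_{L^\infty}$, $\norm{\nabla\rho}_{L^2}$ and $\norm{\mathbf{u}}_{H^2}$, is controlled by $C(\norm{\nabla\rho}_{L^2}^2+\norm{\nabla\mathbf{u}}_{L^2}^2+\norm{\nabla^2\mathbf{u}}_{L^2}^2)$. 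By \eqref{hh5} the last term is bounded by $C(\norm{\sqrt\rho\dot{\mathbf{u}}}_{L^2}^2+\norm{\nabla\rho}_{L^2}^2+\norm{\nabla\mathbf{u}}_{L^2}^2)$, and all of these integrals over $(0,\infty)$ are finite. Applying exactly the integral-averaging argument used for \eqref{hh28}, namely
\begin{align*}
    f(t)\le\int_{t-1}^t f(\tau)\,d\tau+\int_{t-1}^t|f'(\tau)|\,d\tau\longrightarrow 0\quad\text{as }t\to\infty,
\end{align*}
we obtain the decay of $\norm{\nabla\rho(t)}_{L^2}$.

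An entirely analogous argument disposes of $\norm{\sqrt\rho\dot{\mathbf{u}}}_{L^2}$. Setting $h(t):=\norm{\sqrt\rho\dot{\mathbf{u}}(t)}_{L^2}^2$, the time-independent version of \eqref{hh14} (with the $l=4$ bounds from \eqref{aa13}) reads
\begin{align*}
    h'(t)+C^{-1}\norm{\nabla\dot{\mathbf{u}}}_{L^2}^2\le C\bigl(\norm{\sqrt\rho\dot{\mathbf{u}}}_{L^2}^2+\norm{\nabla\rho}_{L^2}^2+\norm{\nabla\rho}_{L^4}^4+\norm{\nabla\mathbf{u}}_{L^2}^2+\norm{\sqrt\rho\dot{\mathbf{u}}}_{L^2}^4\bigr),
\end{align*}
where the cubic term is harmless because $\sup_t\norm{\sqrt\rho\dot{\mathbf{u}}}_{L^2}^2\le C$. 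The right-hand side is integrable in time by \eqref{aa13} and the $L^2$-in-time bound on $\sqrt\rho\dot{\mathbf{u}}$, and $h\in L^1(0,\infty)$ for the same reason. The same averaging trick then forces $h(t)\to 0$.

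Finally, the elliptic estimate \eqref{hh5} together with \eqref{hh26}, the conclusion of the previous paragraph, and the just-established decay of $\norm{\nabla\rho}_{L^2}$ gives
\begin{align*}
    \norm{\nabla^2\mathbf{u}(t)}_{L^2}\le C\bigl(\norm{\sqrt\rho\dot{\mathbf{u}}(t)}_{L^2}+\norm{\nabla\rho(t)}_{L^2}+\norm{\nabla\mathbf{u}(t)}_{L^2}\bigr)\longrightarrow 0,
\end{align*}
completing the proof of \eqref{hh27}. The main technical obstacle is the time-independent higher-order estimate for $\sqrt\rho\dot{\mathbf{u}}$; once it is in hand, the two uniform-continuity/integral-averaging arguments are parallel applications of the technique already used for \eqref{hh26}.
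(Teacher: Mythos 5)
Your proposal is correct and follows essentially the same route as the paper: apply the integral-averaging argument (the device already used for \eqref{hh26}) to $\norm{\nabla\rho}_{L^2}^2$ and to $\norm{\sqrt\rho\dot{\mathbf{u}}}_{L^2}^2$ via \eqref{hh14}, then conclude the decay of $\norm{\nabla^2\mathbf{u}}_{L^2}$ from the elliptic estimate \eqref{hh5}, the only difference being the immaterial swap of the order of the two averaging steps. One small correction: the bound you state for $f'(t)$ should also carry the integrable term $\norm{\nabla\rho}_{L^4}^4$, produced by Young's inequality on $\int_\Omega|\nabla\mathbf{u}||\nabla\rho|^2\,dx$ (the manipulations you describe yield at best $C\norm{\nabla\rho}_{L^2}$, which is not in $L^1(0,\infty)$); this is harmless because \eqref{aa13} gives $\int_0^\infty\norm{\nabla\rho}_{L^4}^4\,dt<\infty$, and indeed the paper's proof keeps this term.
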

\begin{proof}
	We first prove that
	\begin{align}\label{hh29}
		\int_\Omega\rho|\dot{\mathbf{u}}|^2dx(t)\to 0, \text{ as }  t\to\infty. 
	\end{align}
	Similarly to \eqref{hh28}, we obtain from \eqref{hh14} that, for sufficiently large \(t\),
	\begin{align}
		\begin{split}
			\int_\Omega\rho|\dot{\mathbf{u}}|^2dx(t)\leq& \int_{t-1}^{t}\int_\Omega\rho|\dot{\mathbf{u}}|^2dxd\tau\\
			&+C\int_{t-1}^t(\norm{\sqrt{\rho}\dot{\mathbf{u}}}_{L^2}^2+\norm{\nabla\rho}_{L^2}^2+\norm{\nabla\rho}_{L^4}^4+\norm{\nabla\mathbf{u}}_{L^2}^2+\norm{\sqrt{\rho}\dot{\mathbf{u}}}_{L^2}^4)d\tau.
		\end{split}
	\end{align}
	Letting \(t\to\infty\), we get \eqref{hh29} from \eqref{h1}, \eqref{h2} and \eqref{aa13}. Next, we prove
	\begin{align}\label{hh30}
		\int_\Omega|\nabla\rho|^2dx(t)\to 0, \text{ as }  t\to\infty. 
	\end{align}
	Using \eqref{hh5}, one has
	\begin{align*}
		\frac{d}{dt}\int_\Omega|\nabla\rho|^2dx&\leq C\int_\Omega(|\nabla\mathbf{u}||\nabla\rho|^2+\rho|\nabla^2\mathbf{u}||\nabla\rho|)dx\\
		&\leq C\int_\Omega(|\nabla\mathbf{u}|^2+|\nabla\rho|^4+|\nabla^2\mathbf{u}|^2+|\nabla\rho|^2)dx\\
		&\leq C\int_\Omega (|\nabla \mathbf{u}|^2+|\nabla\rho|^4+|\nabla\rho|^2+\rho|\dot{\mathbf{u}}|^2)dx.
	\end{align*}
	Therefore, similarly to \eqref{hh28}, we get
	\begin{align}
		\begin{split}
			\int_\Omega|\nabla\rho|^2dx(t)\leq \int_{t-1}^t\int_\Omega|\nabla\rho|^2dxd\tau+C\int_{t-1}^t\int_\Omega (|\nabla \mathbf{u}|^2+|\nabla\rho|^4+|\nabla\rho|^2+\rho|\dot{\mathbf{u}}|^2)dxd\tau.
		\end{split}
	\end{align}
	Letting \(t\to\infty\), we get \eqref{hh30} from \eqref{h1} and \eqref{aa13}.  Finally, from \eqref{hh5} one obtains
	\begin{align}
		\norm{\nabla^2\mathbf{u}(t)}_{L^2}\leq C(\norm{\sqrt{\rho}\dot{\mathbf{u}}}_{L^2}+\norm{\nabla\rho(t)}_{L^2}+\norm{\nabla\mathbf{u}(t)}_{L^2})\to 0,\text{ as }t\to \infty.
	\end{align}
	The proof is complete.
\end{proof}

\section{Global classical solutions away from vacuum including  viscous shallow water equations \((\alpha=1)\)}
Throughout this section, we always assume \(\alpha = 1\). Suppose that \((\rho, \mathbf{u})\) is the unique local classical solution to the initial‑boundary‑value problem \eqref{0}–\eqref{0-2} with initial data satisfying \eqref{C initial data}, defined on \(\Omega \times [0, T]\) for some $T>0$. The existence of such a solution is guaranteed by Lemma \ref{Lem local existence}.

\subsection{Upper bound for the density}
Noting that \(n_2(1) = +\infty\), we can obtain the following proposition by following exactly the same arguments as in Propositions \ref{Prop 3.11}–\ref{Prop C uniRT}. For brevity, we omit the proof here.
\begin{prop}
    Assume that \eqref{SV-2gamma} holds. Then there exists a constant \(C>0\), independent of \(T\), such that
		\begin{align}\label{SV 2d k}
			\sup_{0\leq t\leq T}\int_0^R\rho u^{4}rdr+\int_0^T\int_0^R\Big(\frac{\rho u^{4}}{r}
			+\rho (\partial_ru)^2 u^{2}r\Big)drdt\leq C
		\end{align}
		as well as
		\begin{align}\label{SV 2d l}
			\sup_{0\leq t\leq T}\int_0^R |\partial_r\rho^{\frac{1}{4}}|^{4}rdr+\int_0^T\int_0^R |\partial_r\rho^{\frac{\gamma}{4}}|^{4}rdrdt\leq C
		\end{align}
		and finally,
		\begin{align}\label{SV 2d RT}
			\sup_{0\leq t\leq T}\norm{\rho}_{L^\infty(0,R)}\leq C.
		\end{align}
\end{prop}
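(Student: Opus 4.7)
The plan is to mimic the three-step chain of Propositions~\ref{Prop 3.11}, \ref{Prop C 4}, and \ref{Prop C uniRT} at the endpoint $\alpha=1$ with $k=2\in M_{\mathrm{set}}$. The decisive structural fact is that $n_2(1)=+\infty$, so the integrability threshold $2<n_2(\alpha)$ that forced all earlier restrictions is trivially satisfied. The proof therefore splits into an exact recapitulation of the earlier arguments with that parameter choice, and the work consists of checking that every intermediate exponent remains admissible when $\alpha=1$.

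First I would set $n=2$, $\alpha=1$, $N=2$ in Proposition~\ref{Prop 3.11}. Its proof only requires $2<n_2(\alpha)$, and the RHS control $\int \rho^{2\gamma-2}u^2\,dy\leq C\int \rho^{0}u^2/r^2\,dy$ needs the $r$-weighted bound $\|\rho^{1/2}r^{\xi}\|_{L^\infty}\leq C(\xi)$ (available from Proposition~\ref{Prop C 2d r-weight} since $\alpha-\tfrac12=\tfrac12<\gamma$) together with $\gamma\ge\alpha=1$, which is implied by \eqref{SV-2gamma}. This gives the Lagrangian bound
\[
\sup_{0\le\tau\le T}\!\!\int_0^1 u^4\,dy+\int_0^T\!\!\int_0^1\!\Big(\tfrac{u^4}{r^2}+\rho^2(\partial_y u)^2u^2r^2\Big)\,dy\,d\tau\le C,
\]
which translates through $dy=\rho r\,dr$ and $\partial_y=(\rho r)^{-1}\partial_r$ into \eqref{SV 2d k}.

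Second, I would apply Proposition~\ref{Prop C m} with $m=2$ to obtain
\[
\sup_{\tau}\!\!\int_0^1(u+r\partial_y\rho)^4\,dy+\!\!\int_0^T\!\!\!\int_0^1\!\rho^{\gamma-1}(r\partial_y\rho)^4\,dy\,d\tau\le C+C\!\!\int_0^T\!\!\!\int_0^1\!\rho^{\gamma-1}u^4\,dy\,d\tau.
\]
The last term is absorbed exactly as in Proposition~\ref{Prop C 4}: writing $\rho^{\gamma-1}u^4 = (u^4/r^2)\cdot\rho^{\gamma-1}r^2$ and using $\|\rho^{1/2}r^\xi\|_{L^\infty}\le C$ with the $\xi$ small enough that $2(\gamma-1)\xi\le 2$ (legal since $\gamma>1$), one bounds it by $C\int u^4/r^2\,dy$, which is finite by the previous step. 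Converting back to Eulerian variables and using the algebraic identities $|\partial_r\rho|^4=256\,\rho^3|\partial_r\rho^{1/4}|^4$ and $|\partial_r\rho|^4=(4/\gamma)^4\rho^{4-\gamma}|\partial_r\rho^{\gamma/4}|^4$ yields \eqref{SV 2d l}.

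Third, and finally, the $L^\infty$ bound \eqref{SV 2d RT} follows by the one-dimensional Sobolev-type embedding used in Proposition~\ref{Prop C uniRT} with $\alpha=1,k=2$:
\[
\|\rho\|_{L^\infty(0,R)}\le C\!\int_0^R\!\rho\,dr+C\!\int_0^R\!|\partial_r\rho|\,dr\le C+C\Big(\!\int_0^R\!|\partial_r\rho^{1/4}|^4 r\,dr\Big)^{\!1/4}\!\Big(\!\int_0^R\!\rho\,r^{-1/3}\,dr\Big)^{\!3/4},
\]
where the last integral is controlled by Hölder and $\sup_t\|\rho\|_{L^4(\Omega)}\le C$ (a consequence of Proposition~\ref{Prop C 2d r-weight}, applicable at $\alpha=1$ because $\gamma>\alpha-\tfrac12$). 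There is no genuine obstacle: the only place where $\alpha=1$ might have been hostile is the BD-type dissipation coefficient $\alpha(N-1)^2-(N-1)(N-2)$ and the cross term estimate in \eqref{C3}, but at $\alpha=1$ the cross term in \eqref{C2} vanishes entirely, so \eqref{C4} is trivially satisfied. The condition $\gamma\ge 3/2$ from \eqref{SV-2gamma} is not actually consumed in the present proposition (only $\gamma>1$ is needed); it will play its role later, in the far more delicate step of deriving the positive lower bound for the density via the effective velocity.
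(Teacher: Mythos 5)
Your proposal is correct and follows exactly the route the paper itself indicates: the paper explicitly states the proposition "can be obtained by following exactly the same arguments as in Propositions \ref{Prop 3.11}--\ref{Prop C uniRT}" (noting $n_2(1)=+\infty$) and omits the details, which you have filled in with the right parameter choice $k=2\in M_{\mathrm{set}}$, verified each exponent constraint at $\alpha=1$, and correctly observed both that the cross term $2n(N-1)(\alpha-1)$ in \eqref{C2} vanishes identically and that the present proposition only consumes $\gamma>1$ rather than the full $\gamma\ge 3/2$ of \eqref{SV-2gamma}.
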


\subsection{Positive lower bound for the density}
In this subsection we establish a positive lower bound for the density, which is the most crucial part in proving the global existence of solutions for the case \(\alpha = 1\).

\begin{lema}
  Assume that \eqref{SV-2gamma} holds. Then there exists a constant \(C(T)>0\), such that
\begin{align}\label{SV-u 2,inf}
    \norm{\sqrt{\rho}\mathbf{u}}_{L^2(0,T;L^\infty(\Omega))}\leq C(T).
\end{align}
\end{lema}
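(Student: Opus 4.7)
The plan is to apply a two-dimensional radial Sobolev embedding to the weighted vector field $\mathbf{V}:=\sqrt{\rho}\,\mathbf{u}$ rather than to $\mathbf{u}$ itself. Writing $\mathbf{V}(x)=V(r)\tfrac{x}{r}$ with $V(r,t)=\sqrt{\rho(r,t)}\,u(r,t)$, the boundary condition $u(R)=0$ together with the regularity at the origin of the radial field $\mathbf{u}$ give $V(R)=V(0)=0$. Starting from $V^{2}(r)=-2\int_{r}^{R}VV_{s}\,ds$ and applying the weighted Cauchy--Schwarz inequality, the two factors that appear are exactly the pieces of $\|\nabla\mathbf{V}\|_{L^{2}(\Omega)}^{2}$ (using the radial identity $|\nabla\mathbf{V}|^{2}=V_{r}^{2}+V^{2}/r^{2}$ in $\mathbb{R}^{2}$), yielding the pointwise-in-time bound
\[
\|\sqrt{\rho}\,\mathbf{u}\|_{L^{\infty}(\Omega)}^{2}\;\le\;C\,\|\nabla(\sqrt{\rho}\,\mathbf{u})\|_{L^{2}(\Omega)}^{2}.
\]
This is the ``special structure of two-dimensional radial symmetry'' alluded to in the paragraph preceding the Lemma: the embedding $H^{1}\hookrightarrow L^{\infty}$ fails in general in two dimensions but holds for radial vector fields vanishing on $\partial\Omega$.

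From $|\nabla(\sqrt{\rho}\,\mathbf{u})|^{2}\le 2\rho|\nabla\mathbf{u}|^{2}+2|\mathbf{u}|^{2}|\nabla\sqrt{\rho}|^{2}$, integrating in $t$ reduces the task to controlling
\[
\int_{0}^{T}\!\!\int_{\Omega}\rho|\nabla\mathbf{u}|^{2}\,dx\,dt\qquad\text{and}\qquad \int_{0}^{T}\!\!\int_{\Omega}|\mathbf{u}|^{2}|\nabla\sqrt{\rho}|^{2}\,dx\,dt.
\]
The first is uniformly bounded (independently of $T$) by the basic energy estimate \eqref{C u^2} applied at $\alpha=1$, $N=2$, after converting back to Eulerian coordinates.

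For the second, more delicate integral the key algebraic identity is
\[
|\mathbf{u}|^{2}|\nabla\sqrt{\rho}|^{2}=\frac{|\mathbf{u}|^{2}|\nabla\rho|^{2}}{4\rho}=4\,|\rho^{1/4}\mathbf{u}|^{2}\,|\nabla\rho^{1/4}|^{2},
\]
which combined with H\"older's inequality gives
\[
\int_{\Omega}|\mathbf{u}|^{2}|\nabla\sqrt{\rho}|^{2}\,dx\;\le\;4\,\|\rho^{1/4}\mathbf{u}\|_{L^{4}(\Omega)}^{2}\,\|\nabla\rho^{1/4}\|_{L^{4}(\Omega)}^{2}\;\le\;C,
\]
uniformly in $t\in[0,T]$: the first factor is uniformly bounded because $\|\rho^{1/4}\mathbf{u}\|_{L^{4}}^{4}=\int_\Omega\rho|\mathbf{u}|^{4}\,dx$ and \eqref{SV 2d k} applies, and the second by \eqref{SV 2d l}. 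The corresponding time integral is therefore at most $CT$, and assembling the two pieces proves the asserted bound $\|\sqrt{\rho}\,\mathbf{u}\|_{L^{2}(0,T;L^{\infty}(\Omega))}\le C(T)$.

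The main obstacle is the absence of a positive pointwise lower bound on $\rho$ at this stage of the argument; this rules out the cleaner route $\|\mathbf{u}\|_{L^{\infty}}^{2}\lesssim\|\nabla\mathbf{u}\|_{L^{2}}^{2}$, since $\int_{0}^{T}\|\nabla\mathbf{u}\|_{L^{2}}^{2}\,dt$ is not known to be finite without such a lower bound. The workaround is precisely to carry the $\sqrt{\rho}$ weight through the Sobolev embedding and to split $\nabla\sqrt{\rho}=2\rho^{1/4}\nabla\rho^{1/4}$, so that the cross term $\int|\mathbf{u}|^{2}|\nabla\sqrt{\rho}|^{2}\,dx$ closes against the $L^{4}$-type estimates already established in \eqref{SV 2d k} and \eqref{SV 2d l}. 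This estimate is precisely the stepping stone needed for the subsequent $L^\infty_{t,x}$ bound on the effective velocity $\mathbf{w}=\mathbf{u}+\nabla\log\rho$.
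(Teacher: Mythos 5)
Your proof is correct and follows essentially the same route as the paper: apply the two-dimensional radial Sobolev embedding $\|\mathbf V\|_{L^\infty}\le C\|\nabla\mathbf V\|_{L^2}$ to $\mathbf V=\sqrt\rho\,\mathbf u$ (the paper's Lemma \ref{Lem 2D L inf}), split $\nabla(\sqrt\rho\,\mathbf u)$ by the Leibniz rule, rewrite $\nabla\sqrt\rho=2\rho^{1/4}\nabla\rho^{1/4}$, and close with H\"older against \eqref{C u^2}, \eqref{SV 2d k} and \eqref{SV 2d l}. Your extra remarks (the failure of $H^1\hookrightarrow L^\infty$ without radial symmetry, and the reason the unweighted $\|\nabla\mathbf u\|_{L^2}$ route is blocked) are accurate and consistent with the paper's discussion.
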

\begin{proof}
Applying Lemma \ref{Lem 2D L inf}, we obtain    
\begin{align*}
			\norm{\sqrt{\rho}\mathbf{u}}_{L^\infty(\Omega)}&\leq C\norm{\sqrt{\rho}\nabla\mathbf{u}}_{L^2(\Omega)}+C\norm{\nabla\sqrt{\rho}\mathbf{u}}_{L^2(\Omega)}\\
			&\leq C\norm{\sqrt{\rho}\nabla\mathbf{u}}_{L^2(\Omega)}+C\norm{\nabla\rho^{\frac{1}{4}}}_{L^4(\Omega)}\norm{\rho^{\frac{1}{4}}\mathbf{u}}_{L^4(\Omega)},
\end{align*}
which, together with \eqref{C u^2}, \eqref{SV 2d k}, and \eqref{SV 2d l}, yields \eqref{SV-u 2,inf}.
\end{proof}

From now until the end of this subsection, we return to calculations in Lagrangian coordinates. Define the effective velocity  
\begin{align}\label{def w}
w = u + r\partial_y \rho, \quad \text{on } [0,1]\times[0,T].
\end{align}
The following proposition provides an \(L^\infty\) estimate for the effective velocity.
\begin{prop}
    Assume that \eqref{SV-2gamma} holds. Then there exists a constant \(C(T)>0\), such that
    \begin{align}\label{w inf}
        \sup_{0\leq \tau\leq T}\norm{w}_{L^\infty((0,1))}\leq C(T).
    \end{align}
\end{prop}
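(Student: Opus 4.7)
My plan is to convert the B-D entropy identity \eqref{2d B-D} into a pointwise transport ODE for $w$ along Lagrangian particle paths, and then close by a Duhamel / integrating-factor estimate. Specializing \eqref{2d B-D} to $N=2$, $\alpha=1$ gives $\partial_\tau w + r\partial_y\rho^\gamma = 0$; using the coordinate identity $r\partial_y = \rho^{-1}\partial_r$ one has $r\partial_y\rho = \partial_r\log\rho = w - u$, so the source can be rewritten as $r\partial_y\rho^\gamma = \gamma\rho^{\gamma-1}\cdot r\partial_y\rho = \gamma\rho^{\gamma-1}(w - u)$. This yields the ODE
\begin{align*}
\partial_\tau w + \gamma\rho^{\gamma-1}w \;=\; \gamma\rho^{\gamma-1}u,
\end{align*}
which is first-order in $\tau$ at each fixed $y\in(0,1)$ since $y$ is materially conserved.

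Because the damping coefficient $\gamma\rho^{\gamma-1}$ is nonnegative, the integrating factor $\exp(\int_0^\tau \gamma\rho^{\gamma-1}d\sigma)$ is at least $1$, and Duhamel's formula immediately gives the pointwise bound
\begin{align*}
|w(y,\tau)| \;\le\; |w(y,0)| + \int_0^\tau \gamma\rho^{\gamma-1}(y,s)\,|u(y,s)|\,ds.
\end{align*}
The initial term $w(\cdot,0) = u_0 + r\partial_y\rho_0$ is uniformly bounded on $[0,1]$ by the regularity and strict positivity in \eqref{C initial data}. For the source I would split $\rho^{\gamma-1}|u| = \rho^{\gamma-\frac32}\cdot\sqrt{\rho}\,|u|$; the hypothesis $\gamma\ge\tfrac32$ together with the uniform upper bound \eqref{SV 2d RT} gives $\rho^{\gamma-\frac32}\le \norm{\rho}_{L^\infty}^{\gamma-\frac32}\le C$. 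Taking the supremum in $y$ and applying H\"older in $\tau$ then yields
\begin{align*}
\sup_{0\le\tau\le T}\norm{w(\tau)}_{L^\infty((0,1))} \;\le\; C + C\sqrt{T}\,\norm{\sqrt{\rho}\,\mathbf{u}}_{L^2(0,T;L^\infty(\Omega))} \;\le\; C(T),
\end{align*}
where \eqref{SV-u 2,inf} controls the last factor. This establishes \eqref{w inf}.

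The only real obstacle in this argument is the compatibility between the weight $\sqrt{\rho}$ that is intrinsic to the $L^\infty_x$-velocity estimate \eqref{SV-u 2,inf} and the power $\rho^{\gamma-1}$ generated by the pressure. Absorbing the weight forces $\gamma-1\ge \tfrac12$, which is exactly the condition $\gamma\ge\tfrac32$ in \eqref{SV-2gamma}. Since \eqref{SV-u 2,inf} itself hinges on the critical two-dimensional embedding Lemma \ref{Lem 2D L inf}, the entire endpoint argument is genuinely two-dimensional, consistent with the discussion in Section 2.2 explaining why this approach does not extend to $N=3$.
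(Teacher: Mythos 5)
Your proof is correct, and it takes a slightly different (and arguably cleaner) route than the paper's. The paper multiplies the B-D entropy equation by $w^{2n-1}$, integrates over $(0,1)$, bounds the right-hand side $\gamma\int_0^1\rho^{\gamma-1}uw^{2n-1}\,dy$ by $\gamma\|w\|_{L^{2n}}^{2n-1}\|\sqrt{\rho}u\|_{L^{2n}}\|\rho^{\gamma-3/2}\|_{L^\infty}$, applies Gronwall on the $L^{2n}$-norm, and then sends $n\to\infty$ to recover the $L^\infty$ bound. You instead observe that since $y$ is the material coordinate, \eqref{2d B-D} with $\alpha=1$ is a scalar ODE in $\tau$ at each fixed $y$, namely $\partial_\tau w+\gamma\rho^{\gamma-1}w=\gamma\rho^{\gamma-1}u$, whose damping coefficient $\gamma\rho^{\gamma-1}\ge 0$; Duhamel then yields the pointwise bound $|w(y,\tau)|\le|w(y,0)|+\int_0^\tau\gamma\rho^{\gamma-1}|u|\,ds$ without any exponential growth, and the closure is identical: split $\rho^{\gamma-1}|u|=\rho^{\gamma-3/2}\cdot\sqrt{\rho}|u|$, use $\gamma\ge\frac32$ with \eqref{SV 2d RT} to absorb the first factor, and control the time integral by $\sqrt{T}\|\sqrt{\rho}\mathbf{u}\|_{L^2(0,T;L^\infty)}$ via \eqref{SV-u 2,inf}. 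Both arguments hinge on exactly the same three inputs (the nonnegativity of the damping term coming from the sign of $\gamma\rho^{\gamma-1}w^2$, the upper bound on $\rho$, and the $L^2_tL^\infty_x$ estimate for $\sqrt{\rho}u$), so the content is the same; your version simply bypasses the $L^{2n}\to L^\infty$ limiting step by exploiting the pointwise Lagrangian structure directly, which is a small streamlining. One point worth making explicit is that $w(\cdot,0)=u_0+\rho_0^{-1}\partial_r\rho_0$ lies in $L^\infty$ because of the embedding $H^3(\Omega)\hookrightarrow C^1(\overline{\Omega})$ in two dimensions together with $\rho_0\ge\underline{\rho_0}>0$ from \eqref{C initial data}; the paper relies on the same fact implicitly when it bounds $\|w_0\|_{L^{2n}((0,1))}\le\|w_0\|_{L^\infty((0,1))}$ before passing to the limit.
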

\begin{proof}
    Let $n\in\mathbb{R}$ be sufficiently large. Multiplying both sides of the B-D entropy equation \(\partial_\tau w + r\partial_y\rho^{\gamma} = 0\) by \(w^{2n-1}\) and integrating the resulting equation over \([0,1]\) yields
    	\begin{align*}
		\frac{1}{2n}\frac{d}{d\tau}\int_0^1 w^{2n}dy+\int_0^1\gamma\rho^{\gamma-1} r\partial_y \rho w^{2n-1}dy=0,
	\end{align*}
which, combined with the definition of the effective velocity \eqref{def w}, gives
	\begin{align*}
			&\quad\frac{1}{2n}\frac{d}{d\tau}\int_0^1 w^{2n}dy+\int_0^1\gamma\rho^{\gamma-1}w^{2n}dy\\
            &=\int_0^1\gamma\rho^{\gamma-1}uw^{2n-1}dy\\
			&\leq \gamma\norm{w}_{L^{2n}((0,1))}^{2n-1}\norm{\sqrt{\rho}u}_{L^{2n}((0,1))}\norm{\rho^{\gamma-\frac{3}{2}}}_{L^\infty((0,1))}\\
			&\leq C\left(\int_0^1 w^{2n}dy+1\right)\norm{\sqrt{\rho}u}_{L^\infty((0,1))},
	\end{align*}
where in the last inequality we have used \eqref{SV-2gamma} and \eqref{SV 2d RT}. Therefore, applying Gronwall’s inequality we obtain for any $\tau\in (0,T)$,
	\begin{align*}
			\int_0^1 w^{2n}(\tau)dy\leq \exp\left\{2nC\int_0^T\norm{\sqrt{\rho}u}_{L^\infty((0,1))}d\tau\right\}\left(\int_0^1 w_0^{2n}dy+2nC\int_0^T\norm{\sqrt{\rho}u}_{L^\infty((0,1))}d\tau\right).
	\end{align*}
    Using \eqref{SV-u 2,inf}, we obtain
    \begin{align*}
        \norm{w(\tau)}_{L^{2n}((0,1))}\leq C(T)\norm{w_0}_{L^{2n}((0,1))}+C(T)\leq C(T)\norm{w_0}_{L^\infty((0,1))}+C(T).
    \end{align*}
    Letting \(n \to \infty\), we obtain \eqref{w inf}.
\end{proof}

Next, we use the \(L^\infty\) estimate of the effective velocity to derive an \(L^\infty\) estimate for the velocity. 
\begin{prop}
     Assume that \eqref{SV-2gamma} holds. Then there exists a constant \(C(T)>0\), such that
    \begin{align}\label{u inf}
        \sup_{0\leq \tau\leq T}\norm{u}_{L^\infty((0,1))}\leq C(T).
    \end{align}
\end{prop}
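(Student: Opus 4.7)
The plan is to convert the $L^\infty$ bound on the effective velocity $w$ just established in \eqref{w inf} into an $L^\infty$ bound on $u$ via a Moser-type iteration in which the key constants are \emph{independent of $n$}. The identity $u=w-r\partial_y\rho$ suggests that the task is equivalent to bounding $r\partial_y\rho$ in $L^\infty$, but it is more convenient to close an $L^{2n}$ estimate for $u$ itself and let $n\to\infty$.

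First I would multiply the Lagrangian momentum equation $\eqref{Equ 3}_2$ (with $N=2,\alpha=1$) by $ru^{2n-1}$ and integrate over $(0,1)$. As in the derivation of Proposition~\ref{Prop C n}, this yields
\[
\frac{1}{2n}\frac{d}{d\tau}\int_0^1 u^{2n}dy+\int_0^1\frac{u^{2n}}{r^2}dy+(2n-1)\int_0^1\rho^2(\partial_yu)^2u^{2n-2}r^2\,dy=(2n-1)\int_0^1\rho^\gamma u^{2n-2}\partial_yu\,r\,dy+\int_0^1\rho^{\gamma-1}\frac{u^{2n-1}}{r}dy.
\]
Rather than estimating $(2n-1)\rho^\gamma u^{2n-2}\partial_yu\cdot r$ by Young's inequality (which introduces an $O(n^2)$ constant and defeats the Moser iteration), I would integrate by parts using $(2n-1)u^{2n-2}\partial_yu=\partial_y(u^{2n-1})$. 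Since $u|_{y=0,1}=0$ and $r|_{y=0}=0$, boundary contributions vanish, and
\[
(2n-1)\int_0^1\rho^\gamma u^{2n-2}\partial_yu\,r\,dy=-\int_0^1 u^{2n-1}\partial_y(\rho^\gamma r)\,dy.
\]
Using $\partial_y(\rho^\gamma r)=\gamma\rho^{\gamma-1}(r\partial_y\rho)+\rho^\gamma\partial_yr=\gamma\rho^{\gamma-1}(w-u)+\rho^{\gamma-1}/r$, the $1/r$-contribution \emph{exactly cancels} the remaining singular term $\int_0^1\rho^{\gamma-1}u^{2n-1}/r\,dy$ on the right-hand side, leaving
\[
\frac{1}{2n}\frac{d}{d\tau}\int_0^1 u^{2n}dy+(\text{dissipation})=-\gamma\int_0^1\rho^{\gamma-1}w\,u^{2n-1}dy+\gamma\int_0^1\rho^{\gamma-1}u^{2n}dy.
\]

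Invoking $\rho\le C$ from \eqref{SV 2d RT}, $\|w\|_{L^\infty}\le C(T)$ from \eqref{w inf}, and H\"older's inequality $\int_0^1|u|^{2n-1}dy\le\bigl(\int_0^1 u^{2n}dy\bigr)^{(2n-1)/(2n)}$, and setting $h(\tau):=\bigl(\int_0^1 u^{2n}(\cdot,\tau)\,dy\bigr)^{1/(2n)}$, after dropping the non-negative dissipative terms one obtains the differential inequality
\[
h'(\tau)\le \gamma C^{\gamma-1}\|w\|_{L^\infty}+\gamma C^{\gamma-1}\,h(\tau),
\]
whose coefficients are \emph{independent of $n$}. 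Gronwall's inequality then yields
\[
h(\tau)\le\bigl(\|u_0\|_{L^\infty((0,1))}+\|w\|_{L^\infty}\bigr)e^{\gamma C^{\gamma-1}T}
\]
uniformly in $n\in(1,\infty)$; passing to the limit $n\to\infty$ on $(0,1)$ equipped with Lebesgue measure delivers \eqref{u inf}.

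The main obstacle is precisely the one resolved at the integration-by-parts step: a direct Young-type estimate on $(2n-1)\rho^\gamma u^{2n-2}\partial_yu\,r$ would force a Gronwall constant of order $e^{Cn^2T}$ and no $L^\infty$ limit could be extracted. Three ingredients conspire to remove this difficulty and are all essential to the endpoint case $\alpha=1$: (i) transferring the derivative off $u$ via integration by parts; (ii) rewriting $r\partial_y\rho$ as $w-u$ so that the already-bounded effective velocity enters the estimate; and (iii) the exact cancellation of the two $1/r$-singular terms, which avoids any separate treatment of the origin.
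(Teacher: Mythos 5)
Your proposal is correct and takes essentially the same route as the paper: multiply by $ru^{2n-1}$, integrate by parts to move the derivative off $u^{2n-1}$, observe the exact cancellation of the $1/r$-singular terms, rewrite $r\partial_y\rho=w-u$ to bring in the already-bounded effective velocity, and close a Gronwall estimate whose constants are independent of $n$ before sending $n\to\infty$. The only cosmetic difference is that you normalize $h(\tau)=\bigl(\int_0^1 u^{2n}\,dy\bigr)^{1/(2n)}$ before applying Gronwall, whereas the paper applies Gronwall to $\int_0^1 u^{2n}\,dy$ and takes the $2n$-th root afterwards; both yield the same $n$-uniform bound.
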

\begin{proof}
    Let $n\in\mathbb{R}$ be sufficiently large. Multiplying $\eqref{Equ 3}_2$ by $ru^{2n-1}$, integrating the resulting equation over \((0,1)\), and using integration by parts together with the boundary condition \eqref{C1} and the fact that \(\alpha=1\), we obtain 
   	\begin{align*}
		&\quad\frac{1}{2n}\frac{d}{d\tau}\int_0^1 u^{2n}dy+\int_0^1\Big(\frac{u^{2n}}{r^2}+(2n-1)\rho^2(\partial_y u)^2 u^{2n-2}r^2\Big)dy\\
        &\leq \int_0^1|\partial_y\rho^\gamma| r |u|^{2n-1}dy\\
		&\leq \int_0^1\gamma\rho^{\gamma-1}(|w|+|u|)|u|^{2n-1}dy\\
		&\leq C\int_0^1u^{2n}dy+C\int_0^1w^{2n}dy\\
		&\leq  C\int_0^1u^{2n}dy+CC(T)^{2n},
	\end{align*}
where we have used \eqref{SV 2d RT}, \eqref{w inf}, and \eqref{def w}. Applying Gronwall's inequality, we obtain for any $\tau\in (0,T)$
\begin{align*}
		\int_0^1 u^{2n}(\tau)dy\leq \exp\left\{2nCT\right\}\left(\int_0^1 u_0^{2n}dy+2nTCC(T)^{2n}\right),
\end{align*}
which implies that
    \begin{align*}
        \norm{u(\tau)}_{L^{2n}((0,1))}\leq C(T)\norm{u_0}_{L^{2n}((0,1))}+C(T)\leq C(T)\norm{u_0}_{L^\infty((0,1))}+C(T).
    \end{align*}
Letting \(n \to \infty\), we obtain \eqref{u inf}.
\end{proof}

Finally, we prove that the density admits a positive lower bound, which is essential to ensure the global existence of the solution.
\begin{prop}\label{Prop S-V VT}
     Assume that \eqref{SV-2gamma} holds. Then there exists a constant \(C(T)>0\), such that
    \begin{align}\label{SV 2d VT}
        V_T\leq C(T).
    \end{align}
\end{prop}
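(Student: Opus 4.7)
The plan is to combine the two $L^\infty_{t,x}$ bounds already in hand—namely the bound \eqref{w inf} on the effective velocity $w=u+r\partial_y\rho$ and the bound \eqref{u inf} on $u$—to obtain a pointwise bound on the logarithmic derivative of $\rho$, and then to convert this gradient bound into a pointwise positive lower bound on $\rho$ by exploiting total mass conservation. Both ingredients are already prepared, so what remains is essentially algebraic.

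First, I would observe that the Lagrangian change of variables $\partial_y=(\rho r)^{-1}\partial_r$ reduces, for $N=2$ and $\alpha=1$, to the clean identity $r\,\partial_y\rho=\partial_r\log\rho$. Consequently the definition \eqref{def w} reads, in Eulerian form,
\[
\partial_r\log\rho(r,t)=w(r,t)-u(r,t),
\]
so that \eqref{w inf} and \eqref{u inf} together yield the $T$-dependent but spatially pointwise bound
\[
\sup_{0\le t\le T}\,\bigl\|\partial_r\log\rho(\cdot,t)\bigr\|_{L^\infty((0,R))}\le C(T).
\]

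Next, I would invoke the mass-conservation normalization $\int_0^R\rho(r,t)\,r\,dr=1$ fixed in Section 3.2, which combined with $\int_0^R r\,dr=R^2/2$ shows, by a mean-value argument, that for every $t\in[0,T]$ there exists $r_*(t)\in(0,R)$ at which $\rho(r_*(t),t)\ge 2/R^2$. Integrating the preceding pointwise bound along the interval $[\min\{r,r_*(t)\},\max\{r,r_*(t)\}]$ then gives
\[
\bigl|\log\rho(r,t)-\log\rho(r_*(t),t)\bigr|\le C(T)R,\qquad r\in(0,R],\ t\in[0,T],
\]
whence $\rho(r,t)\ge (2/R^2)\,e^{-C(T)R}$, which is exactly \eqref{SV 2d VT}. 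Equivalently, in Lagrangian variables the same calculation reads $|\partial_r\log v|\le C(T)$ along the instantaneous Eulerian configuration of each time slice, and one selects the Lagrangian label corresponding to $r_*(t)$ to pin $v$ from above at one point.

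No substantial obstacle remains at this step: all the delicate analysis—the critical two-dimensional embedding behind \eqref{SV-u 2,inf} and the Moser-type iterations producing \eqref{w inf} and \eqref{u inf}—has already been carried out. The key structural point is precisely the algebraic simplification $r\partial_y\rho=\partial_r\log\rho$ that is special to the endpoint case $N=2$, $\alpha=1$; together with the elementary observation that an $L^\infty$ bound on $\nabla\log\rho$ upgrades to a strictly positive lower bound once one additional point of controlled density is exhibited by mass conservation, it delivers the desired estimate in a few lines.
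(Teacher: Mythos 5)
Your proof is correct and reaches \eqref{SV 2d VT} by a genuinely different route in the final step. Both you and the paper begin by deducing the pointwise bound $\sup_{0\le\tau\le T}\norm{r\partial_y\rho}_{L^\infty((0,1))}\le C(T)$ from \eqref{w inf} and \eqref{u inf} (your observation that, for $N=2$, $r\partial_y\rho=\partial_r\log\rho$ in Eulerian coordinates is exactly the right reading of \eqref{rrho_y inf}). The divergence is in how this gradient bound is upgraded to a positive lower bound on $\rho$. The paper stays in Lagrangian coordinates: it uses the conserved $L^1$ bound $\int_0^1 v\,dy\le C$ for $v=\rho^{-1}$, then a one-dimensional Sobolev embedding of $v^\beta$ with a three-factor H\"older split exploiting $\norm{r\partial_y\rho}_{L^\infty}$, arriving at $V_T^\beta\le C(\beta)+C(T)\beta V_T^\beta$ and absorbing the right-hand side by taking $\beta=\min\{\tfrac{1}{2C(T)},\tfrac12\}$. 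You instead pass to Eulerian coordinates, invoke the mass normalization $\int_0^R\rho r\,dr=1$ and the mean value theorem to produce a point $r_*(t)$ with $\rho(r_*(t),t)=2/R^2$, and then integrate the uniform bound on $\partial_r\log\rho$ over the bounded radial interval to propagate the lower bound to every $r$. Your argument is more elementary and transparent: it avoids the fractional-power Sobolev manipulation and the bookkeeping of the $r$-weights entirely, at the small cost of invoking the mean value theorem (valid since $\rho(\cdot,t)$ is continuous for the classical solution). Both yield the same $T$-dependent constant up to equivalent exponential growth in $C(T)$, and your version could be substituted without loss.
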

\begin{proof}
    Using \eqref{w inf} and \eqref{u inf}, we directly obtain
    \begin{align}\label{rrho_y inf}
        \sup_{0\leq \tau\leq T}\norm{r\partial_y \rho}_{L^\infty((0,1))}\leq C(T).
    \end{align}
    Let \(v = \rho^{-1}\). Then \(v\) satisfies the equation \(v_\tau = (ru)_y\). Integrating this equation over \((0,1) \times (0,\tau)\) and using \eqref{C1}, we obtain
    \begin{align}\label{SV-v-norm}
			\int_0^1 v(y,\tau)dy=\int_0^1 v_0(y)dy\leq C.
    \end{align}
    Let \(\beta \in (0,1)\) be a parameter to be determined later. For any \((y,\tau) \in (0,1)\times(0, T)\), using the one‑dimensional Sobolev embedding, Hölder's inequality, \eqref{rrho_y inf}, and \eqref{SV-v-norm}, we have
    \begin{align*}
			v^\beta(y,\tau)&\leq \int_0^1 v^\beta dy+\beta\int_0^1 v^{\beta-1}|\partial_y v|dy\\
			&\leq \left(\int_0^1 vdy\right)^{\frac{1}{\beta}}+\beta\int_0^1|\partial_y\rho| v^{\beta+1}dy\\
			&\leq C(\beta)+\beta\norm{r\partial_y\rho}_{L^\infty((0,1))}\left(\int_0^1 vr^{-\frac{3}{2}}dy\right)^{\frac{2}{3}}\left(\int_0^1 v^{3\beta+1}dy\right)^{\frac{1}{3}}\\
			&\leq C(\beta)+C(T)\beta V_T^\beta.
	\end{align*}
    Taking the supremum over \((y,\tau)\) of the above inequality and setting \(\beta = \min\{\frac{1}{2C(T)}, \frac{1}{2}\}\), we obtain \eqref{SV 2d VT}. This completes the proof of Proposition \ref{Prop S-V VT}.
\end{proof}

\subsection{The proof of Theorem \ref{Thm SV 2d}}
Combining \eqref{SV 2d k}, \eqref{SV 2d RT}, \eqref{SV 2d VT}, \eqref{C nablarho^2}, \eqref{SV 2d l}, and \eqref{C u^2}, we obtain the following bound:
\begin{align}
		\begin{split}
			\sup_{0\leq t\leq T}\Big(\norm{\mathbf{u}}_{L^{4}(\Omega)}&+\norm{\rho}_{L^\infty(\Omega)}+\norm{\rho^{-1}}_{L^\infty(\Omega)}+\norm{\nabla\rho}_{L^2(\Omega)}+\norm{\nabla\rho}_{L^{4}(\Omega)}\Big)\\
			&+\int_0^T\left(\norm{\nabla\rho}_{L^2(\Omega)}^2+\norm{\nabla\rho}_{L^{4}(\Omega)}^{4}+\norm{\nabla\mathbf{u}}_{L^2(\Omega)}^2\right)dt\leq C(T).
		\end{split}
	\end{align}
    Using Propositions \ref{Prop H1}-\ref{Prop H5} we obtain estimates for higher‑order derivatives. Hence, by a continuity argument, we have proved that the local classical solution \((\rho, \mathbf{u})\) exists globally in time.

    Regarding the large‑time behavior of the solution, we recall the  uniform estimates \eqref{SV 2d RT}, \eqref{C nablarho^2}, \eqref{SV 2d l}, and \eqref{C u^2} already obtained:
    \begin{align*}
	\begin{split}
		\sup_{0\leq t<\infty}&\left(\norm{\rho}_{L^\infty(\Omega)}+\norm{\nabla\rho^{\frac{1}{2}}}_{L^2(\Omega)}+\norm{\nabla\rho^{\frac{1}{4}}}_{L^{4}(\Omega)}\right)\\
		&+\int_0^\infty\norm{\nabla\rho^{\frac{\gamma}{2}}}_{L^{2}(\Omega)}^2+\norm{\nabla\rho^{\frac{\gamma}{4}}}_{L^{4}(\Omega)}^{4}+\norm{\rho^{\frac{1}{2}}\nabla \mathbf{u}}_{L^2(\Omega)}^2dt\leq C.
	\end{split}
\end{align*}
This, together with Propositions \ref{Prop rho epsilon}-\ref{Prop lb}, implies that the density possesses a uniform positive lower bound, and consequently, the large‑time behavior of the solution follows. 

We have completed the proof of Theorem \ref{Thm SV 2d}.

\section{Global weak solutions allowing vacuum \((\alpha\ge 1)\)}\label{Sec4}
Throughout this section we always assume \(\gamma>1\) and \(\alpha\ge 1\) if \(N=2\) or \(1\le\alpha<11.7\) if \(N=3\); the radially symmetric initial data \((\rho_0,\mathbf m_0)\) are taken to satisfy \eqref{2d weak initial} with \(p,q\) fulfilling
\begin{align*}
	 \frac{N}{2}<q<p<n_N(\alpha), \quad p\in\mathbb{R},\quad q\in M_{set}.
\end{align*}

\subsection{Construction of the approximate system I}
We employ the approximate system from \cite{Guo-2008}: 
\begin{align}\label{App  Stm Euler}
	\left\{
	\begin{array}{l}
		\partial_t\rho_\eta+\div(\rho_\eta\mathbf{u}_\eta)=0,\\
		\partial_t(\rho_\eta \mathbf{u}_\eta)+\div(\rho_\eta\mathbf{u}_\eta\otimes\mathbf{u}_\eta)+\nabla(\rho_\eta^\gamma)=\div(\tilde{\mu}(\rho_\eta)\nabla \mathbf{u}_\eta)+\nabla(\tilde{\lambda}(\rho_\eta)\div\mathbf{u}_\eta),\\
		(\rho_{\eta},\mathbf{u}_{\eta})|_{t=0}=(\rho_{\eta,0},\mathbf{u}_{\eta,0}),\\
		\mathbf{u}_{\eta}|_{\partial\Omega_\eta}=0
	\end{array}
	\right.
\end{align}
where $t\ge0$ and $x\in \Omega_\eta=\Omega-B_\eta$. The viscosity coefficients satisfy
\begin{align}
	\tilde{\mu}(\rho_\eta)=\rho_\eta^\alpha+\eta\rho^\delta_\eta, \quad
	\tilde{\lambda}(\rho_\eta)=(\alpha-1)\rho_\eta^\alpha+\eta(\delta-1)\rho_\eta^\delta,
\end{align}
where \(\delta\) is chosen as follows:{\small
	\begin{align}
		&1-\frac{p\sqrt{2p-1}-2p+1}{p^2-2p+1}<\delta<1-\frac{1}{2p},\  N=2;\label{2d delta}\\
		&1-\frac{\sqrt{4p(4p^2-p-1)+1}-6p+3}{4p^2-8p+4}<\delta<1-\frac{1}{2p},\  N=3 \text{ and } p\geq  1.55;\label{3d delta p large}\\
		&1-\frac{\sqrt{4n(4n^2-n-1)+1}-6n+3}{4n^2-8n+4}|_{n=1.55}<\delta=0.677<1-\frac{1}{2n}|_{n=1.55},\  N=3 \text{ and } p< 1.55.\label{3d delta p small}
\end{align}}
\begin{rmk} 
	The artificial viscosity term is added to obtain an \(\eta\)-dependent positive lower bound for \(\rho_\eta\), which is crucial for the global existence of the approximate solution. It is readily verified that the \(\delta\) appearing in \eqref{2d delta} and \eqref{3d delta p large} is well-defined.
\end{rmk}
\begin{rmk}
	For \(N=2\), or for \(N=3\) and \(p\ge 1.55\), the definition of \(\delta\) immediately gives \(p<n_N(\delta)\).  
	When \(N=3\) and \(p<1.55\), we likewise have \(p<1.55<n_3(\delta)\).
\end{rmk}

We now specify the initial data for the approximate system.  Without loss of generality, we assume the total initial mass $\int_\Omega\rho_0 dx=1$.  First, extend \(\rho_0\) continuously to \(\mathbb R^N\) and set
\begin{align}\label{weak rho ini}
	\rho_{\eta,0}=C_\eta\left(j_{\varepsilon(\eta)}*\rho_0^{\alpha-1+\frac{1}{2q}}+\eta^{\frac{\alpha-1+\frac{1}{2q}}{\alpha-\delta}}\right)^{\frac{1}{\alpha-1+\frac{1}{2q}}}, \text{ in }\Omega,
\end{align}
where the constant \(C_\eta\) is chosen so that \(\int_{\Omega_\eta} \rho_{\eta,0}\,dx=1\), and \(j_{\varepsilon(\eta)}\) is the standard mollifier.  We then define
\begin{align}\label{weak u ini}
	\mathbf{u}_{\eta,0}=\left(\frac{1}{\rho_{\eta,0}}\left(\frac{|\mathbf{m}_0|^{2p}}{\rho_0^{2p-1}}\alpha_\eta\right)*j_{\varepsilon(\eta)}\right)^{\frac{1}{2p}}\frac{\mathbf{x}}{r}, \text{ in }\Omega,
\end{align}
where \(\alpha_\eta(\cdot)\) is a smooth cut-off function such that \(\alpha_\eta(x)=0\) whenever \(0\le|x|\le 2\eta\) or \(|x|\ge R-\eta\), and \(\alpha(x)=1\) when \(3\eta\le|x|\le R-2\eta\). One verifies that the initial data of the approximate system possess the following properties:
\begin{align}\label{2d weak initial data}
	\begin{split}
		&\rho_{\eta,0}\rightarrow \rho_0 \text{ in }L^\gamma(\Omega),\quad \nabla\rho_{\eta,0}^{\alpha-\frac{1}{2}}\rightarrow\nabla\rho_0^{\alpha-\frac{1}{2}} \text{ in }L^2(\Omega),\quad \rho_{\eta,0}\ge C\eta^{\frac{1}{\alpha-\delta}},\\
		&\nabla\rho_{\eta,0}^{\alpha-1+\frac{1}{2q}}\rightarrow  \nabla\rho_{0}^{\alpha-1+\frac{1}{2q}}\text{ in }L^{2q}(\Omega), \quad |\mathbf{m}_{\eta,0}|^{2p}/\rho_{\eta,0}^{2p-1}\rightarrow |\mathbf{m}_0|^{2p}/\rho_0^{2p-1} \text{ in }L^1(\Omega),\\
		&|\mathbf{m}_{\eta,0}|^{2}/\rho_{\eta,0}\rightarrow |\mathbf{m}_0|^{2}/\rho_0 \text{ in }L^1(\Omega), \quad \mathbf{u}_{\eta,0}|_{\partial\Omega_\eta}=0,
	\end{split}
\end{align}
where \(C>0\) is a generic constant and $\mathbf{m}_{\eta,0}=\rho_{\eta,0}\mathbf{u}_{\eta,0}$. Restricting the obtained pair \((\rho_{\eta,0},\mathbf{u}_{\eta,0})\) to \(\Omega_\eta\), we still denote it by \((\rho_{\eta,0},\mathbf{u}_{\eta,0})\).

 In spherical coordinates, the approximate system \eqref{App  Stm Euler} takes the following form
\begin{align}\label{App  Stm Ball}
	\left\{
	\begin{array}{l}
		(\rho_\eta)_t+(\rho_\eta u_\eta)_r+\frac{N-1}{r}\rho_\eta u_\eta=0,\\
		\rho_\eta (u_\eta)_t+\rho_\eta u_\eta (u_\eta)_r+(\rho_\eta^\gamma)_r-\left(\frac{\alpha\rho_\eta^\alpha+\eta\delta\rho_\eta^\delta}{r^{N-1}}(r^{N-1}u_\eta)_r\right)_r+\frac{N-1}{r}(\rho_\eta^\alpha+\eta \rho_\eta^\delta)_r u_\eta=0,\\
		(\rho_\eta, u_\eta)|_{t=0}=(\rho_{\eta,0}, u_{\eta,0}),\\
		u_\eta(\eta,t)=u_\eta(R,t)=0,
	\end{array}
	\right.
\end{align}
where $t\ge 0$ and $r\in(\eta, R)$.

  Define \(y(r,t)=\int_\eta^r \rho_\eta(s,t)s^{N-1}ds\) and \(\tau(r,t)=t\).  Then, in Lagrangian coordinates, the approximate system \eqref{App  Stm Euler} becomes
\begin{align}\label{app system lag}
	\left\{
	\begin{array}{l}
		(\rho_\eta)_\tau+\rho^2_\eta(r^{N-1}u_\eta )_y=0,\\
		r^{-(N-1)}(u_\eta)_\tau+(\rho_\eta^\gamma)_y-((\alpha\rho_\eta^{\alpha+1}+\eta\delta\rho^{1+\delta}_\eta)(r^{N-1}u_\eta)_y)_y+\frac{N-1}{r}(\rho_\eta^\alpha+\eta\rho_\eta^\delta)_yu_\eta=0,\\
		(\rho_\eta, u_\eta)|_{\tau=0}=(\rho_{\eta,0}, u_{\eta,0}),\\
		u_\eta(0,\tau)=u_\eta(1,\tau)=0,
	\end{array}
	\right.
\end{align}
where $\tau\ge0$ and $y\in(0,1)$. 

\subsection{Local solvability of the approximate system}
We study the global solvability of system \eqref{app system lag} in Lagrangian coordinates.  Fix \(\eta>0\).  It is easily seen that \(\rho_0^\eta\in C^{1+\beta}[0,1]\) and \(u_0^\eta\in C^{2+\beta}[0,1]\) for any \(0<\beta<1\).  As remarked in \cite{Guo-2008}, the system is now essentially one-dimensional and contains no singularities.  For such problems a standard argument yields a unique local classical solution \((\rho_\eta, u_\eta)\) with \(\rho_\eta,\partial_y\rho_\eta,\partial_{\tau y}\rho_\eta,u_\eta,\partial_yu_\eta,\partial_\tau u_\eta,\partial_{yy}u_\eta\in C^{\beta,\frac{\beta}{2}}([0,1]\times[0,T_0])\) for some \(T_0>0\). 
\subsection{Some useful estimates}
Let \(T^{*}\) be the maximal existence time of the local classical solution \((\rho_{\eta}, u_{\eta})\) to the approximate system \eqref{app system lag}, and fix \(0<T<T^{*}\). 
	\begin{prop}\label{Prop WI n}
	There exists a constant \(C>0\) independent of \(\tau\) and \(\eta\) such that
	\begin{align}
		\int_0^1 (u_\eta^2+\rho_\eta^{\gamma-1})dy+\int_0^\tau\int_0^1\Big(\rho_\eta^{\alpha-1}\frac{u_\eta^{2}}{r^2}&+\rho_\eta^{1+\alpha} (\partial_y{u_\eta})^2r^{2(N-1)}\Big)dyd\tau \nonumber\\
		+\eta\int_0^\tau\int_0^1&\Big(\rho_\eta^{\delta-1}\frac{u_\eta^{2}}{r^2}+\rho_\eta^{1+\delta} (\partial_y{u_\eta})^2r^{2(N-1)}\Big)dyd\tau\leq C.\label{u^2 weak}
	\end{align}
	For $1<n\leq p$, there exists a constant \(C>0\) independent of \(\tau\) and \(\eta\) such that
	\begin{align}
		\int_0^1 u_\eta^{2n}dy+\int_0^\tau\int_0^1\Big(\rho_\eta^{\alpha-1}\frac{u_\eta^{2n}}{r^2}&+\rho_\eta^{1+\alpha} {(\partial_y{u_\eta})}^2 u_\eta^{2n-2}r^{2(N-1)}\Big)dyd\tau\nonumber\\
		&\leq C+C\int_0^\tau\int_0^1\rho_\eta^{2n(\gamma-\alpha)+\alpha-1}r^{2(n-1)}dyd\tau.\label{u^2n weak}
	\end{align}
	For $p<n<\min\{n_N(\alpha),n_N(\delta)\}$, there exists a constant \(C(\eta)>0\) independent of \(\tau\) but depending on \(\eta\) such that
	\begin{align}
		\int_0^1 u_\eta^{2n}dy+\int_0^\tau\int_0^1\Big(\rho_\eta^{\alpha-1}\frac{u_\eta^{2n}}{r^2}&+\rho^{1+\alpha} {(\partial_y{u_\eta})}^2 u_\eta^{2n-2}r^{2(N-1)}\Big)dyd\tau\nonumber\\
		&\leq C(\eta)+C\int_0^\tau\int_0^1\rho_\eta^{2n(\gamma-\alpha)+\alpha-1}r^{2(n-1)}dyd\tau.\label{u^2n weak eta}
	\end{align}
\end{prop}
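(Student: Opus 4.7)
The plan is to mirror the proof of Proposition \ref{Prop C n}, treating the artificial viscous term $\eta\delta\rho_\eta^{1+\delta}$ as a second copy of the principal viscous term with exponent $\delta$ in place of $\alpha$.

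For \eqref{u^2 weak}, I multiply the momentum equation $\eqref{app system lag}_2$ by $r^{N-1}u_\eta$ and integrate by parts on $(0,1)$, using the continuity equation and the boundary condition $u_\eta(0,\tau)=u_\eta(1,\tau)=0$. This yields two dissipative contributions of the form
\begin{align*}
\bigl(\alpha(N-1)^2-(N-1)(N-2)\bigr)\rho_\eta^{\alpha-1}\frac{u_\eta^2}{r^2}+\alpha\rho_\eta^{1+\alpha}(\partial_y u_\eta)^2 r^{2(N-1)}
\end{align*}
together with the analogous expression, weighted by $\eta$, with $\delta$ replacing $\alpha$. Both are non-negative since $\alpha,\delta>(N-1)/N$, which is guaranteed by \eqref{2d delta}--\eqref{3d delta p small}; combined with \eqref{2d weak initial data}, this proves \eqref{u^2 weak} with an $\eta$-independent constant.

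For \eqref{u^2n weak} and \eqref{u^2n weak eta}, I multiply $\eqref{app system lag}_2$ by $r^{N-1}u_\eta^{2n-1}$ and integrate. Reproducing the chain of inequalities \eqref{C2}--\eqref{C6}, one must absorb the two cross terms
\begin{align*}
2n(N-1)(\alpha-1)\rho_\eta^{\alpha}u_\eta^{2n-1}\partial_y u_\eta\, r^{N-2},\qquad 2n(N-1)(\delta-1)\eta\rho_\eta^{\delta}u_\eta^{2n-1}\partial_y u_\eta\, r^{N-2}
\end{align*}
via Young's inequality against the corresponding dissipative quadratic terms. This requires a single $\varepsilon>0$ that simultaneously realises the analogue of \eqref{C4} both for the exponent $\alpha$ and for the exponent $\delta$, which is possible because $n<n_N(\alpha)$ and $n<n_N(\delta)$ are both ensured by the hypothesis $n<\min\{n_N(\alpha),n_N(\delta)\}$ together with \eqref{2d delta}--\eqref{3d delta p small}. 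A further application of Young's inequality to the pressure contributions $\rho_\eta^\gamma u_\eta^{2n-2}\partial_y u_\eta\, r^{N-1}$ and $\rho_\eta^{\gamma-1}u_\eta^{2n-1}/r$, absorbed into the $\alpha$-viscous dissipation alone, produces the right-hand side $\rho_\eta^{2n(\gamma-\alpha)+\alpha-1}r^{2(n-1)}$ appearing in both \eqref{u^2n weak} and \eqref{u^2n weak eta}.

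The main subtlety is then to track the $\eta$-dependence of the resulting constants. All constants arising in the absorption step depend only on $\alpha,\delta,\gamma,n,N$, hence are $\eta$-independent, so the overall constant is controlled by the initial datum $\int_0^1 u_{\eta,0}^{2n}dy$, which in Eulerian coordinates is comparable to $\int_\Omega\rho_{\eta,0}|\mathbf u_{\eta,0}|^{2n}dx$. For $n\leq p$, H\"older's inequality combined with the mass bound $\int_\Omega\rho_{\eta,0}dx=1$ and the $\eta$-uniform convergence of $|\mathbf{m}_{\eta,0}|^{2p}/\rho_{\eta,0}^{2p-1}$ in $L^1(\Omega)$ from \eqref{2d weak initial data} yields an $\eta$-independent bound, giving \eqref{u^2n weak}. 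For $p<n<\min\{n_N(\alpha),n_N(\delta)\}$, by contrast, this initial quantity is only finite for each fixed $\eta$, via the smoothness of $\mathbf u_{\eta,0}$ and the positive lower bound $\rho_{\eta,0}\geq C\eta^{1/(\alpha-\delta)}$ built into the construction \eqref{weak rho ini}--\eqref{weak u ini}, which is precisely the source of the $\eta$-dependent constant in \eqref{u^2n weak eta}.
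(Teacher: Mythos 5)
Your proof is correct and follows essentially the same route as the paper: multiply the momentum equation by $r^{N-1}u_\eta^{2n-1}$, absorb the two cross terms (from the $\rho_\eta^\alpha$ and $\eta\rho_\eta^\delta$ viscosities) with a single Young's-inequality $\varepsilon$ valid under $n<\min\{n_N(\alpha),n_N(\delta)\}$, absorb the pressure into the $\alpha$-dissipation to produce the $\rho_\eta^{2n(\gamma-\alpha)+\alpha-1}r^{2(n-1)}$ right-hand side, and then split the treatment of the initial term $\int_0^1 u_{\eta,0}^{2n}dy$ according to whether $n\le p$ (interpolated via the $L^1$-convergence of $|\mathbf m_{\eta,0}|^{2p}/\rho_{\eta,0}^{2p-1}$) or $p<n$ (finite for fixed $\eta$ by construction). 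Your observation that the base energy estimate only needs $\alpha,\delta>(N-1)/N$ is precisely equivalent to the paper's condition $1<\min\{n_N(\alpha),n_N(\delta)\}$, since $n_N(\cdot)>1$ on $((N-1)/N,\infty)$; otherwise the two arguments coincide.
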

\begin{proof}
	Multiplying $\eqref{app system lag}_2$ by \(r^{N-1}u_\eta\) and integrating the resulting equation over $(0,1)\times(0,\tau)$, we obtain \eqref{u^2 weak} by using the fact $1<\min\{n_N(\alpha), n_N(\delta)\}$ and the boundary condition $\eqref{app system lag}_4$.
	
	Fix $1<n<\min\{n_N(\alpha),n_N(\delta)\}$. Multiplying $\eqref{app system lag}_2$ by \(r^{N-1}u_\eta^{2n-1}\) and integrating the resulting equation over $(0,1)$ gives 
	\begin{align*}
		&\frac{1}{2n}\frac{d}{d\tau}\int_0^1u_\eta^{2n}dy+\int_0^1\alpha\rho_\eta^{1+\alpha}(r^{N-1} u_\eta)_y(r^{N-1}u_\eta^{2n-1})_y-(N-1) \rho_\eta^{\alpha}(r^{N-2}u_\eta^{2n})_ydy\\
		&+\eta\int_0^1\Big(\delta\rho_\eta^{1+\delta}(r^{N-1} u_\eta)_y(r^{N-1}u_\eta^{2n-1})_y-(N-1)\rho_\eta^{\delta}(r^{N-2}u_\eta^{2n})_y\Big)dy=\int_0^1\rho_\eta^{\gamma}(r^{N-1}u_\eta^{2n-1})_ydy.
	\end{align*}
	A direct computation shows
	\begin{align}\label{W1}
	\begin{split}
		&\frac{1}{2n}\frac{d}{d\tau}\int_0^1u_\eta^{2n}dy+
		\int_0^1\Big((\alpha(N-1)^2-(N-1)(N-2))\rho_\eta^{\alpha-1}\frac{u_\eta^{2n}}{r^2}\\
		&\quad+(2n-1)\alpha\rho_\eta^{1+\alpha}(\partial_yu_\eta)^2 u_\eta^{2n-2}r^{2(N-1)}+2n(N-1)(\alpha-1)\rho_\eta^\alpha u_\eta^{2n-1}\partial_yu_\eta r^{N-2}\Big)dy\\
		&\quad+\eta\int_0^1\Big((\delta(N-1)^2-(N-1)(N-2))\rho_\eta^{\delta-1}\frac{u_\eta^{2n}}{r^2}\\
		&\quad+(2n-1)\delta\rho_\eta^{1+\delta}(\partial_yu_\eta)^2 u_\eta^{2n-2}r^{2(N-1)}+2n(N-1)(\delta-1)\rho_\eta^\delta u_\eta^{2n-1}\partial_yu_\eta r^{N-2}\Big)dy\\
		&=\int_0^1 \Big((2n-1)\rho_\eta^{\gamma}u_\eta^{2n-2}\partial_y u_\eta r^{N-1}+(N-1)\rho_\eta^{\gamma-1}\frac{u_\eta^{2n-1}}{r} \Big)dy.
	\end{split}
\end{align}
Since \(1<n<\min\{n_{N}(\alpha),n_N(\delta)\}\), we can choose \(\varepsilon>0\) sufficiently small so that
\begin{align*}
	\frac{(2n(N-1)(\alpha-1))^2}{4(1-2\varepsilon)(2n-1)\alpha}<(1-2\varepsilon)(\alpha(N-1)^2-(N-1)(N-2))
\end{align*}
and 
\begin{align*}
	\frac{(2n(N-1)(\delta-1))^2}{4(1-2\varepsilon)(2n-1)\delta}<(1-2\varepsilon)(\delta(N-1)^2-(N-1)(N-2)).
\end{align*}
Therefore, Young's inequality yields
	\begin{align}\label{W1.1}
	\begin{split}
		&\quad 2n(N-1)(\alpha-1)\rho_\eta^\alpha u_\eta^{2n-1}\partial_yu_\eta r^{N-2}\\
		&\ge-(1-2\varepsilon)(2n-1)\alpha\rho_\eta^{1+\alpha} (\partial_y{u_\eta})^2 u_\eta^{2n-2}r^{2(N-1)}-\frac{(2n(N-1)(\alpha-1))^2}{4(1-2\varepsilon)(2n-1)\alpha}\rho_\eta^{\alpha-1}\frac{u_\eta^{2n}}{r^2}\\
		&\geq -(1-2\varepsilon)(2n-1)\alpha\rho_\eta^{1+\alpha} (\partial_y u_\eta)^2 u_\eta^{2n-2}r^{2(N-1)}\\
		&\quad-(1-2\varepsilon)(\alpha(N-1)^2-(N-1)(N-2))\rho_\eta^{\alpha-1}\frac{u_\eta^{2n}}{r^2}
	\end{split}
\end{align}
and 
	\begin{align}\label{W1.2}
	\begin{split}
		&\quad 2n(N-1)(\delta-1)\rho_\eta^\delta u_\eta^{2n-1}\partial_yu_\eta r^{N-2}\\
		&\ge-(1-2\varepsilon)(2n-1)\delta\rho_\eta^{1+\delta} (\partial_y{u_\eta})^2 u_\eta^{2n-2}r^{2(N-1)}-\frac{(2n(N-1)(\delta-1))^2}{4(1-2\varepsilon)(2n-1)\delta}\rho_\eta^{\delta-1}\frac{u_\eta^{2n}}{r^2}\\
		&\geq -(1-2\varepsilon)(2n-1)\delta\rho_\eta^{1+\delta} (\partial_y u_\eta)^2 u_\eta^{2n-2}r^{2(N-1)}\\
		&\quad-(1-2\varepsilon)(\delta(N-1)^2-(N-1)(N-2))\rho_\eta^{\delta-1}\frac{u_\eta^{2n}}{r^2}.
	\end{split}
\end{align}
Using \eqref{W1.1} and \eqref{W1.2}, we obtain from \eqref{W1} that
	\begin{align}\label{W2}
	\begin{split}
		&\frac{1}{2n}\frac{d}{d\tau}\int_0^1u_\eta^{2n}dy+
		2\varepsilon\int_0^1\Big((\alpha(N-1)^2-(N-1)(N-2))\rho_\eta^{\alpha-1}\frac{u_\eta^{2n}}{r^2}\\
		&\quad+(2n-1)\alpha\rho_\eta^{1+\alpha} (\partial_y u_\eta)^2 u_\eta^{2n-2}r^{2(N-1)}\Big)dy\\
		&\leq\int_0^1 \Big((2n-1)\rho_\eta^{\gamma}u_\eta^{2n-2}\partial_y u_\eta r^{N-1}+(N-1)\rho_\eta^{\gamma-1}\frac{u_\eta^{2n-1}}{r} \Big)dy.
	\end{split}
\end{align}
Applying Young’s inequality once more, we get
\begin{align}\label{W3}
	\begin{split}
		&\frac{1}{2n}\frac{d}{d\tau}\int_0^1u_\eta^{2n}dy+
		\varepsilon\int_0^1\Big((\alpha(N-1)^2-(N-1)(N-2))\rho_\eta^{\alpha-1}\frac{u_\eta^{2n}}{r^2}\\
		&\quad+(2n-1)\alpha\rho_\eta^{1+\alpha} (\partial_y u_\eta)^2 u_\eta^{2n-2}r^{2(N-1)}\Big)dy\\
		&\leq C\int_0^1\rho_\eta^{2n(\gamma-\alpha)+\alpha-1}r^{2n-2}dy.
	\end{split}
\end{align}
Integrate the above expression over \((0,\tau)\). If \(1<n\le p\), we obtain from \eqref{2d weak initial data} that
\begin{align}\label{W4}
	\begin{split}
		&\frac{1}{2n}\int_0^1u_\eta^{2n}dy+
		\varepsilon\int_0^\tau\int_0^1\Big((\alpha(N-1)^2-(N-1)(N-2))\rho_\eta^{\alpha-1}\frac{u_\eta^{2n}}{r^2}\\
		&\quad+(2n-1)\alpha\rho_\eta^{1+\alpha} (\partial_y u_\eta)^2 u_\eta^{2n-2}r^{2(N-1)}\Big)dyd\tau\\
		&\leq\frac{1}{2n}\int_0^1u_{\eta,0}^{2n}dy+C\int_0^\tau \int_0^1\rho_\eta^{2n(\gamma-\alpha)+\alpha-1}r^{2n-2}dyd\tau\\
		&\leq C\int_0^1u_{\eta,0}^{2p}dy+C\int_0^1u_{\eta,0}^{2}dy+C\int_0^\tau \int_0^1\rho_\eta^{2n(\gamma-\alpha)+\alpha-1}r^{2n-2}dyd\tau\\
		&\leq C+C\int_0^\tau \int_0^1\rho_\eta^{2n(\gamma-\alpha)+\alpha-1}r^{2n-2}dyd\tau.
	\end{split}
\end{align}
If \(p<n<\min\{n_N(\alpha),n_N(\delta)\}\), the definition of \(u_{\eta,0}\) implies \eqref{u^2n weak eta}. This completes the proof of Proposition \ref{Prop WI n}.
\end{proof}

	\begin{prop}\label{Prop W m}
	Assume that $1<m\leq q$ and $m\in M_{set}$. It holds that
	\begin{align}
		\int_0^1(u_\eta+r^{N-1}(\rho_\eta^\alpha)_y+\eta r^{N-1}(\rho^\delta_\eta)_y)^2dy&+\int_0^\tau\int_0^1\rho_\eta^{\gamma-\alpha}(r^{N-1}(\rho_\eta^\alpha)_y)^{2}dyd\tau\leq C;\label{W nablarho^2}\\
		\int_0^1(u_\eta+r^{N-1}(\rho_\eta^\alpha)_y+\eta r^{N-1}(\rho^\delta_\eta)_y)^{2m}dy&+\int_0^\tau\int_0^1\rho_\eta^{\gamma-\alpha}(r^{N-1}(\rho_\eta^\alpha)_y)^{2m}dyd\tau\nonumber\\
		&\leq C+C\int_0^\tau\int_0^1\rho_\eta^{\gamma-\alpha}u_\eta^{2m}dyd\tau,\label{W nablarho^2m}
	\end{align}
	where $C$ is a positive constant independent of $\tau$ and $\eta$.
\end{prop}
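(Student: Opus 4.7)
The proof will parallel Proposition \ref{Prop C m}, modified to accommodate the artificial viscosity. First, I derive the B-D entropy equation for \eqref{app system lag}. Using $\eqref{app system lag}_1$ to rewrite $(r^{N-1}u_\eta)_y = -(\rho_\eta)_\tau/\rho_\eta^2$, the factor $\alpha\rho_\eta^{\alpha+1}+\eta\delta\rho_\eta^{1+\delta}$ multiplying $(r^{N-1}u_\eta)_y$ in $\eqref{app system lag}_2$ reduces to $-(\rho_\eta^\alpha)_\tau-\eta(\rho_\eta^\delta)_\tau$. Substituting this and combining the remaining first-order term with $r^{N-1}$ via $r_\tau = u_\eta$, the momentum equation becomes
\begin{align*}
\partial_\tau\bigl(u_\eta + r^{N-1}(\rho_\eta^\alpha)_y + \eta r^{N-1}(\rho_\eta^\delta)_y\bigr) + r^{N-1}(\rho_\eta^\gamma)_y = 0.
\end{align*}
Denote $W := u_\eta+A+B$ with $A := r^{N-1}(\rho_\eta^\alpha)_y$ and $B := \eta r^{N-1}(\rho_\eta^\delta)_y$. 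The pointwise identity $(\rho_\eta^\delta)_y = \tfrac{\delta}{\alpha}\rho_\eta^{\delta-\alpha}(\rho_\eta^\alpha)_y$ gives $B = \tfrac{\eta\delta}{\alpha}\rho_\eta^{\delta-\alpha}A$, so $A+B = \phi A$ where $\phi := 1+\tfrac{\eta\delta}{\alpha}\rho_\eta^{\delta-\alpha}\ge 1$; equivalently, $A = (W-u_\eta)/\phi$. This pointwise algebraic identity is the key to the whole argument.

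For \eqref{W nablarho^2}, I multiply the B-D entropy equation by $W$ and integrate over $(0,1)$. Integration by parts combined with $\eqref{app system lag}_1$ converts $\int r^{N-1}(\rho_\eta^\gamma)_y u_\eta\,dy$ into $\tfrac{1}{\gamma-1}\tfrac{d}{d\tau}\int\rho_\eta^{\gamma-1}\,dy$, while $(\rho_\eta^\gamma)_y = \tfrac{\gamma}{\alpha}\rho_\eta^{\gamma-\alpha}(\rho_\eta^\alpha)_y$ expresses the cross terms as $\tfrac{\gamma}{\alpha}\int\rho_\eta^{\gamma-\alpha}A^2\,dy + \eta\gamma\delta\int r^{2(N-1)}\rho_\eta^{\gamma+\delta-2}((\rho_\eta)_y)^2\,dy$, both nonnegative. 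Dropping the latter and integrating in $\tau$ yields \eqref{W nablarho^2}; the initial bound $\|W_0\|_{L^2(dy)}\le C$ holds uniformly in $\eta$ thanks to \eqref{2d weak initial data}.

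For \eqref{W nablarho^2m}, I multiply by $W^{2m-1}$, noting that $m\in M_{\text{set}}$ makes $W^{2m-1}$ a well-defined real function via an odd-denominator rational exponent. The identity
\begin{align*}
\frac{1}{2m}\frac{d}{d\tau}\int_0^1 W^{2m}\,dy + \frac{\gamma}{\alpha}\int_0^1 \rho_\eta^{\gamma-\alpha}AW^{2m-1}\,dy = 0,
\end{align*}
combined with $A=(W-u_\eta)/\phi$ and Young's inequality (using $\phi\ge 1$), yields
\begin{align*}
\rho_\eta^{\gamma-\alpha}AW^{2m-1} \ge (1-\varepsilon)\frac{\rho_\eta^{\gamma-\alpha}}{\phi}W^{2m} - C\rho_\eta^{\gamma-\alpha}u_\eta^{2m}.
\end{align*}
Invoking once more $A = (W-u_\eta)/\phi$ together with $\phi^{2m}\ge \phi$ gives the conversion
\begin{align*}
\rho_\eta^{\gamma-\alpha}A^{2m} \le C\frac{\rho_\eta^{\gamma-\alpha}}{\phi}W^{2m} + C\rho_\eta^{\gamma-\alpha}u_\eta^{2m},
\end{align*}
so that the $\phi^{-1}$-weighted dissipation controls the unweighted $A^{2m}$ dissipation demanded by the conclusion. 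Time integration and the uniform initial bound $\|W_0\|_{L^{2m}(dy)}\le C$ (valid for $m\le q$ via \eqref{2d weak initial data}, Hölder's inequality, and mass conservation) then produce \eqref{W nablarho^2m}.

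The main obstacle is to avoid picking up any $\eta$-dependent constant from the artificial viscosity. The natural dissipation from the B-D equation controls only $\int\tfrac{\rho_\eta^{\gamma-\alpha}}{\phi}W^{2m}\,dy$, while the target involves $\int\rho_\eta^{\gamma-\alpha}A^{2m}\,dy$. Since $\delta<\alpha$ forces $\phi\to\infty$ as $\rho_\eta\to 0$, a naive bound would introduce a vacuum-dependent loss that is fatal for the stability analysis performed in the next subsection. The identity $A = (W-u_\eta)/\phi$ together with the pointwise inequality $\phi\ge 1$ resolves this cleanly: it trades $W^{2m}$ dissipation for $A^{2m}$ dissipation at the cost only of an additive $u_\eta^{2m}$ term, which is precisely the form admitted on the right-hand side of \eqref{W nablarho^2m} and is already controlled by Proposition \ref{Prop WI n}.
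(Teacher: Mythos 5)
Your proof is correct and takes essentially the same approach as the paper; the only stylistic difference is that the paper packages the pointwise Young-type estimate as Lemma \ref{Lem ks} applied with $a=A+B=\phi A$ and $b=u_\eta$, whereas you apply $\varepsilon$-Young directly to the decomposition $A=(W-u_\eta)/\phi$ and then pass from the intermediate dissipation $\int\frac{\rho_\eta^{\gamma-\alpha}}{\phi}W^{2m}dy$ to $\int\rho_\eta^{\gamma-\alpha}A^{2m}dy$ via $\phi^{2m}\ge\phi$. Both routes rest on the same key observation $\phi\ge 1$, which is what prevents an $\eta$-dependent loss from the artificial viscosity. One minor remark on the initial-data bound: the paper gets the uniform $L^{2m}$ bound on $W_0$ not from mass conservation but from the explicit lower bound $\rho_{\eta,0}\ge C\eta^{1/(\alpha-\delta)}$ in \eqref{2d weak initial data}, which exactly cancels the $\eta^{2m}$ prefactor in the artificial-viscosity contribution; the condition $m\le q$ together with H\"older then controls the $\nabla\rho_{\eta,0}^{\alpha-1+\frac{1}{2m}}$ term by $\nabla\rho_{\eta,0}^{\alpha-1+\frac{1}{2q}}\in L^{2q}$.
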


	\begin{proof}
	From equation \eqref{app system lag} we derive the following B–D entropy equation:
	\begin{align}\label{2d B-D weak}
		(u_\eta+r^{N-1}(\rho_\eta^\alpha)_y+\eta r^{N-1}(\rho_\eta^\delta)_y)_\tau+(\rho_\eta^\gamma)_yr^{N-1}=0.
	\end{align}
	Multiplying \eqref{2d B-D weak} by (\(u_\eta+r^{N-1}(\rho_\eta^\alpha)_y+\eta r^{N-1}(\rho_\eta^\delta)_y\)) and integrating the resulting equation over $(0,1)\times(0,\tau)$ gives
	\begin{align*}
		\int_0^1\frac{1}{2}(u_\eta+r^{N-1}(\rho_\eta^\alpha)_y+\eta r^{N-1}(\rho_\eta^\delta)_y)^2+\frac{\rho_\eta^{\gamma-1}}{\gamma-1}dy+\alpha\gamma\int_0^\tau\int_0^1\rho_\eta^{\gamma+\alpha-2}(\partial_y\rho_\eta)^2r^{2(N-1)}dyd\tau\\
		+\eta\delta\gamma\int_0^\tau\int_0^1\rho_\eta^{\gamma+\delta-2}(\partial_y \rho_\eta)^2r^{2(N-1)}dyd\tau=\int_0^1\frac{1}{2}(u_{\eta,0}+r^{N-1}(\rho_{\eta,0}^\alpha)_y+\eta r^{N-1}(\rho_{\eta,0}^\delta)_y)^2+\frac{\rho_{\eta,0}^{\gamma-1}}{\gamma-1}dy,
	\end{align*}
	which together with \eqref{2d weak initial data} yields \eqref{W nablarho^2}.
	
	Multiplying \eqref{2d B-D weak} by \(\bigl(u_\eta+r^{N-1}(\rho_\eta^\alpha)_y+\eta r^{N-1}(\rho_\eta^\delta)_y\bigr)^{2m-1}\) and integrating the resulting equation over \(y\in(0,1)\) gives
	\begin{align*}
		\frac{1}{2m}\frac{d}{d\tau}\int_0^1 &(u_\eta+r^{N-1}(\rho_\eta^\alpha)_y+\eta r^{N-1}(\rho_\eta^\delta)_y)^{2m}dy\\
		&+\gamma\int_0^1\rho_\eta^{\gamma-1}\partial_y\rho_\eta(u_\eta+r^{N-1}(\rho_\eta^\alpha)_y+\eta r^{N-1}(\rho_\eta^\delta)_y)^{2m-1}r^{N-1}dy=0.
	\end{align*}
	Using Lemma \ref{Lem ks}, we obtain 
	\begin{align*}
		&\quad\rho_\eta^{\gamma-1}\partial_y\rho_\eta(u_\eta+r^{N-1}(\rho_\eta^\alpha)_y+\eta r^{N-1}(\rho_\eta^\delta)_y)^{2m-1}r^{N-1}\\
		&= (\alpha \rho_\eta^{\alpha-1}+\eta\delta \rho_\eta^{\delta-1})r^{N-1}\partial_y\rho_\eta(u_\eta+(\alpha \rho_\eta^{\alpha-1}+\eta\delta \rho_\eta^{\delta-1})r^{N-1}\partial_y\rho_\eta)^{2m-1}\frac{\rho_\eta^{\gamma-1}}{\alpha\rho_\eta^{\alpha-1}+\eta\delta\rho_\eta^{\delta-1}}\\
		&\geq\varepsilon((\alpha \rho_\eta^{\alpha-1}+\eta\delta \rho_\eta^{\delta-1})r^{N-1}\partial_y\rho_\eta)^{2m}\frac{\rho_\eta^{\gamma-1}}{\alpha\rho_\eta^{\alpha-1}+\eta\delta\rho_\eta^{\delta-1}}-\frac{\rho_\eta^{\gamma-1}}{\alpha\rho_\eta^{\alpha-1}+\eta\delta\rho_\eta^{\delta-1}}u_\eta^{2m}\\
		&=\varepsilon(\alpha \rho_\eta^{\alpha-1}+\eta\delta \rho_\eta^{\delta-1})^{2m-1}(r^{N-1}\partial_y\rho_\eta)^{2m}\rho_\eta^{\gamma-1}-\frac{\rho_\eta^{\gamma-1}}{\alpha\rho_\eta^{\alpha-1}+\eta\delta\rho_\eta^{\delta-1}}u_\eta^{2m}\\
		&\ge\frac{\varepsilon}{\alpha}(r^{N-1}(\rho_\eta^\alpha)_y)^{2m}\rho_\eta^{\gamma-\alpha}-\frac{1}{\alpha}\rho_\eta^{\gamma-\alpha}u_\eta^{2m},
	\end{align*}
	where \(\varepsilon>0\) is a sufficiently small generic constant. Therefore, we have
	\begin{align*}
		\frac{1}{2m}\frac{d}{d\tau}\int_0^1 (u_\eta+r^{N-1}(\rho_\eta^\alpha)_y&+\eta r^{N-1}(\rho_\eta^\delta)_y)^{2m}dy+\frac{\varepsilon}{\alpha}\int_0^1\rho_\eta^{\gamma-\alpha}(r^{N-1}(\rho_\eta^\alpha)_y)^{2m}dy\\
		&\leq C\int_0^1 \rho_\eta^{\gamma-\alpha}u_\eta^{2m}dy.
	\end{align*}
	Integrating the inequatity with respect to time and using \eqref{2d weak initial data} we obtain that
	{\small
	\begin{align*}
		&\quad\frac{1}{2m}\int_0^1 (u_\eta+r^{N-1}(\rho_\eta^\alpha)_y+\eta r^{N-1}(\rho_\eta^\delta)_y)^{2m}dy+\frac{\varepsilon}{\alpha}\int_0^\tau\int_0^1\rho_\eta^{\gamma-\alpha}(r^{N-1}(\rho_\eta^\alpha)_y)^{2m}dyd\tau\\
		&\leq \frac{1}{2m}\int_0^1 (u_{\eta,0}+r^{N-1}(\rho_{\eta,0}^\alpha)_y+\eta r^{N-1}(\rho_{\eta,0}^\delta)_y)^{2m}dy+C\int_0^\tau\int_0^1\rho_\eta^{\gamma-\alpha}u_\eta^{2m}dyd\tau\\
		&\leq C\left(\int_{\Omega_\eta} \rho_{\eta,0}|\mathbf{u}_{\eta,0}|^{2m}dx+\norm{\nabla \rho_{\eta,0}^{\alpha-1+\frac{1}{2m}}}_{L^{2m}(\Omega_\eta)}^{2m}+\norm{\eta\rho_{\eta,0}^{\delta-2+\frac{1}{2m}}\nabla \rho_{\eta,0}}_{L^{2m}(\Omega_\eta)}^{2m}\right)+C\int_0^\tau\int_0^1\rho_\eta^{\gamma-\alpha}u_\eta^{2m}dyd\tau\\
		&\leq C\left(1+\norm{\nabla \rho_{\eta,0}^{\alpha-1+\frac{1}{2m}}}_{L^{2m}(\Omega_\eta)}^{2m}+\norm{\nabla \rho_{\eta,0}^{\alpha-1+\frac{1}{2m}}}_{L^{2m}(\Omega_\eta)}^{2m}\norm{\rho^{-1}_{\eta,0}}_{L^\infty(\Omega_\eta)}^{2m(\alpha-\delta)}\eta^{2m}\right)+C\int_0^\tau\int_0^1\rho_\eta^{\gamma-\alpha}u_\eta^{2m}dyd\tau\\
		&\leq C\left(1+\norm{\nabla\rho_{\eta,0}^{\alpha-1+\frac{1}{2q}}}_{L^{2m}(\Omega_\eta)}^{2m}\norm{\rho_{\eta,0}}_{L^\infty(\Omega_\eta)}^{1-\frac{m}{q}}\right)+C\int_0^\tau\int_0^1\rho_\eta^{\gamma-\alpha}u_\eta^{2m}dyd\tau\\
		&\leq C+C\int_0^\tau\int_0^1\rho_\eta^{\gamma-\alpha}u_\eta^{2m}dyd\tau.
	\end{align*}
}
	Therefore, we complete the proof.
\end{proof}

We extend \((\rho_\eta, \mathbf{u}_\eta)\) continuously from \(\Omega_\eta\times[0,T]\) to \(\Omega\times[0,T]\). Specifically,
\begin{align}\label{vv5}
	\tilde{\rho}_\eta(r,t)=\left\{
	\begin{array}{ll}
		\rho_\eta(r,t),& \text{ if } \eta\leq r\leq R,\\
		\rho_\eta(\eta,t),&\text{ if }  0\leq r\leq \eta;\\
	\end{array}
	\right.\quad
	\tilde{u}_\eta(r,t)=\left\{
	\begin{array}{ll}
		u_\eta(r,t),& \text{ if } \eta\leq r\leq R,\\
		0,&\text{ if }  0\leq r\leq \eta.\\
	\end{array}
	\right.
\end{align}
For brevity we omit the tilde notation.  From now on, \((\rho_\eta,\mathbf{u}_\eta)\) denotes the functions defined on \(\Omega\times[0,T]\).

	\begin{prop}\label{Prop WI 2d r}
	Let $N=2$. Assume that 
	\begin{align*}
		\gamma>\alpha-\frac{1}{2}.
	\end{align*}
	 For any \(1\le s<\infty\), there exists a constant \(C(s)>0\), independent of \(\eta\) and \(T\), such that
	\begin{align}\label{2d weak rho Lp}
		\sup_{0\le t\le T}\|\rho_\eta\|_{L^s(\Omega)}\le C(s).
	\end{align}
	Moreover, for any \(0<\xi\ll 1\), there exists a constant \(C(\xi)>0\), independent of \(\eta\) and \(T\), such that
	\begin{align}\label{r weight 2d}
		\sup_{0\le t\le T}\|\rho_\eta^{\alpha-\frac12}r^\xi\|_{L^\infty(\eta,R)}\le C(\xi).
	\end{align}
\end{prop}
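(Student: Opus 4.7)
The plan is to mirror the strategy of Proposition \ref{Prop C 2d r-weight} for the classical solution, with careful attention to $\eta$-independence and to the extension \eqref{vv5} from $\Omega_\eta$ to the full ball $\Omega$. First I would convert the Lagrangian inequalities \eqref{u^2 weak} and \eqref{W nablarho^2} back to Eulerian coordinates, using that the cross term in the square $(r^{N-1}(\rho_\eta^\alpha)_y+\eta r^{N-1}(\rho_\eta^\delta)_y)^2$ is nonnegative since both summands carry the same sign as $\partial_r\rho_\eta$, to extract
\begin{align*}
\sup_{0\le t\le T}\norm{\rho_\eta}_{L^\gamma(\Omega_\eta)}+\sup_{0\le t\le T}\norm{\nabla\rho_\eta^{\alpha-\frac12}}_{L^2(\Omega_\eta)}\le C
\end{align*}
independently of $\eta$ and $T$. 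Because the extension in \eqref{vv5} is constant on the inner ball $B_\eta$, the distributional gradient of $\rho_\eta^{\alpha-\frac12}$ vanishes there, and the $L^2$ gradient bound automatically propagates to the full ball $\Omega$.

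The main obstacle is upgrading the $L^1$ control of $\rho_\eta^{\alpha-\frac12}$ from $\Omega_\eta$ to $\Omega$. Since $\rho_\eta$ is extended by the constant value $\rho_\eta(\eta,t)$ on $B_\eta$, this reduces to showing $\rho_\eta^{\alpha-\frac12}(\eta,t)\,\eta^2\le C$ uniformly. I would select a base radius $r_0\in(R/2,R)$ at which $\rho_\eta^{\alpha-\frac12}(r_0,t)$ is controlled pointwise by the $L^\gamma(\Omega_\eta)$ bound (using $\gamma>\alpha-\tfrac12$ together with an averaging argument on the outer region), and write
\begin{align*}
\rho_\eta^{\alpha-\frac12}(\eta,t)=\rho_\eta^{\alpha-\frac12}(r_0,t)-\int_\eta^{r_0}\partial_s\rho_\eta^{\alpha-\frac12}(s,t)\,ds.
\end{align*}
A weighted Cauchy--Schwarz estimate then yields
\begin{align*}
\int_\eta^{R}|\partial_s\rho_\eta^{\alpha-\frac12}|\,ds\le\Bigl(\int_\eta^{R}(\partial_s\rho_\eta^{\alpha-\frac12})^2 s\,ds\Bigr)^{\frac12}\Bigl(\int_\eta^{R}\frac{ds}{s}\Bigr)^{\frac12}\le C|\log\eta|^{\frac12},
\end{align*}
so $\rho_\eta^{\alpha-\frac12}(\eta,t)\le C(1+|\log\eta|^{\frac12})$, whence $\rho_\eta^{\alpha-\frac12}(\eta,t)\,\eta^2\to 0$ uniformly. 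This is precisely the step that exploits the two-dimensional structure: the weight $ds/s$ is only logarithmically singular at the origin, so the excess growth of $\rho_\eta(\eta,t)$ is easily dominated by the volume factor $\eta^{2}$.

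With the uniform $L^1(\Omega)$ and $\dot H^1(\Omega)$ bounds for $\rho_\eta^{\alpha-\frac12}$ in hand, the Poincar\'e--Wirtinger inequality combined with the two-dimensional Sobolev embedding $H^1(\Omega)\hookrightarrow L^s(\Omega)$ for every $s<\infty$ gives $\sup_{0\le t\le T}\norm{\rho_\eta^{\alpha-\frac12}}_{L^s(\Omega)}\le C(s)$, from which \eqref{2d weak rho Lp} follows by raising to the power $(\alpha-\tfrac12)^{-1}$. Finally, for the $r$-weighted $L^\infty$ bound \eqref{r weight 2d}, I would apply the one-dimensional Sobolev embedding to the product $\rho_\eta^{\alpha-\frac12}r^\xi$ on $(0,R)$, convert each resulting line integral to a two-dimensional volume integral via the identity $r\,dr=dx/(2\pi)$, and close with H\"older's inequality using the $L^s(\Omega)$ bounds on $\rho_\eta^{\alpha-\frac12}$ for large $s$ tied to $\xi$ and the $L^2(\Omega)$ gradient bound, reproducing the computation already displayed in the proof of Proposition \ref{Prop C 2d r-weight}.
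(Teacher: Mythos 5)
Your proposal is correct and, for the crucial step — showing that the constant extension of $\rho_\eta$ to the inner disk $B_\eta$ does not ruin the $L^1(\Omega)$ bound on $\rho_\eta^{\alpha-\frac12}$ — you take a genuinely different route than the paper. The paper first proves a full weighted $L^\infty$ bound $\sup_{0\le t\le T}\|\rho_\eta^{\alpha-\frac12}r^\zeta\|_{L^\infty(\eta,R)}\le C$ for some $\zeta\in(\frac{2\alpha-1}{\gamma},2)$ by applying the one-dimensional Sobolev embedding to $\rho_\eta^{\alpha-\frac12}r^\zeta$, then reads off $\rho_\eta^{\alpha-\frac12}(\eta,t)\le C\eta^{-\zeta}$ with $\zeta<2$; the condition $\gamma>\alpha-\frac12$ is used exactly to make the interval $(\frac{2\alpha-1}{\gamma},2)$ nonempty. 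You instead pick a moving base radius $r_0(t)\in(R/2,R)$ where $\rho_\eta^{\alpha-\frac12}$ is controlled by the $L^\gamma$ energy bound (this is where you use $\gamma>\alpha-\frac12$, raising the pointwise bound $\rho_\eta^\gamma(r_0,t)\le C$ to the power $(\alpha-\frac12)/\gamma<1$), and then integrate the radial derivative inward with the Cauchy--Schwarz weight $s\,ds\cdot\frac{ds}{s}$, yielding $\rho_\eta^{\alpha-\frac12}(\eta,t)\le C(1+|\log\eta|^{\frac12})$. This is strictly sharper than the paper's $O(\eta^{-\zeta})$ (both are far more than enough to absorb the volume factor $\eta^2$), and it avoids the $r^\zeta$ bookkeeping; the trade-off is that the paper's route produces the auxiliary $r^\zeta$ estimate explicitly, which it could reuse elsewhere. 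Your treatment of the extraction of $\|\nabla\rho_\eta^{\alpha-\frac12}\|_{L^2(\Omega_\eta)}$ from \eqref{W nablarho^2} via the sign observation $ab\ge0$ for the two density-gradient terms, and of the final $r^\xi$ estimate \eqref{r weight 2d} by the one-dimensional Sobolev embedding plus H\"older with the $L^s(\Omega)$ bounds, matches the paper's argument. One small point worth flagging if you write this up: when you apply the one-dimensional Sobolev embedding to $\rho_\eta^{\alpha-\frac12}r^\xi$ on the full interval $(0,R)$ rather than $(\eta,R)$, the derivative $\partial_r(r^\xi)=\xi r^{\xi-1}$ does not vanish on $(0,\eta)$ even though the extended $\rho_\eta$ is constant there; the contribution $\rho_\eta^{\alpha-\frac12}(\eta,t)\int_0^\eta\xi r^{\xi-1}dr=\rho_\eta^{\alpha-\frac12}(\eta,t)\eta^\xi$ is of course tiny by your logarithmic bound, but one should either note this or, as the paper does, simply work on $(\eta,R)$ and observe that $\sup_{(0,R)}\rho_\eta^{\alpha-\frac12}r^\xi=\sup_{(\eta,R)}\rho_\eta^{\alpha-\frac12}r^\xi$ because $r^\xi$ is increasing and $\rho_\eta$ is constant on $(0,\eta)$.
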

\begin{proof}
	We choose \(2>\zeta>\frac{2\alpha-1}{\gamma}\). Using \eqref{u^2 weak} and \eqref{W nablarho^2}, the one-dimensional Sobolev embedding yields
	\begin{align}
		\begin{split}
			&\norm{\rho_\eta^{\alpha-\frac{1}{2}}r^\zeta}_{L^\infty(\eta,R)}\leq C\int_\eta^R \rho_\eta^{\alpha-\frac{1}{2}} r^\zeta dr+C\int_\eta^R \rho_\eta^{\alpha-\frac{1}{2}}r^{\zeta-1}dr+C\int_\eta^R |\partial_r \rho_\eta^{\alpha-\frac{1}{2}}| r^\zeta dr\\
			&\leq C\int_\eta^R (\rho_\eta^\gamma r)^{\frac{2\alpha-1}{2\gamma}} r^{\zeta-\frac{2\alpha-1}{2\gamma}}dr+C\int_\eta^R (\rho_\eta^\gamma r)^{\frac{2\alpha-1}{2\gamma}} r^{\zeta-1-\frac{2\alpha-1}{2\gamma}}dr+C\int_\eta^R(|\partial_r \rho_\eta^{\alpha-\frac{1}{2}}|^2 r)^{\frac{1}{2}}r^{\zeta-\frac{1}{2}}dr\\
			&\leq C+C\int_\eta^R r^{\frac{2\gamma}{2\gamma-2\alpha+1}\zeta+\frac{-2\alpha+1}{2\gamma-2\alpha+1}}dr+C\int_\eta^R r^{\frac{2\gamma}{2\gamma-2\alpha+1}\zeta+\frac{-2\gamma-2\alpha+1}{2\gamma-2\alpha+1}}dr+C\int_\eta^R r^{2\zeta-1}dr\leq C,
		\end{split}
	\end{align}
	where we have used
	\begin{align*}
		\frac{2\gamma}{2\gamma-2\alpha+1}\zeta+\frac{-2\gamma-2\alpha+1}{2\gamma-2\alpha+1}>-1\Leftrightarrow \zeta>\frac{2\alpha-1}{\gamma}.
	\end{align*}
	This implies
	\begin{align}
		\rho_\eta^{\alpha-\frac{1}{2}}(\eta,t)\leq C \eta^{-\zeta}.
	\end{align}
	Therefore, by the definition of \(\rho_\eta\) and \eqref{u^2 weak}, we obtain
	\begin{align}
		\begin{split}
			\norm{\rho_\eta^{\alpha-\frac{1}{2}}}_{L^1(\Omega)}&=\norm{\rho_\eta^{\alpha-\frac{1}{2}}}_{L^1(\Omega_\eta)}+\norm{\rho_\eta^{\alpha-\frac{1}{2}}}_{L^1(\Omega-\Omega_\eta)}\leq C+C\eta^{-\zeta+2}\leq C.
		\end{split}
	\end{align}
	Noting also that \(\|\nabla\rho_\eta^{\alpha-\frac12}\|_{L^2(\Omega)}=\|\nabla\rho_\eta^{\alpha-\frac12}\|_{L^2(\Omega_\eta)}\le C\), we apply the standard Sobolev embedding on \(\Omega\) to obtain a bound on \(\sup_{0\le t\le T}\|\rho_\eta^{\alpha-\frac12}\|_{L^s(\Omega)}\) for any finite \(s\), from which \eqref{2d weak rho Lp} follows. 
	
	We next prove \eqref{r weight 2d}.  For sufficiently small \(\xi>0\), using \eqref{W nablarho^2}, \eqref{2d weak rho Lp} and Young's inequality, we obtain
	\begin{align}
		\begin{split}
			&\norm{\rho_\eta^{\alpha-\frac{1}{2}}r^\xi}_{L^\infty(\eta,R)}\leq C\int_\eta^R \rho_\eta^{\alpha-\frac{1}{2}} r^\xi dr+C\int_\eta^R \rho_\eta^{\alpha-\frac{1}{2}}r^{\xi-1}dr+C\int_\eta^R |\partial_r \rho_\eta^{\alpha-\frac{1}{2}}| r^\xi dr\\
			&\leq C\int_\eta^R (\rho_\eta^{(\alpha-\frac{1}{2})\frac{3}{\xi}}r)^{\frac{\xi}{3}} r^{\frac{2\xi}{3}}dr+ C\int_\eta^R (\rho_\eta^{(\alpha-\frac{1}{2})\frac{3}{\xi}}r)^{\frac{\xi}{3}} r^{\frac{2\xi-3}{3}}dr+C\int_\eta^R(|\partial_r \rho_\eta^{\alpha-\frac{1}{2}}|^2 r)^{\frac{1}{2}}r^{\xi-\frac{1}{2}}dr\\
			&\leq C(\xi)+C\int_\eta^R r^{\frac{2\xi}{3-\xi}}dr+C\int_\eta^R r^{\frac{2\xi-3}{3-\xi}}dr+C\int_\eta^R r^{2\xi-1}dr\leq C(\xi). 
		\end{split}
	\end{align}
	
	This completes the proof of Proposition \ref{Prop WI 2d r}.
\end{proof}

	\begin{prop}\label{Prop WI 3d r}
	Let $N=3$. Assume that 
	\begin{align*}
		\gamma>\alpha-\frac{1}{2}.
	\end{align*}
	Then there exists a constant \(C>0\), independent of \(\eta\) and \(T\), such that
	\begin{align}\label{3d weak rho Lp}
		\sup_{0\leq t\leq T}\norm{\rho_\eta}_{L^{6\alpha-3}(\Omega)}\leq C.
	\end{align}
	For any \(0<\xi\ll 1\), there exists a constant \(C(\xi)>0\), independent of \(\eta\) and \(T\), such that
	\begin{align}\label{r weight 3d}
		\sup_{0\le t\le T}\|\rho_\eta^{\alpha-\frac12}r^{\frac{1}{2}+\xi}\|_{L^\infty(\eta,R)}\le C(\xi).
	\end{align}
\end{prop}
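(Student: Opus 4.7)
\textbf{Proof sketch of Proposition \ref{Prop WI 3d r}.} We adapt the strategy of Proposition \ref{Prop WI 2d r} to the 3D geometry. The starting point is the uniform estimate
\[
\sup_{0\le t\le T}\bigl(\norm{\rho_\eta}_{L^\gamma(\Omega_\eta)}+\norm{\nabla\rho_\eta^{\alpha-\frac12}}_{L^2(\Omega_\eta)}\bigr)\le C,
\]
which follows from \eqref{u^2 weak} and \eqref{W nablarho^2} and which is preserved under the constant radial extension \eqref{vv5} across $B_\eta$. The plan is to first establish an intermediate weighted bound $\sup_t\norm{\rho_\eta^{\alpha-\frac12}r^\zeta}_{L^\infty(\eta,R)}\le C$ for some $\zeta<3$, use it to control the mass contributed by $B_\eta$, upgrade to the $L^{6\alpha-3}(\Omega)$ bound via a 3D Sobolev embedding, and finally close the sharp $r^{1/2+\xi}$-weighted estimate using this improved integrability.

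First, choose $\zeta$ satisfying $\max\{\tfrac12,(6\alpha-3)/(2\gamma)\}<\zeta<3$; such a $\zeta$ exists precisely because $\gamma>\alpha-\frac12$. Applying the one-dimensional Sobolev embedding on $(\eta,R)$ to $\rho_\eta^{\alpha-\frac12}r^\zeta$ and rewriting the resulting 1D integrals as 3D volume integrals via $dx\sim r^2dr$ gives
\[
\rho_\eta^{\alpha-\frac12}(r_0,t)\,r_0^\zeta\le C\int_\Omega \rho_\eta^{\alpha-\frac12}\bigl(r^{\zeta-2}+r^{\zeta-3}\bigr)dx+C\int_\Omega |\nabla\rho_\eta^{\alpha-\frac12}|\,r^{\zeta-2}dx.
\]
The gradient term is handled by Cauchy--Schwarz, reducing to $\int_\eta^R r^{2\zeta-2}dr<\infty$ (which holds since $\zeta>\tfrac12$). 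The two density terms are handled by H\"older with the conjugate pair $\bigl(\tfrac{\gamma}{\alpha-\frac12},\tfrac{\gamma}{\gamma-\alpha+\frac12}\bigr)$ together with the $L^\gamma$ bound of $\rho_\eta$; the binding radial integral is $\int_\eta^R r^{(\zeta-3)\frac{\gamma}{\gamma-\alpha+1/2}+2}dr$, finite thanks to $\zeta>(6\alpha-3)/(2\gamma)$. Taking $r_0=\eta$ yields the pointwise bound $\rho_\eta^{\alpha-\frac12}(\eta,t)\le C\eta^{-\zeta}$.

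Since the extended density is constant on $B_\eta$ and $\zeta<3$, this gives
\[
\norm{\rho_\eta^{\alpha-\frac12}}_{L^1(\Omega)}\le \norm{\rho_\eta^{\alpha-\frac12}}_{L^1(\Omega_\eta)}+C\eta^{3-\zeta}\le C,
\]
while the gradient bound extends trivially (the gradient vanishes on $B_\eta$). The standard 3D Sobolev--Poincar\'e inequality (decompose into mean and oscillation and use $H^1(\Omega)\hookrightarrow L^6(\Omega)$) then yields $\norm{\rho_\eta^{\alpha-\frac12}}_{L^6(\Omega)}\le C$, which is exactly \eqref{3d weak rho Lp}. With \eqref{3d weak rho Lp} in hand, repeating the 1D Sobolev argument for the weight $r^{1/2+\xi}$ but now estimating the density integrals by H\"older with the exponent pair $(6,6/5)$ reduces everything to radial integrals of the form $\int_\eta^R r^{2\xi-1}dr$ and $\int_\eta^R r^{(6\xi\pm 5)/5}dr$; all remain finite uniformly in $\eta$ whenever $0<\xi\ll 1$, proving \eqref{r weight 3d}.

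\textbf{Main obstacle.} The crucial delicate point is the selection of $\zeta$ in the first step: it must simultaneously exceed the gradient threshold $1/2$ and the pressure threshold $(6\alpha-3)/(2\gamma)$, while staying below the dimensional threshold $3$ needed for the constant-extension argument. The assumption $\gamma>\alpha-\frac12$ is precisely the sharp condition making this window nonempty; without this intermediate step, the $L^1$-to-$L^6$ upgrade, and hence the final $r^{1/2+\xi}$-weighted estimate, cannot be closed. The extra "$r^{-1}$" contribution arising from the 3D volume element, absent in 2D, is exactly what forces the weight to start at $r^{1/2}$ rather than $r^0$.
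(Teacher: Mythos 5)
Your proof is correct and follows essentially the same route as the paper's: choose $\zeta\in\bigl(\max\{\tfrac12,\tfrac{6\alpha-3}{2\gamma}\},3\bigr)$, apply the 1D Sobolev embedding to $\rho_\eta^{\alpha-\frac12}r^\zeta$, use Cauchy--Schwarz on the gradient term and the $L^\gamma$ energy bound on the density terms to deduce $\rho_\eta^{\alpha-\frac12}(\eta,t)\le C\eta^{-\zeta}$, convert this to a uniform $L^1(\Omega)$ bound via $\zeta<3$, upgrade to $L^6(\Omega)$ by Sobolev, and then re-run the 1D embedding with weight $r^{1/2+\xi}$ using Hölder with exponents $(6,6/5)$. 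The only divergence is presentational: you convert the radial integrals to 3D volume integrals before applying Hölder, whereas the paper factors the $r^2$ weight into the $(\rho_\eta^\gamma r^2)^{(2\alpha-1)/(2\gamma)}$ and $(\rho_\eta^{6\alpha-3}r^2)^{1/6}$ groupings directly, but the resulting threshold conditions on the radial exponents are identical. (A tiny arithmetic slip: the exponent you write as $(6\xi+5)/5$ should be $(6\xi+1)/5$; harmless, since both keep the radial integral finite.)
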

	\begin{proof}
	We choose $3>\zeta>\max\{\frac{6\alpha-3}{2\gamma},\frac{1}{2}\}$. Using \eqref{u^2 weak} and \eqref{W nablarho^2}, the one-dimensional Sobolev embedding yields
	\begin{align}
		\begin{split}
			&\norm{\rho_\eta^{\alpha-\frac{1}{2}}r^\zeta}_{L^\infty(\eta,R)}\leq C\int_\eta^R \rho_\eta^{\alpha-\frac{1}{2}} r^\zeta dr+C\int_\eta^R \rho_\eta^{\alpha-\frac{1}{2}}r^{\zeta-1}dr+C\int_\eta^R |\partial_r \rho_\eta^{\alpha-\frac{1}{2}}| r^\zeta dr\\
			&\leq C\int_\eta^R (\rho_\eta^\gamma r^2)^{\frac{2\alpha-1}{2\gamma}} r^{\zeta-\frac{2\alpha-1}{\gamma}}dr+C\int_\eta^R (\rho_\eta^\gamma r^2)^{\frac{2\alpha-1}{2\gamma}} r^{\zeta-1-\frac{2\alpha-1}{\gamma}}dr+C\int_\eta^R(|\partial_r \rho_\eta^{\alpha-\frac{1}{2}}|^2 r^2)^{\frac{1}{2}}r^{\zeta-1}dr\\
			&\leq C+C\int_\eta^R r^{\frac{2\gamma}{2\gamma-2\alpha+1}\zeta+\frac{-4\alpha+2}{2\gamma-2\alpha+1}}dr+C\int_\eta^R r^{\frac{2\gamma}{2\gamma-2\alpha+1}\zeta+\frac{-2\gamma-4\alpha+2}{2\gamma-2\alpha+1}}dr+C\int_\eta^R r^{2\zeta-2}dr\leq C,
		\end{split}
	\end{align}
	where we have used
	\begin{align*}
		\frac{2\gamma}{2\gamma-2\alpha+1}\zeta+\frac{-2\gamma-4\alpha+2}{2\gamma-2\alpha+1}>-1\Leftrightarrow \zeta>\frac{6\alpha-3}{2\gamma}.
	\end{align*}
	This implies
	\begin{align}
		\rho_\eta^{\alpha-\frac{1}{2}}(\eta,t)\leq C \eta^{-\zeta}.
	\end{align}
	Therefore, by the definition of \(\rho_\eta\) and \eqref{u^2 weak}, we obtain
	\begin{align}
		\begin{split}
			\norm{\rho_\eta^{\alpha-\frac{1}{2}}}_{L^1(\Omega)}&=\norm{\rho_\eta^{\alpha-\frac{1}{2}}}_{L^1(\Omega_\eta)}+\norm{\rho_\eta^{\alpha-\frac{1}{2}}}_{L^1(\Omega-\Omega_\eta)}\leq C+C\eta^{-\zeta+3}\leq C.
		\end{split}
	\end{align}
	Note also that \(\|\nabla\rho_\eta^{\alpha-\frac12}\|_{L^2(\Omega)}=\|\nabla\rho_\eta^{\alpha-\frac12}\|_{L^2(\Omega_\eta)}\le C\). Applying the standard Sobolev embedding on \(\Omega\), we obtain a bound on \(\sup_{0\le t\le T}\|\rho_\eta^{\alpha-\frac12}\|_{L^6(\Omega)}\), from which \eqref{3d weak rho Lp} follows. 
	
	We next prove \eqref{r weight 3d}. For sufficiently small \(\xi>0\), using \eqref{W nablarho^2}, \eqref{3d weak rho Lp} and Young's inequality, we obtain
	\begin{align*}
		\begin{split}
			&\norm{\rho_\eta^{\alpha-\frac{1}{2}}r^{\frac{1}{2}+\xi}}_{L^\infty(\eta,R)}\leq C\int_\eta^R \rho_\eta^{\alpha-\frac{1}{2}} r^{\frac{1}{2}+\xi} dr+C\int_\eta^R \rho_\eta^{\alpha-\frac{1}{2}}r^{\xi-\frac{1}{2}}dr+C\int_\eta^R |\partial_r \rho_\eta^{\alpha-\frac{1}{2}}| r^{\frac{1}{2}+\xi} dr\\
			&\leq C\int_\eta^R (\rho_\eta^{6\alpha-3}r^2)^{\frac{1}{6}} r^{\frac{1}{6}+\xi}dr+ C\int_\eta^R (\rho_\eta^{6\alpha-3}r^2)^{\frac{1}{6}} r^{\xi-\frac{5}{6}}dr+C\int_\eta^R(|\partial_r \rho_\eta^{\alpha-\frac{1}{2}}|^2 r^2)^{\frac{1}{2}}r^{\xi-\frac{1}{2}}dr\\
			&\leq C+C\int_\eta^R r^{\frac{6}{5}\xi+\frac{1}{5}}dr+C\int_\eta^R r^{\frac{6}{5}\xi-1}dr+C\int_\eta^R r^{2\xi-1}dr\leq C(\xi). 
		\end{split}
	\end{align*}
	
	This completes the proof of Proposition \ref{Prop WI 3d r}.
\end{proof}

\subsection{Upper bound and positive lower bound of $\rho_\eta$ $(N=2)$}
We first derive a Mellet–Vasseur-type estimate together with bounds on the derivative of the density.
\begin{prop}\label{Prop W 2d k}
	Under the assumptions of Theorem \ref{Thm3}, there exists a constant \(C(T)>0\) independent of \(\eta\) such that
	\begin{align}\label{W 2d k}
		\sup_{0\leq t\leq T}\int_\eta^R\rho_\eta u_\eta^{2p}rdr+\int_0^T\int_\eta^R\Big(\rho_\eta^{\alpha}\frac{u_\eta^{2p}}{r}
		+\rho_\eta^{\alpha} (\partial_ru_\eta)^2 u_\eta^{2p-2}r\Big)drdt\leq C(T)
	\end{align}
	and
	\begin{align}\label{W 2d l}
		\sup_{0\leq t\leq T}\int_\eta^R |\partial_r\rho_\eta^{\alpha-1+\frac{1}{2q}}|^{2q}rdr+\int_0^T\int_\eta^R |\partial_r\rho_\eta^{\alpha-1+\frac{\gamma-\alpha+1}{2q}}|^{2q}rdrdt\leq C(T).
	\end{align}
\end{prop}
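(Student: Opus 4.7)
The plan is to establish the two inequalities in sequence. First I would apply the Mellet--Vasseur-type bound \eqref{u^2n weak} of Proposition \ref{Prop WI n} at $n=p$, which is admissible since $1<p<n_2(\alpha)$, to derive \eqref{W 2d k}; then I would apply the B--D-type bound \eqref{W nablarho^2m} of Proposition \ref{Prop W m} at $m=q$ (admissible since $q\in M_{set}$) to derive \eqref{W 2d l}, closing the right-hand side via the first estimate together with the $r$-weighted $L^\infty$ bound \eqref{r weight 2d} and the $L^s$ bound \eqref{2d weak rho Lp}.

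For \eqref{W 2d k}, the RHS of \eqref{u^2n weak} at $n=p$, after the change of variables $dy=\rho_\eta r dr$, takes the Eulerian form $C+C\int_0^T\int_\eta^R\rho_\eta^{2p(\gamma-\alpha)+\alpha} r^{2p-1}drdt$. The hypothesis $\gamma\ge(1-\tfrac{1}{2p})\alpha$ from \eqref{WI 2d gamma} forces $2p(\gamma-\alpha)+\alpha\ge 0$, so with $e:=(2p(\gamma-\alpha)+\alpha)/(\alpha-\tfrac12)\ge 0$ the $r$-weighted estimate \eqref{r weight 2d} yields $\rho_\eta^{2p(\gamma-\alpha)+\alpha}r^{2p-1}\le C(\xi)\,r^{2p-1-e\xi}$ for any small $\xi>0$; choosing $\xi$ so that $2p-1-e\xi>-1$ gives an $\eta$-uniform bound on the $r$-integral.

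For the space--time part of \eqref{W 2d l}, the Lagrangian LHS term $\int_0^T\int_0^1\rho_\eta^{\gamma-\alpha}(r(\rho_\eta^\alpha)_y)^{2q}dyd\tau$ in \eqref{W nablarho^2m} converts, up to a positive constant, to $\int_0^T\int_\eta^R|\partial_r\rho_\eta^{\alpha-1+\frac{\gamma-\alpha+1}{2q}}|^{2q}r\,drdt$. To bound the RHS of \eqref{W nablarho^2m} uniformly in $\eta$, I would apply H\"older's inequality with conjugate exponents $(p/q,\,p/(p-q))$ at each time slice,
\begin{align*}
\int_0^1\rho_\eta^{\gamma-\alpha}u_\eta^{2q}\,dy\le \Big(\int_0^1 u_\eta^{2p}\,dy\Big)^{q/p}\Big(\int_0^1\rho_\eta^{(\gamma-\alpha)p/(p-q)}\,dy\Big)^{(p-q)/p}.
\end{align*}
The first factor is controlled by the already proved \eqref{W 2d k}; the second factor equals, in Eulerian coordinates, $\int_\eta^R\rho_\eta^{(\gamma-\alpha)p/(p-q)+1}r\,dr$, whose exponent is non-negative precisely when $\gamma\ge\alpha-1+q/p$, i.e.\ the second half of \eqref{WI 2d gamma}, so that \eqref{2d weak rho Lp} supplies an $\eta$-uniform bound. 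Integration in $\tau$ then completes the space--time estimate.

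For the $\sup_t$ part of \eqref{W 2d l}, I must extract $\sup_t\int|r(\rho_\eta^\alpha)_y|^{2q}\,dy$ from the full B--D bracket in \eqref{W nablarho^2m} uniformly in $\eta$. The key observation is that $(\rho_\eta^\alpha)_y=\alpha\rho_\eta^{\alpha-1}\partial_y\rho_\eta$ and $(\rho_\eta^\delta)_y=\delta\rho_\eta^{\delta-1}\partial_y\rho_\eta$ share the sign of $\partial_y\rho_\eta$, so $|r(\rho_\eta^\alpha)_y|\le|r(\rho_\eta^\alpha)_y+\eta r(\rho_\eta^\delta)_y|$; combining this with the elementary inequality $|r(\rho_\eta^\alpha)_y+\eta r(\rho_\eta^\delta)_y|^{2q}\le C\bigl(|u_\eta+r(\rho_\eta^\alpha)_y+\eta r(\rho_\eta^\delta)_y|^{2q}+|u_\eta|^{2q}\bigr)$ reduces matters to bounding $\int u_\eta^{2q}\,dy$, which is controlled by $\bigl(\int u_\eta^{2p}\,dy\bigr)^{q/p}$ via H\"older. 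This sign-matching step is the main obstacle I anticipate: a naive triangle inequality alone would have to absorb the artificial-viscosity contribution $\eta r(\rho_\eta^\delta)_y$ into an $\eta$-dependent quantity, destroying the $\eta$-uniformity of the estimate that the subsequent compactness argument relies on.
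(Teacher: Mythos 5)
Your proposal matches the paper's own argument: apply Proposition \ref{Prop WI n} at $n=p$ and close via the $r$-weighted bound \eqref{r weight 2d}, then apply Proposition \ref{Prop W m} at $m=q$ and close via H\"older, \eqref{W 2d k}, and \eqref{2d weak rho Lp}; the constraints on $\gamma$ in \eqref{WI 2d gamma} arise exactly where you say they do. The one place you are more explicit than the paper is the sign-matching step for extracting $\sup_t\int|r(\rho_\eta^\alpha)_y|^{2q}\,dy$ from the B--D bracket: the paper simply writes the inequality without comment, whereas you correctly observe that $(\rho_\eta^\alpha)_y$ and $\eta(\rho_\eta^\delta)_y$ share the sign of $\partial_y\rho_\eta$ (both $\alpha,\delta>0$, $\rho_\eta>0$, $\eta>0$), so $|r(\rho_\eta^\alpha)_y|\le|r(\rho_\eta^\alpha)_y+\eta r(\rho_\eta^\delta)_y|$; without this, the leftover $\eta$-dependent term $\eta r(\rho_\eta^\delta)_y$ cannot be controlled uniformly in $\eta$, as you note. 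The only cosmetic difference is that you apply H\"older slice-by-slice and then integrate in $\tau$, while the paper applies it directly to the space--time integral; both yield the same $C(T)$ bound.
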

\begin{proof}
	We first prove \eqref{W 2d k}. Using \eqref{r weight 2d}, we obtain from \eqref{u^2n weak} that
		\begin{align*}
		\begin{split}
			&\sup_{0\leq t\leq T}\int_\eta^R\rho_\eta u_\eta^{2p}rdr+\int_0^T\int_\eta^R\Big(\rho_\eta^{\alpha}\frac{u_\eta^{2p}}{r}
			+\rho_\eta^{\alpha} (\partial_r u_\eta)^2 u_\eta^{2p-2}r\Big)drdt\\
			&\leq C+C\int_0^T\int_\eta^R\rho_\eta^{2p(\gamma-\alpha)+\alpha}r^{2p-1}drdt\\
			&\leq C+C\sup_{0\le t\le T}\norm{\rho_\eta^{\alpha-\frac{1}{2}}r^\xi}_{L^\infty(\eta,R)}^{\frac{2p(\gamma-\alpha)+\alpha}{\alpha-1/2}}\int_0^T\int_\eta^R r^{2p-1-\frac{2p(\gamma-\alpha)+\alpha}{\alpha-1/2}\xi}drdt\\
			&\leq C(T),
		\end{split}
	\end{align*}
	where we have used
	\begin{align*}
		2p(\gamma-\alpha)+\alpha\ge 0\Longleftrightarrow \gamma\ge\left(1-\frac{1}{2p}\right)\alpha
	\end{align*}
	and \(\xi>0\) is chosen sufficiently small so that
	\begin{align*}
		2p-1-\frac{2p(\gamma-\alpha)+\alpha}{\alpha-1/2}\xi>-1.
	\end{align*}
	
	Next, we prove \eqref{W 2d l}. Since \(q\in M_{\text{set}}\), we apply \eqref{W nablarho^2m} to obtain
		\begin{align*}
		\begin{split}
			&\quad\sup_{0\leq t\leq T}\int_\eta^R |\partial_r\rho_\eta^{\alpha-1+\frac{1}{2q}}|^{2q}rdr+\int_0^T\int_\eta^R |\partial_r\rho_\eta^{\alpha-1+\frac{\gamma-\alpha+1}{2q}}|^{2q}rdrdt\\
			&\leq\sup_{0\leq t\leq T}\int_\eta^R\rho_\eta u_\eta^{2q}rdr+ \sup_{0\leq t\leq T}\int_\eta^R\rho_\eta(u_\eta+\rho_\eta^{-1}\partial_r\rho_\eta^\alpha+\eta\rho^{-1}_\eta \partial_r \rho_\eta^\delta)^{2q}rdr\\
			&\quad+\int_0^T\int_\eta^R|\partial_r\rho_\eta^{\alpha-1+\frac{\gamma-\alpha+1}{2q}}|^{2q}rdrdt\nonumber\\
			&\leq C\sup_{0\le t\le T}\int_\eta^R\rho_\eta(u_\eta^{2p}+u_\eta^2)rdr+C\int_0^T\int_\eta^R\rho_\eta^{\gamma-\alpha+1}u_\eta^{2q}rdrdt\\
			&\leq C(T)+C\left(\int_0^T\int_\eta^R\rho_\eta u_\eta^{2p}rdrdt\right)^{\frac{q}{p}}\left(\int_0^T\int_\eta^R\rho_\eta^{(\gamma-\alpha)\frac{p}{p-q}+1}rdrdt\right)^{\frac{p-q}{p}}\\
			&\leq C(T),
		\end{split}
	\end{align*}
	where the last inequality follows from \eqref{W 2d k},  \eqref{2d weak rho Lp} and the fact 
	\begin{align*}
		(\gamma-\alpha)\frac{p}{p-q}+1\ge 0\Longleftrightarrow \gamma\ge \alpha-1+\frac{q}{p}.
	\end{align*}
	
	This completes the proof of Proposition \ref{Prop W 2d k}.
\end{proof}

Finally, we establish the upper and positive lower bounds for the density in two dimensions. Denote that
\begin{align*}
	\begin{split}
		R_{\eta, T}:=\sup_{0\le t\le T}\norm{\rho_\eta(t)}_{L^\infty(\Omega)}+1,\quad V_{\eta,T}:=\sup_{0\le t\le T}\norm{\rho_\eta^{-1}(t)}_{L^\infty(\Omega)}+1.
	\end{split}
\end{align*}
\begin{prop}\label{Prop W 2d RT}
	Under the assumptions of Theorem \ref{Thm3}, there exists a constant \(C(T)>0\) independent of \(\eta\) such that
	\begin{align}\label{W 2d RT}
		R_{\eta,T}\leq C(T).
	\end{align}
\end{prop}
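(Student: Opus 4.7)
The plan is to mimic the proof of Proposition \ref{Prop C 2d RT} verbatim, but in the approximate setting on the annulus $(\eta,R)$. Because $\rho_\eta$ has been extended continuously to $\Omega$ via \eqref{vv5}, we have $\|\rho_\eta(t)\|_{L^\infty(\Omega)}=\|\rho_\eta(t)\|_{L^\infty(\eta,R)}$, so it suffices to bound the latter. The ingredients now available are: the uniform $L^{s}$ bounds on $\rho_\eta$ from \eqref{2d weak rho Lp} for every finite $s$, and the $\eta$-independent bound \eqref{W 2d l} on $\|\partial_r\rho_\eta^{\alpha-1+\frac{1}{2q}}\|_{L^{2q}(\eta,R;rdr)}$ with $q>1$. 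These play the roles of the uniform density bound \eqref{C7} and the derivative bound \eqref{C 2d l} in the classical setting, with $k$ replaced by $q$.

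The concrete steps are as follows. First, apply the one-dimensional Sobolev embedding on $(\eta,R)$ to obtain
\begin{align*}
    \|\rho_\eta^\alpha\|_{L^\infty(\eta,R)}\le C\int_\eta^R\rho_\eta^\alpha\,dr+C\int_\eta^R|\partial_r\rho_\eta^\alpha|\,dr.
\end{align*}
For the first integral, write $\rho_\eta^\alpha=(\rho_\eta^{3\alpha}r)^{1/3}r^{-1/3}$ and use Hölder together with \eqref{2d weak rho Lp}; since the remaining radial factor $r^{-1/2}$ is integrable uniformly in $\eta$ near the origin, this contributes $C(T)$. For the second integral, decompose
\begin{align*}
    |\partial_r\rho_\eta^\alpha|=\frac{\alpha}{\alpha-1+\frac{1}{2q}}\bigl|\partial_r\rho_\eta^{\alpha-1+\frac{1}{2q}}\bigr|\,\rho_\eta^{1-\frac{1}{2q}},
\end{align*}
and apply Hölder with exponents $2q$ and $\frac{2q}{2q-1}$; using \eqref{W 2d l} reduces the matter to bounding $\int_\eta^R\rho_\eta\,r^{-\frac{1}{2q-1}}\,dr$. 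A further Hölder with exponents $\frac{2q}{q-1}$ and its conjugate, combined with \eqref{2d weak rho Lp}, yields a bound by $C(T)$, provided the resulting power of $r$, namely $-\frac{2q^2-q+1}{2q^2+q-1}$, is integrable at the origin. This is equivalent to $q>1$, which is assumed.

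All the radial integrals encountered are of the form $\int_\eta^R r^{-c}\,dr$ with $c<1$, hence bounded uniformly in $\eta$ by their values on $(0,R)$. Concatenating the estimates gives $\|\rho_\eta^\alpha\|_{L^\infty(\eta,R)}\le C(T)$, which is the desired \eqref{W 2d RT}.

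I do not anticipate any serious obstacle: the only point requiring attention is to confirm that every Hölder exponent and every radial power stays in the admissible range uniformly in $\eta$, which is precisely what the conditions $q\in M_{\mathrm{set}}$, $q>1$, and \eqref{WI 2d gamma} guarantee through the propositions already established. The transition from the classical setting on $\Omega$ to the annular setting $\Omega_\eta$ introduces no new difficulty, since shrinking the domain of integration can only help the radial weight integrals and since all relevant a priori bounds on $\rho_\eta$ are $\eta$-independent by construction.
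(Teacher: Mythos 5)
Your proposal reproduces the paper's proof essentially verbatim: the same one-dimensional Sobolev embedding on $(\eta,R)$, the same splitting $\rho_\eta^\alpha=(\rho_\eta^{3\alpha}r)^{1/3}r^{-1/3}$ for the zeroth-order term, the same factorization $|\partial_r\rho_\eta^\alpha|=\tfrac{\alpha}{\alpha-1+1/(2q)}|\partial_r\rho_\eta^{\alpha-1+1/(2q)}|\rho_\eta^{1-1/(2q)}$ with Hölder exponents $2q$ and $2q/(2q-1)$, and the same second Hölder step leading to the radial exponent $-\tfrac{2q^2-q+1}{2q^2+q-1}>-1$ (equivalent to $q>1$). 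The argument is correct and matches the paper's.
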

\begin{proof}
	Using  \eqref{2d weak rho Lp} and \eqref{W 2d l}, we obtain from the one-dimensional Sobolev embedding that
	\begin{align*}
		&\norm{\rho_\eta^\alpha}_{L^\infty(\eta,R)}\leq C\int_\eta^R \rho_\eta^\alpha dr+C\int_\eta^R |\partial_r\rho_\eta^\alpha|dr\\
		&\leq C\int_0^R\rho_\eta^{\alpha}r^{\frac{1}{3}}r^{-\frac{1}{3}}dr+C\int_\eta^R|\partial_r\rho_\eta^{\alpha-1+\frac{1}{2q}}|\rho_\eta^{1-\frac{1}{2q}}dr\\
		&\leq C\left(\int_\eta^R\rho_\eta^{3\alpha}rdr\right)^{\frac{1}{3}}\left(\int_\eta^Rr^{-\frac{1}{2}}dr\right)^{\frac{2}{3}}+C\left(\int_\eta^R|\partial_r\rho_\eta^{\alpha-1+\frac{1}{2q}}|^{2q}rdr\right)^{\frac{1}{2q}}\left(\int_\eta^R \rho_\eta r^{-\frac{1}{2q-1}} dr\right)^{\frac{2q-1}{2q}}\\
		&\leq C(T)+C(T)\int_\eta^R\rho_\eta r^{-\frac{1}{2q-1}}dr \\
		&\leq C(T)+C(T)\left(\int_\eta^R \rho_\eta^{\frac{2q}{q-1}}rdr\right)^{\frac{q-1}{2q}}\left(\int_\eta^Rr^{-\frac{2q^2-q+1}{2q^2+q-1}}dr\right)^{\frac{q+1}{2q}}\\
		&\leq C(T).
	\end{align*}
	This completes the proof of Proposition \ref{Prop W 2d RT}.
\end{proof}

We then use the method in \cite{Guo-2008} to obtain a positive lower bound for \(\rho_\eta\) that depends on \(\eta\).
\begin{prop}
	Under the assumptions of Theorem \ref{Thm3}, there exists a constant \(C(\eta,T)>0\) such that
	\begin{align}\label{2d weak VT}
		V_{\eta, T}\le C(\eta,T).
	\end{align}
\end{prop}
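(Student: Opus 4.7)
The plan is to mirror the Lagrangian-coordinate argument of Proposition \ref{Prop C 2d VT}, but now using the artificial viscosity $\eta\rho_\eta^\delta$ to play the role that the natural viscosity $\rho^\alpha$ played in the classical case. The key exponent condition $\delta+\frac{1}{2p}<1$ is built into the choice \eqref{2d delta} and will enable a Young's-inequality absorption at the end. The two main ingredients are an $\eta$-dependent $L^{2p}$ estimate on $\nabla\rho_\eta^{\delta-1+\frac{1}{2p}}$, followed by a Sobolev-embedding argument for $v_\eta=1/\rho_\eta$ in Lagrangian coordinates.

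For the first ingredient, I will adapt the proof of Proposition \ref{Prop W m} with exponent $m=p$, multiplying the BD-entropy identity \eqref{2d B-D weak} by a $(2p-1)$-power of the effective velocity $w_\eta=u_\eta+r^{N-1}(\rho_\eta^\alpha)_y+\eta r^{N-1}(\rho_\eta^\delta)_y$. Setting $F=\rho_\eta^{\gamma-1}/(\alpha\rho_\eta^{\alpha-1}+\eta\delta\rho_\eta^{\delta-1})$ and $G=(\rho_\eta^\alpha+\eta\rho_\eta^\delta)_y$, Lemma \ref{Lem ks} supplies an algebraic inequality that produces a dissipative term $\varepsilon\int FG^{2p}r^{2p(N-1)}dy$. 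The pointwise bound $\alpha\rho_\eta^{\alpha-1}+\eta\delta\rho_\eta^{\delta-1}\ge\eta\delta\rho_\eta^{\delta-1}$ then gives
$$FG^{2p}r^{2p(N-1)}\ge C\eta^{2p-1}\rho_\eta^{\gamma-\delta}(r^{N-1}(\rho_\eta^\delta)_y)^{2p},$$
while the forcing $\int Fu_\eta^{2p}dy$ is dominated by $C\int\rho_\eta^{\gamma-\alpha}u_\eta^{2p}dy$, which is $\eta$-uniformly bounded thanks to \eqref{W 2d k} together with the $\eta$-uniform upper bound \eqref{W 2d RT}. Integrating in time and converting back to Eulerian coordinates (where the Lagrangian expression $(r^{N-1}(\rho_\eta^\delta)_y)^{2p}$ corresponds, up to a multiplicative constant, to $|\nabla\rho_\eta^{\delta-1+\frac{1}{2p}}|^{2p}$) furnishes
$$\sup_{0\le t\le T}\|\nabla\rho_\eta^{\delta-1+\frac{1}{2p}}\|_{L^{2p}(\Omega_\eta)}\le C(\eta,T),$$
with the $\eta$-dependence coming from the explicit factor $\eta^{-(2p-1)}$.

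For the second ingredient, define $v_\eta=1/\rho_\eta$; from $\eqref{app system lag}_1$ and the boundary condition $\eqref{app system lag}_4$, $\int_0^1 v_\eta(y,\tau)\,dy=\int_0^1 v_{\eta,0}\,dy\le C\eta^{-1/(\alpha-\delta)}$ by \eqref{2d weak initial data}. Applying the one-dimensional Sobolev embedding exactly as in the derivation of \eqref{C17}, and writing $|\partial_y v_\eta|=v_\eta^2|\partial_y\rho_\eta|=\delta^{-1}v_\eta^{1+\delta}|\partial_y\rho_\eta^\delta|$, I will decompose $v_\eta^{1+\delta}|\partial_y\rho_\eta^\delta|$ via a three-factor Hölder inequality: the first factor absorbs the $L^{2p}$ bound just obtained (after inserting the $r^{2p(N-1)}$ weight), the second factor $\int_0^1 v_\eta r^{-2/(2\xi+1)}dy$ is finite for small $\xi>0$ by \eqref{2d weak rho Lp}, and the third factor is bounded by $V_{\eta,T}^{\delta+\frac{1}{2p}-1}\int_0^1 v_\eta\,dy$. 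Taking the supremum and invoking $\delta+\frac{1}{2p}<1$ from \eqref{2d delta}, Young's inequality yields \eqref{2d weak VT}. The principal difficulty lies in the first step, namely extracting the $\rho_\eta^\delta$-contribution from the mixed effective velocity $w_\eta$ at the precise $L^{2p}$ level while preserving a tractable $\eta^{2p-1}$ weight on the dissipation, so that the resulting lower bound on $\rho_\eta$ is explicit in $\eta$ (albeit degenerating as $\eta\to 0$).
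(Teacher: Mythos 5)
Your plan to adapt Proposition \ref{Prop W m} with $m=p$ runs into a genuine obstacle: Lemma \ref{Lem ks} is stated, and its proof (via the auxiliary function $(1+t)^{1+\frac{2s}{2k+1}}+t^{2+\frac{2s}{2k+1}}$, which requires an odd denominator so that negative bases make sense) is valid, only for exponents in $M_{\mathrm{set}}$. But under the hypotheses of Theorem \ref{Thm3} only $q$ lies in $M_{\mathrm{set}}$; $p$ is an arbitrary real with $q<p<n_2(\alpha)$, and for such $p$ the expression $(1+t)^{2p-1}$ is not even well-defined on $t<-1$, so the algebraic inequality you want simply does not exist. The paper avoids this entirely by \emph{integrating} \eqref{2d B-D weak} in time first to get \eqref{xx3}, then multiplying by $|G|^{2p-2}G$ with $G=r(\rho^\alpha_\eta)_y+\eta r(\rho_\eta^\delta)_y$; H\"older plus Young reduce this to a Volterra-type inequality for $\int_0^1 G^{2p}dy$ that closes by Gronwall, with no algebraic lemma at all.

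There is a second, independent gap. The quantity you extract from the dissipation, namely $\varepsilon\int FG^{2p}r^{2p(N-1)}\,dy\ge C\eta^{2p-1}\int\rho_\eta^{\gamma-\delta}(r^{N-1}(\rho_\eta^\delta)_y)^{2p}\,dy$, is (i) a \emph{time-integrated} quantity, not a supremum in time, and (ii) carries the degenerate weight $\rho_\eta^{\gamma-\delta}$, which vanishes precisely where $\rho_\eta$ is small and control is needed. It therefore cannot deliver the $\sup_{0\le t\le T}\|\nabla\rho_\eta^{\delta-1+\frac{1}{2p}}\|_{L^{2p}(\Omega_\eta)}$ bound that your Sobolev-embedding step requires. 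The missing idea is the sign-alignment observation the paper uses after obtaining $\sup_\tau\int_0^1 G^{2p}\,dy\le C(\eta,T)$: since $(\rho_\eta^\alpha)_y$ and $(\rho_\eta^\delta)_y$ are both positive multiples of $\partial_y\rho_\eta$, they have the same sign pointwise, so $|G|\ge\eta\,|r(\rho_\eta^\delta)_y|$, and hence $\sup_\tau\int_0^1|\eta r(\rho_\eta^\delta)_y|^{2p}\,dy\le\sup_\tau\int_0^1 G^{2p}\,dy$. This is what lets one isolate the $\rho_\eta^\delta$-contribution at the $L^\infty_\tau L^{2p}_y$ level; you flag the extraction as ``the principal difficulty'' but do not supply it. Your final Sobolev step (conservation of $\int_0^1 v_\eta\,dy$ and Young with $\delta<1-\frac{1}{2p}$ from \eqref{2d delta}) is sound and matches the paper; note, however, that because the paper's $L^{2p}$ bound $\int_0^1|\partial_y\rho_\eta^\delta|^{2p}dy$ carries no $r$-weight, a two-factor H\"older as in \eqref{xx8} suffices there.
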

\begin{proof}
	We claim that
	\begin{align}\label{xx4}
		\sup_{0\leq \tau\leq T}\int_0^1|\partial_y \rho_\eta^\delta|^{2p}dy\leq C(\eta, T).
	\end{align}
	Indeed, fixing \(y\in[0,1]\) and integrating $\eqref{2d B-D weak}$ with respect to $\tau$ from \(0\) to \(\tau\) gives
	\begin{align}\label{xx3}
		[r(\rho^\alpha_\eta)_y+\eta r(\rho_\eta^\delta)_y](y,\tau)=[u_{\eta,0}+r(\rho^\alpha_{\eta,0})_y+\eta r(\rho_{\eta,0}^\delta)_y] (y)-u_\eta(y,\tau)-\int_0^\tau \partial_y(\rho_\eta^\gamma) r(y,s)ds.
	\end{align}
	Multiplying the above equation by \([r(\rho^\alpha_\eta)_y+\eta r(\rho_\eta^\delta)_y]^{2p-1}\) and integrating over \([0,1]\), we obtain from \eqref{W 2d k} and Young’s inequality that
	\begin{align}
		\begin{split}
			&\int_0^1 [r(\rho^\alpha_\eta)_y+\eta r(\rho_\eta^\delta)_y]^{2p}dy\\
			&\leq C\left(\int_0^1 [r(\rho^\alpha_\eta)_y+\eta r(\rho_\eta^\delta)_y]^{2p}dy\right)^{\frac{2p-1}{2p}}\Big(\norm{u_{\eta,0}+r(\rho^\alpha_{\eta,0})_y+\eta r(\rho_{\eta,0}^\delta)_y}_{L^{2p}([0,1])}\\
			&\quad+\norm{u_\eta}_{L^{2p}([0,1])}+\Big(\int_0^\tau\norm{\partial_y(\rho_\eta^\gamma) r}_{L^{2p}([0,1])}^{2p}ds\Big)^{\frac{1}{2p}}\Big)\\
			&\leq \frac{1}{2}\int_0^1 [r(\rho^\alpha_\eta)_y+\eta r(\rho_\eta^\delta)_y]^{2p}dy+C(\eta,T)+C\int_0^\tau\norm{\partial_y(\rho_\eta^\gamma) r}_{L^{2p}([0,1])}^{2p}ds.
		\end{split}
	\end{align}
	Therefore, we obtain from $\delta<1$, $r\ge \eta$ and  $\eqref{W 2d RT}$ that
	\begin{align}
		\begin{split}
			\int_0^1[\eta r(\rho_\eta^\delta)_y]^{2p}dy&\leq C(\eta,T)+C\int_0^\tau\norm{\partial_y(\rho_\eta^\gamma) r}_{L^{2p}([0,1])}^{2p}ds\\
			&\leq C(\eta,T)+C(\eta)\int_0^\tau\norm{\rho_\eta^{\gamma-\delta}}_{L^\infty([0,1])}^{2p}\norm{\partial_y(\rho_\eta^\delta)}_{L^{2p}([0,1])}^{2p}ds\\
			&\leq C(\eta,T)+C(\eta,T)\int_0^\tau\norm{\partial_y(\rho_\eta^\delta)}_{L^{2p}([0,1])}^{2p}ds,
		\end{split}
	\end{align}
which implies that
	\begin{align*}
		\int_0^1|\partial_y(\rho_\eta^\delta)|^{2p}dy\leq C(\eta,T)+C(\eta,T)\int_0^\tau\norm{\partial_y(\rho_\eta^\delta)}_{L^{2p}([0,1])}^{2p}ds.
	\end{align*}
Using Gronwall’s inequality, we obtain \eqref{xx4}.
	
	We next establish the lower bound for the density.  Set \(v_\eta(y,\tau)=\frac{1}{\rho_\eta(y,\tau)}\). The continuity equation $\eqref{app system lag}_1$ gives \((v_\eta)_\tau=(r u_\eta)_y\), so we have
	\begin{align}\label{xx4.5}
		\int_0^1 v_\eta(y,\tau)dy=\int_0^1 v_\eta(y,0)dy\leq C(\eta).
	\end{align}
	Then, by the one-dimensional Sobolev embedding, $\eqref{xx4}$ and \eqref{xx4.5}, one has
	\begin{align}\label{xx8}
		\begin{split}
			v_\eta(y,\tau)&\leq \int_0^1 v_\eta(y,\tau)dy+\int_0^1|\partial_y v_\eta(y,\tau)|dy\\
			&\leq C(\eta)+C\int_0^1 v_\eta^{1+\delta}|\partial_y \rho_\eta^\delta|dy\\
			&\leq C(\eta)+C\left(\int_0^1|\partial_y \rho_\eta^\delta|^{2p}dy\right)^{\frac{1}{2p}}\left(\int_0^1 v_\eta^{(1+\delta)\frac{2p}{2p-1}}dy\right)^{\frac{2p-1}{2p}}\\
			&\leq C(\eta)+C(\eta,T)\left(\int_0^1 v_\eta^{(1+\delta)\frac{2p}{2p-1}-1+1}dy\right)^{\frac{2p-1}{2p}}\\
			&\leq C(\eta)+C(\eta,T)(V_{\eta,T})^{\delta+\frac{1}{2p}}.
		\end{split}
	\end{align}
Taking the supremum of the above expression over \((y,\tau)\in[0,1]\times[0,T]\) and applying Young’s inequality together with \eqref{2d delta}, we obtain \eqref{2d weak VT}.
\end{proof}

\subsection{Upper bound and positive lower bound of $\rho_\eta$ $(N=3)$}
\begin{prop}\label{Prop W 3d k}
	Under the assumptions of Theorem \ref{Thm4},  there exists a constant $0\leq\sigma<\min\{2\alpha-1,(3\alpha-2)(1-\frac{1}{q})\}$ such that
	\begin{align}\label{sigma gamma}
		\alpha-\frac{\alpha}{2p}+\sigma<\gamma<3\alpha-1+\frac{\alpha-1}{2p}+\sigma.
	\end{align}
	Furthermore, there exists a constant \(C(T)>0\) independent of \(\eta\) such that
	\begin{align}\label{W 3d k}
		\sup_{0\leq t\leq T}\int_\eta^R\rho_\eta u_\eta^{2p}r^2dr+\int_0^T\int_\eta^R\Big(\rho_\eta^{\alpha}u_\eta^{2p}
		+\rho_\eta^{\alpha} (\partial_ru_\eta)^2 u_\eta^{2p-2}r^2\Big)drdt\leq C(T)R_{\eta,T}^{2p\sigma}
	\end{align}
	and
	\begin{align}\label{W 3d l}
		\sup_{0\leq t\leq T}\int_\eta^R |\partial_r\rho_\eta^{\alpha-1+\frac{1}{2q}}|^{2q}r^2dr+\int_0^T\int_\eta^R |\partial_r\rho_\eta^{\alpha-1+\frac{\gamma-\alpha+1}{2q}}|^{2q}r^2drdt\leq C(T)R_{\eta,T}^{2q\sigma}.
	\end{align}
\end{prop}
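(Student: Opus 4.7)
The proof follows the template of Proposition \ref{Prop C 3d k}, adapted in two ways: (i) the indices $p$ (velocity) and $q$ (density derivative) now play distinct roles rather than a single $k$, and (ii) one must accommodate the artificial viscosity terms of order $\eta$. I will first verify the existence of the parameter $\sigma$, then handle \eqref{W 3d k} via Proposition \ref{Prop WI n} with $n=p$, and finally handle \eqref{W 3d l} via Proposition \ref{Prop W m} with $m=q$.

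For the existence of $\sigma$, I would directly unpack \eqref{WI 3d gamma}: the lower bound $\gamma\ge 2\alpha-1$ and the upper bound on $\gamma$ together imply that the interval
\[
\max\bigl\{0,\;\gamma-3\alpha+1-\tfrac{\alpha-1}{2p}\bigr\}\;\le\;\sigma\;<\;\min\bigl\{2\alpha-1,\,(3\alpha-2)(1-\tfrac{1}{q})\bigr\}
\]
is non-empty, and any such $\sigma$ automatically fulfills \eqref{sigma gamma}. For \eqref{W 3d k}, apply Proposition \ref{Prop WI n} with $n=p$ (permitted since $1<p<n_3(\alpha)$ and $p<n_3(\delta)$ by the choice of $\delta$), translate \eqref{u^2n weak} into Eulerian coordinates via $dy=\rho_\eta r^{N-1}dr$, and obtain the left-hand side of \eqref{W 3d k} bounded by $C+C\int\!\!\int\rho_\eta^{2p(\gamma-\alpha)+\alpha} r^{2p}\,drdt$. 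Factor $\rho_\eta^{2p(\gamma-\alpha)+\alpha}=R_{\eta,T}^{2p\sigma}\cdot\rho_\eta^{2p(\gamma-\alpha)+\alpha-2p\sigma}$, and then trade the remaining power of $\rho_\eta$ for a negative power of $r$ through the $r$-weighted estimate \eqref{r weight 3d}. The resulting exponent conditions — non-negativity of the residual $\rho_\eta$-power and integrability of $r$ at the origin (for a sufficiently small auxiliary $\xi>0$) — reduce precisely to the two inequalities in \eqref{sigma gamma}.

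For \eqref{W 3d l}, apply Proposition \ref{Prop W m} with $m=q$ (allowed since $q\in M_{\mathrm{set}}$ and $1<q<n_3(\alpha)$). Using the pointwise identity $|\partial_r\rho^{\alpha-1+\frac{1}{2q}}|^{2q}=C\rho(\rho^{-1}\partial_r\rho^\alpha)^{2q}$ (and its analog for $\partial_r\rho^{\alpha-1+\frac{\gamma-\alpha+1}{2q}}$), together with the triangle inequality on the combined term $\rho(u+\rho^{-1}\partial_r\rho^\alpha+\eta\rho^{-1}\partial_r\rho^\delta)^{2q}$, the left-hand side of \eqref{W 3d l} is controlled by the Eulerian form of the left-hand side of \eqref{W nablarho^2m} plus lower-order corrections $\int\rho u^{2q} r^{N-1}dr$, which are absorbed using \eqref{W 3d k} and Hölder interpolation against the uniform mass bound. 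The crucial right-hand side $\int\!\!\int\rho_\eta^{\gamma-\alpha+1}u_\eta^{2q}r^2\,drdt$ is split as $\rho_\eta^\alpha u_\eta^{2q}\cdot\rho_\eta^{\gamma-2\alpha+1}r^2$: the second factor is bounded in $L^\infty$ by \eqref{r weight 3d} since $\gamma<6\alpha-3$ is implied by \eqref{WI 3d gamma}, while the first factor $\int\!\!\int\rho_\eta^\alpha u_\eta^{2q}\,drdt$ is estimated by $C(T)R_{\eta,T}^{2q\sigma}$ via Proposition \ref{Prop WI n} with $n=q$ and the same $r$-weighted exchange as in Step 2.

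\textbf{Main obstacle.} The delicate point is not any single estimate but the simultaneous compatibility of all exponent conditions: a single $\sigma$ must enforce non-negativity of several residual $\rho_\eta$-powers, integrability of several (possibly negative) powers of $r$ near the origin, and the inequality $\gamma<6\alpha-3$. The assumption \eqref{WI 3d gamma} is precisely tailored for this compatibility. A secondary technical nuisance is the $\eta$-perturbation contribution $\eta^{2q}\rho(\rho^{-1}\partial_r\rho^\delta)^{2q}$ produced by the triangle inequality; this is reabsorbed into the combined-term bound supplied by \eqref{W nablarho^2m}, just as in the two-dimensional analog Proposition \ref{Prop W 2d k}.
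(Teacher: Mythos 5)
Your overall strategy matches the paper's, and the exponent computations you sketch for \eqref{W 3d k} and most of \eqref{W 3d l} are right. There are, however, two slips that must be fixed.

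\textbf{Choice of $\sigma$.} You assert that \emph{any} $\sigma$ in $\bigl[\max\{0,\gamma-3\alpha+1-\tfrac{\alpha-1}{2p}\},\;\min\{2\alpha-1,(3\alpha-2)(1-\tfrac1q)\}\bigr)$ automatically fulfills \eqref{sigma gamma}. That is not so: \eqref{sigma gamma} also requires $\sigma<\gamma-\alpha+\tfrac{\alpha}{2p}$, which you never verify, and this upper bound can be much smaller than $\min\{2\alpha-1,(3\alpha-2)(1-\tfrac1q)\}$ (e.g.\ $\alpha=1$, $\gamma$ close to $1$, $p$ and $q$ large: then $\gamma-\alpha+\tfrac{\alpha}{2p}$ is close to $0$ while $1-\tfrac1q$ is close to $1$). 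What is true, and all that the proposition needs, is that a $\sigma$ slightly above your left endpoint works, since $\max\{0,\gamma-3\alpha+1-\tfrac{\alpha-1}{2p}\}<\gamma-\alpha+\tfrac{\alpha}{2p}$ follows from $\gamma\ge2\alpha-1$ and $\alpha\ge1$. State that choice explicitly rather than claiming the whole interval works.

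\textbf{The $\eta$-viscosity remainder in \eqref{W 3d l}.} You apply the triangle inequality to $u_\eta+\rho_\eta^{-1}\partial_r\rho_\eta^\alpha+\eta\rho_\eta^{-1}\partial_r\rho_\eta^\delta$ and obtain the extra term $\eta^{2q}\rho_\eta(\rho_\eta^{-1}\partial_r\rho_\eta^\delta)^{2q}$, claiming it is ``reabsorbed into the combined-term bound supplied by \eqref{W nablarho^2m}.'' This step does not go through as written: \eqref{W nablarho^2m} only controls the combined term, not the $\delta$-component alone, and at this stage of the argument there is no uniform-in-$\eta$ lower bound on $\rho_\eta$ (the lower bound of Proposition 5.10 is $\eta$-dependent) that would allow trading $\eta^{2q}$ against $\rho_\eta^{2q(\delta-\alpha)}$. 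The paper's displayed chain implicitly uses a different, cleaner observation: since $\rho_\eta>0$ and $\alpha,\delta>0$, both $\partial_r\rho_\eta^\alpha$ and $\partial_r\rho_\eta^\delta$ are positive multiples of $\partial_r\rho_\eta$, so $\rho_\eta^{-1}\partial_r\rho_\eta^\alpha$ and $\eta\rho_\eta^{-1}\partial_r\rho_\eta^\delta$ have the same sign and $|\rho_\eta^{-1}\partial_r\rho_\eta^\alpha|\le|\rho_\eta^{-1}\partial_r\rho_\eta^\alpha+\eta\rho_\eta^{-1}\partial_r\rho_\eta^\delta|$ pointwise. Only $u_\eta$ has to be peeled off by the elementary inequality $|a-b|^{2q}\le C(|a|^{2q}+|b|^{2q})$, yielding
\[
|\partial_r\rho_\eta^{\alpha-1+\frac{1}{2q}}|^{2q}r^2\le C\rho_\eta\bigl(|u_\eta+\rho_\eta^{-1}\partial_r\rho_\eta^\alpha+\eta\rho_\eta^{-1}\partial_r\rho_\eta^\delta|^{2q}+u_\eta^{2q}\bigr)r^2
\]
with no $\eta$-remainder. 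Replace your triangle inequality with this sign-agreement argument.

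\textbf{A harmless deviation.} For $\int_0^T\!\int\rho_\eta^\alpha u_\eta^{2q}\,dr\,dt$ you run Proposition \ref{Prop WI n} again at $n=q$ and repeat the $r$-weighted trade. The paper instead interpolates this quantity by H\"older between $\int\!\int\rho_\eta^\alpha u_\eta^{2p}$ and $\int\!\int\rho_\eta^\alpha u_\eta^{2}$ with exponents $\tfrac{q-1}{p-1}$ and $\tfrac{p-q}{p-1}$, then invokes \eqref{W 3d k} and \eqref{u^2 weak}. Both routes close: yours is arguably cleaner because it is the identical argument at $p$ and at $q$, but it requires verifying the level-$q$ exponent conditions $\gamma>\alpha-\tfrac{\alpha}{2q}+\sigma$ and $\gamma<3\alpha-1+\tfrac{\alpha-1}{2q}+\sigma$ (which do follow from \eqref{sigma gamma} together with $q<p$ and $\alpha\ge1$); the paper's interpolation avoids re-checking those.
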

\begin{proof}
	It is readily verified that a constant \(\sigma\) satisfying the required condition exists, thanks to \eqref{WI 3d gamma}. 	
	
	Next, we establish \eqref{W 3d k}. We use \eqref{u^2n weak}, \eqref{r weight 3d} and \eqref{sigma gamma} to obtain
	\begin{align*}
		\begin{split}
			&\sup_{0\leq t\leq T}\int_\eta^R\rho_\eta u_\eta^{2p}r^2dr+\int_0^T\int_\eta^R\Big(\rho_\eta^{\alpha}u_\eta^{2p}
			+\rho_\eta^{\alpha} (\partial_ru_\eta)^2 u_\eta^{2p-2}r^2\Big)drdt\\
			&\leq C+C\int_0^T\int_\eta^R\rho_\eta^{2p(\gamma-\alpha)+\alpha}r^{2p}drdt\\
			&\leq C+C\sup_{0\le t\le T}\norm{\rho_\eta^{\alpha-\frac{1}{2}}r^{\frac{1}{2}+\xi}}_{L^\infty(\eta,R)}^{\frac{2p(\gamma-\alpha)+\alpha-2p\sigma}{\alpha-1/2}}R_{\eta,T}^{2p\sigma}\int_0^T\int_\eta^R r^{2p-\frac{2p(\gamma-\alpha)+\alpha-2p\sigma}{\alpha-1/2}\left(\frac{1}{2}+\xi\right)}drdt\\
			&\leq C(T)R_{\eta,T}^{2p\sigma},
		\end{split}
	\end{align*}
	where we have used
	\begin{align*}
		2p(\gamma-\alpha)+\alpha-2p\sigma> 0\Longleftrightarrow \gamma>\alpha-\frac{\alpha}{2p}+\sigma
	\end{align*}
	 and \(\xi>0\) is a sufficiently small constant satisfying
	\begin{align*}
		2p-\frac{2p(\gamma-\alpha)+\alpha-2p\sigma}{\alpha-1/2}\left(\frac{1}{2}+\xi\right)>-1.
	\end{align*}
	Such a $\xi$ exists because \eqref{sigma gamma}. We thus obtain \eqref{W 3d k}.
	
	Finally, we proof \eqref{W 3d l}. Invoking \eqref{r weight 3d}, \eqref{u^2 weak} and \eqref{W 3d k}, one derives from \eqref{W nablarho^2m} that
		\begin{align*}
		\begin{split}
			&\quad\sup_{0\leq t\leq T}\int_\eta^R |\partial_r\rho_\eta^{\alpha-1+\frac{1}{2q}}|^{2q}r^2dr+\int_0^T\int_\eta^R |\partial_r\rho_\eta^{\alpha-1+\frac{\gamma-\alpha+1}{2q}}|^{2q}r^2drdt\\
			&\leq\sup_{0\leq t\leq T}\int_\eta^R\rho_\eta u_\eta^{2q}r^2dr+ \sup_{0\leq t\leq T}\int_\eta^R\rho_\eta(u_\eta+\rho_\eta^{-1}\partial_r\rho_\eta^\alpha+\eta\rho^{-1}_\eta \partial_r \rho_\eta^\delta)^{2q}r^2dr\\
			&\quad+\int_0^T\int_\eta^R|\partial_r\rho_\eta^{\alpha-1+\frac{\gamma-\alpha+1}{2q}}|^{2q}r^2drdt\nonumber\\
			&\leq \sup_{0\leq t\leq T}\int_\eta^R\rho_\eta u_\eta^{2q}r^2dr+C\int_0^T\int_\eta^R\rho_\eta^{\gamma-\alpha+1}u_\eta^{2q}r^2drdt\\
			&\leq \sup_{0\leq t\leq T}\left(\int_\eta^R\rho_\eta u_\eta^{2p}r^2dr\right)^{\frac{q}{p}}\left(\int_\eta^R\rho_\eta r^2dr\right)^{\frac{p-q}{p}}\\
			&\quad+C\sup_{0\le t\le T}\norm{\rho_\eta^{\gamma-2\alpha+1}r^2}_{L^\infty(\eta,R)}\left(\int_0^T\int_\eta^R\rho_\eta^\alpha u_\eta^{2p}drdt\right)^{\frac{q-1}{p-1}}\left(\int_0^T\int_\eta^R\rho_\eta^\alpha u_\eta^{2}drdt\right)^{\frac{p-q}{p-1}}\\
			&\leq C(T)R_{\eta,T}^{2q\sigma}+C(T)\sup_{0\le t\le T}\norm{\rho_\eta^{\gamma-2\alpha+1}r^2}_{L^\infty(\eta,R)}R_{\eta,T}^{2p\sigma\frac{q-1}{p-1}}\\
			&\leq C(T)R_{\eta,T}^{2q\sigma}+C(T)\sup_{0\le t\le T}\norm{\rho^{\alpha-\frac{1}{2}}r^{\frac{1}{2}+\xi}}_{L^\infty(0,R)}^{\frac{\gamma-2\alpha+1}{\alpha-1/2}}R_{\eta,T}^{2q\sigma}\leq C(T)R_{\eta,T}^{2q\sigma},
		\end{split}
	\end{align*}
	where we have used 
	\begin{align*} 
		\gamma-2\alpha+1\ge0,
	\end{align*} 
	and \(\xi\) is taken sufficiently small so that
	\begin{align*}
		\left(\frac{\gamma-2\alpha+1}{\alpha-1/2}\right)\left(\frac{1}{2}+\xi\right)<2.
	\end{align*}
	Such a $\xi$ exists because $\gamma<6\alpha-3$. This completes the proof of Proposition \ref{Prop W 3d k}.
\end{proof}

Next, we derive an \(\eta\)-independent upper bound and an \(\eta\)-dependent positive lower bound for \(\rho_\eta\) in three dimensions.
\begin{prop}\label{Prop W 3d RT}
	Under the assumptions of Theorem \ref{Thm4}, there exists a constant \(C(T)>0\) independent of \(\eta\) such that
	\begin{align}\label{W 3d RT}
		R_{\eta,T}\leq C(T).
	\end{align}
\end{prop}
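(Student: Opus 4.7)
The strategy mirrors that of Proposition \ref{Prop C 3d RT} in the classical setting: combine the uniform Lebesgue bound on $\rho_\eta$ from \eqref{3d weak rho Lp}, the $r$-weighted pointwise estimate \eqref{r weight 3d}, and the density-gradient bound \eqref{W 3d l} of Proposition \ref{Prop W 3d k}, then close by absorption via Young's inequality. All constants along the way must be $\eta$-independent, which is automatic because each of the ingredients has already been shown to be $\eta$-independent (possibly with a factor $R_{\eta,T}^{\sigma}$ that we will ultimately absorb).

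First I would select an exponent $\beta$ satisfying
\begin{align*}
\sigma<\beta<\min\Bigl\{2\alpha-1,\ (3\alpha-2)\Bigl(1-\frac{1}{q}\Bigr)\Bigr\},\qquad \alpha-1+\tfrac{1}{2q}<\beta,
\end{align*}
where $\sigma$ is furnished by Proposition \ref{Prop W 3d k}. The existence of such a $\beta$ is guaranteed by the inequality $\sigma<\min\{2\alpha-1,(3\alpha-2)(1-1/q)\}$ and by the upper-bound condition on $\gamma$ in \eqref{WI 3d gamma}, exactly as in Lemma \ref{Lem C 3k}. Apply the one-dimensional Sobolev embedding on $(\eta,R)$ to get
\begin{align*}
\|\rho_\eta^\beta\|_{L^\infty(\eta,R)}\le C\int_\eta^R\rho_\eta^\beta\,dr+C\int_\eta^R|\partial_r\rho_\eta^\beta|\,dr.
\end{align*}
The first integral I would control by inserting the weight $r^2$ and using H\"older together with the uniform $L^{6\alpha-3}$ estimate \eqref{3d weak rho Lp}. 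The second integral I would write as $\int_\eta^R|\partial_r\rho_\eta^{\alpha-1+\frac{1}{2q}}|\,\rho_\eta^{\beta-\alpha+1-\frac{1}{2q}}\,dr$ and bound via H\"older by
\begin{align*}
\left(\int_\eta^R|\partial_r\rho_\eta^{\alpha-1+\frac{1}{2q}}|^{2q}r^2\,dr\right)^{\!\frac{1}{2q}}\!\!\left(\int_\eta^R\rho_\eta^{\frac{2q}{2q-1}(\beta-\alpha)+1}\,r^{-\frac{2}{2q-1}}\,dr\right)^{\!\frac{2q-1}{2q}}.
\end{align*}
Using \eqref{W 3d l} the first factor is controlled by $C(T)R_{\eta,T}^{\sigma}$; the second factor is then split by writing $\rho_\eta^{\frac{2q}{2q-1}(\beta-\alpha)+1}=(\rho_\eta^{\alpha-\frac12}r^{\frac12+\xi})^{\theta}\,\rho_\eta^{\cdots}\,r^{\cdots}$ for a suitable exponent $\theta$, and then invoking the $r$-weighted $L^\infty$ bound \eqref{r weight 3d} together with the integrability of the residual $r$-weight.

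The delicate step is the bookkeeping of exponents: we must pick $\xi>0$ small enough that the resulting power of $r$ is strictly greater than $-1$ (so that $\int_\eta^R r^{\cdots}\,dr\le C$ uniformly in $\eta$), while simultaneously keeping $\theta$ non-negative. The admissibility of such $\xi$ is equivalent to the two-sided condition on $\beta$ chosen above, which is exactly why the constraints $\beta>\alpha-1+\tfrac{1}{2q}$ and $\beta<(3\alpha-2)(1-\tfrac{1}{q})$ were imposed. Combining everything yields
\begin{align*}
\|\rho_\eta^\beta\|_{L^\infty(\eta,R)}\le C(T)\,R_{\eta,T}^{\sigma},
\end{align*}
and since $\rho_\eta$ is extended constantly into $B_\eta$ via \eqref{vv5}, the same bound holds on all of $\Omega$. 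Taking the supremum and using $\beta>\sigma$, Young's inequality yields $R_{\eta,T}^{\beta}\le \tfrac12 R_{\eta,T}^{\beta}+C(T)$, from which \eqref{W 3d RT} follows. The principal obstacle is verifying that the interval of admissible $\beta$ and the interval of admissible $\xi>0$ are simultaneously non-empty under the full set of assumptions \eqref{WI 3d alpha}--\eqref{WI 3d gamma}; once this compatibility is established, the rest is a direct transcription of the classical argument with $R_T$ replaced by $R_{\eta,T}$ and with $\eta$-uniformity tracked at each step.
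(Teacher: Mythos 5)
Your proposal is correct and mirrors the paper's proof of Proposition \ref{Prop W 3d RT} essentially step for step: same choice of window for $\beta$ (lower bound $\max\{\sigma,\alpha-1+\tfrac{1}{2q}\}$, upper bound $\min\{2\alpha-1,(3\alpha-2)(1-\tfrac{1}{q})\}$), same one-dimensional Sobolev embedding split into $\int\rho_\eta^\beta$ and $\int|\partial_r\rho_\eta^\beta|$, same treatment of the first term via $L^{6\alpha-3}$ with an $r^2$ weight and of the second via H\"older against $\|\partial_r\rho_\eta^{\alpha-1+1/(2q)}\|_{L^{2q}(r^2dr)}$ plus the $r$-weighted $L^\infty$ bound, and the same final absorption by Young's inequality using $\beta>\sigma$. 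No meaningful deviations from the paper's argument.
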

\begin{proof}
Choose \(\beta\) such that
\begin{align}\label{w beta}
	\max\Bigl\{\sigma,\alpha-1+\frac1{2q}\Bigr\}
	<\beta<
	\min\Bigl\{2\alpha-1,(3\alpha-2)\Bigl(1-\frac1q\Bigr)\Bigr\},
\end{align}
where \(\sigma\) is given in Proposition \ref{Prop W 3d k}. Therefore, combining the one-dimensional Sobolev embedding with \eqref{3d weak rho Lp}, \eqref{r weight 3d}, \eqref{W 3d l} and \eqref{w beta}, we obtain
\begin{align*}
	\begin{split}
		&\norm{\rho_\eta^\beta}_{L^\infty(\eta,R)}\leq C\int_\eta^R\rho_\eta^\beta dr+C\int_\eta^R |\partial_r\rho_\eta^\beta| dr\\
		&\leq C\int_\eta^R (\rho_\eta^{6\alpha-3}r^2)^{\frac{\beta}{6\alpha-3}}r^{-\frac{2\beta}{6\alpha-3}}dr+C\int_\eta^R|\partial_r\rho_\eta^{\alpha-1+\frac{1}{2q}}|\rho_\eta^{\beta-\alpha+1-\frac{1}{2q}}dr\\
		&\leq C\left(\int_\eta^R\rho_\eta^{6\alpha-3}r^2dr\right)^{\frac{\beta}{6\alpha-3}}\left(\int_\eta^Rr^{-\frac{2\beta}{6\alpha-\beta-3}}dr\right)^{\frac{6\alpha-\beta-3}{6\alpha-3}}\\
		&\quad+C\left(\int_\eta^R|\partial_r\rho_\eta^{\alpha-1+\frac{1}{2q}}|^{2q}r^2dr\right)^{\frac{1}{2q}}\left(\int_\eta^R\rho_\eta^{\frac{2q}{2q-1}(\beta-\alpha)+1}r^{-\frac{2}{2q-1}}dr\right)^{\frac{2q-1}{2q}}\\
		&\leq C+C(T)R_{\eta,T}^{\sigma}\sup_{0\le t\le T}\norm{\rho_\eta^{\alpha-\frac{1}{2}}r^{\frac{1}{2}+\xi}}_{L^\infty(\eta,R)}^{\frac{\beta-\alpha+\frac{2q-1}{2q}}{\alpha-1/2}}\left(\int_0^R r^{-\frac{2}{2q-1}-\left(\frac{1}{2}+\xi\right)\frac{\frac{2q}{2q-1}(\beta-\alpha)+1}{\alpha-1/2}}dr\right)^{\frac{2q-1}{2q}}\\
		&\leq C(T)R_{\eta,T}^\sigma,
	\end{split}
\end{align*}
where \(\xi\) is taken sufficiently small so that
\begin{align*}
	-\frac{2}{2q-1}-\left(\frac{1}{2}+\xi\right)\frac{\frac{2q}{2q-1}(\beta-\alpha)+1}{\alpha-1/2}>-1.
\end{align*}
Such a \(\xi\) exists by virtue of \eqref{w beta}. Applying Young’s inequality, we obtain 
\begin{align*}
	R_{\eta,T}^{\beta}\leq \frac{1}{2}R_{\eta,T}^{\beta}+C(T),
\end{align*}
which implies $\eqref{W 3d RT}$.

This completes the proof of Proposition \ref{Prop W 3d RT}.
\end{proof}

The approach to obtaining the $\eta$-dependent positive lower bound on the density is identical to the two-dimensional case, because away from the sphere center the system is essentially one-dimensional.  Hence we only outline the proof.
\begin{prop}\label{Prop W 3d VT}
	Under the assumptions of Theorem \ref{Thm4}, there exists a constant \(C(\eta,T)>0\) such that
	\begin{align}\label{3d weak VT} 
		V_{\eta,T}\le C(\eta,T).
	\end{align}
\end{prop}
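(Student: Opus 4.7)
\smallskip

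\noindent\textbf{Plan.} The strategy is to mirror the two-dimensional proof of \eqref{2d weak VT}, carrying out all the estimates in Lagrangian coordinates where the problem is essentially one-dimensional away from the origin. The three ingredients are: an $\eta$-dependent $L^{2p}$-in-space estimate on $\partial_y\rho_\eta^{\delta}$, the conservation $\int_0^1 v_\eta(y,\tau)\,dy=\int_0^1 v_\eta(y,0)\,dy\leq C(\eta)$ coming from $(v_\eta)_\tau=(r^2 u_\eta)_y$ and the boundary condition $\eqref{app system lag}_4$, and finally a one-dimensional Sobolev embedding closed by Young's inequality thanks to the structural constraint $\delta<1-\frac{1}{2p}$ guaranteed by \eqref{3d delta p large}--\eqref{3d delta p small}.

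\smallskip

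\noindent\textbf{Step 1: $L^{2p}$ control of $\partial_y\rho_\eta^\delta$.} Integrating the B--D entropy equation \eqref{2d B-D weak} in time with $N=3$ yields
\begin{align*}
    \bigl[r^{2}(\rho_\eta^\alpha)_y+\eta r^{2}(\rho_\eta^\delta)_y\bigr](y,\tau)
    =\bigl[u_{\eta,0}+r^{2}(\rho_{\eta,0}^\alpha)_y+\eta r^{2}(\rho_{\eta,0}^\delta)_y\bigr](y)-u_\eta(y,\tau)-\int_0^\tau\partial_y(\rho_\eta^\gamma)\,r^{2}(y,s)\,ds.
\end{align*}
Testing this identity against $[r^{2}(\rho_\eta^\alpha)_y+\eta r^{2}(\rho_\eta^\delta)_y]^{2p-1}$, invoking the $\eta$-independent $L^{2p}$ bound \eqref{W 3d k} on $u_\eta$, the $\eta$-independent upper bound \eqref{W 3d RT} on $\rho_\eta$, the initial-data estimates \eqref{2d weak initial data}, and the a priori bound $r\geq\eta$ on $\Omega_\eta$, I obtain (after absorbing the non-artificial-viscosity contribution on the left and using $\delta<1<\alpha$ combined with $R_{\eta,T}\leq C(T)$ to handle the pressure term)
\begin{align*}
    \int_0^1 |\partial_y\rho_\eta^\delta|^{2p}\,dy\leq C(\eta,T)+C(\eta,T)\int_0^\tau\int_0^1|\partial_y\rho_\eta^\delta|^{2p}\,dy\,ds.
\end{align*}
Gronwall's inequality then gives $\sup_{0\leq\tau\leq T}\int_0^1|\partial_y\rho_\eta^\delta|^{2p}dy\leq C(\eta,T)$.

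\smallskip

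\noindent\textbf{Step 2: Closing the lower bound.} Writing $\partial_y v_\eta=-\delta^{-1}v_\eta^{1+\delta}\partial_y\rho_\eta^\delta$ and combining the one-dimensional Sobolev embedding with Step 1 and the $L^1_y$ bound on $v_\eta$, Hölder's inequality delivers
\begin{align*}
    v_\eta(y,\tau)\leq C(\eta)+C(\eta,T)\Bigl(\int_0^1 v_\eta^{(1+\delta)\frac{2p}{2p-1}}\,dy\Bigr)^{\frac{2p-1}{2p}}\leq C(\eta)+C(\eta,T)\,V_{\eta,T}^{\,\delta+\frac{1}{2p}},
\end{align*}
where in the last step I used $\int_0^1 v_\eta^{a}dy\leq V_{\eta,T}^{a-1}\int_0^1 v_\eta\,dy$ with $a=(1+\delta)\frac{2p}{2p-1}$. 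Taking the supremum over $(y,\tau)\in[0,1]\times[0,T]$ and invoking the structural condition $\delta<1-\frac{1}{2p}$ from \eqref{3d delta p large}--\eqref{3d delta p small}, so that the exponent $\delta+\frac{1}{2p}<1$, Young's inequality absorbs the $V_{\eta,T}$ factor on the right-hand side and yields \eqref{3d weak VT}.

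\smallskip

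\noindent\textbf{Expected obstacle.} The arithmetic itself is routine; the substantive point is ensuring that the pressure integral $\int_0^\tau\|\partial_y(\rho_\eta^\gamma)\,r^2\|_{L^{2p}}^{2p}\,ds$ generated in Step 1 is absorbable into a Gronwall loop controlled only by $\int_0^\tau\|\partial_y\rho_\eta^\delta\|_{L^{2p}}^{2p}\,ds$. In three dimensions the $r^{2}$ weight is more singular than the $r$ weight available in 2D, so bounding $\rho_\eta^{\gamma-\delta}r^{2}$ uniformly in $\eta$ requires combining \eqref{W 3d RT} with the $r$-weighted estimate \eqref{r weight 3d}, and this is the only step where care is needed. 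Once this pressure term is controlled, the remainder of the argument is an exact transcription of the two-dimensional proof.
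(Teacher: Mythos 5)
Your argument is correct only in the case $p\ge 1.55$, where the definition \eqref{3d delta p large} does give $\delta<1-\frac{1}{2p}$; in that regime your Steps 1 and 2 mirror the two-dimensional proof of \eqref{2d weak VT} and what the paper does. But the hypotheses of Theorem \ref{Thm4} only impose $1.5<q<p<n_3(\alpha)$, so the case $1.5<p<1.55$ is admissible, and there \eqref{3d delta p small} fixes $\delta=0.677$ with the only guarantee $\delta<1-\tfrac{1}{2n}\big|_{n=1.55}=1-\tfrac{1}{3.1}$; it does \emph{not} give $\delta<1-\frac{1}{2p}$. Indeed, for $p$ close to $1.5$ one has $1-\frac{1}{2p}\approx 0.667<0.677=\delta$, so $\delta+\frac{1}{2p}>1$ and Young's inequality in your Step 2 fails to absorb the $V_{\eta,T}^{\,\delta+\frac{1}{2p}}$ term. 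The paper handles this by splitting into two cases: for $p<1.55$ it replaces the exponent $2p$ by $2n=3.1$, invoking the $\eta$-dependent velocity estimate \eqref{u^2n weak eta} (valid because $p<1.55<\min\{n_3(\alpha),n_3(\delta)\}$) together with the upper bound \eqref{W 3d RT} to get $\sup_\tau\int_0^1|u_\eta|^{3.1}dy\le C(\eta,T)$, then deduces $\sup_\tau\int_0^1|\partial_y\rho_\eta^\delta|^{3.1}dy\le C(\eta,T)$, and closes with Young's inequality using $\delta+\frac{1}{3.1}<1$. Your ``expected obstacle'' (the pressure integral in the Gronwall loop) is not the actual difficulty; the missing idea is the case split on $p$ versus $1.55$ and the switch to the larger exponent $3.1$ in the small-$p$ regime.
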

\begin{proof}
	We consider two cases.  In the first case \(p\ge 1.55\), we have obtain from \eqref{W 3d k} and \eqref{W 3d RT} that $\sup_{0\leq \tau\leq T}\int_0^1 u^{2p}_\eta dy\leq C(T)$, which yields $\sup_{0\leq \tau\leq T}\int_0^1|\partial_y\rho_\eta^\delta|^{2p}dy\leq C(\eta,T)$.  We then obtain $V_{\eta,T}\leq C(\eta,T)$  provided 
	\begin{align}
		\delta<1-\frac{1}{2p},
	\end{align} 
	which is guaranteed by the definition of \(\delta\) in \eqref{3d delta p large}.
	
	The second case is \(p<1.55\). Using \eqref{W 3d RT} and the simple fact that \(1.55<\min\{n_3(\alpha),n_3(\delta)\}\), we obtain from \eqref{u^2n weak eta} that
	$\sup_{0\leq \tau\leq T}\int_0^1 |u_\eta|^{3.1}dy\leq C(\eta,T)$, which yields 
	$\sup_{0\leq \tau\leq T}\int_0^1|\partial_y \rho_\eta^\delta|^{3.1}dy\leq C(\eta, T)$. Therefore, we obtain $V_{\eta,T}\leq C(\eta,T)$  provided that
	\begin{align}
		\delta<1-\frac{1}{3.1},
	\end{align} 
	which is guaranteed by the definition of \(\delta\) in \eqref{3d delta p small}. 
	
	This completes the proof of Proposition \ref{Prop W 3d VT}.
\end{proof}

\subsection{Uniform upper bound of $\rho_\eta$ $(N=2,3)$}
In this subsection we establish a uniform upper bound for the density. We first show that the \(L^4\)-integrability of the velocity field can be established independently of $T$ and $\eta$.

\begin{prop}
	 Under the assumptions of Theorem \ref{Thm5}, there exists a constant \(C>0\) independent of \(T\) and $\eta$ such that
	\begin{align}\label{vv1}
	\int_0^1 u_\eta^4dy+\int_0^\tau\int_0^1 \Big(\rho_\eta^{\alpha-1}\frac{ u_\eta^4}{r^2}+\rho^{1+\alpha}(\partial_yu_\eta)^2u_\eta^{2}r^{2(N-1)}\Big)dyd\tau\leq C.
\end{align}
\end{prop}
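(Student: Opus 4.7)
The strategy is a direct adaptation of Proposition \ref{Prop 3.11} to the approximate system \eqref{app system lag}, taking $n=2$ in the computation of Proposition \ref{Prop WI n} and tracking both the $\rho_\eta^\alpha$ and $\eta\rho_\eta^\delta$ viscous contributions in parallel. A preliminary observation is that the hypotheses of Theorem \ref{Thm5} are calibrated to ensure $2<\min\{n_N(\alpha),n_N(\delta)\}$: for $N=2$, the bound $\alpha<7.46$ corresponds to $\alpha<\alpha_{2,+}(2)=4+2\sqrt{3}$, and \eqref{2d delta} with $p=2$ forces $\delta>4-2\sqrt{3}=\alpha_{2,-}(2)$; the three-dimensional case is analogous, with $\alpha<5.81\approx\alpha_{3,+}(2)$ and $\delta$ constrained by \eqref{3d delta p large}.

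The first step is to multiply $\eqref{app system lag}_2$ by $r^{N-1}u_\eta^3$ and integrate over $(0,1)$ in $y$. Repeating the argument of \eqref{W1}--\eqref{W3} with $n=2$, the condition $2<\min\{n_N(\alpha),n_N(\delta)\}$ permits absorbing the cross term $4(N-1)(\alpha-1)\rho_\eta^\alpha u_\eta^3\partial_y u_\eta r^{N-2}$ and its $\delta$-analog via Young's inequality, producing a small but strictly positive $\varepsilon>0$ and the differential inequality
\begin{align*}
&\frac{d}{d\tau}\int_0^1 u_\eta^4\,dy+\varepsilon\int_0^1\Bigl(\rho_\eta^{\alpha-1}\frac{u_\eta^4}{r^2}+\rho_\eta^{1+\alpha}(\partial_y u_\eta)^2u_\eta^2 r^{2(N-1)}\Bigr)dy\\
&\quad+\varepsilon\eta\int_0^1\Bigl(\rho_\eta^{\delta-1}\frac{u_\eta^4}{r^2}+\rho_\eta^{1+\delta}(\partial_y u_\eta)^2u_\eta^2 r^{2(N-1)}\Bigr)dy\leq C\int_0^1\rho_\eta^{2\gamma-\alpha-1}u_\eta^2\,dy.
\end{align*}

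Next, rewrite the right-hand side as $\int_0^1 \rho_\eta^{\alpha-1}(u_\eta^2/r^2)\cdot\rho_\eta^{2(\gamma-\alpha)}r^2\,dy$ and apply the $r$-weighted bounds from Proposition \ref{Prop WI 2d r}, respectively Proposition \ref{Prop WI 3d r}. For $N=2$, for $\xi>0$ small,
\[
\rho_\eta^{2(\gamma-\alpha)}r^2\leq\|\rho_\eta^{\alpha-1/2}r^\xi\|_{L^\infty}^{\frac{2(\gamma-\alpha)}{\alpha-1/2}}r^{2-\frac{2\xi(\gamma-\alpha)}{\alpha-1/2}}\leq C,
\]
where the exponent of $r$ stays nonnegative because $\gamma\geq\alpha$, an inequality guaranteed by $\gamma\geq 2\alpha-1$ together with $\alpha\geq 1$. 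For $N=3$, the analogous estimate using $\|\rho_\eta^{\alpha-1/2}r^{1/2+\xi}\|_{L^\infty}\leq C(\xi)$ yields $\rho_\eta^{2(\gamma-\alpha)}r^2\leq C$ provided one can pick $\xi>0$ with $(1+2\xi)(\gamma-\alpha)/(\alpha-1/2)\leq 2$, which holds thanks to the strict inequality $\gamma<3\alpha-1$ of \eqref{3d van}. Hence $\int_0^1\rho_\eta^{2\gamma-\alpha-1}u_\eta^2\,dy\leq C\int_0^1\rho_\eta^{\alpha-1}(u_\eta^2/r^2)\,dy$.

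Finally, integrating the resulting differential inequality over $(0,\tau)$ and invoking the $\eta$- and $T$-independent bound \eqref{u^2 weak} for the dissipation term on the right, together with the initial control $\int_0^1 u_{\eta,0}^4\,dy\leq\int_0^1 u_{\eta,0}^{2p}\,dy+\int_0^1 u_{\eta,0}^2\,dy\leq C$ (using $p=2$ in Theorem \ref{Thm5} and the uniform bounds \eqref{2d weak initial data} on the initial data), yields \eqref{vv1}. The principal delicate point is not the calculation itself but the simultaneous verification that the numerical constraints in \eqref{2d van}--\eqref{3d van}, the range of $\delta$ in \eqref{2d delta}--\eqref{3d delta p large}, and $\gamma$ vs.\ $\alpha$ are precisely those needed for both the Young's inequality absorption and the $r$-weighted closing to succeed together; granted this, the argument is an almost mechanical transcription of Proposition \ref{Prop 3.11}.
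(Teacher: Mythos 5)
Your proposal is correct and follows essentially the same route as the paper: multiply $\eqref{app system lag}_2$ by $r^{N-1}u_\eta^3$, absorb the off-diagonal $\rho_\eta^\alpha$ and $\eta\rho_\eta^\delta$ cross terms via Young's inequality using $2<\min\{n_N(\alpha),n_N(\delta)\}$, control the pressure contribution by factoring it as $\rho_\eta^{\alpha-1}\tfrac{u_\eta^2}{r^2}\cdot\rho_\eta^{2(\gamma-\alpha)}r^2$ and invoking the $r$-weighted $L^\infty$ bound on $\rho_\eta^{\alpha-1/2}$, then integrate in $\tau$ and appeal to \eqref{u^2 weak} and the initial-data control. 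The only quibble is a slight mislabeling in your $N=2$ sentence: the condition $\gamma\ge\alpha$ is what makes the exponent $\tfrac{2(\gamma-\alpha)}{\alpha-1/2}$ on $\|\rho_\eta^{\alpha-1/2}r^\xi\|_{L^\infty}$ nonnegative (so that the bound transfers), whereas the residual exponent $2-\tfrac{2\xi(\gamma-\alpha)}{\alpha-1/2}$ of $r$ stays nonnegative simply because $\xi$ is small; your wording conflates the two, but both facts are in play and the argument closes correctly.
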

	\begin{proof}
	Multiplying $\eqref{app system lag}_2$ by \(r^{N-1}u_\eta^3\) and integrating the resulting equation over \((0,1)\) gives
	\begin{align*}
		&\frac{1}{4}\frac{d}{d\tau}\int_0^1 u_\eta^4dy+\int_0^1(\alpha(N-1)^2-(N-1)(N-2)) \rho_\eta^{\alpha-1}\frac{u_\eta^4}{r^2}+3\alpha\rho_\eta^{1+\alpha}(\partial_yu_\eta)^2u_\eta^2r^{2(N-1)}dy\\
		&\quad+\eta\left(\int_0^1(\delta(N-1)^2-(N-1)(N-2)) \rho_\eta^{\delta-1}\frac{u_\eta^4}{r^2}+3\delta\rho_\eta^{1+\delta}(\partial_yu_\eta)^2u_\eta^2r^{2(N-1)}dy\right)\\
		&= \int_0^1\Big(3 \rho_\eta^\gamma u_\eta^2\partial_yu_\eta r^{N-1}+(N-1)\rho_\eta^{\gamma-1}\frac{u_\eta^3}{r}\Big)dy+\int_0^14(N-1)(1-\alpha)\rho_\eta^\alpha u_\eta^3 \partial_yu_\eta r^{N-2}dy\\
		&\quad+\eta\int_0^14(N-1)(1-\delta)\rho_\eta^\delta u_\eta^3 \partial_yu_\eta r^{N-2}dy.
	\end{align*}
	From \eqref{2d van}, \eqref{3d van},  \eqref{2d delta} and \eqref{3d delta p large} we
	see that $\min\{n_N(\alpha), n_N(\delta)\}>2$. Therefore, by Young’s inequality, we obtain a sufficiently small generic constant \(\varepsilon>0\) such that
	\begin{align}
		\begin{split}
			&\quad\frac{1}{4}\frac{d}{d\tau}\int_0^1 u_\eta^4dy+2\varepsilon\int_0^1 \rho_\eta^{\alpha-1}\frac{u_\eta^4}{r^2}+\rho_\eta^{1+\alpha}(\partial_yu_\eta)^2u_\eta^2r^{2(N-1)}dy\\
			&\leq  C\int_0^1\rho_\eta^\gamma u_\eta^2|\partial_yu_\eta| r^{N-1}dy+C\int_0^1\rho_\eta^{\gamma-1}\frac{|u_\eta|^3}{r}dy\\
			&\leq C\int_0^1\left(\rho_\eta^{1+\alpha}(\partial_yu_\eta)^2u_\eta^{2}r^{2(N-1)}\right)^{\frac{1}{2}}\left(\rho_\eta^{2\gamma-\alpha-1}u_\eta^2\right)^{\frac{1}{2}}dy+C\int_0^1 \left(\rho_\eta^{\alpha-1}\frac{u_\eta^{4}}{r^2}\right)^{\frac{1}{2}}\left(\rho_\eta^{2\gamma-\alpha-1}u_\eta^2\right)^{\frac{1}{2}}dy\\
			&\leq \varepsilon \int_0^1 \rho_\eta^{1+\alpha}(\partial_yu_\eta)^2u_\eta^2r^{2(N-1)}dy +\varepsilon\int_0^1 \rho_\eta^{\alpha-1}\frac{u_\eta^{4}}{r^2}dy+C\int_0^1 \rho_\eta^{2\gamma-\alpha-1}u_\eta^2dy.
		\end{split}
	\end{align}
	
	When \(N=2\), using \eqref{r weight 2d}, we obtain, for sufficiently small \(\xi\),
	\begin{align*}
		\begin{split}
			&\quad C\int_0^1 \rho_\eta^{2\gamma-\alpha-1}u_\eta^2dy=C\int_0^1 \rho_\eta^{\alpha-1}\frac{u_\eta^2}{r^2}\rho_\eta^{2\gamma-2\alpha}r^2dy\\
			&\leq C\norm{\rho_\eta^{\alpha-\frac{1}{2}}r^\xi}^{\frac{2(\gamma-\alpha)}{\alpha-1/2}}_{L^\infty(\eta,R)}\int _0^1\rho_\eta^{\alpha-1}\frac{u_\eta^2}{r^2}dy\leq C\int _0^1\rho_\eta^{\alpha-1}\frac{u_\eta^2}{r^2}dy,
		\end{split}
	\end{align*}
	where we used
	\begin{align*}
		\frac{2(\gamma-\alpha)}{\alpha-1/2}\ge0\Longleftrightarrow\gamma\ge\alpha.
	\end{align*}
	
	When \(N=3\), we use \eqref{r weight 3d} to get that
	\begin{align*}
		\begin{split}
			&\quad C\int_0^1 \rho_\eta^{2\gamma-\alpha-1}u_\eta^2dy=C\int_0^1 \rho_\eta^{\alpha-1}\frac{u_\eta^2}{r^2}\rho_\eta^{2\gamma-2\alpha}r^2dy\\ 
			&\leq C\norm{\rho_\eta^{\alpha-\frac{1}{2}}r^{\frac{1}{2}+\xi}}^{\frac{2(\gamma-\alpha)}{\alpha-1/2}}_{L^\infty(\eta,R)}\int_0^1 \rho_\eta^{\alpha-1}\frac{u_\eta^2}{r^2}dy\leq C\int _0^1\rho_\eta^{\alpha-1}\frac{u_\eta^2}{r^2}dy,
		\end{split}
	\end{align*}
	where we have used $\alpha\leq \gamma<3\alpha-1$ to choose a sufficiently small \(\xi\) such that
	\begin{align*}
		0\leq \left(\frac{1}{2}+\xi\right)\frac{2(\gamma-\alpha)}{\alpha-1/2}\leq 2.
	\end{align*}
	
	Therefore, one has
	\begin{align*}
		\frac{1}{4}\frac{d}{d\tau}\int_0^1 u_\eta^4dy+\varepsilon\int_0^1\Big( \rho_\eta^{\alpha-1}\frac{u_\eta^4}{r^2}+\rho_\eta^{1+\alpha}(\partial_yu_\eta)^2u_\eta^2r^{2(N-1)}\Big)dy\leq C\int _0^1\rho_\eta^{\alpha-1}\frac{u_\eta^2}{r^2}dy.
	\end{align*}
	Integrating with respect to $\tau$ and using \eqref{u^2 weak} and \eqref{2d weak initial data} complete the proof.
\end{proof}

	\begin{prop}\label{Prop W 4}
	Under the assumptions of Theorem \ref{Thm5}, there exists a constant \(C>0\) independent of $T$ and $\eta$ such that
	\begin{align}\label{W 2d 4}
		\sup_{0\leq t\leq T}\int_\eta^R\rho_\eta u_\eta^{4}r^{N-1}dr+\int_0^T\int_\eta^R\Big(\rho_\eta^{\alpha}u_\eta^{4}r^{N-3}
		+\rho_\eta^{\alpha} (\partial_ru_\eta)^2 u_\eta^{2}r^{N-1}\Big)drdt\leq C
	\end{align}
	and
	\begin{align}\label{W 2d l4}
		\sup_{0\leq t\leq T}\int_\eta^R |\partial_r\rho_\eta^{\alpha-1+\frac{1}{2q}}|^{2q}r^{N-1}dr+\int_0^T\int_\eta^R |\partial_r\rho_\eta^{\alpha-1+\frac{\gamma-\alpha+1}{2q}}|^{2q}r^{N-1}drdt\leq C.
	\end{align}
\end{prop}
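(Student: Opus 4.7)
My plan for Proposition \ref{Prop W 4} is to parallel the proof of Proposition \ref{Prop C 4}, with two modifications: (i) the argument must produce bounds uniform in both $T$ and $\eta$, and (ii) the B-D quantity now contains an extra artificial-viscosity term involving $\rho_\eta^\delta$ which must be handled carefully.

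First, \eqref{W 2d 4} is an immediate consequence of \eqref{vv1}: using $dy=\rho_\eta r^{N-1}dr$ and $\partial_y=(\rho_\eta r^{N-1})^{-1}\partial_r$, each term on the left-hand side of \eqref{vv1} translates directly into the corresponding Eulerian term. For \eqref{W 2d l4}, since the assumption of Theorem \ref{Thm5} gives $q\in M_{\mathrm{set}}$ with $1<q<2$, I apply Proposition \ref{Prop W m} with $m=q$ to obtain
\begin{align*}
\int_0^1\bigl(u_\eta+r^{N-1}(\rho_\eta^\alpha)_y+\eta r^{N-1}(\rho_\eta^\delta)_y\bigr)^{2q}dy+\int_0^\tau\!\int_0^1\rho_\eta^{\gamma-\alpha}\bigl(r^{N-1}(\rho_\eta^\alpha)_y\bigr)^{2q}dyd\tau'\leq C+C\!\int_0^\tau\!\int_0^1\rho_\eta^{\gamma-\alpha}u_\eta^{2q}dyd\tau'.
\end{align*}
The space-time term on the left, via the identity $r^{N-1}(\rho_\eta^\alpha)_y=\rho_\eta^{-1}\partial_r\rho_\eta^\alpha$ and the change $dy=\rho_\eta r^{N-1}dr$, equals exactly a constant multiple of $\int_0^T\!\int_\eta^R|\partial_r\rho_\eta^{\alpha-1+(\gamma-\alpha+1)/(2q)}|^{2q}r^{N-1}drdt$, which is the space-time part of \eqref{W 2d l4}.

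To extract the sup-in-time bound on $\int_\eta^R|\partial_r\rho_\eta^{\alpha-1+1/(2q)}|^{2q}r^{N-1}dr$, I exploit the key identity $\eta r^{N-1}(\rho_\eta^\delta)_y=(\delta\eta/\alpha)\rho_\eta^{\delta-\alpha}\cdot r^{N-1}(\rho_\eta^\alpha)_y$. Since $\rho_\eta^{\delta-\alpha}\geq 0$, the coefficient of $r^{N-1}(\rho_\eta^\alpha)_y$ inside the B-D quantity is bounded below by $1$, so the triangle inequality gives
\begin{align*}
\bigl|r^{N-1}(\rho_\eta^\alpha)_y\bigr|^{2q}\leq C\bigl[\bigl(u_\eta+r^{N-1}(\rho_\eta^\alpha)_y+\eta r^{N-1}(\rho_\eta^\delta)_y\bigr)^{2q}+u_\eta^{2q}\bigr].
\end{align*}
The first term on the right is bounded by Proposition \ref{Prop W m}, and the second is controlled via Hölder by $\int u_\eta^4\,dy$ which is bounded by \eqref{vv1} (since the total Lagrangian mass equals $1$). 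Translating back to Eulerian coordinates yields the sup-in-time portion of \eqref{W 2d l4}, modulo the forcing term $\int_0^T\!\int_\eta^R\rho_\eta^{\gamma-\alpha+1}u_\eta^{2q}r^{N-1}drdt$ coming from the right-hand side of the B-D estimate.

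The remaining task---and the main technical step---is to bound $\int_0^T\!\int_\eta^R\rho_\eta^{\gamma-\alpha+1}u_\eta^{2q}r^{N-1}drdt$ uniformly in $T$ and $\eta$. Since $1\leq q<2=p$, I interpolate $u_\eta^{2q}=u_\eta^{2(2-q)}\cdot u_\eta^{4(q-1)}$ and factor out the $r$-weighted density to obtain
\begin{align*}
\rho_\eta^{\gamma-\alpha+1}u_\eta^{2q}r^{N-1}=\bigl(\rho_\eta^{\gamma-2\alpha+1}r^2\bigr)\cdot\bigl(\rho_\eta^\alpha u_\eta^2 r^{N-3}\bigr)^{2-q}\bigl(\rho_\eta^\alpha u_\eta^4 r^{N-3}\bigr)^{q-1}.
\end{align*}
Hölder's inequality in space and time then reduces the task to bounding $\sup_{0\leq t\leq T}\|\rho_\eta^{\gamma-2\alpha+1}r^2\|_{L^\infty(\eta,R)}$ together with the two space-time integrals, which are controlled by \eqref{u^2 weak} and \eqref{W 2d 4}. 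The $r$-weight supremum is handled using the assumption $\gamma\geq 2\alpha-1$ (so that the power of $\rho_\eta$ is nonnegative) together with Proposition \ref{Prop WI 2d r} (for $N=2$) or Proposition \ref{Prop WI 3d r} (for $N=3$): one picks $\xi>0$ sufficiently small so that $\|\rho_\eta^{\gamma-2\alpha+1}r^2\|_{L^\infty(\eta,R)}\leq C\|\rho_\eta^{\alpha-1/2}r^\xi\|_{L^\infty}^{(\gamma-2\alpha+1)/(\alpha-1/2)}$ in 2D, and similarly with $r^{1/2+\xi}$ in 3D. In the 3D case, the upper bound $\gamma<3\alpha-1$ from \eqref{3d van} is precisely what guarantees the existence of a valid $\xi$, and is the place where the ``endpoint'' restriction on $\gamma$ enters. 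The hardest step conceptually is the clean separation of $r^{N-1}(\rho_\eta^\alpha)_y$ from the combined B-D quantity, which hinges on the positivity of $\rho_\eta^{\delta-\alpha}$ and avoids having to bound the $\eta$-weighted $\rho^\delta$-derivative term independently.
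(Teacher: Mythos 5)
Your proposal is correct and matches the paper's argument essentially step for step: pass \eqref{vv1} to Eulerian coordinates for \eqref{W 2d 4}, apply Proposition \ref{Prop W m}, exploit that $\partial_r\rho_\eta^\alpha$ and $\eta\,\partial_r\rho_\eta^\delta$ share a sign to isolate the $\rho_\eta^\alpha$-gradient, close the sup-in-time term by H\"older against $\int u_\eta^4$, and bound the remaining forcing integral by extracting $\|\rho_\eta^{\gamma-2\alpha+1}r^2\|_{L^\infty}$ via Propositions \ref{Prop WI 2d r}/\ref{Prop WI 3d r}. One minor mislabeling: the 3D $r$-weight supremum actually only needs $\gamma<6\alpha-3$ (to pick a valid $\xi$), while the tighter $\gamma<3\alpha-1$ from \eqref{3d van} is what is needed earlier in establishing \eqref{vv1}.
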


	\begin{proof}
	Rewriting \eqref{vv1} in Eulerian coordinates gives \eqref{W 2d 4}. 
	
	We next prove \eqref{W 2d l4}. Noting that \(q\in M_{\mathrm{set}}\), so we can apply Proposition \ref{Prop W m} to obtain
	\begin{align*}
		\begin{split}
			&	\sup_{0\leq t\leq T}\int_\eta^R |\partial_r\rho_\eta^{\alpha-1+\frac{1}{2q}}|^{2q}r^{N-1}dr+\int_0^T\int_\eta^R |\partial_r\rho_\eta^{\alpha-1+\frac{\gamma-\alpha+1}{2q}}|^{2q}r^{N-1}drdt\\
			&\leq\sup_{0\leq t\leq T}\int_\eta^R\rho_\eta u_\eta^{2q}r^{N-1}dr+ \sup_{0\leq t\leq T}\int_\eta^R\rho_\eta(u_\eta+\rho_\eta^{-1}\partial_r\rho_\eta^\alpha+\eta\rho^{-1}_\eta \partial_r \rho_\eta^\delta)^{2q}r^{N-1}dr\\
			&\quad+\int_0^T\int_\eta^R|\partial_r\rho_\eta^{\alpha-1+\frac{\gamma-\alpha+1}{2q}}|^{2q}r^{N-1}drdt\nonumber\\
			&\leq \sup_{0\leq t\leq T}\int_\eta^R\rho_\eta u_\eta^{2q}r^{N-1}dr+C\int_0^T\int_\eta^R\rho_\eta^{\gamma-\alpha+1}u_\eta^{2q}r^{N-1}drdt\\
			&\leq \sup_{0\leq t\leq T}\left(\int_\eta^R\rho_\eta u_\eta^{4}r^{N-1}dr\right)^{\frac{q}{2}}\left(\int_\eta^R\rho_\eta r^{N-1}dr\right)^{\frac{2-q}{2}}\\
			&\quad+C\sup_{0\le t\le T}\norm{\rho_\eta^{\gamma-2\alpha+1}r^2}_{L^\infty(\eta,R)}\left(\int_0^T\int_\eta^R\rho_\eta^\alpha u_\eta^{4}r^{N-3}drdt\right)^{q-1}\left(\int_0^T\int_\eta^R\rho_\eta^\alpha u_\eta^{2}r^{N-3}drdt\right)^{2-q}\\
			&\leq C+C\sup_{0\le t\le T}\norm{\rho_\eta^{\gamma-2\alpha+1}r^2}_{L^\infty(\eta,R)},
		\end{split}
	\end{align*}
	where we used \eqref{u^2 weak}, \eqref{W 2d 4} and
	\begin{align*}
		\gamma\ge2\alpha-1.
	\end{align*}
	Therefore, it suffices to obtain \eqref{W 2d l4} by estimating \(\sup_{0\le t\le T}\norm{\rho_\eta^{\gamma-2\alpha+1}r^2}_{L^\infty(\eta,R)}\). 
	
	When \(N=2\), using \eqref{r weight 2d}, we obtain,
	\begin{align*}
		\sup_{0\le t\le T}\norm{\rho_\eta^{\gamma-2\alpha+1}r^2}_{L^\infty(\eta,R)}\leq C\sup_{0\le t\le T}\norm{\rho_\eta^{\alpha-\frac{1}{2}}r^{\xi}}_{L^\infty(\eta,R)}^{\frac{\gamma-2\alpha+1}{\alpha-1/2}}\leq C,
	\end{align*}
	where \(\xi\) is taken sufficiently small so that
	\begin{align*}
		\left(\frac{\gamma-2\alpha+1}{\alpha-1/2}\right)\xi\leq2.
	\end{align*}
	
	When \(N=3\), we use \eqref{r weight 3d} to obtain
	\begin{align*}
		\sup_{0\le t\le T}\norm{\rho_\eta^{\gamma-2\alpha+1}r^2}_{L^\infty(\eta,R)}\leq C\sup_{0\le t\le T}\norm{\rho_\eta^{\alpha-\frac{1}{2}}r^{\frac{1}{2}+\xi}}_{L^\infty(\eta,R)}^{\frac{\gamma-2\alpha+1}{\alpha-1/2}}\leq C,
	\end{align*}
	where we have used $\gamma<6\alpha-3$ to choose a sufficiently small \(\xi\) such that
	\begin{align*}
		\left(\frac{1}{2}+\xi\right)\frac{\gamma-2\alpha+1}{\alpha-1/2}\leq 2.
	\end{align*}
	
	This completes the proof of Proposition \ref{Prop W 4}.
\end{proof}

Finally, we establish a uniform upper bound for the density.
	\begin{prop}
	Under the assumptions of Theorem \ref{Thm5}, there exist a constant $C>0$ independent of $T$ and $\eta$ such that
	\begin{align}\label{vv RT}
		R_{\eta,T}\leq C.
	\end{align}
\end{prop}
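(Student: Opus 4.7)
The strategy is to mimic the argument of Proposition \ref{Prop C uniRT} line-by-line, with the Lagrangian exponent $l = 2$ replaced by the parameter $q \in (\tfrac{N}{2}, 2)$ from Theorem \ref{Thm5}, and with the uniform $L^{2q}$ bound \eqref{W 2d l4} on $\partial_r \rho_\eta^{\alpha-1+\frac{1}{2q}}$ playing the role of \eqref{C 2d l4}. Since \eqref{2d weak rho Lp}, \eqref{3d weak rho Lp}, \eqref{r weight 2d}, \eqref{r weight 3d} and \eqref{W 2d l4} are all simultaneously $T$- and $\eta$-independent, every resulting bound inherits the same independence.

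For $N=2$, I apply the one-dimensional Sobolev embedding on the interval $(\eta, R)$ to write
\begin{align*}
\norm{\rho_\eta^\alpha}_{L^\infty(\eta,R)} \leq C\int_\eta^R \rho_\eta^\alpha \, dr + C\int_\eta^R |\partial_r \rho_\eta^\alpha| \, dr.
\end{align*}
The first term is controlled directly by \eqref{2d weak rho Lp} after inserting a trivial radial weight. For the second term I factor
\begin{align*}
|\partial_r \rho_\eta^\alpha| = C_\alpha |\partial_r \rho_\eta^{\alpha - 1 + \frac{1}{2q}}|\, \rho_\eta^{1 - \frac{1}{2q}}
\end{align*}
and apply H\"older with exponents $(2q, \tfrac{2q}{2q-1})$ together with the radial weight $r$. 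The first resulting factor is bounded by \eqref{W 2d l4}, and the second reduces to $\int_\eta^R \rho_\eta\, r^{-\frac{1}{2q-1}}\, dr$, which is finite uniformly in $\eta$ thanks to $\tfrac{1}{2q-1} < 1$ (guaranteed by $q > 1$) combined with \eqref{2d weak rho Lp} applied at a sufficiently large Lebesgue exponent.

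For $N=3$, I select an auxiliary exponent $\beta$ with
\begin{align*}
\max\left\{0,\, \alpha - 1 + \tfrac{1}{2q}\right\} < \beta < \min\left\{2\alpha - 1,\, (3\alpha - 2)\left(1 - \tfrac{1}{q}\right)\right\},
\end{align*}
which should be non-empty precisely under the assumptions $\tfrac{3}{2} < q < 2$ and $1 \leq \alpha < 5.81$ of Theorem \ref{Thm5}. Then I estimate $\norm{\rho_\eta^\beta}_{L^\infty(\eta,R)}$ by the 1D embedding, split $\partial_r \rho_\eta^\beta = C_\beta \partial_r \rho_\eta^{\alpha - 1 + \frac{1}{2q}}\, \rho_\eta^{\beta - \alpha + 1 - \frac{1}{2q}}$, and apply H\"older in $L^{2q}$ with the weight $r^2$. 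The density factor is then controlled using the $r$-weighted $L^\infty$ estimate \eqref{r weight 3d}: writing $\rho_\eta^{\frac{2q}{2q-1}(\beta - \alpha) + 1} = (\rho_\eta^{\alpha - \frac{1}{2}} r^{\frac{1}{2} + \xi})^{\theta} \cdot r^{-(\frac{1}{2} + \xi)\theta}$ for a suitable exponent $\theta$ and a small $\xi > 0$, the upper bound on $\beta$ guarantees both $r$-integrability and that $\theta$ is a strict exponent such that the right-hand side is bounded by $C\, R_{\eta, T}^{\beta'}$ with $\beta' < \beta$. Young's inequality then absorbs the result into $\tfrac{1}{2} R_{\eta, T}^\beta$ and gives $R_{\eta,T}^\beta \leq \tfrac{1}{2} R_{\eta, T}^\beta + C$, which proves \eqref{vv RT}.

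The principal obstacle will be verifying that the displayed admissible interval for $\beta$ is non-empty and that the parameter $\xi > 0$ can be chosen sufficiently small to make all the $r$-weighted integrals converge simultaneously. It is precisely this feasibility requirement on $(\alpha, \gamma, q)$ that dictates the upper bound $\alpha < 5.81$ in assumption \eqref{3d van}. Once this compatibility is verified, the remaining computations are mechanical and mirror those of Proposition \ref{Prop C uniRT} verbatim.
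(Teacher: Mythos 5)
Your route is viable but genuinely different from the paper's, which is considerably shorter. The paper observes that on the (constantly extended) domain $\Omega$, the estimate \eqref{W 2d l4} gives $\sup_t\|\nabla\rho_\eta^{\alpha-1+\frac{1}{2q}}\|_{L^{2q}(\Omega)}\le C$ uniformly, while \eqref{2d weak rho Lp} or \eqref{3d weak rho Lp} gives $\sup_t\|\rho_\eta^{\alpha-1+\frac{1}{2q}}\|_{L^1(\Omega)}\le C$; since $2q>N$, the standard $N$-dimensional Morrey/Sobolev embedding $W^{1,2q}(\Omega)\hookrightarrow L^\infty(\Omega)$ immediately yields $R_{\eta,T}\le C$, with no radial-coordinate computation at all. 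Your plan instead descends to the one-dimensional spherical embedding and replays the $r$-weighted H\"older manipulations of Propositions \ref{Prop C uniRT} and \ref{Prop W 3d RT}. That works, but costs you the auxiliary exponent $\beta$, the $r$-weighted $L^\infty$ estimate \eqref{r weight 3d}, and careful bookkeeping of $\xi$, none of which the paper needs here.

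Two smaller points. First, your Young's-inequality absorption step for $N=3$ is spurious: the $R_{\eta,T}^\sigma$ factor that appears in Proposition \ref{Prop W 3d RT} comes from \eqref{W 3d l}, which carries a $R_{\eta,T}^{2q\sigma}$ prefactor; but \eqref{W 2d l4} carries no such factor, so when you substitute it the right-hand side is already a pure constant and there is nothing to absorb. The step isn't wrong, just unnecessary. Second, your claim that the feasibility of the $\beta$-interval "dictates the upper bound $\alpha<5.81$" is misplaced: non-emptiness of $\bigl(\max\{0,\alpha-1+\tfrac{1}{2q}\},\,\min\{2\alpha-1,(3\alpha-2)(1-\tfrac1q)\}\bigr)$ follows from $q>\tfrac32$ and $\alpha\ge1$ alone. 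The constraint $\alpha<5.81$ comes from requiring $n_3(\alpha)>2$ (equivalently $\alpha<\alpha_{3,+}(2)\approx 5.81$), which is what makes the uniform $L^4$ estimate of Proposition \ref{Prop W 4}, and hence \eqref{W 2d l4} itself, available.
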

	\begin{proof}
	We obtain from \eqref{2d weak rho Lp} and \eqref{3d weak rho Lp} that \begin{align}\label{vv6}
		\sup_{0\le t\le T}\norm{\rho_\eta^{\alpha-1+\frac{1}{2q}}}_{L^1(\Omega)}\leq C,
	\end{align}
	where \(C\) is independent of \(T\) and \(\eta\).  Using the standard Sobolev embedding, we obtain from \eqref{W 2d l4} and \eqref{vv6} that 
	\begin{align*}
		R_{\eta, T}\leq C.
	\end{align*}
	We have completed the proof.
\end{proof}

\subsection{Global solvability of the approximate system}
	Under the hypotheses of Theorem \ref{Thm3} or Theorem \ref{Thm4}, for any $0<T<T^*$, from \eqref{W 2d RT}, \eqref{2d weak VT}, \eqref{W 3d RT} and \eqref{3d weak VT} we conclude that $\rho_\eta(y,\tau)$ is bounded from above and below on $[0,1]\times[0,T]$. From \eqref{W 2d k} and \eqref{W 3d k} we know that $u_\eta$ is bounded in $L^\infty(0,T;L^{2p}([0,1]))$, and from \eqref{W 2d l} and \eqref{W 3d l} we know that $\partial_y \rho_\eta$ is bounded in $L^\infty(0,T;L^{2q}([0,1]))$. Furthermore, differentiating equation \eqref{app system lag} and applying standard energy methods yields higher-order estimates for $(\rho_\eta,u_\eta)$.  We then invoke Schauder theory for linear parabolic equations to conclude that the $C^{\beta,\beta/2}([0,1]\times[0,T])$-norms of $\rho_\eta,\partial_y\rho_\eta,\partial_{\tau y}\rho_\eta,u_\eta, \partial_yu_\eta,\partial_\tau u_\eta$ and $\partial_{yy}u_\eta$ are bounded. Therefore, we can continue the local solution globally in time and obtain the unique global solution \((\rho_\eta,u_\eta)\) of the initial-boundary-value problem \eqref{app system lag} satisfying, for any $T>0$, 
\begin{align*}
	\rho_\eta,\partial_y\rho_\eta,\partial_{\tau y}\rho_\eta,u_\eta,\partial_y u_\eta,\partial_\tau u_\eta,\partial_{yy}u_\eta\in C^{\beta,\beta/2}([0,1]\times[0,T])
\end{align*}
for some $0<\beta<1$, and $\rho_\eta>0$ on $[0,1]\times[0,T]$. This can be done in a similar way as in \cite{jiang-2001}.

\subsection{Taking the limit and stability of the approximate solution sequence}
With the uniform estimates for the approximate solutions in hand, we can study the stability of the approximate solution sequence.  Indeed, the stability of approximate solution sequences in two and three dimensions for periodic domains and the whole space has already been investigated in \cite{Mellet-2007}.  The stability of the weak sequence for the approximate system \eqref{App  Stm Euler} has also been proved in \cite{Guo-2008}.  Our $\eta$-independent estimates for the approximate solutions are stronger than theirs, so the stability result for the approximate solution sequence follows easily. The proof in this subsection follows \cite{Mellet-2007} and \cite{Guo-2008}. In this subsection we do not distinguish between two and three dimensions except where specifically stated.

Under the hypotheses of Theorems \ref{Thm3} and \ref{Thm4}, for any $T>0$, we obtained solutions \((\rho_\eta,u_\eta)\) on $\Omega_\eta\times[0,T]$ and extended them continuously to \(\Omega\times[0,T]\).  Consider a sequence \(\eta_j\to 0\) as \(j\to\infty\).  Write \(\rho_j=\rho_{\eta_j}\), \(\mathbf{u}_j=\mathbf{u}_{\eta_j}\), and set \(\Omega_{\frac1n}=\Omega-B_{\frac1n}(0)\) for \(n=1,2,\dots\). Let us recall the $\eta$-independent estimates \eqref{u^2 weak}, \eqref{W nablarho^2}, \eqref{W 2d k}-\eqref{W 2d RT} and \eqref{W 3d k}-\eqref{W 3d RT} we have already obtained for the approximate solutions:
\begin{align}
	\sup_{0\le t\le T}\Big(\norm{\rho_j}_{L^\infty(\Omega)}&+\norm{\nabla\rho_j^{\alpha-\frac12}}_{L^2(\Omega)}+\norm{\nabla\rho_j^{\alpha-1+\frac{1}{2q}}}_{L^{2q}(\Omega)}\Big)\nonumber\\
	&+\int_0^T\Big(\norm{\nabla\rho_j^{\frac{\gamma+\alpha-1}{2}}}_{L^2(\Omega)}^2+\norm{\nabla\rho_j^{\alpha-1+\frac{\gamma-\alpha+1}{2q}}}_{L^{2q}(\Omega)}^{2q}\Big)dt\leq C(T),\label{ttrho}\\
	\sup_{0\le t\le T}\int_{\Omega}\Big(\rho_j|\mathbf{u}_j|^2&+\rho_j|\mathbf{u}_j|^{2p}\Big)dx+\int_0^T\int_\Omega\Big(\rho_j^{\alpha}|\nabla\mathbf{u}_j|^2+\eta\rho_j^{\delta}|\nabla\mathbf{u}_j|^2\Big)dxdt\leq C(T).\label{ttu}
\end{align}

\begin{prop}\label{Prop W rho RT}
	There exists a subsequence of \(\rho_j\), still denoted by itself, such that for any \(n\in\mathbb{N}^+\) and any \(1\le s<\infty\) we have
	\begin{align}\label{tt9}
		\begin{split}
			\rho_j \rightarrow \rho, \text{ in } C(\overline{\Omega}_{\frac{1}{n}}\times[0,T]),\quad \rho_j \rightarrow \rho \text{ in }L^s(\Omega\times(0,T)),
		\end{split}
	\end{align}
	where \(\rho\) is a radially symmetric function and \(\rho\in L^\infty(\Omega\times(0,T))\cap C((\overline{\Omega}\setminus\{0\})_{\text{loc}}\times[0,T])\).
\end{prop}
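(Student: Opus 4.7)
The plan is to establish uniform joint equicontinuity of $\{\rho_j\}$ on compact subsets of $(\overline{\Omega}\setminus\{0\})\times[0,T]$, apply Arzel\`a--Ascoli on each $\overline{\Omega}_{1/n}\times[0,T]$, and then extract a diagonal subsequence over $n$. The $L^\infty$ bound on $\rho_j$ will then promote the resulting pointwise a.e.\ convergence on $\Omega\times(0,T)$ to $L^s$ convergence via dominated convergence.

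Setting $\theta := \alpha - 1 + \frac{1}{2q}>0$, the estimate \eqref{ttrho} combined with the uniform $L^\infty$ bound on $\rho_j$ shows that $\{\rho_j^\theta\}$ is uniformly bounded in $L^\infty(0,T; W^{1,2q}(\Omega))$. Since $2q>N$ under the hypotheses of both Theorem \ref{Thm3} and Theorem \ref{Thm4}, Morrey's embedding gives a uniform bound in $L^\infty(0,T; C^\beta(\overline{\Omega}))$ for some $\beta\in(0,1)$. The map $z\mapsto z^{1/\theta}$ is uniformly continuous (indeed H\"older continuous) on $[0,\sup_j\|\rho_j\|_{L^\infty(\Omega\times(0,T))}]$, so this transfers to a uniform spatial H\"older bound on $\rho_j$ itself: there exist $\beta'\in(0,1)$ and $C>0$, independent of $j\in\mathbb{N}^+$ and $t\in[0,T]$, with $|\rho_j(x,t)-\rho_j(y,t)|\le C|x-y|^{\beta'}$ for all $x,y\in\overline{\Omega}$.

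For temporal equicontinuity on $\overline{\Omega}_{1/n}\times[0,T]$, I will use the fact that the continuity equation for the approximate system holds on $\Omega_{\eta_j}$, while \eqref{ttu} yields $\rho_j\mathbf{u}_j=\sqrt{\rho_j}\cdot(\sqrt{\rho_j}\mathbf{u}_j)$ uniformly bounded in $L^\infty(0,T; L^2(\Omega))$. Fix $n$ and restrict to $j$ with $\eta_j<1/(2n)$. For $x_0\in\overline{\Omega}_{1/n}$ and $0<\epsilon<1/(2n)$, pick a mollifier $\phi_\epsilon\in C_c^\infty(B_\epsilon(x_0))$ with unit mass, so that $\operatorname{supp}\phi_\epsilon\subset\Omega_{\eta_j}$. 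Testing the continuity equation against $\phi_\epsilon$ gives
\begin{align*}
\bigl|\langle\rho_j(\cdot,t_2)-\rho_j(\cdot,t_1),\phi_\epsilon\rangle\bigr|
\le\int_{t_1}^{t_2}\|\rho_j\mathbf{u}_j\|_{L^2}\|\nabla\phi_\epsilon\|_{L^2}\,dt
\le C\epsilon^{-1-N/2}|t_2-t_1|,
\end{align*}
while the spatial regularity from the previous step supplies $|\langle\rho_j(\cdot,t),\phi_\epsilon\rangle-\rho_j(x_0,t)|\le C\epsilon^{\beta'}$. Combining and optimising $\epsilon\sim|t_2-t_1|^{1/(1+N/2+\beta')}$ produces a uniform modulus of temporal H\"older continuity on $\overline{\Omega}_{1/n}\times[0,T]$. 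Together with the spatial estimate, $\{\rho_j\}$ is uniformly bounded and jointly equicontinuous on every such compact slab.

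Arzel\`a--Ascoli on each $\overline{\Omega}_{1/n}\times[0,T]$ followed by a Cantor diagonal extraction over $n\in\mathbb{N}^+$ produces a subsequence (still denoted $\rho_j$) converging uniformly on every $\overline{\Omega}_{1/n}\times[0,T]$ to some $\rho\in C((\overline{\Omega}\setminus\{0\})_{\mathrm{loc}}\times[0,T])$. The uniform $L^\infty$ bound and radial symmetry pass to the limit. Pointwise a.e.\ convergence on $\Omega\times(0,T)$ combined with dominated convergence then gives $\rho_j\to\rho$ in $L^s(\Omega\times(0,T))$ for every $1\le s<\infty$. The main obstacle is what forces the restriction to $\overline{\Omega}_{1/n}$: the extended $\rho_j$ is a spatial constant on $B_{\eta_j}$ and does \emph{not} satisfy the continuity equation there, so the mollifier argument only controls temporal oscillations at points $x_0$ with $|x_0|>\eta_j$, and letting $j\to\infty$ forces $|x_0|>0$. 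The absence of a uniform positive lower bound on $\rho_j$ is sidestepped by systematically working with $\rho_j^\theta$ rather than $\rho_j$ and relying only on continuity of the inverse power map.
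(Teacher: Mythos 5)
Your proof is correct and takes a genuinely different route from the paper. The paper runs the Aubin--Lions lemma on $\rho_j^b$ (for $b\ge\alpha$): it bounds $\nabla\rho_j^b$ in $L^\infty(0,T;L^{2q}(\Omega))$ via the chain rule from \eqref{ttrho}, bounds $\partial_t\rho_j^b$ in $L^2(0,T;L^1(\Omega_{1/n}))$ by rewriting $\partial_t\rho_j^b = -\tfrac{2b}{2b-1}\sqrt{\rho_j}\mathbf{u}_j\cdot\nabla\rho_j^{b-1/2}-b\rho_j^{b-\alpha/2}\cdot\rho_j^{\alpha/2}\div\mathbf{u}_j$, and then invokes the compact embedding $W^{1,2q}\hookrightarrow\hookrightarrow C$ ($2q>N$) inside Aubin--Lions to get convergence in $C(\overline\Omega_{1/n}\times[0,T])$, followed by the same diagonal extraction and dominated-convergence step you use. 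You instead establish \emph{uniform joint H\"older equicontinuity} of $\rho_j$ on each slab and apply Arzel\`a--Ascoli directly: the spatial H\"older modulus comes from Morrey applied to $\rho_j^\theta$ plus the H\"older continuity of $z\mapsto z^{1/\theta}$, and the temporal modulus from testing the continuity equation against a shrinking mollifier and optimising its scale. The same ingredients ($W^{1,2q}$ bound with $2q>N$, $L^\infty$ bound on $\rho_j$, $L^\infty L^2$ bound on $\sqrt{\rho_j}\mathbf{u}_j$) drive both arguments, and your trick of working with a fixed power of $\rho_j$ to avoid the missing density lower bound is exactly the device used in the paper. Your route is more elementary (no abstract compactness lemma) and yields an explicit H\"older modulus for $\rho$ in both variables, which the Aubin--Lions route does not produce; the paper's route is shorter and more modular.

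One small gap: you claim $\operatorname{supp}\phi_\epsilon\subset\Omega_{\eta_j}$, but this fails when $x_0$ lies on $\partial B_R$ and part of $B_\epsilon(x_0)$ exits $\Omega$. The argument survives because $\mathbf{u}_j$ vanishes on $\partial\Omega_{\eta_j}$ (both components), so integration by parts against a mollifier that merely satisfies $\operatorname{supp}\phi_\epsilon\cap\Omega\subset\Omega_{\eta_j}$ produces no boundary terms on $\partial B_R$, and the half-ball mollifier can be renormalised to unit mass without spoiling either estimate. Worth stating explicitly, but not a flaw in substance.
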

\begin{proof}
	Fix \(n\) and choose \(b\ge\alpha\). When \(j\) is sufficiently large, we have
	\begin{align*}
		\partial_t \rho_j^b+\mathbf{u}_j\cdot\nabla\rho_j^b+b\rho_j^b\div \mathbf{u}_j=0,\text{ in }\Omega_{\frac{1}{n}}.
	\end{align*}
	Therefore, one has
	\begin{align*}
		\partial_t\rho_j^b=-\frac{2b}{2b-1}\sqrt{\rho_j}\mathbf{u}_j\cdot\nabla\rho_j^{b-\frac{1}{2}}-b\rho_j^{b-\frac{\alpha}{2}}\rho_j^{\frac{\alpha}{2}}\div\mathbf{u}_j ,
	\end{align*}
	which implies that 
	\begin{align}\label{tt9.1}
		\partial_t\rho_j^b \text{ is bounded in }L^2(0,T;L^1(\Omega_{\frac{1}{n}})).
	\end{align}
	Moreover, since
	\begin{align*}
		|\nabla\rho_j^b|\leq C\rho_j^{b-\alpha+1-\frac{1}{2q}}|\nabla\rho_j^{\alpha-1+\frac{1}{2q}}|,
	\end{align*}
	we have 
		\begin{align}\label{tt9.2}
		|\nabla\rho_j^b| \text{ is bounded in }L^\infty(0,T;L^{2q}(\Omega)).
	\end{align}
The Aubin–Lions lemma, together with \eqref{tt9.1} and \eqref{tt9.2}, shows that, up to a subsequence still denoted by itself,
	\begin{align*}
		\rho_j^b\rightarrow \rho^b, \text{ in }C(\overline{\Omega}_{\frac{1}{n}}\times[0,T]).
	\end{align*}
For \(n=1,2,\dots\), a standard diagonal argument allows us to extract a subsequence of \(\rho_j\), still denoted by itself, such that for any \(n\in\mathbb N^+\) it holds that
	\begin{align*}
		\rho^b_j\rightarrow \rho^b, \text{ in }C(\overline{\Omega}_{\frac{1}{n}}\times[0,T]).
	\end{align*}
	This shows that \(\rho\in C((\overline{\Omega}\setminus\{0\})_{\text{loc}}\times[0,T])\) and 
	\begin{align*}
		\rho_j \rightarrow \rho, \text{ in } C(\overline{\Omega}_{\frac{1}{n}}\times[0,T]),\forall n\in \mathbb{N}^+.
	\end{align*}  
Since the \(\rho_j\) are uniformly bounded and converge to \(\rho\) almost everywhere, \(\rho\) is also in \(L^\infty(\Omega\times(0,T))\) and its bound does not exceed the uniform bound of the \(\rho_j\).  Moreover, each \(\rho_j\) is radially symmetric, so the limit \(\rho\) is radially symmetric as well.  Finally, it follows from the dominated convergence theorem that, for every \(1\le s<\infty\),
	\begin{align*}
		\rho_j \rightarrow \rho \text{ in }L^s(\Omega\times(0,T)).
	\end{align*}
	We have completed the proof.
\end{proof}

\begin{prop}
	There exists a radially symmetric function \(\mathbf{u}\) such that, up to a subsequence,
	\begin{align}\label{tt strong}
		\sqrt{ \rho_j }\mathbf{u}_j\rightarrow \sqrt{\rho} \mathbf{u}, \text{ in }L^2(\Omega\times(0,T)).
	\end{align}
\end{prop}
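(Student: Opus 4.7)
The plan is to follow the compactness framework of Mellet-Vasseur \cite{Mellet-2007} and Guo-Jiu-Xin \cite{Guo-2008}: extract weak limits; obtain strong convergence of the momentum $\mathbf{m}_j=\rho_j\mathbf{u}_j$ on compact subsets away from the origin via Aubin-Lions; then combine this with the uniform higher integrability of $\sqrt{\rho_j}\mathbf{u}_j$ (which comes from the fact that $p>1$) to rule out concentration on the vacuum region and upgrade to strong $L^2$ convergence. I expect the final step—handling the degeneracy near the vacuum boundary and at the origin simultaneously—to be the main obstacle.

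First, from \eqref{ttu} and the uniform $L^\infty$ bound on $\rho_j$ one obtains
\begin{align*}
\norm{\sqrt{\rho_j}\mathbf{u}_j}_{L^{2p}(\Omega)}^{2p}=\int_\Omega\rho_j^{p-1}\cdot\rho_j|\mathbf{u}_j|^{2p}\,dx\leq \norm{\rho_j}_{L^\infty}^{p-1}\int_\Omega\rho_j|\mathbf{u}_j|^{2p}\,dx\leq C(T),
\end{align*}
so that $\sqrt{\rho_j}\mathbf{u}_j$ is bounded in $L^\infty(0,T;L^{2p}(\Omega))$ with $2p>2$, hence equi-integrable. Passing to subsequences, $\sqrt{\rho_j}\mathbf{u}_j\rightharpoonup\mathbf{w}$ and $\mathbf{m}_j=\sqrt{\rho_j}\cdot\sqrt{\rho_j}\mathbf{u}_j\rightharpoonup\mathbf{m}$ weakly-$\ast$ in $L^\infty(0,T;L^{2p}(\Omega))$. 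Using the strong convergence $\rho_j\to\rho$ from \eqref{tt9}, I would define $\mathbf{u}:=\mathbf{m}/\rho$ on $\{\rho>0\}$ and $\mathbf{u}:=0$ on $\{\rho=0\}$; radial symmetry transfers to the limit, and one checks $\mathbf{w}=\sqrt{\rho}\mathbf{u}$ a.e.

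Second, I would establish strong convergence of $\mathbf{m}_j$ on each $\Omega_{1/n}$. Writing $\nabla\mathbf{m}_j=\rho_j\nabla\mathbf{u}_j+\mathbf{u}_j\otimes\nabla\rho_j$, the first summand is controlled in $L^2(0,T;L^r(\Omega))$ via $\rho_j^{\alpha/2}\nabla\mathbf{u}_j\in L^2$ and $\rho_j\in L^\infty$; the second via $|\nabla\rho_j|\leq C|\nabla\rho_j^{\alpha-1+1/(2q)}|\,\rho_j^{2-\alpha-1/(2q)}$ combined with Hölder against the $L^{2p}$-bound of $\sqrt{\rho_j}\mathbf{u}_j$. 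From the momentum equation $\eqref{App  Stm Euler}_2$, each term yields a bound on $\partial_t\mathbf{m}_j$ in $L^2(0,T;W^{-1,s}_{\mathrm{loc}}(\Omega_{1/n}))$ for suitable $s>1$. Aubin-Lions then gives $\mathbf{m}_j\to\mathbf{m}$ strongly in $L^2(0,T;L^s_{\mathrm{loc}}(\Omega_{1/n}))$, whence $\mathbf{u}_j=\mathbf{m}_j/\rho_j\to\mathbf{u}$ a.e.\ on $\{\rho>0\}$.

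Finally, for the strong $L^2$ convergence itself, I would decompose, for $\varepsilon>0$ and $n\in\mathbb{N}^+$,
\begin{align*}
\int_0^T\!\!\int_\Omega |\sqrt{\rho_j}\mathbf{u}_j-\sqrt{\rho}\mathbf{u}|^2\,dx\,dt = I_1+I_2+I_3,
\end{align*}
where $I_1,I_2,I_3$ denote the integrals over $\Omega\setminus\Omega_{1/n}$, $\{\rho\leq\varepsilon\}\cap\Omega_{1/n}$, and $\{\rho>\varepsilon\}\cap\Omega_{1/n}$, respectively. Hölder with the $L^{2p}$-bound on $\sqrt{\rho_j}\mathbf{u}_j$ gives $I_1\leq C|\Omega\setminus\Omega_{1/n}|^{1-1/p}\to 0$ as $n\to\infty$, uniformly in $j$ and $\varepsilon$. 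Since $\rho_j\to\rho$ uniformly on $\Omega_{1/n}$, for $j$ sufficiently large $\{\rho\leq\varepsilon\}\cap\Omega_{1/n}\subset\{\rho_j\leq 2\varepsilon\}$, so Hölder and \eqref{ttu} yield
\begin{align*}
\int_0^T\!\!\int_{\{\rho_j\leq 2\varepsilon\}}\rho_j|\mathbf{u}_j|^2\,dx\,dt\leq (2\varepsilon)^{1-1/p}T^{1-1/p}\left(\int_0^T\!\!\int_\Omega\rho_j|\mathbf{u}_j|^{2p}\,dx\,dt\right)^{1/p}\leq C\varepsilon^{1-1/p},
\end{align*}
together with Fatou for the $\rho|\mathbf{u}|^2$ contribution, so $I_2\to 0$ as $\varepsilon\to 0$ uniformly in $j$. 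On $\{\rho>\varepsilon\}\cap\Omega_{1/n}$ one has $\rho_j>\varepsilon/2$ for $j$ large, hence $\mathbf{u}_j=\mathbf{m}_j/\rho_j\to\mathbf{u}$ in $L^2$ by Step 2 and dominated convergence, giving $I_3\to 0$ as $j\to\infty$. Letting $j\to\infty$, then $\varepsilon\to 0$, and finally $n\to\infty$ yields \eqref{tt strong}.
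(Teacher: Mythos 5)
There is a genuine gap in your Step 2, and it becomes fatal for the larger values of $\alpha$ that the proposition must cover. You attempt to run the Aubin–Lions argument on $\mathbf{m}_j=\rho_j\mathbf{u}_j$, i.e.\ on $\rho_j^{b}\mathbf{u}_j$ with $b=1$. For the spatial-gradient bound you write $\rho_j\nabla\mathbf{u}_j=\rho_j^{\,1-\alpha/2}\cdot\rho_j^{\alpha/2}\nabla\mathbf{u}_j$ and $\mathbf{u}_j\otimes\nabla\rho_j\sim\rho_j^{\,3/2-\alpha-1/(2q)}\,\sqrt{\rho_j}\mathbf{u}_j\otimes\nabla\rho_j^{\alpha-1+1/(2q)}$. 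The only uniform control available is $\rho_j^{\alpha/2}\nabla\mathbf{u}_j\in L^2_{t,x}$, $\nabla\rho_j^{\alpha-1+1/(2q)}\in L^\infty_tL^{2q}_x$, $\sqrt{\rho_j}\mathbf{u}_j\in L^\infty_tL^{2p}_x$ and $\rho_j\in L^\infty_{t,x}$; there is no positive lower bound on $\rho_j$. So the factors $\rho_j^{\,1-\alpha/2}$ and $\rho_j^{\,3/2-\alpha-1/(2q)}$ blow up near vacuum as soon as $\alpha>2$ (first term) or $\alpha>3/2-1/(2q)$ (second term), and these thresholds are well inside the admissible ranges of Theorems~\ref{Thm3}--\ref{Thm4} ($\alpha\ge 1$ in 2D, $1\le\alpha<11.7$ in 3D). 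The paper sidesteps this by working with $\rho_j^{b}\mathbf{u}_j$ for $b$ \emph{sufficiently large} (at least $b\ge\alpha$), so that $\nabla(\rho_j^b\mathbf{u}_j)=\tfrac{2b}{2b-1}\sqrt{\rho_j}\mathbf{u}_j\cdot\nabla\rho_j^{b-1/2}+\rho_j^{\,b-\alpha/2}\cdot\rho_j^{\alpha/2}\nabla\mathbf{u}_j$ involves only \emph{nonnegative} powers of $\rho_j$ (since $\nabla\rho_j^{b-1/2}=\tfrac{b-1/2}{\alpha-1/2}\rho_j^{\,b-\alpha}\nabla\rho_j^{\alpha-1/2}$ with $b-\alpha\ge 0$), and then writes $\partial_t(\rho_j^b\mathbf{u}_j)=\rho_j^{b-1}\partial_t(\rho_j\mathbf{u}_j)+\rho_j\mathbf{u}_j\,\partial_t\rho_j^{b-1}$ with the extra weight $\rho_j^{b-1}$ rendering each term of the momentum equation testable. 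Replacing $\mathbf{m}_j$ by $\rho_j^b\mathbf{u}_j$ with $b\ge\alpha$ large and defining $\mathbf{u}:=\overline{\rho^b\mathbf{u}}/\rho^b$ on $\{\rho>0\}$ fixes Step 2.

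Your Steps 1 and 3 are sound and in part differ from the paper in an interesting way. For the final passage to strong $L^2$ convergence, you truncate in $\rho$ (separately treating the vacuum set $\{\rho\le\varepsilon\}$, the region near the origin, and the good set) and use the equi-integrability estimate $\iint_{\{\rho_j\le 2\varepsilon\}}\rho_j|\mathbf{u}_j|^2\le C\varepsilon^{1-1/p}$. The paper instead truncates in $|\mathbf{u}_j|$, splitting off $\{|\mathbf{u}_j|>M\}$ and $\{|\mathbf{u}|>M\}$ whose contributions are $O(M^{-(2p-2)})$ by the same higher-integrability bound. Both are valid uses of the Mellet--Vasseur idea; the paper's version avoids the extra spatial cut-off near the origin and is slightly cleaner because the dominating function $M\sqrt{\rho_j}$ for the truncated part is uniform in $j$. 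Once Step~2 is repaired by taking $b$ large, either closing argument works.
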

\begin{proof}
	We take \(b\) sufficiently large. Since  
	\begin{align*}
		\nabla(\rho_j^b\mathbf{u}_j)=\frac{2b}{2b-1}\sqrt{\rho_j }\mathbf{u}_j\cdot\nabla\rho_j^{b-\frac{1}{2}}+\rho_j^{b-\frac{\alpha}{2}}\rho_j^{\frac{\alpha}{2}}\nabla\mathbf{u}_j,
	\end{align*}
	it follows that 
	\begin{align}\label{tt6}
		\nabla(\rho_j^b\mathbf{u}_j) \text{ is bounded in }L^2(0,T;L^1(\Omega)).
	\end{align}
	 Fix \(n\in\mathbb{N}^+\). We claim that for sufficiently large \(j\),
\begin{align}\label{tt6.5}
	\partial_t(\rho_j^b \mathbf{u}_j)\text{ is bounded in }L^2(0,T;W^{-1,s'}(\Omega_{\frac{1}{n}})),
\end{align}
where $s=\max\{p', 2N\}$. Indeed, a direct computation shows that
	\begin{align}\label{tt0}
		\partial_t(\rho_j^b \mathbf{u}_j)=\partial_t(\rho_j\mathbf{u}_j)\rho_j^{b-1}+\rho_j\mathbf{u}_j\partial_t(\rho_j^{b-1}), \text{ in }\Omega_{\frac{1}{n}}.
	\end{align}
	We first consider the second term on the right-hand side of \eqref{tt0}. For every \(\bm{\psi}\in L^2(0,T;W_0^{1,s}(\Omega_{\frac{1}{n}}))\), one has
	\begin{align*}
		\begin{split}
			&|\langle\rho_j\mathbf{u}_j\partial_t(\rho_j^{b-1}),\bm{\psi}\rangle|=\frac{b-1}{b}|\langle\partial_t(\rho_j^b) \mathbf{u}_j,\bm{\psi}\rangle|\\
			&\leq \frac{b-1}{b}\left(|\langle\div(\rho_j^b \mathbf{u}_j),\mathbf{u}_j\cdot\bm{\psi}\rangle|+|\langle(b-1)\rho_j^b\div\mathbf{u}_j,\mathbf{u}_j\cdot\bm{\psi}\rangle|\right)\\
			&=\frac{b-1}{b}\left(|\langle\rho_j^b \mathbf{u}_j,\nabla(\mathbf{u}_j\cdot\bm{\psi})\rangle|+|\langle(b-1)\rho_j^b\div\mathbf{u}_j,\mathbf{u}_j\cdot\bm{\psi}\rangle|\right)\\
			&\leq C(T)\norm{\sqrt{\rho_j}\mathbf{u}_j}_{L^\infty(0,T; L^2(\Omega_{\frac{1}{n}}))}\norm{\rho_j^{\frac{\alpha}{2}}\nabla\mathbf{u}_j}_{L^2(0,T;L^2(\Omega_{\frac{1}{n}}))}\norm{\bm{\psi}}_{L^2(0,T;L^\infty(\Omega_{\frac{1}{n}}))}\\
			&\quad+C(T)\norm{\rho_j|\mathbf{u}_j|^2}_{L^\infty(0,T; L^p(\Omega_{\frac{1}{n}}))}\norm{\nabla\bm{\psi}}_{L^2(0,T;L^{s}(\Omega_{\frac{1}{n}}))}\\
			&\leq C(T)\norm{\bm{\psi}}_{L^2(0,T;W^{1,s}_0(\Omega_{\frac{1}{n}}))}.
		\end{split}
	\end{align*}
	This shows that
	\begin{align}\label{tt1}
		\rho_j\mathbf{u}_j\partial_t(\rho_j^{b-1}) \text{ is bounded in }L^2(0,T;W^{-1,s'}(\Omega_{\frac{1}{n}})).
	\end{align}
 Next, we consider the second term on the right-hand side of \eqref{tt0}. A direct calculation shows that
	\begin{align}\label{tt1.5}
		\begin{split}
			\partial_t(\rho_j\mathbf{u}_j)\rho_j^{b-1}=&\Big(-\div(\rho_j\mathbf{u}_j\otimes\mathbf{u}_j)-\nabla(\rho_j^\gamma)+\div((\rho^\alpha_j+\eta_j\rho^\delta_j)\nabla\mathbf{u}_j)\\
			&+\nabla(((\alpha-1)\rho_j^\alpha+\eta_j(\delta-1)\rho_j^\delta)\div\mathbf{u}_j)\Big)\times\rho_j^{b-1}, \text{ in }\Omega_{\frac{1}{n}}.
		\end{split}
	\end{align}
	For every \(\bm{\psi}\in L^2(0,T;W_0^{1,s}(\Omega_{\frac{1}{n}}))\), we have
	\begin{align*}
		\begin{split}
			&|\langle\div(\rho_j\mathbf{u}_j\otimes\mathbf{u}_j)\rho_j^{b-1},\bm{\psi}\rangle|=|\langle\rho_j\mathbf{u}_j\otimes\mathbf{u}_j,\nabla(\rho_j^{b-1}\bm{\psi})\rangle|\\
			&\leq\frac{b-1}{b}|\langle\mathbf{u}_j\otimes\mathbf{u}_j,\nabla(\rho_j^b)\otimes\bm{\psi}\rangle|+|\langle\mathbf{u}_j\otimes\mathbf{u}_j,\rho_j^b\nabla\bm{\psi}\rangle|\\
			&\leq \frac{b-1}{b}|\langle\div((\mathbf{u}_j\otimes\mathbf{u}_j)\cdot\bm{\psi}),\rho_j^b\rangle|+|\langle\mathbf{u}_j\otimes\mathbf{u}_j,\rho_j^b\nabla\bm{\psi}\rangle|\\
			&\leq C(T)\norm{\sqrt{\rho_j}\mathbf{u}_j}_{L^\infty(0,T; L^2(\Omega_{\frac{1}{n}}))}\norm{\rho_j^{\frac{\alpha}{2}}\nabla\mathbf{u}_j}_{L^2(0,T;L^2(\Omega_{\frac{1}{n}}))}\norm{\bm{\psi}}_{L^2(0,T;L^\infty(\Omega_{\frac{1}{n}}))}\\
			&\quad+C(T)\norm{\rho_j|\mathbf{u}_j|^2}_{L^\infty(0,T; L^p(\Omega_{\frac{1}{n}}))}\norm{\nabla\bm{\psi}}_{L^2(0,T;L^s(\Omega_{\frac{1}{n}}))}\\
			&\leq C(T)\norm{\bm{\psi}}_{L^2(0,T;W^{1,s}_0(\Omega_{\frac{1}{n}}))}.
		\end{split}
	\end{align*}
This implies that 
\begin{align}\label{tt2}
-\div(\rho_j\mathbf{u}_j\otimes\mathbf{u}_j)\rho_j^{b-1} \text{ is bounded in }L^2(0,T;W^{-1,s'}(\Omega_{\frac{1}{n}})).
\end{align}
It is easily verified that 
\begin{align}\label{tt2.5}
	\nabla(\rho_j^\gamma)\rho_j^{b-1}\text{ is bounded in }L^\infty (0,T;L^{2}(\Omega)).
\end{align}
For every \(\bm{\psi}\in L^2(0,T;W_0^{1,s}(\Omega_{\frac{1}{n}}))\), it holds that
	\begin{align*}
		\begin{split}
		&|\langle\div((\rho^\alpha_j+\eta_j\rho^\delta_j)\nabla\mathbf{u}_j)\rho_j^{b-1},\bm{\psi}\rangle|=|\langle ((\rho^\alpha_j+\eta_j\rho^\delta_j)\nabla\mathbf{u}_j),\nabla(\rho_j^{b-1}\bm{\psi})\rangle|\\
			&\leq C(T)\norm{\nabla\rho_j^{\alpha-\frac{1}{2}}}_{L^\infty(0,T; L^{2}(\Omega_\frac{1}{n}))}\norm{\rho_j^{\frac{\alpha}{2}}\nabla\mathbf{u}_j}_{L^2(0,T;L^2(\Omega_{\frac{1}{n}}))}\norm{\bm{\psi}}_{L^2(0,T;L^\infty(\Omega_{\frac{1}{n}}))}\\
			&\quad+C(T)\norm{\rho_j^{\frac{\alpha}{2}}\nabla\mathbf{u}_j}_{L^2(0,T;L^2(\Omega_{\frac{1}{n}}))}\norm{\nabla\bm{\psi}}_{L^2(0,T;L^s(\Omega_{\frac{1}{n}}))}\\
			&\leq C(T)\norm{\bm{\psi}}_{L^2(0,T;W^{1,s}_0(\Omega_{\frac{1}{n}}))}.
		\end{split}
	\end{align*}
This implies that 
\begin{align}\label{tt2.6}
	\div((\rho^\alpha_j+\eta_j\rho^\delta_j)\nabla\mathbf{u}_j)\rho_j^{b-1} \text{ is bounded in } L^2(0,T;W^{-1,s'}(\Omega_{\frac{1}{n}})).
\end{align}
Similarly,
\begin{align}\label{tt2.7}
	\nabla(((\alpha-1)\rho_j^\alpha+\eta_j(\delta-1)\rho_j^\delta)\div\mathbf{u}_j)\rho_j^{b-1}\text{ is bounded in } L^2(0,T;W^{-1,s'}(\Omega_{\frac{1}{n}})).
\end{align}
Therefore, substituting \eqref{tt2}--\eqref{tt2.7} into \eqref{tt1.5} gives
\begin{align}
	\partial_t(\rho_j \mathbf{u}_j)\rho_j^{b-1} \text{ is bounded in }L^2(0,T;W^{-1,s'}(\Omega_{\frac{1}{n}})),
\end{align}
which together with \eqref{tt1}, shows that \eqref{tt6.5}. 

Therefore, by the Aubin–Lions lemma together with \eqref{tt6} and \eqref{tt6.5}, there exists a subsequence of \(\rho_j^b\mathbf{u}_j\), still denoted by itself, such that for any \(1\le s<\tfrac32\), 
	\begin{align*}
		\rho_j^b\mathbf{u}_j\to\overline{\rho^b\mathbf{u}}\quad\text{in }L^2(0,T;L^s(\Omega_{\tfrac1n})).
	\end{align*}
For \(n=1,2,\dots\) we apply the standard diagonal argument to extract a subsequence \(\rho_j^b\mathbf{u}_j\), still denoted by itself, such that for any \(n\in\mathbb N^+\) and $1\leq s<\frac{3}{2}$
	\begin{align}\label{tt5}
	\rho_j^b\mathbf{u}_j\to\overline{\rho^b\mathbf{u}}\quad\text{in }L^2(0,T;L^s(\Omega_{\tfrac1n})).
\end{align}

	We define
	\begin{align}
		\mathbf{u}=\left\{
		\begin{array}{ll}
			\frac{\overline{\rho^b\mathbf{u}}}{\rho^b},& \text{ in }\{\rho>0\};\\
			0,&\text{ in }\{\rho=0\}.
		\end{array}
		\right.
	\end{align}
	We obtain from \eqref{tt5} that \(\rho_j^b\mathbf{u}_j\to\overline{\rho^b\mathbf{u}}\) a.e. in \(\Omega\times(0,T)\), which implies \(\mathbf{u}_j\to\mathbf{u}\) a.e. in \(\{\rho>0\}\). Noting that each \(\mathbf{u}_j\) is radially symmetric, we conclude that \(\mathbf{u}\) is also radially symmetric. 
	
	Finally, we prove \eqref{tt strong}. By Fatou's lemma, one has
	\begin{align}\label{tt7}
		\int_0^T\int_\Omega \rho|\mathbf{u}|^{2p}dxdt=\iint_{\{\rho>0\}}\rho|\mathbf{u}|^{2p}dxdt\leq \liminf_{j}\int_0^T\int_\Omega\rho_j|\mathbf{u}_j|^{2p}dxdt\leq C(T).
	\end{align}
	Given a sufficiently large \(M\), we obtain
	\begin{align*}
		\int_0^T\int_\Omega|\sqrt{\rho_j} \mathbf{u}_j-\sqrt{\rho}\mathbf{u}|^2dxdt&\leq C\int_0^T\int_\Omega |\sqrt{\rho_j}\mathbf{u}_j\chi_{\{|\mathbf{u}_j|>M\}}|^2dxdt\\
		&\quad+ C\int_0^T\int_\Omega |\sqrt{\rho_j}\mathbf{u}_j\chi_{\{|\mathbf{u}_j|\leq M\}}-\sqrt{\rho}\mathbf{u}\chi_{\{|\mathbf{u}|\leq M\}}|^2dxdt\\
		&\quad+C\int_0^T\int_\Omega|\sqrt{\rho}\mathbf{u}\chi_{\{|\mathbf{u}|>M\}}|^2dxdt:=\sum_{i=1}^3I_i.
	\end{align*}
	From \eqref{ttu} and \eqref{tt7} we obtain
	\begin{align*}
		I_1+I_3\leq \frac{C}{M^{2p-2}}\int_0^T\int_\Omega\rho_j |\mathbf{u}_j|^{2p}dxdt+\frac{C}{M^{2p-2}}\int_0^T\int_\Omega\rho |\mathbf{u}|^{2p}dxdt\leq \frac{C(T)}{M^{2p-2}}.
	\end{align*}
	Therefore, for any given \(\varepsilon>0\), we can choose \(M\) sufficiently large such that \(I_{1}+I_{3}\le 2\varepsilon\), and then, by the dominated convergence theorem, pick \(j\) sufficiently large so that \(I_{2}\le\varepsilon\). Therefore, we finish the proof.
\end{proof}

We now show that the radially symmetric pair \((\rho,\mathbf{u})\) is a weak solution in the sense of Definition \ref{Def weak sol}.
\begin{prop}
	The pair \((\rho,\mathbf{u})\) possesses the regularity stated in \eqref{weak regularity}.
\end{prop}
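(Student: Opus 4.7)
The strategy is to verify each of the five regularity statements in \eqref{weak regularity} by passing to the limit in the uniform bounds \eqref{ttrho}--\eqref{ttu} for the approximate sequence, using the strong convergence \eqref{tt9} of the densities and \eqref{tt strong} of the momentum, together with weak-$\ast$ lower semicontinuity. Three of the five bounds follow almost directly, while the last one must be interpreted distributionally because \(\nabla \mathbf{u}\) need not be defined in the vacuum region.

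First, \(\rho\in L^\infty(\Omega\times(0,T))\) follows from the uniform \(L^\infty\) bound on \(\rho_j\) in \eqref{ttrho} together with the a.e.\ convergence \(\rho_j\to\rho\) implicit in \eqref{tt9}. For \(\nabla \rho^{\alpha-\frac12}\in L^\infty(0,T;L^2(\Omega))\): the uniform bound on \(\nabla\rho_j^{\alpha-\frac12}\) in \(L^\infty(0,T;L^2(\Omega))\) gives a weak-\(\ast\) convergent subsequence whose limit must be \(\nabla\rho^{\alpha-\frac12}\), since \eqref{tt9} combined with the uniform \(L^\infty\)-bound on \(\rho_j\) and the dominated convergence theorem implies \(\rho_j^{\alpha-\frac12}\to \rho^{\alpha-\frac12}\) strongly in \(L^s(\Omega\times(0,T))\) for every finite \(s\), so distributional derivatives converge. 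The same argument with exponent \(2q\) in place of \(2\) and with \(\alpha-1+\frac{1}{2q}\) in place of \(\alpha-\frac12\) yields \(\nabla\rho^{\alpha-1+\frac{1}{2q}}\in L^\infty(0,T;L^{2q}(\Omega))\). Finally, \(\sqrt{\rho}\mathbf{u}\in L^\infty(0,T;L^2(\Omega))\) is immediate from the uniform \(L^\infty(0,T;L^2)\) bound on \(\sqrt{\rho_j}\mathbf{u}_j\) in \eqref{ttu}, the strong convergence \eqref{tt strong}, and lower semicontinuity of the norm.

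The main difficulty is the last statement, \(\rho^\alpha\nabla\mathbf{u}\in L^2(0,T;W^{-1,1}_{\mathrm{loc}}(\Omega))\), since \(\nabla\mathbf{u}\) is not a priori defined where \(\rho\) vanishes. The plan is to define the pairing by duality as in \eqref{diff term}: for any \(\bm{\psi}\in C_c^2(\Omega\times[0,T])\), set
\begin{align*}
\langle \rho^\alpha\nabla\mathbf{u},\nabla\bm{\psi}\rangle:=-\int_0^T\!\!\int_\Omega \rho^{\alpha-\frac12}\sqrt{\rho}\mathbf{u}\cdot\Delta\bm{\psi}\,dxdt-\frac{2\alpha}{2\alpha-1}\int_0^T\!\!\int_\Omega \nabla\rho^{\alpha-\frac12}\cdot\nabla\bm{\psi}\cdot\sqrt{\rho}\mathbf{u}\,dxdt.
\end{align*}
For the approximate solutions the same identity holds classically through integration by parts since \(\rho_j>0\) and \(\mathbf{u}_j\) is smooth, so passage to the limit in the right-hand side is justified by strong convergence of \(\sqrt{\rho_j}\mathbf{u}_j\) in \(L^2\), strong convergence of \(\rho_j^{\alpha-\frac12}\) in every \(L^s\), and weak-\(\ast\) convergence of \(\nabla\rho_j^{\alpha-\frac12}\) in \(L^\infty(0,T;L^2(\Omega))\). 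Holder's inequality applied to this definition gives
\begin{align*}
|\langle \rho^\alpha\nabla\mathbf{u},\nabla\bm{\psi}\rangle|\le C\bigl(\|\rho^{\alpha-\frac12}\|_{L^\infty_tL^s_x}\|\sqrt{\rho}\mathbf{u}\|_{L^\infty_tL^2_x}+\|\nabla\rho^{\alpha-\frac12}\|_{L^\infty_tL^2_x}\|\sqrt{\rho}\mathbf{u}\|_{L^\infty_tL^2_x}\bigr)\|\bm{\psi}\|_{L^2_tW^{2,s'}_x},
\end{align*}
which, after localising \(\bm{\psi}\) to any compact subset, gives the required \(L^2(0,T;W^{-1,1}_{\mathrm{loc}}(\Omega))\) bound. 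The hard part is verifying that this distributional object coincides, on \(\{\rho>0\}\), with the classical product \(\rho^\alpha\nabla\mathbf{u}\); this will follow from the local regularity of \(\mathbf{u}\) on compact subsets of \(\{\rho>0\}\), where the renormalised continuity equation \eqref{rnm mass equ} and strong convergence of \(\rho_j\) away from the origin allow one to identify the weak limit of \(\rho_j^\alpha\nabla\mathbf{u}_j\) with \(\rho^\alpha\nabla\mathbf{u}\) as a function.
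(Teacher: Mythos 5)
Your proposal is correct and follows the same route as the paper: weak-$*$ convergence of the density gradients and of $\sqrt{\rho_j}\mathbf{u}_j$ from the uniform bounds \eqref{ttrho}--\eqref{ttu}, identification of the limits via the strong convergences \eqref{tt9} and \eqref{tt strong}, the uniform $L^\infty$ bound on $\rho_j$ for the first statement, and interpretation of $\rho^\alpha\nabla\mathbf{u}$ through the duality formula \eqref{diff term}. The only remark is that the concern in your last paragraph --- reconciling the distributional object with a pointwise product $\rho^\alpha\nabla\mathbf{u}$ on $\{\rho>0\}$ --- is unnecessary here: within Definition~\ref{Def weak sol} the symbol $\rho^\alpha\nabla\mathbf{u}$ \emph{is} the distribution given by \eqref{diff term}, so membership in $L^2(0,T;W^{-1,1}_{\mathrm{loc}}(\Omega))$ is exactly the H\"older bound you derived from $\rho\in L^\infty$, $\nabla\rho^{\alpha-\frac12}\in L^\infty_tL^2_x$, and $\sqrt\rho\mathbf{u}\in L^\infty_tL^2_x$, with no further identification required.
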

\begin{proof}
	We obtain from \eqref{ttrho} that
	\begin{align}
		&\nabla\rho_j^{\alpha-\frac{1}{2}}\rightharpoonup \nabla\rho^{\alpha-\frac{1}{2}},\text{ in }L^\infty(0,T;L^2(\Omega)),\label{tt11}\\
		&\nabla\rho_j^{\alpha-1+\frac{1}{2q}}\rightharpoonup \nabla\rho^{\alpha-1+\frac{1}{2q}},\text{ in } L^\infty(0,T,L^{2q}(\Omega)).\label{tt 10}
	\end{align}
	From \eqref{ttu} and \eqref{tt strong} we obtain
	\begin{align*}
		\sqrt{\rho_j}\mathbf{u}_j\rightharpoonup\sqrt{\rho}\mathbf{u},\text{ in }L^\infty(0,T,L^{2}(\Omega)).
	\end{align*}
	Proposition \ref{Prop W rho RT} implies $\rho\in L^\infty(\Omega\times(0,T))$.
	Verifying that \(\rho^\alpha\nabla\mathbf{u}\in L^2(0,T;W^{-1,1}_{\text{loc}}(\Omega))\) is straightforward from its definition \eqref{diff term}. Therefore, we have completed the proof.
\end{proof}

\begin{prop}
	The weak form \eqref{mass equ weak} of the continuity equation holds.
\end{prop}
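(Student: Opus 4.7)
The plan is to start from the classical continuity equation satisfied by the approximate solutions on $\Omega_{\eta_j}$, rewrite it as an integrated identity on the full domain $\Omega$ (using the extension \eqref{vv5}), and then pass to the limit using the strong convergence results established in the two previous propositions.

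First I would fix $t_2\ge t_1\ge 0$ and $\zeta\in C^1(\overline{\Omega}\times[t_1,t_2])$, and work with the classical solution $(\rho_j,\mathbf{u}_j)$ to \eqref{App  Stm Euler} on $\Omega_{\eta_j}\times[t_1,t_2]$. Multiplying $(\ref{App  Stm Euler})_1$ by $\zeta$, integrating over $\Omega_{\eta_j}\times(t_1,t_2)$ and integrating by parts, the boundary condition $\mathbf{u}_j|_{\partial\Omega_{\eta_j}}=0$ (which applies on both $\partial B_{\eta_j}$ and $\partial B_R$) kills the surface integral and yields
\begin{align*}
\int_{\Omega_{\eta_j}}\rho_j\zeta\,dx\bigg|_{t_1}^{t_2}=\int_{t_1}^{t_2}\int_{\Omega_{\eta_j}}\bigl(\rho_j\zeta_t+\sqrt{\rho_j}\sqrt{\rho_j}\mathbf{u}_j\cdot\nabla\zeta\bigr)\,dx\,dt.
\end{align*}

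Next I would rewrite each integral on $\Omega$ by using the extension \eqref{vv5}. Since $\mathbf{u}_j\equiv 0$ on $B_{\eta_j}$ after extension, the flux term picks up no contribution from $B_{\eta_j}$, so
$\int_\Omega \sqrt{\rho_j}\sqrt{\rho_j}\mathbf{u}_j\cdot\nabla\zeta\,dx=\int_{\Omega_{\eta_j}}\rho_j\mathbf{u}_j\cdot\nabla\zeta\,dx$.
For the remaining terms, the discrepancy between $\int_\Omega\rho_j\zeta\,dx$ and $\int_{\Omega_{\eta_j}}\rho_j\zeta\,dx$ (and similarly for $\zeta_t$) is bounded in absolute value by $\|\rho_j\|_{L^\infty(\Omega\times(0,T))}\|\zeta\|_{C^1}|B_{\eta_j}|$, which tends to zero as $j\to\infty$ thanks to the uniform bound \eqref{ttrho} and the fact that $|B_{\eta_j}|\to 0$. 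Hence
\begin{align*}
\int_{\Omega}\rho_j\zeta\,dx\bigg|_{t_1}^{t_2}=\int_{t_1}^{t_2}\int_{\Omega}\bigl(\rho_j\zeta_t+\sqrt{\rho_j}\sqrt{\rho_j}\mathbf{u}_j\cdot\nabla\zeta\bigr)\,dx\,dt+o_j(1).
\end{align*}

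Finally, I would pass to the limit $j\to\infty$. For the right‑hand side, $\rho_j\to\rho$ in $L^s(\Omega\times(0,T))$ for any finite $s$ gives convergence of the $\rho_j\zeta_t$ integral; since $\rho_j\to\rho$ in $L^2(\Omega\times(0,T))$ also implies $\sqrt{\rho_j}\to\sqrt{\rho}$ in $L^4(\Omega\times(0,T))$, combining with $\sqrt{\rho_j}\mathbf{u}_j\to\sqrt{\rho}\mathbf{u}$ in $L^2(\Omega\times(0,T))$ from \eqref{tt strong} gives convergence of the flux term against the bounded test function $\nabla\zeta$.

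The main obstacle — and the step that requires a little care — is the convergence of the boundary‑in‑time integrals $\int_\Omega \rho_j(\cdot,t_i)\zeta(\cdot,t_i)\,dx$ for $i=1,2$, since the convergence $\rho_j\to\rho$ in $L^s(\Omega\times(0,T))$ does not directly control fixed time slices. To handle this I would split $\Omega$ as $\Omega_{1/n}\cup B_{1/n}$: on $\Omega_{1/n}$, Proposition \ref{Prop W rho RT} provides the uniform convergence $\rho_j\to\rho$ in $C(\overline{\Omega}_{1/n}\times[0,T])$, hence $\int_{\Omega_{1/n}}\rho_j(\cdot,t_i)\zeta(\cdot,t_i)\,dx\to\int_{\Omega_{1/n}}\rho(\cdot,t_i)\zeta(\cdot,t_i)\,dx$; the remainders on $B_{1/n}$ are bounded uniformly in $j$ by $C\|\zeta\|_{L^\infty}|B_{1/n}|$, which can be made arbitrarily small by choosing $n$ large. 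A standard diagonal/$\varepsilon$ argument then delivers the desired convergence and yields \eqref{mass equ weak}.
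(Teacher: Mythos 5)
Your proposal is correct and follows essentially the same route as the paper: multiply $(\ref{App  Stm Euler})_1$ by the test function, integrate by parts over $\Omega_{\eta_j}\times(t_1,t_2)$ (the homogeneous boundary data kills the surface terms), pass to $\Omega$ via the extension \eqref{vv5}, and let $j\to\infty$ using \eqref{tt9} and \eqref{tt strong}. The paper compresses the convergence step into one line; your write-up fills in the correct details — in particular, the observation that $|\sqrt{\rho_j}-\sqrt\rho|^2\le|\rho_j-\rho|$ makes $\sqrt{\rho_j}\to\sqrt\rho$ in $L^4$ and hence handles the product in the flux term, and the $\Omega_{1/n}\cup B_{1/n}$ split using the local uniform convergence from \eqref{tt9} is exactly what controls the fixed-time boundary integrals.
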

\begin{proof}
	We know that equation 
	\begin{align}
		\partial_t \rho_j+\div(\rho_j \mathbf{u}_j)=0
	\end{align}
	holds on \(\Omega_j\).  For arbitrary $t_2\ge t_1\ge 0$ and $\zeta\in C^1(\overline{\Omega}\times[t_1,t_2])$, multiplying the equation by \(\zeta\) and integrating by parts gives
	\begin{align}
		\int_{\Omega_j}\rho_j\zeta dx|_{t_1}^{t_2}=\int_0^T\int_{\Omega_j}\rho_j\partial_t\zeta+\sqrt{\rho_j}\sqrt{\rho_j}\mathbf{u}_j\cdot\nabla\zeta dxdt.
	\end{align}
	Letting \(j\to\infty\) and using \eqref{tt9} and \eqref{tt strong}, we obtain \eqref{mass equ weak}.
\end{proof}

\begin{prop}
	The weak form \eqref{rnm mass equ} of the renormalized continuity equation holds.
\end{prop}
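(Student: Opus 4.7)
The plan is to first derive the renormalized identity \eqref{rnm mass equ} at the level of the smooth approximate solutions $(\rho_j,\mathbf u_j)$ on $\Omega_j$, and then to pass to the limit $j\to\infty$ by exploiting the strong/weak convergences already collected in the previous propositions. Since $(\rho_j,\mathbf u_j)$ is a classical solution with $\rho_j>0$, the chain rule gives
\begin{align*}
\partial_t\rho_j^b+\operatorname{div}(\rho_j^b\mathbf u_j)+(b-1)\rho_j^b\operatorname{div}\mathbf u_j=0\quad\text{in }\Omega_j\times(0,T).
\end{align*}
Multiplying by $\phi\in C^1(\overline{\Omega}\times[t_1,t_2])$ and integrating by parts over $\Omega_j\times(t_1,t_2)$, the boundary contributions from $\partial\Omega_j$ vanish since $\mathbf u_j=0$ there. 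Rewriting $\mathbf u_j\cdot\nabla\rho_j^b=\tfrac{2b}{2b-1}\sqrt{\rho_j}\mathbf u_j\cdot\nabla\rho_j^{b-1/2}$ and $\rho_j^b\mathbf u_j=\rho_j^{b-1/2}\sqrt{\rho_j}\mathbf u_j$, the two divergence expressions combine to yield exactly
\begin{align*}
\int_{\Omega_j}\rho_j^b\phi\,dx\Big|_{t_1}^{t_2}=\int_{t_1}^{t_2}\!\!\int_{\Omega_j}\!\Bigl(\rho_j^b\partial_t\phi+\tfrac{2b(b-1)}{2b-1}\sqrt{\rho_j}\mathbf u_j\cdot\nabla\rho_j^{b-1/2}\phi+b\rho_j^{b-1/2}\sqrt{\rho_j}\mathbf u_j\cdot\nabla\phi\Bigr)dx\,dt.
\end{align*}

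Next I replace the domain $\Omega_j$ by $\Omega$ using the trivial extension \eqref{vv5}: on $B_{\eta_j}$ the velocity is zero and the density is bounded uniformly in $j$ by $R_{\eta,T}\le C(T)$, so every integrand supported on $B_{\eta_j}$ contributes at most $C\,|B_{\eta_j}|=o(1)$. Hence the same identity holds on $\Omega$ up to a vanishing error. To pass to the limit, I use: Proposition \ref{Prop W rho RT}, which gives $\rho_j\to\rho$ locally uniformly away from the origin and in every $L^s(\Omega\times(0,T))$, so $\rho_j^b\to\rho^b$ and $\rho_j^{b-1/2}\to\rho^{b-1/2}$ strongly in every $L^s$ by dominated convergence; the strong convergence $\sqrt{\rho_j}\mathbf u_j\to\sqrt{\rho}\mathbf u$ in $L^2(\Omega\times(0,T))$ from \eqref{tt strong}; and the weak convergence $\nabla\rho_j^{\alpha-1/2}\rightharpoonup\nabla\rho^{\alpha-1/2}$ in $L^\infty(0,T;L^2)$ from \eqref{tt11}. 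The linear terms $\rho_j^b\partial_t\phi$ and $\rho_j^{b-1/2}\sqrt{\rho_j}\mathbf u_j\cdot\nabla\phi$ are then immediate: the first follows from $\rho_j^b\to\rho^b$ in $L^1$, while the second is a strong-strong product passing to the limit in $L^1$.

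The delicate step, and the main obstacle, is the quadratic term $\sqrt{\rho_j}\mathbf u_j\cdot\nabla\rho_j^{b-1/2}\phi$, which pairs a sequence converging only weakly against one converging strongly. To identify the weak limit of $\nabla\rho_j^{b-1/2}$, I write
\begin{align*}
\nabla\rho_j^{b-1/2}=\tfrac{2b-1}{2\alpha-1}\rho_j^{b-\alpha}\nabla\rho_j^{\alpha-1/2},
\end{align*}
valid because $b\ge\alpha$ keeps the exponent nonnegative and $\rho_j$ is uniformly bounded. The factor $\rho_j^{b-\alpha}$ converges strongly to $\rho^{b-\alpha}$ in every $L^s$ by Proposition \ref{Prop W rho RT} and dominated convergence, while $\nabla\rho_j^{\alpha-1/2}$ converges weakly in $L^\infty(0,T;L^2)$; hence $\nabla\rho_j^{b-1/2}\rightharpoonup\nabla\rho^{b-1/2}$ weakly in $L^2(\Omega\times(0,T))$. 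Combining this with the strong $L^2$ convergence of $\sqrt{\rho_j}\mathbf u_j$ and the boundedness of $\phi$ yields
\begin{align*}
\int_{t_1}^{t_2}\!\!\int_{\Omega}\sqrt{\rho_j}\mathbf u_j\cdot\nabla\rho_j^{b-1/2}\phi\,dx\,dt\longrightarrow \int_{t_1}^{t_2}\!\!\int_{\Omega}\sqrt{\rho}\mathbf u\cdot\nabla\rho^{b-1/2}\phi\,dx\,dt,
\end{align*}
and \eqref{rnm mass equ} is established. The case $b=\alpha$ (where the exponent $b-\alpha$ degenerates) is handled directly by $\nabla\rho_j^{\alpha-1/2}\rightharpoonup\nabla\rho^{\alpha-1/2}$, so no separate argument is needed.
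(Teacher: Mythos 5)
Your proof is correct and follows essentially the same route as the paper: derive the renormalized identity for the classical approximate solutions $(\rho_j,\mathbf u_j)$ on $\Omega_j$, integrate by parts against $\phi$ (boundary terms vanish because $\mathbf u_j|_{\partial\Omega_j}=0$), and pass to the limit using \eqref{tt9} and \eqref{tt strong}. Your write-up simply makes explicit two details the paper leaves implicit — the $o(|B_{\eta_j}|)$ error from replacing $\Omega_j$ by $\Omega$, and the strong--weak pairing needed for the term $\sqrt{\rho_j}\mathbf u_j\cdot\nabla\rho_j^{b-1/2}\phi$ (where the factorization $\nabla\rho_j^{b-1/2}=\tfrac{2b-1}{2\alpha-1}\rho_j^{b-\alpha}\nabla\rho_j^{\alpha-1/2}$ together with the uniform $L^\infty$ bound on $\rho_j$ identifies the weak $L^2$ limit of $\nabla\rho_j^{b-1/2}$) — so there is no substantive difference in approach.
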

\begin{proof}
	Let \(b\ge \alpha\). On \(\Omega_j\), $(\rho_j, \mathbf{u}_j)$ satisfies
	\begin{align}
		\partial_t \rho_j^b+\div(\rho_j^b\mathbf{u}_j)+(b-1)\rho_j^b\div \mathbf{u}_j=0.
	\end{align}
	For arbitrary $t_2\ge t_1\ge 0$ and $\phi\in C^1(\overline{\Omega}\times[t_1,t_2])$, multiplying the equation by \(\phi\) and integrating by parts gives
	\begin{align*}
		\begin{split}
			\int_{\Omega_j} \rho_j^b\phi dx\left|_{t_1}^{t_2}\right.=\int_{t_1}^{t_2}\int_{\Omega_j}\left( \rho_j^b\partial_t \phi +\frac{2b(b-1)}{2b-1}\sqrt{\rho_j}\mathbf{u}_j\cdot\nabla \rho_j^{b-\frac{1}{2}}\phi+b\rho_j^{b-\frac{1}{2}}\sqrt{\rho_j}\mathbf{u}_j\cdot\nabla\phi\right) dxdt.
		\end{split}
	\end{align*}
	Letting \(j\to\infty\) and using \eqref{tt9} and \eqref{tt strong}, we obtain \eqref{rnm mass equ}.
\end{proof}

\begin{prop}
	It holds that
	\begin{align*}
		\rho\in  C(\overline{\Omega}\times[0,T]).
	\end{align*}
\end{prop}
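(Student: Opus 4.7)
The plan is to reduce continuity of $\rho$ on $\overline{\Omega}\times[0,T]$ to continuity at points of the form $(0,t_0)$, since Proposition \ref{Prop W rho RT} has already supplied continuity on $\overline{\Omega}_{1/n}\times[0,T]$ for every $n\in\mathbb{N}^+$. The decisive ingredient will be a uniform-in-$t$ spatial Hölder estimate on $\rho$, from which one can continuously extend $\rho$ to the symmetry center and then combine the extension with the interior continuity to obtain joint continuity at $(0,t_0)$.

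The spatial Hölder estimate is furnished by \eqref{tt 10}: setting $a:=\alpha-1+\tfrac{1}{2q}>0$, we have $\nabla\rho^{a}\in L^{\infty}(0,T;L^{2q}(\Omega))$, and under the hypotheses of Theorem \ref{Thm3} or Theorem \ref{Thm4} one has $2q>N$. The Morrey embedding therefore provides a constant $C$ with
\begin{align*}
|\rho^{a}(x,t)-\rho^{a}(y,t)|\le C|x-y|^{1-N/(2q)},\qquad x,y\in\Omega\setminus\{0\},\ t\in[0,T].
\end{align*}
Because Proposition \ref{Prop W rho RT} gives $\rho\in L^{\infty}$, the map $s\mapsto s^{1/a}$ is Hölder continuous of exponent $\min(1,1/a)$ on $[0,\|\rho\|_{L^{\infty}}^{a}]$, so composing yields
\begin{align*}
|\rho(x,t)-\rho(y,t)|\le C|x-y|^{\beta},\qquad \beta:=\min\!\Big(1,\tfrac{1}{a}\Big)\Big(1-\tfrac{N}{2q}\Big)>0,
\end{align*}
for all $x,y\in\Omega\setminus\{0\}$ and $t\in[0,T]$. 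For each fixed $t$, Cauchy's criterion then permits the definition $\rho(0,t):=\lim_{|x|\to 0^{+}}\rho(x,t)$, and the Hölder inequality above extends by continuity to all of $\overline{\Omega}$.

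For continuity at a point $(0,t_{0})$, take any sequence $(x_k,t_k)\to(0,t_{0})$ and split
\begin{align*}
|\rho(x_k,t_k)-\rho(0,t_{0})|\le |\rho(x_k,t_k)-\rho(0,t_k)|+|\rho(0,t_k)-\rho(0,t_{0})|.
\end{align*}
The first term is bounded by $C|x_k|^{\beta}\to 0$. For the second term, fix any $r\in(0,R)$ and a unit vector $e$, and apply the Hölder estimate to both $t_k$ and $t_{0}$:
\begin{align*}
|\rho(0,t_k)-\rho(0,t_{0})|\le 2Cr^{\beta}+|\rho(re,t_k)-\rho(re,t_{0})|.
\end{align*}
Since $\rho\in C(\overline{\Omega}_{r}\times[0,T])$ by Proposition \ref{Prop W rho RT}, letting $k\to\infty$ and then $r\to 0^{+}$ gives $\rho(0,t_k)\to\rho(0,t_{0})$, hence $\rho(x_k,t_k)\to\rho(0,t_{0})$. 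Together with the already-established continuity on $\overline{\Omega}_{1/n}\times[0,T]$ for every $n$, this yields $\rho\in C(\overline{\Omega}\times[0,T])$. The argument is essentially routine once the Morrey embedding is invoked; the only mild subtlety is the regime $a>1$ (large $\alpha$), in which the exponent $1-N/(2q)$ gets degraded by the factor $1/a$, but the resulting $\beta$ is still strictly positive, which is all that is needed.
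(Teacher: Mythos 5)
Your proof is correct, but it follows a genuinely different route from the paper. The paper's proof works globally on $\Omega$: it uses the renormalized continuity equation \eqref{rnm mass equ} to bound $\partial_t\rho^b$ in $L^\infty\bigl(0,T;(H^1(\Omega))^*\bigr)$, and then invokes the Aubin--Lions lemma with the compact chain $W^{1,2q}(\Omega)\hookrightarrow\hookrightarrow C(\overline\Omega)\hookrightarrow (H^1(\Omega))^*$, obtaining $\rho^b\in C(\overline\Omega\times[0,T])$ in one stroke. You instead avoid any dual-space estimate on $\partial_t\rho$: you rely on the uniform-in-$t$ Morrey embedding of $\rho^{\alpha-1+\frac{1}{2q}}(\cdot,t)$ (coming from \eqref{tt 10} together with $2q>N$), recover a spatial Hölder modulus for $\rho$ by inverting the power with exponent $\min(1,1/a)$, and then patch this against the interior (annulus) continuity already supplied by Proposition \ref{Prop W rho RT} to get joint continuity at the origin. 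Both arguments are valid; the paper's is shorter because a single application of Aubin--Lions hides the $a.e.\ t$-to-all-$t$ upgrade and the origin-patching inside the compactness theorem, whereas your version makes the mechanism transparent. One point you should state explicitly: the Morrey inequality with a uniform constant holds a priori only for a.e.\ $t$, and extending it to every $t\in[0,T]$ requires passing to the limit along a sequence $t_k\to t$ using the continuity of $\rho$ on $\overline\Omega_{1/n}\times[0,T]$ from Proposition \ref{Prop W rho RT} (the Hölder bound is closed under this pointwise limit). As written you invoke the estimate for arbitrary $t$ without flagging this step, though the ingredients you cite do support it.
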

\begin{proof}
	Equation $\eqref{rnm mass equ}$ shows that for any \(\phi\in C^1(\overline\Omega)\) we have for any $b\ge \alpha$,
	\begin{align}
		\langle \partial_t \rho^b, \phi\rangle=\int_{\Omega}\frac{2b(b-1)}{2b-1}\sqrt{\rho}\mathbf{u}\cdot\nabla \rho^{b-\frac{1}{2}}\phi+b\rho^{b-\frac{1}{2}}\sqrt{\rho}\mathbf{u}\cdot\nabla\phi dx, \text{ in }\mathcal{D}'(0,T).
	\end{align}
	Therefore, we obtain
	\begin{align*}
		\sup_{0\leq t\leq T}|\langle \partial_t \rho^b,\phi\rangle|&\leq C(b)\norm{\sqrt{\rho}\mathbf{u}}_{L^\infty L^2(\Omega)}\norm{\nabla \rho^{b-\frac{1}{2}}}_{L^\infty L^{2q}(\Omega)}\norm{\phi}_{H^1(\Omega)}\\
		&\quad+C(b)\norm{\rho^{b-\frac{1}{2}}}_{L^\infty L^\infty(\Omega)}\norm{\sqrt{\rho}\mathbf{u}}_{L^\infty L^2(\Omega)}\norm{\phi}_{H^1(\Omega)}\leq C(b,T)\norm{\phi}_{H^1(\Omega)},
	\end{align*}
	which shows that
	\begin{align*}
		\partial_t \rho^b \text{ in } L^\infty(0,T;(H^{1}(\Omega))^*).
	\end{align*}
	Combined with the spatial-derivative estimate \(\rho^b\in L^\infty(0,T; W^{1,2q}(\Omega))\), the Aubin–Lions lemma yields \(\rho^b\in C(\overline\Omega\times[0,T])\), which implies $\rho\in  C(\overline{\Omega}\times[0,T])$.
\end{proof}

\begin{prop}
	The weak form \eqref{mom equ weak} of the momentum equation holds.
\end{prop}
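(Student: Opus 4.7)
The plan is to obtain \eqref{mom equ weak} by passing to the limit $j\to\infty$ in the weak form of the classical approximate momentum equation $\eqref{App  Stm Euler}_2$. Since $\mathbf{u}_{\eta_j}$ vanishes on $\partial\Omega_j=\partial\Omega\cup\partial B_{\eta_j}$ while the admissible test function $\bm\psi$ from Definition \ref{Def weak sol} vanishes only on $\partial\Omega$, I would first introduce a radially symmetric cut-off $\chi_j\in C^\infty(\overline\Omega)$ with $\chi_j\equiv 0$ on $\overline{B_{2\eta_j}}$, $\chi_j\equiv 1$ outside $B_{3\eta_j}$, and $|\nabla^k\chi_j|\le C\eta_j^{-k}$, and test the smooth approximate momentum equation against $\bm\psi_j:=\chi_j\bm\psi$. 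Because $\bm\psi_j|_{\partial\Omega_j}=\bm\psi_j(\cdot,T)=0$, standard integration by parts produces the identity
\begin{align*}
	&\int_{\Omega_j}\mathbf{m}_{\eta_j,0}\cdot\bm\psi_j(\cdot,0)dx+\int_0^T\!\!\int_{\Omega_j}\bigl[\rho_j\mathbf{u}_j\cdot\partial_t\bm\psi_j+\rho_j\mathbf{u}_j\otimes\mathbf{u}_j:\nabla\bm\psi_j+\rho_j^\gamma\div\bm\psi_j\bigr]dxdt\\
	&\quad-\int_0^T\!\!\int_{\Omega_j}\tilde\mu(\rho_j)\nabla\mathbf{u}_j:\nabla\bm\psi_j dxdt-\int_0^T\!\!\int_{\Omega_j}\tilde\lambda(\rho_j)\div\mathbf{u}_j\div\bm\psi_j dxdt=0
\end{align*}
for every $j$. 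I would then split each integral into its restriction to $\Omega\setminus B_{3\eta_j}$ (where $\bm\psi_j=\bm\psi$) and a remainder localized in $B_{3\eta_j}$; since $|B_{3\eta_j}|=O(\eta_j^N)$, the uniform bounds \eqref{ttrho}--\eqref{ttu} render every remainder $o(1)$, even those carrying the factor $|\nabla\chi_j|\le C\eta_j^{-1}$, as the $\eta_j^{-1}$ loss is compensated by the volume of the annulus $B_{3\eta_j}\setminus B_{2\eta_j}$. The artificial viscosity pieces carrying $\eta_j$ are handled without integrating by parts through
\begin{align*}
	\left|\int_0^T\!\!\int_{\Omega_j}\eta_j\rho_j^\delta\nabla\mathbf{u}_j:\nabla\bm\psi dxdt\right|\le\eta_j^{1/2}\!\left(\int_0^T\!\!\int_{\Omega_j}\!\eta_j\rho_j^\delta|\nabla\mathbf{u}_j|^2\right)^{\!\!1/2}\!\!\norm{\rho_j^{\delta/2}}_{L^\infty}\norm{\nabla\bm\psi}_{L^2}\to 0,
\end{align*}
using \eqref{ttu}, and likewise for the $\eta_j(\delta-1)\rho_j^\delta\div\mathbf{u}_j\div\bm\psi$ contribution, so only the $\rho^\alpha$ diffusion survives in the limit.

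For the principal viscosity term I would perform a second integration by parts in space, which is justified because $\rho_j>0$ and $(\rho_j,\mathbf{u}_j)$ is smooth on $\Omega_j$: using $\alpha\rho_j^{\alpha-1}\nabla\rho_j=\frac{2\alpha}{2\alpha-1}\rho_j^{1/2}\nabla\rho_j^{\alpha-1/2}$ one arrives at
\begin{align*}
	\int_0^T\!\!\int_{\Omega_j}\rho_j^\alpha\nabla\mathbf{u}_j:\nabla\bm\psi dxdt=&-\int_0^T\!\!\int_{\Omega_j}\rho_j^{\alpha-\frac12}\sqrt{\rho_j}\mathbf{u}_j\cdot\Delta\bm\psi dxdt\\
	&-\int_0^T\!\!\int_{\Omega_j}\frac{2\alpha}{2\alpha-1}\nabla\rho_j^{\alpha-\frac12}\cdot\nabla\bm\psi\cdot\sqrt{\rho_j}\mathbf{u}_j dxdt,
\end{align*}
which matches precisely the form used to define $\langle\rho^\alpha\nabla\mathbf{u},\nabla\bm\psi\rangle$ in \eqref{diff term}; the corresponding identity for $\langle\rho^\alpha\div\mathbf{u},\div\bm\psi\rangle$ is analogous. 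The limit is then taken termwise: $\mathbf{m}_{\eta_j,0}\to\mathbf{m}_0$ in $L^1(\Omega)$ by \eqref{2d weak initial data}; the strong convergence $\rho_j\to\rho$ in every $L^s(\Omega\times(0,T))$ from Proposition \ref{Prop W rho RT}, together with the uniform $L^\infty$-bound on $\rho_j^{\alpha-1/2}$ and the strong $L^2$-convergence $\sqrt{\rho_j}\mathbf{u}_j\to\sqrt\rho\mathbf{u}$ of \eqref{tt strong}, handles the $\partial_t\bm\psi$, pressure, and $\Delta\bm\psi$ viscosity terms; and the weak convergence $\nabla\rho_j^{\alpha-1/2}\rightharpoonup\nabla\rho^{\alpha-1/2}$ in $L^\infty(0,T;L^2(\Omega))$ from \eqref{tt11}, paired with strong $L^2$-convergence of $\sqrt{\rho_j}\mathbf{u}_j\cdot\nabla\bm\psi$, completes the $\nabla\bm\psi$ viscosity piece.

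The main obstacle is the convective term $\rho_j\mathbf{u}_j\otimes\mathbf{u}_j=\sqrt{\rho_j}\mathbf{u}_j\otimes\sqrt{\rho_j}\mathbf{u}_j$, which is quadratic in $\sqrt{\rho_j}\mathbf{u}_j$ and therefore requires precisely the strong $L^2$-convergence established in \eqref{tt strong}; that convergence in turn relies decisively on the Mellet--Vasseur-type bound $\sup_{t\in[0,T]}\int_\Omega\rho_j|\mathbf{u}_j|^{2p}dx\le C(T)$ from \eqref{ttu}, which provides the equi-integrability needed to truncate $|\mathbf{u}_j|>M$ with a remainder of order $M^{-(2p-2)}$. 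Once that compactness is available, all remaining steps reduce to careful weak--strong pairings and standard $\eta_j$-independent estimates.
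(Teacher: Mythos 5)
Your plan of multiplying the admissible test function $\bm\psi$ by a radial cut-off $\chi_j$ supported outside $B_{2\eta_j}$, testing against $\bm\psi_j=\chi_j\bm\psi$, and controlling the commutator errors by the smallness of $|B_{3\eta_j}\setminus B_{2\eta_j}|$ has a genuine gap in dimension $N=2$. Your own scaling heuristic fails for the physical viscosity: using only the bounds $\sup_t\|\rho_j^{\alpha/2}\|_{L^\infty}\le C(T)$ and $\int_0^T\int_{\Omega_j}\rho_j^\alpha|\nabla\mathbf u_j|^2\le C(T)$ from \eqref{ttrho}--\eqref{ttu}, the error term localized on the annulus obeys
\begin{align*}
\left|\int_0^T\!\!\int_{B_{3\eta_j}\setminus B_{2\eta_j}}\rho_j^\alpha\nabla\mathbf u_j:(\bm\psi\otimes\nabla\chi_j)\,dxdt\right|
\le \|\rho_j^{\alpha/2}\|_{L^\infty}\|\rho_j^{\alpha/2}\nabla\mathbf u_j\|_{L^2_{t,x}}\|\bm\psi\otimes\nabla\chi_j\|_{L^2_{t,x}}
\lesssim \eta_j^{-1}\cdot\eta_j^{N/2},
\end{align*}
which is $O(\eta_j^{1/2})$ when $N=3$ but only $O(1)$ when $N=2$. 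The same type of scaling makes the $\rho_j\mathbf u_j\otimes\mathbf u_j:(\bm\psi\otimes\nabla\chi_j)$ commutator borderline when $N=2$ and $p\le 2$, which in particular includes the case $p=2$ used in Theorem~\ref{Thm5}. Rescaling the cut-off width does not repair the viscosity term in 2D: a cut-off supported on an annulus of width $r_j$ gives $\eta_j^{-1}\to r_j^{-1}$ and the bound stays $r_j^{N/2-1}=O(1)$ for any choice. (Performing your second integration by parts before replacing $\bm\psi_j$ by $\bm\psi$ only makes things worse, since $\Delta\chi_j=O(\eta_j^{-2})$ then dominates.) Absent a proof that the dissipation $\rho_j^\alpha|\nabla\mathbf u_j|^2$ does not concentrate on the annulus $B_{3\eta_j}\setminus B_{2\eta_j}$, the remainder is not known to be $o(1)$.

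The paper takes a genuinely different route that sidesteps the commutator entirely: it tests against the unmodified $\bm\psi$ and carries the boundary integrals over $\partial B_{\eta_j}$ explicitly as $K_5^j,K_6^j$ in \eqref{tt12}. The key structural facts it uses, which your generic cut-off argument discards, are (i) $\mathbf u_j=0$ on $\partial\Omega_j$ together with radial symmetry force $\bm\psi\cdot\nabla\mathbf u_j\cdot\mathbf n=\div\mathbf u_j\,\bm\psi\cdot\mathbf n=\partial_r u_j\,\bm\psi\cdot\mathbf n$ on $\partial B_{\eta_j}$; and (ii) the renormalized continuity equation $\partial_t\rho_j^\alpha+\mathbf u_j\cdot\nabla\rho_j^\alpha+\alpha\rho_j^\alpha\div\mathbf u_j=0$ restricted to $\partial B_{\eta_j}$, where $\mathbf u_j=0$, converts $\rho_j^\alpha\div\mathbf u_j$ into $-\frac1\alpha\partial_t\rho_j^\alpha$. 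After a time integration by parts the viscous boundary contribution involves only $\rho_j^\alpha$ and $\partial_t\bm\psi$, both uniformly bounded, times the surface measure $|\partial B_{\eta_j}|\sim\eta_j^{N-1}\to0$ in both dimensions. Your handling of the artificial viscosity (extra factor $\sqrt{\eta_j}$), the initial-data term, the pressure term, and the time-derivative term, as well as your use of the Mellet--Vasseur bound and \eqref{tt strong} for the convective limit, are all correct and close to the paper's; it is the treatment of the physical viscosity near $\partial B_{\eta_j}$ in $N=2$ that needs the paper's boundary-term argument rather than a cut-off.
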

\begin{proof}
	The approximate solutions satisfy the momentum equation: for any \(\bm{\psi}\in C^2(\overline\Omega\times[0,T])\) such that \(\bm{\psi}(x,t)=0\) on \(\partial\Omega\) and \(\bm{\psi}(x,T)=0\), we have
	\begin{align}\label{tt12}
		\begin{split}
			0&=\int_{\Omega_j}\rho_{j,0}\mathbf{u}_{j,0}\cdot\bm{\psi}(x,0) dx+\int_0^T\int_{\Omega_j}\sqrt{\rho_j}\sqrt{\rho_j}\mathbf{u}_j\cdot\partial_t\bm{\psi}+\sqrt{\rho_j}\mathbf{u}_j\otimes\sqrt{\rho_j}\mathbf{u}_j:\nabla\bm{\psi}+\rho_j^\gamma\div\bm{\psi} dxdt\\
			&\quad-(\langle\rho_j^\alpha\nabla\mathbf{u}_j,\nabla\bm{\psi}\rangle+(\alpha-1)\langle\rho_j^\alpha\div\mathbf{u}_j,\div\bm{\psi}\rangle)-\eta_j(\langle\rho_j^\delta\nabla\mathbf{u}_j,\nabla\bm{\psi}\rangle+(\delta-1)\langle\rho_j^\delta\div\mathbf{u}_j,\div\bm{\psi}\rangle)\\
			&\quad+\int_0^T\int_{\partial\Omega_j}-\bm{\psi}\cdot\sqrt{\rho_j}\mathbf{u}_j\otimes\sqrt{\rho_j}\mathbf{u}_j\cdot \mathbf{n}-\rho_j^\gamma\bm{\psi}\cdot\mathbf{n}dSdt\\
			&\quad+\int_0^T\int_{\partial \Omega_j}(\rho_j^\alpha+\eta_j\rho_j^\delta)\bm{\psi}\cdot\nabla\mathbf{u}_j\cdot\mathbf{n}+((\alpha-1)\rho_j^\alpha+\eta_j(\delta-1)\rho_j^\delta)\div\mathbf{u}_j\bm{\psi}\cdot\mathbf{n}dSdt=\sum_{k=1}^{6}K^j_k.
		\end{split}
	\end{align}
	Using \eqref{2d weak initial data}, \eqref{weak rho ini} and \eqref{weak u ini}, we obtain
	\begin{align}\label{K1}
		K_1^j\to \int_{\Omega}\mathbf{m}(0)\cdot\bm{\psi}(x,0)dx, \text{ as }j\rightarrow\infty.
	\end{align}
	From  \eqref{tt9} and \eqref{tt strong} we obtain
	\begin{align}\label{K2}
		K_2^j\to \int_0^T\int_{\Omega}\sqrt{\rho}\sqrt{\rho}\mathbf{u}\cdot\partial_t\bm{\psi}+\sqrt{\rho}\mathbf{u}\otimes\sqrt{\rho}\mathbf{u}:\nabla\bm{\psi}+\rho^\gamma\div\bm{\psi} dxdt, \text{ as } j\to \infty. 
	\end{align}
	The diffusion terms of the approximate system can be rewritten as
	\begin{align*}
		\begin{split}
			\langle \rho_j^\alpha \nabla\mathbf{u}_j,\nabla\bm{\psi}\rangle&=-\int_0^T\int_{\Omega_j} \rho_j^{\alpha-\frac{1}{2}}\sqrt{\rho_j}\mathbf{u}_j\cdot\Delta\bm{\psi}dxdt\\
			&\quad	-\frac{2\alpha}{2\alpha-1}\int_0^T\int_{\Omega_j}\nabla \rho_j^{\alpha-\frac{1}{2}}\cdot\nabla\bm{\psi}\cdot\sqrt{\rho_j}\mathbf{u}_j dxdt;\\
			\langle \rho_j^\alpha \div\mathbf{u}_j,\div\bm{\psi}\rangle&=-\int_0^T\int_{\Omega_j} \rho_j^{\alpha-\frac{1}{2}}\sqrt{\rho_j}\mathbf{u}_j\cdot\nabla\div\bm{\psi} dxdt\\
			&\quad-\frac{2\alpha}{2\alpha-1}\int_0^T\int_\Omega\nabla \rho_j^{\alpha-\frac{1}{2}}\cdot\sqrt{\rho_j}\mathbf{u}_j \div\bm{\psi} dxdt.
		\end{split}
	\end{align*}
	Therefore, using \eqref{tt9}, \eqref{tt strong} and \eqref{tt11}, we obtain
	\begin{align}\label{K3}
		K_3^j\to -\langle \rho^\alpha \nabla \mathbf{u},\nabla\bm{\psi}\rangle-(\alpha-1)\langle \rho^\alpha\div\mathbf{u},\div\bm{\psi}\rangle, \text{ as }j\to \infty,
	\end{align}
	where the diffusion terms are defined as in \eqref{diff term}. For the diffusion terms arising from the artificial viscosity, we get from \eqref{ttu} that
	\begin{align}\label{K4}
		\begin{split}
			|K_4^j|\leq C\sqrt{\eta_j}\sqrt{\eta_j}\norm{\rho_\eta^{\frac{\delta}{2}}\nabla \mathbf{u}_j}_{L^2(0,T;L^2(\Omega_j))}\norm{\nabla\bm{\psi}}_{L^2(0,T;L^2(\Omega_j))}\leq C\sqrt{\eta_j}\to 0, \text{ as }j\to \infty. 
		\end{split}
	\end{align}
	Finally, we only need to handle the boundary terms.  Noting that \(\mathbf{u}_j|_{\partial \Omega_j}=0\), we have
	\begin{align}\label{K5}
		|K_5^j|\leq \left|\int_0^T\int_{\partial\Omega_j}\rho_j^\gamma\bm{\psi}\cdot\mathbf{n}dSdt\right|\leq C(T)\eta_j^{N-1}\rightarrow 0, \text{ as }j\to \infty.
	\end{align}
	Noting that
	\begin{align}
		\partial_t \rho_j^\alpha +\mathbf{u}_j\cdot \nabla\rho_j^\alpha +\alpha\rho_j^\alpha\div\mathbf{u}_j=0,
	\end{align}
	we therefore have
	\begin{align}\label{tt10}
		\begin{split}
			&\left|\int_0^T\int_{\partial \Omega_j}(\alpha-1)\rho_j^\alpha\div\mathbf{u}_j\bm{\psi}\cdot\mathbf{n}dSdt\right|=\left|\frac{\alpha-1}{\alpha}\int_0^T\int_{\partial \Omega_j}\partial_t\rho_j^\alpha \bm{\psi}\cdot\mathbf{n}dSdt\right|\\
			&=\left|\frac{\alpha-1}{\alpha}\left(\int_{\partial \Omega_j}\rho_j^\alpha \bm{\psi}\cdot\mathbf{n}dS(T)-\int_{\partial \Omega_j}\rho_j^\alpha \bm{\psi}\cdot\mathbf{n}dS(0)-\int_0^T\int_{\partial \Omega_j}\rho_j^\alpha \partial_t\bm{\psi}\cdot\mathbf{n}dSdt\right)\right|\\
			&\leq C(T)\eta_j^{N-1}\rightarrow 0.
		\end{split}
	\end{align}
	Similarly,
	\begin{align}
		\left|\int_0^T\int_{\partial \Omega_j}\eta_j(\delta-1)\rho_j^\delta\div\mathbf{u}_j\bm{\psi}\cdot\mathbf{n}dSdt\right|\rightarrow 0.
	\end{align}
	Noting the following simple fact:
	\begin{align*}
		\bm{\psi}\cdot\nabla\mathbf{u}_j\cdot\mathbf{n}|_{\partial\Omega_j}=\bm{\psi}\cdot\nabla(\mathbf{u}_j\cdot\mathbf{n})|_{\partial\Omega_j}=\partial_r u_j \bm{\psi}\cdot \mathbf{n}|_{\partial\Omega_j}=\div\mathbf{u}_j\bm{\psi}\cdot\mathbf{n}|_{\partial\Omega_j}
	\end{align*}
	This implies that, similarly to \eqref{tt10}, we have
	\begin{align*}
		\left|	\int_0^T\int_{\partial \Omega_j}(\rho_j^\alpha+\eta_j\rho_j^\delta)\bm{\psi}\cdot\nabla\mathbf{u}_j\cdot\mathbf{n}dSdt\right|\to0, \text{ as }j\rightarrow\infty.
	\end{align*}
	Therefore, we obtain
	\begin{align}\label{K6}
		|K_6^j|\rightarrow 0,\text{ as }j\to\infty.
	\end{align}
	Letting \(j\to\infty\) and using \eqref{K1}-\eqref{K5} and \eqref{K6}, we obtain \eqref{mom equ weak} from \eqref{tt12}. We have completed the proof.
\end{proof}

\subsection{Finite-time vacuum vanishing}
In this subsection we investigate the finite-time vanishing of the vacuum region. Assuming the conditions of Theorem \ref{Thm5} hold, let \((\rho,\mathbf u)\) be the weak solution on \(\Omega\times[0,\infty)\) obtained in Theorem \ref{Thm4}. From \eqref{u^2 weak}, \eqref{vv RT}, \eqref{W nablarho^2} and \eqref{W 2d l4} we obtain 
	\begin{align}
	\begin{split}
		\sup_{0\leq t<\infty}&\left(\norm{\sqrt{\rho}\mathbf{u}}_{L^2(\Omega)}+\norm{\rho}_{L^\infty(\Omega)}+\norm{\nabla\rho^{\alpha-\frac{1}{2}}}_{L^2(\Omega)}+\norm{\nabla\rho^{\alpha-1+\frac{1}{2q}}}_{L^{2q}(\Omega)}\right)\\
		&+\int_0^\infty\Big(\norm{\nabla\rho^{\frac{\gamma+\alpha-1}{2}}}_{L^{2}(\Omega)}^2+\norm{\nabla\rho^{\alpha-1+\frac{\gamma-\alpha+1}{2q}}}_{L^{2q}(\Omega)}^{2q}\Big)dt\leq C.
	\end{split}
\end{align}

	\begin{prop}\label{Prop rho epsilon weak}
	Under the assumptions of Theorem \ref{Thm5}, there exist constants \(T_0>0\) and \(\rho^->0\), such that
	\begin{align}\label{rho varepsilon}
		\rho\ge \rho^-, \quad \forall (x,t)\in \Omega\times[T_0,\infty).
	\end{align}
\end{prop}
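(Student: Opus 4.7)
The plan is to imitate the strategy of Proposition \ref{Prop rho epsilon}, replacing the $L^4$-based identities by $L^{2q}$-based ones; the condition $2q > N$ (which follows from $q > N/2$) plays the same role here that $4 > N$ played there. First, by exploiting the uniform upper bound \eqref{vv RT}, the chain-rule identity $\nabla \rho^c = (c/\beta)\rho^{c-\beta}\nabla \rho^\beta$, and the estimates displayed immediately before the statement, I would obtain, for any sufficiently large $b \ge \alpha$,
\begin{align*}
\sup_{0 \le t < \infty} \Big(\norm{\rho}_{L^\infty(\Omega)} + \norm{\nabla \rho^{b-\frac12}}_{L^2(\Omega)} + \norm{\nabla \rho^b}_{L^{2q}(\Omega)}\Big) + \int_0^\infty \Big(\norm{\nabla \rho^{b-\frac12}}_{L^2}^2 + \norm{\nabla \rho^b}_{L^{2q}}^{2q}\Big) dt \le C.
\end{align*}

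Setting $g(t) := \norm{\rho^b(t) - \overline{\rho^b}(t)}_{L^{2q}(\Omega)}^{2q}$, the Poincaré--Wirtinger inequality for zero-mean functions gives $g(t) \le C\norm{\nabla \rho^b}_{L^{2q}}^{2q}$, so $\int_0^\infty g(t)\, dt < \infty$. Next, I would show $\int_0^\infty |g'(t)|\, dt < \infty$ by differentiating $g$, substituting $\partial_t \rho^b$ from the renormalized continuity equation \eqref{rnm mass equ}, and integrating by parts; the resulting identity bounds $|g'(t)|$ by a sum of terms of the form $\norm{\sqrt{\rho}\mathbf{u}}_{L^2}\norm{\nabla \rho^b}_{L^2}\norm{\rho^b - \overline{\rho^b}}_{L^\infty}^{2q-2}$ together with analogous quantities. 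Using $\norm{\rho^b - \overline{\rho^b}}_{L^\infty} \le C\norm{\nabla \rho^b}_{L^{2q}}$ (Morrey, since $2q > N$), Young's inequality, and the uniform bounds above, this integral is finite. Hence $g(t)\to 0$, and a second application of Morrey upgrades this to $\norm{\rho^b(t)-\overline{\rho^b}(t)}_{L^\infty(\Omega)} \to 0$ as $t\to\infty$. Mass conservation from \eqref{mass equ weak} (with $\zeta\equiv 1$) gives $\int_\Omega \rho(t)\, dx = \int_\Omega\rho_0\, dx$, and Jensen's inequality then forces $\overline{\rho^b}(t) \ge \overline{\rho_0}^{\,b} > 0$ for every $t$. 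Combining the uniform convergence with this strict positivity furnishes $T_0$ and $\rho^- > 0$ such that $\rho(x,t)\ge\rho^-$ on $\Omega\times[T_0,\infty)$.

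The principal obstacle is the rigorous justification of the differentiation of $g$ against the renormalized continuity equation, because the natural test function $(\rho^b - \overline{\rho^b})^{2q-1}$ is not a priori $C^1$ in space-time for a weak solution, and similarly $(\overline{\rho^b})'(t)$ is defined only in a distributional sense. I would bypass this by performing the entire $g$-computation first on the regularized sequence $(\rho_\eta, \mathbf{u}_\eta)$ constructed in Section \ref{Sec4}, which is classical and therefore admits direct calculus; the bounds required to pass $\eta\to 0$ are identical to those already used in the stability argument, so the limiting $g$ inherits both $\int_0^\infty g\, dt < \infty$ and $\int_0^\infty |g'|\, dt < \infty$ from the approximate level. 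This reduces the weak-solution argument to the same dissipation-type estimates already present in the classical setting.
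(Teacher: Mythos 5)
Your overall strategy matches the paper's: bound $\int_0^\infty g\,dt < \infty$ and $\int_0^\infty |g'|\,dt < \infty$ to conclude $g(t)\to 0$, then combine with Jensen's inequality for $\overline{\rho^b}$ to extract a positive lower bound. Your extra care about differentiating $g$ against the renormalized continuity equation for a mere weak solution is legitimate, and carrying the computation at the level of the approximate sequence $(\rho_\eta,\mathbf u_\eta)$ is a reasonable way to make it rigorous.

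However, your choice $g(t)=\|\rho^b-\overline{\rho^b}\|_{L^{2q}}^{2q}$ opens a genuine gap in the $\int_0^\infty|g'|\,dt<\infty$ step, and this is precisely why the paper keeps $g$ at the $L^4$ level. Differentiating the $L^{2q}$-based $g$ produces, after integrating by parts against $\rho\mathbf{u}$, a term of the size
\[
\|\sqrt{\rho}\mathbf{u}\|_{L^2}\,\|\nabla\rho^b\|_{L^2}\,\|\rho^b-\overline{\rho^b}\|_{L^\infty}^{2q-2}
\;\le\;
C\,\|\nabla\rho^b\|_{L^2}\,\|\nabla\rho^b\|_{L^{2q}}^{2q-2},
\]
and the only available time-dissipation estimates are $\int_0^\infty\|\nabla\rho^{b-1}\|_{L^2}^2\,dt<\infty$ (hence $\int_0^\infty\|\nabla\rho^b\|_{L^2}^2\,dt<\infty$) and $\int_0^\infty\|\nabla\rho^b\|_{L^{2q}}^{2q}\,dt<\infty$. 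Young's inequality with exponents $(a,a')$ would require $a'(2q-2)\ge 2q$, i.e. $a'\ge q/(q-1)$, forcing $a\le q$, and hence $\int_0^\infty\|\nabla\rho^b\|_{L^2}^q\,dt<\infty$. Since $q<2$, this is a strictly weaker power than the one you actually control, and $\int\|\cdot\|_{L^2}^2\,dt<\infty$ does not imply $\int\|\cdot\|_{L^2}^q\,dt<\infty$ for $q<2$ (take $\|\nabla\rho^b\|_{L^2}\sim t^{-1/2}(\log t)^{-1}$). The same obstruction persists if you instead absorb $\|\nabla\rho^b\|_{L^2}$ into $\|\nabla\rho^b\|_{L^{2q}}$: you then need $\int_0^\infty\|\nabla\rho^b\|_{L^{2q}}^{2q-1}\,dt<\infty$, again a sub-critical power. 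The paper's $L^4$ choice is not cosmetic: $g'$ then carries $|\rho^b-\overline{\rho^b}|^2$ and $|\rho^b-\overline{\rho^b}|^3$, both with $L^\infty$-exponent $\ge 2 > q$, so the uniform bound on $\|\rho^b-\overline{\rho^b}\|_{L^\infty}$ lets one drop down to $\|\rho^b-\overline{\rho^b}\|_{L^\infty}^q\le C\|\nabla\rho^b\|_{L^{2q}}^q$, and then Young with the symmetric pair $(2,2)$ lands exactly on $\|\nabla\rho^b\|_{L^2}^2+\|\nabla\rho^b\|_{L^{2q}}^{2q}$, both of which integrate. You should keep the $L^4$-based $g$ (or any $L^p$ with $p\ge q+2$); the Poincar\'e--Wirtinger reformulation at exponent $2q$ is too small. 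Separately, one minor imprecision: the upgrade from $\|\rho^b-\overline{\rho^b}\|_{L^{4}}\to 0$ (or $L^{2q}\to 0$) to $\|\rho^b-\overline{\rho^b}\|_{L^\infty}\to 0$ is not a "second application of Morrey" but a Gagliardo--Nirenberg interpolation of the form $\|f-\bar{f}\|_{L^\infty}\le C\|\nabla f\|_{L^{2q}}^\theta\|f-\bar{f}\|_{L^{r}}^{1-\theta}$, valid because $2q>N$; Morrey alone only gives a uniform bound, not decay.
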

\begin{proof}
	We choose \(b>1\) sufficiently large so that
	\begin{align}\label{zz4}
		\begin{split}
			\sup_{0\leq t\leq \infty}&\left(\norm{\rho}_{L^\infty(\Omega)}+\norm{\nabla\rho^{b-1}}_{L^2(\Omega)}+\norm{\nabla\rho^b}_{L^{2q}(\Omega)}\right)\\
			&+\int_0^\infty\norm{\nabla\rho^{b-1}}_{L^2(\Omega)}^2+\norm{\nabla\rho^{b}}_{L^{2q}(\Omega)}^{2q}dt\leq C.
		\end{split}
	\end{align}
	By the embedding inequality we have
	\begin{align}\label{zz3}
		\norm{\rho^b-\overline{\rho^b}}_{C(\overline{\Omega})}\leq C\norm{\rho^b-\overline{\rho^b}}_{L^4(\Omega)}^{\frac{4q-2N}{4q-2N+Nq}}\norm{\nabla\rho^b}_{L^{2q}(\Omega)}^{\frac{Nq}{4q-2N+Nq}}\leq C\norm{\rho^b-\overline{\rho^b}}_{L^{4}(\Omega)}^{\frac{4q-2N}{4q-2N+Nq}}\rightarrow0,
	\end{align}
	if 
	\begin{align}\label{zz2}
		g(t)=\norm{\rho^b-\overline{\rho^b}}_{L^4(\Omega)}^4\rightarrow 0, \text{ as }t\rightarrow\infty.
	\end{align}
	Then, by Jensen's inequality we obtain
	\begin{align*}
		\overline{\rho^b}\ge \overline{\rho}^b=\overline{\rho_0}^b>0.
	\end{align*}
	Together with \eqref{zz3}, this shows that there exist \(\rho^-\) and \(T_0\) for which \eqref{rho varepsilon} holds. Thus it remains only to verify that \eqref{zz2} holds.
	
	We first show that \(\int_{0}^{\infty}|g'(t)|dt\le C\). A direct computation gives
	\begin{align*}
		g'(t)&=4b\left\langle (\rho^b-\overline{\rho^b})^3\rho^{b-1},\rho_t\right\rangle-4(\overline{\rho^b})_t\int_\Omega(\rho^b-\overline{\rho^b})^3dx\\
		&=-4b\left\langle \nabla\left((\rho^b-\overline{\rho^b})^3\rho^{b-1}\right),\sqrt{\rho}\sqrt{\rho}\mathbf{u}\right\rangle-4(\overline{\rho^b})_t\int_\Omega(\rho^b-\overline{\rho^b})^3dx:=I_1+I_2.
	\end{align*}
	Using \eqref{zz4}, we obtain
	\begin{align}\label{zz5}
		\begin{split}
			&\int_0^\infty|I_1|dt\leq C\int_0^\infty\int_\Omega|\rho^b-\overline{\rho^b}|^2|\nabla\rho^b|\rho^{b-\frac{1}{2}}\sqrt{\rho}|\mathbf{u}|+\left|\rho^b-\overline{\rho^b}\right|^3|\nabla\rho^{b-1}|\sqrt{\rho}\sqrt{\rho}|\mathbf{u}|dxdt\\
			&\leq C\int_0^\infty\left(\norm{\sqrt{\rho}\mathbf{u}}_{L^2(\Omega)}\norm{\nabla\rho^b}_{L^2(\Omega)}\norm{\rho^b-\overline{\rho^b}}_{L^\infty(\Omega)}^q+\norm{\sqrt{\rho}\mathbf{u}}_{L^2(\Omega)}\norm{\nabla\rho^{b-1}}_{L^2(\Omega)}\norm{\rho^b-\overline{\rho^b}}_{L^\infty(\Omega)}^q\right)dt\\
			&\leq C\int_0^\infty\left(\norm{\nabla\rho^{b-1}}_{L^2(\Omega)}^2+\norm{\nabla \rho^b}_{L^{2q}(\Omega)}^{2q}\right)dt\leq C.
		\end{split}
	\end{align}
	Note that
	\begin{align*}
		\sup_{0\leq t<\infty}\left|\frac{d}{dt}\overline{\rho^b}\right|&=b\sup_{0\leq t<\infty}\left|\langle \rho^{b-1},\rho_t\rangle\right|=b\sup_{0\leq t<\infty}\left|\int_\Omega \nabla\rho^{b-1}\sqrt{\rho}\cdot\sqrt{\rho}\mathbf{u}dx\right|\\
		&\leq C\sup_{0\leq t<\infty}\norm{\sqrt{\rho}\mathbf{u}}_{ L^2(\Omega)}\sup_{0\leq t<\infty}\norm{\nabla\rho^{b-1}}_{L^2(\Omega)}\leq C.
	\end{align*}
	Therefore,
	\begin{align}\label{zz6}
		\int_0^\infty|I_2|dt\leq C\int_0^\infty\norm{\rho^b-\overline{\rho^b}}^2_{L^2(\Omega)}dt\leq C\int_0^\infty \norm{\nabla\rho^b}_{L^2(\Omega)}^2dt\leq C.
	\end{align}
	Finally, we obtain
	\begin{align}\label{zz7}
		\int_0^\infty g(t)dt\leq C\sup_{0\leq t<\infty}\norm{\rho^b-\overline{\rho^b}}_{ L^\infty(\Omega)}^2\int_0^\infty\|\rho^b-\overline{\rho^b}\|_{L^2(\Omega)}^2dt\leq C\int_0^\infty\norm{\nabla \rho^b}_{L^2(\Omega)}^2dt\leq C.
	\end{align}
	The estimates \eqref{zz5}–\eqref{zz7} imply \eqref{zz2}, and the proof of Proposition \ref{Prop rho epsilon weak} is complete.
\end{proof}

\section{Global weak solutions with possible vacuum at the origin $(\alpha=1)$}
\subsection{Construction of the approximate system II}
	We begin by introducing the approximate system for \eqref{0}--\eqref{0-2} with sufficiently small \(\iota>0\):
	\begin{align}\label{App  Stm Euler'}
		\left\{
		\begin{array}{l}
			\partial_t\rho_\iota+\div(\rho_\iota \mathbf{u}_\iota)=0,\\
			\partial_t(\rho_\iota \mathbf{u}_\iota)+\div(\rho_\iota\mathbf{u}_\iota\otimes\mathbf{u}_\iota)+\nabla(\rho_\iota^\gamma)=\div(\rho_\iota\nabla \mathbf{u}_\iota),\\
			(\rho_{\iota},\mathbf{u}_{\iota})|_{t=0}=(\rho_{\iota,0},\mathbf{u}_{\iota,0}),\\
			\mathbf{u}_{\iota}|_{\partial\Omega_\iota}=0,
		\end{array}
		\right.
	\end{align}
	where $t\ge0$ and $x\in \Omega_\iota=\Omega-B_\iota$. Next, we specify the initial data \((\rho_{\iota,0},\mathbf{u}_{\iota,0})\) of the approximate system.  Without loss of generality, assume \(\int_\Omega\rho_0\,dx=1\).  Extend \(\rho_{\iota,0}\) constantly to \(\mathbb{R}^N\) and define
	\begin{align*}
		\rho_{\iota,0}=C_\iota\rho_0*j_{\varepsilon(\iota)}, \text{ in }\Omega,
	\end{align*}
	where \(C_\iota\) is a constant chosen so that \(\int_{\Omega_\iota}\rho_{\iota,0}\,dx=1\) and \(j_{\varepsilon(\iota)}\) is the standard mollifier. We then define
	\begin{align*}
		\mathbf{u}_{\iota,0}=(\mathbf{u}_0\alpha_\iota)*j_{\varepsilon(\iota)}, \text{ in }\Omega,
	\end{align*}
	where \(\alpha_\iota(\cdot)\) is a smooth cut-off function such that \(\alpha_\iota(x)=0\) whenever \(0\le|x|\le 2\iota\) or \(|x|\ge R-\iota\), and \(\alpha(x)=1\) when \(3\iota\le|x|\le R-2\iota\), and $|\nabla\alpha_\iota|\leq \frac{C}{\iota}$. One verifies that the initial data of the approximate system possess the following properties:
\begin{align}\label{2d weak initial data'}
	\begin{split}
		&\rho_{\iota,0}\rightarrow \rho_0 \text{ in }W^{1,{2q}}(\Omega),\quad \nabla\rho_{\iota,0}^{\frac{1}{2}}\rightarrow\nabla\rho_0^{\frac{1}{2}} \text{ in }L^2(\Omega),\quad \rho_{\iota,0}\ge C(\underline{\rho_0})>0,\\
		&\nabla\rho_{\iota,0}^{\frac{1}{2q}}\rightarrow  \nabla\rho_{0}^{\frac{1}{2q}}\text{ in }L^{2q}(\Omega), \quad \rho_{\iota,0}|\mathbf{u}_{\iota,0}|^{2p}\rightarrow \rho_{0}|\mathbf{u}_{0}|^{2p} \text{ in }L^1(\Omega),\\
		&\rho_{\iota,0}|\mathbf{u}_{\iota,0}|^{2}\rightarrow \rho_{0}|\mathbf{u}_{0}|^{2} \text{ in }L^1(\Omega), \quad \mathbf{u}_{\iota,0}|_{\partial\Omega_\iota}=0,
	\end{split}
\end{align}
where \(C(\underline{\rho_0})\) is a constant depending only on \(\underline{\rho_0}\). Moreover, for any \(n\in\mathbb N^+\), when \(\iota\) is sufficiently small, we have
\begin{align}
	&\norm{\nabla\rho_{\iota,0}}_{L^\infty(\Omega\backslash B_{1/n})}\leq C,\label{ini nabla rho}\\
	&\norm{\mathbf{u}_{\iota,0}}_{H^1(\Omega\backslash B_{1/n})}+\norm{\mathbf{u}_{\iota,0}}_{L^\infty(\Omega\backslash B_{1/n})}\leq C,\label{ini nabla u}
\end{align}
where $C$ is independent of $\iota$ due to \eqref{1'}.

In spherical coordinates, the approximate system \eqref{App  Stm Euler'} takes the following form
\begin{align}\label{App  Stm Ball'}
	\left\{
	\begin{array}{l}
		(\rho_\iota)_t+(\rho_\iota u_\iota)_r+\frac{N-1}{r}\rho_\iota u_\iota=0,\\
		\rho_\iota (u_\iota)_t+\rho_\iota u_\iota (u_\iota)_r+(\rho_\iota^\gamma)_r-\left(\frac{\rho_\iota}{r^{N-1}}(r^{N-1}u_\iota)_r\right)_r+\frac{N-1}{r}(\rho_\iota)_r u_\iota=0,\\
		(\rho_\iota, u_\iota)|_{t=0}=(\rho_{\iota,0}, u_{\iota,0}),\\
		u_\iota(\iota,t)=u_\iota(R,t)=0,
	\end{array}
	\right.
\end{align}
where $t\ge 0$ and $r\in(\iota, R)$.

 Define \(y(r,t)=\int_\iota^r \rho_\iota(s,t)s^{N-1}ds\) and \(\tau(r,t)=t\).  Then, in Lagrangian coordinates, the approximate system \eqref{App  Stm Euler'} becomes
\begin{align}\label{app system lag'}
	\left\{
	\begin{array}{l}
		(\rho_\iota)_\tau+\rho^2_\iota(r^{N-1}u_\iota )_y=0,\\
		r^{-(N-1)}(u_\iota)_\tau+(\rho_\iota^\gamma)_y-(\rho_\iota^2(r^{N-1}u_\iota)_y)_y+\frac{N-1}{r}(\rho_\iota)_yu_\iota=0,\\
		(\rho_\iota, u_\iota)|_{\tau=0}=(\rho_{\iota,0}, u_{\iota,0}),\\
		u_\iota(0,\tau)=u_\iota(1,\tau)=0,
	\end{array}
	\right.
\end{align}
where $\tau\ge0$ and $y\in(0,1)$. 

\subsection{Global existence of weak solutions}
As in Section \ref{Sec4}, we investigate the global solvability of the approximate system \eqref{app system lag'} in Lagrangian coordinates. Let \((\rho_\iota,u_\iota)\) be a uniqule local classical solution of system \eqref{app system lag'} defined on \([0,1]\times[0,T]\) for some fixed \(T>0\). To show that the approximate solution exists globally in time, it suffices to obtain an upper bound and a positive lower bound for \(\rho_\iota\).

We first establish the standard energy estimate and the B-D entropy estimate. Indeed, it suffices to note that \(n_N(1)=+\infty\) and to use the properties of the initial data \((\rho_{0,\iota},\mathbf{u}_{0,\iota})\) stated in \eqref{2d weak initial data'}.  Repeating the proofs of Propositions \ref{Prop WI n} and \ref{Prop W m}, we obtain the following Proposition.
\begin{prop}
	Assume that \eqref{wii alpha} holds. Then there exists a constant \(C>0\), independent of \(\iota\) and \(\tau\), such that
\begin{align}
	\int_0^1 (u_\iota^2+\rho_\iota^{\gamma-1})dy+\int_0^\tau\int_0^1\Big(\frac{u_\iota^{2}}{r^2}+\rho_\iota^{2} (\partial_y{u_\iota})^2r^{2(N-1)}\Big)dyd\tau \leq C,\label{u^2 weak'}
\end{align}
	and 
\begin{align}
		\int_0^1(r^{N-1}(\rho_\iota)_y)^2dy+\int_0^\tau\int_0^1\rho_\iota^{\gamma-1}(r^{N-1}(\rho_\iota)_y)^{2}dyd\tau\leq C.\label{W nablarho^2'}
\end{align}
\end{prop}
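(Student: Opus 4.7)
The plan is to mimic the proofs of Propositions \ref{Prop WI n} and \ref{Prop W m} with $\alpha=1$, using that the initial data $(\rho_{\iota,0},u_{\iota,0})$ satisfy the $\iota$-uniform bounds \eqref{2d weak initial data'} (in particular, $\rho_{\iota,0}$ is bounded above and away from zero, and $\int_\Omega \rho_{\iota,0}|\mathbf u_{\iota,0}|^2 dx$ and $\int_\Omega \rho_{\iota,0}^{\gamma}dx$ are uniformly controlled). Since the approximate system \eqref{App Stm Euler'} contains no artificial viscosity term, the calculations are strictly cleaner than in Section \ref{Sec4}.

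For \eqref{u^2 weak'}, I would multiply the momentum equation $\eqref{app system lag'}_2$ by $r^{N-1}u_\iota$ and integrate over $(0,1)$, using the boundary condition $u_\iota(0,\tau)=u_\iota(1,\tau)=0$. After integration by parts (and using $\eqref{app system lag'}_1$ to handle the pressure term, which produces the internal energy $\rho_\iota^{\gamma-1}/(\gamma-1)$), one arrives at the identity
\begin{align*}
    \frac{d}{d\tau}\int_0^1\Big(\tfrac12 u_\iota^2 + \tfrac{\rho_\iota^{\gamma-1}}{\gamma-1}\Big)dy
    + \int_0^1 \Big( (N-1)\rho_\iota^{1}\tfrac{u_\iota^2}{r^2}
    + \rho_\iota^{2}(\partial_y u_\iota)^{2}r^{2(N-1)}\Big)dy = 0,
\end{align*}
exactly as in \eqref{W1}--\eqref{W2} specialized to $\alpha=1$ and $n=1$ (so the destabilizing cross-term $2n(N-1)(\alpha-1)\rho^\alpha u^{2n-1}\partial_y u\, r^{N-2}$ vanishes identically, which is why no $\varepsilon$-argument is needed here). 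Integrating in $\tau$ and invoking \eqref{2d weak initial data'} to control the initial energy yields \eqref{u^2 weak'}.

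For \eqref{W nablarho^2'}, I would derive the BD entropy equation by differentiating $\eqref{app system lag'}_1$ in $y$ and rearranging with $\eqref{app system lag'}_2$: since $\alpha=1$ one gets
\begin{align*}
    \partial_\tau\bigl(u_\iota + r^{N-1}(\rho_\iota)_y\bigr) + r^{N-1}(\rho_\iota^{\gamma})_y = 0,
\end{align*}
which is \eqref{2d B-D weak} with $\eta=0$ and $\alpha=1$. Multiplying by $u_\iota + r^{N-1}(\rho_\iota)_y$ and integrating by parts (using $\eqref{app system lag'}_1$ to rewrite $\gamma\rho_\iota^{\gamma-1}\partial_y\rho_\iota\,u_\iota r^{N-1}$ and recover the internal energy time derivative, just as in the proof of Proposition \ref{Prop W m}) produces
\begin{align*}
    \frac{d}{d\tau}\int_0^1\Big(\tfrac12 \bigl(u_\iota + r^{N-1}(\rho_\iota)_y\bigr)^2 + \tfrac{\rho_\iota^{\gamma-1}}{\gamma-1}\Big)dy
    + \gamma\int_0^1 \rho_\iota^{\gamma-1}\bigl(r^{N-1}(\rho_\iota)_y\bigr)^2 dy = 0.
\end{align*}
Integrating in $\tau$, combining with \eqref{u^2 weak'} to dominate $\int u_\iota^2 dy$, and using the convergences in \eqref{2d weak initial data'} to bound $\int_0^1 (r^{N-1}(\rho_{\iota,0})_y)^2 dy$ uniformly in $\iota$ gives \eqref{W nablarho^2'}.

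The only subtle point is the $\iota$-uniformity of the initial BD entropy $\int_0^1 (r^{N-1}(\rho_{\iota,0})_y)^2 dy$, which in Eulerian variables reads $\int_{\Omega_\iota} |\nabla\sqrt{\rho_{\iota,0}}|^2 dx$; this is supplied by the second convergence in \eqref{2d weak initial data'}. All other ingredients are identical to the $\alpha=1$, $\eta=0$ specialization of Propositions \ref{Prop WI n} and \ref{Prop W m}, so no genuine obstacle arises.
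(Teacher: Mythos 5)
Your proposal follows exactly the route the paper takes: the paper itself states that the proposition is obtained by ``repeating the proofs of Propositions \ref{Prop WI n} and \ref{Prop W m}'' specialized to $\alpha=1$ with the artificial-viscosity terms removed, and your energy identity, BD-entropy identity, and appeal to \eqref{2d weak initial data'} for the initial entropy all agree with that specialization. One small typo: in your first displayed identity the exponent should be $\rho_\iota^{\alpha-1}=\rho_\iota^0=1$ (not $\rho_\iota^1$) on the $u_\iota^2/r^2$ term, consistent with \eqref{W1} and with the claimed estimate \eqref{u^2 weak'}.
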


Next, using the facts that \( n_N(1) = +\infty \), \( q \in M_{\mathrm{set}} \), and that \eqref{2d weak initial data'} holds, and following the proofs of Propositions \ref{Prop W 2d k}–\ref{Prop W 3d RT}, we obtain higher integrability of the velocity field and of the density gradient, together with an upper bound on the density.
\begin{prop}\label{Prop wii 2p}
	Under the assumptions of Theorem \ref{T_2}, there exists a constant \(C(T)>0\), independent of \(\iota\), such that
\begin{align}\label{W 2d k'}
	\sup_{0\leq \tau\leq T}\int_0^1u_\iota^{2p}dy+\int_0^T\int_0^1\Big(\frac{u_\iota^{2p}}{r^2}
	+\rho_\iota^2(\partial_yu_\iota)^2 u_\iota^{2p-2}r^{2(N-1)}\Big)dyd\tau\leq C(T)
\end{align}
as well as
\begin{align}\label{W 2d l'}
	\sup_{0\leq \tau\leq T}\int_0^1 (r^{N-1}(\rho_\iota)_y)^{2q}dy+\int_0^T\int_0^1\rho_\iota^{\gamma-1}(r^{N-1}(\rho_\iota)_y)^{2q}dyd\tau\leq C(T)
\end{align}
and finally,
\begin{align}\label{W 2d RT'}
	\sup_{0\leq \tau\leq T}\norm{\rho_\iota(\tau)}_{L^\infty(0,1)}\leq C(T).
\end{align}
\end{prop}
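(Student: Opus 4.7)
The plan is to mimic the argument of Propositions \ref{Prop W 2d k}–\ref{Prop W 3d RT} from Section \ref{Sec4}, exploiting the simplification $\alpha=1$, $n_N(1)=+\infty$, and the absence of the artificial viscosity term. The key observation is that the three estimates are coupled through a carefully ordered bootstrap: a weighted $L^\infty$ bound on the density feeds the Mellet–Vasseur estimate \eqref{W 2d k'}, which in turn powers the B–D-type estimate \eqref{W 2d l'}, whose $L^{2q}$ output closes back to the uniform upper bound \eqref{W 2d RT'}.

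\textbf{Step 1 (weighted $L^\infty$ estimate of $\rho_\iota$).} First, I would derive the analogues of Propositions \ref{Prop WI 2d r}, \ref{Prop WI 3d r} for the system \eqref{app system lag'}. Extending $(\rho_\iota,u_\iota)$ constantly as in \eqref{vv5} and applying the one-dimensional Sobolev embedding to $\rho_\iota^{1/2}$ together with $\eqref{u^2 weak'}$ and $\eqref{W nablarho^2'}$ (noting that $\alpha-\tfrac12=\tfrac12$), I expect to obtain, for $N=2$ and any $1\le s<\infty$,
\begin{align*}
\sup_{0\le\tau\le T}\|\rho_\iota\|_{L^s(\Omega)}+\sup_{0\le\tau\le T}\|\rho_\iota^{1/2}r^\xi\|_{L^\infty(\iota,R)}\le C(\xi,s),
\end{align*}
and for $N=3$ the weighted bound $\|\rho_\iota^{1/2}r^{1/2+\xi}\|_{L^\infty(\iota,R)}\le C(\xi)$ together with $\|\rho_\iota\|_{L^3(\Omega)}\le C$. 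The constraint $\gamma>1$ ensures the exponent $\gamma>\alpha-1/2=1/2$ required by those propositions.

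\textbf{Step 2 (Mellet–Vasseur estimate \eqref{W 2d k'}).} With the weighted density bound in hand, I apply \eqref{u^2n weak} at $n=p$, permissible since $n_N(1)=+\infty$. The right-hand side $\int_0^\tau\!\!\int_0^1\rho_\iota^{2p(\gamma-1)+1}r^{2p-2}\,dyd\tau$ in $N=2$ is controlled by separating a power of $\rho_\iota^{1/2}r^\xi$ and an integrable $r$-weight, provided $\gamma\ge 1+\tfrac{1}{p}-\tfrac{1}{2p}$; in $N=3$ the analogous absorption uses $\rho_\iota^{1/2}r^{1/2+\xi}$ and demands $\gamma<3-\tfrac{1}{q}$ to keep the $r$-integral convergent after choosing $\xi$ small. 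The assumption \eqref{wii gamma}, namely $1+\tfrac{1}{p}\le\gamma$ (and $\gamma<3-\tfrac{1}{q}$ in 3D) is exactly the algebraic condition needed here. The initial datum contribution $\int_0^1 u_{\iota,0}^{2p}\,dy$ is bounded uniformly in $\iota$ thanks to $\mathbf u_0\in L^{2p}(\Omega)$ in \eqref{1'}.

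\textbf{Step 3 (density-gradient estimate \eqref{W 2d l'}).} Since $q\in M_{\rm set}$ and $\alpha=1$, the B–D-type estimate \eqref{W nablarho^2m} gives, after absorbing the artificial-viscosity terms (which are simply absent here),
\begin{align*}
\sup_{0\le\tau\le T}\int_0^1(r^{N-1}(\rho_\iota)_y)^{2q}dy+\int_0^T\!\!\int_0^1\rho_\iota^{\gamma-1}(r^{N-1}(\rho_\iota)_y)^{2q}\,dyd\tau\le C+C\!\int_0^T\!\!\int_0^1\rho_\iota^{\gamma-1}u_\iota^{2q}\,dyd\tau.
\end{align*}
The right-hand side is handled by Hölder's inequality with exponents $p/q$ and $p/(p-q)$ (using $q<p$) plus the weighted $L^\infty$ bound on $\rho_\iota$ from Step 1, reducing everything to \eqref{W 2d k'} and $\sup_\tau\|\rho_\iota\|_{L^s}\le C$. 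The initial contribution is controlled by $\nabla\rho_0\in L^{2q}(\Omega)$ from \eqref{1'}.

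\textbf{Step 4 (uniform upper bound \eqref{W 2d RT'}).} Returning to Eulerian coordinates and repeating the one-dimensional Sobolev embedding argument of Proposition \ref{Prop W 2d RT} or Proposition \ref{Prop W 3d RT}, I bound $\|\rho_\iota\|_{L^\infty(\iota,R)}$ by $\int_\iota^R\rho_\iota\,dr$ plus $\int_\iota^R|\partial_r\rho_\iota|\,dr$, and control the latter by combining \eqref{W 2d l'} with a power of $\rho_\iota$ in $L^s$ via Hölder. The exponent $q>N/2$ together with the $L^s$ bound on $\rho_\iota$ makes every $r$-weight integrable, and Young's inequality then absorbs any residual power of $R_{\iota,T}$.

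\textbf{Main obstacle.} The delicate point is that, unlike the approximate system of Section \ref{Sec4}, the present system \eqref{App Stm Euler'} contains \emph{no} artificial viscosity, so I cannot rely on the extra $\eta\rho^\delta$-term to absorb boundary or singular contributions; every absorption must come from the r-weighted bound of Step 1. The exponent juggling in Step 3 (in particular matching the $\gamma<3-1/q$ condition in 3D with the integrability of $r^{\bullet}$ near the origin) is the place where the hypothesis \eqref{wii gamma} is used in full strength, and I expect this to be the only genuinely tight point in the proof.
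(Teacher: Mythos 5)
Your plan is correct and follows the paper's own route: the paper gives no detailed proof here but simply instructs the reader to rerun the arguments of Propositions~\ref{Prop W 2d k}--\ref{Prop W 3d RT} with $\alpha=1$, $n_N(1)=+\infty$, and the artificial-viscosity term removed, which is exactly the bootstrap you describe (weighted $L^\infty$ density bound $\to$ Mellet--Vasseur $L^{2p}$ estimate $\to$ $L^{2q}$ density-gradient bound $\to$ uniform density ceiling). One small arithmetic slip in Step~2: with $\alpha=1$, $N=2$, the Lagrangian source term from \eqref{u^2n weak} is $\rho_\iota^{2p(\gamma-1)}r^{2(p-1)}$ (Eulerian form $\rho_\iota^{2p(\gamma-1)+1}r^{2p-1}$), and the absorption condition is only $\gamma\ge 1-\tfrac{1}{2p}$, not $\gamma\ge 1+\tfrac{1}{2p}$; the hypothesis $\gamma\ge 1+\tfrac{1}{p}$ of Theorem~\ref{T_2} is in fact stronger than needed for this proposition (it is used in full in Proposition~\ref{Prop wii VT}), so the slip does not affect the conclusion.
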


Finally, we derive a positive lower bound for \(\rho_\iota\) that depends on \(\iota\).  It should be noted that Proposition \ref{Prop wii lower} below can be obtained by a slight modification of the proof of Proposition \ref{Prop wii VT}, or the reader may refer to \cite{guo-2012}.  To avoid repetition, we omit the proof of Proposition \ref{Prop wii lower}.
\begin{prop}\label{Prop wii lower}
	Under the assumptions of Theorem \ref{T_2}, there exists a constant \(C(\iota,T)>0\) such that
	\begin{align}\label{W 2d VT'}
		\sup_{0\leq \tau\leq T}\norm{\rho^{-1}_\iota(\tau)}_{L^\infty(0,1)}\leq C(\iota,T).
	\end{align}
\end{prop}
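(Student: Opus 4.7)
The plan is to adapt the Lagrangian strategy of Proposition \ref{Prop W 2d VT} to the approximate system \eqref{app system lag'}, with modifications accounting for the absence of any artificial viscosity. The argument proceeds in three steps.

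First, I would use the $\alpha=1$ B-D entropy identity
\begin{equation*}
\partial_\tau\bigl(u_\iota+r^{N-1}(\rho_\iota)_y\bigr)+r^{N-1}(\rho_\iota^\gamma)_y=0
\end{equation*}
to derive an $L^{2p}$ estimate on $r^{N-1}(\rho_\iota)_y$. Multiplying by $\bigl(u_\iota+r^{N-1}(\rho_\iota)_y\bigr)^{2p-1}$, integrating over $(0,1)$, and invoking both the uniform upper bound on $\rho_\iota$ and the $L^{2p}$ control of $u_\iota$ provided by Proposition \ref{Prop wii 2p}, a Gronwall argument yields
\begin{equation*}
\sup_{0\le\tau\le T}\int_0^1\bigl(r^{N-1}(\rho_\iota)_y\bigr)^{2p}\,dy\le C(T),
\end{equation*}
uniformly in $\iota$. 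Since $r\ge\iota$ on $\Omega_\iota$, this translates into $\sup_\tau\|\partial_y\rho_\iota\|_{L^{2p}(0,1)}^{2p}\le C(\iota,T)$, and analogously, for any $\delta\in(0,1)$, $\int_0^1|\partial_y\rho_\iota^\delta|^{2p}\,dy\le C(\iota,T)V_{\iota,T}^{(1-\delta)2p}$.

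Next, setting $v_\iota:=\rho_\iota^{-1}$ and exploiting $(v_\iota)_\tau=(r^{N-1}u_\iota)_y$ together with the no-flux condition $u_\iota|_{\partial\Omega_\iota}=0$, I obtain the mass-conservation identity
\begin{equation*}
\int_0^1 v_\iota(y,\tau)\,dy=\int_0^1 v_{\iota,0}(y)\,dy=\int_\iota^R r^{N-1}\,dr\le\frac{R^N}{N},
\end{equation*}
which is uniform in both $\iota$ and $\tau$. Combining this with the one-dimensional Sobolev embedding $W^{1,1}(0,1)\hookrightarrow L^\infty(0,1)$ applied to $v_\iota^\beta$, and Hölder's inequality applied to $\int_0^1 v_\iota^{1+\delta}|\partial_y\rho_\iota^\delta|\,dy$ together with the pointwise interpolation $\int v_\iota^s\,dy\le V_{\iota,T}^{s-1}\int v_\iota\,dy$, one attempts to bound $V_{\iota,T}$ by powers of itself.

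The main obstacle is that, in contrast with Proposition \ref{Prop W 2d VT}, the absence of the artificial viscosity prevents us from obtaining a $V_{\iota,T}$-independent bound on $\int|\partial_y\rho_\iota^\delta|^{2p}\,dy$ with $\delta<1-1/(2p)$; the direct estimate produces an unavoidable factor $V_{\iota,T}^{(1-\delta)2p}$, so the naive Sobolev-Hölder pairing yields $V_{\iota,T}\le C+C(\iota,T)V_{\iota,T}^{1+1/(2p)}$, which Young's inequality cannot absorb. To overcome this, following the particle-path approach of \cite{guo-2012}, I would exploit the fact that on the annulus $\Omega_\iota$ the approximate solution is classical with $C^{\beta,\beta/2}$ Hölder regularity (since $\rho_{\iota,0}\ge C(\underline{\rho_0})>0$ by \eqref{2d weak initial data'}), invoke higher-order interior estimates to control $\int_0^T\|\operatorname{div}\mathbf{u}_\iota\|_{L^\infty(\Omega_\iota)}\,d\tau$ by some $C(\iota,T)$, and then integrate the identity $D_t\log\rho_\iota=-\operatorname{div}\mathbf{u}_\iota$ along characteristics starting from the $\iota$-independent lower bound $\rho_{\iota,0}\ge C(\underline{\rho_0})$. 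This yields $\rho_\iota\ge C(\underline{\rho_0})e^{-C(\iota,T)}$, which is precisely \eqref{W 2d VT'}.
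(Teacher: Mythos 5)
Your first two observations are sound: the B-D identity does give an $\iota$-dependent $L^{2p}$ bound on $\partial_y\rho_\iota$, and you correctly diagnose that the direct Sobolev-H\"older pairing fails because $\partial_y v_\iota=-v_\iota^2\partial_y\rho_\iota$ picks up two extra powers of $v_\iota$, leaving $V_{\iota,T}\le C+C(\iota,T)V_{\iota,T}^{1+1/(2p)}$, which Young cannot absorb. The proposed fix, however, is circular. Bounding $\int_0^T\|\operatorname{div}\mathbf{u}_\iota\|_{L^\infty(\Omega_\iota)}d\tau$ requires $W^{2,s}$- or $W^{1,\infty}$-type control on $\mathbf{u}_\iota$; the momentum equation $\rho_\iota\dot{\mathbf{u}}_\iota=\operatorname{div}(\rho_\iota\nabla\mathbf{u}_\iota)-\nabla\rho_\iota^\gamma$ is a degenerate parabolic system whose ellipticity constant is $\rho_\iota$ itself, so those interior regularity estimates presuppose exactly the positive lower bound on $\rho_\iota$ you are trying to prove. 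In the paper's chain of logic, Proposition \ref{Prop wii lower} is what permits the local classical solution to be continued globally; the higher-order estimate Proposition \ref{Prop wii nabla u} comes afterwards and explicitly invokes the density lower bound \eqref{92}. Citing it here would use it before it is available.

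The route the paper actually intends (the ``slight modification'' of Proposition \ref{Prop wii VT}) avoids this entirely. Set $v_\iota=(\rho_\iota r^{N-1})^{-1}$ and work with the \emph{evolution} equation $\partial_\tau v_\iota^\beta=\beta v_\iota^{\beta-1}\partial_y(r^{N-1}u_\iota)r^{1-N}-(N-1)\beta v_\iota^\beta u_\iota r^{-1}$ for some fixed $\beta>2$. Integrate over $(0,1)\times(0,\tau)$: the integration by parts produces no boundary terms because $u_\iota(0,\tau)=u_\iota(1,\tau)=0$, and the cut-off $\phi_h$ of Proposition \ref{Prop wii VT} is unnecessary because on $\Omega_\iota$ one already has $r\ge\iota>0$, so every factor $r^{-1}$ or $r^{1-N}$ contributes at worst $\iota^{-1}$ or $\iota^{1-N}$. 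Substituting the B-D identity \eqref{wii 2} for $r^{N-1}\partial_y\rho_\iota$, the pressure-gradient term is handled by the pointwise Gronwall estimate analogous to \eqref{wii 5}, now on $(0,1)$ with constant $C(\iota,T)$ instead of $C(h/2,T)$, and all the other ingredients (\eqref{W 2d RT'}, \eqref{u^2 weak'}, \eqref{W nablarho^2'}, \eqref{W 2d k'}, \eqref{W 2d l'}, and the mollified initial data bounds) are already $V_{\iota,T}$-free. The outcome is $\sup_\tau\int_0^1 v_\iota^\beta\,dy\le C(\iota,T)$, a bound that is \emph{independent} of $V_{\iota,T}$. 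Only then does one apply the 1D Sobolev embedding to $v_\iota^\beta$: $\|v_\iota^\beta\|_{L^\infty}\le\int v_\iota^\beta+\int|\partial_y(v_\iota^\beta)|\le C(\iota,T)+C(\iota,T)\bigl(\int v_\iota^{2\beta+2}\bigr)^{1/2}\le C(\iota,T)\bigl(1+V_{\iota,T}^{\beta/2+1}\bigr)$, and since $\beta>2$ forces $\beta/2+1<\beta$, Young's inequality closes. The key idea you were missing is that the $L^\beta$ control of $v_\iota$ for $\beta>2$ must be produced from the time evolution (which the B-D entropy makes closeable by Gronwall), not bootstrapped from $\int v_\iota\,dy$ via a spatial derivative bound; only then does the final Sobolev step land on an exponent that Young can absorb.
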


With \eqref{W 2d RT'} and \eqref{W 2d VT'} at hand, we can argue in a standard way that $(\rho_\iota,u_\iota)$ is a global classical solution on $[0,1]\times[0,\infty)$; see the previous section or \cite{Guo-2008} for details.  Using the $\iota$-independent estimates \eqref{W 2d k'}--\eqref{W 2d RT'} and standard compactness arguments, we establish the global existence of the weak solution stated in Theorem \ref{T_2}; see Section \ref{Sec4} or \cite{Guo-2008} for details.

\subsection{Some \(\iota\)-independent estimates}
In this section we follow the ideas of \cite{Jensen-2004} to derive several \(\iota\)-independent estimates, which play a key role in characterizing the vacuum states and the regularity of the solution in the flow region. To this end, we fix a sufficiently small constant $h > 0$. Then, for any $\iota > 0$, we define the particle path $r_h^{\iota}(t)$ by
\begin{equation}\label{353}
	h=\int_\iota^{r_h^{\iota} (t)}\rho_{\iota}(r,t)r^{N-1}dr,
\end{equation}
where $\rho_{\iota}(r,t)$ is the global classical solution to the approximate system \eqref{App  Stm Ball'} defined on $[\iota,R]\times[0,\infty)$. From \eqref{u^2 weak'}, one can easily check that there exists a constant $C(h) > 0$, independent of $\iota$ and $t$, satisfying
\begin{align}\label{wii 3}
	r_h^{\iota}(t) \ge (C(h))^{-1}.
\end{align}

Next, the following Proposition \ref{Prop wii VT} shows that $\rho_\iota$ admits a $\iota$-independent positive lower bound to the right of the particle path determined by $h$.
\begin{prop}\label{Prop wii VT}
	Under the assumptions of Theorem \ref{T_2}, there exists a constant $C(h/2, T)>0$, independent of $\iota$, such that for all $(y, \tau) \in [h, 1] \times [0, T]$,
	\begin{equation}\label{92}
		\rho_{\iota}(y, \tau)\geq (C(h/2, T))^{-1}.
	\end{equation}
\end{prop}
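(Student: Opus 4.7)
The plan is to work in the Lagrangian framework \eqref{app system lag'} on the restricted region $[h/2, 1]\times[0, T]$, which corresponds in Eulerian coordinates to the annulus $r \in [r_{h/2}^\iota(t), R]$. By \eqref{wii 3}, this annulus stays uniformly away from the symmetry center, so the radial factor $r^{N-1}$ is bounded above and below by $\iota$-independent constants there, effectively reducing the analysis to a one-dimensional problem for the specific volume $v_\iota = 1/\rho_\iota$. I would first collect the $\iota$-uniform estimates from Proposition \ref{Prop wii 2p}: $\rho_\iota\le C(T)$, $u_\iota\in L^\infty(0,T;L^{2p}([0,1]))$, and $r^{N-1}\partial_y\rho_\iota\in L^\infty(0,T;L^{2q}([0,1]))$. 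Since $r\ge c(h)$ on $\{y\ge h/2\}$, the last of these upgrades to a uniform bound on $\|\partial_y\rho_\iota\|_{L^{2q}([h/2,1])}$. A further useful (and trivial) identity comes from $dy = \rho_\iota r^{N-1}dr$, which gives
\[
\int_{h/2}^1 v_\iota(y,\tau)\,dy = \int_{r_{h/2}^\iota(\tau)}^R r^{N-1}dr \le \frac{R^N}{N},
\]
uniformly in $\iota$ and $\tau\in[0,T]$.

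With these preparations in hand, I would adapt the Sobolev embedding strategy underlying Proposition \ref{Prop wii lower}. For any $y\in [h,1]$, the one-dimensional mean-value theorem together with the uniform $L^1_y$-bound of $v_\iota$ on $[h/2,h]$ supplies a point $y_*(\tau)\in[h/2,h]$ with $v_\iota(y_*(\tau),\tau)\le C(h)$, whence
\[
v_\iota(y,\tau) \le C(h) + \int_{h/2}^1 |\partial_z v_\iota(z,\tau)|\,dz,
\]
and the identity $|\partial_y v_\iota| = v_\iota^2|\partial_y\rho_\iota|$ combined with Hölder's inequality reduces the task to a bound on $\bigl(\int_{h/2}^1 v_\iota^{4q/(2q-1)}dy\bigr)^{(2q-1)/(2q)}$. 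Interpolating this against the uniform $L^1$-bound for $v_\iota$ and setting up a Gronwall-type argument in $\tau$ for a suitable time-dependent quantity (such as $\sup_{y\in[h/2,1]}v_\iota(y,\tau)$ or $\int_{h/2}^1 v_\iota^s\,dy$ with an appropriate $s$) should then yield the desired $\iota$-independent bound $\sup_{[h,1]\times[0,T]}v_\iota\le C(h/2,T)$, which in Eulerian coordinates is exactly \eqref{92}.

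The main obstacle lies in closing this Gronwall-type argument in the critical endpoint case $\alpha=1$. Unlike the sub-critical situation in Proposition \ref{Prop wii lower}, no artificial viscosity is available here to supply the damping $\delta<1-\tfrac{1}{2p}$ that cleanly closes the bootstrap; a naive interpolation produces an exponent $(2q+1)/(2q)>1$ in the self-bound for $\|v_\iota\|_{L^\infty_y([h/2,1])}$ that does not close by itself. To overcome this, I would exploit the full B--D entropy identity $\partial_\tau(u_\iota+r^{N-1}\partial_y\rho_\iota)+r^{N-1}\partial_y\rho_\iota^\gamma = 0$ together with the sharp space-time bound $\int_0^T\!\int_0^1\rho_\iota^{\gamma-1}(r^{N-1}\partial_y\rho_\iota)^{2q}\,dy\,d\tau\le C(T)$ from \eqref{W 2d l'} and the $L^{2p}_y$-bound on $u_\iota$ from \eqref{W 2d k'}, and argue Kazhikhov--Shelukhin-style along the particle path $r_h^\iota(t)$: integrating the B--D identity in $\tau$ gives a pointwise-in-$y$ representation for $r^{N-1}\partial_y\rho_\iota$ in terms of $u_\iota$ and a space-time integrable forcing term, which upon substitution back into the Sobolev-type inequality provides the extra decay in $\tau$ needed to close the bootstrap and deliver the $\iota$-independent lower bound.
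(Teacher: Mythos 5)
Your high-level skeleton is right — work in Lagrangian coordinates on $[h/2,1]\times[0,T]$, use the $\iota$-uniform controls from Proposition \ref{Prop wii 2p}, integrate the B--D identity in $\tau$ to replace $r^{N-1}\partial_y\rho_\iota$ by an expression in $u_\iota$ and a pressure integral, and close via some Gronwall/Sobolev combination — and you correctly detect that the naive $L^1_y\to L^\infty_y$ bootstrap generates a self-exponent $(2q+1)/(2q)>1$ that does not close. But the fix you propose is not the right mechanism and, as formulated, does not close either.

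The concrete gap is this: what actually closes the argument is an $\iota$-independent estimate on $\int_h^1 v_\iota^\beta\,dy$ for some \emph{fixed $\beta>2$}, not an estimate on $\int v_\iota\,dy$ or $\|v_\iota\|_{L^\infty_y}$ directly. Substituting the time-integrated B--D identity into the pointwise Sobolev inequality merely replaces $\partial_y\rho_\iota$ by a sum of an initial-data term, $-u_\iota(y,\tau)$, and $\int_0^\tau r^{N-1}\partial_y\rho_\iota^\gamma\,d\xi$; the resulting self-bound $V(\tau)\leq C+C A\,V(\tau)+\cdots$ is merely linear in $V(\tau)$ with a coefficient $A$ depending on (possibly large) initial data and hence still fails to close. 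There is no ``extra decay in $\tau$'' hiding here. What the paper does instead is (i) introduce a smooth cut-off $\phi_h$ supported in $[h/2,1]$ (needed because $\partial_y v_\iota^\beta$ cannot be controlled down to $y=h/2$ without boundary terms), (ii) test the evolution equation $\partial_\tau v_\iota^\beta=\beta v_\iota^{\beta-1}\partial_y(r^{N-1}u_\iota)r^{1-N}-(N-1)\beta v_\iota^\beta u_\iota r^{-1}$ against $\phi_h$ with $v_\iota:=1/(\rho_\iota r^{N-1})$, (iii) after substituting the B--D representation, extract the hidden quadratic \emph{dissipation} term $\tfrac{\beta(\beta-1)}{2}\int_0^\tau\!\int v_\iota^\beta u_\iota^2\phi_h\,dy\,ds$ on the left-hand side via Young's inequality — this is what absorbs the $-u_\iota(\tau)$ contribution, and it is the genuine trick — and (iv) separately prove the pointwise claim $\int_0^s|\partial_y(\rho_\iota^\gamma)r^{N-1}(y,\xi)|^p\,d\xi\le C(h/2,T)$ by its own Gronwall argument (this is \eqref{wii 5}). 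Only once one has $\sup_\tau\int_h^1 v_\iota^\beta\,dy\leq C(h/2,T)$ with $\beta>2$ does the one-dimensional embedding close: $\|v_\iota^\beta\|_{L^\infty}\leq C\,V_{\iota,T,h}+C\,V_{\iota,T,h}^{(\beta+2)/2}$, and since $(\beta+2)/2<\beta$ a final Young absorption finishes the proof. Your proposal alludes to ``$\int v_\iota^s\,dy$ for an appropriate $s$'' but does not identify the cut-off, the constraint $s>2$, or the source of the compensating dissipation; the Kazhikhov--Shelukhin reformulation along a single particle path is not used in the paper's proof and is not a substitute for these missing steps.
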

\begin{proof}
	Fix $\beta>2$. We define the following cut-off function $\phi_h(y)\in C([0,1])$:
	\begin{align}\label{wii 4}
		\phi_h(y)=\left\{
		\begin{array}{ll}
			0,& y\in[0,\frac{h}{2}],\\
			(y-\frac{h}{2})^\beta,& y\in [\frac{h}{2},\frac{3h}{4}],\\
			\text{linear connection},& y\in [\frac{3h}{4},h],\\
			1,&[h,1].
		\end{array}\right.
	\end{align}
	Next, we set
	\begin{align}
		v_\iota(y,\tau)=\frac{1}{({\rho_\iota} r_\iota^{N-1})(y,\tau)},\nonumber
	\end{align}
	then $\eqref{app system lag'}_1$ gives
	\begin{align}\label{74}
		\partial_\tau v_\iota^{\beta} = \beta v_\iota^{\beta -1} \partial_y(r_\iota^{N-1} {u_\iota}){r_\iota^{1-N}} - (N-1) \beta v_\iota^{\beta} {u_\iota} r_\iota^{-1}.
	\end{align}
	Multiplying the above equation by the cut-off function \(\phi_h(y)\), integrating the resulting equation over \([\frac{h}{2},1]\times[0,\tau]\), and using integration by parts together with boundary condition $\eqref{app system lag'}_4$, we obtain
	\begin{align}\label{wii 1}
		\begin{split}
			\int_\frac{h}{2}^1 v_\iota^\beta\phi_h dy(\tau)&=\int_\frac{h}{2}^1 v_\iota^\beta\phi_h dy(0)+\int_0^\tau \int_\frac{h}{2}^1 (\beta v^{\beta-1}_\iota \partial_y(r^{N-1}_\iota u_\iota) r^{1-N}_\iota-(N-1)\beta v_\iota^\beta u_\iota r^{-1}_\iota)\phi_h dy\\
			&=\int_\frac{h}{2}^1 v_\iota^\beta\phi_h dy(0)+\beta(\beta-1)\int_0^\tau\int_\frac{h}{2}^1v_\iota^\beta \partial_y\rho_\iota r_\iota^{N-1} u_\iota \phi_hdyds\\
			&\quad-\beta\int_0^\tau\int_\frac{h}{2}^1 v_\iota^{\beta-1}u_\iota\partial_y\phi_hdyds+(N-1)(\beta^2-\beta)\int_0^\tau\int_\frac{h}{2}^1 v_\iota^\beta u_\iota r_\iota^{-1}\phi_h dyds.
		\end{split}
	\end{align}
	From $\eqref{app system lag'}_1$ and $\eqref{app system lag'}_2$ we derive the B-D entropy equation  
	\begin{align*}
		(u_\iota+r^{N-1}\partial_y\rho_\iota)_\tau+r^{N-1}\partial_y (\rho_\iota^\gamma)=0,
	\end{align*}
	which, upon integration over $(0,s)$, gives
	\begin{align}\label{wii 2}
	r_\iota^{N-1}\partial_y\rho_\iota(y,s)=r_\iota^{N-1}\partial_y \rho_{\iota}(y,0)+u_\iota(y,0)-u_\iota(y,\tau)-\int_0^s \partial_y(\rho_\iota^\gamma) r_\iota^{N-1}(y,\xi)d\xi.
	\end{align}
	Substituting \eqref{wii 2} into \eqref{wii 1} yields
	\begin{align}
		\begin{split}
			&\quad\int_\frac{h}{2}^1 v_\iota^\beta\phi_h dy(\tau)\\
			&=\int_\frac{h}{2}^1 v_\iota^\beta\phi_h dy(0)+\beta(\beta-1)\int_0^\tau\int_\frac{h}{2}^1v_\iota^\beta \Big(r_\iota^{N-1}\partial_y \rho_{\iota}(y,0)+u_{0,\iota}(y)-u_\iota(y,s)\Big) u_\iota \phi_hdyds\\
			&\quad-\beta(\beta-1)\int_0^\tau\int_\frac{h}{2}^1v_\iota^\beta\left(\int_0^s\partial_y(\rho_\iota^\gamma)r_\iota^{N-1}(y,\xi) d\xi\right)u_\iota\phi_hdyds-\beta\int_0^\tau\int_\frac{h}{2}^1 v_\iota^{\beta-1}u_\iota\partial_y\phi_hdyds\\
			&\quad+(N-1)(\beta^2-\beta)\int_0^\tau\int_\frac{h}{2}^1 v_\iota^\beta u_\iota r_\iota^{-1}\phi_h dyds,
		\end{split}
	\end{align}
	which, together with Young's inequality, implies
	\begin{align}\label{wii 6}
		\begin{split}
			&\quad\int_\frac{h}{2}^1 v_\iota^\beta\phi_h dy(\tau)+\frac{\beta(\beta-1)}{2}\int_0^\tau\int_\frac{h}{2}^1 v_\iota^\beta u_\iota^2\phi_h dyds\\
			&\leq \int_\frac{h}{2}^1 v_\iota^\beta\phi_h dy(0)+C\int_0^\tau\int_\frac{h}{2}^1 v_\iota^\beta |r_\iota^{N-1}\partial_y \rho_{\iota}(y,0)+u_{0,\iota}(y)|^2\phi_hdyds\\
			&\quad +C\int_0^\tau\int_\frac{h}{2}^1 v_\iota^\beta\left(\int_0^s\partial_y(\rho_\iota^\gamma)r_\iota^{N-1}(y,\xi) d\xi\right)^2\phi_h dyds+C\int_0^\tau\int_\frac{h}{2}^1 v_\iota^{\beta-2}|\partial_y \phi_h|^2\phi_h^{-1}dyds\\
			&\quad +C\int_0^\tau\int_\frac{h}{2}^1 v_\iota^\beta r_\iota^{-2}\phi_h dyds\\
			&=\sum_{i=1}^5 W_i.
		\end{split}
	\end{align}
	Using \eqref{2d weak initial data'} and \eqref{wii 3}, we obtain
	\begin{align}\label{E W_1}
		|W_1|\leq C\norm{\rho_{\iota}^{-\beta}r_\iota^{\beta(1-N)}(0)}_{L^\infty((h/2,1))}\leq C(h/2). 
	\end{align}
	Similarly, from \eqref{ini nabla rho}, \eqref{ini nabla u} and \eqref{wii 3}, one gets
	\begin{align}\label{E W_2}
		\begin{split}
		|W_2|&\leq C\left(\norm{r_\iota^{N-1}\partial_y\rho_{\iota}(0)}_{L^\infty((h/2,1))}^2+\norm{u_{\iota,0}}_{L^\infty((h/2,1))}^2\right)\int_0^\tau\int_\frac{h}{2}^1 v_\iota^\beta \phi_h dyds\\
		&\leq C(h/2)\int_0^\tau\int_\frac{h}{2}^1 v_\iota^\beta \phi_h dyds.
		\end{split}
	\end{align}
	We now estimate the complex term $W_3$. We claim that for all $(y,s)\in [h/2,1]\times[0,T]$,
	\begin{align}\label{wii 5}
		\int_0^s|\partial_y(\rho_\iota^\gamma) r_\iota^{N-1}(y,\xi)|^pd\xi\leq C(h/2,T).
	\end{align}
	In fact, it follows from \eqref{wii 2}, \eqref{W 2d RT'}, \eqref{ini nabla rho}, \eqref{ini nabla u} and \eqref{wii 3} that
	\begin{align}
		\begin{split}
			&\int_0^s |\partial_y(\rho_\iota^\gamma) r_\iota^{N-1}(y,\xi)|^p
			d\xi=\gamma^p\int_0^s \rho_\iota^{p(\gamma-1)}|\partial_y\rho_\iota r_\iota^{N-1}|^p d\xi\\
			&=\gamma^p\int_0^s\rho_\iota^{p(\gamma-1)}| r_{\iota,0}^{N-1}\partial_y \rho_{\iota,0}+u_{0,\iota}-u_\iota-\int_0^\xi\partial_y(\rho_\iota^\gamma) r_\iota^{N-1}d\zeta|^pd\xi\\
			&\leq C\left(h/2,T\right)+C\int_0^s \norm{\rho_\iota^{p(\gamma-1)}u_\iota^p}_{L^\infty ((h/2, 1))}d\xi+C(T)\int_0^s\left(\int_0^\xi|\partial_y(\rho_\iota^\gamma) r_\iota^{N-1}(y,\zeta)|^pd\zeta\right)d\xi\\
			&\leq C\left(h/2,T\right)+C\int_0^s\int_{\frac{h}{2}}^1 \Big(\rho_\iota^{p(\gamma-1)}|u_\iota|^p+|\partial_y (\rho_\iota^{p(\gamma-1)})||u_\iota|^p+\rho_\iota^{p(\gamma-1)}|u_\iota|^{p-1}|\partial_y u_\iota|\Big)dyd\xi\\
			&\quad+C(T)\int_0^s\left(\int_0^\xi|\partial_y(\rho_\iota^\gamma) r_\iota^{N-1}(y,\zeta)|^pd\zeta\right)d\xi\\
			&\leq C\left(h/2,T\right)+C\int_0^s\int_{\frac{h}{2}}^1\Big(1+|u_\iota|^{2p}+|\partial_y(\rho_\iota^{p(\gamma-1)})|^2\Big)dyd\xi\\
			&\quad +C\int_0^s\int_{\frac{h}{2}}^1\Big(\rho_\iota^2|\partial_y u_\iota|^2u_\iota^{2p-2} r_\iota^{2(N-1)}+\rho_\iota^{2p(\gamma-1)-2}r_\iota^{-2(N-1)}\Big)dyd\xi\\
			&\quad+C(T)\int_0^s\left(\int_0^\xi|\partial_y(\rho_\iota^\gamma) r_\iota^{N-1}(y,\zeta)|^pd\zeta\right)d\xi\\
			&\leq C\left(h/2,T\right)+C(T)\int_0^s\left(\int_0^\xi|(\rho_\iota^\gamma)_y r_\iota^{N-1}(y,\zeta)|^pd\zeta\right)d\xi,
		\end{split}
	\end{align}
	where in the last inequality we have used \eqref{W 2d k'}, \eqref{W nablarho^2'}, \eqref{wii gamma} and \eqref{W 2d RT'}. Therefore, an application of Gronwall's inequality yields \eqref{wii 5}. Using \eqref{wii 5}, we obtain
	\begin{align}\label{E W_3}
		|W_3|\leq C(h/2,T)\int_0^\tau\int_\frac{h}{2}^1 v_\iota^\beta \phi_h dyds.
	\end{align}
	From the definition \eqref{wii 4} of the cut-off function, we obtain
\begin{align}\label{E W_4}
	\begin{split}
			|W_4|&\leq C\int_0^\tau\int_\frac{h}{2}^1 v_\iota^\beta \phi_h dyds+C\int_0^\tau\int_\frac{h}{2}^1 |\partial_y \phi_h|^\beta\phi_h^{1-\beta}dyds\\
		&\leq C\int_0^\tau\int_\frac{h}{2}^1 v_\iota^\beta \phi_h dyds+C(h/2,T).
	\end{split}
\end{align}
By \eqref{wii 3}, we get
	\begin{align}\label{E W_5}
		\begin{split}
			|W_5|&\leq C(h/2)\int_0^\tau\int_\frac{h}{2}^1 v_\iota^\beta \phi_h dyds.
		\end{split}
	\end{align}
Substituting \eqref{E W_1}--\eqref{E W_5} into \eqref{wii 6} gives
	\begin{align*}
		\begin{split}
			\int_\frac{h}{2}^1 v_\iota^\beta\phi_h dy(\tau)\leq C(h/2,T)+C(h/2,T)\int_0^\tau\int_\frac{h}{2}^1 v_\iota^\beta \phi_h dyds.
		\end{split}
	\end{align*}
Using Young's inequality, one has
\begin{align*}
	\sup_{0\leq \tau\leq T}\int_\frac{h}{2}^1 v_\iota^\beta\phi_h dy(\tau)\leq C(h/2,T),
\end{align*}
	which implies that
	\begin{align}\label{wii 7}
		\sup_{0\leq \tau\leq T}\int_h^1 v_\iota^\beta dy(\tau)\leq C(h/2,T).
	\end{align}
	Denote $V_{\iota,T,h}=\sup_{0\leq \tau\leq T}\norm{\rho_\iota^{-1}(\tau)}_{L^\infty((h,1))}+1$. We then obtain from the one-dimensional Sobolev embedding, \eqref{wii 3}, \eqref{W nablarho^2'} and \eqref{wii 7} that
\begin{align}
	\begin{split}
		\norm{v_\iota^\beta(\tau)}_{L^\infty((h,1))}&\leq \int_h^1 v_\iota^\beta dy+\int_h^1|\partial_y(v_\iota^\beta)|dy\\
		&\leq \int_h^1 v_\iota^\beta dy+C(h/2)\left(\int_h^1v_\iota^{\beta+1}|\partial_y \rho_\iota|dy+\int_h^1 v_\iota^{\beta+1}dy\right)\\
		&\leq C(h/2,T)V_{\iota,T,h}+C(h/2)\left(\int_h^1|\partial_y \rho_\iota|^2dy\right)^{\frac{1}{2}}\left(\int_h^1 v_\iota^{2\beta+2}dy\right)^{\frac{1}{2}}\\
		&\leq C(h/2,T)V_{\iota,T,h}^{\frac{\beta}{2}+1}.
	\end{split}
\end{align}
Taking the supremum over \(\tau\in[0,T]\) and using Young's inequality together with the fact that \(\beta>2\), we obtain \eqref{92}.

This completes the proof of Proposition \ref{Prop wii VT}.
\end{proof}

Then we show that on the right side of the particle path determined by \(h\), the velocity field satisfies a higher-order estimate whose bound is independent of \(\iota\).
\begin{prop}\label{Prop wii nabla u}
	Under the assumptions of Theorem \ref{T_2}, there exists a constant $C(h/4, T)>0$, independent of $\iota$, such that 
		\begin{align}\label{wii 13}
		\sup_{0\leq \tau\leq T}\int_{h}^1 (\partial_y u_\iota)^2dy+\int_0^T\int_{h}^1((\partial_\tau u_\iota)^2+(\partial_{yy}u_\iota)^2) dyd\tau\leq C(h/4,T).
	\end{align}
\end{prop}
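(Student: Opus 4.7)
The plan is to exploit the positive density lower bound from Proposition \ref{Prop wii VT}, applied with $h$ replaced by $h/2$, which yields $\rho_\iota(y,\tau)\geq (C(h/4,T))^{-1}$ uniformly in $\iota$ on $[h/2,1]\times[0,T]$. Combined with the uniform upper bound \eqref{W 2d RT'}, this reduces the problem to a standard higher-order energy estimate on the support of a cutoff function that isolates $[h,1]$ from the potentially vacuum region near $y=0$.

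First, I would introduce a smooth cutoff $\psi_h(y)$ equal to $1$ on $[h,1]$ and vanishing on $[0,h/2]$, with $|\psi_h'|\leq C/h$. Multiplying $\eqref{app system lag'}_2$ by $r_\iota^{N-1}(u_\iota)_\tau\psi_h^2$ and integrating over $[0,1]$ (boundary contributions vanish since $\psi_h(0)=0$ and $u_\iota(1,\tau)=0$), the viscous term, after integration by parts and using $r_\tau=u_\iota$ together with $(r_y)_\tau=(u_\iota)_y$, simultaneously produces $\int(u_\iota)_\tau^2\psi_h^2\,dy$ and a contribution that can be rearranged into $\tfrac12\frac{d}{d\tau}\int\rho_\iota^2|(r_\iota^{N-1}u_\iota)_y|^2\psi_h^2\,dy$ plus lower-order errors. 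The pressure term is absorbed via Young's inequality using the $L^{2q}$ control on $r^{N-1}(\rho_\iota)_y$ from \eqref{W 2d l'}, and the geometric term $\tfrac{N-1}{r}(\rho_\iota)_y u_\iota\cdot r_\iota^{N-1}(u_\iota)_\tau\psi_h^2$ is handled with \eqref{W 2d k'} together with the density bounds on the support of $\psi_h$.

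After absorbing a small fraction of $\int(u_\iota)_\tau^2\psi_h^2\,dy$ into the left-hand side and invoking the initial regularity \eqref{ini nabla u}, Gronwall's lemma applied to the resulting differential inequality yields
\begin{align*}
\sup_{0\leq\tau\leq T}\int\rho_\iota^2|(r_\iota^{N-1}u_\iota)_y|^2\psi_h^2\,dy+\int_0^T\!\!\int(u_\iota)_\tau^2\psi_h^2\,dy\,d\tau\leq C(h/4,T).
\end{align*}
Converting $(r_\iota^{N-1}u_\iota)_y$ to $(u_\iota)_y$ via the identity $(r_\iota^{N-1}u_\iota)_y=\tfrac{N-1}{\rho_\iota r_\iota}u_\iota+r_\iota^{N-1}(u_\iota)_y$ and using the uniform density bounds on $[h,1]$ delivers the first two pieces of \eqref{wii 13}. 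The $(u_\iota)_{yy}$ estimate then follows by solving $\eqref{app system lag'}_2$ algebraically for $(u_\iota)_{yy}$, whose coefficient $\rho_\iota^2 r_\iota^{2(N-1)}$ is bounded away from zero on $[h,1]$, and controlling the right-hand side in $L^2_{\tau,y}$ using the just-obtained bound on $(u_\iota)_\tau$, the $L^\infty$ bounds on $\rho_\iota^{\pm 1}$, the $L^{2q}$ bound on $(\rho_\iota)_y$, and the $L^{2p}$ bound on $u_\iota$.

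The main obstacle is the careful treatment of the transition zone $[h/2,h]$ where $\psi_h'$ is supported: there we only have the weak dissipation $\int_0^T\!\int\rho_\iota^2|(r_\iota^{N-1}u_\iota)_y|^2\,dy\,d\tau\leq C$ from \eqref{u^2 weak'}, not an $L^\infty_\tau L^2_y$ control on $(u_\iota)_y$. The cross terms involving $\psi_h'$ must therefore be absorbed either into that $L^2_{\tau,y}$ dissipation or into the time-derivative quantity $\frac{d}{d\tau}\int\rho_\iota^2|(r_\iota^{N-1}u_\iota)_y|^2\psi_h^2\,dy$ via Young's inequality; the positive density lower bound at the smaller scale $h/4$ is essential here so that every factor of $\rho_\iota^{-1}$ arising on the support of $\psi_h'$ remains $\iota$-independent.
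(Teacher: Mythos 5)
Your overall outline is sound and your choice of a smooth squared cutoff $\psi_h^2$ is a valid simplification of the paper's piecewise power-law cutoff $\phi_h$ (since $|\partial_y(\psi_h^2)|^2(\psi_h^2)^{-1}=4(\psi_h')^2$ is bounded, which is all one needs). However, the proposal has a genuine gap at the heart of the energy estimate. The term arising from $\tfrac{1}{2}\int\partial_\tau(\rho_\iota^2)\,|(r_\iota^{N-1}u_\iota)_y|^2\psi_h^2\,dy$ — which you label a ``lower-order error'' — is, upon substituting the continuity equation $\partial_\tau\rho_\iota=-\rho_\iota^2(r_\iota^{N-1}u_\iota)_y$, a \emph{cubic} term in $(r_\iota^{N-1}u_\iota)_y$, essentially $\int\rho_\iota^3\,(\partial_y u_\iota)^3\,dy$. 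This term does not close under a naive Gronwall argument. The paper (term $Z_2$ in \eqref{v9}--\eqref{v6}) resolves it by a two-step argument: first derive the $L^2$ estimate \eqref{wii 12} for $\partial_{yy}u_\iota$ from the momentum equation (absorbing a piece of $\int|\partial_\tau u_\iota|^2\phi_h\,dy$), then use the one-dimensional embedding $\norm{(\partial_y u_\iota)^2\phi_h}_{L^\infty}\lesssim\int|\partial_{yy}u_\iota||\partial_y u_\iota|\phi_h\,dy+\ldots$ to split the cubic term, absorb $\varepsilon\int|\partial_{yy}u_\iota|^2\phi_h\,dy$ back through \eqref{wii 12}, and only then close Gronwall with the $L^1_\tau$-integrable coefficient $\int_{h/2}^1|\partial_y u_\iota|^2\,dy$ (which is in $L^1(0,T)$ thanks to the dissipation bound in \eqref{u^2 weak'} and the two-sided bounds on $\rho_\iota,r_\iota$ over $[h/2,1]$). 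In other words, the $\partial_{yy}u_\iota$ estimate is not an afterthought to be derived at the end — it is a tool needed mid-proof to close the cubic term, and this circularity is what makes the proof delicate.

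Conversely, the obstacle you highlight at the end — the cross terms supported on the transition zone $[h/2,h]$ — is the comparatively easy part: by Young's inequality those terms reduce to $C(h)\int_{h/2}^h|\partial_y u_\iota|^2\,dy$, which is controlled in $L^1(0,T)$ by the global dissipation estimate since $\rho_\iota$ and $r_\iota$ are bounded above and below there; no additional idea is required.
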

\begin{proof}
	It follows from $\eqref{app system lag'}_2$ that
	\begin{align}\label{wii 9}
		(u_\iota)_\tau+(\rho_\iota^\gamma)_y r_\iota^{N-1}-(\rho_\iota^2 (r_\iota^{N-1}u_\iota)_y)_y r_\iota^{N-1} +(N-1)r_{\iota}^{N-2}(\rho_\iota)_y u_\iota=0.
	\end{align}
	Multiplying the above equation by \((u_\iota)_\tau \phi_h\), integrating over \([\frac{h}{2},1]\), and using the boundary condition $\eqref{app system lag'}$, we obtain
	\begin{align}\label{v2}
		\begin{split}
			\int_\frac{h}{2}^1(\partial_\tau u_\iota)^2\phi_h dy &+\int_\frac{h}{2}^1 \partial_y(\rho^\gamma_\iota)r_\iota^{N-1}\partial_\tau u_\iota \phi_hdy\\
			&+\int_\frac{h}{2}^1\rho_\iota^2(r_\iota^{N-1}u_\iota)_y(  r_\iota^{N-1}\partial_\tau u_\iota\phi_h)_ydy+(N-1)\int_\frac{h}{2}^1 r_{\iota}^{N-2}\partial_y \rho_\iota u_\iota \partial_\tau u_\iota \phi_h dy=0.
		\end{split}
	\end{align}
		Then a direct calculation gives
	\begin{align}\label{v1}
		\begin{split}
			&\quad\int_\frac{h}{2}^1 \rho_\iota^2(r_\iota^{N-1}u_\iota)_y( r_\iota^{N-1}\partial_\tau u_\iota\phi_h)_ydy\\
			&=\int_\frac{h}{2}^1\left(\rho_\iota^2 r_\iota^{N-1}\partial_y u_\iota+\frac{(N-1)\rho_\iota u_\iota}{ r_\iota}\right)( r_\iota^{N-1}\partial_\tau u_\iota\phi_h)_ydy\\
			&=(N-1)\int_\frac{h}{2}^1 r_\iota^{N-2}\rho_\iota \partial_y u_\iota \partial_\tau u_\iota \phi_hdy+\int_\frac{h}{2}^1 \partial_y u_\iota \partial_{y\tau}u_\iota\rho_\iota^2 r_\iota^{2(N-1)}\phi_h dy\\
			&\quad +\int_\frac{h}{2}^1r_\iota^{2(N-1)}\partial_y u_\iota \partial_\tau u_\iota \rho_\iota^2\partial_y\phi_hdy-(N-1)\int_\frac{h}{2}^1\left(\frac{\rho_\iota u_\iota}{ r_\iota}\right)_y r_\iota^{N-1}\partial_\tau u_\iota\phi_hdy\\
			&=\frac{1}{2}\frac{d}{d\tau}\int_\frac{h}{2}^1 (\partial_y u_\iota)^2\rho_\iota^2 r_\iota^{2(N-1)}\phi_h dy-\frac{1}{2}\int_\frac{h}{2}^1(\partial_y u_\iota)^2\partial_\tau(\rho_\iota^2 r_\iota^{2(N-1)})\phi_h dy\\
			&\quad +(N-1)\int_\frac{h}{2}^1  r_\iota^{N-2}\rho_\iota\partial_y u_\iota \partial_\tau u_\iota \phi_hdy+\int_\frac{h}{2}^1 r_\iota^{2(N-1)}\partial_y u_\iota \partial_\tau u_\iota \rho_\iota^2\partial_y\phi_hdy\\
            &\quad -(N-1)\int_\frac{h}{2}^1\left(\frac{\rho_\iota u_\iota}{ r_\iota}\right)_y r_\iota^{N-1}\partial_\tau u_\iota\phi_hdy.
		\end{split}
	\end{align}
		Substituting (\ref{v1}) into (\ref{v2}), we have
	\begin{align}\label{v9}
		\begin{split}
			&\quad\frac{1}{2}\frac{d}{d\tau}\int_\frac{h}{2}^1 (\partial_y u_\iota)^2\rho_\iota^2 r_\iota^{2(N-1)}\phi_h dy+\int_\frac{h}{2}^1(\partial_\tau u_\iota)^2\phi_h dy \\
			&=-\int_\frac{h}{2}^1 \partial_y(\rho^\gamma_\iota)r_\iota^{N-1}\partial_\tau u_\iota \phi_hdy+\frac{1}{2}\int_\frac{h}{2}^1(\partial_y u_\iota)^2\partial_\tau(\rho_\iota^2 r_\iota^{2(N-1)})\phi_h dy\\
			&\quad-(N-1)\int_\frac{h}{2}^1 r_\iota^{N-2}\rho_\iota\partial_y u_\iota \partial_\tau u_\iota \phi_hdy-\int_\frac{h}{2}^1 r_\iota^{2(N-1)}\partial_y u_\iota \partial_\tau u_\iota \rho_\iota^2\partial_y\phi_hdy\\
            &\quad+(N-1)\int_\frac{h}{2}^1\left(\frac{\rho_\iota u_\iota}{ r_\iota}\right)_y r_\iota^{N-1}\partial_\tau u_\iota\phi_hdy-(N-1)\int_\frac{h}{2}^1 r_\iota^{N-2}\partial_y \rho_\iota u_\iota \partial_\tau u_\iota \phi_\iota dy\\
			&=\sum_{i=1}^{6}Z_i.
		\end{split}
	\end{align}
	Next, we estimate \(Z_1\)--\(Z_6\).  It follows from \eqref{wii 3}, \eqref{W 2d RT'}, and \eqref{92} that for any $(y,\tau)\in [h/2,1]\times[0,T]$,
	\begin{align}\label{wii 8}
		\begin{split}
		&C(h/2)\leq r_\iota(y,\tau)\leq R,\\
		&(C(h/4,T))^{-1}\leq \rho_\iota(y,\tau)\leq C(T).
		\end{split}
	\end{align}
	Therefore, Young's inequality together with \eqref{u^2 weak'}, \eqref{W nablarho^2'}, \eqref{wii 4} and \eqref{wii 8} implies that
	\begin{align}\label{v7}
		\begin{split}
			&\quad|Z_1|+|Z_3|+|Z_4|+|Z_5|+|Z_6|\\
			&\leq \frac{1}{4}\int_\frac{h}{2}^1(\partial_\tau u_\iota)^2\phi_h dy+C(h/4,T)\int_\frac{h}{2}^1\Big(|\partial_y
			\rho_\iota|^2+|\partial_y u_\iota|^2+|\partial_y\rho_\iota|^2|u_\iota|^2+|u_\iota|^2\Big)\phi_hdy\\
			&\quad+C(h/4,T)\int_\frac{h}{2}^1 |\partial_y u_\iota|^2|\partial_y\phi_h|^2\phi_h^{-1}dy\\
			&\leq \frac{1}{4}\int_\frac{h}{2}^1(\partial_\tau u_\iota)^2\phi_h dy+C(h/4,T)\left(1+\int_{\frac{h}{2}}^1|\partial_yu_\iota|^2dy\right),
		\end{split}
	\end{align}
	where in the last inequality we have used the fact that
	\begin{align}\label{wii 11}
		\norm{u_\iota}_{L^\infty((h/2,1))}^2\leq C\int_{\frac{h}{2}}^1|u_\iota|^2dy+C\int_{\frac{h}{2}}^1|\partial_yu_\iota|^2dy.
	\end{align}
	Finally, we estimate the complex term $Z_2$. A direct calculation, together with \eqref{wii 8}, shows that
	\begin{align}\label{wii 10}
		|Z_2|\leq C(h/4,T)\left(\int_{\frac{h}{2}}^1|\partial_y u_\iota|^3\phi_h dy+\int_{\frac{h}{2}}^1|\partial_y u_\iota|^2|u_\iota|\phi_h dy\right).
	\end{align}
	From \eqref{wii 9} we obtain  
	\begin{align}
		\left(\rho_\iota^2\left(r_\iota^{N-1}\partial_y u_\iota+\frac{(N-1)u_\iota}{\rho_\iota r_\iota}\right)\right)_yr_\iota^{N-1}=(u_\iota)_\tau +r^{N-1}_\iota(\rho_\iota^\gamma)_y+(N-1)r_\iota^{N-2}(\rho_\iota)_yu_\iota,\nonumber
	\end{align}
	which implies that
		\begin{align}
		\rho_\iota^2 r_\iota^{2(N-1)}\partial_{yy}u_\iota&=-\left[(\rho_\iota^2 r_\iota^{N-1})_y\partial_y u_\iota+(N-1)\left(\frac{u_\iota\rho_\iota}{r_\iota}\right)_y\right] r_\iota^{N-1}\\
        &\quad+(u_\iota)_\tau+r_\iota^{N-1}(\rho^\gamma_\iota)_y+(N-1)r_\iota^{N-2}(\rho_\iota)_yu_\iota.\nonumber
	\end{align}
	Thus, it deduces from \eqref{wii 8} that, for any $(y,\tau)\in [h/2,1]\times[0,T]$,
	\begin{align}
		|\partial_{yy} u_\iota(y,\tau)|\leq C(h/4,T)(|\partial_y \rho_\iota|+1)(|\partial_ yu_\iota|+|u_\iota|+1)+C(h/4,T)|\partial_\tau u_\iota|,\nonumber
	\end{align}
	and using \eqref{u^2 weak'}, \eqref{W nablarho^2'}, \eqref{wii 4}, and \eqref{wii 8}, we obtain
		\begin{align}\label{v5}
		\begin{split}
			&\quad\int_\frac{h}{2}^1 |\partial_{yy} u_\iota|^2\phi_h dy\\
			&\leq C(h/4,T)\int_\frac{h}{2}^1 |\partial_\tau u_\iota|^2\phi_h dy+C(h/4,T)\int_\frac{h}{2}^1(|\partial_y \rho_\iota|^2+1)(|\partial_y u_\iota|^2+|u_\iota|^2+1)\phi_h dy\\
			&\leq C(h/4,T)\int_\frac{h}{2}^1 |\partial_\tau u_\iota|^2\phi_h dy+C(h/4,T)\left(1+\int_{\frac{h}{2}}^1|\partial_y u_\iota|^2dy+\int_\frac{h}{2}^1|\partial_y \rho_\iota|^2|\partial_y u_\iota|^2\phi_hdy\right)\\
			&\leq  \frac{1}{2}\int_\frac{h}{2}^1 |\partial_{yy} u_\iota|^2\phi_h dy+C(h/4,T)\int_\frac{h}{2}^1 |\partial_\tau u_\iota|^2\phi_h dy+C(h/4,T)\left(1+\int_{\frac{h}{2}}^1|\partial_y u_\iota|^2dy\right),
		\end{split}
	\end{align}
	where in the last inequality we have used \eqref{W 2d l'}, \eqref{wii 8}, and \eqref{wii 4} to obtain
		\begin{align}
		\begin{split}
			&C(h/4,T)\int_\frac{h}{2}^1|\partial_y \rho_\iota|^2|\partial_y u_\iota|^2\phi_hdy\\
			&\leq C(h/4,T)\left(\int_{\frac{h}{2}}^1|\partial_y \rho_\iota|^{2q}dy\right)^{\frac{1}{q}}\left(\int_\frac{h}{2}^1|\partial_y u_\iota|^{\frac{2q}{q-1}}\phi_h^{\frac{q}{q-1}} dy\right)^{\frac{q-1}{q}}\\
			&\leq C(h/4,T)\norm{(\partial_y u_\iota)^2 \phi_h}_{L^\infty}^{\frac{1}{q}}\left(\int_\frac{h}{2}^1|\partial_y u_\iota|^2\phi_hdy\right)^{\frac{q-1}{q}}\\
			&\leq C(h/4,T)\int_\frac{h}{2}^1\Big((\partial_y u_\iota)^2 \phi_h+|\partial_{yy}u_\iota||\partial_y u_\iota|\phi_h+(\partial_yu_\iota)^2|\partial_y\phi_h|\Big)dy+C(h/4,T)\int_{h/2}^1|\partial_y u_\iota|^2dy\\
			&\leq \frac{1}{2}\int_\frac{h}{2}^1 |\partial_{yy} u_\iota|^2\phi_h dy+C(h/4,T)\int_{h/2}^1|\partial_y u_\iota|^2dy.\nonumber
		\end{split}
	\end{align}
	We obtain from \eqref{v5} that
	\begin{align}\label{wii 12}
		\int_\frac{h}{2}^1 |\partial_{yy} u_\iota|^2\phi_h dy\leq C(h/4,T)\int_\frac{h}{2}^1 |\partial_\tau u_\iota|^2\phi_h dy+C(h/4,T)\left(1+\int_{h/2}^1|\partial_y u_\iota|^2dy\right).
	\end{align}
	Now, based on \eqref{wii 12}, \eqref{wii 11}, \eqref{wii 4}, and \eqref{wii 10}, it is sufficient to estimate $Z_2$,
		\begin{align}\label{v6}
		\begin{split}
			|Z_2|&\leq C(h/4,T)\norm{\partial_y u_\iota \phi_h^{\frac{1}{2}}}_{L^\infty((h/2,1))}\int_\frac{h}{2}^1|\partial_y u_\iota|^2\phi_h^{\frac{1}{2}}dy+C(h/2,T)\norm{u_\iota}_{L^\infty((h/2,1))}\int_\frac{h}{2}^1|\partial_y u_\iota|^2\phi_h dy\\
			&\leq \varepsilon\norm{(\partial_y u_\iota)^2\phi_h}_{L^\infty((h/2,1))}+C(h/4,T)\left(1+\int_{h/2}^1|\partial_y u_\iota|^2dy\right)\left(1+\int_\frac{h}{2}^1|\partial_y u_\iota|^2\phi_h dy\right)\\
			&\leq \varepsilon\int_\frac{h}{2}^1 |\partial_{yy} u_\iota|^2\phi_h dy+C(h/4,T)\left(1+\int_{h/2}^1|\partial_y u_\iota|^2dy\right)\left(1+\int_\frac{h}{2}^1|\partial_y u_\iota|^2\phi_h dy\right)\\
			&\leq \frac{1}{4}\int_\frac{h}{2}^1 |\partial_\tau u_\iota|^2\phi_h dy+C(h/4,T)\left(1+\int_{h/2}^1|\partial_y u_\iota|^2dy\right)\left(1+\int_\frac{h}{2}^1|\partial_y u_\iota|^2\phi_h dy\right),
			\end{split}
	\end{align}
	where \(\varepsilon\) is a sufficiently small positive constant depending on \(h/4\) and \(T\). Substituting \eqref{v7}, \eqref{v6} and \eqref{wii 12} into \eqref{v9} gives
	\begin{align*}
		\begin{split}
			&\quad\frac{d}{d\tau}\int_\frac{h}{2}^1 (\partial_y u_\iota)^2\rho_\iota^2 r_\iota^{2(N-1)}\phi_h dy+\int_\frac{h}{2}^1|\partial_\tau u_\iota|^2\phi_h dy+\int_\frac{h}{2}^1 |\partial_{yy} u_\iota|^2\phi_h dy\\
			&\leq C(h/4,T)\left(1+\int_{h/2}^1|\partial_y u_\iota|^2dy\right)\left(1+\int_\frac{h}{2}^1|\partial_y u_\iota|^2\phi_h dy\right),
		\end{split}
	\end{align*}
	which, together with Gronwall's inequality, \eqref{ini nabla u}, \eqref{wii 8}, and \eqref{u^2 weak'}, yields \eqref{wii 13}.
\end{proof}

Finally, we establish an \(\iota\)-independent bound for the density gradient on the right-hand side of the particle path determined by \(h\).
\begin{prop}\label{Prop nabla rho}
	Under the assumptions of Theorem \ref{T_2}, there exists a constant $C(h/4, T)>0$, independent of $\iota$, such that 
	\begin{align}\label{wii 14}
		\sup_{0\leq \tau\leq T}\norm{u_\iota}_{L^\infty((h,1))}+\sup_{0\leq \tau\leq T}\norm{\partial_y\rho_\iota}_{L^\infty((h,1))}\leq C(h/4,T).
	\end{align}
\end{prop}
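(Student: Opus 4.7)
\textbf{Proof plan for Proposition~\ref{Prop nabla rho}.} The plan is to first extract the $L^\infty$ bound for $u_\iota$ from the $H^1$-type estimate of the previous proposition, and then to feed that bound (together with the $L^p$-in-time estimate \eqref{wii 5}) into the integrated B--D entropy identity \eqref{wii 2} in order to obtain a pointwise bound on $\partial_y\rho_\iota$.

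First, I would combine the energy estimate \eqref{u^2 weak'} and the bound $\sup_{0\le \tau\le T}\int_h^1 (\partial_y u_\iota)^2 dy\le C(h/4,T)$ coming from Proposition~\ref{Prop wii nabla u}. The one-dimensional Sobolev embedding on the interval $(h,1)$ then yields, for every $\tau\in[0,T]$,
\begin{align*}
\norm{u_\iota(\tau)}_{L^\infty((h,1))}\le C\norm{u_\iota(\tau)}_{L^2((h,1))}+C\norm{\partial_y u_\iota(\tau)}_{L^2((h,1))}\le C(h/4,T),
\end{align*}
which settles the first half of \eqref{wii 14}.

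Next, evaluating the B--D identity \eqref{wii 2} for $(y,s)\in[h,1]\times[0,T]$ gives
\begin{align*}
r_\iota^{N-1}\partial_y\rho_\iota(y,s)=r_{\iota,0}^{N-1}\partial_y\rho_{\iota,0}(y)+u_{\iota,0}(y)-u_\iota(y,s)-\int_0^s \partial_y(\rho_\iota^\gamma)r_\iota^{N-1}(y,\xi) d\xi.
\end{align*}
The first two terms on the right-hand side are bounded uniformly in $\iota$: by the lower bound \eqref{wii 3}, the Lagrangian interval $(h,1)$ corresponds in Eulerian coordinates to a region bounded away from the origin uniformly in $\iota$, so taking $n$ large enough in \eqref{ini nabla rho}--\eqref{ini nabla u} gives an $\iota$-independent initial bound on $\partial_y\rho_{\iota,0}$ and $u_{\iota,0}$ on $(h,1)$. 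The third term is controlled by the $L^\infty$ bound for $u_\iota$ obtained above. Finally, applying H\"older's inequality in $\xi$ together with the pointwise-in-$y$ estimate \eqref{wii 5} yields
\begin{align*}
\left|\int_0^s \partial_y(\rho_\iota^\gamma) r_\iota^{N-1}(y,\xi) d\xi\right|\le T^{\frac{p-1}{p}}\left(\int_0^T |\partial_y(\rho_\iota^\gamma)r_\iota^{N-1}(y,\xi)|^p d\xi\right)^{\frac{1}{p}}\le C(h/4,T).
\end{align*}
Since $r_\iota(y,s)\ge C(h/2)$ on $[h,1]\times[0,T]$ by \eqref{wii 8}, dividing through by $r_\iota^{N-1}$ produces the desired pointwise bound on $\partial_y\rho_\iota$, completing \eqref{wii 14}.

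The most delicate step is the control of the integral term, which is why we proved the $L^p$ bound \eqref{wii 5} rather than merely an $L^1$ bound: the H\"older step above would fail if $p=1$. Apart from this point, the argument is a bookkeeping exercise relying on translating the $\iota$-uniform initial-data estimates \eqref{ini nabla rho}--\eqref{ini nabla u} into Lagrangian coordinates, which is made possible by the uniform particle-path lower bound \eqref{wii 3}.
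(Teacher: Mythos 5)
Your proof is correct and follows essentially the same route as the paper: Sobolev embedding plus Proposition~\ref{Prop wii nabla u} gives the velocity bound \eqref{wii 15}, and then the integrated B--D identity \eqref{wii 2}, combined with the initial-data bounds \eqref{ini nabla rho}--\eqref{ini nabla u}, the bound \eqref{wii 15}, the claim \eqref{wii 5}, and the lower bounds \eqref{wii 3}/\eqref{wii 8} on $r_\iota$, yields the density-gradient bound. One small remark on your closing commentary: the statement that "the H\"older step would fail if $p=1$" is not quite the right diagnosis, since if \eqref{wii 5} were available for $p=1$ the integral term would be bounded directly with no H\"older needed at all; the real reason $p>1$ matters is that the Gronwall argument proving \eqref{wii 5} rests on the $L^{2p}$ velocity estimate \eqref{W 2d k'}, not on the trivial H\"older step at the end.
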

\begin{proof}
	Using the one-dimensional Sobolev embedding, together with \eqref{u^2 weak'} and \eqref{wii 13}, we obtain
	\begin{align}\label{wii 15}
		\begin{split}
			\sup_{0\leq \tau\leq T}\norm{u_\iota}_{L^\infty((h,1))}\leq C\sup_{0\leq \tau\leq T}\left(\int_{h}^1 u_\iota^2dy+\int_{h}^1|\partial_y u_\iota|^2dy\right)\leq C(h/4,T).
		\end{split}
	\end{align}
	From \eqref{wii 2}, \eqref{wii 3}, \eqref{ini nabla rho}, \eqref{ini nabla u}, \eqref{wii 15}, and \eqref{wii 5}, it follows that
	\begin{align}
		\begin{split}
			&\quad\sup_{0\leq \tau\leq T}\norm{\partial_y\rho_\iota}_{L^\infty((h,1))}\\
			&\leq C(h)\sup_{0\leq \tau\leq T}\norm{\partial_y\rho_\iota r_\iota^{N-1}(y,0)+u_\iota(y,0)-u_\iota(y,\tau)+\int_0^\tau \partial_y(\rho_\iota^\gamma)r_\iota^{N-1}(y,s)ds}_{L^\infty((h,1))}\\
			&\leq C(h/4,T).
		\end{split}
	\end{align}
This completes the proof of Proposition \ref{Prop nabla rho}.
\end{proof}

Now, combining the estimates of Proposition \ref{Prop wii 2p}, \ref{Prop wii VT}--\ref{Prop nabla rho}, the we change all the estimates back to the Eulerian coordinates. That is, it holds that
\begin{align}
	&r_h^\iota(t)\ge (C(h))^{-1}, \ t\in[0,T]\label{429};\\
	&\ 0<(C(h/2,T))^{-1}\leq\rho_\iota(r,t)\leq C(T),\ (r,t)\in\big[r_h^{\iota} (t),R\big]\times[0,T];\label{430}\\
	&\sup_{t\in[0,T]}\int_\iota^R\rho_\iota u_\iota^{2p} r^{N-1} dr+\int_{0}^{T} \int_\iota^R \big(\rho_\iota u_\iota^{2p}r^{N-3}+{\rho_\iota u_\iota^{2p-2}(\partial_{r} u_\iota)^2}{r^{N-1}}\big)drdt\leq C(T);\label{432}\\
	&\sup_{t\in[0,T]}\int_\iota^R|\partial_r \rho_\iota^{\frac{1}{2q}}|^{2q}r^{N-1}
	dr+\int_0^T\int_\iota^R|\partial_r\rho_\iota^{\frac{\gamma}{2q}}|^{2q}r^{N-1}drdt\leq C(T);\label{431}\\
	&\sup_{t\in[0,T]}\int_{r_h^{\iota} (t)}^R\Big((\partial_{r} u_\iota)^2+(\partial_t \rho_\iota)^2 \Big)dr+\int_0^{T} \int_{r_h^\iota
		(t)}^R{ \Big((\partial_t u_\iota)^2+(\partial_{r r}u_\iota)^2 \Big)}drdt\leq C(h/4,T);\label{433}\\
	&\sup_{t\in[0,T]}\norm{u_\iota}_{ L^\infty((r_h^{\iota} (t),R))}+\sup_{t\in[0,T]}\norm{\partial_{r}\rho_\iota}_{L^\infty((r_h^{\iota} (t),R))}\leq C(h/4,T),\label{434}
\end{align}
where we have used
\begin{align}
	\begin{split}
		\sup_{t\in[0,T]}\int_{r^\iota_h(t)}^R (\partial_{r}u_\iota)^2dr&\leq C(h/4,T)\sup_{\tau\in[0,T]}\int_h^1(\partial_y u_\iota)^2dy\leq C(h/4,T);\\
		\sup_{t\in[0,T]}\int_{r^\iota_h(t)}^R (\partial_{t}\rho_\iota)^2dr&\leq C(h/4,T)\sup_{0\leq t\leq T}\int_{r_h^{\iota} (t)}^R\Big(|\partial_r\rho_\iota|^2|u_\iota|^2+|\partial_r u_\iota|^2+|u_\iota|^2\Big)dr\leq C(h/4,T);\\
		\int_0^T\int_{r^\iota_h(t)}^R (\partial_tu_\iota)^2drdt&\leq C(h/4,T) \int_0^T\int_h^1\Big((\partial_\tau u_\iota)^2+u_\iota^2(\partial_y u_\iota)^2\Big)dyd\tau\leq C(h/4,T);\\
		\int_0^T\int_{r^\iota_h(t)}^R (\partial_{r r}u_\iota)^2drdt&\leq C(h/4,T)\int_0^T\int_h^1\Big((\partial_{yy}u_\iota)^2+(\partial_y u_\iota)^2(\partial_y \rho_\iota)^2+(\partial_yu_\iota)^2\Big)dyd\tau\\
        &\leq C(h/4,T).
	\end{split}
\end{align}

\subsection{The proof of Theorem \ref{T_2}}
The following Proposition implies that the particle paths $r_h^\iota (t) $ are convergent.
	\begin{prop}\label{P_1}
	Under the assumptions of Theorem \ref{T_2}, let $(\rho_\iota,u_\iota)$ and $r_h^\iota (t)$ be as described above. Then there exists a subsequence $\iota_i$ such that $r_h^{\iota_i} (t)$ converges uniformly for $(h,t)$ in compact subset of $(0,1]\times[0,\infty)$,
	\begin{equation}\label{345}
		\lim_{\iota_i\rightarrow 0^+}r_h^{\iota_i} (t)= r_h(t);
	\end{equation}
	the limit $r_h(t)$ is $H\ddot{o}lder$ continuous on these compact sets, and if
	\begin{equation}\label{340}
		\underline{r}(t)=\lim_{h\rightarrow 0^+} r_h(t),
	\end{equation}
	then $\underline{r}(t)$ is a upper semi-continuous curve and $\lim_{t\rightarrow 0} {\underline{r}(t)}=0.$
\end{prop}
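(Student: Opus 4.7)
I will establish Proposition \ref{P_1} by obtaining uniform (in $\iota$) equicontinuity of the family $\{r_h^\iota\}$ in both variables on compact subsets of $(0,1]\times[0,\infty)$, then extracting a limit by Arzel\`a--Ascoli together with a diagonal argument, and finally deducing the properties of $\underline{r}$ from elementary facts about pointwise infima of continuous functions. The key point making the whole scheme work is that the $\iota$-independent lower bounds on $\rho_\iota$ and $r_h^\iota(t)$ from \eqref{429}--\eqref{430} and the $L^\infty$ bound on $u_\iota$ from \eqref{434}, although all degenerate as $h\downarrow 0$, remain uniform in $\iota$ and $t\in[0,T]$ for each fixed $h>0$.

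First, since in Lagrangian coordinates the particle path $r_h^\iota(t)$ is the image of the fixed Lagrangian point $y=h$ under the flow, differentiating \eqref{353} in $t$ and using the continuity equation $\eqref{App Stm Ball'}_1$ together with $u_\iota(\iota,t)=0$ yields the identity $\tfrac{d}{dt}r_h^\iota(t)=u_\iota(r_h^\iota(t),t)$. Combined with \eqref{434} this gives
\begin{equation*}
    |r_h^\iota(t_2)-r_h^\iota(t_1)|\leq C(h/4,T)\,|t_2-t_1|,\qquad 0\leq t_1,t_2\leq T,
\end{equation*}
uniformly in $\iota$. For continuity in $h$, fix $0<h_0\leq 1$ and take $h_0\leq h_1<h_2\leq 1$; the definition \eqref{353} and the lower bounds \eqref{429}, \eqref{430} give
\begin{equation*}
    h_2-h_1=\int_{r_{h_1}^\iota(t)}^{r_{h_2}^\iota(t)}\rho_\iota(r,t)\,r^{N-1}\,dr\geq \bigl(C(h_0/2,T)\bigr)^{-1}\bigl(C(h_0)\bigr)^{-(N-1)}\bigl(r_{h_2}^\iota(t)-r_{h_1}^\iota(t)\bigr),
\end{equation*}
so $r_h^\iota$ is Lipschitz in $h\in[h_0,1]$ uniformly in $\iota$ and $t\in[0,T]$.

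Second, Arzel\`a--Ascoli on $[h_0,1]\times[0,T]$ combined with a standard Cantor diagonal extraction over $h_0\downarrow 0$ and $T\uparrow\infty$ furnishes a subsequence $\iota_i\to 0^+$ such that $r_h^{\iota_i}\to r_h$ uniformly on every compact subset of $(0,1]\times[0,\infty)$. The limit $r_h(t)$ inherits the Lipschitz (hence H\"older) estimates on each such compact set and is nondecreasing in $h$, since each $r_h^\iota$ is. Therefore
\begin{equation*}
    \underline{r}(t):=\lim_{h\to 0^+}r_h(t)=\inf_{h>0}r_h(t)
\end{equation*}
is well defined on $[0,\infty)$, takes values in $[0,R]$, and as a pointwise infimum of a family of continuous functions is automatically upper semicontinuous.

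Third, it remains to prove $\lim_{t\to 0^+}\underline{r}(t)=0$, which I reduce to $\underline{r}(0)=0$. From the construction of $\rho_{\iota,0}$ in Section 6.1 together with $\rho_0\geq\underline{\rho_0}>0$ one has $\rho_{\iota,0}\geq\underline{\rho_0}/2$ pointwise for all sufficiently small $\iota$; evaluating \eqref{353} at $t=0$ then gives
\begin{equation*}
    h\geq\frac{\underline{\rho_0}}{2N}\bigl((r_h^\iota(0))^N-\iota^N\bigr),
\end{equation*}
and letting $\iota=\iota_i\to 0^+$ and then $h\to 0^+$ forces $r_h(0)\to 0$, i.e. $\underline{r}(0)=0$. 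Since $\underline{r}\geq 0$, upper semicontinuity at $t=0$ yields $\lim_{t\to 0^+}\underline{r}(t)=0$. The main obstacle throughout is the $h$-dependence of the constants $C(h)$, $C(h/2,T)$, $C(h/4,T)$ inherited from Propositions \ref{Prop wii VT}--\ref{Prop nabla rho}: every estimate degenerates as $h\downarrow 0$, so no uniform regularity of $r_h$ can be hoped for near $h=0$, which is precisely why one can only assert upper semicontinuity (and not continuity) of $\underline{r}$ and why the H\"older regularity of the limit $r_h$ is confined to compact subsets of $(0,1]\times[0,\infty)$.
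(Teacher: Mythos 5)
Your proposal is correct and takes essentially the same route as the paper: the same implicit differentiation of \eqref{353} in $h$ and $t$ combined with \eqref{429}--\eqref{430} and \eqref{434} to get the $\iota$-uniform Lipschitz bounds, Arzel\`a--Ascoli plus a diagonal argument for the limit, and monotonicity in $h$ to define $\underline{r}$. Your treatment of the last two claims is marginally more economical than the paper's (you invoke the standard fact that an infimum of continuous functions is upper semicontinuous where the paper manipulates a telescoping identity that reduces to $\underline{r}(s)\le r_h(s)$, and you isolate $\underline{r}(0)=0$ from the initial-data lower bound and then apply USC at $t=0$ where the paper estimates $\underline{r}(t)\le r_h^{\iota_i}(t)+|r_h(t)-r_h^{\iota_i}(t)|$ directly), but the underlying ideas are identical.
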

\begin{proof}
	Fix \(h_0\in(0,1)\) and \(T>0\). For any $h_0\leq h_1<h_2\leq 1$ and $0\leq t_1< t_2\leq T$, it follows from \eqref{429}, \eqref{430}, and \eqref{434} that
	\begin{align*}
		\begin{split}
			|r^{\iota_i}_{h_1}(t_1)-r_{h_2}^{\iota_i}(t_2)|&\leq |r^{\iota_i}_{h_1}(t_1)-r_{h_2}^{\iota_i}(t_1)|+|r^{\iota_i}_{h_2}(t_1)-r_{h_2}^{\iota_i}(t_2)|\\
			&\leq \int_{h_1}^{h_2}|\partial_h r_h^{\iota_i} (t_1)|dh+\int_{t_1}^{t_2}|\partial_t r_{h_2}^{\iota_i}(t)|dt\\
			&\leq \int_{h_1}^{h_2}(\rho_{\iota_i}(r_h^{\iota_i}(t_1),t_1)(r_h^{\iota_i}(t_1))^{N-1})^{-1}dh+\int_{t_1}^{t_2}|u_{\iota_i}(r_{h_2}^{\iota_i}(t),t)|dt\\
			&\leq C(h_0/4,T)(|h_2-h_1|+|t_2-t_1|),
		\end{split}
	\end{align*}
	where the third inequality follows from the definition \eqref{353} of \(r_h^{\iota_i}(t)\). Therefore, by the Ascoli–Arzelà theorem, \(r_h^{\iota_i}(t)\) converges uniformly on compact sets, and the limit function \(r_h(t)\) is Hölder continuous on compact sets. 
	
	Next, the monotonicity of \(r_h(t)\) with respect to \(h\in[0,1]\) ensures that \(\underline{r}(t)\) is well-defined. Fix \(t>0\). For any \(s>0\) and sufficiently small \(h>0\), we have
		\begin{align}
		\begin{split}
		{\underline{r}(t)}&={\underline{r}(s)}+(r_h(s)-{\underline{r}(s)})+(r_h(t)-r_h(s))+({\underline{r}(t)}-r_h(t))\\
		&\ge{\underline{r}(s)}+(r_h(t)-r_h(s))+({\underline{r}(t)}-r_h(t)),
		\end{split}
	\end{align}
	which, together with the definition of \(\underline{r}(t)\) and the continuity of \(r_h(\cdot)\), implies that
	\begin{align}
		\underline{r}(t)\ge \limsup_{s\to t} \underline{r}(s).
	\end{align}
	This shows that \(\underline{r}(\cdot)\) is a upper semi-continuous curve.
	
	Finally, for sufficiently small \(h>0\) we have
	\begin{align}
		\begin{split}
			\underline{r}(t)&=\underline{r}(t)-r_h(t)+(r_h(t)-r_h^{\iota_i}(t))+r_h^{\iota_i}(t)\\
			&\leq (r_h(t)-r_h^{\iota_i}(t))+r_h^{\iota_i}(t),
		\end{split}
	\end{align}
	which, together with the uniform convergence of \(r_h^{\iota_i}(\cdot)\) on \([0,T]\), implies that
	\begin{align}
		\lim\limits_{t\to0}\underline{r}(t)=0.
	\end{align}
\end{proof}

Extend \((\rho_{\iota_i},u_{\iota_i})\) constantly from \([\iota_i,R]\times[0,T]\) to \([0,R]\times[0,T]\) and still denote the extended functions by \((\rho_{\iota_i},u_{\iota_i})\).  The following proposition shows that the region to the left of \(\underline{r}(\cdot)\) is actually the vacuum region.
\begin{prop}\label{P_3}
Let the hypotheses and notations of Proposition \ref{P_1} be in force. Then there exists a further subsequence, still denoted by $\iota_i$, such that for any $n\in\mathbb{N}^+$,
\begin{equation}\label{327'}
	\rho_{\iota_i}(r,t) r^{N-1} \to \rho(r,t) r^{N-1} \quad \text{as } \iota_i \to 0^+,
\end{equation}
uniformly in $C([1/n,R] \times [0,T])$. Furthermore, for any $t \in [0,T]$, it holds that
\begin{align}
	\rho(r,t) r^{N-1} = 0 \text{ on } [0, \underline{r}(t)], \label{wii 20}\\
	\rho(r,t) > 0  \text{ on } (\underline{r}(t), R]. \label{wii 21}
\end{align}
\end{prop}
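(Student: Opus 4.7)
The plan is to argue in three stages: (i) extract a subsequence for which $\rho_{\iota_i}$ converges strongly on every compact annulus bounded away from the origin; (ii) use the mass constraint \eqref{353} defining the particle path $r_h^{\iota}(t)$ to localize the vacuum region; and (iii) use the quantitative lower bound \eqref{430} to rule out vacuum on the exterior.

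For stage (i), I would fix $n\in\mathbb N^+$ and apply Aubin--Lions on $\Omega_{1/n}$. Estimate \eqref{431} gives a uniform (in $\iota<1/n$) bound on $\rho_\iota$ in $L^\infty(0,T;W^{1,2q}(\Omega_{1/n}))$, while the continuity equation combined with \eqref{u^2 weak'} and \eqref{W 2d RT'} yields a uniform bound on $\partial_t\rho_\iota$ in, say, $L^2(0,T;L^1(\Omega_{1/n}))$. Since $2q>N$, the embedding $W^{1,2q}(\Omega_{1/n})\hookrightarrow C(\overline{\Omega_{1/n}})$ is compact, so Aubin--Lions produces a subsequence converging uniformly on $\overline{\Omega_{1/n}}\times[0,T]$. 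A standard diagonal extraction over $n$ then yields a single subsequence $\iota_i$ with $\rho_{\iota_i}\to\rho$ in $C([1/n,R]\times[0,T])$ for every $n$, from which \eqref{327'} follows upon multiplying by the continuous weight $r^{N-1}$.

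For stage (ii), fix $t\in[0,T]$ and pick $r_0\in(0,\underline{r}(t))$. Since $r_h(t)$ is monotone in $h$ and decreases to $\underline{r}(t)$ as $h\to 0^+$, we have $r_h(t)>r_0$ for every $h>0$. Choosing $\delta>0$ with $[r_0-\delta,r_0+\delta]\subset(0,\underline{r}(t))$, Proposition \ref{P_1} shows that for $i$ large enough one has $[r_0-\delta,r_0+\delta]\subset[\iota_i,r_h^{\iota_i}(t)]$, so the defining identity \eqref{353} gives
\begin{align*}
\int_{r_0-\delta}^{r_0+\delta}\rho_{\iota_i}(s,t)s^{N-1}\,ds\le\int_{\iota_i}^{r_h^{\iota_i}(t)}\rho_{\iota_i}(s,t)s^{N-1}\,ds=h.
\end{align*}
Sending $\iota_i\to 0$ via stage (i) and then $h\to 0$ yields $\int_{r_0-\delta}^{r_0+\delta}\rho(s,t)s^{N-1}\,ds=0$; the continuity of $\rho\,r^{N-1}$ on this annulus (inherited from the uniform convergence) forces $\rho(r_0,t)=0$. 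A limiting argument up to $r=\underline{r}(t)$, together with the trivial identity at $r=0$, completes \eqref{wii 20}.

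For stage (iii), if $r\in(\underline{r}(t),R]$ then some $h_0>0$ satisfies $r_{h_0}(t)<r$, and uniform convergence $r_{h_0}^{\iota_i}\to r_{h_0}$ from Proposition \ref{P_1} gives $r_{h_0}^{\iota_i}(t)<r$ for large $i$. Hence $(r,t)$ lies in the region where \eqref{430} applies, so $\rho_{\iota_i}(r,t)\ge\bigl(C(h_0/2,T)\bigr)^{-1}$, and passing to the limit via stage (i) yields \eqref{wii 21}. The main obstacle I anticipate is the delicate double limit at the edge of the vacuum region in stage (ii): the order $\iota_i\to 0$ before $h\to 0$ is essential, and the legitimacy of this interchange rests on the fact that $r_h^{\iota_i}\to r_h$ uniformly in $(h,t)$ on compact subsets of $(0,1]\times[0,\infty)$, which is precisely what Proposition \ref{P_1} provides.
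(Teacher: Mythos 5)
Your proof is correct and follows essentially the same architecture as the paper's: Aubin--Lions plus diagonal extraction for the uniform convergence \eqref{327'}, the mass identity \eqref{353} to kill $\rho$ inside the particle paths, and the lower bound \eqref{430} to keep $\rho$ positive outside.

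The one place where you deviate slightly is stage (ii). The paper estimates the full integral $\int_0^{\underline{r}(t)}\rho_{\iota_i}r^{N-1}\,dr$ directly via the four-piece decomposition
$\int_0^{\iota_i}+\int_{\iota_i}^{r_h^{\iota_i}(t)}+\int_{r_h^{\iota_i}(t)}^{r_h(t)}+\int_{r_h(t)}^{\underline{r}(t)}$,
controlling the middle piece by the exact value $h$ and the remaining pieces via boundedness of $\rho_{\iota_i}$ and H\"older's inequality against the $L^\gamma$ bound. You instead localize to a small annulus $[r_0-\delta,r_0+\delta]\subset(0,\underline r(t))$ strictly inside the candidate vacuum region, observe that for $i$ large this annulus is trapped between $\iota_i$ and $r_h^{\iota_i}(t)$, and use nonnegativity of the integrand to bound the localized mass by $h$; then take $\iota_i\to\infty$ and $h\to 0$ in that order. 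This is a clean alternative: it avoids the paper's need to treat the piece on $[0,\iota_i]$ and the two ``overshoot'' pieces (one of which is a backwards interval when $r_h(t)>\underline r(t)$), at the modest cost of a separate continuity argument at the endpoint $r=\underline r(t)$. Both arguments ultimately rest on the same two facts: $\int_{\iota_i}^{r_h^{\iota_i}(t)}\rho_{\iota_i}r^{N-1}\,dr=h$ and uniform convergence of both $\rho_{\iota_i}r^{N-1}$ and $r_h^{\iota_i}(t)$ on compacts away from the origin.

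One small remark: you frame the two successive limits ($\iota_i\to\infty$ first, then $h\to 0$) as an ``interchange'' whose legitimacy needs justification. There is in fact no interchange here --- it is a genuine iterated limit, and no uniformity in $h$ of the $\iota_i$-convergence is needed beyond the pointwise statement $r_h^{\iota_i}(t)\to r_h(t)$ for each fixed $h$, which Proposition \ref{P_1} certainly supplies. Stages (i) and (iii) match the paper's argument essentially verbatim.
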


\begin{proof}
	Using \eqref{431}, we can prove \eqref{327'} by the same method as in Proposition \ref{Prop W rho RT}. 
	
	It follows from \eqref{430} and \eqref{353} that
		\begin{align}
		&\int_{0}^{\underline{r}(t)}{{\rho_{\iota_i}(r,t)}r^{N-1}}dr\nonumber\\
		&=\Big(\int_0^{\iota_i}+\int_{\iota_i}^{r_h^{\iota_i}(t)}+\int_{r_h^{\iota_i}(t)}^{r_h (t)}+\int_{r_h (t)}^{\underline{r}(t)}\Big){{\rho_{\iota_i}(r,t)}r^{N-1}}dr\nonumber\\
		&\leq C(T){\iota_i}^N+\int_{\iota_i}^{r_h^{\iota_i}(t)}{{\rho_{\iota_i}(r,t)}r^{N-1}}dr\nonumber\\
		&\ \ + C\left(\int_{\iota_i}^R \rho_{\iota_i}^\gamma(r,t) r^{N-1}dr\right)^{\frac{1}{\gamma}}\Big[\Big({r_h (t)}^N-{r_h^{\iota_i}(t)}^N\Big)^{\frac{\gamma-1}{\gamma}}+\Big({\underline{r}(t)}^N-{r_h (t)}^N\Big)^{\frac{\gamma-1}{\gamma}}\Big]\nonumber\\
		&\leq C{\iota_i}^N+h+C\Big({r_h (t)}^N-{r_h^{\iota_i}(t)}^N\Big)^{\frac{\gamma-1}{\gamma}}+C\Big({\underline{r}(t)}^N-{r_h (t)}^N\Big)^{\frac{\gamma-1}{\gamma}}.\nonumber
	\end{align}
	Letting \(\iota_i\to 0\) and then \(h\to 0\) gives
	\begin{align*}
		\int_{0}^{\underline{r}(t)}{{\rho(r,t)}r^{N-1}}dr=0,
	\end{align*}
	 which shows \eqref{wii 20}.
	 
	 Finally, for any \(r>\underline{r}(t)\), there exists \(0<h\) such that \(\underline{r}(t)<r_h(t)<r\).  By \eqref{430} and \eqref{345} we have \(\rho_{\iota_i}(r,t)\ge (C(h/2,T))^{-1}\) for all sufficiently small \(\iota_i\), which together with \eqref{327'} implies $\rho(r,t)\ge (C(h/2,T))^{-1}$.
	 
	 This completes the proof of Proposition \ref{P_3}.
\end{proof}

Define
\begin{equation}\label{336}
	\mathcal{F}\triangleq\big\{(r,t)\mid\underline{r}(t)<r\leq R,\ 0\leq t<\infty\big\}.
\end{equation}
The following Proposition \ref{P_6} provides a precise characterization of the flow region.
\begin{prop}\label{P_6}
	For any $(r,t)\in \mathcal{F}$, there exist two small positive constants $\tilde{r}$ and $\tilde{t}$ such that
	\begin{align}\label{wii 16}
		\begin{split}
			[r-\tilde{r},R]\times[(t-\tilde{t})_+,t+\tilde{t}]\subset(\underline{r}(\cdot)+\tilde{r},R]\times[(t-\tilde{t})_+,t+\tilde{t}]\subset \mathcal{F}.
		\end{split}
	\end{align}
	Hence, $\mathcal{F}\cap \{t>0\}\cap\{r<R\}$ is open.
\end{prop}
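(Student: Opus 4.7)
The plan is to exploit the monotonicity of $r_h(t)$ with respect to the mass parameter $h$ together with the Hölder continuity of $r_h(\cdot)$ already proved in Proposition \ref{P_1}, thereby bypassing the fact that $\underline{r}(\cdot)$ itself is only upper semi-continuous. The heuristic is that any point strictly to the right of $\underline{r}(t)$ must sit strictly to the right of some particle-path limit $r_h(t)$ for small $h>0$, and $r_h$ has good continuity in time that allows us to open a space-time box around the point.

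More precisely, fix $(r,t)\in\mathcal{F}$ and set $\varepsilon:=(r-\underline{r}(t))/4>0$. First I would invoke the definition \eqref{340} and the evident monotonicity $h\mapsto r_h(t)$ (from $h=\int_\iota^{r_h^\iota(t)}\rho_\iota s^{N-1}ds$ and the pointwise convergence in Proposition \ref{P_1}) to select $h_0>0$ so small that
\begin{equation*}
r_{h_0}(t)<\underline{r}(t)+\varepsilon=r-3\varepsilon.
\end{equation*}
Next I would use the Hölder continuity of $r_{h_0}(\cdot)$ on compact subsets of $[0,\infty)$ established in Proposition \ref{P_1} to pick $\tilde{t}>0$ so small that
\begin{equation*}
\sup_{s\in[(t-\tilde t)_+,\,t+\tilde t]} r_{h_0}(s)<r-2\varepsilon.
\end{equation*}

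Then I would combine these two facts with the key monotonicity observation
\begin{equation*}
\underline{r}(s)=\lim_{h\to 0^+} r_h(s)\le r_{h_0}(s)\quad\text{for every }s\ge 0,
\end{equation*}
which is immediate from the definition \eqref{340} and the fact that $r_h(s)$ is non-decreasing in $h$. This yields $\underline{r}(s)<r-2\varepsilon$ throughout the time window $[(t-\tilde t)_+,t+\tilde t]$. Setting $\tilde{r}:=\varepsilon$, one then verifies that for any $(r',s)$ with $r'\in[r-\tilde r,R]$ and $s\in[(t-\tilde t)_+,t+\tilde t]$, one has $r'\ge r-\varepsilon>\underline{r}(s)+\varepsilon$, so $(r',s)\in\mathcal{F}$; this gives the stronger chain \eqref{wii 16} with the intermediate inclusion $(\underline{r}(\cdot)+\tilde r,R]\subset\mathcal{F}$. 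The openness of $\mathcal{F}\cap\{t>0\}\cap\{r<R\}$ is then immediate because, for interior points in this open half-strip, one can further shrink $\tilde r,\tilde t$ so that the whole box $[r-\tilde r,r+\tilde r]\times[t-\tilde t,t+\tilde t]$ lies inside $\mathcal{F}\cap\{t>0\}\cap\{r<R\}$.

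The only delicate point — and really the only thing one must be careful with — is the asymmetry between the lower semi-continuity of $r_h(t)$ in $h$ (which in fact is continuity from above as $h\downarrow 0$) and the mere upper semi-continuity of $\underline{r}$ in $t$. Since we are opening a neighborhood on the \emph{right} of $\underline{r}(\cdot)$, upper semi-continuity of $\underline{r}$ is exactly the correct one-sided property: it forbids $\underline{r}$ from jumping up at time $t$, which is precisely what would obstruct the construction. Therefore no compactness or further PDE estimate beyond Proposition \ref{P_1} is required, and the proposition follows.
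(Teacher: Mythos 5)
Your proof is correct, and in substance it coincides with the paper's: both hinge on the continuity of the particle-path limit $r_h(\cdot)$ together with the monotone domination $\underline{r}(s)\le r_{h}(s)$ for every $h>0$. The only difference is presentational. The paper first packages these two facts into the statement that $\underline{r}(\cdot)$ is upper semi-continuous (proved in Proposition \ref{P_1}) and then, in Proposition \ref{P_6}, takes $\tilde{r}=\tfrac13\bigl(r-\underline{r}(t)\bigr)$ and applies upper semi-continuity directly to choose $\tilde{t}$; you instead inline that argument, choosing $h_0$ first so that $r_{h_0}(t)<\underline{r}(t)+\varepsilon$ and then using the H\"older continuity of $r_{h_0}(\cdot)$ to control the time window, with $\tilde{r}=\varepsilon=\tfrac14\bigl(r-\underline{r}(t)\bigr)$. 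The constants differ trivially but both chains of inequalities close. Your version is marginally more self-contained since it does not quote the upper semi-continuity statement as a lemma, but neither version does anything the other could not; the ``asymmetry'' remark you close with correctly identifies why the one-sided regularity of $\underline{r}$ is exactly what is needed.
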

\begin{proof}
	Set $\tilde{r}=\frac{1}{3}(r-\underline{r}(t))$. Since $\underline{r}(\cdot)$ is an upper semicontinuous curve, we have
	\begin{align}
		\underline{r}(t)\ge \inf_{\delta>0}\sup_{s\in B_\delta(t)}\underline{r}(s),
	\end{align}
	 which shows that there exists a sufficiently small positive constant $\tilde{t}$ such that
	 \begin{align}
	 	\underline{r}(t)+\tilde{r}>\sup_{s\in B_{\tilde{t}}(t)}\underline{r}(s).
	 \end{align}
	 That is, for all $s\in[(t-\tilde{t})_+,t+\tilde{t}]$, we obtain
	 \begin{align}
	 	\underline{r}(s)+\tilde{r}<\underline{r}(t)+2\tilde{r}\leq  r-\tilde{r},
	 \end{align}
	 which implies \eqref{wii 16}. We have thus proved Proposition \ref{P_6}.
\end{proof}

\begin{prop}\label{P_7}
	Under the hypotheses of Proposition \ref{P_6}, there exist a sufficiently small \(h>0\) and a constant $T>0$ such that
	\begin{align}\label{wii 17}
		[r-\tilde{r},R]\times[(t-\tilde{t})_+,t+\tilde{t}]\subset(r_h(\cdot),R]\times[0,T].
	\end{align}
	Furthermore, for any \(h'<h\), there exists a sufficiently small \(\iota_{i_0}\) such that for all \(0<\iota_i<\iota_{i_0}\) we have
	\begin{align}\label{wii 18}
		r_{h'}^{\iota_i}(s)<r_h(s),\quad \forall s\in[0,T].
	\end{align}
\end{prop}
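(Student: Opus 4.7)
The proof splits naturally into the two displayed conclusions. For \eqref{wii 17}, fix $T > t+\tilde{t}$ and recall that the construction of $\tilde{t}$ in Proposition \ref{P_6} gives $\underline{r}(s) < r-2\tilde{r}$ on the compact set $I := [(t-\tilde{t})_+, t+\tilde{t}]$. The plan is to produce a single small $h>0$ with $r_h(s) < r-\tilde{r}$ on $I$, which is exactly \eqref{wii 17}. Two structural facts drive the argument: (i) $h \mapsto r_h^{\iota}(s)$ is strictly increasing (the integrand in the definition \eqref{353} is positive), hence so is $h \mapsto r_h(s)$, and $r_h(s) \searrow \underline{r}(s)$ as $h\to 0^+$ by \eqref{340}; and (ii) each $r_h$ is continuous in $s$ by Proposition \ref{P_1}. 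A finite-cover argument combines these: for each $s_0 \in I$, choose $h_{s_0}>0$ with $r_{h_{s_0}}(s_0) < r - \tfrac{3}{2}\tilde{r}$; by continuity there is an open neighbourhood $U_{s_0}$ of $s_0$ in $I$ on which $r_{h_{s_0}}(s) < r-\tilde{r}$. Extract a finite subcover $\{U_{s_i}\}_{i=1}^{k}$ and take $h := \min_i h_{s_i}$; monotonicity gives $r_h(s) \le r_{h_{s_i}}(s) < r-\tilde{r}$ for every $s \in U_{s_i}$, and hence on all of $I$.

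For \eqref{wii 18}, fix this $h$ and an arbitrary $h'<h$. The strategy is to establish a uniform-in-$s$ strict gap between $r_h$ and $r_{h'}$, then close via the uniform convergence \eqref{345}. The $\iota$-independent upper bound $\rho_{\iota_i}(r,s) \le C(T)$ from \eqref{W 2d RT'}, together with the definition \eqref{353}, gives
\begin{align*}
    h-h' \;=\; \int_{r_{h'}^{\iota_i}(s)}^{r_{h}^{\iota_i}(s)} \rho_{\iota_i}(r,s)\, r^{N-1}\,dr
    \;\le\; C(T)\, R^{N-1}\bigl(r_{h}^{\iota_i}(s) - r_{h'}^{\iota_i}(s)\bigr),
\end{align*}
so $r_{h}^{\iota_i}(s) - r_{h'}^{\iota_i}(s) \ge \frac{h-h'}{C(T) R^{N-1}} =: 2\delta_0 > 0$, uniformly in $s\in[0,T]$ and in $\iota_i$. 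Sending $\iota_i \to 0^+$ via \eqref{345} preserves the inequality, whence $r_h(s) - r_{h'}(s) \ge 2\delta_0$ on $[0,T]$. The uniform convergence $r_{h'}^{\iota_i} \to r_{h'}$ on $[0,T]$ from Proposition \ref{P_1} then supplies $\iota_{i_0}>0$ with $|r_{h'}^{\iota_i}(s) - r_{h'}(s)| < \delta_0$ for all $0<\iota_i<\iota_{i_0}$ and $s\in[0,T]$, yielding $r_{h'}^{\iota_i}(s) < r_{h'}(s) + \delta_0 \le r_h(s) - \delta_0 < r_h(s)$.

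The main delicacy lies in the first step: because $\underline{r}$ is only upper semicontinuous, no Dini-type argument upgrades the pointwise monotone convergence $r_h \searrow \underline{r}$ to uniform convergence on $I$. The compactness argument above sidesteps this by invoking the continuity of each individual $r_h$ in $s$ rather than the convergence itself. The second assertion is then essentially quantitative and relies only on the globally-in-space $\iota$-independent upper bound on the density.
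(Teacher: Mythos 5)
Your proof is correct, but the first half takes a genuinely different route from the paper, and your parenthetical remark about Dini's theorem is off the mark. You assert that ``no Dini-type argument upgrades the pointwise monotone convergence $r_h\searrow\underline r$ to uniform convergence on $I$'' because $\underline r$ is only upper semicontinuous. The paper in fact does use Dini's theorem, but applies it to the auxiliary function $g_h(s)=(r_h(s)-(r-2\tilde r))_+$ rather than to $r_h$ itself. Since $\underline r(s)<r-2\tilde r$ on all of $I$ by Proposition~\ref{P_6}, the pointwise monotone limit of $g_h$ is the zero function, which \emph{is} continuous, so Dini applies directly and yields $g_h\rightrightarrows 0$; choosing $h$ so that $g_h<\tilde r$ on $I$ then gives $r_h(s)<r-\tilde r$. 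Your finite-subcover argument --- picking a local witness $h_{s_0}$ for each $s_0\in I$, spreading it to a neighbourhood by continuity of $r_{h_{s_0}}(\cdot)$, and taking the minimum over a finite subcover using monotonicity in $h$ --- is an equally valid, more hands-on substitute that avoids invoking Dini altogether; it is arguably more transparent, though slightly longer. For the second half, your estimate $h-h'\le C(T)R^{N-1}\bigl(r_h^{\iota_i}(s)-r_{h'}^{\iota_i}(s)\bigr)$ via the uniform $L^\infty$ bound \eqref{W 2d RT'} is cleaner than the paper's, which instead applies H\"older's inequality against the $L^\gamma$ bound on $\rho_{\iota_i}$ to obtain $h-h'\le C\bigl((r_h^{\iota_i})^N-(r_{h'}^{\iota_i})^N\bigr)^{(\gamma-1)/\gamma}$; both yield a uniform-in-$s$ positive gap that survives the limit $\iota_i\to 0$, and both then close with the uniform convergence of Proposition~\ref{P_1} exactly as you do. So: correct proof, different (and equally sound) mechanism in step one, simpler dependence on a priori bounds in step two, but the claim that Dini ``fails'' here is incorrect and should be retracted.
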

\begin{proof}
	We define 
	\begin{align}
		g_h(s)=(r_h(s)-(r-2\tilde{r}))_+,\quad s\in [(t-\tilde{t})_+,t+\tilde{t}].
	\end{align}
	Since \(g_h(\cdot)\) is monotonically decreasing in $h$, Proposition \ref{P_6} implies that
	\begin{align*}
		\lim_{h\rightarrow0}g_h(s)=\lim_{h\rightarrow0}(r_h(s)-(r-2\tilde{r}))_{+}=0,
	\end{align*}
	which, together with the continuity of \(g_h(\cdot)\) and Dini's theorem, implies that
	\begin{align}
			g_h(s)\rightrightarrows0,\ s\in[(t-\tilde{t})_+,t+\tilde{t}].
	\end{align}
	Thus, there exists small suitably $h>0$ such that, for all $s\in[(t-\tilde{t})_+,t+\tilde{t}]$,
	\begin{align}
		g_h(s)< \tilde{r},
	\end{align}
	which implies that
	\begin{align}
		r_h(s)< r-\tilde{r}.
	\end{align}
	This establishes \eqref{wii 17}.  Next, we prove \eqref{wii 18}. Hölder's inequality shows that for any \(\iota_i>0\) and $s\in[0,T]$ we have
	\begin{align}
		\begin{split}
			h-h'=&\int_{r_{h'}^{\iota_i}(s)}^{r_h^{\iota_i}(s)}\rho_{\iota_i}(r,s)r^{N-1}dr\\
			&\leq C\left(\int_{r_{h'}^{\iota_i}(s)}^{r_h^{\iota_i}(s)}\rho_{\iota_i}^\gamma(r,s)r^{N-1}dr\right)^{\frac{1}{\gamma}}\left((r_{h}^{\iota_i}(s))^N-(r_{h'}^{\iota_i}(s))^N\right)^{\frac{\gamma-1}{\gamma}}\\
			&\leq C\left((r_{h}^{\iota_i}(s))^N-(r_{h'}^{\iota_i}(s))^N\right)^{\frac{\gamma-1}{\gamma}},
		\end{split}
	\end{align}
	which is equivalent to
	\begin{align}
		(r_{h}^{\iota_i}(s))^N-(r_{h'}^{\iota_i}(s))^N\ge C(h-h')^{\frac{\gamma}{\gamma-1}}.
	\end{align}
	Letting \(\iota_i\to 0\), we obtain
	\begin{align}\label{wii 19}
		r_{h}(s)-r_{h'}(s)\ge (C(h',h))^{-1}>0.
	\end{align}
	Therefore, using Proposition \ref{P_1}, we obtain \eqref{wii 18}. We have proved Proposition \ref{P_7}.
\end{proof}

\begin{prop}\label{P_2}
	Let the hypotheses and notations of Propositions \ref{P_1}-\ref{P_7} be in force. Then there is a further subsequence, still denoted by $\iota_i$, such that
	\begin{equation}\label{327}
		{u_{\iota_i}}(r,t)\rightarrow u(r,t),\ {\rho_{\iota_i}}(r,t)\rightarrow \rho(r,t),\ as\ \iota_i\rightarrow 0^+,
	\end{equation}
	uniformly on $[r-\tilde{r},R]\times[(t-\tilde{t})_+,t+\tilde{t}]$, and the limiting function $(\rho,u)$ is $H\ddot{o}lder$ continuous on these sets.
\end{prop}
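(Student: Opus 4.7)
The plan is to first arrange, using Propositions \ref{P_6} and \ref{P_7}, that the compact rectangle $K:=[r-\tilde r,R]\times[(t-\tilde t)_+, t+\tilde t]$ lies strictly to the right of some particle path $r_{h'}^{\iota_i}(\cdot)$ for all sufficiently small $\iota_i$, so that the $\iota$-independent estimates \eqref{430}, \eqref{433}, \eqref{434} are available on $K$. With these in hand, I would derive uniform equicontinuity of $\{\rho_{\iota_i}\}$ and $\{u_{\iota_i}\}$ on $K$ with a common H\"older modulus, apply the Arzel\`a--Ascoli theorem to extract a uniformly convergent subsequence, and run a Cantor diagonal argument over a countable family of such rectangles exhausting $\mathcal F$ to obtain a single subsequence that works on every such compact set. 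H\"older continuity of $(\rho,u)$ then passes to the limit from the uniform equi-H\"older estimate, and the density limit is identified with the $\rho$ of Proposition \ref{P_3} by uniqueness of uniform limits on overlapping sets.

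For the geometric setup, $K\subset\mathcal F$ is supplied by Proposition \ref{P_6}, and Proposition \ref{P_7} produces $h>h'>0$ and $\iota_{i_0}>0$ such that
\[
r_{h'}^{\iota_i}(s)<r_h(s)<r-\tilde r,\qquad s\in[(t-\tilde t)_+,t+\tilde t],\ \iota_i<\iota_{i_0}.
\]
Hence $K\subset\bigl[r_{h'}^{\iota_i}(\cdot),R\bigr]\times[(t-\tilde t)_+,t+\tilde t]$ for all small $\iota_i$, and \eqref{430}--\eqref{434} yield $\iota_i$-independent bounds on $K$:
\[
(C(h'/2,T))^{-1}\le\rho_{\iota_i}\le C(T),\qquad |u_{\iota_i}|+|\partial_r\rho_{\iota_i}|\le C(h'/4,T),
\]
together with uniform bounds on $\partial_r u_{\iota_i}$ in $L^\infty_tL^2_r$ and on $\partial_t u_{\iota_i},\partial_{rr}u_{\iota_i}$ in $L^2(K)$.

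For $\rho_{\iota_i}$, the pointwise bound on $\partial_r\rho_{\iota_i}$ already gives uniform Lipschitz regularity in $r$. For H\"older regularity in $t$ uniform in $r$, I would use the continuity equation
\[
\partial_t\rho_{\iota_i}=-u_{\iota_i}\partial_r\rho_{\iota_i}-\rho_{\iota_i}\partial_r u_{\iota_i}-\tfrac{N-1}{r}\rho_{\iota_i}u_{\iota_i},
\]
noting that the first and third terms are pointwise bounded on $K$, and upgrading the middle term via the one-dimensional Sobolev embedding $H^1_r\hookrightarrow L^\infty_r$ applied to $\partial_r u_{\iota_i}(\cdot, s)$: together with $\partial_r u_{\iota_i}\in L^\infty_tL^2_r\cap L^2_tH^1_r$ this gives $\partial_r u_{\iota_i}\in L^2(0,T;L^\infty_r)$ uniformly in $\iota_i$. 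A Cauchy--Schwarz estimate on $\int_{t_1}^{t_2}|\partial_t\rho_{\iota_i}|(r,s)\,ds$ then yields $|\rho_{\iota_i}(r,t_1)-\rho_{\iota_i}(r,t_2)|\le C|t_1-t_2|^{1/2}$, uniform in $r\in[r-\tilde r,R]$ and $\iota_i$.

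The main obstacle is the corresponding control of $u_{\iota_i}$, since $\partial_t u_{\iota_i}\in L^2(K)$ alone gives no pointwise-in-$r$ temporal regularity and the argument above cannot be repeated verbatim. I would circumvent this by a direct compactness argument: the preceding bounds show that $\{u_{\iota_i}\}$ is uniformly bounded in $L^\infty((t-\tilde t)_+,t+\tilde t;H^1([r-\tilde r,R]))$ with $\partial_t u_{\iota_i}$ uniformly bounded in $L^2(K)$, and the one-dimensional embedding $H^1\hookrightarrow C^\alpha$ is compact for every $\alpha\in(0,\tfrac12)$, so the Aubin--Lions lemma furnishes a subsequence converging in $C([(t-\tilde t)_+,t+\tilde t];C^\alpha([r-\tilde r,R]))$, and in particular uniformly on $K$. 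The limit inherits a H\"older modulus by passing to the limit in the equi-H\"older inequality. Combined with the convergence $\rho_{\iota_i}\to\rho$ from Proposition \ref{P_3} and a Cantor diagonal over a countable exhausting family of compact rectangles in $\mathcal F$, this yields a single subsequence realising \eqref{327} on every such $K$, with $(\rho,u)$ H\"older continuous there.
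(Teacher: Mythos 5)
Your geometric setup and the treatment of $\rho_{\iota_i}$ are correct, and the latter is a genuinely nicer route than the paper's: using the continuity equation together with $\partial_r u_{\iota_i}\in L^2_tL^\infty_r$ (via $H^1_r\hookrightarrow L^\infty_r$) to place $\partial_t\rho_{\iota_i}$ in $L^2_tL^\infty_r$ uniformly, and thus get $|\rho_{\iota_i}(r,t_1)-\rho_{\iota_i}(r,t_2)|\le C|t_1-t_2|^{1/2}$ pointwise in $r$, is a clean way to a joint H\"older modulus for $\rho_{\iota_i}$. The paper handles $\rho$ by the same averaging argument it uses for $u$.

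The gap is in the claim that the limit $u$ is H\"older continuous on $K$. The Aubin--Lions--Simon argument you invoke gives precompactness in $C\bigl([(t-\tilde t)_+,t+\tilde t];C^\alpha([r-\tilde r,R])\bigr)$, hence uniform convergence and $C^\alpha$ regularity in $r$ for each fixed $t$, but it gives \emph{no H\"older modulus in $t$}: $C_tC^\alpha_r$ only asserts continuity in $t$, and the bounds $u\in L^\infty_tH^1_r$, $\partial_t u\in L^2(K)$ do not translate into a pointwise-in-$r$ control of $\int_{t_1}^{t_2}|\partial_t u(r,s)|\,ds$. Your sentence ``the limit inherits a H\"older modulus by passing to the limit in the equi-H\"older inequality'' has nothing behind it for the $t$-direction, since you explicitly declined to establish such an equi-H\"older estimate for $u_{\iota_i}$. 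The paper closes exactly this hole with a spatial averaging trick: with $k=\sqrt{t_2-t_1}$ it writes
\begin{align*}
|u_{\iota_i}(r,t_2)-u_{\iota_i}(r,t_1)|
&\le k^{-1}\int_{r-k}^{r}|u_{\iota_i}(\xi,t_2)-u_{\iota_i}(\xi,t_1)|\,d\xi
+k^{-1}\sum_{j=1}^2\int_{r-k}^r|u_{\iota_i}(r,t_j)-u_{\iota_i}(\xi,t_j)|\,d\xi,
\end{align*}
controls the first term by Cauchy--Schwarz using $\partial_t u_{\iota_i}\in L^2(K)$ and the second by $\partial_r u_{\iota_i}\in L^\infty_tL^2_r$, yielding a uniform $(t_2-t_1)^{1/4}$ bound (with a separate trivial estimate when $k$ exceeds a fixed gap between particle paths). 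You would either need to insert this averaging estimate, or note that the limit $u$ again satisfies $u\in L^\infty_tH^1_r$ and $\partial_tu\in L^2(K)$ by weak lower semicontinuity and apply the same averaging a posteriori to $u$. Without one of these, the proof does not yield the joint H\"older continuity asserted in the proposition.
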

\begin{proof}
	This proof can be found in \cite[Proposition 4.3]{wang-2018}, we sketch it here for completeness.

	Using Proposition \ref{P_7}, we obtain small constants \(h>0\) and \(T>0\) such that
	\begin{align*}
		[r-\tilde{r},R]\times[(t-\tilde{t})_+,t+\tilde{t}]\subset (r_h(\cdot),R]\times[0,T].
	\end{align*}
	Fix \(h'<h\). When \(\iota_i\) is sufficiently small, Proposition \ref{P_7} and \eqref{434} give
	\begin{align*}
		|u_{\iota_i}(r,t)|\leq C(h'/4,T),\quad \forall(r,t)\in (r_h(\cdot),R]\times[0, T].
	\end{align*}
	
	Next, we show that the approximate solutions have a uniform Hölder norm on each rectangular region $[r-\tilde{r},R]\times[(t-\tilde{t})_+,t+\tilde{t}]$. For any $t\in[(t-\tilde{t})_+,t+\tilde{t}]$ and $r_1,r_2\in [r-\tilde{r},R]$, it follows from \eqref{433} that
	\begin{align}
		|{u_{\iota_i}}(r_2,t)-{u_{\iota_i}}(r_1,t)|&\leq\Big|\int_{r_1}^{r_2} \partial_r{u_{\iota_i}}(t)dr\Big|\nonumber\\
		&\leq C\Big(\int_{r_1}^{r_2} {({\partial_{r} u_{\iota_i}})}^2dr(t)\Big)^{\frac12}(r_2-r_1)^{\frac12}\nonumber\\
		&\leq C(h'/4,T)(r_2-r_1)^{\frac12}\nonumber.
	\end{align}
	For any $r\in[r-\tilde{r},R]$ and $t_1,t_2\in [(t-\tilde{t})_+,t+\tilde{t}]$, let $k=\sqrt{t_2-t_1}$. Fix $h^{''}\in(h',h)$. By \eqref{wii 19}, there exists a constant \(\varepsilon(h,h'')>0\), depending only on \(h\) and \(h''\), such that
	\begin{align*}
		r_{h}(s)-r_{h^{''}}(s)\ge\varepsilon(h,h''),\quad \forall s\in [0,T].
	\end{align*}
	We split the discussion into two cases.  When \(k<\varepsilon(h,h'')\), we have
	\begin{align*}
		r-k\ge r-\tilde{r}-k\ge r_h(s)-k\ge r_{h^{''}}(s),\quad \forall s\in [(t-\tilde{t})_+,t+\tilde{t}].
	\end{align*}
	Therefore, \eqref{wii 18} and \eqref{433} imply that
	\begin{align}
		&\quad\big|{u_{\iota_i}}(r,t_2)-{u_{\iota_i}}(r,t_1)\big|\\
		&=k^{-1}\int_{r-k}^{r}\big|{u_{\iota_i}}(r,t_2)-
		{u_{\iota_i}}(r,t_1)\big|d\xi\nonumber\\
		&\leq k^{-1}\int_{r-k}^{r}\big|{u_{\iota_i}}(\xi,t_2)-{u_{\iota_i}}(\xi,t_1)\big|d\xi
		+k^{-1}\sum_{i=1}^2{\int_{r-k}^{r}\big|{u_{\iota_i}}(r,t_i)-{u_{\iota_i}}(\xi,t_i)\big|d\xi}\nonumber\\
		&\leq \sqrt{k}\Big(\int_{r-k}^{r}\int_{t_1}^{t_2}\big({\partial_t u_{\iota_i}}(r,t)\big)^2dtdr\Big)^{\frac12}+k^{-1}\sum_{i=1}^2\int_{r-k}^{r}\int_\xi^r|\partial_r u_{\iota_i}(\zeta, t_i)|d\zeta d\xi\nonumber\\
		&\leq\sqrt{k}\Big(\int_{r-k}^{r}\int_{t_1}^{t_2}\big({\partial_t u_{\iota_i}}(r,t)\big)^2dtdr\Big)^{\frac12}+k^{-1}\sum_{i=1}^2\int_{r-k}^{r}\int_\xi^r|\partial_r u_{\iota_i}(\zeta, t_i)|d\zeta d\xi\nonumber\\
		&\leq C(h'/4,T)(t_2-t_1)^{\frac{1}{4}}.
	\end{align}
	When \(k\ge\varepsilon(h'',h)\), it follows from \eqref{434} that
	\begin{align}
		\begin{split}
			|{u_{\iota_i}}(r,t_2)-{u_{\iota_i}}(r,t_1)\big|&\leq 2\sup_{t_1\leq t\leq t_2}\norm{u_{\iota_i}}_{L^\infty((r_h(t),R])}\\
			&\leq 2\sup_{t_1\leq t\leq t_2}\norm{u_{\iota_i}}_{L^\infty((r^{\iota_i}_{h'}(t),R])}\\
			&\leq C(h'/4,T)(\varepsilon(h^{''},h'))^{-\frac{1}{2}}(t_2-t_1)^{\frac{1}{4}}\\
			&\leq C(h'/4,T)(t_2-t_1)^{\frac{1}{4}}.
		\end{split}
	\end{align}
	Thus we have shown that on each rectangle \(u_{\iota_i}\) possesses a uniform H\"{o}lder norm.  we conclude that $\{u_{\iota_i}(r,t)\}$ is uniformly bounded and H\"{o}lder continuous, jointly in $r,t$, on $[r-\tilde{r},R]\times[(t-\tilde{t})_+,t+\tilde{t}]$. Therefore, by the Ascoli–Arzelà theorem, \(u_{\iota_i}\) converges uniformly on the set, and the limit function \(u\) is Hölder continuous on the set. 
	
	Using \eqref{431} and \eqref{433}, the same conclusion can be shown to hold for \(\rho\). This completes the proof of Proposition \ref{P_2}.
\end{proof}

\begin{prop}\label{Prop regu}
	Let the hypotheses and notations of Proposition \ref{P_1}-\ref{P_7} be in force. For any $(r,t)\in\mathcal{F}$, it holds that
	\begin{align}
		\begin{split}
			&\rho\in L^\infty((t-\tilde{t})_+,t+\tilde{t}; W^{1,\infty}([r-\tilde{r},R]));\\
			& \partial_t\rho\in L^\infty((t-\tilde{t})_+,t+\tilde{t}; L^2([r-\tilde{r},R]));\\
			&u\in L^\infty((t-\tilde{t})_+,t+\tilde{t}; H^1([r-\tilde{r},R]))\cap L^2((t-\tilde{t})_+,t+\tilde{t}; H^2([r-\tilde{r},R]));\\
			&\partial_t u\in L^2((t-\tilde{t})_+,t+\tilde{t}; L^2([r-\tilde{r},R]).
		\end{split}
	\end{align}
\end{prop}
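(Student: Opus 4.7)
The plan is to transfer the $\iota$-independent estimates \eqref{433}--\eqref{434} for the approximate solutions to the limit pair $(\rho,u)$ by passing to weak/weak-$*$ limits on the fixed cylinder $Q:=[r-\tilde r,R]\times[(t-\tilde t)_+,t+\tilde t]$, using the pointwise convergence already furnished by Proposition~\ref{P_2} to identify the limits.

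First I would invoke Proposition~\ref{P_7} to choose $h$ and $T$ with $Q\subset (r_h(\cdot),R]\times[0,T]$, then pick any $h'<h$ and fix $\iota_{i_0}$ so that $r_{h'}^{\iota_i}(s)<r_h(s)<r-\tilde r$ for every $s\in[(t-\tilde t)_+,t+\tilde t]$ and every $0<\iota_i<\iota_{i_0}$. Consequently, $Q\subset [r_{h'}^{\iota_i}(\cdot),R]\times[0,T]$, and the estimates \eqref{433}--\eqref{434} provide an $\iota_i$-independent bound $C(h'/4,T)$ on
\begin{align*}
\sup_{s}\bigl(\|\partial_r u_{\iota_i}(s)\|_{L^2([r-\tilde r,R])}&+\|\partial_t\rho_{\iota_i}(s)\|_{L^2([r-\tilde r,R])}+\|u_{\iota_i}(s)\|_{L^\infty([r-\tilde r,R])}+\|\partial_r\rho_{\iota_i}(s)\|_{L^\infty([r-\tilde r,R])}\bigr)\\
&+\int_{(t-\tilde t)_+}^{t+\tilde t}\!\bigl(\|\partial_t u_{\iota_i}\|_{L^2([r-\tilde r,R])}^2+\|\partial_{rr}u_{\iota_i}\|_{L^2([r-\tilde r,R])}^2\bigr)\,ds.
\end{align*}

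Next I would use Banach--Alaoglu to extract a further subsequence (not relabeled) along which $\partial_r u_{\iota_i}$ and $\partial_t\rho_{\iota_i}$ converge weakly-$*$ in $L^\infty((t-\tilde t)_+,t+\tilde t;L^2)$, $\partial_r\rho_{\iota_i}$ converges weakly-$*$ in $L^\infty(Q)$, and $\partial_t u_{\iota_i},\,\partial_{rr}u_{\iota_i}$ converge weakly in $L^2(Q)$. The uniform convergence $(\rho_{\iota_i},u_{\iota_i})\to(\rho,u)$ on $Q$ from Proposition~\ref{P_2} allows me to identify each of these weak limits with the corresponding distributional derivative of $(\rho,u)$: for any test function $\varphi\in C_c^\infty(\operatorname{int} Q)$, integrating by parts in $\langle\partial_\alpha(\rho_{\iota_i}\text{ or }u_{\iota_i}),\varphi\rangle$ and passing to the limit using strong convergence of $(\rho_{\iota_i},u_{\iota_i})$ and the definition of the weak derivative gives agreement of the two. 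Lower semicontinuity of the respective norms under weak/weak-$*$ convergence then yields exactly the claimed regularity bounds.

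The main (though modest) obstacle is the identification step for the second-order quantities $\partial_{rr}u$ and $\partial_t u$, since these require integrating by parts twice or using the already-identified first derivatives as intermediate objects, and the test functions must be supported away from $r_h(\cdot)$. This is handled by further shrinking the cut-off parameter or, equivalently, by noting that we only need to establish the equality on $\operatorname{int} Q$, which is strictly to the right of $r_h(\cdot)$ by the choice of $h'$. Once the identifications are in place, the conclusions on $\rho\in L^\infty_t W^{1,\infty}_r$ and $u\in L^\infty_t H^1_r\cap L^2_t H^2_r$ with $\partial_t\rho\in L^\infty_t L^2_r$ and $\partial_t u\in L^2_t L^2_r$ follow immediately, completing the proof of Proposition~\ref{Prop regu}.
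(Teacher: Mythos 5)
Your proposal is correct and follows essentially the same route as the paper: use Proposition \ref{P_7} to place the fixed cylinder $[r-\tilde r,R]\times[(t-\tilde t)_+,t+\tilde t]$ to the right of $r_{h'}^{\iota_i}(\cdot)$ for small $\iota_i$, invoke the $\iota$-independent bounds \eqref{433}--\eqref{434} on that cylinder, and pass to weak/weak-$*$ limits. The paper leaves the identification of the weak limits with the distributional derivatives of $(\rho,u)$ implicit; you spell it out via test functions and the uniform convergence from Proposition \ref{P_2}, which is a harmless elaboration rather than a deviation.
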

\begin{proof}
	Let \(h\) and \(h'\) be as in Proposition \ref{P_7}. Then for sufficiently small \(\iota_i\) we have
	\begin{align}
		[r-\tilde{r},R]\times[(t-\tilde{t})_+,t+\tilde{t}]\subset(r_{h'}^{\iota_i}(\cdot),R]\times[0,T].
	\end{align}
	We thus deduce from \eqref{433} that
	\begin{align*}
		\sup_{t\in[(t-\tilde{t})_+,t+\tilde{t}]}\int_{r-\tilde{r}}^R\Big((\partial_{r} u_{\iota_i})^2+(\partial_t \rho_{\iota_i})^2 \Big)dr+\int_{(t-\tilde{t})_+}^{t+\tilde{t}} \int_{r-\tilde{r}}^R{ \Big((\partial_t u_{\iota_i})^2+(\partial_{r r}u_{\iota_i})^2 \Big)}drdt\leq C(h'/4,T),
	\end{align*}
	and form \eqref{434} that
	\begin{align*}
		\sup_{t\in[(t-\tilde{t})_+,t+\tilde{t}]}\norm{\partial_{r}\rho_{\iota_i}}_{L^\infty([r-\tilde{r},R])}\leq C(h'/4,T).
	\end{align*}
	This shows that
	\begin{align*}
		\begin{split}
			&\rho_{\iota_i}\rightharpoonup\rho,\text{ weak-$*$ in } L^\infty((t-\tilde{t})_+,t+\tilde{t}; W^{1,\infty}([r-\tilde{r},R]));\\
			&\partial_t\rho_{\iota_i}\rightharpoonup\partial_t\rho,\text{ weak-$*$ in }L^\infty((t-\tilde{t})_+,t+\tilde{t}; L^2([r-\tilde{r},R]));\\
			&u_{\iota_i}\rightharpoonup u,\text{ weak-$*$ in }L^\infty((t-\tilde{t})_+,t+\tilde{t}; H^1([r-\tilde{r},R]));\\
			&\partial_t u_{\iota_i}\rightharpoonup\partial_t u,\text{ weak in }L^2((t-\tilde{t})_+,t+\tilde{t}; L^2([r-\tilde{r},R])).
		\end{split}
	\end{align*}
	This completes the proof of Proposition \ref{Prop regu}.
    \end{proof}

    \begin{prop}\label{P_4}
	Let the hypotheses and notations of Proposition \ref{P_1} be in force. It holds that
	\begin{align}
	\underline{r}(t)\leq \left(R^N-\frac{M_0^{\frac{\gamma}{\gamma-1}}}{\omega_N((\gamma-1)E_0)^{\frac{1}{\gamma-1}}}\right)^{\frac{1}{N}},
\end{align}
where \(\omega_N\) denotes the measure of the unit ball in \(\mathbb R^N\).
	\end{prop}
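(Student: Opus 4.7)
The plan is to combine conservation of mass with the global energy bound for the weak solution, and then exploit the vacuum characterization \eqref{wii 20} via H\"older's inequality on the non-vacuum annulus. Specifically, for the approximate solutions we have the identities
\[
\int_{\Omega_\iota}\rho_\iota(x,t)\,dx=\int_{\Omega_\iota}\rho_{\iota,0}\,dx,\qquad
\frac12\int_{\Omega_\iota}\rho_\iota|\mathbf{u}_\iota|^2dx+\frac{1}{\gamma-1}\int_{\Omega_\iota}\rho_\iota^\gamma dx\leq \frac12\int_{\Omega_\iota}\rho_{\iota,0}|\mathbf{u}_{\iota,0}|^2 dx+\frac{1}{\gamma-1}\int_{\Omega_\iota}\rho_{\iota,0}^\gamma dx,
\]
the first from $\eqref{app system lag'}_1$ and the second from the standard energy estimate used already in \eqref{u^2 weak'}. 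First I would pass both to the limit $\iota_i\to 0$, using the convergence in \eqref{2d weak initial data'} together with \eqref{tt9} (applied in the present context) and Fatou's lemma for the kinetic energy, to obtain
\[
\int_{\Omega}\rho(x,t)\,dx=M_0,\qquad \frac{1}{\gamma-1}\int_{\Omega}\rho^\gamma(x,t)\,dx\leq E_0,
\]
for a.e.\ $t\geq 0$.

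Next I would invoke the vacuum characterization \eqref{wii 20}, which gives $\rho(x,t)=0$ on $\{|x|\leq \underline{r}(t)\}$, so that all the mass is supported in the annular region $\{\underline{r}(t)<|x|\leq R\}$, whose Lebesgue measure equals $\omega_N(R^N-\underline{r}(t)^N)$. H\"older's inequality then yields
\[
M_0=\int_{\underline{r}(t)<|x|\leq R}\rho(x,t)\,dx
\leq \Bigl(\int_\Omega \rho^\gamma(x,t)\,dx\Bigr)^{\frac{1}{\gamma}}
\bigl(\omega_N(R^N-\underline{r}(t)^N)\bigr)^{\frac{\gamma-1}{\gamma}}
\leq \bigl((\gamma-1)E_0\bigr)^{\frac{1}{\gamma}}\bigl(\omega_N(R^N-\underline{r}(t)^N)\bigr)^{\frac{\gamma-1}{\gamma}}.
\]
Raising to the power $\gamma/(\gamma-1)$ and rearranging gives
\[
\underline{r}(t)^N\leq R^N-\frac{M_0^{\gamma/(\gamma-1)}}{\omega_N\bigl((\gamma-1)E_0\bigr)^{1/(\gamma-1)}},
\]
which is the desired estimate after taking the $N$-th root.

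The main technical point is the justification of the two passages to the limit. The mass identity is straightforward since $\rho_{\iota_i}\to \rho$ in $L^s(\Omega\times(0,T))$ for every finite $s$ by \eqref{tt9} and the approximate density is uniformly bounded by \eqref{W 2d RT'}. For the energy bound, the control of $\int\rho^\gamma dx$ follows from the same strong convergence together with the uniform bound \eqref{2d weak rho Lp}/\eqref{3d weak rho Lp}, while the initial energy of $(\rho_{\iota,0},\mathbf{u}_{\iota,0})$ converges to $E_0$ by \eqref{2d weak initial data'}. One only needs to be slightly careful because $\underline{r}(t)$ was obtained through a pointwise-in-$t$ limit, so the estimate is derived for each $t$ at which the limit mass/energy relations hold; the upper semi-continuity of $\underline{r}(\cdot)$ established in Proposition \ref{P_1} then extends the bound to every $t\geq 0$.
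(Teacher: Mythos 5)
Your proposal follows the paper's argument essentially verbatim: mass conservation and the energy inequality for the approximate system, passage to the limit, then H\"older on the non-vacuum annulus using the vacuum characterization. One caveat about your closing remark: upper semi-continuity of $\underline{r}(\cdot)$ gives $\underline{r}(t)\ge\limsup_{s\to t}\underline{r}(s)$, which bounds nearby values \emph{from above} by $\underline{r}(t)$, not the reverse — so it cannot upgrade an a.e.\ bound on $\underline{r}$ to an everywhere bound. This extension step is unnecessary anyway: by Definition \ref{Def weak sol}, $\rho\in C(\overline{\Omega}\times[0,T])$, so the mass identity and the bound on $\int_\Omega\rho^\gamma\,dx$ hold for every $t$, and Proposition \ref{P_3} already furnishes the vacuum characterization pointwise in $t$; the paper's proof therefore concludes for all $t$ directly without any a.e.-to-everywhere argument.
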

\begin{proof}
	From $\eqref{App  Stm Euler'}_1$ and \eqref{u^2 weak'} it follows that for any \(t\in[0,T]\),
	\begin{align*}
		\begin{split}
			&\int_{\Omega_{\iota_i}}\rho_{\iota_i}(t,x)dx=\int_{\Omega_{\iota_i}}\rho_{\iota_i,0}(x)dx=M_0;\\
			&\frac{1}{\gamma-1}\int_{\Omega_{\iota_i}}\rho_{\iota_i}^\gamma(t,x)dx\leq \frac{1}{2}\int_{\Omega_{\iota_i}}\rho_{\iota_i,0}u_{\iota_i,0}^2dx+ \frac{1}{\gamma-1}\int_{\Omega_{\iota_i}}\rho_{\iota_i,0}^\gamma(x)dx.
		\end{split}
	\end{align*}
	Letting \(\iota_i\to 0\) and using \eqref{430}, \eqref{327'}, and \eqref{2d weak initial data'}, we obtain
	\begin{align*}
		\begin{split}
			&\int_{\Omega}\rho(t,x)dx=\int_{\Omega}\rho_0(x)dx=M_0;\\
			&\frac{1}{\gamma-1}\int_{\Omega}\rho^\gamma(t,x)dx\leq \frac{1}{2}\int_{\Omega}\rho_{0}u_{0}^2dx+ \frac{1}{\gamma-1}\int_{\Omega}\rho_{0}^\gamma(x)dx=E_0.
		\end{split}
	\end{align*}
	Therefore, it follows from Proposition \ref{P_3} that
	\begin{align}
		\begin{split}
			M_0&=\int_{\Omega}\rho(t,x)dx=\int_{\{\rho(t)>0\}}\rho(t,x)dx\\
			&\leq \left(\int_\Omega\rho^\gamma(t,x)dx\right)^{\frac{1}{\gamma}}|\{\rho(t)>0\}|^{\frac{\gamma-1}{\gamma}}\\
			&=((\gamma-1)E_0)^{\frac{1}{\gamma}}\left(\omega_N(R^{N}-(\underline{r}(t))^N)\right)^{\frac{\gamma-1}{\gamma}},
		\end{split}
	\end{align}
	which implies that
	\begin{align}
		\underline{r}(t)\leq \left(R^N-\frac{M_0^{\frac{\gamma}{\gamma-1}}}{\omega_N((\gamma-1)E_0)^{\frac{1}{\gamma-1}}}\right)^{\frac{1}{N}}.
	\end{align}
	This completes the proof of Proposition \ref{P_4}.
\end{proof}

\section{Appendix}
In this section we collect several lemmas required in the proof.
\begin{lema}\label{Lem alpha2}
	The function \(\alpha_{2,-}(x)\) introduced in Definition \ref{Def alpha 2}  is strictly increasing on \((1,\infty)\) with
	\begin{align}\label{Appendix 1}
		\lim\limits_{x\to 1^{+}}\alpha_{2,-}(x)=\frac{1}{2},\quad \lim\limits_{x\to \infty}\alpha_{2,-}(x)=1;
	\end{align}
	while \(\alpha_{2,+}(x)\) is strictly decreasing on \((1,\infty)\) with
	\begin{align}\label{Appendix 2}
		\lim\limits_{x\to 1^{+}}\alpha_{2,+}(x)=+\infty,\quad \lim\limits_{x\to \infty}\alpha_{2,+}(x)=1.
	\end{align}
\end{lema}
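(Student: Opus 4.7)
The plan is to reduce both functions to simple one-variable expressions via a change of variable, after which monotonicity and the boundary limits can be read off by elementary calculus.

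First I would set $u=\sqrt{2n-1}$, which maps $(1,\infty)$ bijectively and strictly increasingly onto $(1,\infty)$. Since $n-1=(u^2-1)/2$ and $n\sqrt{2n-1}=u(u^2+1)/2$, a short computation yields the factorizations
\begin{align*}
n\sqrt{2n-1}-(2n-1)&=\tfrac{u(u-1)^2}{2},\quad n\sqrt{2n-1}+(2n-1)=\tfrac{u(u+1)^2}{2},\\
(n-1)^2&=\tfrac{(u-1)^2(u+1)^2}{4}.
\end{align*}
Substituting these into the definitions in Definition~\ref{Def alpha 2} gives the very clean representations
\begin{equation*}
\alpha_{2,-}(n)=1-\frac{2u}{(u+1)^2},\qquad \alpha_{2,+}(n)=1+\frac{2u}{(u-1)^2},\qquad u=\sqrt{2n-1}.
\end{equation*}

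Next I would differentiate the auxiliary functions $\phi_-(u)=2u/(u+1)^2$ and $\phi_+(u)=2u/(u-1)^2$. A direct calculation gives
\begin{equation*}
\phi_-'(u)=\frac{2(1-u)}{(u+1)^3},\qquad \phi_+'(u)=\frac{-2(u+1)}{(u-1)^3},
\end{equation*}
both of which are strictly negative on $(1,\infty)$. Hence $\phi_\pm$ are strictly decreasing on $(1,\infty)$. Because $u\mapsto\sqrt{2n-1}$ is strictly increasing, it follows that $\alpha_{2,-}(n)=1-\phi_-(u)$ is strictly increasing and $\alpha_{2,+}(n)=1+\phi_+(u)$ is strictly decreasing on $(1,\infty)$.

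Finally, the limits \eqref{Appendix 1}–\eqref{Appendix 2} are immediate from the $u$-representation: as $n\to 1^+$ we have $u\to 1^+$, so $\phi_-(u)\to 1/2$ and $\phi_+(u)\to+\infty$, giving $\alpha_{2,-}\to 1/2$ and $\alpha_{2,+}\to+\infty$; as $n\to\infty$ we have $u\to\infty$, so both $\phi_\pm(u)\to 0$, giving $\alpha_{2,\pm}\to 1$. No genuine obstacle is expected here; the only thing to be careful about is verifying the algebraic identities for the numerators and denominators so that the resulting expressions are genuinely as simple as claimed, since monotonicity and the limits fall out immediately once the substitution is in place.
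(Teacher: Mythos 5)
Your proof is correct. The key algebraic identities check out: with $u=\sqrt{2n-1}$ one indeed has $n\sqrt{2n-1}-(2n-1)=\tfrac{u(u-1)^2}{2}$, $n\sqrt{2n-1}+(2n-1)=\tfrac{u(u+1)^2}{2}$, and $(n-1)^2=\tfrac{(u-1)^2(u+1)^2}{4}$, so the cancellations yield $\alpha_{2,-}=1-\tfrac{2u}{(u+1)^2}$ and $\alpha_{2,+}=1+\tfrac{2u}{(u-1)^2}$, and the derivatives $\phi_\pm'$ are as you computed.

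Your route is genuinely different from the paper's. The paper differentiates $\alpha_{2,\pm}$ directly in the original variable, obtains a derivative of the form $\tfrac{1}{(x-1)^3\sqrt{2x-1}}\bigl(x^2-2x\sqrt{2x-1}+2x-1\bigr)$, and then proves positivity of the bracket by introducing an auxiliary function $f_1$, checking $f_1(1)=0$, and showing $f_1'>0$ on $(1,\infty)$ — a two-level monotonicity argument. It also invokes L'Hospital for the limits at $1^+$. Your substitution $u=\sqrt{2n-1}$ collapses both functions to rational functions of $u$, after which the sign of a single derivative is obvious and the limits are read off by inspection; no auxiliary function, no L'Hospital, and the $\tfrac{1}{2}$ and $+\infty$ boundary values become transparent. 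The trade-off is that you front-load some algebra (verifying the factorizations), but you buy a uniform treatment of both $\alpha_{2,-}$ and $\alpha_{2,+}$ and a cleaner conceptual picture; the paper's approach is more brute-force but requires no change of variable. Either is acceptable.
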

\begin{proof}
	By L'Hospital rule, \eqref{Appendix 1} is easily verified.  
	To show that \(\alpha_{2,-}(\cdot)\) is strictly increasing on \((1,\infty)\), we compute directly:
	\begin{align*}
		\alpha_{2,-}'(x)=\frac{1}{(x-1)^3\sqrt{2x-1}}\left(x^2-2x\sqrt{2x-1}+2x-1\right),\quad x>1.
	\end{align*}
	Define the auxiliary function \(f_1(x)=x^2-2x\sqrt{2x-1}+2x-1\). Then a direct computation gives
	\begin{align*}
		f_1'(x)=2\left(1-\frac{1}{\sqrt{2x-1}}\right)\left(x-\sqrt{2x-1}\right)>0,\quad x>1,
	\end{align*}
	which together with \(f_1(1)=0\) shows that \(f_1(x)>0\) for every \(x\in(1,\infty)\). Hence $\alpha_{2,-}'(x)>0$ for all $x\in(1,\infty)$, so $\alpha_{2,-}(\cdot)$ is strictly increasing.  The corresponding statement for $\alpha_{2,+}(\cdot)$ is obtained in the same way and its proof is omitted.
\end{proof}

\begin{lema}\label{Lem alpha3}
	The function \(\alpha_{3,-}(x)\) introduced in Definition \ref{Def alpha 3}  is strictly increasing on \((1,\infty)\) with
\begin{align}\label{Appendix 3}
	\lim\limits_{x\to 1^{+}}\alpha_{3,-}(x)=\frac{2}{3},\quad \lim\limits_{x\to \infty}\alpha_{3,-}(x)=1;
\end{align}
	while \(\alpha_{3,+}(x)\) is strictly decreasing on \((1,\infty)\) with
	\begin{align}\label{Appendix 4}
	\lim\limits_{x\to 1^{+}}\alpha_{3,+}(x)=+\infty,\quad \lim\limits_{x\to \infty}\alpha_{3,+}(x)=1.
	\end{align}
\end{lema}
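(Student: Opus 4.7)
The plan is to mirror the strategy used for Lemma \ref{Lem alpha2}, with heavier but still tractable algebra. After the substitution $m=n-1$ (so $m\in(0,\infty)$), I will write
\[
R(m):=\sqrt{16m^{3}+44m^{2}+36m+9},\qquad \alpha_{3,\pm}(n)-1=\frac{(6m+3)\pm R(m)}{4m^{2}}.
\]
A straightforward factorization yields $R(m)^{2}=(4m+3)(2m+1)(2m+3)$, and together with the perfect-square identity
\[
(4m^{2}+18m+9)^{2}-9R(m)^{2}=16m^{4},
\]
verified by direct expansion, this will drive the sign analysis below.

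For the limits at $n\to\infty$ one has $R(m)\sim 4m^{3/2}$, so $\alpha_{3,\pm}(n)-1\to 0$, giving \eqref{Appendix 3}$_{2}$ and \eqref{Appendix 4}$_{2}$. The limit $\alpha_{3,+}(n)\to+\infty$ as $n\to 1^{+}$ is immediate since $6m+3+R(m)\to 6$ while $4m^{2}\to 0^{+}$. For the remaining $0/0$ limit of $\alpha_{3,-}$, I will Taylor-expand
\[
R(m)=3\sqrt{1+4m+\tfrac{44}{9}m^{2}+\tfrac{16}{9}m^{3}}=3+6m+\tfrac{4}{3}m^{2}+O(m^{3}),\qquad m\to 0^{+},
\]
so that $(6m+3)-R(m)=-\tfrac{4}{3}m^{2}+O(m^{3})$, yielding $\alpha_{3,-}(n)-1\to-\tfrac{1}{3}$, i.e., $\alpha_{3,-}(n)\to 2/3$.

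For the monotonicity, differentiation with respect to $n$ (noting $dn/dm=1$) gives, after a short computation,
\[
4m^{3}\,\alpha_{3,-}'(n)=2R(m)-mR'(m)-6(m+1),\qquad 4m^{3}\,\alpha_{3,+}'(n)=mR'(m)-2R(m)-6(m+1),
\]
with $R'(m)=(24m^{2}+44m+18)/R(m)$. Inserting $R'$ and using the polynomial identity $2R^{2}-m(24m^{2}+44m+18)=2(m+1)(4m^{2}+18m+9)$, I obtain
\[
2R(m)-mR'(m)=\frac{2(m+1)(4m^{2}+18m+9)}{R(m)}.
\]
Hence $\alpha_{3,+}'<0$ is immediate, because $mR'-2R$ is strictly negative on $(0,\infty)$. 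The inequality $\alpha_{3,-}'>0$ reduces to the positive estimate $4m^{2}+18m+9>3R(m)$, equivalently (after squaring) to $(4m^{2}+18m+9)^{2}>9R(m)^{2}$, which holds by the perfect-square identity above with the strictly positive surplus $16m^{4}$.

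The main obstacle lies in the sign determination for $\alpha_{3,-}'$: without the identity $(4m^{2}+18m+9)^{2}-9R(m)^{2}=16m^{4}$, the relevant cubic-under-the-root expression has no manifest sign, and a brute-force approach based on squaring and expanding the original $\sqrt{4n(4n^{2}-n-1)+1}$ term is considerably messier. Recognizing the factorization of $R^{2}$ and this perfect-square identity is the heart of the argument; once both are in hand, the remainder of the proof is a routine bookkeeping exercise.
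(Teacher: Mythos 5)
Your proof is correct, and every algebraic identity you quote checks out: $R(m)^2=(4m+3)(2m+1)(2m+3)$, the perfect-square identity $(4m^2+18m+9)^2-9R(m)^2=16m^4$, the polynomial identity $2R^2-m(24m^2+44m+18)=2(m+1)(4m^2+18m+9)$, and the derivative formulae $4m^3\alpha_{3,\pm}'(n)=\pm\bigl(2R(m)-mR'(m)\bigr)-6(m+1)$ all expand correctly, and your Taylor expansion of $R(m)$ near $m=0$ gives the stated $-\tfrac{4}{3}m^2$ leading term.

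The route is essentially the same as the paper's (direct differentiation and a sign analysis), but your version is substantially more complete. The paper only writes out the easy half: it records the closed form of $\alpha_{3,+}'(x)$, whose numerator is manifestly negative for $x>1$, and then declares the $\alpha_{3,-}$ case ``analogous.'' In fact the $\alpha_{3,-}$ case is \emph{not} a sign-by-inspection argument: after simplifying, $\alpha_{3,-}'(x)>0$ reduces to $4x^2+10x-5>3\sqrt{16x^3-4x^2-4x+1}$, and verifying this requires squaring and recognizing that the difference of squares factors as $16(x-1)^4$. Your identity $(4m^2+18m+9)^2-9R(m)^2=16m^4$ is precisely this factorization after the shift $m=n-1$ (since $4m^2+18m+9=4x^2+10x-5$ when $x=m+1$), so you have supplied exactly the nontrivial ingredient the paper leaves implicit. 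The substitution $m=n-1$ also lets you handle the $0/0$ limit $\lim_{n\to 1^+}\alpha_{3,-}(n)=2/3$ by a clean two-term Taylor expansion of $R(m)$, which is tidier than the double application of L'H\^opital the paper's stated method would require.

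One small wording nit: in the sentence ``$\alpha_{3,+}'<0$ is immediate, because $mR'-2R$ is strictly negative,'' you should also note that the remaining term $-6(m+1)$ is negative; the conclusion then follows from the sum of two negative quantities. This is clearly what you meant, but as written the implication is slightly compressed.
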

\begin{proof}
	We only prove the properties of \(\alpha_{3,+}(\cdot)\). Those of \(\alpha_{3,-}(\cdot)\) are analogous. L'Hospital rule immediately verifies \eqref{Appendix 4}. A direct computation gives
		\begin{align*}
		\alpha_{3,+}'(x)=\frac{-4x^3-10x^2+5x-3x\sqrt{16x^3-4x^2-4x+1}}{2(x-1)^3\sqrt{16x^3-4x^2-4x+1}}<0, \quad x>1,
	\end{align*}
	which shows that \(\alpha_{3,+}(x)\) is strictly decreasing on \((1,\infty)\). The proof is complete.
\end{proof}
	Below we prove a technical elementary inequality.
\begin{lema}\label{Lem ks}
	Let \(k\in\mathbb N\) and \(s\in\mathbb N^+\). Then there exists \(\varepsilon>0\) such that
	\begin{align*}
		a(a+b)^{1+\frac{2s}{2k+1}}\ge \varepsilon|a|^{2+\frac{2s}{2k+1}}-|b|^{2+\frac{2s}{2k+1}},\quad \forall a,b\in \mathbb{R}.
	\end{align*}
\end{lema}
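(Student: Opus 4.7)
\textbf{Proof proposal for Lemma \ref{Lem ks}.}
Set $p := 2 + \frac{2s}{2k+1} = \frac{4k+2s+2}{2k+1} > 2$.  Since $2k+1$ is odd and $2k+2s+1$ is odd, the expression $(a+b)^{1+\frac{2s}{2k+1}}$ is unambiguously a real number, and its sign equals $\operatorname{sgn}(a+b)$; concretely, $(a+b)^{1+\frac{2s}{2k+1}}=(a+b)\,|a+b|^{p-2}$.  A direct sign check shows the inequality is invariant under $(a,b)\mapsto(-a,-b)$, so it suffices to treat the case $a>0$ (the trivial case $a=0$ being immediate since the right‑hand side is non‑positive).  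Dividing the desired inequality by $a^{p}$ and setting $t := b/a\in\mathbb R$, the claim reduces to producing $\varepsilon>0$ with
\[
  g(t) \;:=\; (1+t)\,|1+t|^{p-2} + |t|^{p} \;\ge\; \varepsilon \qquad\text{for every } t\in\mathbb R.
\]

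The function $g$ is continuous on $\mathbb R$, so it suffices to show $g$ is strictly positive and grows at infinity.  For $t\ge -1$ both terms of $g(t)$ are non‑negative, and at least one is positive unless $t=-1$, in which case $g(-1)=0+1=1$; hence $g>0$ on $[-1,\infty)$.  For $t<-1$ we have $(1+t)|1+t|^{p-2}=-|1+t|^{p-1}$, so $g(t)=0$ would force
\[
  |t|^{p} = |1+t|^{p-1}, \qquad\text{i.e., with } u:=-t>1,\qquad u^{p}=(u-1)^{p-1}.
\]
Since $u>1$ and $p>1$, we have $u^{p}>u^{p-1}>(u-1)^{p-1}$, so no such $t$ exists.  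Therefore $g>0$ on all of $\mathbb R$.  Finally, as $t\to+\infty$, $g(t)\sim t^{p-1}+t^{p}\to\infty$, and as $t\to-\infty$, $g(t)=|t|^{p}-|1+t|^{p-1}\sim |t|^{p}-|t|^{p-1}\to\infty$ because $p>1$.

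Consequently $g$ attains a (strictly positive) minimum on $\mathbb R$; calling this minimum $\varepsilon$ and tracing back through the reductions proves the lemma.  The only genuinely non‑routine step is the exclusion of zeros of $g$ on $(-\infty,-1)$, which rests on the elementary inequality $u^{p}>(u-1)^{p-1}$ for $u>1$; the remainder is a standard compactness/continuity argument, and the homogeneity reduction handles the sign cases in $a$.
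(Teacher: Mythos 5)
Your proof is correct, and it takes a genuinely cleaner route than the paper's at the key step.  Both arguments first normalize to a one‑variable function: you divide by $a^{p}$ after using the invariance $(a,b)\mapsto(-a,-b)$ to assume $a>0$, while the paper divides directly by $|a|^{p}$ for $a\neq 0$; in either case the target becomes showing that $g(t)=(1+t)^{p-1}+|t|^{p}$ is bounded below by a positive constant.  Where the proofs diverge is in showing $g>0$.  The paper converts a hypothetical zero $t_0$ of $g$ into a zero of the auxiliary function $t\mapsto t^{d/c}+t+1$ (with $d=4k+2s+2$, $c=2k+2s+1$), locates its unique critical point $t_1=(-c/d)^{c/(d-c)}$ by a derivative computation, and verifies $g(t_1)>0$ by tracking which rational exponents preserve sign.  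You instead split at $t=-1$: on $[-1,\infty)$ both terms are non‑negative and one is strictly positive, and on $(-\infty,-1)$ a zero would force $u^{p}=(u-1)^{p-1}$ with $u=-t>1$, which is killed by the one‑line chain $u^{p}>u^{p-1}>(u-1)^{p-1}$.  Your route avoids the extra change of variable and the derivative bookkeeping entirely, and it also makes explicit the growth of $g$ as $t\to\pm\infty$, which the paper states a bit loosely (it only mentions $t\to\infty$, though the conclusion ``positive lower bound $\iff$ no zeros'' needs coercivity at both ends, which does hold here since $p>p-1$).  Everything in your write‑up checks out: the invariance under $(a,b)\mapsto(-a,-b)$ relies on $p-1$ being a ratio of odd integers, the identity $(1+t)^{p-1}=(1+t)|1+t|^{p-2}$ is correct for that exponent, and the minimum argument is sound.
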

\begin{proof}
	Assume without loss of generality that \(a\neq 0\), the inequality is equivalent to
	\begin{align*}
		\left(1+\frac{b}{a}\right)^{1+\frac{2s}{2k+1}}+\left|\frac{b}{a}\right|^{2+\frac{2s}{2k+1}}\ge \varepsilon.
	\end{align*}
	Consider the function $f(t)=\left(1+t\right)^{1+\frac{2s}{2k+1}}+t^{2+\frac{2s}{2k+1}}$, as \(t\to\infty\), \(f(t)\to+\infty\). Hence \(f\) possesses a positive lower bound if and only if it has no zeros. We claim that \(f\) never vanishes. Otherwise, there would exist a \(t_{0}\) such that
	\begin{align*}
		f(t_0)=0\Longleftrightarrow t_0^{\frac{4k+2s+2}{2k+2s+1}}+t_0+1=0.
	\end{align*}
	Define \(g(t)=t^{\frac{d}{c}}+t+1\), where \(d>c\), \(d\) is even, and \(c\) is odd. A simple computation gives
	\begin{align*}
		g'(t)=\frac{d}{c}t^{\frac{d-c}{c}}+1.
	\end{align*}
	Thus \(g\) first decreases and then increases. Its minimum is attained at $t_1=\left(-\frac{c}{d}\right)^{\frac{c}{d-c}}$, and we have
	\begin{align*}
		g(t_1)=\left(\frac{c}{d}\right)^{\frac{d}{d-c}}+\left(-\frac{c}{d}\right)^{\frac{c}{d-c}}+1>0.
	\end{align*}
	This contradicts the assumption that \(f\) has a zero.
\end{proof}

\begin{lema}\label{Lem 2D L inf}
    Assuming \(N=2\) and \(\mathbf{v}(x) = v(r)\frac{x}{r}\) satisfies the boundary conditions \(v(0) = v(R) = 0\), there exists a constant \(C > 0\) such that
\begin{align*}
    \norm{\mathbf{v}}_{L^\infty(\Omega)}\leq C\norm{\nabla \mathbf{v}}_{L^2(\Omega)}.
\end{align*}
\end{lema}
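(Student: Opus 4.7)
The plan is to reduce everything to a one-dimensional computation on the radial profile $v(r)$, exploiting the fact that for a radial vector field $\mathbf{v}(x)=v(r)x/r$ in $\mathbb{R}^2$ the Dirichlet integral takes the clean form
\begin{align*}
\|\nabla\mathbf{v}\|_{L^2(\Omega)}^2 = 2\pi\int_0^R\Bigl((v'(r))^2+\frac{v(r)^2}{r^2}\Bigr)r\,dr.
\end{align*}
A direct calculation of $\partial_j v_i$ for $v_i=v(r)x_i/r$ and the identities $\sum_{i,j}x_i^2x_j^2/r^4=1$, $\sum_i x_i^2/r^2=1$ show that $|\nabla\mathbf{v}|^2=(v')^2+(v/r)^2$; this cancels the cross-term and produces the useful Hardy-type piece $v^2/r^2$. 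First I would record this identity explicitly.

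Next, since $\mathbf{v}$ is radial, $\|\mathbf{v}\|_{L^\infty(\Omega)}=\sup_{r\in(0,R]}|v(r)|$, so it suffices to bound $|v(r)|$ pointwise. Using $v(0)=0$, I write
\begin{align*}
v(r)^2 = 2\int_0^r v(s)v'(s)\,ds = 2\int_0^r \frac{|v(s)|}{\sqrt{s}}\,|v'(s)|\sqrt{s}\,ds,
\end{align*}
and apply the Cauchy--Schwarz inequality followed by Young's inequality to obtain
\begin{align*}
v(r)^2 \le 2\Bigl(\int_0^R\frac{v^2}{s}\,ds\Bigr)^{1/2}\Bigl(\int_0^R (v')^2 s\,ds\Bigr)^{1/2} \le \int_0^R\Bigl((v')^2+\frac{v^2}{s^2}\Bigr)s\,ds.
\end{align*}
Combining this with the identity from the first step gives $v(r)^2\le \frac{1}{2\pi}\|\nabla\mathbf{v}\|_{L^2(\Omega)}^2$, which yields the claim with $C=(2\pi)^{-1/2}$.

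There is no serious obstacle here: the decisive observation is simply that in the two-dimensional radial setting the $v^2/r^2$ term appearing in $|\nabla\mathbf{v}|^2$ plays the role of a built-in Hardy weight, which compensates for the fact that the plain $H^1\hookrightarrow L^\infty$ embedding fails in dimension two. The boundary condition $v(R)=0$ is not used, but $v(0)=0$ is essential to kill the boundary term in the identity $v(r)^2=2\int_0^r vv'$; this is automatic for a smooth radial vector field on $\Omega$.
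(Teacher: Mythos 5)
Your proof is correct and takes essentially the same approach as the paper's: both write $v(r)^2 = 2\int_0^r vv'\,ds$ using $v(0)=0$, split the integrand as $|v'|\cdot|v/s|\cdot s$, apply Young's inequality (equivalently Cauchy--Schwarz plus AM--GM in your version), and recognize the resulting $r$-weighted integral as a multiple of $\|\nabla\mathbf{v}\|_{L^2(\Omega)}^2$. The only substantive difference is that you spell out the identity $|\nabla\mathbf{v}|^2=(v')^2+v^2/r^2$, which the paper uses implicitly; a small slip is that the second $=$ in your second display should be $\leq$, since $vv'$ need not equal $|v|\,|v'|$.
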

\begin{proof}
    For any \(r \in (0, R)\), a direct computation shows that
    \begin{align*}
        v(r)^2&=2\int_0^r v\partial_r vdr\leq 2\int_0^R |\partial_r v|\left|\frac{v}{r}\right|rdr\\
        &\leq \int_0^R\left(|\partial_r v|^2+\left|\frac{v}{r}\right|^2\right)rdr\leq C\norm{\nabla\mathbf{v}}_{L^2(\Omega)}.
    \end{align*}
    This completes the proof.
\end{proof}

\section*{Acknowledgments}
	X Huang is partially supported by Chinese Academy of Sciences Project for Young Scientists in Basic Research (Grant No. YSBR-031), National Natural Science Foundation of China (Grant Nos. 12494542, 11688101). X Zhang is supported by National Natural Science Foundation of Yulin
University (Grant No. 2025GK11), the Young Elite Scientists Sponsorship Program by Yulin Association for Science and Technology (Grant No. 20250712).
	
	\vspace{1cm}
	\noindent\textbf{Data availability statement.} Data sharing is not applicable to this article.
	
	\vspace{0.3cm}
	\noindent\textbf{Conflict of interest.} The authors declare that they have no conflict of interest.

\end{document}